\newif\ifarxiv
  \def\@articleRef{}
  \def\@numLang{3}
  \def\@keywordsname{Keywords}
\title[Synthetic fibered $\inftyone$-category theory]{Synthetic fibered $\inftyone$-category theory}
\keywords{homotopy type theory, simplicial type theory, $\inftyone$-categories, Segal spaces, Rezk spaces, cartesian fibrations}
\author[Buchholtz]{Ulrik Buchholtz}
\email{ulrik.buchholtz@nottingham.ac.uk}
\address{Functional Programming Lab, School of Computer Science, University of Nottingham, Jubilee Campus, Nottingham, NG7 1BB, UK}
\author[Weinberger]{Jonathan Weinberger}
\address{Dept.~of Mathematics, Krieger School of Arts and Sciences, Johns Hopkins U., 3400 N Charles St., Baltimore, MD 21218, USA}
\email{jweinb20@jhu.edu}
\theoremstyle{plain}
\newtheorem{thm}[theorem]{Theorem}
\newtheorem{lem}[theorem]{Lemma}
\newtheorem{prop}[theorem]{Proposition}
\newtheorem{cor}[theorem]{Corollary}
\newtheorem{ax}[theorem]{Axiom}
\theoremstyle{definition}
\newtheorem{defn}[theorem]{Definition}
\newtheorem{rem}[theorem]{Remark}
\newtheorem{expl}[theorem]{Example}
\newtheorem{obs}[theorem]{Observation}
\theoremstyle{remark}
\numberwithin{equation}{section}
\begin{document}

\maketitle

\begin{abstract}
  We study cocartesian fibrations in the setting of the
  synthetic \inftyone-category theory developed in
  simplicial type theory introduced by Riehl and Shulman.
  Our development culminates in a Yoneda Lemma
  for cocartesian fibrations.
\end{abstract}

\section{Introduction}\label{sec:intro}

\subsection{Motivation and overview}

\subsubsection{Synthetic mathematics in homotopy type theory}

So far, homotopy type theory (HoTT) has served as a convenient framework for a lot of homotopical mathematics. It allows one to reason synthetically about homotopy types, for example to study homotopy or cohomology groups~\cite{hottbook,SymmetryBook,RijPhd}.
Indeed, it functions as an internal language for higher toposes.
This had been conjectured for some time by Awodey, and then recently established
by Shulman~\cite{Shu19}.
Alas, developing specifically the field of higher category theory in standard Book HoTT has not been as fruitful yet. For example, it is still an important open problem to give a definition of the notion of $(\infty,1)$-category completely internal to HoTT. Currently, one has to rely on some extension such as two-level type theory~\cite{2ltt}.

It is also not possible to directly interpret types as higher categories, since not all functors are exponentiable. At the level of model structures presenting the homotopy theory of $(\infty,1)$-categories (such as the Joyal or Rezk model structure), this manifests itself in their failure to be right proper.

\subsubsection{Synthetic higher category theory in simplicial homotopy type theory}

Outside the realm of Book HoTT, there do exist various approaches to reason type-theoretically about higher categorical structues, cf.~\cite{B19} for an overview and discussion.
As one solution, simplicial (homotopy) type theory (sHoTT) has been suggested by Riehl--Shulman~\cite{RS17}. Also independently proposed by Joyal, the idea is to work internally to simplicial spaces, or more generally, simplicial objects in any higher topos. This extension of HoTT comes with two new kinds of gadgets: shapes and extension types, both with \emph{judgmental equalities}. The strict shape layer encompasses e.g.~the standard $n$-simplices $\Delta^n$, boundaries $\partial \Delta^n$, and horns $\Lambda^n_k$. The strict extension types correspond to subtypes of dependent function types whose elements \emph{judgmentally} restrict to a fixed section defined on a subshape of a larger shape. These extension types make it possibly to define \eg,~the directed hom-types $\hom_A(x,y)$, for types $A$ and elements $x,y:A$, derived from the arrow type $A^{\Delta^1}$. Importantly, extension types are homotopically and computationally well-behaved: they allow for strict computations while maintaining fibrancy.

The prime model of sHoTT is the $\inftyone$-topos $\sSpace := \left[\Simplex^{\Op}, \Space\right]$ of simplicial $\infty$-groupoids, hence sHoTT can be understood as a synthetic theory of simplicial $\infty$-groupoids. The internal simplicial structure of each type $A$ can be probed by investigating the function type $\Delta^n \to A$. This allows for reasoning about higher categories in a convenient way because one can state the conditions for a type to be groupoidal (\emph{aka discrete}) or (complete) Segal in relatively simple and \emph{finitary} terms using extension types, as shown by Joyal in the classical set-theoretic setting. Semantically, these, in fact correspond to the desired properties, cf.~\cite[Appendix A]{RS17}.

\subsubsection{Synthetic fibered higher category theory}

In this paper, we complement Riehl--Shulman's work on \emph{covariant families}, \ie,~functorial type families with groupoidal fibers, by a development of a synthetic notion of \emph{cocartesian family}, \ie,~functorial type families with categorical fibers. That is, cocartesian families represent functors to the category of categories. Dually, we obtain a synthetic notion of \emph{cartesian family} as well, representing contravariant functors to the category of categories.

Specifically, we give characterizations of cocartesian fibrations via certain adjointness conditions, one of which being the so-called \emph{Chevalley criterion} which traditionally is known from $2$-category theory due to Street~\cite{StrBicat} and Gray~\cite{GrayFib}, and in the past years has been recovered in Riehl--Verity's model-independent higher category theory~\cite{RV}. We furthermore prove a type-theoretic $2$-Yoneda Lemma, which for $\infty$-cosmoses has been established by~\cite{RVyoneda}, \cite[Section 5.7]{RV} (in turn being inspired by~\cite{StrYon}). This implies the discrete case as a corollary, which previously has been formulated and proven in the type-theoretic setting in~\cite{RS17}, and semantically for (complete) Segal spaces in~\cite{KV14,RasYon}.

The type-theoretic Yoneda Lemma can also be understood as an induction principle for \emph{directed} arrows. Furthermore, we establish various theorems about cocartesian functors, analogous to~\cite{RV}, including characterization theorems in terms of invertibility of mates, as well as several closure properties inspired by $\infty$-cosmos theory. In fact, these closure properties could be understood as completeness results about the $\inftyone$-category of small cocartesian fibrations w.r.t.~$\inftyone$-categorical limits. However, in the work at hand the universal properties will be spelled out explicitly since the universe type in consideration will not itself be an $\inftyone$-category. For previous discussions on more structured universes and the Directed Univalence Axiom, cf.~Cavallo--Riehl--Sattler~\cite{CRS} and Weaver--Licata~\cite{WL19}.
In the context of the latter, Licata has an Agda formalization of the definition of cocartesian families~\cite{LicataMoreFibs}.

Prior to these main developments, we also briefly touch on non-functorial families of synthetic (pre-)$\inftyone$-categories. These notions, admittedly of somewhat auxiliary character, primarily serve to conceptually systematize our developments in the realm of simplicial types. At the end of the paper, we also provide some tie-ins with the pre-established theory of covariant families from~\cite{RS17}, recovering the familiar characterization of covariant families among cocartesian families.

\subsection{Contributions}

We re-develop the basic theory of fibered $\inftyone$-categories in (a mild extension of) the synthetic setting of Riehl--Shulman's \emph{simplicial homotopy type theory}~\cite{RS17}. Specifically, we prove several closure properties, characterization theorems, and a Yoneda Lemma for cocartesian fibrations and cocartesian functors. Along the way, we also discuss non-functorial families of (pre-)$\inftyone$-categories as auxiliary notions.

Notably, Riehl--Verity's model-independent higher category theory in $\infty$-cosmoses~\cite{RV} has been serving as a principal guiding stone for our developments. Tying in with previous work by Riehl--Shulman, we establish additional characterizations of discrete covariant fibrations and left adjoint right inverse (LARI) adjunctions.

To argue that our synthetic theory in fact captures the well-known analytic theory of fibered $\inftyone$-categories, we briefly point out the semantics in (a suitable presentation of) the $\inftyone$-topos of simplicial spaces (or, more generally, simplicial objects in an $\inftyone$-topos). However, a detailed semantic discussion is omitted in this text.

\subsection{Structure of the paper}

After the introductory \cref{sec:intro}, we give in \cref{sec:expo} an exposition of Riehl--Shulman's simplicial type theory~\cite{RS17} and our concrete type-theoretic setup.

\begin{figure}
	\centering
\begin{tikzcd}
	&& {\text{Section 1}} \\
	&& {\text{Section 2}} \\
	{\text{A1, A2}} & {\text{Section 3}} \\
	{\text{A3}} & {\text{Section 4}} \\
	&& {\text{Section 5}} & {\text{Section 6}} \\
	&& {\text{Section 7}}
	\arrow[from=2-3, to=3-2]
	\arrow[from=3-2, to=4-2]
	\arrow[from=4-2, to=5-3]
	\arrow[dashed, from=3-2, to=3-1]
	\arrow[dashed, from=3-1, to=4-2]
	\arrow[dashed, from=4-2, to=4-1]
	\arrow[dashed, from=4-1, to=5-3]
	\arrow[shift left=2, squiggly, from=5-3, to=6-3]
	\arrow[from=5-3, to=5-4]
	\arrow[from=5-4, to=6-3]
	\arrow[shift left=2, squiggly, from=1-3, to=2-3]
	\arrow[shift right=2, from=1-3, to=2-3]
	\arrow[shift left=2, squiggly, from=2-3, to=5-3]
\end{tikzcd}
\caption{Dependency of the sections}
\label{fig:sec-dep}
\end{figure}
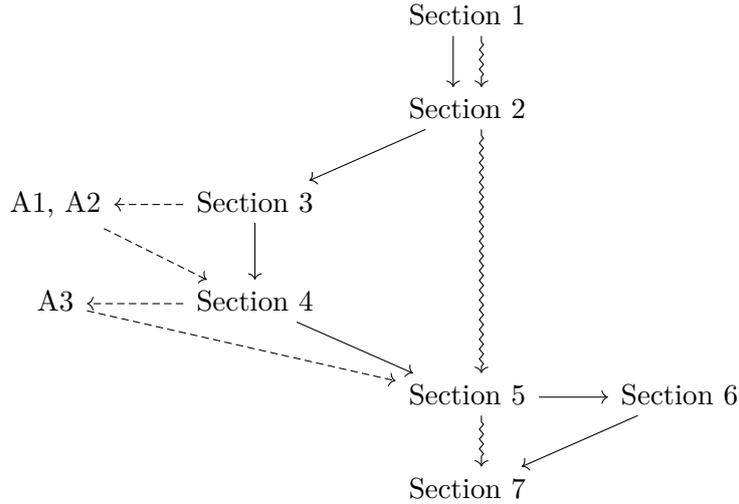

In the subsequent two sections we set the stage for our study of synthetic cocartesian fibrations which makes up the main part of the paper. Namely, in \cref{sec:closure} we discuss closure properties of two kinds of ``fibrations'' which formally generalize cocartesian fibrations and their discrete analogue. \cref{sec:isoinner-fams} captures \emph{non-functorial} families of synthetic (pre-)$\inftyone$-categories. Similar to (for instance) Joyal and Lurie's \emph{inner} (or \emph{mid}) \emph{fibrations}, we discuss these somewhat auxiliary notions of fibrations since at the most general level, types are not necessarily categories.\footnote{Rather, they are (Reedy fibrant) simplicial objects.}

Afterwards, in \cref{sec:cocart-fams}, cocartesian fibrations are introduced and discussed, along with cocartesian functors. The development closely parallels Riehl--Verity's theory of cocartesian fibrations in $\infty$-cosmoses~\cite[Chapter 5]{RV}. The relations of our setup to discrete covariant fibrations, previously introduced by Riehl--Shulman, are discussed in \cref{sec:cov-fam}. As an application of our theory we present a Yoneda Lemma for cocartesian fibrations in \cref{sec:yoneda}, which type-theoretically can be viewed as a principle of \emph{directed arrow induction} for functorial families of Rezk types. This is a direct generalization of Riehl--Shulman's discrete Yoneda Lemma in~\cite[Section 9]{RS17}, and at the same time a type-theoretic analogue of Riehl--Verity's Yoneda Lemma in~\cite[Section 5.7]{RV}.

Appendices~\ref{app:lax-sq} and~\ref{app:sec:adj} cover basic results on left adjoint right inverse (LARI) adjunctions and mates, inspired by~\cite[Section 11]{RS17} and~\cite[Appendix B.3/4]{RV}.

In \cref{fig:sec-dep} we outline the logical dependency of the sections with suggested reading orders. Readers primarily interested in the results about cocartesian fibrations and the Yoneda Lemma are invited to follow the path indicated by squiggly arrows ``$\rightsquigarrow$''. This is also recommended for a first skim through the paper focusing on the main parts of the theory, avoiding some of the more technical aspects.
A complete tour through the main text is given by straight arrows ``$\rightarrow$'', extendable by side trips through the appendices as indicated by dashed arrows ``$\dashrightarrow$''.

\subsection{Related work}

This article extends the theory of discrete fibrations in the setting of
Riehl--Shulman's type theory for synthetic $\inftyone$-categories~\cite[Section~8]{RS17}
to cover various more general notions of fibered $\inftyone$-categories,
building on Riehl--Verity's theory of co-/cartesian fibrations in $\infty$-cosmoses,
\cf~\cite[Chapter~5]{RV} and \cite{RVscratch,RVyoneda}. These, in turn, generalize work by Gray~\cite{GrayFib} and Street~\cite{StrYon,StrBicat,StrBicatCorr}. An extensive analytical and model categorical account to (internal) co-/cartesian fibrations is laid out by Rasekh in~\cite{RasQCatSegCart,rasekh2021cartesian,Ras17Cart,RasYonDSp}.

Segal and Rezk spaces have been studied by Rezk~\cite{rez01}, Joyal--Tierney~\cite{joyal2007quasi}, Lurie~\cite{lurie2009goodwillie}, Kazhdan--Varshavsky~\cite{KV14}, and Rasekh~\cite{RasQCatSegCart,rasekh2021cartesian}. Segal objects are treated by Boavida de~Brito~\cite{dB16segal}, Stenzel~\cite{SteSegObj}, and Rasekh~\cite{Ras17Cart,RasUniv}. For broad developments of these and related techniques, see also the monographs by Simpson~\cite{simpson2011homotopy}, Bergner~\cite{bergner2018homotopy}, and Paoli~\cite{paoli2019simplicial}.

Originally, the theory of Grothendieck \aka~cartesian fibrations of $\inftyone$-categories (implemented as quasi-categories) has been developed by Joyal~\cite{joyal2007quasi} as well as Lurie~\cite{LurHTT}. Notable and fundamental follow-up work on fibrations of (internal) $\inftyone$-categories has been done by~Ayala--Francis~\cite{AFfib}, Boavida de~Brito~\cite{dB16segal}, Barwick--Dotto--Glasman--Nardin--Shah~\cite{barwick2016}, Barwick--Shah~\cite{BarwickShahFib}, Mazel-Gee~\cite{MazGee-UserCart}, Rezk~\cite{rezk2017stuff}, Cisinski~\cite{CisInfBook}, and Nguyen~\cite{NguyPhD}.

Recently, a model-agnostic and more semantics-minded  theory of internal $\inftyone$-categories and co-/cartesian fibrations internal to an $\inftyone$-topos has been under development by Martini~\cite{mar-yon,MarCocart} and Martini--Wolf~\cite{mw-lim}.

A range of \emph{directed homotopy type theories} has been proposed in the works of Warren~\cite{War-DTT-IAS}, Licata--Harper~\cite{LH2DTT}, North~\cite{NorthDHoTT}, Nuyts~\cite{NuyMSc,NuyPhD}, Kavvos~\cite{KavQuant}, and Altenkirch--Sestini~\cite{ASNatFree}. Going back to Voevodsky~\cite{VV-HTS} (originally under the name of \emph{homotopy type system (HTS)}) is the idea of \emph{two-level} type theories, which have been further treated by Capriotti~\cite{CapriottiPhD} and Annenkov--Capriotti--Kraus--Sattler~\cite{2ltt}. For a conceptual discussion, see the essay~\cite{B19} by the first named author.

Specifically in the case of simplicial type theory an account to directed univalence has been given by Cavallo--Riehl--Sattler~\cite{CRS18}, and in a bicubical setting by Weaver--Licata~\cite{WL19}. A development of limits and colimits in simplicial HoTT is now available due to Bardomiano Mart\'{i}nez~\cite{martinez2022limits}.

The theory of synthetic co-/cartesian fibrations in the article at hand has later been developed further by the second named author in his doctoral dissertation~\cite{jw-phd} and in subsequent articles to include so-called \emph{Beck--Chevalley} and \emph{Moens} or \emph{extensive fibrations}~\cite{Wei22-intlsums} (after \cite{MoePhD}, \cite[Sections~5 and 6]{streicher2020fibered}, and \cite{LietzDip}; \cf~also~\cite[\protect{[003C]} Theorem 1.1$\cdot$b]{SteGeoUnivTop}) as well as \emph{sliced} and \emph{two-sided} cartesian fibrations~\cite{W22-2sCart} (after \cite[Chapter~7]{RV}). A semantic treatment of extension types, complementing \cite[Appendix~A]{RS17}, is given in~\cite{Wei-StrExt}.

A proof assistant for simplicial type theory is being implemented by Kudasov~\cite{KudRzk}.

Another expansive treatment of simplicial homotopy type theory, both from a syntactic and semantic point of view, including cocartesian fibrations and other material discussed in the present paper is given in the master's thesis of Bakke~\cite{BakkeMSc}.

\section{Exposition of Riehl--Shulman's synthetic
  \texorpdfstring{$\inftyone$}{(∞,1)}-category theory}\label{sec:expo}

We recall some basic features and results from Riehl--Shulman's synthetic $\inftyone$-category theory~\cite{RS17}, at a very brief and informal level. A significantly more thorough treatment is provided in the original paper.

\subsection{Simplicial type theory}

\subsubsection{Shapes}
In terms of homotopy theory, the shape layer enables us to reason about generating anodyne cofibrations using strict equalities.

In simplicial HoTT, next to the familiar layer of (univalent) intensional Martin-L\"of type theory, there are new ``non-fibrant'' layers added that provide a logical calculus of geometric shapes. We start of from the \emph{cube layer}, \ie, a Lawvere theory generated by a single bi-pointed object $0,1:\I$, the \emph{standard $1$-cube}. A cube context
\[ \Xi \jdeq[ I_1, \ldots,  I_k] \]
is thus a finite list of cubes
\[ I_m ~\mathrm{cube}\]
for $1 \le m \le k$.

On top of the cube layer, we can form \emph{topes} through logical comprehension via (intuitionistic) conjunction $\land$, disjunction $\lor$, and equality $\jdeq$. The \emph{tope layer} hence captures sub-polytopes of $n$-cubes (with explicit embedding). A tope formula $\varphi(t_1,\ldots,t_k)$ together with a cube context $\Xi \jdeq [I_1, \ldots, I_k, \vec{J}]$ gives rise to a tope
\[ \Xi \vdash \varphi ~\mathrm{tope}.\]

The interval $\I$ under consideration shall also come equipped with an \emph{inequality} tope
\[ x,y: \I \vdash x \le y ~ \mathrm{tope}\]
making it a total order (w.r.t.~the \emph{strict} equality tope $\jdeq$) with $0$ and $1$ as the bottom and top element, respectively.\footnote{For a comparison with the setup of \emph{cubical type theory}~\cite{CCHM2018} cf.~\cite[Remark~3.2]{RS17}.}
In particular, we also sometimes make use of \emph{connections} on the cube terms as discussed in~\cite[Proposition~3.5]{RS17}.

A cube together with a tope is called a \emph{shape}:
\[
\frac{I\,\cube \qquad t : I \vdash \varphi\,\tope}%
{\set{t : I}{\varphi}\,\type}
\]

As an addition to the original theory by Riehl--Shulman, we will moreover coerce all shapes to be types, cf.~\cref{ssec:fib-shapes}. This is still in accordance with the intended class of models.

\subsubsection{Extension types}

In addition to the strict layers, the other new feature of simplicial type theory is a new type former called the \emph{extension type}, the idea of which originally was due to Lumsdaine and Shulman. Given a shape $\Psi$ and a type family $P: \Gamma \to \Psi \to  \UU$ together with a \emph{partial} section $a: \prod_{\Gamma \times \Phi} P$, where $\Phi \subseteq \Psi$ denotes a subshape, we can form the corresponding family of extension types
\[ \Gamma \vdash \exten{t:\Psi}{P(t)}{\Phi}{a}\]
which is interpreted as a (strict) pullback, \cf~\cite[Theorem~A.16]{RS17}:
\[\begin{tikzcd}
	{\exten{t:\Psi}{P(t)}{\Phi}{a}} && {\widetilde{P}^\Psi} \\
	\Gamma && {\widetilde{P}^\Phi \times_{(\Gamma \times \Psi)^\Phi} (\Gamma \times \Psi)^\Psi} & {} \\
	&&& {}
	\arrow[two heads, from=1-1, to=2-1]
	\arrow[from=1-1, to=1-3]
	\arrow[two heads, from=1-3, to=2-3]
	\arrow["{\pair{\overline{a}}{\overline{\id_{\Gamma \times \Psi}}}}"' swap, from=2-1, to=2-3]
	\arrow["\lrcorner"{anchor=center, pos=0.125}, draw=none, from=1-1, to=2-3]
\end{tikzcd}\]
This means, the elements of $\exten{t:\Psi}{P(t)}{\Phi}{a}$ are total sections $b: \prod_{\Gamma \times \Psi} P$ such that $b|_\Phi \jdeq a$ holds judgmentally:
\[\begin{tikzcd}
	{\Phi^*\widetilde{P}} && {\widetilde{P}} \\
	{\Gamma \times \Phi} && {\Gamma \times \Psi}
	\arrow[two heads, from=1-1, to=2-1]
	\arrow[hook, from=2-1, to=2-3]
	\arrow[from=1-1, to=1-3]
	\arrow[two heads, from=1-3, to=2-3]
	\arrow["\lrcorner"{anchor=center, pos=0.125}, draw=none, from=1-1, to=2-3]
	\arrow["b"', curve={height=18pt}, dotted, from=2-3, to=1-3]
	\arrow["a", curve={height=-18pt}, dotted, from=2-1, to=1-1]
\end{tikzcd}\]
The type-theoretic rules are analogous to the familiar rules of $\Pi$-types, but with the desired judgmental equalities added, \cf~\cite[Figure~4]{RS17}.

In particular, non-dependent instances give rise to function types $A^\Phi$ where $\Phi$ is a shape rather than a type (even though, later on all of our shapes are assumed to be fibrant, cf.~Subsection~\ref{ssec:fib-shapes}). Semantically, this reflects the fact that the intended model is cotensored over simplicial sets, cf.~also the discussion in~\cite[Appendix~A]{RS17}.

From the given rules one can show that the extension types interact well with the usual $\Pi$- and $\Sigma$-types, as shown in~\cite[Subsections~4.1, 4.2]{RS17}. In particular, there is a version of the type-theoretic axiom of choice involving extension types that will be used a lot.

\begin{thm}[Type-theoretic axiom of choice for extension types, {\protect\cite[Theorem~4.2]{RS17}}]\label{thm:choice}
Let $\Phi \subseteq \Psi$ be a shape inclusion. Suppose we are given families $P: \Psi \to \UU$, $Q: \prod_{t:\Psi} (P(t) \to \UU)$ and sections $a:\prod_{t:\Phi} P(t)$, $b:\prod_{t:\Phi} Q(t,a(t))$. Then there is an equivalence
\[ \exten{t:\Psi}{\sum_{x:P(t)} Q(t,x)}{\Phi}{\lambda t.\pair{a(t)}{b(t)}} \equiv \sum_{f:\exten{t:\Psi}{P(t)}{\Phi}{a}} \exten{t:\Psi}{Q(t,f(t))}{\Phi}{b}.\]
\end{thm}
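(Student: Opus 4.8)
The plan is to construct the equivalence by producing explicit maps in both directions and checking that they are mutually inverse, relying on the structural rules for extension types (analogous to the $\Pi$-type rules, but with the judgmental boundary equalities built in) established in~\cite[Subsections~4.1, 4.2]{RS17}. The key observation is that a total section $c : \exten{t:\Psi}{\sum_{x:P(t)} Q(t,x)}{\Phi}{\lambda t.\pair{a(t)}{b(t)}}$ is, by definition, a dependent function $c : \prod_{t:\Psi} \sum_{x:P(t)} Q(t,x)$ whose restriction to $\Phi$ is judgmentally $\lambda t.\pair{a(t)}{b(t)}$. The idea is simply to ``commute'' the $\Sigma$ past the dependent-product-with-boundary, exactly as in the ordinary type-theoretic axiom of choice, but tracking how the prescribed boundary datum splits into its two components.

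First I would define the left-to-right map. Given such a $c$, I set $f \defeq \lambda t.\proj_1(c(t))$ and $g \defeq \lambda t.\proj_2(c(t))$. The crucial point is that the judgmental restriction $c|_\Phi \jdeq \lambda t.\pair{a(t)}{b(t)}$ forces, by applying $\proj_1$ and $\proj_2$ and using that projections compute judgmentally on pairs, the equalities $f|_\Phi \jdeq a$ and $g|_\Phi \jdeq b$ \emph{judgmentally}. Hence $f$ is a legitimate element of $\exten{t:\Psi}{P(t)}{\Phi}{a}$, and $g$ is a legitimate element of $\exten{t:\Psi}{Q(t,f(t))}{\Phi}{b}$ --- here I must check that the family $Q(t,f(t))$ has the correct boundary, which holds since $f|_\Phi \jdeq a$ makes $Q(t,f(t))$ restrict judgmentally to $Q(t,a(t))$ on $\Phi$, so the prescribed section $b$ typechecks. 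This produces the pair $\pair{f}{g}$ on the right.

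Conversely, given $\pair{f}{g}$ on the right-hand side, I define $c \defeq \lambda t.\pair{f(t)}{g(t)}$, and the two boundary conditions $f|_\Phi \jdeq a$ and $g|_\Phi \jdeq b$ combine to give $c|_\Phi \jdeq \lambda t.\pair{a(t)}{b(t)}$ judgmentally, so $c$ lands in the left-hand extension type. Finally I verify that the two maps are mutually inverse: one direction is the judgmental $\eta$-rule for $\Sigma$-types (reconstructing a pair from its projections), and the other is the judgmental $\beta$-rule for projections together with the $\eta$-rule for the extension/$\Pi$-former. Since all these identifications are in fact \emph{judgmental} definitional equalities, the round-trips are literally the identity, so both composites are identical to the respective identity functions, giving a (judgmental, hence in particular homotopical) equivalence rather than merely an equivalence up to a nontrivial homotopy.

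The main obstacle --- more a matter of care than of genuine difficulty --- is the bookkeeping of the judgmental boundary conditions through the $\Sigma$-splitting, in particular ensuring that the second factor's family $Q(t,f(t))$ genuinely restricts to the family $Q(t,a(t))$ carrying the section $b$, which hinges on the \emph{judgmental} (not merely propositional) equality $f|_\Phi \jdeq a$. Everything else reduces to the $\beta$- and $\eta$-rules for $\Sigma$-types together with the analogous rules for extension types recorded in~\cite[Figure~4]{RS17}; no homotopy-coherence argument is needed precisely because the extension-type formalism keeps these restrictions strict.
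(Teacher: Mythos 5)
Your proof is correct, and it is essentially the canonical argument: the paper does not prove this theorem itself but imports it from Riehl--Shulman \cite[Theorem~4.2]{RS17}, whose proof is precisely your construction --- split a section of the $\Sigma$-family into its two projections, re-pair in the other direction, check that the judgmental boundary conditions pass through the projections and the pairing, and conclude that both composites are identities by the $\beta$- and $\eta$-rules for $\Sigma$-types and extension types. The only hypothesis worth flagging is that your claim of a strictly judgmental round trip uses the judgmental $\eta$-rule for $\Sigma$-types, which the Riehl--Shulman type theory does assume; without it, the same maps would still be mutually inverse up to propositional homotopy, which suffices for the stated equivalence.
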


A further important principle is \emph{relative function extensionality}, which is added as an axiom:
\begin{ax}[Relative function extensionality, {\protect\cite[Axiom 4.6]{RS17}}]\label{ax:relfunext}
	Let $\Phi \subseteq \Psi$ be a shape inclusion.
  Given a family $P: \Psi \to \UU$ such that each $P(t)$ is contractible,
  and a partial section $a:\prod_{t: \Phi} P(t)$, then the extension type $\exten{t:\Psi}{P(t)}{\Phi}{a}$ is contractible.
\end{ax}
An important consequence is the \emph{homotopy extension property (HEP)}:
\begin{prop}[Homotopy extension property (HEP), {\protect\cite[Proposition 4.10]{RS17}}]\label{prop:hep}
Fix a shape inclusion $\Phi \subseteq \Psi$. Let $P: \Psi \to \UU$ be family, $b:\prod_{t:\Psi} P(t)$ a total section, and $a:\prod_{t:\Phi} A(t)$ a partial section. Then, given a homotopy $H:\prod_{t:\Phi} a(t) = b(t)$, there exist totalizations $a':\exten{t:\Psi}{P(t)}{\Phi}{a}$ and $H':\exten{t:\Psi}{a'(t)=b(t)}{\Phi}{H}$.
 \end{prop}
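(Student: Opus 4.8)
The plan is to recognize HEP as an instance of the contractibility of extension types (\cref{ax:relfunext}) once the data is repackaged as a single section into a family of \emph{based path spaces}, and then to split that section back into its two constituent pieces using the type-theoretic axiom of choice (\cref{thm:choice}). In short, the two earlier results do all the work; the proof is purely a matter of choosing the right auxiliary family.

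Concretely, first I would introduce the dependent family $Q : \prod_{t:\Psi}(P(t) \to \UU)$ defined by $Q(t,x) :\jdeq (x = b(t))$, using the total section $b$ as the basepoint in each fiber. The key observation is that for each $t : \Psi$ the total fiber $\sum_{x:P(t)} (x = b(t))$ is contractible, being the singleton based at $b(t)$. On the subshape $\Phi$ the pair $\lambda t.\pair{a(t)}{H(t)}$ is a partial section of this $\Sigma$-family, since there $a(t):P(t)$ and $H(t):a(t)=b(t)$.

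Next I would apply \cref{ax:relfunext} to the family $t \mapsto \sum_{x:P(t)} Q(t,x)$ with the partial section $\lambda t.\pair{a(t)}{H(t)}$; as each fiber is contractible, the extension type
\[ \exten{t:\Psi}{\sum_{x:P(t)}(x=b(t))}{\Phi}{\lambda t.\pair{a(t)}{H(t)}} \]
is contractible, hence inhabited. Finally, \cref{thm:choice} applied with this $P$, this $Q$, and the partial sections $a$ and $H$ furnishes an equivalence between that extension type and
\[ \sum_{a':\exten{t:\Psi}{P(t)}{\Phi}{a}} \exten{t:\Psi}{a'(t)=b(t)}{\Phi}{H}, \]
so transporting the inhabitant across this equivalence yields exactly the desired totalizations $a'$ and $H'$.

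The main thing to get right --- more bookkeeping than genuine obstacle --- is that the judgmental restriction data line up on the nose: the partial section fed into \cref{thm:choice} must be literally $\lambda t.\pair{a(t)}{H(t)}$ so that the two projected extension types carry the prescribed boundaries $a$ and $H$ respectively, and one must confirm the contractibility hypothesis of \cref{ax:relfunext} fiberwise via the singleton-contractibility of $\sum_{x:P(t)}(x=b(t))$. Once these match, the two components of the resulting pair are precisely $a' : \exten{t:\Psi}{P(t)}{\Phi}{a}$ and $H' : \exten{t:\Psi}{a'(t)=b(t)}{\Phi}{H}$.
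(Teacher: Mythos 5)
Your proof is correct and is essentially the standard argument — the same one given for this proposition in Riehl–Shulman's paper, which the present paper simply cites without reproducing a proof: package $a$ and $H$ as a partial section of the fiberwise-contractible family of based path spaces $t \mapsto \sum_{x:P(t)}(x=b(t))$, apply relative function extensionality (\cref{ax:relfunext}) to inhabit the extension type, and split via the axiom of choice for extension types (\cref{thm:choice}).
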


\subsubsection{Semantics in simplicial spaces}

A model of simplicial type theory is given by the Reedy model structure on bisimplicial sets, which presents the $\inftyone$-topos of simplicial spaces. The main steps in proving this are discussed in~\cite[Appendix A]{RS17}, with previous work done in~\cite{ShuReedyUniv,CisUniv}. The splitting of the extension type former is carried out in~\cite{Wei-StrExt}. In fact, it is conceivable that one can replace the base by an arbitrary (Grothendieck) $\inftyone$-topos $\mathscr E$ so that the results developed synthetically will hold for Rezk objects (\ie, internal $\inftyone$-categories, \cf~\eg~\cite{lurie2009goodwillie,dB16segal,SteSegObj,RasQCatSegCart} and~\cite[Proposition~E.3.7]{RV}) in $\mathscr E$, also as explained in~\emph{ibid}. One should note that the definitions and constructions we are presenting always have characterizations in terms of basic notions, such as (fibered) weak equivalences, (LARI) adjunctions, etc. (See for instance the characterizations of cocartesian fibrations and functors via \cref{thm:cocart-fams-intl-char,thm:cocart-fam-intl-char-fib,thm:cocart-fun-intl-char})
By Riehl--Verity's results on model-independence and Rasekh's work on simplicial and (complete) Segal spaces one can systematically argue that, in essence, all of our internal notions externalize to their intended semantic counterparts---at least when restricting to the Rezk types, which are our objects of primary interest after all.

\subsection{Synthetic higher categories}\label{ssec:syn-higher-cats}
\begin{figure}
\[\begin{tikzcd}[sep=7mm]
    &[2mm] &&[2mm]
    {\langle 0,1 \rangle} & {\langle 1,1 \rangle} &[2mm]
    {\langle 0,1 \rangle} & {\langle 1,1 \rangle} &[2mm]
    {\langle 0,1 \rangle} & {\langle 1,1 \rangle} \\
	\bullet & 0 & 1 & {\langle 0,0 \rangle} & {\langle 1,0 \rangle} & {\langle 0,0 \rangle} & {\langle 1,0 \rangle} & {\langle 0,0 \rangle} & {\langle 1,0 \rangle} \\[-5mm]
	{\textstyle\Delta^0} & {} & {} & {} & {} & {} & {} & {} & {}
	\arrow[from=2-6, to=2-7]
	\arrow[from=2-7, to=1-7]
	\arrow[""{name=0, anchor=center, inner sep=0}, from=2-6, to=1-7]
	\arrow[from=2-5, to=1-5]
	\arrow[from=2-4, to=2-5]
	\arrow[from=2-2, to=2-3]
	\arrow[from=2-8, to=2-9]
	\arrow[from=2-8, to=1-8]
	\arrow[from=2-9, to=1-9]
	\arrow[from=1-8, to=1-9]
	\arrow[Rightarrow, no head, from=1-8, to=2-9]
	\arrow["{\textstyle\Delta^1}"{description}, draw=none, from=3-2, to=3-3]
	\arrow["{\textstyle\Lambda_1^2}"{description}, draw=none, from=3-4, to=3-5]
	\arrow["{\textstyle\Delta^2}"{description}, draw=none, from=3-6, to=3-7]
	\arrow["{\textstyle\Delta^1 \times \Delta^1}"{description}, draw=none, from=3-8, to=3-9]
	\arrow[shorten <=2pt, Rightarrow, no head, from=0, to=2-7]
\end{tikzcd}\]
	\caption{Some shapes}
	\label{fig:sample-shapes}
\end{figure}
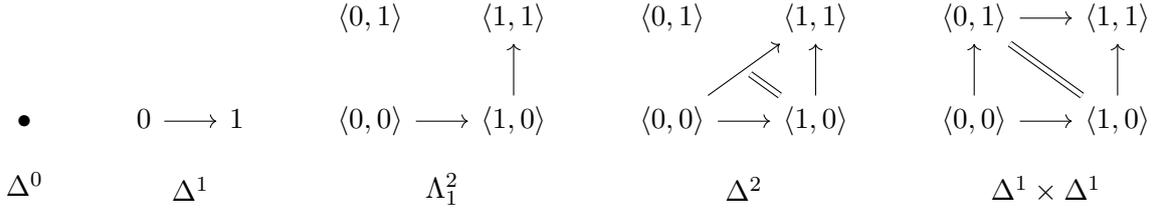

Via the inequality tope of the interval $\I$ we can define simplices and subshapes familiar from simplicial homotopy theory. The first few low-dimensional simplices are given by
\begin{align*}
	\Delta^0 \defeq \{ t:\unit \; | \; \top\}, ~~ 	\Delta^1 \defeq \{ t:\I  \; | \; \top\}, ~~ \Delta^2 \defeq \{ \! \pair{t}{s} : \I \times \I \; | \;  s \le t \}.
\end{align*}
The logical connectives of the tope layer enable us to carve out subshapes, such as boundaries and horns, \eg
\begin{align*}
	& \partial \Delta^1 \defeq \{ t:\I \; | \;  t \jdeq 0 \lor t \jdeq 1\}, \quad \Lambda_1^2 \defeq \{ \pair{t}{s} : \I \times \I  \; | \;  s \jdeq 0 \lor t \jdeq 1\}, \\
	& \partial \Delta^2 \defeq \{ \pair{t}{s} : \I \times \I  \; | \;  s \jdeq t \lor s \jdeq 0 \lor t \jdeq 1\}.
\end{align*}
Cf.~\cref{fig:sample-shapes} for an illustration and~\cite[Section 3.2]{RS17} for a detailed discussion.

\begin{figure}
	\centering
	\includegraphics{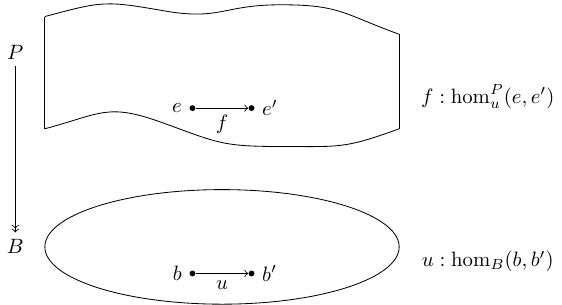}
	\caption{Dependent arrows}
	\label{fig:dhom}
\end{figure}

We then can define, for any type $B$ and fixed elements $b,b':B$ the type of arrows from $b$ to $b'$ as
\[
  \hom_B(b,b') \defeq \ndexten{\Delta^1}{B}{\partial \Delta^1}{[b,b']}.
\]
Given a type family $P:B \to \UU$ and an arrow $u:\hom_B(b,b')$ in the base, the type of arrows lying over $u$, from $e:P\,b$ to $e':P\,b'$, is given by
\[ \dhom^P_u(e,e') \defeq \exten{t:\Delta^1}{P(u(t))}{\partial \Delta^1}{[e,e']}.\]
Such an arrow is also called a \emph{dependent arrow} or \emph{dependent homomomorphism}, \cf~\cref{fig:dhom}.

We will also be considering types of $2$-cells, defined by\footnote{The boundary here is given by
	\[ [u,v,w]: \partial \Delta^2 \to B, \quad [u,v,w](t,s) \defeq
	\begin{cases}
		u & s \jdeq 0 \\
		v & t \jdeq 1 \\
		w & s \jdeq t
	\end{cases}
	\]
	and similarly for the dependent case.}
\[ \hom_B^2(u,v;w) \jdeq \ndexten{\Delta^2}{B}{\partial \Delta^2}{[u,v,w]}, \quad \hom^{2,P}_\sigma(f,g;h) \jdeq \exten{\langle t,s \rangle : \Delta^2}{P(\sigma(t,s))}{\partial \Delta^2}{[f,g,h]},\]
see~\cref{fig:dhom2} for an illustration:
\begin{figure}[H]
	\centering
	\includegraphics{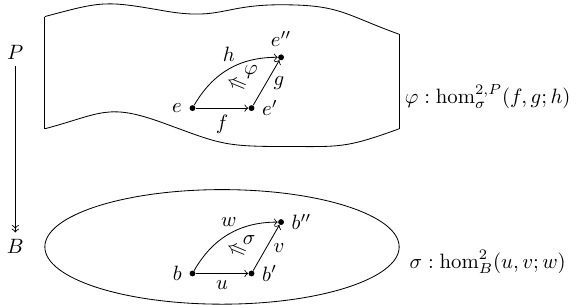}
	\caption{Dependent $2$-cells}
	\label{fig:dhom2}
\end{figure}
We abbreviate
\[ \hom_B(b,b') \jdeq  (b \to_B b') \jdeq  (b \to b')\]
when the intent is clear from the context. For families $P : B \to \UU$, we write
\[ \hom_u^P(e,e') \jdeq (e \to^P_u e').\]
For a type $B$, we have two projections from the arrow type, given by evaluation
\[ \partial_k: B^{\Delta^1} \to B, \quad \partial_k \defeq \lambda u.u(k),\]
for $k=0,1$.

Similarly to the notation introduced above, for the type of natural transformations between a fixed pair of functors we abbreviate
\[ \nat{A}{B}(f,g) \jdeq  (f \Rightarrow g).\]
Sometimes, we also denote the type of $2$-simplices by
\[ \hom_B^2(u,v;w) \jdeq (u,v \Rightarrow w),\]
and likewise for the dependent version.
With these prerequisites, Riehl--Shulman~\cite{RS17} define a type $B$ to be a \emph{Segal type} such that the proposition
\[ \isSegal(B) \defeq \prod_{b,b',b'':B} \prod_{\substack{u:b \to b' \\ v: b' \to b''}} \isContr\Big( \sum_{w:b \to b''} \hom_B^2(u,v;w)\Big) \]
is true. This means that $B$ has \emph{weak composition of directed arrows}. After Joyal, the Segal condition can be stated as
\[
  \isSegal(B) \simeq \isEquiv (B^\iota), \]
  with the inclusion where $\iota: \Lambda_1^2 \hookrightarrow \Delta^2$, \ie, $B^\iota \defeq (- \circ \iota): B^{\Delta^2} \to B^{\Lambda_1^2}$ restricts filled $2$-simplices in $B$ onto their $(2,1)$-horn.
  
Segal types can be thought of as synthetic \emph{pre}-$\inftyone$-categories, which here in simplicial homotopy type theory is expressed as a \emph{property} rather than structure, echoing the familiar situation from the semantics in simplicial spaces. As discussed in~\cite[Section 5]{RS17}, Segal types can be endowed with a weak composition \emph{operation} which is weakly associative. For arrows $f:a \to b$, $g:b \to c$ in some Segal type $B$, one writes $g \circ f$ for the chosen composite arrow.

The \emph{identity arrow} of an element $b:B$ is given by the constant map
\[\id_b \defeq \lambda t.b:\Delta^1 \to B. \]

Often, naturality \wrt~directed arrows comes ``for free''. In particular, any function~$f:A \to B$ between Segal types is a \emph{functor} in the sense that it preserves composition and identities up to propositional equality, as proven in~\cite[Section 6.1]{RS17}. The action of a functor on points already determines its actions on arrows as discussed in \emph{loc.~cit}.

Although the semantics is given by a structure presenting an $\inftyone$-topos---a certain kind of $\inftyone$-category---we, in fact, have access to portions of the $2$-dimensional structure present in the model as well. Since Segal types form an exponential ideal, the type $A \to B$ is Segal if $B$ is, and this allows us to study natural transformations between functors and lax diagrams of types, cf.\ Appendix~\ref{app:lax-sq} and the groundwork in~\cite[Section~6]{RS17}. This enables us to adapt several developments from Riehl--Verity's model-independent higher category theory from \emph{$\infty$-cosmoses} to type theory. Roughly, an $\infty$-cosmos\footnote{The name ``$\infty$-cosmos'' has been suggested by Peter May~\cite[Acknowledgments]{RV}, after Street's (fibrational) cosmoses \cite{StreetEltCosI,StreetCosIntl}.} is a model for an $\inftytwo$-categorical universe of a given notion of ``$\infty$-category'' which could mean $(\infty,n)$-categories (for $0 \le n \le \infty$) but also \eg,~fibrations thereof. This semantical theory will turn out to be an important guiding stone for our internal developments. 

Segal types come with two possible notions of isomorphism (analogous to the semantic situation for Segal spaces), the ``spatial'' one given by propositional equality, and the ``categorical'' one derived from the directed arrows. Namely, an arrow $f:a \to b$ in a Segal type $B$ is a (categorical) \emph{isomorphism} if the type
\[ \isIso(f) \defeq \sum_{g,h:b \to a} (gf = \id_a) \times (fh = \id_b) \]
is inhabited. As discussed in~\cite[Section 10]{RS17}, the type $\isIso(f)$ turns out to be a proposition, so we can define the subtypes
\[ \iso_B(a,b) \defeq \sum_{f:\hom_A(a,b)} \isIso(a,b).\]
In particular, for all $b:B$ one has\footnote{leaving the witnessing proposition implicit in our notation} $\id_b : \iso_B(a,b)$.
By path induction we define the comparison map
\[ \idtoiso_B: \prod_{a,b:B} (a=_Bb) \to \iso_B(a,b), \quad \idtoiso_{B,a,a}(\refl_a) \defeq \id_a , \]
and demanding that this be an equivalence leads to the notion of \emph{Rezk completeness:}
\[ \isRezkCompl(B) \defeq \isEquiv(\idtoiso_B).\]
A \emph{complete Segal type} \aka~\emph{Rezk type} is a Segal type that is also Rezk complete. A type $B:\UU$ being Rezk is witnessed by the proposition
\begin{align*}
	{\isRezk}(B) & \defeq \isSegal(B) \times \isRezkCompl(B) \\
			   & \simeq \isEquiv(B^\iota) \times \isEquiv(\idtoiso_B).
\end{align*}
Then Rezk completeness condition can be understood as a local version of the Univalence Axiom. In the simplicial space model, Rezk types are interpreted as Rezk spaces, which justifies viewing Rezk types as \emph{synthetic $\inftyone$-categories}. Even though a lot of the development in~\cite{RS17} actually already works well on the level of (not necessarily complete) Segal types, our study of cocartesian families mostly restricts to complete Segal types, which is in line with preexisting studies of 
\linebreak co-/cartesian fibrations in the higher categorical context \cite{JoyNotesQcat,LurHTT,RV,Ras17Cart,dB16segal,AFfib,BarwickShahFib}.

Among the synthetic $\inftyone$-categories, we can also consider types that are synthetic $\inftyzero$-categories, \ie, $\infty$-groupoids. These are called \emph{discrete types},\footnote{semantically known as \emph{Bousfield--Segal spaces} due to Bergner~\cite[Section~6]{Bergner-AddInvII}, \cf~also earlier work by Bousfield~\cite{BousItLoop} and more recent work by Stenzel~\cite{Stenzel-CBS}} which refers to the condition that all directed arrows be invertible, namely the comparison map defined inductively by
\[ \idtoarr_B: \prod_{a,b:B} (a=_B b) \to \hom_B(a,b), \quad \idtoarr_{B,a,a}(\refl_a) \defeq \id_a,\]
be an equivalence:
\[ \isDisc(B) \defeq \prod_{a,b:B} \isEquiv(\idtoarr_{B,a,b})\]
In fact, this discreteness condition entails the Rezk condition as shown by Riehl--Shulman. Furthermore, if $B$ is a Segal type, for any $a,b:B$, the hom-type $\hom_B(a,b)$ is discrete by~\cite[Proposition 8.13]{RS17}.

\subsection{Covariant families}

Riehl--Shulman have introduced the notion of \emph{covariant family}, \ie, families of discrete types varying functorially \wrt~directed arrows in the base. A type family $P:B \to \UU$ over a (Segal or Rezk) type $B$ is \emph{covariant} if
\[ \isCovFam(P) \defeq \prod_{\substack{a,b:B \\ u:a \to b}} \prod_{d:P\,a} \isContr\Big( \sum_{e:P\,b} (d \to^P_u e) \Big), \]
\ie, arrows in the base can be uniquely lifted w.r.t.~a given source vertex.

Semantically, these correspond to \emph{left fibrations}, which encode $\inftyone$-copresheaves. Hence, as expected, an example is given, for any $a:B$ by the family
\[ \hom_B(a,-) \defeq \lambda b.\hom_B(a,b): B \to \UU.\]

The central topic of our work is to generalize this study to synthetic \emph{cocartesian fibrations}, \ie, the case where the fibers are Rezk rather than discrete.\footnote{Everything dualizes to the case of cartesian fibrations, of course, but we don't spell this out.}

\subsection{Fibrant shapes}\label{ssec:fib-shapes}

In the models we are interested in, the shapes will be fibrant objects,
so we may reflect the shape layer into the type layer.
In a generalized algebraic presentation of the syntax,
this will be done with a constructor corresponding to the inference rule:\footnote{The introduction, elimination and computation rules state elements of type $\shapety I\varphi$ are precisely those of the corresponding shape $I\mid\varphi$. We elide these forms from our notation to ease readability.}
\[
\frac{I\,\cube \qquad t : I \vdash \varphi\,\tope}%
{\shapety{t : I}{\varphi}\,\type}
\]
In fact, we take the interval $\Delta^1$ as a type, and the inequality relation ${\le} : \Delta^1 \to \Delta^1 \to \Prop$ as a type family, and then all shapes are types using the ordinary type formers.
It seems it should be possible to develop everything we do using standard type theory extended with $(\Delta^1,{\le})$ along with the postulate that this is an \emph{interval object}, \ie, a totally ordered set with bottom element $0$ and top element $1$,
where $0 \ne 1$.
This nicely complements the fact that the $1$-topos of simplicial sets is the classifying topos for the geometric theory of a strict interval~\cite{JohnstoneTopTop}.

Recall that if $A$ is a type and $B$ is a type family over $A$,
then $\prod_{x:A}B(x)$ is equivalent to the type of sections of
the first projection $\sum_{x:A}B(x) \to A$.
Similarly, the extension type $\exten{x:\shapety I\psi}{A(x)}{\varphi}{a}$
is equivalent to
\[
\sum_{f : \prod_{x:\shapety I\psi}A(x)}\prod_{x:\shapety I\varphi}(a\,x = f\,x),
\]
\ie, the type of functions over the large shape together with a homotopy
to the given function over the small shape.
Indeed, for any $a$, we have a map from the extension type to the latter type,
so it suffices to show that the induced map on total types,
\[
  \left(\sum_{a:\prod_{x:\shapety I\varphi}A(x)}
    \exten{x:\shapety I\psi}{A(x)}{\varphi}{a}\right)
\to
\left(\sum_{a:\prod_{x:\shapety I\varphi}A(x)}
\sum_{f : \prod_{x:\shapety I\psi}A(x)}\prod_{x:\shapety I\varphi}(a\,x = f\,x)\right)
\]
is an equivalence. The codomain here is equivalent to
just $\prod_{x:\shapety I\psi}A(x)$ (by relative function extensionality),
and the composite maps $\pair{a}{f}$ to $f$. This map has an obvious inverse
that maps $f$ to the pair of $f$ restricted to $\shapety I\varphi$ and $f$.

In particular, this gives the following formulation of (diagrammatic, weak) lifting problems in terms of (formulaic, strict) contractibility statements.

\begin{obs}
Consider a family $P:B \to \UU$ and a shape inclusion $\Phi \subseteq \Psi$. Then, given a total diagram $\sigma:\Psi \to B$ with a partial diagram $\kappa:\prod_\Phi \sigma^*P$ lying over,
the diagram
\[\begin{tikzcd}
	\Phi && {\widetilde{P}} & {} \\
	\Psi && B & {}
	\arrow[hook, from=1-1, to=2-1]
	\arrow["\sigma"{description}, from=2-1, to=2-3]
	\arrow["\pi", two heads, from=1-3, to=2-3]
	\arrow["\kappa"{description}, from=1-1, to=1-3]
	\arrow[dashed, from=2-1, to=1-3]
\end{tikzcd}\]
possesses a diagonal filler uniquely up to homotopy if and only if the proposition
\[ 
  \mathrm{isContr} \Big( \Big\langle\prod_{t:\Psi} P(\sigma(s))\Big|^\Phi_\kappa \Big \rangle\Big)
\]
is inhabited.
\end{obs}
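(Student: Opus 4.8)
The plan is to translate the diagrammatic lifting problem into type-theoretic language and then invoke the equivalence established in the paragraph immediately preceding the statement. First I would observe that a diagonal filler of the square is precisely a dependent section of the pullback $\sigma^*P$ over the large shape $\Psi$: by the universal property of the pullback, a map $\Psi \to \widetilde{P}$ making the lower triangle commute (i.e.\ lying over $\sigma$) is the same datum as a dependent function $b : \prod_{t:\Psi} P(\sigma(t))$, and commutativity of the upper triangle asks that the restriction of $b$ along $\Phi \hookrightarrow \Psi$ recover the given partial section $\kappa$. Since we require a filler only \emph{up to homotopy} rather than on the nose, the relevant agreement with $\kappa$ is agreement up to a path, so the type of such fillers is
\[
\sum_{b : \prod_{t:\Psi} P(\sigma(t))} \prod_{t:\Phi} (\kappa(t) = b(t)).
\]

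Next I would unwind the phrase ``uniquely up to homotopy''. To say that a diagonal filler exists and is unique up to homotopy is exactly to say that the type of fillers displayed above is contractible: its center of contraction witnesses existence, and the contraction witnesses that every filler is homotopic to the center, which is uniqueness up to homotopy. Thus the left-hand side of the claimed equivalence amounts to $\isContr$ of the $\Sigma$-type above.

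It then remains to identify this with the contractibility of the extension type $\exten{t:\Psi}{P(\sigma(t))}{\Phi}{\kappa}$. But this is precisely the content of the computation carried out just before the statement: that extension type is equivalent to the $\Sigma$-type of functions over $\Psi$ equipped with a homotopy witnessing agreement with $\kappa$ over $\Phi$, the argument passing to total spaces over the partial section, collapsing the codomain to $\prod_{t:\Psi} P(\sigma(t))$ via relative function extensionality (\cref{ax:relfunext}), and then exhibiting an explicit inverse. Since contractibility is invariant under equivalence, the extension type is contractible if and only if the $\Sigma$-type of homotopy-fillers is, which is exactly the asserted equivalence.

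The one point that genuinely deserves care—and the only real obstacle—is the passage from ``strict'' to ``up-to-homotopy'' restriction: the extension type encodes fillers restricting to $\kappa$ \emph{judgmentally}, whereas a weak lifting problem only asks for restriction up to a path. Conflating the two without justification would leave a gap, but the already-established equivalence between the two formulations of the extension type is exactly the bridge between them, so no further work is required beyond invoking it.
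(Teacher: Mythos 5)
Your proposal is correct and follows essentially the same route the paper intends: the Observation is stated as an immediate consequence of the preceding equivalence $\exten{t:\Psi}{P(\sigma(t))}{\Phi}{\kappa} \simeq \sum_{b:\prod_{t:\Psi}P(\sigma(t))}\prod_{t:\Phi}(\kappa(t)=b(t))$, and you invoke exactly that equivalence after correctly unwinding a homotopy-unique diagonal filler as contractibility of the latter $\Sigma$-type. Your closing remark on the strict-versus-homotopy restriction being bridged by this equivalence is precisely the point the paper's de-/strictification discussion is making.
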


\begin{expl}
Recall from~\cite{RS17} that a type $A$ is Segal precisely if $A \to \unit$ is right orthogonal to $\Lambda_1^2 \hookrightarrow \Delta^2$. Another example is given by the class of covariant families, namely $P:B \to \UU$ is covariant if and only if $\totalty{P} \to B$ is right orthogonal to the initial vertex inclusion $i_0: \unit \hookrightarrow \Delta^1$.
\end{expl}

We can also type-theoretically express the \emph{Leibniz construction} familiar from categorical homotopy theory~\cite[Definitions C.2.8, C.2.10, C.3.8]{RV} as follows. We remark that Leibniz cotensor maps will be ubiquitous in our treatise since the fibrations of interest are defined by conditions on them.

\begin{defn}[Leibniz cotensor]
	Let $j:Y \to X$ be a type map or shape inclusion, and $\pi :E \to B$ a map between types. The \emph{Leibniz cotensor of $j$ and $\pi$} (\aka~\emph{Leibniz exponential of $\pi$ by $j$} or \emph{pullback hom}) is defined as the following gap map:
	\[\begin{tikzcd}
		&&& {E^X} \\
		{Y} & {E} & {E^X} && {\cdot} && {E^Y} \\
		{X} & {B} & {B^X \times_{B^Y} E^Y} && {B^X} && {B^Y}
		\arrow["{j}"{name=0, swap}, from=2-1, to=3-1]
		\arrow["{j \widehat{\pitchfork} \pi}"{name=1}, from=2-3, to=3-3]
		\arrow[from=2-5, to=3-5]
		\arrow[from=3-5, to=3-7]
		\arrow[from=2-5, to=2-7]
		\arrow[from=2-7, to=3-7]
		\arrow["{j\widehat{\pitchfork} \pi}" description, from=1-4, to=2-5, dashed]
		\arrow[from=1-4, to=2-7, curve={height=-12pt}]
		\arrow[from=1-4, to=3-5, curve={height=12pt}]
		\arrow["\lrcorner"{very near start, rotate=0}, from=2-5, to=3-7, phantom]
		\arrow["{\pi}"{name=2, swap}, from=2-2, to=3-2, swap]
		\arrow[Rightarrow, "{\defeq}"{description,pos=0.25}, from=2, to=1, shorten <=7pt, shorten >=7pt, phantom, no head]
		\arrow[Rightarrow, "{\widehat{\pitchfork}}" description, from=0, to=2, shorten <=5pt, shorten >=5pt, phantom, no head]
	\end{tikzcd}\]
\end{defn}

The map $\pi$ is right orthogonal to $j$, meaning that for any square as below there exists a filler uniquely up to homotopy
\[\begin{tikzcd}
	{Y} & {E} \\
	{X} & {B}
	\arrow["{j}"', from=1-1, to=2-1]
	\arrow[from=2-1, to=2-2]
	\arrow[from=1-1, to=1-2]
	\arrow["{\pi}", from=1-2, to=2-2]
	\arrow[from=2-1, to=1-2, dotted]
\end{tikzcd}\]
if and only if the Leibniz cotensor map is an equivalence
\[ j \widehat{\pitchfork} \pi: E^X \stackrel{\equiv}{\longrightarrow} B^X \times_{B^Y} E^Y, \]
\cf~\cref{prop:orth-maps}.

Though sparsely explicitly present in the text, we will also mention the dual operation.

\begin{defn}[Pushout product]
	Let $j: Y \to X$ and $k: T \to S$ each be type maps or shape inclusions. The \emph{Leibniz tensor of $j$ and $k$} (or \emph{pushout product}) is defined as the following cogap map:
	\[\begin{tikzcd}
		{Y} & {T} & {Y \times S\bigsqcup_{Y \times T} X \times T} && {Y \times T} && {Y \times S} \\
		{X} & {S} & {X \times S} && {X \times T} && {\cdot} \\
		&&&&&&& {X \times S}
		\arrow["{j}"{name=0, swap}, from=1-1, to=2-1]
		\arrow["{k}"{name=1, swap}, from=1-2, to=2-2, swap]
		\arrow["{j\widehat{\otimes} k}"{name=2}, from=1-3, to=2-3]
		\arrow[from=1-5, to=2-5]
		\arrow[from=2-5, to=2-7]
		\arrow[from=1-5, to=1-7]
		\arrow[from=1-7, to=2-7]
		\arrow[from=2-5, to=3-8, curve={height=12pt}]
		\arrow[from=1-7, to=3-8, curve={height=-12pt}]
		\arrow["{j \widehat{\otimes}k}" description, from=2-7, to=3-8, dashed]
		\arrow["\lrcorner"{very near start, rotate=180}, from=2-7, to=1-5, phantom]
		\arrow[Rightarrow, "{\widehat{\otimes}}" description, from=0, to=1, shorten <=5pt, shorten >=5pt, phantom, no head]
		\arrow[Rightarrow, "{\defeq}" {description,pos=0.25}, from=1, to=2, shorten <=9pt, shorten >=9pt, phantom, no head]
	\end{tikzcd}\]
\end{defn}

In particular, recall from~\cite[Theorem 4.2]{RS17}, the explicit formula for the pushout product of two shape inclusions:
\[\begin{tikzcd}
	{\{t:I\,|\,\varphi\}} & {\{s:J\,|\,\chi\}} & {\{ \langle t,s\rangle : I \times J \, | \, (\varphi \land \zeta) \lor (\psi \land \chi)\}} \\
	{\{t:I\,|\,\psi\}} & {\{s:J\,|\,\zeta\}} & {\{ \langle t,s\rangle : I \times J \, | \, \psi \land \zeta\}} \\
	{}
	\arrow[""{name=0, inner sep=0}, from=1-1, to=2-1, hook]
	\arrow[""{name=1, inner sep=0}, from=1-2, to=2-2, hook]
	\arrow[""{name=2, inner sep=0}, from=1-3, to=2-3, hook]
	\arrow[Rightarrow, "{\widehat{\otimes}}" description, from=0, to=1, shorten <=7pt, shorten >=7pt, phantom, no head]
	\arrow[Rightarrow, "{\defeq}"  {description,pos=0.25}, from=1, to=2, shorten <=13pt, shorten >=13pt, phantom, no head]
\end{tikzcd}\]

\subsection{Families vs.~fibrations}\label{ssec:fam-vs-fib}

Recall from~\cite{hottbook} that in presence of the univalence axiom, there is an equivalence between type families and fibrations.\footnote{Assuming universes with better structural properties---such as the Segal condition or directed univalence---would be fruitful for further considerations, but this is part of future work.}

Consider the types
\[ \Fib(\UU) \defeq \sum_{A,B:\UU} A \to B, \qquad \Fam(\UU) \defeq \sum_{B:\UU} (B \to \UU)  \]
of functions in $\UU$ (viewed as type-theoretic fibrations\footnote{The inhabitants of $\Fib(\UU)$ are just maps between arbitrary $\UU$-small types, but viewed as ``$\UU$-small type theoretic fibrations over a $\UU$-small base''.}), and families with $\UU$-small fibers, resp. Both these types naturally are fibered over $\UU$ via the following maps:
\[\begin{tikzcd}
	{\Fib(\UU)} && \UU && {\Fam(\UU)}
	\arrow["{\partial_1 \defeq \lambda A,B,f.B}", from=1-1, to=1-3]
	\arrow["{\pr_1 \defeq \lambda B,P.B}"', from=1-5, to=1-3]
\end{tikzcd}\]
Over a type $B:\UU$, we obtain the type of \emph{maps into (or fibrations over) $B$} as the fiber:
\[\begin{tikzcd}
	{\UU/B} && {\mathrm{Fib}(\UU)} \\
	\unit && \UU
	\arrow[from=1-1, to=2-1]
	\arrow["B"', from=2-1, to=2-3]
	\arrow[from=1-1, to=1-3]
	\arrow["{\partial_1}", from=1-3, to=2-3]
	\arrow["\lrcorner"{anchor=center, pos=0.125}, draw=none, from=1-1, to=2-3]
\end{tikzcd}\]

\begin{theorem}[Type-theoretic Grothendieck construction, cf.~{\protect\cite[Theorem 4.8.3]{hottbook}}]
	There is a fiberwise quasi-equivalence
	\[\begin{tikzcd}
		{\Fib(\UU)} && {\Fam(\UU)} \\
		& \UU
		\arrow["{\partial_1}"', from=1-1, to=2-2]
		\arrow["{\pr_1}", from=1-3, to=2-2]
		\arrow["\Un", shift left=1, from=1-3, to=1-1]
		\arrow["\St", shift left=2, from=1-1, to=1-3]
	\end{tikzcd}\]
	at stage $B:\UU$ given by a pair
	\[\begin{tikzcd}
		{\UU/B} && {(B\to \UU)}
		\arrow["{\St_B}", shift left=2, from=1-1, to=1-3]
		\arrow["{\Un_B}", shift left=1, from=1-3, to=1-1]
	\end{tikzcd}\]
	with \emph{straightening}
	\[ \St_B(\pi) \defeq \lambda b.\fib_b(\pi)\]
	and \emph{unstraightening}
	\[ \Un_B(P) \defeq \pair{\totalty{P}}{\pi_P}\]
	($\pi_P: \totalty{P} \defeq \sum_{b:B} P\,b \to B$ the total space projection).
\end{theorem}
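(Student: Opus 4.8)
The plan is to exhibit $\St_B$ and $\Un_B$ as mutually quasi-inverse equivalences for each fixed stage $B:\UU$, and to observe that both maps leave the base $B$ untouched, so that the resulting quasi-inverse data is automatically fibered over $\UU$. Concretely, since $\partial_1 \circ \Un = \pr_1$ and $\pr_1 \circ \St = \partial_1$ hold definitionally, it suffices to produce, at each $B$, the two round-trip identifications $\St_B(\Un_B(P)) = P$ and $\Un_B(\St_B(\pi)) = \pi$ and to check they are natural in $B$.

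First I would treat the composite $\St_B \circ \Un_B$ on a family $P:B\to\UU$. Unwinding the definitions,
\[ \St_B(\Un_B(P))(b) \jdeq \fib_b(\pi_P) \jdeq \sum_{b':B}\sum_{p:P\,b'}(b'=b). \]
Swapping the two inner summands, reassociating the iterated $\Sigma$, and contracting the singleton $\sum_{b':B}(b'=b)$ -- which is contractible with center $\pair{b}{\refl_b}$ -- yields a pointwise equivalence $\fib_b(\pi_P)\simeq P\,b$, natural in $b$. Univalence converts each such equivalence into an identification $\fib_b(\pi_P) =_{\UU} P\,b$, and function extensionality assembles these into the desired identification $\St_B(\Un_B(P)) = P$ in $B\to\UU$.

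Next I would treat $\Un_B \circ \St_B$ on a map $\pi:E\to B$, where $\Un_B(\St_B(\pi)) \jdeq \pair{\sum_{b:B}\fib_b(\pi)}{\pr_1}$. The total space computes as
\[ \sum_{b:B}\fib_b(\pi) \jdeq \sum_{b:B}\sum_{e:E}(\pi(e)=b) \simeq \sum_{e:E}\sum_{b:B}(\pi(e)=b) \simeq E, \]
the last step contracting the singleton $\sum_{b:B}(\pi(e)=b)$, which is contractible with center $\pair{\pi(e)}{\refl_{\pi(e)}}$. The crucial point is that this equivalence lies \emph{over} $B$: the left-hand projection $\pair{b}{\pair{e}{q}}\mapsto b$ corresponds under the contraction to $e\mapsto\pi(e)$, precisely because $q:\pi(e)=b$ forces the retained base point to be $\pi(e)$. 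Hence the equivalence respects the slice structure and, again via univalence and function extensionality, gives $\Un_B(\St_B(\pi)) = \pi$ in $\UU/B$.

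I expect the main obstacle to be exactly this coherence in the second round trip: establishing not merely that $\sum_{b:B}\fib_b(\pi)$ and $E$ are equivalent, but that the equivalence is compatible with the two projections to $B$, so that the resulting identification genuinely lives in the slice $\UU/B$ and not just in $\UU$. The singleton-contraction computations themselves are routine instances of the contractibility of based path spaces; the care is in packaging them coherently with the base maps, and in checking that the stagewise homotopies are natural in $B$ so that they glue into the asserted \emph{fiberwise} quasi-equivalence over $\UU$.
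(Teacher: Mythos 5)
Your proposal is correct and is essentially the standard argument the paper relies on: the paper gives no proof of its own but defers to \cite[Theorem 4.8.3]{hottbook}, whose proof is exactly your two singleton-contraction round trips ($\fib_b(\pi_P)\simeq P\,b$ and $\sum_{b:B}\fib_b(\pi)\simeq E$ over $B$), packaged via univalence and function extensionality, with the fibered structure over $\UU$ coming for free since $\St$ and $\Un$ commute with the projections definitionally. You also correctly isolate the one genuinely delicate point, namely that the second equivalence must be compatible with the maps to $B$ in order to yield an identification in $\UU/B$ rather than merely in $\UU$.
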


The spirit of dependent type theory somewhat favors type families over fibrations, but we will often resort to the fibrational viewpoint because it allows us to replay familiar categorical arguments. For instance, Riehl--Shulman's covariant type families are a type-theoretic version of left fibrations, and we want to be able to conveniently make use of both incarnations of the same concept which motivates the following:

\begin{defn}[Notions of families and fibrations]
	A \emph{notion of family (or notion of fibration)} is a family
	\[ \mathcal F : \Fam(\UU) \to \Prop \]
	of propositions on the type of $\UU$-small fibrations. For a notion of family $\mathcal F$, we say that a family $P:B\to \UU$ is an \emph{$\mathcal F$-family}\footnote{In practice, the name of $\mathcal F$ will often be a linguistic predicate such as ``covariant'', ``cocartesian'' etc.~in which case we drop the hyphen and treat it as part of the natural meta-language, \eg,~we will simply speak of ``cocartesian'' or ``covariant fibrations''.} if and only if the proposition
	\[ \isFam_{\mathcal F}(P) \defeq \mathcal F(P)\]
	holds. A map $\pi:E \to B$ is called an \emph{$\mathcal F$-fibration} if its family of fibers $\St_B(\pi)$ is an $\mathcal F$-family.
\end{defn}
By univalence and the Grothendieck construction, this definition is well-behaved, \ie, a~($\UU$-small) map is an~$\mathcal F$-fibration if and only if it is (equivalent to) a projection associated to an $\mathcal F$-family (valued in $\UU$).

In particular, we observe the following. Considering
\[ \Fib_\mathcal F(\UU) \defeq \sum_{\substack{E,B:\UU \\ \pi:E \to B}} \isFam_{\mathcal F}(\St(\pi)), \quad \Fam_{\mathcal F}(\UU) \defeq \sum_{P:\Fam(\UU)} \isFam_{\mathcal F}(P), \]
the Grothendieck construction descends to a fiberwise equivalence, for any notion of fibration/family $\mathcal F$:

\[\begin{tikzcd}
	{\mathrm{Fib}_{\mathcal F}(\mathcal U)} && {\mathrm{Fam}_{\mathcal F}(\mathcal U)} \\
	& {\mathrm{Fib}(\mathcal U)} && {\mathrm{Fam}(\mathcal U)} \\
	& {\mathcal U}
	\arrow[hook, from=1-1, to=2-2]
	\arrow[""{name=0, anchor=center, inner sep=0}, "{\mathrm{St}}", shift left=2, from=1-1, to=1-3]
	\arrow[hook, from=1-3, to=2-4]
	\arrow[curve={height=6pt}, from=1-1, to=3-2]
	\arrow[from=1-3, to=3-2]
	\arrow[from=2-4, to=3-2]
	\arrow[from=2-2, to=3-2]
	\arrow[""{name=2, anchor=center, inner sep=0}, "{\mathrm{Un}}", shift left=2, from=1-3, to=1-1]
	\arrow[""{name=3, anchor=center, inner sep=0}, "{\mathrm{Un}}", shift left=2, from=2-4, to=2-2, crossing over]
	\arrow[""{name=1, anchor=center, inner sep=0}, "{\mathrm{St}}", shift left=2, from=2-2, to=2-4, crossing over]
	\arrow["\simeq"{description}, shorten <=1pt, shorten >=1pt, Rightarrow, from=1, to=3]
	\arrow["\simeq"{description}, shorten <=1pt, shorten >=1pt, Rightarrow, no head, from=0, to=2]
\end{tikzcd}\]

\begin{rem}
As a convention, we will always state the definitions of the various notions of fibration in terms of \emph{families}, and the above definition schema immediately yields the respective corresponding notion in fibrational terms.

As a convention, we will also often denote a map which satisfies such a fibration condition (or possibly even just a usual map which is to be regarded as a type-theoretic fibration) by a double hooked arrow $\pi:E \fibarr B$, as is customary in homotopical algebra or categorical homotopy theory.
\end{rem}

Furthermore, as a consequence of univalence, any such propositionally defined notion of family/fibration is invariant under equivalence.

\begin{prop}[Homotopy invariance of notions of fibrations]
	Let $\mathcal F$ be a notion of fibration. When given a commutative square
	\[\begin{tikzcd}
		F && E \\
		A && B
		\arrow["\xi"', from=1-1, to=2-1]
		\arrow["\simeq"', from=2-1, to=2-3]
		\arrow["\simeq", from=1-1, to=1-3]
		\arrow["\pi", from=1-3, to=2-3]
	\end{tikzcd}\]
	the map $\xi$ is an $\mathcal F$-fibration if and only if $\pi$ is.
\end{prop}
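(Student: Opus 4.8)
The plan is to factor the whole statement through the \emph{type} $\Fib(\UU)$, exploiting that being an $\mathcal F$-fibration is a proposition-valued predicate on it. By the definition schema above, whether a map $\pi : E \to B$ is an $\mathcal F$-fibration depends only on the proposition $\isFam_{\mathcal F}(\St_B(\pi))$, and this assignment is a genuine function
\[ \isFib_{\mathcal F} : \Fib(\UU) \to \Prop, \qquad \isFib_{\mathcal F}(E,B,\pi) \defeq \isFam_{\mathcal F}(\St_B(\pi)). \]
Since any function into $\Prop$ sends equal inputs to logically equivalent outputs, it suffices to produce an identification $(F,A,\xi) =_{\Fib(\UU)} (E,B,\pi)$ out of the given square. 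Throughout I write $g : A \to B$ and $\tilde g : F \to E$ for the two horizontal equivalences.

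First I would reduce the desired identification in $\Fib(\UU) \defeq \sum_{X,Y:\UU}(X \to Y)$ to its components via the standard characterization of paths in a $\Sigma$-type: an identification $(F,A,\xi) = (E,B,\pi)$ amounts to paths $p : F = E$ and $q : A = B$ in $\UU$ together with a path witnessing that transporting $\xi$ along $(p,q)$ coincides with $\pi$. I obtain $p$ and $q$ from $\tilde g$ and $g$ by univalence, setting $p \defeq \mathrm{ua}(\tilde g)$ and $q \defeq \mathrm{ua}(g)$.

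Then I would compute the transport. Transport in the domain variable of $\sum_{X,Y:\UU}(X \to Y)$ acts on the map component by precomposition with the inverse of the domain equivalence, and transport in the codomain variable by postcomposition with the codomain equivalence; using the univalence computation rule $\mathrm{idtoeqv}(\mathrm{ua}(-)) = (-)$, the transported function is pointwise $g \circ \xi \circ \tilde g^{-1} : E \to B$. The commutativity of the given square says precisely $\pi \circ \tilde g = g \circ \xi$, hence $\pi = g \circ \xi \circ \tilde g^{-1}$ as maps $E \to B$; (relative) function extensionality upgrades this homotopy to the required path of functions. Assembling $p$, $q$, and this path yields the identification in $\Fib(\UU)$, and applying $\isFib_{\mathcal F}$ to its two endpoints delivers the claimed biconditional.

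The main obstacle is exactly the transport computation in the third step: one must unwind how transport in the nested $\Sigma$-type $\sum_{X,Y:\UU}(X \to Y)$ acts on the map component and match the resulting conjugation against the variances of the square and of univalence. This is purely mechanical, but it is the only place where the directions of $g$, $\tilde g$, and the inverses have to be tracked with care; everything else is formal, resting solely on the facts that $\mathcal F$ lands in $\Prop$ and that $\Fib(\UU)$ is a type to which univalence applies. (Equivalently, one could transport the identification across the Grothendieck construction of the preceding theorem to obtain a path $(A,\St_A(\xi)) = (B,\St_B(\pi))$ in $\Fam(\UU)$ and apply $\mathcal F$ directly, but the direct argument above is shorter.)
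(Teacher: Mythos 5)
Your proposal is correct and is essentially the paper's own argument: the paper states this proposition without an explicit proof, relying on the immediately preceding remark that, as a consequence of univalence, any propositionally defined notion of family/fibration is invariant under equivalence. Your write-up simply makes explicit the univalence-plus-transport computation (identification in $\Fib(\UU)$ via paths in a $\Sigma$-type, conjugation by the horizontal equivalences, and application of the $\Prop$-valued predicate) that the paper's remark leaves implicit.
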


\subsection{Comma and co-/cone types}\label{ssec:commas-cones}
\begin{defn}[Comma types]\label{def:comma-obj}
Consider a cospan of types
\[\begin{tikzcd}
	C && A && B
	\arrow["g", from=1-1, to=1-3]
	\arrow["f"', from=1-5, to=1-3]
\end{tikzcd}\]
The \emph{comma type} $f \downarrow g$ is given by the following pullback:
	\[\begin{tikzcd}
		{f \downarrow g} && {A^{\Delta^1}} \\
		{C \times B} && {A \times A}
		\arrow[from=1-1, to=2-1]
		\arrow["g \times f", from=2-1, to=2-3]
		\arrow[from=1-1, to=1-3]
		\arrow["{\langle \partial_1, \partial_0 \rangle}", from=1-3, to=2-3]
		\arrow["\lrcorner"{anchor=center, pos=0.125}, draw=none, from=1-1, to=2-3]
	\end{tikzcd}\]
In the case that $f$ is the identity $\id_A$, we write shorthand $\comma{A}{g}$ for $\comma{f}{g}$, and dually $\comma{f}{A}$ if $g$ is the identity.
\end{defn}

\begin{defn}[Co-/cone types]
Let $X$ be a type or a shape and $A$ a type. In a setting such as the present one, a map $u:X \to A$ is sometimes referred to as an \emph{$X$-shaped diagram in $A$}. The cospans
\[\begin{tikzcd}
	A && {A^X} && \unit && \unit && {A^X} && A
	\arrow["{\mathrm{cst}_A}", from=1-1, to=1-3]
	\arrow["u"', from=1-5, to=1-3]
	\arrow["u", from=1-7, to=1-9]
	\arrow["{\mathrm{cst}_A}"', from=1-11, to=1-9]
\end{tikzcd}\]
give rise to the type $u/A \defeq \comma{u}{\cst_A}$ of \emph{cocones in $A$ under $u$}, and, dually  $A/u \defeq \comma{\cst_A}{u}$ of \emph{cones in $A$ over $u$}, resp., defined as comma objects:
\[\begin{tikzcd}
	{u/A} && {\big(A^X)^{\Delta^1}} && {A/u} && {\big(A^X)^{\Delta^1}} \\
	{A \times \unit} && {A^X \times A^X} && {\unit \times A} && {A^X \times A^X}
	\arrow["{\cst_A \times u}", from=2-1, to=2-3]
	\arrow[from=1-1, to=1-3]
	\arrow["{\langle \partial_1, \partial_0 \rangle}", from=1-3, to=2-3]
	\arrow[from=1-1, to=2-1]
	\arrow["\lrcorner"{anchor=center, pos=0.125}, draw=none, from=1-1, to=2-3]
	\arrow[from=1-5, to=2-5]
	\arrow["{u \times \cst_A}", from=2-5, to=2-7]
	\arrow[from=1-5, to=1-7]
	\arrow["{\langle \partial_1, \partial_0 \rangle}", from=1-7, to=2-7]
	\arrow["\lrcorner"{anchor=center, pos=0.125}, draw=none, from=1-5, to=2-7]
\end{tikzcd}\]
\end{defn}

\begin{expl}[Co-/slice types]
For a fixed point $b:B$, the \emph{co-/slice types} are defined as $b/B$ and $B/b$, resp. Note that $b/B \equiv \comma{b}{B}$, and similarly for the slice types.
\end{expl}

\section{Right orthogonal and LARI families}\label{sec:closure}
An important part in our study of synthetic fibered $(\infty,1)$-categories is to provide proofs of certain closure properties, which are chosen to parallel those of $\infty$-cosmoses~\cite[Definition~1.2.1]{RV}. Recall from~\cite{RV}, that any $\infty$-cosmos $\mathcal K$ provides an intrinsic notion of cocartesian fibrations, which themselves form an $\infty$-cosmos ${co\mc{C}art}({\mc{K}})$. The discrete cocartesian fibrations form an embedded $\infty$-cosmos $\mc{D}iscco\mc{C}art({\mc{K}}) \hookrightarrow {co\mc{C}art}({\mc{K}})$. We prove type-theoretic analogues of the $\infty$-cosmological closure properties, formulated internally to the type theory of the ``ambient'' $\inftyone$-topos of simplicial objects.

First, we consider maps which are, more generally, defined by a unique right lifting property against an arbitrary map. Next, we discuss $j$-LARI maps which are defined by a left adjoint right inverse condition on a Leibniz cotensor map.

Specifically, let $j:Y \to X$ be some type map.\footnote{This is often a shape inclusion; recall from Subsection~\ref{ssec:fib-shapes} that we have coercion of (strict) shapes into types.} A map $\pi:E \to B$ is called \emph{$j$-orthogonal} if any square as below has a contractible space of fillers:
\[\begin{tikzcd}
	Y && E \\
	X && B
	\arrow["j"', from=1-1, to=2-1]
	\arrow[from=2-1, to=2-3]
	\arrow[from=1-1, to=1-3]
	\arrow["\pi", from=1-3, to=2-3]
	\arrow[dashed, from=2-1, to=1-3]
\end{tikzcd}\]
If a map $\pi:E \to B$ is right orthogonal to a map $j:Y \to X$, we write $j \bot \pi$. Similarly, for families $P:B \to \UU$, we write $j \bot P$ if $j \bot \Un_B(P)$.

Classes of maps defined by such lifting conditions play an important role in categorical homotopy theory and have been extensively studied in various contexts. In particular, classes defined by right orthogonal lifting conditions necessarily satisfy certain closure properties. We are giving type theoretic proofs which will later apply for the specific kinds of $j$-orthogonal maps that we are interested in, namely (iso)inner fibrations and left fibrations \aka~discrete covariant fibrations. For instance, a map $\pi:E \to B$ (over a Segal type $B$) is a covariant fibration if and only if it is right orthogonal to the initial vertex inclusion $i_0: \unit \hookrightarrow \Delta^1$.

In general, $\pi:E \to B$ being $j$-orthogonal can be rephrased as the condition that the gap map in the following diagram be an equivalence:
\[\begin{tikzcd}
	{E^X} \\
	& {B^X \times_{B^Y} E^Y} && {E^Y} \\
	& {B^X} && {B^Y}
	\arrow[from=2-2, to=3-2]
	\arrow[from=3-2, to=3-4]
	\arrow[from=2-2, to=2-4]
	\arrow[from=2-4, to=3-4]
	\arrow["\lrcorner"{anchor=center, pos=0.125}, draw=none, from=2-2, to=3-4]
	\arrow[curve={height=-12pt}, from=1-1, to=2-4]
	\arrow["\equiv"{description}, dashed, from=1-1, to=2-2]
	\arrow[curve={height=12pt}, from=1-1, to=3-2]
\end{tikzcd}\]
Weakening this condition by requiring the gap map to only have a \emph{left adjoint right inverse (LARI)} leads to the notion of \emph{$j$-LARI} map, \ie, $\pi:E \to B$ is a $j$-LARI map if and only if the induced map $E^X \to B^X \times_{B^Y} E^Y$ has a LARI:
\[\begin{tikzcd}
	{E^X} \\
	&& {B^X \times_{B^Y} E^Y} && {E^Y} \\
	&& {B^X} && {B^Y}
	\arrow[from=2-3, to=3-3]
	\arrow[from=3-3, to=3-5]
	\arrow[from=2-3, to=2-5]
	\arrow[from=2-5, to=3-5]
	\arrow["\lrcorner"{anchor=center, pos=0.125}, draw=none, from=2-3, to=3-5]
	\arrow[curve={height=-24pt}, from=1-1, to=2-5]
	\arrow[curve={height=12pt}, from=1-1, to=3-3]
	\arrow[""{name=0, anchor=center, inner sep=0}, curve={height=12pt}, dotted, from=2-3, to=1-1]
	\arrow[""{name=1, anchor=center, inner sep=0}, curve={height=6pt}, from=1-1, to=2-3]
	\arrow["\dashv"{anchor=center, rotate=-110}, draw=none, from=0, to=1]
\end{tikzcd}\]
Between Rezk types, a map $\pi:E \to B$ is a cocartesian fibration if and only if it is an $i_0$-LARI map, for the initial vertex inclusion $i_0: \unit \hookrightarrow \Delta^1$.

Both $j$-orthogonal and $j$-LARI maps are closed under dependent products, composition, and pullback. In addition, $j$-orthogonal maps are closed under sequential limits and Leibniz cotensoring. In addition, they satisfy left canceling.
\subsection{Right orthogonal fibrations}

\subsubsection{$j$-orthogonal families}

Recall from the discussion~\cite[Section 32.3]{RijIntro} the following formulation of right orthogonality of a map in type theoretic terms.
\begin{prop}\label{prop:orth-maps}
A map $\pi:E \to B$ is right orthogonal to a type map $j:Y \to X$ if and only if the \emph{Leibniz cotensor} map
	\[\begin{tikzcd}
		{E^X} \\
		& \cdot && {E^Y} \\
		& {B^X} && {B^Y}
		\arrow["{j^*}"', from=3-2, to=3-4]
		\arrow["{\pi^Y}", from=2-4, to=3-4]
		\arrow["{j^*}", curve={height=-12pt}, from=1-1, to=2-4]
		\arrow[from=2-2, to=3-2]
		\arrow[from=2-2, to=2-4]
		\arrow["\lrcorner"{anchor=center, pos=0.125}, draw=none, from=2-2, to=3-4]
		\arrow["{\pi^X}"', curve={height=12pt}, from=1-1, to=3-2]
		\arrow["{j \widehat{\pitchfork}\pi}"{description}, dashed, from=1-1, to=2-2]
	\end{tikzcd}\]
	is an equivalence, \ie, the following proposition is true:
	\[ \prod_{v: X \to B} \prod_{f: \prod_Y (j^*v)^*P} \isContr\Big( \sum_{g:\prod_X v^*P} j^*g = f\Big)\]
\end{prop}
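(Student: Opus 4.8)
The plan is to transport the entire statement to the straightened family $P \defeq \St_B(\pi)$ and then to recognize the displayed contractibility as the fiberwise-equivalence criterion for the Leibniz cotensor map. Both the property of being $j$-orthogonal and the property that $j\widehat{\pitchfork}\pi$ is an equivalence are invariant under replacing $\pi$ by an equivalent map over $B$: orthogonality is a notion of fibration in the sense of the excerpt, hence invariant by homotopy invariance of notions of fibrations, while equivalences are evidently detected by equivalences. Since the type-theoretic Grothendieck construction exhibits $\pi$ as equivalent over $B$ to the projection $\pi_P : \totalty{P} \to B$, I may assume without loss of generality that $\pi$ is this projection.

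First I would compute the three mapping types occurring in the gap diagram. By the universal property of $\Sigma$-types together with (relative) function extensionality, postcomposition with the projection exhibits $E^X \simeq \sum_{v : X \to B} \prod_{x:X} P(v\,x)$, and likewise $E^Y \simeq \sum_{w : Y \to B} \prod_{y:Y} P(w\,y)$. For the homotopy pullback I would contract the base component of $E^Y$ along the gluing path identifying $w$ with $v\circ j$, obtaining $B^X \times_{B^Y} E^Y \simeq \sum_{v : X \to B} \prod_{y:Y} P(v(j\,y))$, which is $\sum_{v:X\to B} \prod_Y (j^*v)^* P$. Tracing the gap map through these identifications, $j\widehat{\pitchfork}\pi$ becomes the map over $B^X = (X \to B)$ sending $(v,g)$ to $(v, j^* g)$, i.e.\ the map induced fiberwise over $v$ by the restriction $j^* : \prod_X v^* P \to \prod_Y (j^*v)^* P$.

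The conclusion then follows from standard fiberwise reasoning. A fiberwise map over $X \to B$ is an equivalence if and only if each of its components is, and $j^* : \prod_X v^* P \to \prod_Y (j^*v)^* P$ is an equivalence if and only if all of its fibers are contractible. Since the fiber of $j^*$ over $f$ is precisely $\sum_{g : \prod_X v^* P}(j^* g = f)$, the map $j\widehat{\pitchfork}\pi$ is an equivalence exactly when the displayed proposition holds. Finally, unfolding the definition of a diagonal filler for the square determined by a base diagram $v$ and a partial lift $f$ shows that the space of fillers is that very same $\Sigma$-type $\sum_{g : \prod_X v^* P}(j^* g = f)$; thus $j$-orthogonality, as contractibility of the filler space for every square, coincides verbatim with the displayed proposition, closing the loop between all three conditions.

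I expect the only delicate point to be the middle step: tracking the homotopy coherence when identifying the homotopy pullback $B^X \times_{B^Y} E^Y$ with $\sum_v \prod_Y (j^*v)^* P$, and verifying that the gap map genuinely restricts to $j^*$ fiberwise over $v$. Once that dictionary is pinned down, everything else is a routine application of the contractible-fibers characterization of equivalences together with the total-space criterion for fiberwise equivalences.
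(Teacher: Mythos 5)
Your proof is correct. There is, however, nothing in the paper to compare it against: the paper does not prove this proposition at all, but recalls it from \cite[Section~29.3]{RijIntro}, and the in-text remarks around it (de-/strictification, \cref{cor:orth-shapes}) presuppose exactly the dictionary you construct. Your argument is the standard one underlying the cited result, and each step checks out: reduce to the projection $\pi_P$ of the straightened family $P \defeq \St_B(\pi)$ by the Grothendieck construction and invariance of both conditions under equivalence over $B$; identify $E^X$ and $B^X \times_{B^Y} E^Y$ with $\Sigma$-types over $B^X$ (distributivity of $\Pi$ over $\Sigma$, then singleton contraction on the gluing path); observe that the gap map becomes the fiberwise restriction $j^*:\prod_X v^*P \to \prod_Y (j^*v)^*P$; and finish with the two standard facts that a map over a common base is an equivalence iff it is one on each fiber, and that a map is an equivalence iff all its homotopy fibers are contractible. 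The point you flag as delicate — that the space of diagonal fillers of a square is the homotopy fiber of the gap map, coherence cell included — is exactly what the strictification buys: once the square is presented by a pair $(v,f)$ with $f:\prod_Y (j^*v)^*P$, it commutes judgmentally, so the filler space is literally $\sum_{g:\prod_X v^*P}(j^*g = f)$ with no residual coherence datum, which is why the paper's displayed formula can omit it.
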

In the case of shape inclusions, by de-/strictifcation we obtain a ``strict'' version of this statement in terms of extension types (for a special case \cf~\cite[Theorem 8.5]{RS17}):
\begin{cor}\label{cor:orth-shapes}
	Let $P: B \to \UU$ be a type family. Then $P$ is orthogonal w.r.t.~a shape inclusion $t:I \, | \, \varphi \vdash \psi$ if and only if
	\[ \prod_{v:\Psi \to B} \prod_{f:\prod_{t:\Phi} P(v(t))} \isContr \left( \exten{t:\Psi}{P(v(t))}{\Phi}{f} \right),\]
	where $\Phi$ and $\Psi$ denote the topes, corresponding to $\varphi$ and $\psi$, resp.
\end{cor}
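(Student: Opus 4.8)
The plan is to obtain the statement as a direct specialization of \cref{prop:orth-maps} to the case where the type map $j$ is the coercion to a type of the shape inclusion $t:I\mid\varphi\vdash\psi$, followed by a de-/strictification rewriting the $\Sigma$-type of ``fillers up to a path'' as an extension type. First I would note that for this shape inclusion we have $Y\defeq\Phi$, $X\defeq\Psi$, and $j:\Phi\hookrightarrow\Psi$; consequently the pullback $(j^*v)^*P$ occurring in \cref{prop:orth-maps} is the partial-section type $\prod_{t:\Phi}P(v(t))$, the total-section type $\prod_X v^*P$ is $\prod_{t:\Psi}P(v(t))$, and the restriction $j^*g$ is simply the restriction of a section $g:\prod_{t:\Psi}P(v(t))$ along $\Phi\hookrightarrow\Psi$. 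With these identifications the orthogonality criterion of \cref{prop:orth-maps} reads: for every $v:\Psi\to B$ and every partial section $f:\prod_{t:\Phi}P(v(t))$, the type $\sum_{g:\prod_{t:\Psi}P(v(t))}(j^*g=f)$ is contractible.

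The crucial step is then to identify this $\Sigma$-type with the extension type $\exten{t:\Psi}{P(v(t))}{\Phi}{f}$. This is exactly the equivalence already recorded (via relative function extensionality) in \cref{ssec:fib-shapes}, which for a family $A$ over $\shapety I\psi$ and a partial section $a$ over $\shapety I\varphi$ exhibits
\[
\exten{x:\shapety I\psi}{A(x)}{\varphi}{a}
\simeq
\sum_{f:\prod_{x:\shapety I\psi}A(x)}\prod_{x:\shapety I\varphi}(a\,x=f\,x).
\]
Instantiating $A\defeq v^*P$ and $a\defeq f$ turns the homotopical ``section-plus-homotopy'' description back into the strict extension type, this being precisely the de-/strictification alluded to in the statement. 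Since $\isContr$ is a proposition preserved and reflected along any equivalence, the contractibility of $\sum_{g}(j^*g=f)$ holds if and only if $\exten{t:\Psi}{P(v(t))}{\Phi}{f}$ is contractible; quantifying over all $v:\Psi\to B$ and $f:\prod_{t:\Phi}P(v(t))$ then yields the asserted biconditional.

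The main obstacle I anticipate is not conceptual but bookkeeping: one must keep track of the fact that the equality $j^*g=f$ lives in the partial-section type $\prod_{t:\Phi}P(v(t))$ (formed over a shape that is now coerced to a type), and that under the equivalence above it corresponds to the judgmental restriction clause ``$g|_\Phi\jdeq f$'' built into the extension type. Matching the direction of the homotopy $a\,x=f\,x$ against the orthogonality path $j^*g=f$ requires care with variances, but no genuinely new idea beyond the equivalence already established in \cref{ssec:fib-shapes} is needed.
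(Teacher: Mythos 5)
Your proposal is correct and follows essentially the same route as the paper: the paper derives \cref{cor:orth-shapes} from \cref{prop:orth-maps} by exactly the ``de-/strictification'' you spell out, namely the equivalence from \cref{ssec:fib-shapes} (via relative function extensionality) between the extension type and the $\Sigma$-type of total sections together with a homotopy over the subshape, along which contractibility transfers. Your write-up merely makes explicit the details the paper leaves implicit, including the function-extensionality identification of the path $j^*g=f$ with a pointwise homotopy.
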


At first, we prove the analogous universal properties, for maps between types rather than families. Note that these are equivalent to the constructions given for families since as a general principle of homotopy type theory any map between types can be replaced by the projection of the total space of a family. Likewise, any (homotopy) commutative square can be seen as a map between families.

Stability under pullback follows from an instance of a Pullback Lemma for cube-shaped diagrams as considered by Rijke in~\cite[Section 24]{RijIntro}.

\subsubsection{Closure properties of $j$-orthogonal families}\label{ssec:clos-orth}

\begin{prop}[Equivalences are $j$-orthogonal]\label{prop:orth-maps-equivs}
For a type map or shape inclusion $j:Y \to X$, any equivalence $f:B' \equiv B$ is $j$-orthogonal.
\end{prop}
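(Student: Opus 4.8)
The plan is to reduce the claim to the Leibniz cotensor characterization of orthogonality recorded in \cref{prop:orth-maps}, and then to exploit two elementary stability properties of equivalences: closure under cotensoring and closure under pullback. Concretely, applying \cref{prop:orth-maps} to the map $\pi \defeq f : B' \to B$, it suffices to show that the gap map
\[ j \widehat{\pitchfork} f : (B')^X \longrightarrow B^X \times_{B^Y} (B')^Y \]
is an equivalence. (In case $j$ is a shape inclusion rather than a type map, I would instead invoke the strictified reformulation \cref{cor:orth-shapes}, which amounts to the same gap map being an equivalence.)

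First I would observe that since $f : B' \equiv B$ is an equivalence, postcomposition with $f$ yields equivalences $f^X : (B')^X \equiv B^X$ and $f^Y : (B')^Y \equiv B^Y$, because equivalences are stable under cotensoring by an arbitrary type or shape. Next, in the defining pullback square of $B^X \times_{B^Y} (B')^Y$, the right-hand vertical map is precisely $f^Y$, which we have just seen is an equivalence; since the pullback of an equivalence is again an equivalence, the projection $B^X \times_{B^Y} (B')^Y \to B^X$ is an equivalence. Finally, by the very construction of the Leibniz cotensor, the composite of the gap map with this projection is the equivalence $f^X$. Having two of the three maps in this triangle be equivalences, the two-out-of-three property forces $j \widehat{\pitchfork} f$ itself to be an equivalence, as desired.

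I do not expect a serious obstacle here; the only point requiring a little care is the stability of equivalences under cotensoring when $X$ and $Y$ are \emph{shapes} rather than genuine types, which I would justify through the reformulation of extension types in terms of $\Pi$-types plus a homotopy (given in Subsection~\ref{ssec:fib-shapes}) together with relative function extensionality (\cref{ax:relfunext}). As a sanity check one could also argue directly without \cref{prop:orth-maps}: given any lifting square against $f$, the inverse $f^{-1}$ produces a diagonal filler, and contractibility of the space of fillers follows formally from $f$ being an equivalence; but the gap-map route is cleaner and reuses the machinery that will be needed for the subsequent closure properties anyway.
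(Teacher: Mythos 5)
Your proposal is correct, but it takes a genuinely different route from the paper. The paper's own proof is a one-liner: by univalence (equivalence induction) one may assume that $f$ is the identity $\id_B$, and for the identity the claim is immediate, since the Leibniz cotensor gap map becomes the canonical identification $B^X \equiv B^X \times_{B^Y} B^Y$. You instead verify the criterion of \cref{prop:orth-maps} directly: cotensoring preserves equivalences, so $f^X$ and $f^Y$ are equivalences; pulling back the equivalence $f^Y$ along $j^* : B^X \to B^Y$ makes the projection $B^X \times_{B^Y} (B')^Y \to B^X$ an equivalence; and since the gap map composed with this projection is $f^X$, two-out-of-three finishes the argument. Both proofs are sound. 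What the paper's reduction buys is brevity---all the content is absorbed into univalence, and no stability lemmas are needed. What your argument buys is that it isolates exactly which structural facts are in play (stability of equivalences under cotensor and under pullback, plus two-out-of-three), which are precisely the ingredients recycled in the remaining closure properties of \cref{ssec:clos-orth}, and it makes explicit where the shape case leans on the strict-to-weak comparison of Subsection~\ref{ssec:fib-shapes} and relative function extensionality (\cref{ax:relfunext}). One caveat: your auxiliary fact that $f^X$ is an equivalence is itself most quickly proved by the same univalence trick the paper uses at top level (or else by function extensionality applied to a quasi-inverse), so the two arguments are less independent than they appear; your route essentially pushes the univalence step one lemma deeper rather than eliminating it.
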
	

\begin{proof}
	By univalence, we can assume $f$ to be the identity $\id_B$. Then the claim follows easily.
\end{proof}

\begin{prop}[Closedness under dependent products]
	Let $I$ be a type, and $B: I \to \UU$ be a family. Assume there is a function $P: \prod_{i:I} B_i \to \UU$. If there is a map or shape inclusion $f: Y \to X$ such that $f\bot P_i$ for all $i: I$, we also have $f \bot \prod_I P$.
\end{prop}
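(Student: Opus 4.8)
The plan is to unfold what it means for the product family $\prod_I P$ to be $f$-orthogonal using the pointwise characterization of \cref{prop:orth-maps}, and then to reduce the resulting contractibility statement index-by-index to the hypotheses $f \bot P_i$ by a short chain of standard equivalences (currying, Fubini for $\prod$, function extensionality, and the type-theoretic axiom of choice). Write $C \defeq \prod_{i:I} B_i$ for the base of $\prod_I P$, and recall that its fibre at $s:C$ is $(\prod_I P)(s) = \prod_{i:I} P_i(s(i))$. By \cref{prop:orth-maps} it then suffices to show that for every $v : X \to C$ and every partial section $a : \prod_{y:Y}(\prod_I P)(v(f(y)))$ the filler type
\[ \sum_{b : \prod_{x:X}(\prod_I P)(v(x))}\; (f^* b = a) \]
is contractible, where $f^* b \defeq \lambda y.\, b(f(y))$.

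The heart of the argument is to decompose this type over the index $I$. First I would curry the map $v : X \to \prod_{i:I} B_i$ into a family $v_i : X \to B_i$ with $v_i(x) \defeq v(x)(i)$. Fubini for $\prod$ then identifies $\prod_{x:X}(\prod_I P)(v(x))$ with $\prod_{i:I}\prod_{x:X} P_i(v_i(x))$, and the same reshuffling sends the partial section $a$ to a family $a_i : \prod_{y:Y} P_i(v_i(f(y)))$. Function extensionality splits the single filler condition $f^* b = a$ into the componentwise conditions $f^* b_i = a_i$, and the type-theoretic axiom of choice (distributivity of $\sum$ over $\prod_I$, a consequence of function extensionality) yields an equivalence
\[ \sum_{b}\,(f^* b = a) \;\simeq\; \prod_{i:I}\; \sum_{b_i : \prod_{x:X} P_i(v_i(x))} (f^* b_i = a_i). \]

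Each factor on the right is precisely the filler type for the family $P_i$ at $v_i$ and $a_i$, hence is contractible by the hypothesis $f \bot P_i$ together with \cref{prop:orth-maps}. Since a dependent product of contractible types is contractible (again by function extensionality), the right-hand side is contractible, and therefore so is the filler type for $\prod_I P$; this establishes $f \bot \prod_I P$. The case where $f$ is a shape inclusion needs no separate treatment: since shapes are coerced to types (\cref{ssec:fib-shapes}), it is an instance of the type-map case, and one could equally run the argument strictly via \cref{cor:orth-shapes} and the extension-type axiom of choice \cref{thm:choice}. The only real obstacle is bookkeeping: one must check that the currying and Fubini equivalences carry the filler condition $f^* b = a$ to the componentwise conditions $f^* b_i = a_i$ compatibly, which is a routine but slightly fiddly use of function extensionality.
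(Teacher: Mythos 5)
Your proposal is correct, but it runs along a different track than the paper's own proof. The paper argues fibrationally: it unstraightens each $P_i$ to a projection $\pi_i : E_i \to B_i$, invokes the pullback-square reading of \cref{prop:orth-maps} (the exponential square $E_i^X \to E_i^Y$ over $B_i^X \to B_i^Y$ is a homotopy pullback), and then concludes in one stroke from the facts that dependent products preserve pullbacks and commute with exponentials, so that $\bigl(\prod_{i:I} E_i\bigr)^X \simeq \prod_{i:I} E_i^X$ sits in a pullback square over $\bigl(\prod_{i:I} B_i\bigr)^X$. You instead stay with the contractibility formula of \cref{prop:orth-maps} and decompose it by hand: curry $v : X \to \prod_{i:I} B_i$ into components $v_i$, commute $\prod_{x:X}$ past $\prod_{i:I}$, split the filler condition $f^*b = a$ componentwise by function extensionality, redistribute $\sum$ over $\prod_{i:I}$ by the type-theoretic axiom of choice, and finish with the fact that a dependent product of contractible types is contractible. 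The two proofs carry the same mathematical content — everything turns on $\prod_{i:I}$ commuting with the relevant structure — but they sit at different levels of abstraction: the paper's version is shorter because it delegates the work to general closure facts about pullbacks and exponentials, while yours makes explicit exactly the element-level manipulations (Fubini, funext, choice) that a careful HoTT proof of ``dependent products preserve pullbacks and commute with exponents'' would itself require. Your remark that the shape-inclusion case needs no separate treatment, either by coercion of shapes to types (\cref{ssec:fib-shapes}) or by running the strict argument through \cref{cor:orth-shapes} and \cref{thm:choice}, is also consistent with the paper's conventions.
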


\begin{proof}
For $i:I$, denote by $\pi_i :E_i \to B_i$ the projection associated to the family $P_i : B_i \to \UU$.
By assumption, the exponential squares
\[\begin{tikzcd}
	{E_i^X} && {E_i^Y} \\
	{B_i^X} && {B_i^Y}
	\arrow[from=1-1, to=2-1]
	\arrow[from=2-1, to=2-3]
	\arrow[from=1-1, to=1-3]
	\arrow[from=1-3, to=2-3]
	\arrow["\lrcorner"{anchor=center, pos=0.125}, draw=none, from=1-1, to=2-3]
\end{tikzcd}\]
are pullbacks. Since dependent products preserve pullbacks and commute with exponents, we have
\[\begin{tikzcd}
	{(\prod_{i:I} E_i)^X \equiv \prod_{i:I} E_i^X} && {\prod_{i:I} E_i^Y \equiv(\prod_{i:I} E_i)^Y} \\
	{(\prod_{i:I} B_i)^X \equiv \prod_{i:I} B_i^X} && {\prod_{i:I} B_i^Y \equiv(\prod_{i:I} B_i)^Y}
	\arrow[from=1-1, to=2-1]
	\arrow[""{name=0, anchor=center, inner sep=0}, from=2-1, to=2-3]
	\arrow[from=1-1, to=1-3]
	\arrow[from=1-3, to=2-3]
	\arrow["\lrcorner"{anchor=center, pos=0.125}, draw=none, from=1-1, to=0]
\end{tikzcd}\]
as desired.
\end{proof}

\begin{cor}[Closedness under binary products]
	Let $C, B:\UU$ with families $P: B \to \UU$ and $Q: C \to \UU$. If both $P$ and $Q$ are orthogonal to a map $j:Y \to X$, then so is the binary product family $P \times Q : B \times C \to \UU$.
\end{cor}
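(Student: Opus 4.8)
The plan is to realize the binary product family as a special case of the dependent product treated in the previous proposition, by taking the indexing type to be the two-element type. (We also note in passing that the hypothesis should read ``both $P$ and $Q$ are orthogonal to $j$'', the occurrence of $C$ being a typo.)

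Concretely, I would set $I \defeq \mathbf 2$, the two-element type with elements $0,1:\mathbf 2$, and define the family of bases $B_\bullet:\mathbf 2 \to \UU$ by $B_0 \defeq B$ and $B_1 \defeq C$, together with $P_\bullet:\prod_{i:\mathbf 2}(B_i \to \UU)$ given by $P_0 \defeq P$ and $P_1 \defeq Q$. The two hypotheses $j \bot P$ and $j \bot Q$ then say exactly that $j \bot P_i$ for every $i:\mathbf 2$, so the hypothesis of the dependent-products proposition is met. Next I would invoke the canonical equivalence $\prod_{i:\mathbf 2} B_i \equiv B \times C$ given by evaluation at $0$ and $1$. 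Under this equivalence the dependent product family $\prod_{\mathbf 2} P_\bullet$, which sends a section $s$ to $\prod_{i:\mathbf 2} P_i(s_i) \equiv P(s_0) \times Q(s_1)$, corresponds precisely to the family $(b,c) \mapsto P(b) \times Q(c)$, \ie\ to $P \times Q$ by definition. Applying the previous proposition therefore yields $j \bot \prod_{\mathbf 2} P_\bullet$.

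Finally, I would transport this conclusion along the above equivalence. On total spaces this amounts to the evident equivalence $\totalty{\prod_{\mathbf 2} P_\bullet} \equiv \totalty{P} \times \totalty{Q} \equiv \totalty{P \times Q}$ sitting over $\prod_{i:\mathbf 2} B_i \equiv B \times C$, so that homotopy invariance of $j$-orthogonality (the proposition on homotopy invariance of notions of fibrations) delivers $j \bot (P \times Q)$, as desired.

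The only step demanding any care---and hence the main, albeit minor, obstacle---is checking that these two equivalences genuinely assemble into a commuting square of the shape required by homotopy invariance, rather than merely being fiberwise equivalences; everything else is the routine bookkeeping of the identification of the $\mathbf 2$-indexed dependent product with the binary product.
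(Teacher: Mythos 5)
Your proposal is correct and is exactly the argument the paper intends: the corollary is stated without proof precisely because it is the instance $I \defeq \mathbf 2$ of the preceding proposition on dependent products, combined with the standard identification $\prod_{i:\mathbf 2} B_i \equiv B \times C$ and homotopy invariance of notions of fibrations. You are also right that the hypothesis ``$P$ and $C$'' is a typo for ``$P$ and $Q$''.
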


\begin{cor}[Closedness under exponentiation]\label{prop:orth-exp}
	Let $P: B \to \UU$ be a family, orthogonal to a shape inclusion or type map $j:Y \to X$. Then, for any type or shape $Z$, also $P^Z$ is $j$-orthogonal.
\end{cor}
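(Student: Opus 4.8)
The plan is to deduce this as a direct corollary of the preceding Proposition on closedness under dependent products, exploiting the fact that an exponential $P^Z$ is nothing but a dependent product over $Z$ of a \emph{constant} family. First I would take the index type to be $Z$ itself---coercing it to a type via the reflection of the shape layer from \cref{ssec:fib-shapes} in case $Z$ is a shape---and consider the constant families $B_z \defeq B$ and $P_z \defeq P$ for all $z : Z$. The hypothesis that $P$ is $j$-orthogonal supplies precisely the assumption $j \bot P_z$ for every $z : Z$, so the dependent products proposition immediately yields $j \bot \prod_Z P$.

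It then remains to identify $\prod_Z P$ with $P^Z$. Writing $\pi : \totalty{P} \to B$ for the unstraightening of $P$, the base of $\prod_Z P$ is $\prod_{z:Z} B = B^Z$, and by the distributivity of $\Sigma$ over $\Pi$ (the type-theoretic axiom of choice, \cref{thm:choice}, in the shape case) its total space is $\sum_{u : B^Z} \prod_{z:Z} P(u(z)) \equiv (Z \to \totalty{P}) = \totalty{P}^Z$, with the projection being exactly $\pi^Z : \totalty{P}^Z \to B^Z$. Equivalently, the fiber of $\prod_Z P$ over $u : Z \to B$ is $\prod_{z:Z} P(u(z))$, which is the fiber of $P^Z \defeq \St_{B^Z}(\pi^Z)$ over $u$. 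Hence $\prod_Z P$ and $P^Z$ agree up to the straightening equivalence, and since notions of fibration are invariant under equivalence (Homotopy invariance of notions of fibrations), $P^Z$ is $j$-orthogonal as well.

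The only point requiring genuine care is the treatment of shapes: when $Z$ is a shape rather than a type, one must check that the exponential $P^Z$ built from extension types really does coincide with the dependent product over the coerced type, which is exactly what the coercion of fibrant shapes into types together with relative function extensionality (\cref{ax:relfunext}) guarantee as spelled out in \cref{ssec:fib-shapes}. I expect this bookkeeping to be the main (and only) obstacle. As an alternative that sidesteps the explicit fiberwise comparison, one can argue more directly that $(-)^Z$ preserves the defining pullback of the Leibniz cotensor, so that $j \widehat{\pitchfork} (\pi^Z) \equiv (j \widehat{\pitchfork} \pi)^Z$; since $\pi$ is $j$-orthogonal the right-hand map is an equivalence by \cref{prop:orth-maps}, and exponentiation by $Z$ preserves equivalences, whence $\pi^Z$ is $j$-orthogonal. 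This variant still relies on the same shape-coercion facts but avoids invoking the dependent products proposition.
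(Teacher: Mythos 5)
Your proposal is correct and matches the paper's intended argument: the paper states this as an unproved corollary of the preceding proposition on closedness under dependent products, which is exactly your reduction via the constant family $P_z \defeq P$ indexed by $Z$ (with the shape-to-type coercion of \cref{ssec:fib-shapes} and the identification $\prod_Z P \equiv P^Z$ via \cref{thm:choice} handling the bookkeeping). Your alternative argument via $j \widehat{\pitchfork} (\pi^Z) \equiv (j \widehat{\pitchfork} \pi)^Z$ is also sound and is essentially the same pullback-preservation fact used in the paper's proof of the dependent products proposition.
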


\begin{prop}[Closedness under composition, and left cancelation, cf.~{\protect\cite[Lemma~5.5.9]{RV}}]
	Let $j: Y \to X$ be a map. Let $P: B \to \UU$ and $Q: \totalty{P} \to \UU$ be families with composite $Q \compfam P: B \to \UU$. If $j \bot P$ then $j \bot Q \compfam P$ if and only if $j \bot Q$.
\end{prop}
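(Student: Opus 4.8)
The plan is to pass to the fibrational picture and reduce the biconditional to a single statement about Leibniz cotensors. Writing $M \defeq \totalty{P}$ and $E \defeq \totalty{Q}$, the canonical equivalence $\totalty{Q \compfam P} \simeq \totalty{Q}$ identifies the unstraightening $\Un_B(Q \compfam P)$ (over $B$) with the composite $\totalty{Q} \xrightarrow{\pi_Q} \totalty{P} \xrightarrow{\pi_P} B$, where I set $g \defeq \pi_P = \Un_B(P)$ and $f \defeq \pi_Q = \Un_M(Q)$. Under this identification the standing hypothesis $j \bot P$ reads $g \bot j$, while $j \bot Q$ reads $f \bot j$ and $j \bot Q \compfam P$ reads $g \circ f \bot j$. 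By the Leibniz cotensor characterization of right orthogonality recorded above (namely that $\pi \bot j$ iff $j \widehat{\pitchfork} \pi$ is an equivalence), each of these is equivalent to the corresponding Leibniz cotensor map being an equivalence, so it suffices to prove: \emph{if $j \widehat{\pitchfork} g$ is an equivalence, then $j \widehat{\pitchfork}(g \circ f)$ is an equivalence if and only if $j \widehat{\pitchfork} f$ is.}

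The key step is a factorization of $j \widehat{\pitchfork}(g \circ f)$ through $j \widehat{\pitchfork} f$. Postcomposition with $g$ induces a map $\rho \colon M^X \times_{M^Y} E^Y \to B^X \times_{B^Y} E^Y$ (apply $g^X$ on the first factor; the $E^Y$ factor is unchanged, and the required compatibility over $B^Y$ follows from $g^Y \circ f^Y = (g \circ f)^Y$), and one checks directly on points that
\[ j \widehat{\pitchfork}(g \circ f) \;=\; \rho \circ (j \widehat{\pitchfork} f) \colon E^X \longrightarrow B^X \times_{B^Y} E^Y. \]
I then claim $\rho$ is a pullback of $j \widehat{\pitchfork} g \colon M^X \to B^X \times_{B^Y} M^Y$ along the map $B^X \times_{B^Y} E^Y \to B^X \times_{B^Y} M^Y$ obtained by applying $f^Y$ to the $E^Y$ factor. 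Concretely, an element of the pullback $M^X \times_{(B^X \times_{B^Y} M^Y)} (B^X \times_{B^Y} E^Y)$ consists of $\alpha \colon X \to M$, a point $(\beta, \epsilon)$ of $B^X \times_{B^Y} E^Y$, and an identification $(g^X \alpha, j^* \alpha) = (\beta, f^Y \epsilon)$; unwinding, this is exactly the data $(\alpha, \epsilon, j^*\alpha = f^Y \epsilon)$ defining $M^X \times_{M^Y} E^Y$, and the comparison is compatible with $\rho$.

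With the factorization in hand the conclusion is immediate: since $g \bot j$, the map $j \widehat{\pitchfork} g$ is an equivalence, and as equivalences are stable under pullback, $\rho$ is an equivalence as well. The two-out-of-three property of equivalences applied to $j \widehat{\pitchfork}(g \circ f) = \rho \circ (j \widehat{\pitchfork} f)$ then shows that $j \widehat{\pitchfork}(g \circ f)$ is an equivalence precisely when $j \widehat{\pitchfork} f$ is, which is the desired biconditional, yielding closure under composition and left cancelation simultaneously. The main obstacle is the middle step: correctly building the factorization and verifying the pullback identification of $\rho$ with $j \widehat{\pitchfork} g$. This is a finite diagram chase, but the bookkeeping of the iterated pullbacks $M^X \times_{M^Y} E^Y$ and $B^X \times_{B^Y} E^Y$ and their compatibilities over $B^Y$ must be tracked carefully. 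As a more elementary alternative one could argue directly with lifting squares, pasting a lifting problem against $g \circ f$ out of one against $g$ (solved by a contractible space of fillers, by hypothesis) followed by one against $f$, and checking that the total space of fillers stays contractible throughout; this reproves both directions but splits into two somewhat repetitive arguments.
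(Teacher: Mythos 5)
Your proof is correct and is essentially the paper's own argument: the paper stacks the exponential squares for $F \to E$ and $E \to B$ and invokes the Pasting Lemma for homotopy pullbacks (the composite square is a pullback if and only if the upper square is, given that the lower one is), which is exactly the content of your construction. The only difference is presentational — you work in the equivalent ``Leibniz cotensor is an equivalence'' formulation and re-derive the pasting step by hand, factoring $j \widehat{\pitchfork}(g \circ f) = \rho \circ (j \widehat{\pitchfork} f)$, identifying $\rho$ as a pullback of the equivalence $j \widehat{\pitchfork} g$, and concluding by two-out-of-three, whereas the paper cites the lemma as a black box.
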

\begin{proof}
	Consider $E \to B$ and $F \to B$, the associated total projections to the family $P$ and $Q$, resp. Observe that $\widetilde{Q \compfam P} \equiv \widetilde{Q}$. We assume $i \bot P$ which yields the following commutative diagram:
	\[
	\begin{tikzcd}
		F^X \ar[dd, bend right = 30] \ar[d] \ar[r] & F^Y\ar[d]  \ar[dd, bend left = 30] \\
		E^X \ar[dr, phantom, "\lrcorner", very near start] \ar[d] \ar[r] & E^Y \ar[d] \\
		B^X \ar[r] & B^Y \\
	\end{tikzcd}
	\]
	Then by the Pasting Lemma for pullbacks (e.g.~\cite[Theorem 22.5.8]{RijIntro}) the composite diagram is a pullback if and only if the upper square is a pullbacks.
\end{proof}

In presence of categorical universes of fibrations this statement would imply that the left adjoint to the change of base functor, \ie, postcomposition with the given fibration, preserves and reflects $j$-orthogonality.

\begin{theorem}[Closedness under pullback along families]
	Let $\pi:E \to B$ be right orthogonal to a type map $j:Y \to X$. Then, for any pullback
	\[\begin{tikzcd}
		F & E \\
		A & B
		\arrow["\xi"', from=1-1, to=2-1]
		\arrow[from=1-1, to=1-2]
		\arrow["k"', from=2-1, to=2-2]
		\arrow["\pi", from=1-2, to=2-2]
		\arrow["\lrcorner"{anchor=center, pos=0.125}, draw=none, from=1-1, to=2-2]
	\end{tikzcd}\]
the map $\xi \jdeq k^* \pi: F \to A$ is also right orthogonal to $j$.
\end{theorem}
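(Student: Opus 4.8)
The plan is to reduce to the Leibniz cotensor characterization of \cref{prop:orth-maps} and then show that $j\widehat{\pitchfork}\xi$ is itself a pullback of $j\widehat{\pitchfork}\pi$, so that it inherits being an equivalence. By \cref{prop:orth-maps}, the hypothesis $\pi\bot j$ says precisely that the gap map $j\widehat{\pitchfork}\pi\colon E^X\to B^X\times_{B^Y}E^Y$ is an equivalence, and $\xi\bot j$ is equivalent to $j\widehat{\pitchfork}\xi\colon F^X\to A^X\times_{A^Y}F^Y$ being an equivalence; this is what I would establish.

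First I would apply the cotensor functors $(-)^X$ and $(-)^Y$ to the given pullback square. As these are right adjoints they preserve pullbacks, so $F^X\equiv A^X\times_{B^X}E^X$ and $F^Y\equiv A^Y\times_{B^Y}E^Y$; together with the restriction maps along $j$ this assembles the data into a commutative cube whose $X$- and $Y$-cotensor faces are both pullbacks. Using the pullback structure of $F^Y$ one simplifies the codomain of the second gap map as $A^X\times_{A^Y}F^Y\equiv A^X\times_{B^Y}E^Y$, and I would then check that the comparison square
\[
\begin{tikzcd}
{F^X} && {E^X} \\
{A^X\times_{A^Y}F^Y} && {B^X\times_{B^Y}E^Y}
\arrow[from=1-1, to=1-3]
\arrow["{j\widehat{\pitchfork}\xi}"', from=1-1, to=2-1]
\arrow["{j\widehat{\pitchfork}\pi}", from=1-3, to=2-3]
\arrow[from=2-1, to=2-3]
\end{tikzcd}
\]
whose horizontal maps are induced by $F\to E$ and $k$, is again a pullback. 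This is exactly an instance of the Pullback Lemma for cube-shaped diagrams: granting that the two cotensor faces are pullbacks, the induced square on gap maps is a pullback as well.

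Finally, since $j\widehat{\pitchfork}\pi$ is an equivalence by hypothesis and equivalences are stable under pullback, the comparison square forces $j\widehat{\pitchfork}\xi$ to be an equivalence, whence $\xi\bot j$ by \cref{prop:orth-maps}. The main obstacle is the bookkeeping in the middle step: one must resolve the nested pullbacks $A^X\times_{A^Y}F^Y$ and $A^X\times_{B^X}E^X$ coherently and verify that the gap-map square genuinely is a pullback rather than merely commuting, which is precisely the content of the cube-shaped Pullback Lemma.

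An alternative, more direct route avoids the cube by passing to families: writing $P\defeq\St_B(\pi)$, the pulled-back fibration $\xi$ has family $\St_A(\xi)\equiv P\circ k$, and the orthogonality criterion of \cref{prop:orth-maps} for $P\circ k$ evaluated at a diagram $v\colon X\to A$ coincides verbatim with the criterion for $P$ at $k\circ v\colon X\to B$. Since the latter is assumed to hold for every diagram into $B$, it holds in particular at $k\circ v$ for every $v$, giving $\xi\bot j$ immediately.
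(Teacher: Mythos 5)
Your main argument is essentially the paper's own proof: both apply $(-)^X$ and $(-)^Y$ to the given square, use that exponentiation preserves pullbacks to obtain two pullback faces of a commutative cube, and then invoke Rijke's cube pullback lemma; your extra packaging of the conclusion as ``the square comparing the two gap maps is a pullback, and equivalences are stable under pullback'' is a correct but cosmetic variant of the paper's ``the back face of the cube is a pullback.'' Your closing alternative, however, is a genuinely different route. The paper proves the theorem at the level of maps first and then deduces the corollary about reindexed families $k^*P \defeq P \circ k$ from it; you reverse the order: since the family formulation in \cref{prop:orth-maps} quantifies over all diagrams $v:X \to B$, orthogonality of $P \circ k$ at a diagram $v:X \to A$ is literally orthogonality of $P$ at $k \circ v$, so the family statement is immediate, and the map statement follows by straightening. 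This is simpler and avoids the cube entirely, but it leans on two facts the cube argument does not need: the identification $\St_A(k^*\pi) \simeq \St_B(\pi) \circ k$ (fibers of a pullback are fibers of the original map), and the passage between the map-level and family-level formulations of orthogonality, which in the paper's setup is exactly where univalence and the Grothendieck construction enter. Both of your arguments are correct.
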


\begin{proof}
By assumption, $E^X \equiv B^X \times_{B^Y} E^Y$. Now, since exponentiation preserves pullbacks, we obtain the following cube:
	\[\begin{tikzcd}
		{E^X} && {F^Y} \\
		& {E^Y} && {F^X} \\
		{B^X} && {A^Y} \\
		& {B^Y} && {A^X}
		\arrow[from=2-4, to=4-4]
		\arrow[from=1-1, to=1-3]
		\arrow[from=1-3, to=3-3]
		\arrow[from=1-1, to=3-1]
		\arrow[from=3-1, to=4-2]
		\arrow[from=3-3, to=4-4]
		\arrow[from=1-3, to=2-4]
		\arrow[from=1-1, to=2-2]
		\arrow[from=3-1, to=3-3]
		\arrow["\lrcorner"{anchor=center, pos=0.125}, draw=none, from=2-2, to=4-4]
		\arrow["\lrcorner"{anchor=center, pos=0.125, rotate=-45}, draw=none, from=1-3, to=4-4]
		\arrow["\lrcorner"{anchor=center, pos=0.125, rotate=-45}, draw=none, from=1-1, to=4-2]
		\arrow[from=2-2, to=4-2, crossing over]
		\arrow[from=4-2, to=4-4]
		\arrow[from=2-2, to=2-4, crossing over]
	\end{tikzcd}\]
We have to show that the back square is a pullback as well, and this follows from the Cube Pullback Lemma~\cite[Lemma~24.1.4 and Remark~24.1.5]{RijIntro}.
\end{proof}

\begin{cor}[Closedness under pullback along maps]
	Let $B$ be a type, $P: B \to \UU$ a family, and $r:C \to B$ a map. Assume $j \bot P$. Then for the pullback $r^*P : C \to \UU$ we have $j \bot r^*P $.
\end{cor}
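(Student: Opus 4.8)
The plan is to reduce this to the preceding theorem on closedness under pullback along families by passing to the fibrational picture. By definition, $j \bot P$ unfolds to $j \bot \Un_B(P)$, \ie the total projection $\pi_P : \totalty{P} \to B$ is right orthogonal to $j$; likewise $j \bot r^*P$ means by definition $j \bot \pi_{r^*P}$, where $\pi_{r^*P} : \totalty{r^*P} \to C$ is the total projection of the pulled-back family. So it suffices to exhibit $\pi_{r^*P}$ as a pullback of $\pi_P$ along $r$ and then quote the theorem verbatim.

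The key step is thus to verify that unstraightening is stable under pullback, \ie that the square
\[
\begin{tikzcd}
	\totalty{r^*P} & \totalty{P} \\
	C & B
	\arrow[from=1-1, to=1-2]
	\arrow["{\pi_{r^*P}}"', from=1-1, to=2-1]
	\arrow["r"', from=2-1, to=2-2]
	\arrow["{\pi_P}", from=1-2, to=2-2]
	\arrow["\lrcorner"{anchor=center, pos=0.125}, draw=none, from=1-1, to=2-2]
\end{tikzcd}
\]
is a pullback. This is a direct $\Sigma$-type computation: by definition of the pulled-back family, $\totalty{r^*P} \defeq \sum_{c:C} P(r(c))$, and the canonical map $\sum_{c:C} P(r(c)) \to \sum_{b:B} P(b)$ sending $\pair{c}{e}$ to $\pair{r(c)}{e}$, together with the first projection to $C$, presents $\totalty{r^*P}$ as $C \times_B \totalty{P}$. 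One checks the universal property directly, or equivalently observes that over a point $c:C$ the fiber of $\pi_{r^*P}$ is $P(r(c)) \equiv \fib_{r(c)}(\pi_P)$, which is exactly the defining condition for the pullback.

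Finally I would apply the theorem on closedness under pullback along families to the square above: since $\pi_P$ is right orthogonal to $j$ by hypothesis, so is its pullback $\pi_{r^*P} \jdeq r^* \pi_P$, which is precisely the assertion $j \bot r^*P$. I do not expect any genuine obstacle here, as this is a corollary of the already-established theorem; the only point requiring care is the identification of the two meanings of ``pullback''---pulling back a \emph{family} along $r$ versus pulling back the associated \emph{fibration} $\Un_B(P)$ along $r$---which is exactly the pullback-stability of unstraightening recorded above, and which is immediate from the Grothendieck construction.
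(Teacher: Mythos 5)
Your proposal is correct and matches the paper's intended argument: the corollary is left without proof precisely because it is the routine translation between families and fibrations via the Grothendieck construction (a principle the paper spells out just before Subsection 3.1.2), reducing to the theorem on pullbacks of maps. Your verification that $\totalty{r^*P} \equiv C \times_B \totalty{P}$, so that $\pi_{r^*P}$ is the pullback of $\pi_P$ along $r$, is exactly the identification the paper takes for granted.
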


\begin{cor}[Fibers of $j$-orthogonal maps]\label{cor:fib-j-orth}
	Let $P:B \to \UU$ be a $j$-orthogonal family. Then, for any $b:B$ the fiber $P\,b$ is $j$-orthogonal.
\end{cor}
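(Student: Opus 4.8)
The plan is to deduce this immediately from the preceding Corollary on closedness under pullback along maps, by recognizing the fiber as a pullback along a point. First I would unfold what it means for the type $P\,b$ to be $j$-orthogonal: under the conventions of this section a type $A$ is $j$-orthogonal precisely when its terminal projection $A \to \unit$ is right orthogonal to $j$, equivalently when the constant family $\lambda t.A:\unit \to \UU$ is $j$-orthogonal, i.e.~$j \bot \Un_{\unit}(\lambda t.A)$.

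Next I would observe that a point $b:B$ is the same datum as a map $b:\unit \to B$, and that pulling the family $P$ back along this map yields a family $b^*P:\unit \to \UU$ over the terminal type. By univalence, together with the standard description of the fiber of $\Un_B(P)$ over $b$, the value of this family at the unique point of $\unit$ is the fiber $P\,b$; that is, $b^*P \equiv \lambda t.P\,b$. Hence applying the Corollary ``Closedness under pullback along maps'' with $C \defeq \unit$ and $r \defeq b$ to the hypothesis $j \bot P$ gives $j \bot b^*P$, which is exactly the assertion that $P\,b$ is $j$-orthogonal.

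There is no substantial obstacle here; the only point requiring care is the identification of $b^*P$ with the constant family on the fiber $P\,b$, which is just the fact that the fiber of $\Un_B(P)$ over $b$ agrees with $P\,b$, combined with the homotopy-invariance of $j$-orthogonality established earlier. One could alternatively give a direct argument by checking the contractibility condition of \cref{prop:orth-maps} for the constant family over $\unit$, but routing through the pullback corollary is the cleanest route and avoids any recomputation.
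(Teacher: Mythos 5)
Your proposal is correct and matches the paper's intended argument: the corollary is stated immediately after ``Closedness under pullback along maps'' precisely so that it follows by pulling $P$ back along the point $b:\unit \to B$, identifying $b^*P$ with the constant family at $P\,b$, and noting that $j$-orthogonality of a family over $\unit$ is exactly $j$-orthogonality of the type $P\,b$ (i.e.\ of its terminal projection). Your extra care about the identification $b^*P \equiv \lambda t.P\,b$ via homotopy invariance is exactly the detail the paper leaves implicit.
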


\begin{cor}[Closedness under fiber product]
	If $P,Q:B \to \UU$ are $j$-orthogonal families, then so is their fiber product $P \times_B Q : B \to \UU$.
\end{cor}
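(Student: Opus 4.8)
The plan is to exhibit the fiber product family as a pullback of the external binary product along the diagonal, so that two of the closure corollaries already proved do all the work. First I would form the external product family $P \times Q : B \times B \to \UU$, sending $\pair{b}{b'}$ to $P(b) \times Q(b')$. Since both $P$ and $Q$ are $j$-orthogonal, the Corollary on closedness under binary products gives that $P \times Q$ is $j$-orthogonal.

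Next I would identify the fiber product with the restriction of the external product to the diagonal. Writing $\Delta : B \to B \times B$ for the diagonal $b \mapsto \pair{b}{b}$, we have
\[ P \times_B Q \;\jdeq\; \Delta^*(P \times Q), \]
because $\Delta^*(P \times Q)(b) = (P \times Q)(\pair{b}{b}) = P(b) \times Q(b)$ unfolds to the fiberwise product. The Corollary on closedness under pullback along maps, applied to the map $\Delta$, then yields directly that $\Delta^*(P \times Q)$, and hence $P \times_B Q$, is $j$-orthogonal.

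I expect no genuine obstacle here: the only point requiring care is the definitional bookkeeping that identifies $P \times_B Q$ with $\Delta^*(P \times Q)$, and, should one prefer not to rely on a judgmental identification, the Proposition on homotopy invariance of notions of fibrations covers the gap. As an alternative route avoiding the diagonal, I could argue by composition: the pullback $\pi_P^* Q : \totalty{P} \to \UU$ along the total-space projection $\pi_P : \totalty{P} \to B$ is $j$-orthogonal by closedness under pullback, and its fiberwise composite with $P$ recovers $P \times_B Q \jdeq (\pi_P^* Q) \compfam P$; since $P$ is $j$-orthogonal, the composition closure property then makes this composite $j$-orthogonal as well. Either way the claim reduces to two previously established corollaries.
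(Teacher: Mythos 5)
Your proposal is correct and is in essence the paper's own approach: the paper offers no separate proof, stating this as an immediate consequence of the preceding closure properties, and both of your routes---pulling back the external product $P \times Q$ along the diagonal $\Delta : B \to B \times B$, or writing $P \times_B Q$ as the composite $(\pi_P^* Q) \compfam P$ and invoking closedness under pullback and composition---are valid derivations from exactly those corollaries. If anything, your alternative composition route is the one implicitly intended by the text, since the corollary appears directly after the pullback and composition results; but the diagonal argument is equally sound, as $\Delta^*(P \times Q)(b) \jdeq P(b) \times Q(b)$ holds judgmentally.
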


\begin{cor}[$j$-orthogonal maps from $j$-orthogonal types]\label{prop:orth-ext}
	Let $B$ be a type and $j:Y \to X$ a map between types, or a shape inclusion. If the type $B$ is $j$-orthogonal, then so is the map $B^k: B^C \to B^D$ for all $k: D \to C$.
\end{cor}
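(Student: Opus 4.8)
The plan is to reduce the statement to two facts already established in this section: closedness of $j$-orthogonal families under exponentiation (\cref{prop:orth-exp}) and the left-cancellation half of the preceding proposition on composition and left cancellation. The conceptual heart is that a map whose domain and codomain are both $j$-orthogonal \emph{types} is automatically a $j$-orthogonal \emph{map}; once this is combined with the observation that cotensoring a $j$-orthogonal type by an arbitrary shape or type again yields a $j$-orthogonal type, the claim for $B^k : B^C \to B^D$ follows immediately.

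First I would record that, since $B$ is $j$-orthogonal as a type (i.e.\ $B \to \unit$ is $j$-orthogonal), \cref{prop:orth-exp} applied to the constant family $\lambda x.\,B : \unit \to \UU$, with the exponentiating shape or type taken to be $C$ and then $D$, shows that $B^C$ and $B^D$ are again $j$-orthogonal types. Concretely, this means that the maps $(B^C)^j : (B^C)^X \to (B^C)^Y$ and $(B^D)^j : (B^D)^X \to (B^D)^Y$ induced by $j$ are equivalences. I would then feed this into left cancellation: regarding $B^D$ as the family $P \defeq \lambda x.\,B^D : \unit \to \UU$ with total space $B^D$, and $B^k$ as the family $Q \defeq \St_{B^D}(B^k) : B^D \to \UU$ with total space $B^C$, the composite $Q \compfam P : \unit \to \UU$ has total space $\totalty{Q} \simeq B^C$ and hence corresponds to $B^C \to \unit$. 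Since $j \bot P$ (because $B^D$ is $j$-orthogonal) and $j \bot (Q \compfam P)$ (because $B^C$ is $j$-orthogonal), the composition proposition yields $j \bot Q$, that is, $B^k$ is $j$-orthogonal.

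Alternatively, and more directly, I would unwind the definition: $j \widehat{\pitchfork} B^k$ is the gap map $(B^C)^X \to (B^D)^X \times_{(B^D)^Y} (B^C)^Y$. Because $(B^D)^j$ is an equivalence, its pullback along $(B^k)^Y$, namely the projection $(B^D)^X \times_{(B^D)^Y} (B^C)^Y \to (B^C)^Y$, is again an equivalence; under this identification the gap map is carried to $(B^C)^j$, which is an equivalence by $j$-orthogonality of $B^C$. A two-out-of-three argument then shows the gap map itself is an equivalence, which is exactly $j$-orthogonality of $B^k$ via \cref{prop:orth-maps}. The only genuinely delicate point — and hence the step I expect to require the most care — is the bookkeeping that converts ``$B$ is a $j$-orthogonal type'' into ``$B^C$ and $B^D$ are $j$-orthogonal types'' through \cref{prop:orth-exp} applied to the family over $\unit$, together with the correct matching of domain and codomain against the base-and-total-space roles in the left-cancellation statement; the equivalence manipulations themselves are routine.
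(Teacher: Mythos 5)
Your proof is correct and is essentially the derivation the paper intends: the corollary carries no proof precisely because, as you observe, it follows by combining closedness under exponentiation (\cref{prop:orth-exp}), applied to $B$ viewed over $\unit$, with the left-cancellation half of the composition proposition. Your alternative direct argument is also sound---it is exactly the paper's own proof of \cref{prop:maps-orth-types} (maps between $j$-orthogonal types are $j$-orthogonal), specialized to the map $B^k$ with $j$-orthogonal domain $B^C$ and codomain $B^D$---so both of your routes already appear in the paper.
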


\begin{cor}[Closedness under Leibniz cotensors]
Let $\pi:E \to B$ be a $j$-orthogonal map. For any $k:D \to C$, the Leibniz cotensor map $k \cotens \pi: E^C \to E^D \times_{B^D} B^C$ is $j$-orthogonal as well.
\end{cor}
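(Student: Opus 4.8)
The plan is to derive this from two facts: the \emph{associativity} of the Leibniz cotensor with respect to the pushout product, and the orthogonality of equivalences against arbitrary maps (\cref{prop:orth-maps-equivs}). By \cref{prop:orth-maps}, asserting that $k \cotens \pi$ is $j$-orthogonal is the same as asserting that its own Leibniz cotensor $j \cotens (k \cotens \pi)$ is an equivalence, so the whole statement reduces to understanding \emph{iterated} Leibniz cotensors and then plugging in the hypothesis on $\pi$.

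First I would establish the key interchange equivalence
\[ j \cotens (k \cotens \pi) \;\simeq\; (j \widehat{\otimes} k) \cotens \pi \;\simeq\; k \cotens (j \cotens \pi), \]
natural in all three arguments. This is the Leibniz upgrade of the two-variable cartesian adjunction $\bigl((Z \times W) \to E\bigr) \simeq \bigl(Z \to E^W\bigr)$: unwinding the definitions, a lifting square under $j$ into $k \cotens \pi$ is, by exponential transposition, exactly a lifting square under the pushout product $j \widehat{\otimes} k$ into $\pi$, and symmetrically a lifting square under $k$ into $j \cotens \pi$; passing to the induced gap maps converts these identifications of lifting problems into the displayed equivalences of cotensor maps. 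In the type-theoretic formulation this is a computation with $\Pi$- and extension types, carried out using the type-theoretic axiom of choice (\cref{thm:choice}) together with (relative) function extensionality, in the same spirit as the verifications that exponentiation preserves the pullbacks defining orthogonality used in \cref{prop:orth-exp,prop:orth-ext}.

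Granting the interchange, the argument closes in one line. Since $\pi$ is $j$-orthogonal, \cref{prop:orth-maps} gives that $j \cotens \pi$ is an equivalence. An equivalence is $k$-orthogonal by \cref{prop:orth-maps-equivs}, so by \cref{prop:orth-maps} its Leibniz cotensor $k \cotens (j \cotens \pi)$ is again an equivalence. By the interchange equivalence this is identified with $j \cotens (k \cotens \pi)$, which is therefore an equivalence; one final application of \cref{prop:orth-maps} shows that $k \cotens \pi$ is $j$-orthogonal, as desired.

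The genuinely laborious step is the interchange equivalence itself, in particular the associativity $j \cotens (k \cotens \pi) \simeq (j \widehat{\otimes} k) \cotens \pi$. The obstacle is purely bookkeeping: one must identify the pullback of exponentials that appears in the codomain of the iterated cotensor with the single pullback built from the pushout product, and check that the two gap maps agree up to the canonical comparison equivalences. I would organize this as a chase of the relevant cube of exponentiated pullback squares, reusing that exponentiation preserves pullbacks (as in the proofs of closure under dependent products and pullback above) and that $j \widehat{\otimes} k$ is computed by the evident pushout; once these identifications are in place the naturality needed for the three-way interchange is automatic.
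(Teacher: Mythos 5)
Your argument is correct, but it follows a genuinely different route from the one the paper intends. The paper states this result as a corollary of the closure properties established immediately before it, and the intended derivation is short: by closure under exponentiation (\cref{prop:orth-exp}), both $\pi^C \colon E^C \to B^C$ and $\pi^D \colon E^D \to B^D$ are $j$-orthogonal; the projection $p \colon E^D \times_{B^D} B^C \to B^C$ is a pullback of $\pi^D$ along $B^k \colon B^C \to B^D$, hence $j$-orthogonal by closure under pullback; and since $\pi^C$ factors as $p \circ (k \cotens \pi)$, the left-cancelation part of the closure-under-composition proposition yields that $k \cotens \pi$ is $j$-orthogonal. Your proof replaces this three-step bookkeeping by the interchange law $j \cotens (k \cotens \pi) \simeq (j \widehat{\otimes} k) \cotens \pi \simeq k \cotens (j \cotens \pi)$ of the Leibniz calculus, after which the conclusion is immediate from \cref{prop:orth-maps} and \cref{prop:orth-maps-equivs}: $j \cotens \pi$ is an equivalence, equivalences are $k$-orthogonal, so $k \cotens (j \cotens \pi)$ and hence $j \cotens (k \cotens \pi)$ is an equivalence. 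What your route buys is generality and reusability: the interchange equivalence is the fundamental two-variable statement, from which this corollary, its symmetric version in $j$ and $k$, and the dual statements about pushout products all drop out at once. What it costs is that the interchange law is nowhere established in the paper and is the actual mathematical content of your proof: for general type maps $j$ and $k$ it requires pushouts of types (to form $j \widehat{\otimes} k$) together with their mapping-out universal property, on top of the axiom of choice for extension types and relative function extensionality that you invoke; only for shape inclusions does the paper supply the strict pushout-product formula. So your proposal is complete modulo a nontrivial lemma the paper never needs, whereas the paper's intended argument uses only results already proved in the same subsection.
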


\begin{prop}[Orthogonal families are closed under sequential limits]
	Let $j:Y \to X$ be a type map. Consider a tower of maps given by $A: \N \to \UU$ together with $f:\prod_{n:\N} A_{n+1} \to A_n$ such that for any $n:\N$, the map $f_n: A_{n+1} \to A_n$ is $j$-orthogonal. Then, for the sequential limit $A_\infty\defeq  \lim_{n:\N} A_n \defeq \lim_{n:\N} \pair{A_n}{f_n}$, the map $\pi_0 : A_\infty \to A_0$ is $j$-orthogonal as well.\footnote{Then, by left cancelation so are all projections $\pi_n:A_\infty \to A_n$ for $n:\N$.}
\end{prop}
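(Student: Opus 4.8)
The plan is to verify the Leibniz cotensor characterization of \cref{prop:orth-maps}: I must show that the gap map $j \widehat{\pitchfork} \pi_0 : A_\infty^X \to A_0^X \times_{A_0^Y} A_\infty^Y$ is an equivalence, equivalently that the exponential square of $\pi_0$ against $j$
\[
\begin{tikzcd}
	{A_\infty^X} && {A_\infty^Y} \\
	{A_0^X} && {A_0^Y}
	\arrow["{j^*}", from=1-1, to=1-3]
	\arrow["{\pi_0^X}"', from=1-1, to=2-1]
	\arrow["{\pi_0^Y}", from=1-3, to=2-3]
	\arrow["{j^*}"', from=2-1, to=2-3]
\end{tikzcd}
\]
is a pullback. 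Two inputs drive the argument. First, since $(-)^Z$ for a type or shape $Z$ is given by ($\Pi$-types and extension over) $\prod$-types, it commutes with the $\sum$-, $\prod$-, and identity types that make up the sequential limit $\lim_{n:\N}\pair{A_n}{f_n} \jdeq \sum_{a:\prod_{n}A_n}\prod_n (a_n = f_n(a_{n+1}))$, by the axiom of choice \cref{thm:choice} and relative function extensionality \cref{ax:relfunext}; hence $(-)^X$ and $(-)^Y$ preserve sequential limits, giving $A_\infty^X \equiv \lim_{n:\N} A_n^X$ and $A_\infty^Y \equiv \lim_{n:\N} A_n^Y$ compatibly with the projections $\pi_0$. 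Second, the hypothesis $j \bot f_n$ says, again via \cref{prop:orth-maps}, that each naturality square
\[
\begin{tikzcd}
	{A_{n+1}^X} && {A_{n+1}^Y} \\
	{A_n^X} && {A_n^Y}
	\arrow["{j^*}", from=1-1, to=1-3]
	\arrow["{f_n^X}"', from=1-1, to=2-1]
	\arrow["{f_n^Y}", from=1-3, to=2-3]
	\arrow["{j^*}"', from=2-1, to=2-3]
\end{tikzcd}
\]
is a pullback, i.e.\ the induced map of towers $(A_n^X)_{n:\N} \to (A_n^Y)_{n:\N}$ is levelwise cartesian.

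Granting these, I would first show by induction on $n$, using the Pasting Lemma for pullbacks \cite[Theorem~17.5.8]{RijIntro}, that $A_n^X \equiv A_0^X \times_{A_0^Y} A_n^Y$ naturally in $n$: the base case is the identity pullback, and in the step I paste the level-$n$ cartesian square onto the equivalence $A_n^X \equiv A_0^X \times_{A_0^Y} A_n^Y$ to get $A_{n+1}^X \equiv (A_0^X \times_{A_0^Y} A_n^Y) \times_{A_n^Y} A_{n+1}^Y \equiv A_0^X \times_{A_0^Y} A_{n+1}^Y$. This exhibits the tower $(A_n^X)_n$ as equivalent to the tower obtained by pulling $(A_n^Y)_n$ back along $j^*: A_0^X \to A_0^Y$. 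Passing to limits and commuting the finite limit (the pullback over $A_0^Y$) past the sequential limit --- limits commute with limits --- then yields
\[
A_\infty^X \;\equiv\; \lim_{n:\N} A_n^X \;\equiv\; \lim_{n:\N}\big(A_0^X \times_{A_0^Y} A_n^Y\big) \;\equiv\; A_0^X \times_{A_0^Y} \lim_{n:\N} A_n^Y \;\equiv\; A_0^X \times_{A_0^Y} A_\infty^Y,
\]
which is precisely the assertion that the displayed exponential square for $\pi_0$ is a pullback. By \cref{prop:orth-maps} this proves $j \bot \pi_0$.

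The main obstacle is coherence in the limit step rather than any single equivalence: the levelwise equivalences $A_n^X \equiv A_0^X \times_{A_0^Y} A_n^Y$ produced by the induction must be assembled into an equivalence of \emph{towers} (they must commute with the transition maps up to coherent homotopy) so as to induce an equivalence on sequential limits, and dually one must make precise, in the extension-type setting, both that $(-)^X$ preserves the sequential limit and that the sequential limit commutes with the pullback. I would read off the naturality of the pasting-lemma equivalences for the first point, and treat the latter two uniformly under the slogan that all these constructions are limits, so that the iterated limit may be reorganized; in the type theory this amounts to reshuffling the nested $\sum$- and $\prod$-types defining $\lim_{n:\N}(-)$, $(-)^X$, and the pullback, which is legitimate by \cref{thm:choice} and function extensionality. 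A mild bookkeeping point is that the final equivalence must respect the projections to $A_0^X$, so that it is genuinely the gap map $j \widehat{\pitchfork} \pi_0$ that is shown to be invertible.
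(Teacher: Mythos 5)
Your proposal is correct and follows essentially the same route as the paper's proof: levelwise pullback squares from $j \bot f_n$, pasted/composed to get $A_n^X \equiv A_0^X \times_{A_0^Y} A_n^Y$ compatibly with the structure maps, then passage to the sequential limit and commutation of limits to conclude that the gap map for $\pi_0$ is an equivalence. The paper states this tersely ("via the structure maps"), whereas you spell out the tower-coherence and limit-interchange bookkeeping explicitly, but the underlying argument is identical.
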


\begin{proof}
By precondition, for any $n:\N$ we have equivalences $A_{n+1}^X \equiv A_{n}^X \times_{A_n^Y} A_{n+1}^Y$ via the structure maps. By composition, we get equivalences $A_n^X \equiv A_0^X \times_{A_0^Y} A_n^Y$. In sum, we obtain
\[ A_\infty^X \equiv \lim_{n:\mathbb N} A_n^X \equiv  \lim_{n:\mathbb N} \big( A_0^X \times_{A_0^Y} A_n^Y \big) \equiv A_0^X \times_{A_0^Y} A_\infty^Y,\]
again via the structure maps.
\end{proof}

\begin{prop}[Maps between $j$-orthogonal types are $j$-orthogonal]\label{prop:maps-orth-types}
	Let $j:Y \to X$ be a type map or shape inclusion. If both $E$ and $B$ are $j$-orthogonal types, then any map $\pi:E \to B$ is $j$-orthogonal as well.
\end{prop}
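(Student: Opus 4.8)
The plan is to reduce the statement to the gap-map criterion recorded above and then to a simple two-out-of-three argument. First I would unwind what it means for a \emph{type} to be $j$-orthogonal: by definition a type $B$ is $j$-orthogonal iff the map $B \to \unit$ is, and the associated Leibniz cotensor target is
\[ \unit^X \times_{\unit^Y} B^Y \equiv B^Y, \]
since $\unit^X \equiv \unit \equiv \unit^Y$. Hence $B$ being $j$-orthogonal says precisely that the restriction map $j^*_B : B^X \to B^Y$ is an equivalence, and likewise $E$ being $j$-orthogonal says that $j^*_E : E^X \to E^Y$ is an equivalence. For the map $\pi$ the task is, by the same gap-map criterion, to show that the induced map $g : E^X \to B^X \times_{B^Y} E^Y$ is an equivalence.

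Next I would analyze the defining pullback
\[
\begin{tikzcd}
	{B^X \times_{B^Y} E^Y} && {E^Y} \\
	{B^X} && {B^Y}
	\arrow["{\mathrm{pr}_{E^Y}}", from=1-1, to=1-3]
	\arrow[from=1-1, to=2-1]
	\arrow["{\pi^Y}", from=1-3, to=2-3]
	\arrow["{j^*_B}"', from=2-1, to=2-3]
	\arrow["\lrcorner"{anchor=center, pos=0.125}, draw=none, from=1-1, to=2-3]
\end{tikzcd}
\]
Since $B$ is $j$-orthogonal, the bottom edge $j^*_B$ is an equivalence, and equivalences are stable under pullback, so the upper projection $\mathrm{pr}_{E^Y} : B^X \times_{B^Y} E^Y \to E^Y$ is an equivalence.

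Finally I would assemble the pieces. The gap map $g$ is characterized by $\mathrm{pr}_{B^X} \circ g = \pi^X$ and $\mathrm{pr}_{E^Y} \circ g = j^*_E$; in particular the triangle with the equivalence $\mathrm{pr}_{E^Y}$ commutes and recovers $j^*_E$, which is an equivalence because $E$ is $j$-orthogonal. Applying the \emph{two-out-of-three} property for equivalences to the composite $\mathrm{pr}_{E^Y} \circ g = j^*_E$ then shows that the gap map $g$ is an equivalence, which is what we had to prove. There is no real obstacle here: the only points requiring care are the identification $\unit^X \times_{\unit^Y} B^Y \equiv B^Y$ and the commutativity of the relevant triangle, after which the argument is purely formal, using only stability of equivalences under pullback together with two-out-of-three.
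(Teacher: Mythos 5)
Your proposal is correct and follows essentially the same route as the paper's own proof: both observe that $j$-orthogonality of $B$ makes the restriction map $B^X \to B^Y$ an equivalence, pull this back along $E^Y \to B^Y$ to get an equivalence $B^X \times_{B^Y} E^Y \to E^Y$, and then conclude by $2$-out-of-$3$ applied to the factorization of $E^X \to E^Y$ through the gap map. The only difference is that you spell out the identification $\unit^X \times_{\unit^Y} B^Y \equiv B^Y$ explicitly, which the paper leaves implicit.
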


\begin{proof}
Since $B$ is $j$-orthogonal, the restriction map $B^X \to B^Y$  is an equivalence, and so is its pullback along $E^Y \to B^Y$:
	\[\begin{tikzcd}
		{E^X} &&& {E^Y} \\
		& \cdot \\
		{B^X} &&& {B^Y}
		\arrow[from=1-1, to=3-1]
		\arrow["\simeq", from=3-1, to=3-4]
		\arrow["\simeq", from=1-1, to=1-4]
		\arrow[from=1-4, to=3-4]
		\arrow[dashed, from=1-1, to=2-2]
		\arrow["\simeq", from=2-2, to=1-4]
		\arrow[from=2-2, to=3-1]
		\arrow["\lrcorner"{anchor=center, pos=0.125}, draw=none, from=2-2, to=3-4]
	\end{tikzcd}\]
	Now, since by assumption $E^X \to E^Y$ is an equivalence is well, so is the gap map by $2$-for-$3$ for equivalences.
\end{proof}

\begin{prop}[Total types of $j$-orthogonal maps]\label{prop:totalty-orth-maps}
	Let $j:Y \to X$ be a type map or shape inclusion and $\pi:E \to B$ be a map with $j$-orthogonal codomain. Then $E$ is $j$-orthogonal if and only if $\pi$ is.
\end{prop}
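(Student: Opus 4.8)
The plan is to run the same $2$-for-$3$ argument that drives \cref{prop:maps-orth-types}, but now reading off a biconditional rather than a single implication. First I would unwind the two orthogonality conditions into statements about exponential maps. A type $E$ is $j$-orthogonal precisely when its restriction map $j^\ast_E : E^X \to E^Y$ is an equivalence, exactly as was used for $B$ in the proof of \cref{prop:maps-orth-types}. On the other hand, by \cref{prop:orth-maps}, the map $\pi$ is $j$-orthogonal precisely when the Leibniz cotensor (gap) map $j \widehat{\pitchfork} \pi : E^X \to B^X \times_{B^Y} E^Y$ is an equivalence. So the entire statement reduces to comparing these two maps out of $E^X$.

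Next I would exploit the hypothesis on the codomain. Since $B$ is $j$-orthogonal, the restriction map $j^\ast_B : B^X \to B^Y$ is an equivalence. Forming the defining pullback of the cotensor,
\[\begin{tikzcd}
	{B^X \times_{B^Y} E^Y} & {E^Y} \\
	{B^X} & {B^Y}
	\arrow["q", from=1-1, to=1-2]
	\arrow[from=1-1, to=2-1]
	\arrow["{\pi^Y}", from=1-2, to=2-2]
	\arrow["\simeq"', from=2-1, to=2-2]
	\arrow["\lrcorner"{anchor=center, pos=0.125}, draw=none, from=1-1, to=2-2]
\end{tikzcd}\]
I observe that the projection $q : B^X \times_{B^Y} E^Y \to E^Y$, being the pullback of the equivalence $j^\ast_B$ along $\pi^Y$, is itself an equivalence, since equivalences are stable under pullback.

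The crux is then the factorization $j^\ast_E = q \circ (j \widehat{\pitchfork} \pi)$: composing the gap map with the projection $q$ onto the $E^Y$-factor recovers the restriction map $E^X \to E^Y$. This is immediate from the explicit description of the gap map in \cref{prop:orth-maps}, whose two legs are $\pi^X$ into $B^X$ and $j^\ast_E$ into $E^Y$. Given this triangle together with $q$ an equivalence, the $2$-for-$3$ property for equivalences yields that $j \widehat{\pitchfork} \pi$ is an equivalence if and only if $j^\ast_E$ is; translating back through the definitions, this says exactly that $\pi$ is $j$-orthogonal if and only if $E$ is $j$-orthogonal. I do not expect a real obstacle here: the only point that deserves care is the identification of $q \circ (j \widehat{\pitchfork} \pi)$ with the restriction map $j^\ast_E$, which needs the explicit form of the gap map and a brief diagram chase, while everything else is a direct appeal to pullback-stability of equivalences and $2$-for-$3$.
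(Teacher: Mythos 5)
Your proposal is correct and takes essentially the same route as the paper: the paper's proof is literally ``similar as for \cref{prop:maps-orth-types}'', whose argument is exactly yours — pullback-stability of equivalences makes the projection $B^X \times_{B^Y} E^Y \to E^Y$ an equivalence, and $2$-for-$3$ applied to the factorization of $E^X \to E^Y$ through the gap map $j \widehat{\pitchfork} \pi$ gives the conclusion. Your write-up just makes explicit the biconditional reading of that $2$-for-$3$ step, which is precisely what the paper leaves implicit.
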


\begin{proof}
	Similar as for \cref{prop:maps-orth-types}.
\end{proof}

\subsection{LARI families}\label{ssec:lari-fam}

For a type map $j:Y \to X$, we call a family $P:B \to \UU$ a \emph{$j$-LARI family} if the Leibniz cotensor map $j \cotens \pi: \totalty{P}^X \to \totalty{P}^Y \times_{B^Y} B^X$ has a left adjoint right inverse (LARI). Typically, $j:Y \to X$ will be a shape inclusion. In fact, we eventually consider the case of the initial vertex inclusion $i_0: \unit \hookrightarrow \Delta^1$ which precisely defines the notion of cocartesian fibration.

However, already for general maps $j:Y \to X$ we can prove that the $j$-LARI fibrations satisfy several closure properties considered before in the case of $j$-orthogonal fibrations.

A basic account of LARI adjunctions in the synthetic setting is given in Appendix~\ref{app:sec:adj}, based on Riehl--Shulman's theory of adjunctions~\cite[Section 11]{RS17}.

\begin{rem}\label{rem:lari-ff}
	We also note the following. If $U : E \to B : F$ is a LARI adjunction of Segal types,
	we may assume---by \emph{fibrant replacement}, \cf~\cite[Theorem~4.8.3]{hottbook} and~\Cref{ssec:fam-vs-fib}---that $E$ is the total type of a family $P:B\to\UU$,
	that $U$ is the projection, and that $F\,x\defeq \pair{x}{f\,x}$
	for some section $f : \prod_{x:B}P\,x$.

	In any LARI adjuction, the left adjoint is fully faithful,
	as $\hom_B(b,b') \equiv \hom_B(b,UF\,b') \equiv \hom_E(F\,b, F\,b')$.
\end{rem}

\subsubsection{LARI families}

\begin{defn}[LARI families]
	Let $P:B \to \UU$ be a family, and $j:Y \to X$ be any type map or shape inclusion. Writing $\pi: E \to B$ for the associated fibration, we call $P$ a \emph{$j$-LARI family} if and only we have a LARI adjunction (in one of the equivalent formulations of~\cref{thm:char-lari}) in the following diagram:
	\[
	\tikzset{%
		symbol/.style={%
			draw=none,
			every to/.append style={%
				edge node={node [sloped, allow upside down, auto=false]{$#1$}}}
		}
	}
	\begin{tikzcd}
		E^X \ar[rrd, bend left = 15, "j \cotens \pi" , ""{name=A, below}] \ar[bend left = 20, rrrrd] \ar[bend right = 30, ddrr] & &  & &   \\
		&& B^X \times_{B^Y} E^Y \ar[rr]   \ar[llu, bend left = 15, dashed, "\chi", ""{name=B, above}] \ar[d]
		\ar[drr, phantom, "\lrcorner", very near start]
		\ar["\dashv"{anchor=center, rotate=60}, draw=none, from=A, to=B] & & E^Y	\ar[d] \\
		& &	B^X \ar[rr]	& & B^Y \\
	\end{tikzcd}
	\]
\end{defn}

LARI fibrations generalize right orthogonal fibrations, as we will see.
\begin{prop}[Adjoint equivalences, {\protect\cite[Proposition 2.1.11/12]{RV}}]\label{prop:adj-equiv}
Let $u:B \to A$ be an equivalence between Rezk types. Then there exists a functor $f:A \to B$ s.t.~$f \dashv u$.

Here, $f$ can be taken to be a quasi-inverse of $u$.
\end{prop}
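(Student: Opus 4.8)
The plan is to reduce to the trivial case of the identity equivalence, exactly in the spirit of the proof of \cref{prop:orth-maps-equivs}. The statement to be proven, namely that there exists a quasi-inverse $f : A \to B$ of $u$ together with an adjunction $f \dashv u$, is a property (in fact a structure) depending only on the equivalence $u : B \simeq A$ with $B$ held fixed. Hence it assembles into a type family
\[
  P(A,u) \defeq \sum_{f:A \to B} \isQinv(f,u) \times (f \dashv u),
  \qquad A:\UU,\ u : B \simeq A,
\]
and by the induction principle for equivalences---a direct consequence of the univalence axiom, which lets one prove $\prod_{A:\UU}\prod_{u:B\simeq A} P(A,u)$ from the single instance $P(B,\id_B)$---it suffices to treat the case $A \defeq B$ and $u \defeq \id_B$. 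Here one first notes that $\isRezk$ is a property invariant under equivalence, so the hypothesis that $A$ be Rezk is automatically available at the base point, where $A$ is literally $B$.

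In the base case I would take $f \defeq \id_B$, which is visibly a quasi-inverse of $\id_B$, and exhibit the \emph{identity adjunction} $\id_B \dashv \id_B$. The unit and counit are the identity natural transformations $\eta \defeq \lambda b.\,\id_b$ and $\epsilon \defeq \lambda b.\,\id_b$, regarded as elements of $\nat{B}{B}(\id_B,\id_B)$; these exist because $B$ is Segal and every $b:B$ has the identity arrow $\id_b \defeq \lambda t.b$. The triangle identities then hold because, on a Segal type, identities are (weak) units for the composition of directed arrows, as established in~\cite[Section 5]{RS17}. This produces the adjunction data in every one of the equivalent senses collected in \cref{thm:char-lari}, so $\id_B \dashv \id_B$ holds and $P(B,\id_B)$ is inhabited; transporting back along univalence yields the desired $f$ and the adjunction $f \dashv u$ for the original $u$.

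I expect the only real subtlety---not so much an obstacle as a point requiring care---to be checking that the notion of adjunction is genuinely a transportable predicate on $u$, so that equivalence induction legitimately applies, and that the identity adjunction satisfies whichever formulation of \cref{thm:char-lari} is taken as primitive. A more hands-on alternative would avoid univalence and build the adjunction directly: since $u$ is a type equivalence it restricts to equivalences on hom-types $\hom_B(b,b') \simeq \hom_A(ub,ub')$ (as the induced map on fibers of the commuting squares $B^{\Delta^1}\to B\times B$ and $A^{\Delta^1}\to A\times A$), and composing with transport along the quasi-inverse homotopy $a = ufa$ gives a natural equivalence $\hom_B(fa,b)\simeq\hom_A(a,ub)$, i.e.\ a transposing adjunction. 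The hard part of that route is verifying the triangle identities, equivalently the coherence of the chosen quasi-inverse homotopies; the univalence reduction is preferable precisely because it discharges this coherence problem for free.
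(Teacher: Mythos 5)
Your proof is correct, but it takes a genuinely different route from the paper's. The paper constructs the adjunction directly: it takes a quasi-inverse $f$ together with homotopies $\eta:\id_A = uf$ and $\varepsilon: fu = \id_B$, obtains the triangle identities via $2$-for-$3$ for isomorphisms in a Rezk type, and then promotes the resulting quasi-diagrammatic adjunction to a coherent, bi-diagrammatic one by~\cite[Theorem~11.23]{RS17}. You instead run equivalence induction---the same device the paper itself uses in \cref{prop:orth-maps-equivs} and \cref{prop:cocart-fun-fib-equiv}, so it is certainly available in this setting---to reduce to $A \jdeq B$, $u \jdeq \id_B$, where the identity adjunction suffices: its unit and counit are identity transformations and the triangle identities are the weak unit laws of composition in the Segal type $B \to B$ (indeed, since $\id_B \circ \id_B \jdeq \id_B$ judgmentally, the transposing maps can even be taken to be identity maps, making this a transposing LARI adjunction on the nose). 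Your route buys exactly what you identify at the end: it discharges the coherence problem for quasi-inverse data (the usual ``qinv versus half-adjoint equivalence'' issue), which is the one delicate point of the direct construction and is precisely what the paper must handle with $2$-for-$3$ plus the promotion theorem. The paper's route buys explicitness---concrete unit and counit data attached to the \emph{given} quasi-inverse---and stays entirely within the adjunction calculus of~\cite[Section~11]{RS17}, without appealing to univalence of the universe. One correction of emphasis: the ``subtlety'' you flag is not a genuine proof obligation, since $\sum_{f:A \to B}(\id_A = uf)\times(fu = \id_B)\times(f \dashv u)$ is a type family over $\sum_{A:\UU}(B \simeq A)$ by construction, and equivalence induction applies to \emph{any} family over that (contractible) total type; transport automatically respects every type-theoretic construction. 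Likewise the Rezk hypothesis on $A$ needs no invariance argument: either include $\isRezk(A)$ as a hypothesis in the family, or simply note that at the base point $A$ is literally $B$, which is Rezk by assumption.
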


\begin{proof}
Since $u:B \to A$ is an equivalence it has a quasi-inverse, \ie, there exists a map $f:A \to B$ together with homotopies $\eta: \id_A = uf$, $\varepsilon: fu = \id_B$. Then, by $2$-for-$3$ for isomorphisms in a Rezk type,\footnote{The $2$-for-$3$ law for isomorphisms in a Segal type can easily be proven as in the case of $1$-categories, using~\cite[Proposition~10.1]{RS17}.} this gives homotopies $u \varepsilon \circ \eta u = \id_u$ and $\varepsilon f \circ f \eta = \id_f$. Together, these data make up a quasi-diagrammatic adjunction which can be promoted to a coherent, bi-diagrammatic adjunction by~\cite[Theorem~11.23]{RS17}.
\end{proof}

\begin{cor}\label{cpr:orth-fam-lari-fam}
Let $P:B \to \UU$ be a $j$-orthogonal family over a Rezk type $B$. Then $P$ is also a $j$-LARI family.
\end{cor}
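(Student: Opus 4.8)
The plan is to unfold both definitions and reduce the statement to the fact that an equivalence admits a left adjoint right inverse. By \cref{prop:orth-maps}, $P$ being $j$-orthogonal means exactly that the Leibniz cotensor map $P \cotens j : \totalty{P}^X \to B^X \times_{B^Y} \totalty{P}^Y$ is an equivalence. On the other hand, $P$ being a $j$-LARI family means by definition that this very same map admits a LARI (in one of the senses of \cref{thm:char-lari}). So the whole content is to show that the equivalence $g \defeq P \cotens j$ carries a LARI structure, which is a purely formal consequence of it being an equivalence between sufficiently nice types.

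First I would check that the types $\totalty{P}^X$ and $B^X \times_{B^Y} \totalty{P}^Y$ are Rezk, so that \cref{prop:adj-equiv} applies. Since $B$ is Rezk and Rezk types form an exponential ideal, $B^X$ and $B^Y$ are Rezk; together with the Rezk-ness of $\totalty{P}$ this yields that $\totalty{P}^X$, $\totalty{P}^Y$, and the pullback $B^X \times_{B^Y} \totalty{P}^Y$ are Rezk as well, using closure of Rezk types under the relevant limits. In fact, since $g$ is already known to be an equivalence, it is enough to establish Rezk-ness of one of its source or target and transport it along $g$.

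The heart of the argument is then \cref{prop:adj-equiv}. Applied to the equivalence $g$, it produces a quasi-inverse $\chi : B^X \times_{B^Y} \totalty{P}^Y \to \totalty{P}^X$ together with an adjunction $\chi \dashv g$, which is precisely the datum $\chi$ appearing in the diagram defining a $j$-LARI family. It then remains to observe that $\chi$ is not merely a left adjoint but a right inverse of $g$: because $g$ is an equivalence, the unit $\id \Rightarrow g\chi$ of this adjunction is an isomorphism, so that $g\chi = \id$ and $\chi$ is a section of $g$. Hence $\chi$ is a LARI of $P \cotens j$, witnessing that $P$ is a $j$-LARI family.

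The main point to be careful about is the interface between \cref{prop:adj-equiv}, which yields a coherent adjoint equivalence, and the precise meaning of ``LARI'' in \cref{thm:char-lari}: I need to confirm that invertibility of the unit of the adjoint equivalence is exactly the right-inverse condition demanded there (equivalently, that $g\chi = \id_{B^X \times_{B^Y} \totalty{P}^Y}$ up to the relevant propositional equality). Beyond this bookkeeping, the only genuine hypothesis to discharge is the Rezk-ness of the types involved, which is what licenses promoting the bare quasi-inverse to an adjoint equivalence in the first place.
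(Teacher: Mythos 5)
Your proof is correct and coincides with the paper's own (implicit) argument: the corollary is stated immediately after \cref{prop:adj-equiv} with no separate proof, the intended reasoning being exactly yours---by \cref{prop:orth-maps} the Leibniz cotensor is an equivalence, so \cref{prop:adj-equiv} yields a left adjoint quasi-inverse, whose unit (being a path) is invertible, making it a LARI. One remark: your appeal to ``the Rezk-ness of $\totalty{P}$'' is not actually licensed by the stated hypotheses (only $B$ is assumed Rezk, and $j$-orthogonality alone does not force $\totalty{P}$ to be Rezk), but this looseness is inherited from the paper itself, whose definition of $j$-LARI family via \cref{thm:char-lari} already presupposes Rezk-ness of the types involved.
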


\subsubsection{Closure properties of $j$-LARI families}\label{ssec:lari-closed}

\begin{prop}[Closedness under products]\label{prop:lari-maps-closed-pi}
	Let $I$ be a type, and $B: I \to \UU$ be a family. Assume there is a function $P: \prod_{i:I} B(i) \to \UU$. If there is a map or shape inclusion $j: Y \to X$ such that $P(i)$ is a $j$-LARI family for each $i: I$, then also $\prod_I P$ is a $j$-LARI family
\end{prop}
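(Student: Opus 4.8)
The plan is to mimic the proof of closure of $j$-orthogonal families under dependent products, replacing ``the gap map is an equivalence'' by ``the gap map admits a LARI''. First I would pass to the fibrational picture. For each $i:I$ write $\pi_i : E_i \defeq \totalty{P_i} \to B_i$ for the projection associated to $P_i \defeq P(i) : B_i \to \UU$. The product family $\prod_I P : (\prod_{i:I} B_i) \to \UU$ sends $s \mapsto \prod_{i:I} P_i(s(i))$, so by the type-theoretic axiom of choice (distributing $\sum$ over $\prod$) its total type is
\[
\sum_{s:\prod_{i:I} B_i} \prod_{i:I} P_i(s(i)) \;\equiv\; \prod_{i:I} \sum_{b:B_i} P_i(b) \;=\; \prod_{i:I} E_i,
\]
and under this equivalence the associated projection becomes $\prod_{i:I}\pi_i : \prod_{i:I} E_i \to \prod_{i:I} B_i$. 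Thus showing that $\prod_I P$ is a $j$-LARI family amounts to exhibiting a LARI for the Leibniz cotensor $j \cotens (\prod_{i:I}\pi_i)$.

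Next, exactly as in the orthogonal case, exponentiation by $X$ and by $Y$ commutes with the product $\prod_{i:I}$ (by reindexing the two $\prod$'s), and dependent products preserve pullbacks; hence the codomain $(\prod_i E_i)^Y \times_{(\prod_i B_i)^Y}(\prod_i B_i)^X$ is equivalent to $\prod_{i:I}\big(E_i^Y \times_{B_i^Y} B_i^X\big)$, and under these identifications the gap map $j \cotens (\prod_i \pi_i)$ is equivalent to the product $\prod_{i:I}(j \cotens \pi_i)$ of the individual Leibniz cotensor maps. By homotopy invariance of the LARI condition it therefore suffices to equip this product map with a LARI.

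Finally, by hypothesis each $j \cotens \pi_i$ carries a LARI $\chi_i$, i.e.\ $\chi_i \dashv (j \cotens \pi_i)$ with $(j\cotens\pi_i)\circ\chi_i = \id$. I would set $\chi \defeq \prod_{i:I}\chi_i$ and show it is a LARI for $\prod_{i:I}(j\cotens\pi_i)$. The right-inverse condition is immediate, since $\big(\prod_i(j\cotens\pi_i)\big)\circ\big(\prod_i\chi_i\big) = \prod_i\big((j\cotens\pi_i)\circ\chi_i\big) = \id$. For the adjunction itself—the main obstacle—the key point is that a product of adjunctions of Segal types is again an adjunction: since the arrow type (and hence the types of natural transformations and of $2$-cells) of a product $\prod_i C_i$ is the product $\prod_i C_i^{\Delta^1}$ of the arrow types, the units and counits $\eta_i,\varepsilon_i$ assemble into a unit $\prod_i\eta_i$ and a counit $\prod_i\varepsilon_i$ for $\chi \dashv \prod_i(j\cotens\pi_i)$, and the triangle identities hold because they hold componentwise. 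One then promotes this quasi-diagrammatic adjunction to a coherent one via \cite[Theorem~11.23]{RS17}, in the appropriate sense of \cref{thm:char-lari}. This yields the required LARI and completes the proof.
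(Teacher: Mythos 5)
Your proposal follows essentially the same route as the paper's proof: identify the total type and Leibniz cotensor of $\prod_I P$ with the product $\prod_{i:I} (j \cotens \pi_i)$ of the componentwise Leibniz cotensors (using that exponentiation commutes with dependent products and that dependent products preserve pullbacks), and then conclude by closure of LARI adjunctions under dependent products, which the paper delegates to \cref{prop:lari-closed-under-pi}. The only variation is in how that sub-lemma is verified---the paper's appendix argues via componentwise equivalences of hom-types (transposing adjunctions), whereas you assemble units, counits and triangle identities componentwise and promote via \cite[Theorem~11.23]{RS17}; both are valid, with yours implicitly requiring the Rezk hypotheses of that promotion theorem, an assumption the paper's appendix makes as well.
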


\begin{proof}
	Let $E$ the total type of $P$ and $E_i$ the total type of $P_i$, for any $i:I$. Observe that we have
	\begin{align*}
		\left( X \to \prod_I E\right) & \equiv \sum_{\sigma:X \to \prod_I B} \prod_{i:I} \left( \prod_{x:X} P_i((\sigma\,x)_i)\right) \\
		& \equiv \sum_{\sigma:X \to \prod_I B}  \left( X \to \prod_I P \right)(\sigma).
	\end{align*}
	For any $i:I$, we are given a LARI adjunction
\[\begin{tikzcd}
	{E_i^X} && {B_i^X \times_{B_i^Y} E_i^Y}
	\arrow[""{name=0, anchor=center, inner sep=0}, "{r_i}"', from=1-1, to=1-3]
	\arrow[""{name=1, anchor=center, inner sep=0}, "{\ell_i}"', curve={height=18pt}, dotted, from=1-3, to=1-1]
	\arrow["\dashv"{anchor=center, rotate=-90}, draw=none, from=1, to=0]
\end{tikzcd}\]
	which induces a LARI adjunction
	\[
\begin{tikzcd}
	{(\prod_I E)^X} && {(\prod_I B)^X \times_{(\prod_I B)^Y} (\prod_I E)^Y}
	\arrow[""{name=0, anchor=center, inner sep=0}, "{\prod_I r}"', from=1-1, to=1-3]
	\arrow[""{name=1, anchor=center, inner sep=0}, "{\prod_I \ell}"', curve={height=18pt}, dotted, from=1-3, to=1-1]
	\arrow["\dashv"{anchor=center, rotate=-90}, draw=none, from=1, to=0]
\end{tikzcd}
	\]
	by \cref{prop:lari-closed-under-pi}, since exponentiation commutes with dependent products.
\end{proof}

\begin{cor}[Closedness of $j$-LARI families under binary products]
	Let $C, B:\UU$ with families $P: B \to \UU$ and $Q: C \to \UU$. If both $P$ and $Q$ are $j$-LARI families \wrt~to a map $j:Y \to X$, then so is the binary product family $P \times Q : B \times C \to \UU$.
\end{cor}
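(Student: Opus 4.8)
The plan is to obtain this as a direct specialization of the dependent-product closure result \cref{prop:lari-maps-closed-pi}, taking the indexing type to have exactly two elements. Concretely, I would let $I \defeq \unit + \unit$ be the two-element type with elements $0,1:I$, define a family $B': I \to \UU$ by $B'(0) \defeq B$ and $B'(1) \defeq C$, and a family $P' : \prod_{i:I}(B'(i) \to \UU)$ by $P'(0) \defeq P$ and $P'(1) \defeq Q$. This repackages the two given families as a single $I$-indexed family in exactly the format required by \cref{prop:lari-maps-closed-pi}.

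The key observation is that the dependent product over $I$ reproduces the binary product. Indeed, $\prod_{i:I} B'(i) \equiv B \times C$, and under this equivalence an element is a pair $\pair{b}{c}$; moreover the fiber of $\prod_I P'$ over $\pair{b}{c}$ is $\prod_{i:I} P'(i)(\sigma_i) \equiv P\,b \times Q\,c$, which is precisely the fiber of the binary product family $P \times Q : B \times C \to \UU$. Hence $\prod_I P'$ is equivalent, as a family over $B \times C$, to $P \times Q$.

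Since $P = P'(0)$ and $Q = P'(1)$ are $j$-LARI families by hypothesis, the assumptions of \cref{prop:lari-maps-closed-pi} are satisfied, so $\prod_I P'$ is a $j$-LARI family. It then remains to transport this property along the equivalence $\prod_I P' \simeq P \times Q$: being a $j$-LARI family is a propositionally-defined notion of fibration (in one of the senses of \cref{thm:char-lari}) and is therefore invariant under equivalence by homotopy invariance of notions of fibrations, so a LARI for the Leibniz cotensor map of $\prod_I P'$ yields one for that of $P \times Q$.

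I do not anticipate a genuine obstacle, as this is a routine instantiation; the only points requiring mild care are (i) verifying that the fiberwise identification $\left(\prod_I P'\right)\pair{b}{c} \equiv P\,b \times Q\,c$ really exhibits $\prod_I P'$ as (equivalent to) the binary product family rather than merely agreeing on underlying total spaces, and (ii) confirming that the LARI structure is genuinely preserved under the equivalence, for which it suffices that being $j$-LARI is propositional.
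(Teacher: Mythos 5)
Your proposal is correct and matches the paper's intended argument: the paper states this result as an immediate corollary of \cref{prop:lari-maps-closed-pi} with no separate proof, and your instantiation at the two-element index type, together with the identification of the dependent product over it with the binary product and transport of the $j$-LARI property along that equivalence, is exactly the specialization being invoked.
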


\begin{prop}[Closedness of $j$-LARI families under composition]\label{prop:lari-maps-closed-comp}
	Let $j: Y \to X$ be a map. Let $P: B \to \UU$ and $Q: \totalty{P} \to \UU$ be families with composite $Q \compfam P: B \to \UU$. If both $P$ and $Q$ are $j$-LARI families, then so is $Q \compfam P$.
\end{prop}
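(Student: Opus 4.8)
The plan is to mirror the proof of the corresponding statement for $j$-orthogonal families, but to track the \emph{left adjoint right inverses} through the relevant gap maps rather than merely asserting that they are equivalences. Writing $\pi_P : E \to B$ and $\pi_Q : F \to E$ for the total projections of $P$ and $Q$, and recalling (as in the orthogonal case) that $\totalty{Q \compfam P} \equiv \totalty{Q} = F$ with structure map $\pi_P \circ \pi_Q$, I would organise the data into the stacked diagram of exponentials
\[
\begin{tikzcd}
F^X \ar[d] \ar[r] & F^Y \ar[d] \\
E^X \ar[d] \ar[r] & E^Y \ar[d] \\
B^X \ar[r] & B^Y
\end{tikzcd}
\]
obtained by applying $(-)^X$, $(-)^Y$ and the restriction along $j$ to $\pi_P$ and $\pi_Q$. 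Abbreviate the three Leibniz cotensor (gap) maps as $g_P : E^X \to B^X \times_{B^Y} E^Y$, $g_Q : F^X \to E^X \times_{E^Y} F^Y$, and $g_{QP} : F^X \to B^X \times_{B^Y} F^Y$, associated to the bottom square, the top square, and the outer rectangle, respectively. By hypothesis $g_P$ and $g_Q$ carry LARIs $\ell_P$ and $\ell_Q$ (in the sense of \cref{thm:char-lari}), and the goal is to produce one for $g_{QP}$.

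First I would establish the key factorization $g_{QP} = h \circ g_Q$, where $h : E^X \times_{E^Y} F^Y \to B^X \times_{B^Y} F^Y$ is induced by $(\pi_P)^X$ on the first factor and the identity on $F^Y$; a short check of the compatibility over $B^Y$ shows $h$ is well defined, and both $h \circ g_Q$ and $g_{QP}$ send a section $s:F^X$ to its pair of restrictions $\pair{(\pi_P\pi_Q)^X(s)}{s|_j}$, so they agree. The decisive observation is that $h$ is precisely the pullback of $g_P$ along the map $q : B^X \times_{B^Y} F^Y \to B^X \times_{B^Y} E^Y$ induced by $(\pi_Q)^Y : F^Y \to E^Y$: unwinding the pullback $E^X \times_{(B^X \times_{B^Y} E^Y)} (B^X \times_{B^Y} F^Y)$ recovers exactly the compatibility condition $e|_j = (\pi_Q)^Y(f)$ in $E^Y$ defining $E^X \times_{E^Y} F^Y$, with $h$ realised as the projection to $B^X \times_{B^Y} F^Y$.

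With this in hand I would conclude in two steps. Since $g_P$ has a LARI and LARI adjunctions are stable under base change, its pullback $h$ inherits a LARI $\ell_h$; concretely $\ell_h(b,f) \defeq \pair{\ell_P(b,\, (\pi_Q)^Y f)}{f}$, which lands in $E^X \times_{E^Y} F^Y$ because $g_P\ell_P = \id$ forces the $j$-restriction of the first component to equal $(\pi_Q)^Y(f)$. Then, as LARI adjunctions compose (Appendix~\ref{app:sec:adj}, in the spirit of \cref{prop:lari-closed-under-pi}), the composite $\ell_{QP} \defeq \ell_Q \circ \ell_h$ is a LARI for $g_{QP} = h \circ g_Q$: it is a right inverse since $(h g_Q)(\ell_Q \ell_h) = h (g_Q \ell_Q) \ell_h = h \ell_h = \id$, and the adjunction $\ell_{QP} \dashv g_{QP}$ is the composite of $\ell_h \dashv h$ with $\ell_Q \dashv g_Q$. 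This exhibits $Q \compfam P$ as a $j$-LARI family.

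I expect the main obstacle to be the verification that $\ell_h$ is genuinely \emph{left adjoint} right inverse to $h$ — i.e.\ the pullback-stability of LARI adjunctions — and not merely a section: the triangle identities together with the invertibility of the unit must be transported along the base change $q$, and this is where the explicit identification of $h$ as a pullback of $g_P$, combined with the characterizations of \cref{thm:char-lari}, does the real work. A secondary, purely bookkeeping point is that all the function types $B^X, E^X, F^X$ and their $Y$-partners are Segal, since $B$, $E$ and $F$ are and Segal types form an exponential ideal, so that every adjunction above is well-formed.
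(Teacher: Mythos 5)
Your proposal is correct and follows essentially the same route as the paper: the paper also factors the gap map $F^X \to B^X \times_{B^Y} F^Y$ through $T \jdeq E^X \times_{E^Y} F^Y$, recognizes the map $T \to B^X \times_{B^Y} F^Y$ as the pullback of the $P$-gap map $E^X \to B^X \times_{B^Y} E^Y$ along the map induced by $F^Y \to E^Y$, and concludes by pulling back one LARI adjunction and composing it with the other. The only nitpick is the citation: the relevant appendix results are \cref{prop:lari-closed-under-pullback} and \cref{prop:lari-closed-under-comp}, not \cref{prop:lari-closed-under-pi}.
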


\begin{proof}
	Let $\pi: E \to B$ and $\xi: F \to E$ be the projection maps associated to $P$ and $Q$, respectively. Abbreviating $S :\jdeq B^X \times_{B^Y} E^Y$, $T :\jdeq E^X \times_{E^Y} F^Y$ and $R\defeq B^X \times_{B^Y} F^Y$, from the LARI adjunctions
	\[
	\tikzset{%
		symbol/.style={%
			draw=none,
			every to/.append style={%
				edge node={node [sloped, allow upside down, auto=false]{$#1$}}}
		}
	}
	\begin{tikzcd}
		E^X \ar[rr, bend left = 15, "r" , ""{name=A, below}]  & & \ar[ll, bend left = 15, dashed, "\ell", ""{name=B, above}]  S
		\ar[from=A, to=B, symbol=\vdash]
	\end{tikzcd}
	\quad
	\begin{tikzcd}
		F^X \ar[rr, bend left = 15, "r'" , ""{name=A, below}]  & & \ar[ll, bend left = 15, dashed, "\ell'", ""{name=B, above}]   T
		\ar[from=A, to=B, symbol=\vdash]
	\end{tikzcd}
	\]
	we want to get a LARI adjunction:
	\[
	\tikzset{%
		symbol/.style={%
			draw=none,
			every to/.append style={%
				edge node={node [sloped, allow upside down, auto=false]{$#1$}}}
		}
	}
	\begin{tikzcd}
		F^X \ar[rr, bend left = 15, "r''" , ""{name=A, below}]  & & \ar[ll, bend left = 15, dashed, "\ell''", ""{name=B, above}]   R
		\ar[from=A, to=B, symbol=\vdash]
	\end{tikzcd}
	\]
	Factoring the pullback square for $T$ as
	\[
	\tikzset{%
		symbol/.style={%
			draw=none,
			every to/.append style={%
				edge node={node [sloped, allow upside down, auto=false]{$#1$}}}
		}
	}
	\begin{tikzcd}
		F^X \ar[rrddd, bend right = 15] \ar[rrd, bend left = 15, "r'" , ""{name=A, below}]  \ar[rrrrrrd, bend left = 15] & & & && &  \\
		& & \ar[llu, bend left = 15, dashed, "\ell'", ""{name=B, above}]  T \ar[dd] \ar[rr, bend left = 20, ""{name=C, above}]
		\ar[ddrr, phantom, "\lrcorner", very near start]
		& & R \ar[ll, bend left = 20, dashed, ""{name=D, above}] \ar[dd] \ar[rr] \ar[ddrr, phantom, "\lrcorner", very near start] & & F^Y \ar[dd]
		\ar[from=A, to=B, symbol=\vdash] \ar[from=C, to=D, symbol=\vdash] \\
		& & & & & & \\
		&& E^X \ar[rr, bend left = 20, "r", ""{name=E, above}] & & S  \ar[ll, bend left = 20, dashed, "\ell", ""{name=F, above}] \ar[rr] && E^Y
		\ar[from=E, to=F, symbol=\vdash]
	\end{tikzcd}
	\]
	yields the claim, by first pulling back the LARI adjunction between $E^X$ and $S$, and then composing with the one between $F^X$ and $T$. Indeed, by~\Cref{prop:lari-closed-under-comp} LARI adjunctions compose.
\end{proof}

\begin{theorem}[Closedness of $j$-LARI families under pullback]\label{prop:lari-maps-closed-pb}
	Let $B$ be a type, $P: B \to \UU$ be a family, and $k:A \to B$ be a map. Given a map $j:Y \to X$, if $P$ is a $j$-LARI family, then the pullback $k^*P : A\to \UU$ is a $j$-LARI family as well.
\end{theorem}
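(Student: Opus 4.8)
The plan is to exhibit the Leibniz cotensor map attached to the pulled-back family $k^*P$ as a genuine \emph{pullback} of the Leibniz cotensor map attached to $P$, and then to conclude by the stability of LARI adjunctions under pullback — precisely the principle already exploited in \cref{prop:lari-maps-closed-comp} and developed in \cref{app:sec:adj}. This runs in close parallel to the proof of closedness under pullback for $j$-orthogonal families, with ``pullback of an equivalence is an equivalence'' replaced by ``pullback of a LARI adjunction is a LARI adjunction''.

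First I would fix notation. Write $\pi:E \to B$ for the fibration associated to $P$ and $\xi \defeq k^*\pi : F \to A$ for the one associated to $k^*P$, so that $F \equiv A \times_B E$ sits in the defining pullback square over $k$. Since exponentiation by a shape or type preserves pullbacks, exponentiating this square by $X$ and by $Y$ yields
\[ F^X \equiv A^X \times_{B^X} E^X, \qquad F^Y \equiv A^Y \times_{B^Y} E^Y, \]
compatibly with the restriction and projection maps.

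Next I would identify the target of $j \cotens \xi : F^X \to A^X \times_{A^Y} F^Y$. Abbreviating $S \defeq B^X \times_{B^Y} E^Y$, the target of $j \cotens \pi$, two applications of the Pasting Lemma for pullbacks (\cite[Theorem 17.5.8]{RijIntro}) give
\[ A^X \times_{A^Y} F^Y \equiv A^X \times_{A^Y}(A^Y \times_{B^Y} E^Y) \equiv A^X \times_{B^Y} E^Y \equiv A^X \times_{B^X} S, \]
so the target is the pullback of $S$ along $k^X : A^X \to B^X$. Exactly as above, exponentiating the original square produces a cube from which the Cube Pullback Lemma (\cite[Lemma 23.1.4 and Remark 23.1.5]{RijIntro}) yields, after forming Leibniz cotensor gap maps, that
\[\begin{tikzcd}
F^X \ar[r] \ar[d, "j \cotens \xi"'] & E^X \ar[d, "j \cotens \pi"] \\
A^X \times_{A^Y} F^Y \ar[r] & S
\end{tikzcd}\]
is a pullback, the bottom map being induced by $k^X$.

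Finally, since $P$ is a $j$-LARI family the map $j \cotens \pi$ carries a LARI adjunction; pulling it back along the induced map $A^X \times_{A^Y} F^Y \to S$ and invoking that LARI adjunctions are stable under pullback (as in \cref{prop:lari-maps-closed-comp}, cf.~\cref{app:sec:adj}) endows $j \cotens \xi$ with a LARI, which is exactly the claim that $k^*P$ is a $j$-LARI family. I expect the main obstacle to be the coherence bookkeeping in the middle step: one must check that, under all these pullback identifications, the canonical map $j \cotens \xi$ genuinely \emph{is} the pullback of $j \cotens \pi$ — not merely that the two maps have equivalent sources and targets — which is the analogue of verifying that all faces of the cube commute. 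Once that pullback square is secured, the appeal to pullback-stability of LARI adjunctions is immediate.
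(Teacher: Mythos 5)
Your proposal is correct and is essentially the paper's own proof: the paper likewise uses that exponentiation preserves pullbacks to identify $F^X \equiv A^X \times_{B^X} E^X$ and $A^X \times_{A^Y} F^Y \equiv A^X \times_{B^Y} E^Y$, exhibits $j \cotens \xi$ as a pullback of $j \cotens \pi$, and then induces the LARI by pullback-stability of LARI adjunctions (\cref{prop:lari-closed-under-pullback}). The only cosmetic difference is that the paper secures the key pullback square by two applications of the Pasting Lemma instead of the Cube Pullback Lemma you cite.
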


\begin{proof}
	By assumption, writing $\pi:E \to B$ and $\xi:F \to A$ for the associated projection maps, we have a pullback together with a LARI adjunction:
\[\begin{tikzcd}
	{F} & {E} & {E^X} \\
	{A} & {B} & {E^Y \times_{B^Y} B^X}
	\arrow["{\xi}"', from=1-1, to=2-1, two heads]
	\arrow["{j}"', from=2-1, to=2-2]
	\arrow[from=1-1, to=1-2]
	\arrow["{\pi}", from=1-2, to=2-2, two heads]
	\arrow["\lrcorner"{very near start, rotate=0}, from=1-1, to=2-2, phantom]
	\arrow[""{name=0, inner sep=0}, from=2-3, to=1-3, curve={height=6pt}, dotted]
	\arrow[""{name=1, inner sep=0}, from=1-3, to=2-3, curve={height=6pt}]
	\arrow["\dashv"{rotate=-180}, from=0, to=1, phantom]
\end{tikzcd}\]
	We have to show that the map $j \cotens \xi:F^X \to F^Y \times_{A^Y} A^X$ has a LARI. Note first that we can factor it through equivalences as follows:

	\begin{align}\label{eq:cotens-larifam-pb}
		{F^X \equiv A^X \times_{B^X} E^X} \longrightarrow {A^X \times_{B^Y} E^Y \equiv A^X \times_{A^Y} (A^Y \times_{B^Y} E^Y) \equiv A^X \times_{A^Y} F^Y}
	\end{align}

Next, observe that by the Pullback Lemma, the inner left square is a pullback:
\[\begin{tikzcd}
	{E^Y \times_{B^Y} A^X} & {} & {E^Y \times_{B^Y} B^X} & {} & {E^Y} \\
	{A^X} && {B^X} && {B^Y}
	\arrow[from=1-1, to=1-3]
	\arrow[from=1-3, to=1-5]
	\arrow[from=2-1, to=2-3]
	\arrow[from=1-3, to=2-3]
	\arrow[from=2-3, to=2-5]
	\arrow[from=1-5, to=2-5]
	\arrow[from=1-1, to=2-1]
	\arrow["\lrcorner"{very near start, rotate=0}, from=1-3, to=2-5, phantom]
\end{tikzcd}\]
Similarly, the top square becomes a pullback:
\[\begin{tikzcd}
	{A^X \times_{B^X} E^X} && {E^X} \\
	{E^Y \times_{B^Y} A^Y} && {E^Y\times_{B^Y}B^X} \\
	{A^X} && {B^X}
	\arrow[from=1-1, to=1-3]
	\arrow[from=1-1, to=2-1]
	\arrow[from=2-1, to=3-1]
	\arrow[from=3-1, to=3-3]
	\arrow[from=2-3, to=3-3]
	\arrow[from=2-1, to=2-3]
	\arrow["\lrcorner"{very near start}, from=2-1, to=3-3, phantom]
	\arrow[""{name=0, inner sep=0}, from=1-3, to=2-3]
	\arrow[""{name=1, inner sep=0}, from=2-3, to=1-3, curve={height=18pt}, dotted]
	\arrow["\vdash", from=1, to=0, phantom]
\end{tikzcd}\]
LARI adjunctions are closed under pullback by~\Cref{prop:lari-closed-under-pullback}. Hence, this serves to induce the LARI of the map
\[
  A^X \times_{B^X} E^X \to E^Y \times_{B^Y} A^X
\]
which corresponds to $j \cotens \xi: F^X \to F^Y \times_{A^Y} A^X$ by~\ref{eq:cotens-larifam-pb}.
\end{proof}

\begin{cor}[Closedness of $j$-LARI families under pullback along families]
	Let $B$ be a types, and $P,Q: B \to \UU$ be families. If $P$ is a $j$-LARI family, then the pullback family $Q^*P : \totalty{Q}\to \UU$ is as well.
\end{cor}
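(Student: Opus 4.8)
The plan is to obtain this as an immediate instance of the preceding \cref{prop:lari-maps-closed-pb} (closedness of $j$-LARI families under pullback). By the notational conventions fixed earlier---the same ones governing the corollary on closedness of $j$-orthogonal families under pullback along maps---the family $Q^*P : \totalty{Q} \to \UU$ is precisely the pullback of $P$ along the total-space projection $\pi_Q : \totalty{Q} \to B$ associated to $Q$; that is, $Q^*P \defeq \pi_Q^* P$, whose total space is $\totalty{\pi_Q^* P} \equiv \totalty{Q} \times_B \totalty{P}$. So the first step is simply to record this identification, unwinding the definition of the pullback of a family along a map.

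With this in hand, I would instantiate \cref{prop:lari-maps-closed-pb} with base $B$, family $P$, and the map $k \defeq \pi_Q : \totalty{Q} \to B$. Since $P$ is assumed to be a $j$-LARI family, the theorem yields directly that $\pi_Q^* P = Q^*P$ is a $j$-LARI family over $\totalty{Q}$, as required. There is no genuine obstacle here: all of the real work---constructing the requisite LARI for an arbitrary pullback square over $B$ by factoring it through the Pullback Lemma and transporting the adjunction along the induced equivalences---has already been carried out in \cref{prop:lari-maps-closed-pb}, and the sole content of the corollary is to specialize that result to the case in which the pullback map is a total-space projection. The only point worth stating explicitly is the equivalence $Q^*P \equiv \pi_Q^* P$, which is purely a matter of notation.
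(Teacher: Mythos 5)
Your proposal is correct and is exactly the derivation the paper intends: the corollary is stated without proof precisely because it is the instance of \cref{prop:lari-maps-closed-pb} with $k \defeq \pi_Q : \totalty{Q} \to B$, under the identification $Q^*P \equiv \pi_Q^*P$. Nothing further is needed.
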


\begin{cor}[Closedness of $j$-LARI families under fiber product]
	If $P,Q:B \to \UU$ are $j$-LARI families, then so is the fiber product $P \times_B Q : B \to \UU$.
\end{cor}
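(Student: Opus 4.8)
The plan is to exhibit the fiber-product family as a composite of two $j$-LARI families and then appeal to the closure of $j$-LARI families under composition. Write $\pi_Q:\totalty{Q}\to B$ for the projection associated with $Q$. The first step is to pull back $P$ along $\pi_Q$, obtaining the family $\pi_Q^*P:\totalty{Q}\to\UU$ whose value at a point $(b,q):\totalty{Q}$ is $P(b)$; by closure of $j$-LARI families under pullback (\cref{prop:lari-maps-closed-pb}, applied with the map $k\defeq\pi_Q$) this family is again $j$-LARI.

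The crux is the identification $P\times_B Q\equiv(\pi_Q^*P)\compfam Q$ as families over $B$. Unfolding the composite at $b:B$ gives $\sum_{q:Q(b)}(\pi_Q^*P)(b,q)\equiv\sum_{q:Q(b)}P(b)\equiv Q(b)\times P(b)$, which is exactly the fiber of $P\times_B Q$ over $b$. In fibrational terms, the projection $\totalty{P\times_B Q}\to B$ factors as $\totalty{P}\times_B\totalty{Q}\to\totalty{Q}\xrightarrow{\pi_Q}B$, the first map being the pullback of $\pi_P$ along $\pi_Q$ and the second being $\pi_Q$ itself. This is precisely the decomposition underlying the analogous fiber-product corollary for $j$-orthogonal families.

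Both constituents of this composite are $j$-LARI: the base family $Q$ by hypothesis, and the fiber family $\pi_Q^*P$ by the first step. Closure under composition (\cref{prop:lari-maps-closed-comp}) then shows that $(\pi_Q^*P)\compfam Q$ is $j$-LARI, and homotopy invariance of the notion of fibration transports the conclusion across the equivalence to $P\times_B Q$. I expect no real obstacle, since the argument is entirely structural bookkeeping; the only point warranting mild care is to confirm that the equivalence $P\times_B Q\equiv(\pi_Q^*P)\compfam Q$ is one of families over $B$ compatible with the relevant Leibniz cotensor maps, so that invoking homotopy invariance (equivalently, directly transporting the LARI adjunction along the equivalence) is justified. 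Given the naturality of the decomposition, this is immediate.
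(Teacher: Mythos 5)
Your proof is correct and follows exactly the route the paper intends: the corollary is stated immediately after the pullback (\cref{prop:lari-maps-closed-pb}, and its pullback-along-families consequence) and composition (\cref{prop:lari-maps-closed-comp}) results, and the implicit argument is precisely your decomposition $P \times_B Q \equiv (\pi_Q^*P)\compfam Q$ with homotopy invariance transporting the conclusion. No gaps.
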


\section{(Iso)inner families}\label{sec:isoinner-fams}

Our main objects of study are cocartesian families, \ie, functorial families of synthetic $\inftyone$-categories.

Since at the most general level types are not Segal, as an intermediate step to defining cocartesian families, we have to deal with families of (complete) Segal types that are not necessarily functorial. \emph{Inner families} are those type families for which the associated projection is right orthogonal to the horn inclusion $\Lambda_1^2 \hookrightarrow \Delta^2$. Hence, between Segal types, inner families correspond to fibrations in the Segal model structure. Bringing in Rezk completeness motivates our definition of \emph{isoinner family}, which in addition to being inner requires all fibers to be Rezk-complete. Over Rezk types, this can be expressed by requiring the associated projection to be right orthogonal to the terminal projection from the free bi-invertible arrow $\walkBinv$.

In this section, we discuss the behavior of these non-functorial families of Segal and Rezk types, respectively.
Several closure properties will follow from the results of \cref{sec:closure}. We also introduce the free bi-invertible arrow $\walkBinv$, constructed as a colimit after~\cite[Appendix A.3]{RS17}.\footnote{The walking bi-invertible arrow hence arises as a \emph{type} in our case, although it might be possible to obtain it as a shape by a suitable extension of the shape theory.} Along the way, we characterize Rezk types via right orthogonality \wrt~either inclusion $\unit \to \walkBinv$, or alternatively, the terminal projection $\walkBinv \to \unit$.

\subsection{Inner families}

We introduce the notion of an inner family, resembling---in some sense---\emph{inner} or \emph{mid fibrations} from quasi-category theory, used by Joyal and Lurie as an auxiliary tool to investigate co-/cartesian fibrations.

In the same vein, an inner family can be seen as a \emph{relativized} version of the notion of Segal type: in an inner family any $(2,1)$-horn sitting over a $2$-simplex in the base can be extended, uniquely up to homotopy, to a $2$-simplex lying over. In particular, an inner family over the terminal type $\unit$ is the same as a Segal type.

\subsubsection{Definition and characterization}

\begin{defn}[Inner Family]
	Let $B$ be a type. A family $P: B \to \UU$ is called \emph{inner family} if
	\[ 	\isInnerFam(P) :\jdeq \prod_{\sigma:\Delta^2 \to B} \prod_{ \eta:\prod_{t:\Lambda_1^2} P(\sigma(t))} \isContr \left( \exten{t:\Delta^2}{P\,\sigma(t)}{\Lambda_1^2}{\eta} \right). \]
	Unpacking this gives the following logically equivalent proposition:
	\begin{eqnarray*}
		\isInnerFam(P) & \equiv & \prod_{b,b',b'':B}
		\prod_{\substack{u: b \to b'
				\\ v:b' \to b'' \\ w: b \to b''}} \prod_{\sigma:u,v \Rightarrow_B w} \prod_{\substack{e:Pb \\ e':Pb' \\ e'':Pb''}} \prod_{\substack{f:e \to_u^P e' \\ g: e' \to^P_v e''}} \\
		& & \isContr\left( \sum_{h:e \to^P_w e''}
		f,g \Rightarrow_w^P h \right).
	\end{eqnarray*}
\end{defn}

\begin{figure}
	\centering

  \includegraphics{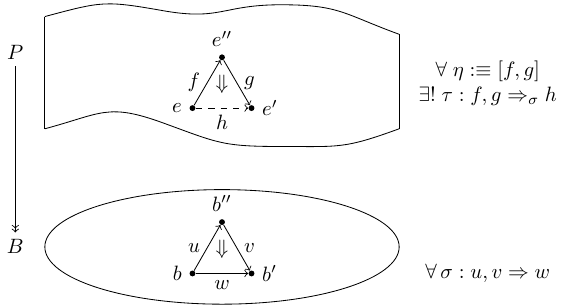}

	\caption{In an inner family, inner $2$-horns can be extended uniquely up to homotopy relative to a given $2$-simplex in the base.}
	\label{fig:innfam}
\end{figure}

This can be expressed in categorical terms as the condition that the unstraightening $\widetilde{P} \fibarr B$ be right orthogonal to $\Lambda_1^2 \hookrightarrow \Delta^2$, or equivalently, the Leibniz cotensor of these maps be an equivalence.

\begin{prop}[Orthogonality characterization of inner families]\label{prop:innfam-pb}
	A type family $P: B \to \UU$ is inner if and only if the square
	\[\begin{tikzcd}
		{\widetilde{P}^{\Delta^2}} && {\widetilde{P}^{\Lambda^2_1}} \\
		{B^{\Delta^2}} && {B^{\Lambda^2_1}}
		\arrow[from=2-1, to=2-3]
		\arrow[from=1-1, to=1-3]
		\arrow[from=1-3, to=2-3]
		\arrow[from=1-1, to=2-1]
	\end{tikzcd}\]
	is a homotopy pullback.
\end{prop}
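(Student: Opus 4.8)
The plan is to deduce this directly from the orthogonality machinery set up in \cref{ssec:clos-orth}, since all the substantive work has already been done there. First I would observe that the definition of $\isInnerFam(P)$ is, verbatim, the right-hand side of \cref{cor:orth-shapes} applied to the shape inclusion $\Lambda_1^2 \hookrightarrow \Delta^2$, i.e.\ taking $\Phi$ to be $\Lambda_1^2$ and $\Psi$ to be $\Delta^2$. Hence $P$ is an inner family if and only if $P$ is orthogonal with respect to $\Lambda_1^2 \hookrightarrow \Delta^2$, which, by the conventions fixed in \cref{sec:closure}, is the same as saying that the associated projection $\pi_P = \Un_B(P) : \widetilde{P} \to B$ is right orthogonal to $\Lambda_1^2 \hookrightarrow \Delta^2$.

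Next I would invoke \cref{prop:orth-maps}: the map $\pi_P$ is right orthogonal to $\Lambda_1^2 \hookrightarrow \Delta^2$ precisely when the Leibniz cotensor map
\[
  (\Lambda_1^2 \hookrightarrow \Delta^2)\,\widehat{\pitchfork}\,\pi_P :
  \widetilde{P}^{\Delta^2} \longrightarrow B^{\Delta^2} \times_{B^{\Lambda_1^2}} \widetilde{P}^{\Lambda_1^2}
\]
is an equivalence. Finally, I would note that by the very definition of the Leibniz cotensor this comparison map is exactly the canonical gap map from $\widetilde{P}^{\Delta^2}$ into the pullback of the cospan $B^{\Delta^2} \to B^{\Lambda_1^2} \leftarrow \widetilde{P}^{\Lambda_1^2}$, obtained by exponentiating the square defining $\pi_P$ into the two shapes $\Lambda_1^2$ and $\Delta^2$. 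Such a gap map is an equivalence if and only if the square displayed in the statement is a homotopy pullback. Concatenating these three equivalences yields the claim.

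Since each step is an immediate appeal to an earlier stated result, I do not anticipate any serious obstacle; the argument is essentially bookkeeping. The only point that genuinely requires attention is the passage between the strict, extension-type formulation of innerness and the homotopical right orthogonality of the total-space projection --- that is, checking that the contractibility statement over each $\sigma : \Delta^2 \to B$ really does assemble into the single condition that the gap map be an equivalence. This is exactly the de-/strictification recorded in \cref{cor:orth-shapes}, so rather than re-deriving it I would simply cite that corollary together with \cref{prop:orth-maps}.
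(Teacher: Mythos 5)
Your proof is correct and follows essentially the same route as the paper, whose entire proof is the single sentence that the proposition is an instance of \cref{cor:orth-shapes}. You have merely spelled out the bookkeeping (innerness $\leftrightarrow$ orthogonality of the family via \cref{cor:orth-shapes}, then \cref{prop:orth-maps} and the gap-map/pullback translation) that the paper leaves implicit.
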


\begin{proof}
	This is an instance of \cref{cor:orth-shapes}.
\end{proof}

\subsubsection{Closure and structural properties}

As an instance of \cref{ssec:clos-orth} it follows that inner families enjoy several closure properties.
\begin{prop}
	Inner families over arbitrary types are closed under fibered equivalences, dependent products, composition, pullback, left cancelation, and sequential limits. Families corresponding to equivalences are always isoinner.
\end{prop}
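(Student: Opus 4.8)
The plan is to reduce everything to the general theory of \cref{sec:closure} by observing, as in \cref{prop:innfam-pb}, that a family $P : B \to \UU$ is inner exactly when it is $j$-orthogonal for the single shape inclusion $j \defeq (\Lambda_1^2 \hookrightarrow \Delta^2)$. Since this is a bona fide shape inclusion, the hypotheses of every closure result proved for arbitrary $j$-orthogonal families apply verbatim, and each clause of the proposition becomes the instance of such a result at this particular $j$.

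Concretely, I would match the clauses to the results of \cref{ssec:clos-orth} as follows. Invariance under fibered equivalences holds because ``inner'' is by construction a notion of fibration, hence invariant under equivalence by the homotopy-invariance proposition for notions of fibrations. Closure under dependent products, composition, left cancelation, pullback (along both maps and families), and sequential limits are, respectively, the Closedness under dependent products proposition, the Closedness under composition and left cancelation proposition (using $\widetilde{Q \compfam P} \equiv \widetilde{Q}$), the Closedness under pullback theorem together with its corollary, and the proposition that $j$-orthogonal families are closed under sequential limits. No separate argument is needed; one only checks that $\Lambda_1^2 \hookrightarrow \Delta^2$ is the shape inclusion in question.

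For the last assertion, let $P : B \to \UU$ be a family whose projection $\pi_P : \totalty{P} \to B$ is an equivalence. Then $P$ is inner because, by \cref{prop:orth-maps-equivs}, any equivalence is $j$-orthogonal for every $j$, in particular for the inner horn inclusion. Moreover every fiber $P\,b$ is contractible, since $\pi_P$ is an equivalence; and contractible types are discrete, hence Rezk. Thus $P$ is inner with Rezk-complete fibers, which is precisely the isoinner condition.

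The main point to be careful about is bookkeeping rather than mathematics: each listed property must be paired with the correct general-$j$ statement from \cref{ssec:clos-orth}, and for the isoinner half one must recall that the fibers of a family corresponding to an equivalence are contractible and that contractibility forces Rezk-completeness. I do not expect any genuine obstacle, as the proposition is essentially a corollary of the orthogonality machinery already in place.
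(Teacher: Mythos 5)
Your proposal is correct and follows exactly the paper's route: the paper states this proposition as an immediate instance of the closure properties of $j$-orthogonal families from \cref{ssec:clos-orth}, using the orthogonality characterization of inner families with $j \defeq (\Lambda_1^2 \hookrightarrow \Delta^2)$, together with homotopy invariance of propositionally defined notions of fibration. In fact you supply slightly more detail than the paper does for the final clause (equivalences have contractible, hence discrete, hence Rezk fibers, giving isoinnerness), and that supplementary argument is sound.
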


\begin{prop}\label{prop:innerfib-has-segal-fibers}
	Let $P: B \to \UU$ be an inner family. Then for every $b:B$ the fiber $P\,b$ is a Segal type.
\end{prop}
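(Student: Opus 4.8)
The plan is to reduce the fiberwise Segal condition to the innerness of the family $P$. Recall that a type $A$ is Segal precisely when $A \to \unit$ is right orthogonal to the horn inclusion $\Lambda_1^2 \hookrightarrow \Delta^2$ (equivalently, when the restriction map $A^{\Delta^2} \to A^{\Lambda_1^2}$ is an equivalence). By \cref{prop:innfam-pb}, innerness of $P$ says exactly that the unstraightening $\widetilde{P} \fibarr B$ is right orthogonal to $\Lambda_1^2 \hookrightarrow \Delta^2$. So the task is to transport this relative orthogonality condition over $B$ into an absolute orthogonality condition for each fiber.

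First I would invoke the closure results from \cref{ssec:clos-orth}. The key observation is \cref{cor:fib-j-orth}: the fiber of a $j$-orthogonal family is again $j$-orthogonal, for $j$ the horn inclusion $\Lambda_1^2 \hookrightarrow \Delta^2$. Applying this with the hypothesis that $P$ is inner (hence $(\Lambda_1^2 \hookrightarrow \Delta^2)$-orthogonal) immediately yields that each fiber $P\,b$, viewed as a family over $\unit$, is $(\Lambda_1^2 \hookrightarrow \Delta^2)$-orthogonal. Then I would note that a family over $\unit$ orthogonal to the horn inclusion is the same data as a type whose terminal projection is horn-orthogonal, which is exactly the Segal condition. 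This gives the result essentially as a one-line corollary of the already-established closure machinery.

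Alternatively, and perhaps more transparently, I would argue directly from the definition of $\isInnerFam(P)$ by specializing to a degenerate simplex in the base. Fixing $b:B$, I would instantiate the inner-family condition at the constant $2$-simplex $\sigma \defeq \cst_b : \Delta^2 \to B$ (the map sending everything to $b$). For this $\sigma$, the pullback $\sigma^*P$ restricted over $\Delta^2$ is constantly $P\,b$, so the extension type $\exten{t:\Delta^2}{P\,\sigma(t)}{\Lambda_1^2}{\eta}$ becomes $\exten{t:\Delta^2}{P\,b}{\Lambda_1^2}{\eta} \equiv (P\,b)^{\Delta^2}$ restricted along a fixed horn $\eta : \Lambda_1^2 \to P\,b$. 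The contractibility demanded by $\isInnerFam(P)$ then states precisely that every inner horn in $P\,b$ has a contractible space of fillers, which is the Joyal formulation of $\isSegal(P\,b)$.

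The main obstacle, such as it is, lies in carefully matching up the two formulations of the Segal condition: the unpacked, element-level form of $\isInnerFam(P)$ at a degenerate base simplex versus the formulation $\isContr\big(\sum_{w:b\to b''}\hom^2_{P\,b}(u,v;w)\big)$ appearing in $\isSegal$. This is a routine but nontrivial bookkeeping step, since one must check that the boundary data $\eta$ over the constant simplex assembles exactly into a composable pair of arrows in the fiber, and that the extension type correctly encodes the space of fillers together with their witnessing $2$-simplices. Both the corollary-based route and the direct route ultimately rest on this identification; I would prefer the direct specialization since it makes the correspondence with Joyal's Segal condition manifest without appealing to the orthogonality reformulation of $\isSegal$.
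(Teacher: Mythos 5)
Your first route is exactly the paper's proof: the paper's entire argument reads ``This is an instance of \cref{cor:fib-j-orth}'', with the identification of horn-orthogonality of a type with the Segal condition left implicit, since it was recalled earlier from Riehl--Shulman. Your preferred alternative---specializing $\isInnerFam(P)$ at the constant $2$-simplex $\cst_b : \Delta^2 \to B$---is also correct, but it is essentially an unfolding of the same content rather than a genuinely different argument: over the constant simplex, the extension type $\exten{t:\Delta^2}{P\,\sigma(t)}{\Lambda_1^2}{\eta}$ is precisely the fiber over $\eta$ of the restriction map $(P\,b)^{\Delta^2} \to (P\,b)^{\Lambda_1^2}$, so contractibility for all $\eta$ is the Joyal form of $\isSegal(P\,b)$ by \cref{cor:orth-shapes}; moreover, the bookkeeping you flag as the main obstacle is lighter than you suggest, because dependent arrows and dependent $2$-simplices over constant maps are \emph{definitionally} arrows and $2$-simplices in the fiber (e.g.\ $\dhom^P_{\id_b}(e,e') \jdeq \hom_{P\,b}(e,e')$), so the element-level forms match on the nose. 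What the corollary route buys is uniformity---the fiber statement falls out of closure machinery proved once for all $j$-orthogonal families---while your direct route makes the content visible at the cost of redoing, in this one instance, the de-/strictification already packaged in \cref{cor:orth-shapes}.
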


\begin{proof}
	This is an instance of \cref{cor:fib-j-orth}.
\end{proof}

\begin{prop}\label{prop:innerfib-total-sp-over-segal-base-is-segal}
	Let $P: B \to \UU$ be a family over a Segal type $B$. Then the total type $\totalty{P} :\jdeq \sum_{b:B} P\,b$ is Segal if and only if $P$ is an inner family.
\end{prop}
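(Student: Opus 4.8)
The plan is to recast both sides of the biconditional as right-orthogonality statements with respect to the inner horn inclusion $\Lambda_1^2 \hookrightarrow \Delta^2$, and then to reduce to the already-established total-space lemma. Recall Joyal's reformulation of the Segal condition recorded above: a type $A$ is Segal precisely when the restriction map $A^{\Delta^2} \to A^{\Lambda_1^2}$ is an equivalence, i.e.\ when $A$ is $(\Lambda_1^2 \hookrightarrow \Delta^2)$-orthogonal as a type. By \cref{prop:innfam-pb}, a family $P$ is inner precisely when the associated projection $\totalty{P} \fibarr B$ is right orthogonal to $\Lambda_1^2 \hookrightarrow \Delta^2$. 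Since the total type $\totalty{P}$ is literally the unstraightening $\widetilde{P}$, the statement ``$\totalty{P}$ is Segal'' is the same as ``$\widetilde{P}$ is a $(\Lambda_1^2 \hookrightarrow \Delta^2)$-orthogonal type''.

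With these identifications in hand, first I would set $j \defeq (\Lambda_1^2 \hookrightarrow \Delta^2)$ and $\pi \defeq (\widetilde{P} \to B)$. The hypothesis that $B$ is Segal says exactly that the codomain $B$ is $j$-orthogonal as a type. Thus the situation matches the hypothesis of \cref{prop:totalty-orth-maps} on the fly, and its conclusion is that the total type $\widetilde{P} = \totalty{P}$ is $j$-orthogonal if and only if $\pi$ is. Unwinding the two characterizations, this is precisely the assertion that $\totalty{P}$ is Segal if and only if $P$ is inner.

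For a self-contained argument one can instead run the underlying two-out-of-three computation directly on the square of \cref{prop:innfam-pb}, whose horizontal maps are the restrictions along $\Lambda_1^2 \hookrightarrow \Delta^2$ and whose vertical maps are induced by $\widetilde{P} \to B$. Since $B$ is Segal, the bottom map $B^{\Delta^2} \to B^{\Lambda_1^2}$ is an equivalence. In the forward direction, if $P$ is inner then the square is a pullback, so the top map $\widetilde{P}^{\Delta^2} \to \widetilde{P}^{\Lambda_1^2}$ is the pullback of an equivalence and hence itself an equivalence; by Joyal's criterion $\totalty{P}$ is then Segal. Conversely, if $\totalty{P}$ is Segal then the top map is an equivalence as well, so the square has both horizontal maps invertible and is therefore automatically a homotopy pullback, whence $P$ is inner by \cref{prop:innfam-pb}.

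The content here is genuinely light, so I do not expect a real obstacle beyond bookkeeping. The one point deserving care is the identification of the total type $\totalty{P}$ with the unstraightening $\widetilde{P}$ together with the verification that ``$B$ Segal'' coincides on the nose with ``$B$ is a $(\Lambda_1^2 \hookrightarrow \Delta^2)$-orthogonal type'', so that the hypotheses of \cref{prop:totalty-orth-maps} are met exactly. Once these are in place, the proposition is simply the appropriate specialization of that lemma.
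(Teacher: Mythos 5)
Your proposal is correct and is exactly the paper's own proof: the paper simply notes that the statement is an instance of \cref{prop:totalty-orth-maps}, which is precisely your reduction via Joyal's reformulation of Segal-ness and the orthogonality characterization of inner families in \cref{prop:innfam-pb}. Your supplementary direct two-out-of-three argument on the pullback square is just an unfolding of the proof of \cref{prop:totalty-orth-maps} itself, so nothing genuinely different is happening there either.
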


\begin{proof}
This is an instance of \cref{prop:totalty-orth-maps}.
\end{proof}

The following observation reflects the semantic fact that between Segal spaces Reedy fibrations coincide with the fibrations of the Segal space model structure. It is also a formal consequence of \cref{prop:maps-orth-types}.
\begin{prop}[Maps between Segal types are inner]\label{prop:maps-between-segal-types}
	Let $\pi:E \to B$ be a map between Segal types $B, E$. Then $\pi:E \to B$ is inner.
\end{prop}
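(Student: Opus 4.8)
The plan is to recognize that \emph{innerness of a map} and \emph{Segalness of a type} are both instances of the same orthogonality condition, and then to quote the already-established closure result \cref{prop:maps-orth-types} verbatim. Concretely, I would set $j$ to be the inner horn inclusion $\Lambda_1^2 \hookrightarrow \Delta^2$ (coerced to a type map via the discussion of \cref{ssec:fib-shapes}) and reduce the statement to the assertion that a map between $j$-orthogonal types is $j$-orthogonal.

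First I would record the two reformulations. On the one hand, a map $\pi:E \to B$ is \emph{inner} exactly when $\pi$ is right orthogonal to $j$; this is the categorical rephrasing of \cref{prop:innfam-pb}, where innerness of the associated family was expressed as right orthogonality of the unstraightening $\widetilde{P} \to B$ to the inner horn inclusion. On the other hand, by Joyal's characterization recalled in \cref{sec:expo}, a type $A$ is Segal precisely when the restriction map $A^{\Delta^2} \to A^{\Lambda_1^2}$ is an equivalence, i.e.\ when $A \to \unit$ is right orthogonal to $j$ — which is exactly the meaning of ``$A$ is a $j$-orthogonal type'' employed in \cref{prop:maps-orth-types} (recall that a type $B$ being $j$-orthogonal unwinds to $B^X \to B^Y$ being an equivalence).

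With these identifications in place the proof is immediate: since $E$ and $B$ are Segal, both are $j$-orthogonal types for $j = (\Lambda_1^2 \hookrightarrow \Delta^2)$, so \cref{prop:maps-orth-types} yields that any map $\pi:E \to B$ between them is $j$-orthogonal, hence inner. Alternatively, one can argue directly via \cref{prop:orth-maps}: the Leibniz cotensor square for $\pi$ and $j$ has both restriction maps $B^{\Delta^2} \to B^{\Lambda_1^2}$ and $E^{\Delta^2} \to E^{\Lambda_1^2}$ equivalences (this is the Segal hypothesis), so the pullback of the first along $E^{\Lambda_1^2} \to B^{\Lambda_1^2}$ is again an equivalence, and $2$-for-$3$ forces the gap map $E^{\Delta^2} \to B^{\Delta^2} \times_{B^{\Lambda_1^2}} E^{\Lambda_1^2}$ to be an equivalence as well — this is precisely the argument already carried out in the proof of \cref{prop:maps-orth-types}.

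I do not expect a genuine obstacle here; the whole content is the dictionary ``Segal type $=$ $j$-orthogonal type'' and ``inner map $=$ $j$-orthogonal map'' for the single choice $j = (\Lambda_1^2 \hookrightarrow \Delta^2)$. The only point requiring care is bookkeeping: ensuring the correct horn is used throughout and that \cref{prop:maps-orth-types} is being applied to a shape inclusion $j$ (which it covers), so that no further hypotheses on $\pi$ beyond those on its source and target are needed.
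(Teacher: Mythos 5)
Your proof is correct and is exactly the argument the paper intends: the paper states this proposition without a written proof, remarking only that it is ``a formal consequence'' of the orthogonality formulation, and your reduction via the dictionary ``Segal $=$ right orthogonal to $\Lambda_1^2 \hookrightarrow \Delta^2$'' together with \cref{prop:maps-orth-types} (equivalently, the direct $2$-for-$3$ argument on the Leibniz cotensor square) is precisely that formal consequence spelled out. No gaps; your bookkeeping caveat about using the correct horn is the only point of care, and you handle it.
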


\subsection{Isoinner families}

As a next step towards the synthetic version of cocartesian fibrations, we consider non-functorial families of \emph{complete} Segal, \aka~Rezk types. To be able to formulate an appropriate right orthogonality condition, we start by defining the \emph{free} (or \emph{walking}) \emph{bi-invertible arrow}. This gives alternative formulations of the Rezk-completeness condition.

\subsubsection{The free bi-invertible arrow}

In order to exhibit Rezk completeness as an orthogonality condition, we want to define a type $\walkBinv$ which for any type corepresents its type of isomorphisms, \ie,
\[ \iso(X) \equiv  (\walkBinv \to X)\]
for any type $X:\UU$. Moreover, this will also allow us to define a notion of (non-functorial) family of Rezk types, which we call \emph{isoinner families}. Semantically, these correspond---at least between fibrant objects---to the fibrations of the Rezk model structure.

We internalize the construction of $\walkBinv$ due to~\cite[Definition A.~24,25]{RS17}, by forming the colimit $\walkBinv$ of the following diagram $\mathcal D$ of types
	\[\begin{tikzcd}
		& {\Delta^1} && {\Delta^1} && {\Delta^1} \\
		{\unit} && {\Delta^2} && {\Delta^2} && {\unit}
		\arrow[from=1-2, to=2-1]
		\arrow["{d^1}", from=1-2, to=2-3]
		\arrow["{d^0}"', from=1-4, to=2-3]
		\arrow["{d^2}", from=1-4, to=2-5]
		\arrow["{d^1}"', from=1-6, to=2-5]
		\arrow[from=1-6, to=2-7]
	\end{tikzcd}\]
where $d^k: \Delta^1 \to \Delta^2$ is the $k$-th face inclusion, \ie, the ``injection whose image is missing $k$''. Concretely, this can be done using the theory of colimits over reflexive graphs as described by Rijke~\cite[Section~3]{RijPhd}.

The free bi-invertible arrow is constructed as a colimit over a reflexive graph.
Let $\mathcal G$ denote the graph underlying the diagram $\mathcal D$ of types as shown above. By~\cite[Remark~3.5.6]{RijPhd}, it follows that the colimit of the $\mathcal G$-diagram $\mathcal D$ is given by the pushout of the cospan $\mathcal S(\mathcal D) :\jdeq \pair{\pr_1}{\pr_2}$ associated to $\mathcal D$ via the total graph:
\[\begin{tikzcd}
	{\sum_{\substack{\langle i,x \rangle, \\ \langle j,y \rangle : \widetilde{D_0}}} G_1(i,j) \times D_1(x)=y} & {} & {\widetilde{D_0}} \\
	{\widetilde{D_0}} && {\mathbb E}
	\arrow["{\mathrm{pr}_2}", from=1-1, to=1-3]
	\arrow["{\mathrm{pr}_1}"', from=1-1, to=2-1]
	\arrow[from=1-3, to=2-3]
	\arrow[from=2-1, to=2-3]
	\arrow["\lrcorner"{very near start, rotate=180}, from=2-3, to=1-2, phantom]
\end{tikzcd}\]

\begin{prop}
The colimit $\walkBinv$ of the diagram $\mathcal D$ as above (covariantly) represents the types of isomorphisms in a Segal type, \ie, for any Segal type $A$ there is an equivalence of types
\[ (\walkBinv \to A) \equiv \iso(A).\]
\end{prop}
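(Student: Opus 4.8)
The plan is to compute the function type $(\walkBinv \to A)$ via the universal property of the colimit that defines $\walkBinv$, and then to collapse the resulting simplicial data using the Segal condition on $A$. Since the functor $(\mathord{-} \to A)$ sends pushouts to pullbacks, applying it to the pushout presentation of $\walkBinv$ displayed above exhibits $(\walkBinv \to A)$ as the type of \emph{cocones} under the diagram $\mathcal D$ with nadir $A$. Because the constituents of $\mathcal D$ are only $\unit$, $\Delta^1$, and $\Delta^2$, and $(\unit \to A) \equiv A$, $(\Delta^1 \to A) \equiv A^{\Delta^1}$, $(\Delta^2 \to A) \equiv A^{\Delta^2}$, such a cocone amounts to a compatible assignment of two points, three arrows, and two $2$-simplices $\sigma_1,\sigma_2 : \Delta^2 \to A$, where the compatibilities are imposed by the face maps $d^0,d^1,d^2$ and the two terminal maps.

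Next I would unwind these compatibilities. The two terminal maps (precomposed with $d^1$ into each $\Delta^2$) force the $d^1$-faces (the ``long'' edges) of $\sigma_1$ and of $\sigma_2$ to be degenerate, \ie~identity arrows; write $x$ and $y$ for their base points. The map $d^0$ into the first $\Delta^2$ and $d^2$ into the second identify the $d^0$-face of $\sigma_1$ with the $d^2$-face of $\sigma_2$. Tracking vertices through these gluings exhibits the data as two objects $x,y:A$ together with arrows $f:\hom_A(x,y)$, $g:\hom_A(y,x)$, $h:\hom_A(x,y)$ and witnesses $\sigma_1 : \hom_A^2(f,g;\id_x)$ and $\sigma_2 : \hom_A^2(g,h;\id_y)$, the two points and the two identity edges being determined by the rest. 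The place where care is needed is that all these identifications are homotopical, so ``degenerate edge'' and ``shared edge'' must be read up to the coherence homotopies carried by the cocone; contracting the auxiliary $\Delta^1$- and $\unit$-data against those homotopies is precisely what yields the clean description above.

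Finally I would invoke the Segal condition. For fixed boundary arrows $f,g$, the type $\sum_{w:\hom_A(x,x)} \hom_A^2(f,g;w)$ is contractible, with centre the chosen composite $g \circ f$; taking the fibre of the first projection over $\id_x$ therefore produces an equivalence $\hom_A^2(f,g;\id_x) \equiv (g \circ f = \id_x)$, and likewise $\hom_A^2(g,h;\id_y) \equiv (h \circ g = \id_y)$. Substituting these into the cocone type gives
\[
  (\walkBinv \to A) \equiv \sum_{x,y:A} \sum_{\substack{g:\hom_A(y,x) \\ f,h:\hom_A(x,y)}} (g \circ f = \id_x) \times (h \circ g = \id_y),
\]
which is exactly $\sum_{x,y:A} \sum_{g:\hom_A(y,x)} \isIso(g) \equiv \iso(A)$, upon reading $f$ as the right inverse and $h$ as the left inverse in the definition of $\isIso(g)$.

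The main obstacle is the second step: correctly pinning down the cocone type and bookkeeping the three face maps together with the two collapses, all up to the coherence homotopies of the colimit. Once that description is secured, the passage to $\iso(A)$ is a routine application of the contractibility of composites in a Segal type, and notably uses Segalness but not Rezk-completeness, matching the hypothesis of the statement.
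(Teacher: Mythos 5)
Your proposal is correct and follows essentially the same route as the paper's proof: compute $(\walkBinv \to A)$ as the type of cocones under $\mathcal D$ via the pushout's universal property, unwind the gluing data into two $2$-simplices with degenerate long edges sharing a middle edge, and then use the Segal contractibility of composites to turn each $2$-simplex filler $\hom_A^2(f,g;\id)$ into an identification $g\circ f = \id$, yielding the bi-invertibility data of $\iso(A)$. The only cosmetic difference is that the paper packages the intermediate step as strict extension types over the horns $\Lambda^2_2$ and $\Lambda^2_0$ (invoking de-/strictification), where you keep the data as $\hom^2$-types with specified boundary.
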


\begin{proof}
We leave out the explicit descriptions of $\mathcal G$ and $\mathcal D$, but they are straightforward. By the universal property of the pushout, for an arbitrary type $A:\UU$, there is an equivalence
\[ (\walkBinv \to A) \equiv \cocone_{\mathcal S(\mathcal D)}(A).\]
Then it follows by unwinding the data of the diagram $\mathcal D$ that there is a chain of equivalences
\begin{align*}
	(\walkBinv \to A)
  &\equiv \sum_{a,b:A} \sum_{\sigma,\tau:\Delta^2 \to A} \left(\sigma \circ d^1 = \id_b\right) \times \left(\sigma \circ d^2 = \tau \circ d^0\right) \times \left(\tau \circ d^1 = \id_a\right) \\
	 &\equiv \sum_{a,b:A} \sum_{\sigma,\tau:\Delta^2 \to A}  \prod_{t:\Delta^1} \sigma(t,t)=\id_b \times \sigma(1,t) = \tau(t,0) \times \tau(t,t) = \id_a \\
	&\equiv \sum_{a,b:A} \sum_{f:a \to b} \ndexten{\Delta^2}{A}{\Lambda_2^2}{[f,\id_b]} \times  \ndexten{\Delta^2}{A}{\Lambda_0^2}{[f,\id_a]} \\
	&\equiv \sum_{a,b:A} \sum_{\substack{f:a \to b \\ g,h:b \to a}} fg=\id_b \times fh=\id_a \tag{\text{$A$ is Segal, cf.~\cite[Proposition 5.10]{RS17}}}\\
	&\equiv \iso(A),
\end{align*}
where one invokes de-/strictifcation as necessary.
\end{proof}

\subsubsection{Rezk types}
For the walking bi-invertible arrow $\walkBinv$, consider the terminal projection $!_\walkBinv : \walkBinv \to\unit$ and the inclusion maps $j_k : \unit \hookrightarrow \walkBinv$ for $k=0,1$. For a fixed type $B$, we write $\partial_k :\jdeq B^{j_k} : B^\walkBinv \to B$. We prove a characterization of Rezk types \`{a}~la~\cite[Proposition 6.4]{rez01}.
In particular, the Rezk types are exactly those among Segal types that are $\walkBinv$-null. This is in line with~\cite[Theorem 6.2]{rez01}.

In general, given a map or shape inclusion $j: Y \to X$ we say a type $B$ is \emph{$j$-local} (\cf~\Cite[Definition~2.1]{RSSmod}) if the terminal projection $!_B: B \to \unit$ is $j$-orthogonal which is equivalent to the map $B^j: B^X \to B^Y$ being an equivalence.  \emph{E.g.}, a type is Segal iff it is $\iota$-local, where $\iota: \Lambda_1^2 \hookrightarrow \Delta^2$.

In case $j \jdeq !_Y: Y \to \unit$ is a terminal projection and $B$ is $!_Y$-local we say that $B$ is \emph{$Y$-null}.

\begin{prop}[Characterization of Rezk types]\label{prop:char-rezk}
	Let $B$ be a Segal type. Then the following are equivalent:
	\begin{enumerate}
		\item\label{thm:rezk-idtoiso-equiv} The type $B$ is Rezk, \ie,
		\[ \isEquiv(\idtoiso_B) \equiv\prod_{\substack{x,y:B \\ f:\iso_B(x,y)}} \isContr\Big( \sum_{\substack{u,v:B \\ p:(u =_B v)}} \angled{u,v,\idtoiso_{B,u,v}(p)}  = \angled{x,y,f} \Big) .\]
		\item\label{thm:rezk-j-local} The type $B$ is $\walkBinv$-null, meaning $B$ is $!_\walkBinv$-local, \ie,
		\[  \isEquiv(B^{!_\walkBinv}) \equiv  \prod_{f:B^\walkBinv} \isContr\Big( \sum_{x:B} f=\id_x\Big). \]
		\item\label{thm:rezk-i-local} The type $B$ is $j_k$-local for $k=0$ or $k=1$, \ie,
		\[ \isEquiv(B^{j_k}) \equiv \prod_{x:B} \isContr\Big( \sum_{f:B^\walkBinv} \partial_k f = x \Big).  \]
	\end{enumerate}
\end{prop}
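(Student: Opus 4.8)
The plan is to transport all three conditions across the equivalence $B^\walkBinv \equiv \iso(B)$ supplied by the preceding proposition (which applies since $B$ is Segal), under which they all become assertions about a single section--retraction pair of maps; the equivalences then follow from the elementary fact that in a retraction one map is an equivalence exactly when the other is. To set this up, I would first fix three maps. Write $s : B \to \iso(B)$ for $s(x) \defeq \angled{x,x,\id_x}$, the identity isomorphism at $x$, and let $p_0, p_1 : \iso(B) \to B$ be the source and target projections, $p_0\angled{x,y,f}\defeq x$ and $p_1\angled{x,y,f}\defeq y$. Since the underlying arrow of $\id_x$ has both endpoints equal to $x$, we have $p_0 \circ s = p_1 \circ s = \id_B$, so $s$ is a common section of $p_0$ and $p_1$.

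Next I would identify these maps with those in the statement. Under the equivalence $B^\walkBinv \equiv \iso(B)$, the map $B^{!_\walkBinv} : B \to B^\walkBinv$ (precomposition with the terminal projection) sends $x$ to the constant function $\walkBinv \to B$ at $x$, whose associated isomorphism is $\id_x$; hence $B^{!_\walkBinv}$ corresponds to $s$. Dually, $\partial_k = B^{i_k} : B^\walkBinv \to B$ is evaluation at the $k$-th basepoint, which under the equivalence reads off the source (resp.\ target) of the represented isomorphism, so $\partial_0,\partial_1$ correspond to $p_0,p_1$. Verifying these identifications is the step requiring the most care: it amounts to unwinding the reflexive-graph and pushout presentation of $\walkBinv$ together with the chain of equivalences in the preceding proof, checking that the generating maps $!_\walkBinv$ and $i_0,i_1$ induce exactly $s$ and $p_0,p_1$ on the $\iso$-side, with de-/strictification applied where needed. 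I expect this bookkeeping to be the main obstacle.

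Then I would rephrase condition (1). A fiberwise map is a fiberwise equivalence iff its induced map on total spaces is an equivalence, so $\idtoiso_B$ is an equivalence iff the total map $\sum_{a,b:B}(a=_Bb) \to \iso(B)$ is; contracting the based path space gives $\sum_{a,b:B}(a=_Bb) \simeq B$, and the total map carries $\angled{a,a,\refl_a}$ to $\angled{a,a,\id_a}$, so it is precisely $s$. Thus condition (1) is the assertion $\isEquiv(s)$, which by the identification above is verbatim condition (2), $\isEquiv(B^{!_\walkBinv})$; the displayed contractibility statements are the ``contractible fibers'' reformulations of these equivalences. Hence (1) $\Leftrightarrow$ (2).

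Finally, for (2) $\Leftrightarrow$ (3) I would use the retraction $p_k \circ s = \id_B$: if $s$ is an equivalence then $p_k = s^{-1}$ is one as well, and conversely if $p_k$ is an equivalence then $s = p_k^{-1}$ is; so $\isEquiv(s) \Leftrightarrow \isEquiv(\partial_k)$ for each $k \in \{0,1\}$. Transporting this back through the identifications and the contractible-fibers descriptions (the fiber of $\partial_k$ over $x$ being $\sum_{f:B^\walkBinv}(\partial_k(f)=x)$) yields condition (3), completing the chain of equivalences.
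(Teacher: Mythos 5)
Your proposal is correct, but it takes a genuinely different route from the paper's proof. The paper establishes the four implications directly and element-wise: for $(1)\Leftrightarrow(2)$ it uses path induction to argue that the unique datum $\angled{u,v,p}$ with $\angled{u,v,\idtoiso_{B,u,v}(p)}=\angled{x,y,f}$ may be taken of the form $\angled{x,x,\refl_x}$ (and conversely), and for $(2)\Leftrightarrow(3)$ it unwinds the fibers of $\partial_k$ by hand, using that $\id_x$ always lies in the fiber over $x$ and that contractibility then forces any $f$ with $\partial_k(f)=x$ to be identified with $\id_x$. You instead reduce all three conditions to equivalence statements about a single section--retraction pair: $s(x)\defeq\angled{x,x,\id_x}$ and the projections $p_k$, identified with $B^{!_\walkBinv}$ and $\partial_k$ under $(\walkBinv\to B)\equiv\iso(B)$. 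Condition (1) becomes $\isEquiv(s)$ via the standard lemma that a fiberwise map is a fiberwise equivalence iff its total map is one, together with contractibility of based path spaces; then $(2)\Leftrightarrow(3)$ is the formal retract argument: since $p_k\circ s=\id_B$, either map being an equivalence forces the other to be its quasi-inverse, uniformly in $k$. What your organization buys is that all the path-induction content is packaged into two standard lemmas and the $(2)\Leftrightarrow(3)$ step collapses to one line; the cost is the bookkeeping you correctly flag as the delicate step, namely checking that $!_\walkBinv$ and $i_k$ really do induce $s$ and $p_k$ across the pushout presentation of $\walkBinv$ and the chain of equivalences in the preceding proposition. Note, though, that the paper's proof incurs the same debt silently, since it freely treats elements of $B^\walkBinv$ as isomorphisms, $\partial_k$ as an endpoint projection, and $f=\id_x$ as an identification of isomorphisms, so this overhead is shared by both arguments rather than being an extra burden of yours.
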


\begin{proof}
	\leavevmode
	\begin{enumerate}
		\item[$\ref{thm:rezk-idtoiso-equiv} \implies \ref{thm:rezk-j-local}$:]
		By assumption, for $\angled{x,y,f}:\sum_{x,y:B} \iso_B(x,y)$ there exists, uniquely up to homotopy, a datum $\angled{u,v,p}: \sum_{u,v:B} (u=_Bv)$ such that $\angled{u,v,p} = \angled{x,y,\idtoiso_{B,u,v}(p)}$. By path induction, we can take $\angled{u,v,p} :\jdeq \angled{x,x,\refl_x}$, thus yielding $x:B$ uniquely with $f=\id_x$.

		\item[$\ref{thm:rezk-j-local} \implies \ref{thm:rezk-idtoiso-equiv}$:]
		By assumption, it suffices to find for any $x:B$, uniquely up to homotopy a path $p:(u=v)$, $u,v:B$ with $\idtoiso_{B,u,v}(p)=\id_x$. By path induction, $p$ can be taken to be $\refl_x$.

		\item[$\ref{thm:rezk-j-local} \implies \ref{thm:rezk-i-local}$:]
		For any $x:B$ clearly $f\defeq \id_x$ is an isomorphism with $\partial_k(f) =  x$. By assumption however, for \emph{any} isomorphism $f:B^\walkBinv$, there is a path from $f$ to some identity.

		\item[$\ref{thm:rezk-i-local} \implies \ref{thm:rezk-j-local}$:]
		Let $f:\iso_B(x,y)$, $x,y:B$. By assumption, any $g:B^\walkBinv$ with $\partial_k(g)=x$ is uniquely determined up to homotopy. Thus, $f=\id_x$.\qedhere
	\end{enumerate}
\end{proof}

\paragraph{NB} Even though the $!_\walkBinv$-local and the $j_k$-local Segal types coincide, the relative version of this statement is not true: not every $!_\walkBinv$-orthogonal family or map is $j_k$-orthogonal. As a counterexample, consider~$\unit+\unit\to \walkBinv$.

\subsubsection{Isoinner families}

We introduce the notion of isoinner family over general bases, even though we mostly will require the base to be (complete) Segal. Over Segal bases, being isoinner can be expressed through relative Segal and Rezk conditions. Then it follows again by \cref{ssec:clos-orth} that these families are closed under several operations.

\begin{defn}[Isoinner family]
	A type family $P: B \to \UU$ is an \emph{isoinner family} if it is an inner family and every fiber is $!_\walkBinv$-local, \ie,~the proposition
	\[ \isIsoInnerFam(P) :\jdeq \isInnerFam(P) \times \prod_{b:B} \prod_{f:\iso(P\,b)} \isContr \Big( \sum_{e:P\,b} f = \id_e \Big)\]
	is inhabited.
\end{defn}

In case $B$ is a Segal type, $P:B \to \UU$ being an isoinner family is equivalent to
\[ \isIsoInnerFam(P) \, \equiv \isInnerFam(P) \times \prod_{b:B} \isRezk(P\,b) \]
by \cref{prop:char-rezk}.

Clearly, an isoinner family over $\unit$ is the same as a Rezk type.

Recall in particular, that total spaces of inner families over Segal types are Segal. It turns out that over Segal types, being isoinner can be expressed as a right orthogonality property.

\begin{prop}[Orthogonality characterization of isoinner families]
Let $P:B \to \UU$ be a family s.t.~every fiber $P\,b$ is Segal. Then every fiber is Rezk if and only if $\pi:E \to B$ is right orthogonal to $\walkBinv \to \unit$, \ie,~the diagram
\[\begin{tikzcd}
	{\totalty{P}} && {\totalty{P}^\mathbb E} \\
	B && {B^\mathbb E}
	\arrow[from=1-1, to=2-1]
	\arrow["{\pttoid_B}"', from=2-1, to=2-3]
	\arrow["{\total(\pttoid_{P})}", from=1-1, to=1-3]
	\arrow[from=1-3, to=2-3]
\end{tikzcd}\]
	is a homotopy pullback, where
	\[ \pttoid_B \defeq \lambda b.\id_b, \quad \pttoid_P  \defeq \lambda b, e.\pair{\id_b}{\id_e}. \]
\end{prop}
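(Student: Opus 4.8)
The plan is to reduce the claimed homotopy-pullback condition to a \emph{fiberwise} locality condition and then to invoke \cref{prop:char-rezk}. First I would recall the standard fact that a commutative square is a homotopy pullback precisely when, for every point of the base, the induced map between the vertical fibers is an equivalence. (Equivalently, by \cref{prop:orth-maps} the square is a pullback iff the Leibniz cotensor gap map is an equivalence, which is exactly the right-orthogonality of $\pi$ against $!_\walkBinv:\walkBinv\to\unit$; I would mention this to justify the two phrasings in the statement.) Applying this criterion to the square at hand, whose vertical maps are $\pi:\totalty{P}\to B$ on the left and $\pi^\walkBinv:\totalty{P}^\walkBinv\to B^\walkBinv$ on the right, reduces the whole claim to analyzing a single induced map on fibers over each $b:B$.

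Next I would compute these fibers. The fiber of $\pi$ over $b$ is $P\,b$. The bottom map $\pttoid_B$ sends $b$ to the constant map $\id_b:\walkBinv\to B$, and the fiber of $\pi^\walkBinv$ over a section $\gamma:\walkBinv\to B$ is $\prod_{w:\walkBinv}\fib_{\gamma(w)}(\pi)$; for the constant section $\gamma=\id_b$ this is $\prod_{w:\walkBinv}\fib_b(\pi)\equiv(P\,b)^\walkBinv$. Unwinding the definition $\total(\pttoid_P)\defeq\lambda b,e.\pair{\id_b}{\id_e}$, the element $e:P\,b$ is sent to the constant map $\lambda w.e$, so under the identification above the induced map on fibers is precisely the locality map $(P\,b)^{!_\walkBinv}:P\,b\to(P\,b)^\walkBinv$. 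Hence the square is a homotopy pullback if and only if $(P\,b)^{!_\walkBinv}$ is an equivalence for every $b:B$, i.e.\ every fiber is $!_\walkBinv$-local. (The forward direction also follows directly from \cref{cor:fib-j-orth}, but the fiberwise analysis yields both implications uniformly.)

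Finally, since by hypothesis every $P\,b$ is Segal, \cref{prop:char-rezk} (the equivalence of items~\ref{thm:rezk-idtoiso-equiv} and~\ref{thm:rezk-j-local}) identifies $!_\walkBinv$-locality of $P\,b$ with $P\,b$ being Rezk. Chaining the two equivalences—homotopy pullback iff fiberwise $!_\walkBinv$-local, and fiberwise $!_\walkBinv$-local iff fiberwise Rezk—gives the desired biconditional.

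I expect the main obstacle to be the fiber computation of the second step: one must check carefully that forming $(-)^\walkBinv$ commutes with passing to the fiber over a constant section (so that $\fib_{\id_b}(\pi^\walkBinv)\equiv(P\,b)^\walkBinv$), and that under this identification $\total(\pttoid_P)$ restricts \emph{on the nose} to the locality map $e\mapsto\id_e$ rather than merely to a map homotopic to it. Everything else is a direct application of results already established above.
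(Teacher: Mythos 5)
Your proposal is correct and follows essentially the same route as the paper's proof: the paper likewise reduces the homotopy-pullback condition to the fiberwise statement that each map $P\,b \to (P\,b)^{\walkBinv}$, $e \mapsto \id_e$, is an equivalence (phrased as contractibility of its fibers), and then invokes \cref{prop:char-rezk} to identify this with every fiber being Rezk. Your version merely spells out the fiber computation that the paper leaves implicit, which is a reasonable elaboration rather than a different argument.
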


\begin{proof}
The diagram is a pullback if and only if
\[ \prod_{b:B} \prod_{f:\walkBinv \to P\,b} \isContr\big( \sum_{e:P\,b} \id_e =_{\walkBinv \to P\,b} f\big).  \]
By \cref{prop:char-rezk} this is equivalent to every fiber being Rezk.
\end{proof}

\begin{cor}
For any type $B$, an inner family $P:B \to \UU$ is isoinner if and only if it is right orthogonal to $\walkBinv \to \unit$.
\end{cor}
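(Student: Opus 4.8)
The plan is to reduce the statement to the immediately preceding orthogonality characterization of isoinner families, whose sole standing hypothesis is that every fiber be Segal. The key observation is that, under the present assumption that $P$ is inner, this hypothesis is automatic. Concretely, I would first invoke \cref{prop:innerfib-has-segal-fibers}: since $P$ is inner, every fiber $P\,b$ is a Segal type. This is exactly the precondition required to apply the preceding proposition, so no further work is needed to secure it.

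Next I would unpack the definition of isoinnerness with the innerness of $P$ held fixed. By definition $\isIsoInnerFam(P) \equiv \isInnerFam(P) \times \prod_{b:B}(\text{$P\,b$ is $!_\walkBinv$-local})$, and since $\isInnerFam(P)$ is assumed to hold, $P$ is isoinner precisely when every fiber is $!_\walkBinv$-local. Because every fiber is already known to be Segal, I would then apply \cref{prop:char-rezk}, which identifies the $!_\walkBinv$-local condition with the Rezk condition for Segal types, to rewrite this as the statement that every fiber $P\,b$ is Rezk.

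Finally, with all fibers Segal the preceding orthogonality characterization of isoinner families applies and yields that every fiber is Rezk if and only if $\pi:\totalty{P}\to B$ is right orthogonal to $\walkBinv\to\unit$. Chaining the equivalences---$P$ isoinner $\iff$ all fibers $!_\walkBinv$-local $\iff$ all fibers Rezk $\iff$ $\pi$ right orthogonal to $\walkBinv\to\unit$---establishes the corollary. I do not anticipate any genuine obstacle: the entire argument is a matter of correctly matching hypotheses, and the only point requiring care is that the Segal-fiber precondition of the preceding proposition is furnished by \cref{prop:innerfib-has-segal-fibers} rather than assumed separately.
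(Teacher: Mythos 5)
Your proof is correct and is precisely the intended derivation: the paper states this as an immediate corollary of the preceding orthogonality characterization, with the Segal-fiber hypothesis supplied by \cref{prop:innerfib-has-segal-fibers} and the identification of $!_\walkBinv$-locality with Rezk-ness via \cref{prop:char-rezk}, exactly as you chain them. No gaps.
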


\begin{prop}
	Let $B$ be a Rezk type and $P: B \to \UU$ an isoinner family. Then the total type $\totalty{P} :\jdeq \sum_{b:B}P\,b$ is a Rezk type.
\end{prop}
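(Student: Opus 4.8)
The plan is to verify the two defining conditions of a Rezk type---being Segal and being Rezk-complete---separately, in each case reducing to a result already in hand.

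First I would dispatch Segalness. Since $B$ is Rezk it is in particular Segal, and by hypothesis $P$ is an isoinner family, hence in particular inner. Therefore \cref{prop:innerfib-total-sp-over-segal-base-is-segal} applies directly and shows that $\totalty{P}$ is a Segal type.

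For Rezk-completeness I would pass to the orthogonality reformulation and invoke the closure result \cref{prop:totalty-orth-maps} with $j$ taken to be the terminal projection $!_\walkBinv : \walkBinv \to \unit$. On the one hand, since $B$ is Rezk, \cref{prop:char-rezk} (the equivalence between being Rezk and being $!_\walkBinv$-local) tells us that $B$ is $!_\walkBinv$-local; unwinding, this says precisely that $B$, regarded as a type, is right orthogonal to $!_\walkBinv$, i.e.\ that $B$ is a $j$-orthogonal type. On the other hand, $P$ is isoinner, so by the orthogonality characterization of isoinner families (and its corollary, stating that an inner family is isoinner exactly when it is right orthogonal to $\walkBinv \to \unit$) the projection $\pi : \totalty{P} \to B$ is a $j$-orthogonal map. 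Now \cref{prop:totalty-orth-maps}, applied with this $j$ and with the $j$-orthogonal codomain $B$, asserts that $\totalty{P}$ is $j$-orthogonal if and only if $\pi$ is; since $\pi$ is, the total type $\totalty{P}$ is itself $!_\walkBinv$-local. Feeding this back into \cref{prop:char-rezk}: having already established that $\totalty{P}$ is Segal and now that it is $!_\walkBinv$-local, the same equivalence gives that $\totalty{P}$ is Rezk, as desired.

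The step I expect to require the most care is the middle one, namely matching up the three guises of one and the same condition: ``$!_\walkBinv$-local'', ``right orthogonal to $\walkBinv \to \unit$'', and ``$j$-orthogonal type/map'' for $j = {!_\walkBinv}$. Concretely one must check that $B$ being $!_\walkBinv$-local---that the restriction $B \to B^{\walkBinv}$ is an equivalence---is exactly the instance of right orthogonality of $B \to \unit$ against $!_\walkBinv$ that \cref{prop:totalty-orth-maps} consumes as its hypothesis on the codomain. Once that identification is made, everything else is a direct citation; no new lifting problem or explicit computation is introduced.
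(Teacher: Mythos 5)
Your proof is correct and is essentially the paper's own argument: the paper's proof is the one-liner ``This is again an instance of \cref{prop:totalty-orth-maps}'', which bundles exactly the two applications you spell out---Segalness via \cref{prop:innerfib-total-sp-over-segal-base-is-segal}, and completeness by applying \cref{prop:totalty-orth-maps} to $j = {!_\walkBinv}$ (using that the isoinner projection is right orthogonal to $\walkBinv \to \unit$ and that the Rezk base is $!_\walkBinv$-local) and then feeding the result back through \cref{prop:char-rezk}. The identification of the three guises of locality/orthogonality that you flag is precisely what the paper leaves implicit, and it goes through as you describe.
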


\begin{proof}
	This is again an instance of \cref{prop:totalty-orth-maps}.
\end{proof}

Formally from Subsection~\ref{ssec:clos-orth} it follows that, over Segal bases, isoinner families enjoy several closure properties.
\begin{prop}
	Isoinner families over Segal types are closed under dependent products, composition, pullback, left cancelation, and sequential limits. Families corresponding to equivalences are always isoinner.
\end{prop}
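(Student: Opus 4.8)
The plan is to reduce every assertion to the single-map closure results already established for $j$-orthogonal families in \cref{ssec:clos-orth}, applied to the two maps that jointly characterize isoinnerness. First I would record that, by the definition of $\isIsoInnerFam$ together with the Corollary just proven (for any base $B$, an inner family is isoinner precisely when it is right orthogonal to $\walkBinv \to \unit$), a family $P : B \to \UU$ is isoinner if and only if it is simultaneously right orthogonal to the inner horn inclusion $\Lambda_1^2 \hookrightarrow \Delta^2$ and to the terminal projection $\walkBinv \to \unit$. Since this characterization holds over an arbitrary base, isoinnerness is literally the conjunction of two $j$-orthogonality conditions, with $j_1 \defeq (\Lambda_1^2 \hookrightarrow \Delta^2)$ and $j_2 \defeq (\walkBinv \to \unit)$.

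With this reduction in place, each closure property would follow by invoking the corresponding result of \cref{ssec:clos-orth} once for $j_1$ and once for $j_2$ and conjoining the conclusions: closedness under dependent products, under pullback (both along maps and along families), and under sequential limits transfer immediately, since any operation preserving each orthogonality condition separately preserves their conjunction. The same applies to composition and to left cancelation, using the respective result for $j$-orthogonal families for each of $j_1, j_2$ in turn. The final claim, that families corresponding to equivalences are isoinner, I would deduce from \cref{prop:orth-maps-equivs}: an equivalence is $j$-orthogonal for every $j$, hence in particular for both $j_1$ and $j_2$.

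The only point needing care beyond this bookkeeping is the hypothesis that the bases be Segal, which enters chiefly in the composition statement. There the family $Q$ lives over the total type $\totalty{P}$ rather than over $B$, so to remain within the intended setting one must check that $\totalty{P}$ is again Segal; this is exactly \cref{prop:innerfib-total-sp-over-segal-base-is-segal}, since an isoinner $P$ is in particular inner and an inner family over a Segal base has Segal total type. Then $Q$ is isoinner over the Segal type $\totalty{P}$, and $Q \compfam P$ is isoinner over $B$ as above. The remaining routine verification is just that products and sequential limits of Segal types are again Segal (Segal types form an exponential ideal), so that the ambient Segal hypotheses propagate through the constructions. I expect the main---though mild---obstacle to be precisely this propagation of Segalness under composition; the orthogonality content itself is inherited for free from \cref{ssec:clos-orth}.
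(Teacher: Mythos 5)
Your proof is correct and takes essentially the same route as the paper, which gives no explicit proof at all but merely remarks that the closure properties ``formally follow'' from \cref{ssec:clos-orth} once isoinnerness is recognized (via the preceding Corollary) as the conjunction of right orthogonality to $\Lambda_1^2 \hookrightarrow \Delta^2$ and to $\walkBinv \to \unit$. Your added point that the composition case requires $\totalty{P}$ to be Segal, supplied by \cref{prop:innerfib-total-sp-over-segal-base-is-segal}, correctly fills in a detail the paper leaves implicit.
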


\section{Cocartesian families}\label{sec:cocart-fams}

Cocartesian families $P:B \to \UU$ encode copresheaves of $\inftyone$-categories. All fibers $P\,b$ are Rezk types, and $P$ is (covariantly) \emph{functorial} in the sense that an arrow $u:a \to b$ in $B$ induces a functor $u_!: P\,a \to P\,b$, and this transport operation is natural w.r.t.~directed arrows in $B$, \ie,~it respects composition and identities. In fact, we will often reason about cocartesian families $P:B \to \UU$ in terms of their associated projection $\pi \defeq \pi_P: E \to B$. Our study is informed by~\cite[Chapter 5]{RV} and~\cite{RVyoneda} in an essential way, where Riehl--Verity develop a model-independent theory of cocartesian fibrations intrinsic to an arbitrary $\infty$-cosmos. While this constitutes more generally a fibrational theory of $\inftyn$-categories, for $0 \le n \le \infty$, our present study restricts to $\inftyone$-categories, and at the same time extends Riehl--Shulman's treatment of synthetic $\inftyone$-categories fibered in $\infty$-groupoids.

Reminiscent to the classical (1-categorical) definition, we introduce cocartesian families in terms of the existence of enough cocartesian liftings. However, we also give alternative characterizations, such as the \emph{Chevalley criterion} which allows us to develop the theory in the style of formal category theory, as done by Riehl and Verity~\cite{RV} for $\infty$-cosmoses.

Specifically, over Rezk types cocartesian families are exactly the isoinner families that are $i_0$-LARI families in the sense of Subsection~\ref{ssec:lari-fam}, for $i_0: \unit \to \Delta^1$. Spelled out, this means that the gap map in the pullback
\[\begin{tikzcd}
	{E^{\Delta^1}} & {} \\
	&& {\pi \downarrow B} && E \\
	&& {B^{\Delta^1}} && B
	\arrow[two heads, from=2-3, to=3-3]
	\arrow["{\partial_0}"', from=3-3, to=3-5]
	\arrow[from=2-3, to=2-5]
	\arrow["\pi", two heads, from=2-5, to=3-5]
	\arrow["\lrcorner"{anchor=center, pos=0.125}, draw=none, from=2-3, to=3-5]
	\arrow["{\partial_0}", shift left=2, curve={height=-18pt}, from=1-1, to=2-5]
	\arrow[""{name=0, anchor=center, inner sep=0}, "\chi"', curve={height=12pt}, dashed, from=2-3, to=1-1]
	\arrow[""{name=1, anchor=center, inner sep=0}, "{i_0\widehat{\pitchfork}\pi}"', curve={height=12pt}, from=1-1, to=2-3]
	\arrow["{\pi^{\Delta^1}}"', shift right=2, curve={height=18pt}, two heads, from=1-1, to=3-3]
	\arrow["\dashv"{anchor=center, rotate=-118}, draw=none, from=0, to=1]
\end{tikzcd}\]
has a left adjoint right inverse $\chi: \comma{\pi}{B} \to E^{\Delta^1}$ which yields the up to homotopy uniquely determined cocartesian lifts.

We proceed by studying \emph{cocartesian functors} between cocartesian families (incarnated as fibrations). Then, using Subsection~\ref{ssec:lari-closed}, we are able to prove type-theoretic versions of the $\infty$-cosmological closure properties of cocartesian fibrations, which in our case means that the $\inftyone$-category of cocartesian fibrations is complete \wrt~to certain $\inftyone$-limits.\footnote{In more technical terms, our results can be interpreted as type-theoretic proofs of the completeness of the $\inftyone$-categorical core of the $\infty$-cosmos $\cosCocart(\cosRezk)$, itself presenting an $\inftytwo$-category (cf.~\cite[Definition 12.1.10, Remark~12.1.11]{RV}).} Note that, ideally, these would be statements involving universe types which themselves are Rezk. These are beyond the scope of the current discussion, but nevertheless we can ``externalize'' these completeness statements to our univalent universe $\UU$ of arbitrary simplicial types, yielding \cref{prop:cocart-cosm-closure}.

Finally, we prove characterizations of cocartesian functors complementing the ones for cocartesian fibrations, \eg,~the Chevalley criterion for cocartesian functors says that a fibered functor
\[\begin{tikzcd}
	F && E \\
	A && B
	\arrow["\xi"', two heads, from=1-1, to=2-1]
	\arrow[from=2-1, to=2-3]
	\arrow[from=1-1, to=1-3]
	\arrow["\pi", two heads, from=1-3, to=2-3]
\end{tikzcd}\]
between cocartesian fibrations is a cocartesian functor if and only if the mate of the induced square
\[\begin{tikzcd}
	{F^{\Delta^1}} && {E^{\Delta^1}} \\
	{\xi \downarrow A} && {\pi \downarrow B}
	\arrow["i_0 \cotens \xi", swap, two heads, from=1-1, to=2-1]
	\arrow[from=2-1, to=2-3]
	\arrow[from=1-1, to=1-3]
	\arrow["i_0 \cotens \pi", two heads, from=1-3, to=2-3]
	\arrow["{=}"{description}, Rightarrow, from=2-1, to=1-3]
\end{tikzcd}\]
is invertible.\footnote{A development of the required results about adjunctions in simplicial type theory is given in \cref{app:ssec:pasting-lax,app:ssec:mates}.} As an application, we prove that fibered adjunctions between cocartesian fibrations are cocartesian functors, adapting Riehl--Verity's proof in $\infty$-cosmoses to the type-theoretic setting.

\subsection{Cocartesian arrows}\label{ssec:cocart-arr}

Just as in $1$-category theory, there is also a notion of cocartesian arrow in higher dimensional category theory. These are dependent arrows\footnote{In fact, in general $\infty$-cosmoses Riehl--Verity consider cocartesian $2$-cells between generalized elements~\cite[Definition~5.1.1]{RV}. In simplicial type theory one could imagine a similar development as well, using exponential transposition and closedness of cocartesian families under dependent products, cf.~also~\cite[Lemma~5.6.5]{RV}.} satisfying a certain initiality property.

In this section, we provide a few characterizations, analogous to Joyal and Lurie's for quasi-categories, and a few properties that will be useful later on when investigating cocartesian fibrations.

\subsubsection{Definition and basic properties}

\begin{defn}[Cocartesian arrow]
	Let $B$ be a type and $P: B \to \UU$ be an inner family.
	Let $b,b': B$, $u:\hom_B(b,b')$, and $e:P\,b$, $e':P\,b'$. An arrow $f : \hom^P_u(e,e')$ is a \emph{($P$-)cocartesian morphism} or \emph{($P$-)cocartesian arrow} if and only if
	\[
	\isCocartArr^P_u f :\jdeq \prod_{\sigma:\ndexten{\Delta^2}{B}{\Delta_0^1}{u}}
	\prod_{h:\prod_{t:\Delta^1} P\,\sigma(t,t)} \isContr \left( \exten{\pair{t}{s}:\Delta^2}{P \sigma(t,s)}{\Lambda_0^2}{[f,h]}  \right).
	\]
\end{defn}
This is illustrated in~\cref{fig:cocart-arr}. Notice that being a cocartesian arrow is a proposition.
\begin{figure}
	\centering
	\includegraphics{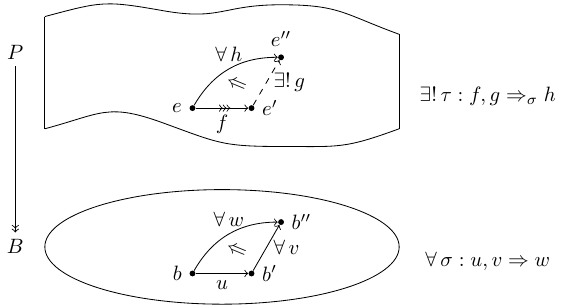}
	\caption{The universal property of cocartesian arrows}
	\label{fig:cocart-arr}
\end{figure}
By expressing the functions on simplices in terms of
objects, morphisms and composition, we obtain an equivalent type:
\begin{equation}\label{eq:isCocartArrEquiv}
  \begin{split}
    \isCocartArr^P_u f \equiv
  		&\prod_{\substack{b'':B \\ v:b' \to b'' \\ w:b \to b''}}
		\prod_{\sigma:u,v \Rightarrow_B w} \prod_{\substack{e'':P\,b'' \\ h:e \to^P_w e''}} \isContr\Big( \sum_{g:e' \to_v^P e''} (f,g \Rightarrow_\sigma^P h)\Big)
  \end{split}
\end{equation}

Diagrammatically, this is expressed as the existence of a filler, unique up to homotopy, as in a diagram of the following form, cf.~\cref{ssec:fib-shapes}:
\[\begin{tikzcd}
	{\Delta^1} & {\Lambda_0^2} & {\widetilde{P}} \\
	& {\Delta^2} & {B}
	\arrow["{\{0,1\}}"', from=1-1, to=1-2]
	\arrow[from=1-2, to=2-2]
	\arrow[from=1-2, to=1-3]
	\arrow[from=2-2, to=2-3]
	\arrow["{\pi}", from=1-3, to=2-3]
	\arrow["{f}", from=1-1, to=1-3, curve={height=-18pt}]
	\arrow[from=2-2, to=1-3, dashed]
\end{tikzcd}\]

\paragraph{NB} We will refrain from reasoning about cocartesian arrows at this level of generality, \ie,~for the case of arbitrary type families with non-trivial composition $2$-cells. Hence, from now on we will mostly consider families of (complete) Segal types.

\emph{Notation:} Let $f$ be a cocartesian arrow. Then, given an arrow $v$ in the base with $\partial_1(\pi\,f) = \partial_0(v)$ and $h$ a dependent arrow over $v \circ \pi(f)$ with $\partial_0\,f = \partial_0\,h$, we denote by $\tyfill_{v,f}(h)$ the arrow $g$ determined uniquely up to homotopy such that $gf = h$. In the notation, we will usually leave the arrow $v$ implicit, writing $\tyfill_{f}(h)$. This is justified syntactically since syntactically the datum $v$ can be inferred from $f$. But please do keep in my mind that the universal property of $f$ quantifies explicitly first over $v$, then $h$.

\begin{defn}[Cocartesian lift]
	Let $B$ be a type and $P: B \to \UU$ be an inner family.
  For $b,b': B$, $u:\hom_B(b,b')$, and $e:P\,b$,
  we define the type of \emph{($P$-)cocartesian lifts of $u$
    starting at $e$} to be
  \[
    \CocartLift_P(u,e) \defeq
    \sum_{e':P\,b'} \sum_{f : \dhom^P_u(e,e')}
    \isCocartArr^P_u f.
  \]
\end{defn}
For Segal types, where composites are uniquely determined,
we can further rewrite \eqref{eq:isCocartArrEquiv}:%
\footnote{Here, composition in the family is to be understood as
  \emph{dependent composition} in the sense of~\cite[Remark~8.11]{RS17}, though we will mostly leave this implicit in our notation.}
\[
	\isCocartArr^P_u f \equiv \prod_{b'':B}~\prod_{v:b'\to_B b''}~
	\prod_{e'':P\,b''}~ \prod_{h:e\to^P_{v \circ u} e''}
  \isContr\Bigl( \sum_{g:e'\to^P_v e''} g \circ^P f = h \Bigr).
\]
In fact, we recognize the right-hand side as expressing the initiality
of the object $\angled{b',\id_{b'},e',f}$ in the type
\begin{equation}\label{eq:cocart-initiality-type}
  A(u,e) \defeq \sum_{b'':B}~ \sum_{v:b' \to_B b''}~
	\sum_{e'':P\,b''} (e \to^P_{v\circ u} e'').
\end{equation}

Indeed, $A(u,e)$ is a Segal type as can be seen as follows.

First, note that we can use the types
\[ b^*B^{\Delta^2} \equiv \ndexten{\Delta^2}{B}{\Delta_0^0}{b}, \quad b^*E^{\Delta^1}  \equiv \sum_{u:b \downarrow B} \sum_{e:P\,b} e \downarrow E\]
as strict models for the pullbacks:
\[
\begin{tikzcd}
 b^*B^{\Delta^2} \ar[d] \arrow[rd, phantom, "\lrcorner", very near start] \ar[r]& B^{\Delta^2} \ar[d, "\ev_{00}"] & & 	b^*E^{\Delta^1} \ar[d] \arrow[rd, phantom, "\lrcorner", very near start] \ar[r]& E^{\Delta^1} \ar[d]
	\ar[r,  "\partial_0"] & E \ar[dl, "\pi"] \\
	 \unit \ar[r, "b"] & B & &	\unit \ar[r, "b"] & B
\end{tikzcd}
\]
These serve to define the pullback
\[
\begin{tikzcd}
 S \ar[d] \arrow[rd, phantom, "\lrcorner", very near start] \ar[r]& b^*E^{\Delta^1} \ar[d, "q"] \\
 b^*B^{\Delta^2} \times P\,b \ar[r, "r \times \id_{P\,b}"] & b \downarrow B \times P\,b
\end{tikzcd}
\]
where
\[ q\defeq \lambda w,d,d',f.\pair{w}{d}: b^*E^{\Delta^1} \to b \downarrow B \times P\,b \]
and
\[ r\defeq \lambda u,v.v \circ u:b^*B^{\Delta^2} \to b \downarrow B.\]
All the occuring types are Segal, hence $S \to b \downarrow B \times P\,b$ is an inner fibration, equivalent to the projection $\totalty{A} \equiv \sum_{\pair{u}{e}:b \downarrow B \times P\,b} A(u,e) \to b \downarrow B \times P\,b$.

In particular $\totalty{A} \equiv S$ is Segal. Initiality in the type $A(u,e)$ implies furthermore that cocartesian lifts (w.r.t.~the given data) are uniquely determined up to homotopy.

\begin{prop}[Uniqueness of cocartesian lifts (in isoinner families), {\protect\cite[Lemma~5.1.11]{RV}}]\label{prop:cocart-lifts-unique-in-isoinner-fams}
	Let $B$ be a Rezk type and $P: B \to \UU$ be an isoinner family. Then $P$-cocartesian lifts of arrows of $B$ are unique up to homotopy.
\end{prop}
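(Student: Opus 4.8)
The plan is to exploit the identification, set up just above, of the proposition $\isCocartArr^P_u f$ with the initiality of the object $\angled{b',\id_{b'},e',f}$ in the Segal type $A(u,e)$, and then to apply the principle that initial objects of a \emph{Rezk} type are unique up to a contractible space of identifications. Recall that an object $a$ of a Segal type $C$ is \emph{initial} when $\prod_{x:C}\isContr(\hom_C(a,x))$ holds; this is a proposition, and between any two initial objects $a_0,a_1$ the unique arrows $a_0\to a_1$ and $a_1\to a_0$ compose to identities, exhibiting a canonical isomorphism $a_0\cong a_1$. When $C$ is moreover Rezk, that isomorphism is $\idtoiso_C$ of a unique identification $a_0=a_1$, so that the type of initial objects of $C$ is a proposition. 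Thus the whole argument reduces to (i)~upgrading $A(u,e)$ from Segal to Rezk, and (ii)~recognizing cocartesian lifts as initial objects of the required form.

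First I would promote $A(u,e)$ to a Rezk type. By \cref{prop:char-rezk}, being Rezk is (among Segal types) the locality condition of $\walkBinv$-orthogonality, and locality conditions on objects are preserved by exponentials and pullbacks: if $X,Y,Z$ are Rezk then so are $X^{\Delta^1}$ and $X\times_Z Y$, since the comparison maps are exponentials and pullbacks of equivalences. Now $B$ is Rezk by hypothesis, each fiber $P\,b$ is Rezk by isoinnerness, and hence $E\jdeq\totalty{P}$ is Rezk (the total type of an isoinner family over a Rezk base is Rezk, as shown above). Feeding these into the explicit pullback presentation of $S\equiv\totalty{A}$ given above — built only from $E^{\Delta^1}$, $B^{\Delta^2}$, the coslice $b\downarrow B$, and $P\,b$ — shows that $S$ is Rezk, and the base $b\downarrow B\times P\,b$ is Rezk as well. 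Consequently each fiber $A(u,e)$, being a pullback of the Rezk types $\unit$ and $S$ over the Rezk base, is itself Rezk.

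With $A(u,e)$ Rezk, a cocartesian lift $(e',f)\in\CocartLift_P(u,e)$ is exactly an initial object $\angled{b',\id_{b'},e',f}$. Given two lifts $(e'_0,f_0)$ and $(e'_1,f_1)$, I would apply the universal property of $f_0$ to the arrow $\id_{b'}$ and the competitor $f_1$ to obtain a unique $g_{01}:\hom^P_{\id_{b'}}(e'_0,e'_1)$ with $g_{01}\circ^P f_0=f_1$, and symmetrically $g_{10}$ with $g_{10}\circ^P f_1=f_0$; the uniqueness clauses force $g_{10}\circ^P g_{01}=\id_{e'_0}$ and $g_{01}\circ^P g_{10}=\id_{e'_1}$. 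Hence $g_{01}$ is an isomorphism lying over $\id_{b'}$, i.e.\ an isomorphism in the Rezk fiber $P\,b'$, so there is a unique $\delta:e'_0=e'_1$ with $\idtoiso(\delta)=g_{01}$; transporting the relation $g_{01}\circ^P f_0=f_1$ along $\delta$ yields an identification of $f_0$ with $f_1$ over $\delta$. Since $\isCocartArr^P_u$ is a proposition, this assembles to an identification $(e'_0,f_0)=(e'_1,f_1)$ in $\CocartLift_P(u,e)$, so that type is a proposition — which is the asserted uniqueness up to homotopy.

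The main obstacle is the first step, the promotion of $A(u,e)$ from Segal to Rezk: one must verify that the concrete pullback model of $S\equiv\totalty{A}$ is genuinely assembled from Rezk ingredients and that Rezk-ness descends to its fibers. A secondary subtlety is to keep the comparison an identification of lift \emph{data}: the comparison isomorphism $g_{01}$ is extracted over $\id_{b'}$ rather than over an arbitrary automorphism of $b'$, so it descends to the fiber $P\,b'$ and fixes the base point, and no spurious loop in the $\angled{b',\id_{b'}}$-coordinate can obstruct the identification. Equivalently, in the purely initial-object formulation this is the observation that $(b',\id_{b'})$ is itself initial in the coslice $b'\downarrow B$ and therefore has a contractible loop space there.
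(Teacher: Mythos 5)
Your proposal is correct and takes essentially the same route as the paper: the paper likewise identifies cocartesian lifts with initial objects of the type $A(u,e)$ and derives the uniqueness claim from initiality. The one place you go beyond the paper's (very terse) argument is worth keeping: the paper only establishes that $A(u,e)$ is \emph{Segal} before asserting uniqueness up to homotopy, but initiality in a mere Segal type yields uniqueness only up to isomorphism (the walking bi-invertible arrow $\walkBinv$ has two distinct initial objects). Your promotion of $A(u,e)$ to a Rezk type---or, equivalently, your direct argument converting the comparison isomorphism $g_{01}$ in the Rezk fiber $P\,b'$ into an identification via $\idtoiso$---supplies exactly the step the paper leaves implicit.
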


\begin{cor}\label{cor:cocart-trivfill}
	Let $P:B \to \UU$ be an isoinner family over a Rezk type $B$.
	\begin{enumerate}
		\item\label{it:cocart-fill-comp} If $f:e \to e'$ is a $P$-cocartesian arrow, and $h:e' \to e''$ is an arbitrary arrow, then $\tyfill_f(h) \circ f = h$.
		\item\label{it:cocart-comp-fill} If $f:e \to e'$ is a $P$-cocartesian arrow, and $g:e' \to e''$ is an arbitrary arrow, then $\tyfill_f(gf) = g$.
	\end{enumerate}
\end{cor}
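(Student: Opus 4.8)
The plan is to recognise the two identities as the computation rules for the filler operation $\tyfill_f$ and to read them off directly from the contractibility statement packaged into the definition of a cocartesian arrow. The only structural input I need is that the relevant (dependent) composites exist and are uniquely determined: since $B$ is Rezk and $P$ is isoinner, the total type $\totalty{P}$ is Segal, so the dependent composite $g \circ^P f$ is defined, and by \cref{prop:cocart-lifts-unique-in-isoinner-fams} the operation $\tyfill_f$ is genuinely well-defined. Throughout I would work with the Segal-rewritten form of cocartesianness, namely that for a $P$-cocartesian $f : e \to^P_u e'$ and any $v$, $e''$ and $h : e \to^P_{v \circ u} e''$ (so that $h$ shares its source $e$ with $f$, as the notation for $\tyfill_f$ demands) the type $\sum_{g : e' \to^P_v e''}(g \circ^P f = h)$ is contractible, and $\tyfill_f(h)$ denotes the first projection of its centre of contraction.

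For \ref{it:cocart-fill-comp} I would simply observe that the second projection of that centre of contraction is by construction a proof of $\tyfill_f(h) \circ^P f = h$; thus the identity is nothing other than the defining witness of the filler, and no further argument is needed. For \ref{it:cocart-comp-fill} I would run uniqueness in the opposite direction: given $g : e' \to^P_v e''$, set $h \defeq g \circ^P f$ and note that the pair $\pair{g}{\refl}$ inhabits the (contractible) type $\sum_{g' : e' \to^P_v e''}(g' \circ^P f = g \circ^P f)$. Contractibility forces this pair to agree with the centre of contraction, whose first projection is by definition $\tyfill_f(g \circ^P f)$; projecting onto the first component then yields $\tyfill_f(g \circ^P f) = g$.

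I expect no serious obstacle here: both statements are immediate unwindings of the universal property once the composites are known to be unique. The one point deserving care is the bookkeeping of base arrows---the filler $\tyfill_f(h)$ lives over the base arrow $v$ with $v \circ u$ equal to the base of $h$, so one must invoke the cocartesian property for exactly that $v$ (in \ref{it:cocart-comp-fill}, the $v$ carried by the given $g$). This is precisely the well-definedness guaranteed by \cref{prop:cocart-lifts-unique-in-isoinner-fams}, after which both identities drop out.
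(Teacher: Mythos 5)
Your proof is correct and takes essentially the same route as the paper, which states this corollary without any proof precisely because it is the immediate unwinding you describe: item~(\ref{it:cocart-fill-comp}) is the second projection of the centre of contraction of $\sum_{g:e' \to^P_v e''}(g \circ^P f = h)$, and item~(\ref{it:cocart-comp-fill}) follows because $\pair{g}{\refl}$ inhabits that contractible type and must therefore agree with the centre; you also rightly read item~(\ref{it:cocart-fill-comp}) with $h : e \to e''$, as the notation for $\tyfill_f$ requires. The only quibble is attributional: the well-definedness of $\tyfill_f$ comes from this same contractibility (the Segal-form definition of cocartesianness of $f$), not from \cref{prop:cocart-lifts-unique-in-isoinner-fams}, which concerns uniqueness of cocartesian \emph{lifts} of a base arrow with prescribed source vertex.
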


As an indication that our definition of cocartesian arrow is correct,
we note the following expected result.
\begin{prop}\label{prop:pushout-domainfib}
  Let $B$ be a Rezk type, and let $b' \xleftarrow u{} b \xrightarrow v{} c$
  be a span in $B$.
  This span is equivalently determined by an arrow $u$ in $B$
  and an element $v$ in the domain fibration $\partial_0 : B^{\Delta^1} \to B$
  over $b$, the domain of $u$.
  Then a cocone $\angled{v',f,\alpha}$ of the span, giving a square in $B$,
  \[\begin{tikzcd}
      {b} & {b'} \\
      {c} & {c',}
      \arrow["{u}", from=1-1, to=1-2]
      \arrow["{v}"', from=1-1, to=2-1]
      \arrow["{f}"', from=2-1, to=2-2, dashed]
      \arrow["{v'}", from=1-2, to=2-2, dashed]
      \arrow[Rightarrow, "{\alpha}", from=1-2, to=2-1,
      shorten <=4pt, shorten >=4pt, dashed]
    \end{tikzcd}\]
  makes a \emph{pushout square}, if and only if $\pair{f}{\alpha}$ is a cocartesian
  lift of $u$ starting at $v$.
\end{prop}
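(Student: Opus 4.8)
The plan is to translate both sides into universal properties and match them. Write $P$ for the family classifying the domain fibration $\partial_0 : B^{\Delta^1} \to B$, so that $P\,x \equiv x/B$ and a dependent arrow over an arrow of $B$ is precisely a square $\Delta^1 \times \Delta^1 \to B$ with prescribed boundary. Under this dictionary the cocone $\langle v',f,\alpha\rangle$ is exactly such a square $\sigma$: the dependent arrow over $u$ from $v$ to $v'$ whose remaining edge is $f$ and whose filler is $\alpha$. By the discussion around \eqref{eq:cocart-initiality-type}, $\langle f,\alpha\rangle$ is a $P$-cocartesian lift of $u$ at $v$ exactly when the object $\angled{b',\id_{b'},e',\sigma}$ (with $e' \jdeq \pair{c'}{v'} : b'/B$) is initial in the Segal type $A(u,v)$. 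On the other side, being a pushout means that $\langle c',v',f,\alpha\rangle$ is initial in the type of cocones under the span, which — again via the dictionary — is the type of squares with fixed lower-left corner $\pair{u}{v}$.

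For the direction \emph{cocartesian $\Rightarrow$ pushout} I would specialize the composition form of the cocartesian condition (the display preceding \eqref{eq:cocart-initiality-type}) to $b'' \jdeq b'$ and $v'' \jdeq \id_{b'}$. A dependent arrow over $\id_{b'}$ from $v'$ to an object $w:b'\to d$ of $b'/B$ degenerates, by the Segal condition (a square with identity base is a triangle), to the data of an arrow $k:c'\to d$ with a witness $w = k\circ v'$; and the equation $g\circ\sigma = h$ then reads $k\circ f = h_{\mathrm{top}}$ plus a matching of fillers. Hence the contractible space of fillers becomes exactly the contractible space of comparison maps out of $\langle c',v',f,\alpha\rangle$, which is the universal property of the pushout.

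For the converse I would handle a general $v'':b'\to b''$ by reduction. Given test data $b''$, $v''$, an object $w:b''\to d$, and a dependent arrow $h$ over $v''\circ u$ from $v$ to $w$, I would first read off the cocone under the span with apex $d$, right leg $w\circ v''$, bottom leg $h_{\mathrm{top}}$, and filler extracted from $h$. The pushout property supplies a unique comparison $k:c'\to d$ with $k\circ v' = w\circ v''$ and $k\circ f = h_{\mathrm{top}}$; taking the free edge of the sought filler $g$ to be $k$ produces a square over $v''$ from $v'$ to $w$ whose paste with $\sigma$ is $h$. The remaining task is to promote this assignment to an equivalence between the space of pushout comparisons and the space of fillers $g$ with $g\circ\sigma = h$, so that contractibility transfers; here I would use that $B$ is Segal (uniqueness of composites and of $2$-cells) together with the axiom of choice for extension types \cref{thm:choice} and de-/strictification.

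The main obstacle is precisely this promotion: the bookkeeping of $2$-cells under horizontal pasting, i.e.\ verifying that $g \mapsto (\text{free edge of } g,\ \text{coherence})$ and its proposed inverse are mutually inverse as maps of \emph{filler spaces}, not merely on underlying arrows. Rezk-completeness of $B$ (hence of $B^{\Delta^1}$ and of the coslices $x/B$) enters to ensure the ambient types are Rezk, so that the two senses of ``unique up to homotopy'' — initiality and contractible fillers — coincide and \cref{prop:cocart-lifts-unique-in-isoinner-fams} applies. A slicker but more infrastructure-heavy alternative would be to exhibit the forgetful map $A(u,v) \to b'/B$, $\angled{b'',v'',w,\tau}\mapsto\pair{b''}{v''}$, as a cocartesian fibration whose fiber over the initial object $\pair{b'}{\id_{b'}}$ is the cocone type, and to invoke that over a based domain with initial object the initial objects of the total type are detected in that fiber.
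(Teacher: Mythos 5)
Your first move --- recasting both conditions as initiality statements, so that $\pair{f}{\alpha}$ is cocartesian iff $\angled{b',\id_{b'},\pair{c'}{v'},\pair{f}{\alpha}}$ is initial in the type $A(u,v)$ of \eqref{eq:cocart-initiality-type}, while the pushout condition is initiality in the cocone type by definition --- is exactly the paper's first step, and it is correct. But your proposal does not get from there to the theorem. The crux, which you correctly identify and then leave open, is upgrading the element-wise correspondence (``each filler $g$ with $g\circ\pair{f}{\alpha}=h$ determines a comparison map $k$, and conversely'') to an equivalence of the two \emph{types}, so that contractibility transfers. Naming Segal-ness, \cref{thm:choice}, and de-/strictification points at tools but does not carry out this step; and the same issue already sits inside your forward direction, where ``the contractible space of fillers becomes exactly the contractible space of comparison maps'' asserts, rather than proves, a type equivalence (the degeneration of a dependent arrow over $\id_{b'}$ into a triangle). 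So the proof is incomplete precisely at its only hard point.

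The paper closes this gap with a device you never invoke: the forgetful map $U\colon A(u,v)\to \sigma/B$ to the cocone type has a LARI $F$, given by inserting $u'\defeq\id_{b'}$, and \cref{lem:LARI-initial} (whose proof uses that left adjoints preserve initial objects and that LARI left adjoints are fully faithful, \cref{rem:lari-ff}) transfers initiality across $F\dashv U$ in both directions at once; applied to the cocone $\angled{c',v',f,\alpha}$ this is the proposition, with all the $2$-cell bookkeeping absorbed into the adjunction. Your closing ``slicker alternative'' is not a repair, for two reasons. First, the projection $A(u,v)\to b'/B$ is not a cocartesian fibration in general: transporting a fiber element $\angled{c',\,v'\colon b''\to c',\,h}$ along $w\colon b''\to b'''$ would require precisely a pushout of $v'$ along $w$, i.e.\ the kind of colimit whose existence is the content of the proposition --- the argument is circular. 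Second, the lemma you would quote is false: for a cocartesian fibration over a base with initial object $b_0$, an object $e$ over $b_0$ satisfies $\hom(e,y)\equiv\hom_{\mathrm{fib}}\bigl((\emptyset_{\pi y})_!\,e,\,y\bigr)$, so initiality of $e$ in the total type demands that \emph{every} cocartesian transport of $e$ be initial in its fiber; initiality of $e$ in its own fiber does not give this, since transport functors need not preserve initial objects. (The converse implication does hold, but it is not the direction needed for pushout $\Rightarrow$ cocartesian.) Either carry out the coherence argument you postponed, or set up the LARI adjunction $F\dashv U$ into the cocone type and conclude with \cref{lem:LARI-initial}.
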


Let us first clarify what we mean by a pushout (and, for the sake of completeness, a pullback) in a Rezk type. Recall from \cref{ssec:commas-cones} the definition of cocone types.

\begin{defn}[Co-/span shape]
The \emph{span} and \emph{cospan shape}, resp., are defined as the shapes
\[ \shpspan \defeq \{ \pair{t}{s}:\I^2 \; | \; t \jdeq 0 \lor s \jdeq 0 \}, \quad \shpcspan \defeq \{ \pair{t}{s}:\I^2 \; | \; t \jdeq 1 \lor s \jdeq 1 \}.\]
\end{defn}
Of course there are weak equivalences
\[ \shpspan \equiv \Lambda_0^2, \quad \shpcspan \equiv \Lambda_2^2,\]
but these pairs of shapes do not coincide on the strict level of topes.

\begin{defn}[Pullbacks and pushouts in a Rezk type]
A \emph{span} $\sigma$ in a Rezk type is a diagram $\sigma: B^\shpspan $. Dually, a cospan $\tau$ is a diagram $\tau: B^\shpcspan$. A \emph{pullback over $\sigma$} in $B$ is a terminal element in the cone type $B/\sigma$. Dually, a \emph{pushout over $\tau$} in $B$ is an initial element in the cocone type $\sigma/B$.
\end{defn}
The type of cocones can be rewritten as
\[ \sigma/B \equiv \ndexten{\Delta^1 \times \Delta^1}{B}{\shpspan}{\sigma} .\]
For a span $\sigma:B^\shpspan$ with $\sigma \jdeq (c \stackrel{v}{\leftarrow} a \stackrel{u}{\to} b)$, a cocone $\vartheta:\sigma/B$ is given by a span $\vartheta \jdeq (c \stackrel{g}{\to} d \stackrel{f}{\leftarrow} b)$.

One checks that a cocone $\vartheta \defeq \angled{x,\iota_0,\iota_1}:\sigma/B$ is a pushout square if and only if it satisfies the familiar universal property\footnote{It is possible to provide a completely strict description replacing the identity types by strict extensions. This could be done elegantly using a join operation on shapes, as is common in simplicial homotopy theory, but we will not need this here.} internally to the given Rezk type:
\[
	\isPushout_{B,\sigma}(\vartheta) \equiv \prod_{\kappa \defeq (c \stackrel{g}{\to} d \stackrel{f}{\leftarrow} b):\sigma/B} \isContr \Big( \sum_{\gamma:x \to d} \gamma \circ \iota_0 = f \times \gamma \circ \iota_1 = g  \Big)
\]
One can make similar considerations for pullbacks internal to a Rezk type.

\begin{proof}[Proof of \cref{prop:pushout-domainfib}]
  We observed above that $\pair{f}{\alpha}$ is a cocartesian lift of $u$
  starting at $v$, if and only if it is an initial object of the
  type~\eqref{eq:cocart-initiality-type}, which in the case of the
  domain family $P(b) \defeq \sum_{c:B}(b \to_B c)$ becomes
  \[
    A(u,v) \defeq \sum_{b'':B}~\sum_{u':b'\to_Bb''}
    ~\sum_{\pair{c'}{v'}:P\,b'}~\bigl(
    (c,v) \to^P_{u'\circ u}(c',v')\bigr),
  \]
  as illustrated below:
  \[\begin{tikzcd}
      & {b'} \\
      {b} & {} & {b''} \\
      {c} && {c'}
      \arrow["{u'\circ u}", from=2-1, to=2-3]
      \arrow["{v}"', from=2-1, to=3-1]
      \arrow["{f}"', from=3-1, to=3-3, dashed]
      \arrow["{v'}", from=2-3, to=3-3, dashed]
      \arrow[Rightarrow, "{\alpha}", from=2-3, to=3-1,
      shorten <=10pt, shorten >=10pt, dashed]
      \arrow["{u}", from=2-1, to=1-2]
      \arrow["{u'}", from=1-2, to=2-3]
    \end{tikzcd}\]
  There is a forgetful map $U$ from this type to the type of cocones,
  and $U$ has a LARI given by taking $u' \defeq \id_{b'}$.
  Now we conclude by~\cref{lem:LARI-initial}.
\end{proof}

\begin{prop}[Closedness under composition, and right cancelation, {\protect\cite[Lemma 5.1.5]{RV}}]\label{prop:cocart-arr-closure} Let $P: B \to \UU$ be a cocartesian family over a Rezk type $B$. For arrows $u:\hom_B(b,b')$, $v:\hom_B(b',b'')$, with $b,b',b'':B$, consider dependent arrows $f:\dhom^P_u(e,e')$, $g:\dhom^P_v(e',e'')$, for $e:P\,b$, $e':P\,b'$, $e'':P\,b''$.
	\begin{enumerate}
		\item\label{it:cocart-arr-comp} If both $f$ and $g$ are are cocartesian arrows, then so is their composite $g \circ f$.
		\item\label{it:cocart-arr-cancel} If $f$ and $g \circ f$ are cocartesian arrows, then so is $g$.
	\end{enumerate}
\end{prop}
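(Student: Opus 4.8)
The plan is to reduce both claims to the initiality characterization of cocartesian arrows given in~\eqref{eq:isCocartArrEquiv}, together with the uniqueness of cocartesian lifts from \cref{prop:cocart-lifts-unique-in-isoinner-fams} and the ``trivial filler'' computations in \cref{cor:cocart-trivfill}. Throughout I would work with the Segal-type reformulation
\[
	\isCocartArr^P_u f \equiv \prod_{b'':B}~\prod_{v:b'\to_B b''}~
	\prod_{e'':P\,b''}~ \prod_{h:e\to^P_{v \circ u} e''}
  \isContr\Bigl( \sum_{g:e'\to^P_v e''} g \circ^P f = h \Bigr),
\]
which says precisely that postcomposition with $f$ induces an equivalence on the relevant spaces of fillers.

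For part~\ref{it:cocart-arr-comp}, suppose both $f:\dhom^P_u(e,e')$ and $g:\dhom^P_v(e',e'')$ are cocartesian, and let an arrow $h:e \to^P_{w \circ (v\circ u)} e'''$ over a further arrow $w:b''\to b'''$ be given, together with the chosen $2$-cell witnessing $(v\circ u)$ as a composite. I would produce the required filler in two stages. First, since $f$ is cocartesian, there is a unique $k:e'\to^P_{w\circ v}e'''$ with $k\circ f = h$; this uses the defining property of $f$ applied to the arrow $w\circ v$ out of $b'$. Second, since $g$ is cocartesian, there is a unique $\ell:e''\to^P_w e'''$ with $\ell\circ g = k$. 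Then $\ell\circ(g\circ f)=(\ell\circ g)\circ f = k\circ f = h$, so $\ell$ fills against $g\circ f$. For uniqueness I would argue that any two fillers against $g\circ f$ induce, by the same two-step factorization run backwards through the uniqueness halves of the cocartesianness of $g$ and then $f$, equal data; more efficiently, contractibility of the total space of fillers follows because the two-stage construction exhibits the filler space for $g\circ f$ as an iterated space of fillers, each contractible by hypothesis, so the composite is contractible. The associativity and composition bookkeeping should be handled via the weak composition operation on the Segal type $A(u,e)$ rather than by hand.

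For part~\ref{it:cocart-arr-cancel}, assume $f$ and $g\circ f$ are cocartesian and show $g$ is. Given an arrow $w:b''\to b'''$ and a target $h':e''\to^P_{w}e'''$ we must exhibit a unique filler over $g$; equivalently, unravelling the definition, we must solve the appropriate lifting problem for $g$ and prove its solution space contractible. The key move is to translate a lifting problem for $g$ into one for $g\circ f$ by precomposing with $f$: a filler $m$ against $g$ with $m\circ g = h'$ yields $m\circ(g\circ f)=h'\circ f$, a filler against $g\circ f$, and conversely the cocartesianness of $g\circ f$ produces a canonical $m$ solving $m\circ(g\circ f)=h'\circ f$, after which one must check that this $m$ actually satisfies $m\circ g = h'$ and not merely the precomposed equation. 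This last check is exactly where the cancellation bites: I would use that $f$ is cocartesian to cancel it, invoking \cref{cor:cocart-trivfill}\ref{it:cocart-comp-fill} (or its evident analogue) to pass from $m\circ g\circ f = h'\circ f$ back to $m\circ g = h'$.

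The main obstacle is this final cancellation step in part~\ref{it:cocart-arr-cancel}: cocartesianness of $f$ gives uniqueness of fillers \emph{through} $f$, but one wants to cancel $f$ on the \emph{right} of a composite, which requires carefully setting up the comparison so that two arrows $m\circ g$ and $h'$ become fillers of the \emph{same} lifting problem against $f$ and are therefore identified by contractibility. Keeping the witnessing $2$-cells and the weak associativity coherences straight---so that ``$m\circ g\circ f$'' is genuinely a filler in the sense \eqref{eq:isCocartArrEquiv} demands---is the delicate bookkeeping; I expect that working inside the Segal type $A(u,e)$ and phrasing everything as initiality, rather than manipulating raw $2$-simplices, will keep these coherences manageable.
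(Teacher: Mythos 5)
Your proposal is correct and follows essentially the same route as the paper's own proof: part (1) via the two-stage filler $\tyfill_g(\tyfill_f(h))$ with uniqueness obtained by running the factorization backwards through the uniqueness halves of $f$ and $g$, and part (2) by precomposing the lifting problem with $f$, solving it via cocartesianness of $g \circ f$, and then cancelling $f$ by observing that $m \circ g$ and $h'$ fill the \emph{same} lifting problem against $f$, hence coincide. Only note the small slip in your setup of part (2): the problem datum $h'$ should be typed $e' \to^P_{w \circ v} e'''$ rather than $e'' \to^P_{w} e'''$, as your subsequent manipulations (which precompose $h'$ with $f$) correctly presuppose.
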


\begin{proof}
	\begin{enumerate}
		\item Let $w:\hom_B(b'',b''')$, $b''':B$. Writing $r\defeq w \circ v \circ u:\hom_B(b,b''')$, let $h:\dhom^P_r(e,e''')$, $e''':P\,b'''$. Define $m\defeq \tyfill_{g}(\tyfill_f(h)):\dhom^P_w(e'',e''')$, then $m \circ (g \circ f) = h$.

		For any $m':\dhom^P_w(e'',e''')$ with $h = m' \circ (g \circ f) = (m' \circ g) \circ f$, necessarily $m' \circ g = \tyfill_f(h)$. But then $m' = \tyfill_g(\tyfill_f(h))$. Thus, $g \circ f$ is cocartesian.
		\item Let $w\defeq v \circ u:\hom_B(b,b'')$ and $h\defeq g \circ h:\dhom^P_w(e,e'')$. For $b''':B$, consider $r:\hom_B(b'',b''')$ and write $t\defeq r \circ v : \hom_B(b',b''')$. Let $k:\dhom^P_t(e',e''')$, $e''':P\,b'''$. Since $h=gf$ is cocartesian, there is a filler $m:\dhom^P_r(e'',e''')$, $m=\tyfill_h(kf)$, so $mh=kf$.

		Then in turn $k=\tyfill_f(mh)$. But since $kf = (mg)f$, we have $k=mg$.

		Given any $m'$ s.t.~$m'h = kf$, this means $m'=\tyfill_h(kf) = m$.\qedhere
	\end{enumerate}
\end{proof}

\begin{lem}[\protect{\cite[Lemma~5.1.6]{RV}}]\label{lem:cocart-arrows-isos}
	Let $P: B \to \UU$ be an inner family over a Segal type $B$.
	\begin{enumerate}
		\item\label{it:depisos-are-over-isos} If $f$ is a dependent isomorphism in $P$ over some morphism $u$ in $B$, then $u$ is itself an isomorphism.
		\item\label{it:depisos-are-cocart} Any dependent isomorphism in $P$ is cocartesian.
		\item\label{it:cocart-arrows-over-ids-are-isos} If $f$ is a cocartesian arrow in $P$ over an identity in $B$, then $f$ is an isomorphism.
	\end{enumerate}
\end{lem}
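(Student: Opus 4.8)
The plan is to run all three arguments inside the two Segal types attached to the family: the total type $\totalty{P}$, which is Segal by \cref{prop:innerfib-total-sp-over-segal-base-is-segal}, and the fibers $P\,b$, which are Segal by \cref{prop:innerfib-has-segal-fibers}. I read a dependent arrow $f : \dhom^P_u(e,e')$ as a \emph{dependent isomorphism} exactly when the corresponding arrow $\bar f \defeq \pair{u}{f}$ is an isomorphism of $\totalty{P}$; then $\bar f$ has a two-sided inverse $\bar f^{-1} \defeq \pair{u^{-1}}{f^{-1}}$ with $f^{-1} : \dhom^P_{u^{-1}}(e',e)$, and the dependent composites satisfy $f^{-1} \circ^P f = \id_e$ and $f \circ^P f^{-1} = \id_{e'}$ (these being the fiber components of the identities witnessing $\bar f^{-1}\bar f$ and $\bar f \bar f^{-1}$). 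All the equational manipulations below are up-to-homotopy associativity and unit laws, legitimate because $\totalty{P}$ and the fibers are Segal.

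For \cref{it:depisos-are-over-isos} I would simply invoke that the projection $\pi_P : \totalty{P} \to B$ is a functor between Segal types and hence preserves isomorphisms. Since $\pi_P(\bar f) = u$ and $\bar f$ is an isomorphism by hypothesis, $u$ is an isomorphism of $B$.

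For \cref{it:depisos-are-cocart} I would use the reformulation of the cocartesian condition for families of Segal types: $f$ is cocartesian iff for every $b''$, every $v : b' \to b''$, every $e'' : P\,b''$ and every $h : e \to^P_{v \circ u} e''$ the type $\sum_{g : e'\to^P_v e''} (g \circ^P f = h)$ is contractible, equivalently iff precomposition $(-)\circ^P f : (e'\to^P_v e'') \to (e \to^P_{v\circ u} e'')$ is an equivalence for each such $v$ and $e''$. Here \cref{it:depisos-are-over-isos} guarantees $u^{-1}$ exists, so I can exhibit the inverse as $(-)\circ^P f^{-1}$: postcomposing $f^{-1} : \dhom^P_{u^{-1}}(e',e)$ onto an arrow over $v \circ u$ produces an arrow over $(v\circ u)\circ u^{-1} = v$, and the two round-trips compute to $h \circ^P(f^{-1}\circ^P f) = h \circ^P \id_e = h$ and $g \circ^P (f \circ^P f^{-1}) = g \circ^P \id_{e'} = g$. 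Thus every relevant fiber is contractible and $f$ is cocartesian.

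For \cref{it:cocart-arrows-over-ids-are-isos} I would first note that an arrow over $\id_b$ is nothing but an arrow $f : \hom_{P\,b}(e,e')$ of the Segal fiber. Specializing the cocartesian condition to $b''\defeq b$ and $v \defeq \id_b$ (so $v \circ \id_b = \id_b$) shows that $(-)\circ f : \hom_{P\,b}(e',e'') \to \hom_{P\,b}(e,e'')$ is an equivalence for every $e'':P\,b$. Taking $e''\defeq e$ and $h \defeq \id_e$ yields a left inverse $g : \hom_{P\,b}(e',e)$ with $g\circ f = \id_e$; taking $e''\defeq e'$ and using that $(-)\circ f$ is an equivalence, hence injective on arrows, applied to the computation $(f\circ g)\circ f = f\circ(g\circ f) = f = \id_{e'}\circ f$, gives $f\circ g = \id_{e'}$. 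Hence $g$ is a two-sided inverse and $f$ is an isomorphism. I expect the main obstacle to be purely bookkeeping: keeping precise track of which base arrow each dependent arrow lies over and discharging the associativity and unit coherences for dependent composition, all of which are available because the ambient total type and fibers are Segal.
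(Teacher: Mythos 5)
Your proposal is correct and follows essentially the same route as the paper's own proof: part \ref{it:depisos-are-over-isos} by pushing the inverse along the projection functor $\pi_P$, part \ref{it:depisos-are-cocart} by using $f^{-1}$ to invert precomposition (the paper phrases this as constructing the filler $g \circ f^{-1}$ and checking uniqueness, which is the same contractibility statement), and part \ref{it:cocart-arrows-over-ids-are-isos} by extracting a retraction from the cocartesian universal property and then using its uniqueness clause to show the retraction is also a section. The only differences are cosmetic (e.g.\ the paper obtains the retraction over some $v$ and then argues $v = \id_b$ via $\pi$, whereas you specialize to $v \defeq \id_b$ from the start), and the transport bookkeeping you flag is glossed over at the same level in the paper.
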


\begin{proof}
	\begin{enumerate}
		\item Let $u:\hom_B(b,b')$ with $b,b':B$, and $f:\dhom^P_u(e,e')$ for $e:P\,b$ and $e':P\,b'$. If $f$ is an isomorphism, since $\totalty{P}$ is Segal, there exists a morphism $f^{-1}:\dhom^P_{u'}(e',e)$ over some morphism $u':\hom_B(b',b)$ such that $f\inv \circ f = \id_e$ and $f \circ f\inv = \id_{e'}$. But for the projection $\pi : \totalty{P} \to B$, this implies
		\[ u' \circ u = \pi(f\inv) \circ \pi(f) = \pi(f\inv \circ f) = \pi(\id_e) = \id_b,\]
		and likewise $u \circ u' = \id_{b'}$. Thus, $u' \jdeq: u^{-1}$ is a two-sided inverse for $u$, so $u$ is an isomorphism, because $B$ is Segal (cf.~\cite[Proposition 10.1]{RS17}). In particular, $\pi(f\inv) = u\inv$ implies $f\inv:\dhom^P_{u\inv}(e',e)$.
		\item Let $b,b':B$, $u:\hom_B(b,b')$ and $f:\dhom^P_u(e,e')$ such that $f$ is an isomorphism. For $b''':B$, $v:\hom_B(b,b''')$ consider a dependent morphism $g:\dhom^P_{v\circ u}(e,e''')$. Since by \cref{prop:innerfib-total-sp-over-segal-base-is-segal} the type $\totalty{P}$ is Segal, both $u$ and $f$ are invertible , cf.~\cref{lem:cocart-arrows-isos}(\ref{it:depisos-are-over-isos}) and~\cite[Proposition 1.10]{RS17}.

		Define $h\defeq g \circ f^{-1} : \dhom^P_v(e',e'')$. Then $h \circ f = (g \circ f\inv) \circ f = g \circ \id_e = g$. This yields the desired filler.

		For uniqueness, observe that for any $k:\dhom^P_v(e',e''')$ s.t.~$g=kf$ we have $hf=kf$, \ie, $h=k$ applying $f^{-1}$.

		\item In general, arrows over an identity morphism are just arrows in a fiber (even definitionally). Thus, for $b:B$ consider a cocartesian arrow $f:\hom_{P\,b}(e,e') \jdeq \dhom^P_{\id_b}(e,e')$, where $e,e':P\,b$ .

		Since $f$ is cocartesian it has a retraction: there exists an arrow $v \colon b\to b$ with $f':\dhom^P_v(e',e)$, uniquely up to homotopy, s.t.~$\id_e= f' \circ f$. But this implies $\id_b = \pi(\id_e) = \pi(f') \circ \pi(f) = \pi(f')$, so we can assume $f'$ to be over $\id_b$.

		Since $f$ is cocartesian, for any morphism $g:\dhom^P_w(e',e')$, $w:\hom_B(b',b')$ with $g \circ f = f$ we get that $g=\id_{e'}$. Now, take $g:=ff'$, $w:=\id_{b'}$. Then $g \circ f = (f \circ f') \circ f = f \circ (f' \circ f) = f$, thus $f \circ f' = \id_{e'}$. Hence, $f$ is an isomorphism.\qedhere
	\end{enumerate}
\end{proof}

\subsubsection{Characterizations of cocartesian arrows}

\begin{prop}[\protect{\cite[Definition~2.4.1.10]{LurHTT}}]
	Let $B$ be a Rezk type, and $P:B \to \UU$ be an isoinner family with total type $E\defeq \totalty{P}$. Fix $b,b':B$ and let $u:\hom_B(b,b')$ be a morphism in $B$ and $f:\dhom^P_u(e,e')$, for $e:P\,b$, $e':P\,b'$, a dependent morphism.

	The morphism $f$ is cocartesian if and only if, for any $b'':B$, $e'':P\,b''$, the diagram
	\[
	\begin{tikzcd}
		\hom_E(\pair{b'}{e'}, \pair{b''}{e''}) \ar[r, "{-\circ \pair{u}{f}}"] \ar[d] & \hom_E(\pair{b}{e}, \pair{b'}{e'})  \ar[d] \\
		\hom_B(b',b'') \ar[r, "{-\circ u}"] & \hom_B(b,b'')
	\end{tikzcd}
	\]
	is a pullback.
\end{prop}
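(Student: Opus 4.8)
The plan is to unwind both sides in terms of dependent hom-types and to recognize the pullback condition as a fiberwise reformulation of the defining contractibility property of a cocartesian arrow. First I would record the decomposition of mapping types in the total space. Since $E \defeq \totalty{P}$, applying the type-theoretic axiom of choice for extension types (\cref{thm:choice}) to the shape inclusion $\partial\Delta^1 \subseteq \Delta^1$ and the $\Sigma$-type $\sum_{b:B} P\,b$ yields, for any $\pair{b_0}{e_0},\pair{b_1}{e_1}:E$, an equivalence
\[ \hom_E(\pair{b_0}{e_0},\pair{b_1}{e_1}) \equiv \sum_{w:\hom_B(b_0,b_1)} \dhom^P_w(e_0,e_1), \]
under which the projection $\pi$ corresponds to $\pr_1$. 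Moreover, since $B$ is Rezk (hence Segal) and $P$ is isoinner (hence inner), the total type $E$ is Segal by \cref{prop:innerfib-total-sp-over-segal-base-is-segal}; thus composition in $E$ is well-defined, projects to composition in $B$, and computes componentwise, so that $\pair{v}{g} \circ \pair{u}{f} = \pair{v \circ u}{g \circ^P f}$ under the decomposition above.

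Next I would compute the homotopy pullback of the cospan
\[ \hom_B(b',b'') \xrightarrow{-\circ u} \hom_B(b,b'') \xleftarrow{\pi} \hom_E(\pair{b}{e},\pair{b''}{e''}). \]
Inserting the decomposition and contracting the singleton witnessed by the path $v \circ u = w$, this pullback is equivalent to $\sum_{v:\hom_B(b',b'')} \dhom^P_{v \circ u}(e,e'')$. Using $\hom_E(\pair{b'}{e'},\pair{b''}{e''}) \equiv \sum_{v:\hom_B(b',b'')} \dhom^P_v(e',e'')$, the induced gap map sends $\pair{v}{g}$ to $\pair{v}{g \circ^P f}$: functoriality of $\pi$ preserves the first component, and the second is the dependent composite. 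In particular the gap map lies over $\hom_B(b',b'')$ via $v$, so it is an equivalence if and only if it is a fiberwise equivalence.

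Finally, over a fixed $v:\hom_B(b',b'')$ the gap map restricts to the postcomposition map $\dhom^P_v(e',e'') \to \dhom^P_{v\circ u}(e,e'')$, $g \mapsto g \circ^P f$, which is an equivalence exactly when all of its fibers are contractible; the fiber over $h:\dhom^P_{v\circ u}(e,e'')$ is $\sum_{g:e'\to^P_v e''}(g \circ^P f = h)$. Hence, for fixed $b''$ and $e''$, the square is a homotopy pullback if and only if for all $v:\hom_B(b',b'')$ and all $h:e \to^P_{v\circ u} e''$ the type $\sum_{g:e'\to^P_v e''}(g\circ^P f = h)$ is contractible; quantifying over $b''$ and $e''$ this is precisely the Segal-type form of $\isCocartArr^P_u f$, and both directions of the biconditional follow from this chain of equivalences. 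The steps are individually routine, and the only genuine care is needed at two points: verifying that composition in $E$ decomposes as dependent composition (which is where Segalness of $E$ enters), and keeping the strictifications and the contraction along $v \circ u = w$ bookkept correctly so that the gap map is visibly fiberwise postcomposition by $f$; once this is in place the reduction of the homotopy pullback to fiberwise contractible fibers reproduces the cocartesianness condition verbatim.
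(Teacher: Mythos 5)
Your proof is correct, and it takes a genuinely different (though related) route from the paper's. The paper's own proof never mentions composition: by fibrant replacement, de-/strictification, and choice it replaces the square by a strict one whose bottom-left corner is $\sum_{w:b\to b''}\ndexten{\Delta^2}{B}{\Lambda_0^2}{[u,w]}$ (the $2$-simplices of $B$ with $01$-edge $u$, equivalent to $\hom_B(b',b'')$ by Segalness) and whose top-left corner is the analogous type of dependent $2$-simplices with $01$-edge $f$; the fibers of the gap map are then \emph{literally} the extension types $\exten{\pair{t}{s}:\Delta^2}{P(\sigma(t,s))}{\Lambda_0^2}{[f,h]}$ appearing in the definition of $\isCocartArr^P_u f$, so pullback-ness coincides with the definition verbatim, with no singleton contraction and no composition coherence to track. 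You instead keep the corners as hom-types, compute the homotopy pullback by contracting $\sum_{w}(v\circ u = w)$, identify the gap map as fiberwise postcomposition $g \mapsto g\circ^P f$, and conclude via the Segal-type rewriting of \eqref{eq:isCocartArrEquiv}. This is the more familiar $1$-categorical shape of the argument, but it costs you two inputs the paper's proof avoids: the compatibility of composition in the Segal type $E$ (via \cref{prop:innerfib-total-sp-over-segal-base-is-segal}) with dependent composition, i.e.\ $\pair{v}{g}\circ\pair{u}{f} = \pair{v\circ u}{g\circ^P f}$ in the sense of~\cite[Remark~8.11]{RS17}, and the previously established equivalence between the raw extension-type definition of cocartesianness and its composition form. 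You correctly flag both as the delicate points, and both are available in the paper, so the argument goes through; the trade-off is that the paper's strictification is shorter and essentially definitional, while yours is more transparent to a reader coming from ordinary category theory.
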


\begin{proof}
	By fibrant replacement, de-/strictification, and choice for extension types, we can replace the square in question by:
	\[
	\hspace{55mm}
	\begin{tikzcd}
		{\mathllap{\sum_{w:b \to b''} \sum_{\sigma:\ndexten{\Delta^2}{B}{\Lambda_0^2}{[u,w]}} \sum_{h:e \to^P_w e''}} \exten{\pair{t}{s}:\Delta^2}{P(\sigma(t,s))}{\Lambda_0^2}{[f,h]}}\ar[d]\ar[r] &
    {\sum_{w:b \to b''} (e \to_w^P e'')}\ar[d] \\
		{\sum_{w:b\to b''} \ndexten{\Delta^2}{B}{\Lambda_0^2}{[u,w]}}\ar[r] &
    {\hom_B(b,b'')}
	\end{tikzcd}\]
	Undwinding what it means for this square to be a pullback precisely recovers the condition that $f$ be cocartesian.
\end{proof}

We prove a characterization for cocartesian edges, recovering the definition established by Joyal and Lurie, and transferred to complete Segal spaces by Rasekh.

Let $B$ be a
Segal type and $P: B \to \UU$ be an isoinner family. Consider its total space $E :\jdeq \sum_{b:B} P\,b$. For $b,b':B$, let $u:\hom_B(b,b')$ an arrow with a dependent arrow $f:\dhom^P_f(e,e')$ above it, where $e:P\,b, e':P\,b'$.

There is an induced commutative square involving comma objects, each of which can be described using extension types:
\[\hspace{25mm}\begin{tikzcd}[sep=large]
    {\mathllap{\sum_{\sigma:u \downarrow B} \exten{\pair{t}{s}:\Delta^1}{P(\sigma(t))}{\Delta_1^1}{f}} \simeq f/E}
    \ar[r]\ar[d,"{\partial_0}"'] &
    {u/B \simeq \mathrlap{\ndexten{\Delta^2}{B}{\Delta_1^1}{u}}}
    \ar[d,"{\partial_0}"] \\
    {\mathllap{\sum_{u:b\downarrow B} \exten{t:\Delta^1}{P(u(t))}{0}{e}} \simeq e/E}
    \ar[r] & {b/B}
\end{tikzcd}\]

\begin{prop}[Joyal's Criterion, \protect{\cite[Paragraph~20.4]{JoyNotesQcat}, \cite[Definition~2.4.1.1]{LurHTT}}]
	Let $B$ be Segal and $P: B \to \UU$ be an inner family. Write $E\defeq \totalty{P}$ and consider the canonical projection $\pi : E \to B$.

	A dependent arrow $f:\dhom^P_u(e,e')$, $e:P\,b, e':P\,b'$, $u:\hom_B(b,b')$, $b,b':B$, is cocartesian if and only if the mediating map $\varphi$ in the pullback
\[\begin{tikzcd}
	{f/E} \\
	& {e/E \times_{b/B} u/B} && {u/B} \\
	& {e/E} && {b/B}
	\arrow[from=2-2, to=3-2]
	\arrow[from=3-2, to=3-4]
	\arrow[from=2-2, to=2-4]
	\arrow["{\partial_0}", from=2-4, to=3-4]
	\arrow[curve={height=-12pt}, from=1-1, to=2-4]
	\arrow["{\partial_0}"', curve={height=12pt}, from=1-1, to=3-2]
	\arrow["\varphi"{description}, dashed, from=1-1, to=2-2]
	\arrow["\lrcorner"{anchor=center, pos=0.125}, draw=none, from=2-2, to=3-4]
\end{tikzcd}\]
	is an equivalence.
\end{prop}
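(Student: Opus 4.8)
The plan is to unwind both sides of the claimed equivalence into $\Sigma$-types of (dependent) arrows and to exhibit the mediating map as \emph{dependent post-composition with $f$}, which is exactly the operation governed by the cocartesian condition. First I would record that, since $B$ is Segal and $P$ is inner, the total type $E\defeq\totalty{P}$ is Segal by \cref{prop:innerfib-total-sp-over-segal-base-is-segal}; hence in $E$ (and in $B$) a cocone under an arrow is, up to a contractible choice of filler, the same datum as an arrow out of its codomain. Applying this together with the type-theoretic axiom of choice for extension types (\cref{thm:choice}) and de-/strictification to each of the four corners, I would obtain identifications
\[
  f/E \simeq \sum_{x:B}\sum_{\xi:P\,x}\sum_{v:b'\to x}\hom^P_v(e',\xi),
  \qquad
  e/E \times_{b/B} u/B \simeq \sum_{x:B}\sum_{\xi:P\,x}\sum_{v:b'\to x}\hom^P_{v\circ u}(e,\xi),
\]
together with $u/B\simeq\sum_{x:B}(b'\to x)$ and $b/B\simeq\sum_{x:B}(b\to x)$, where the structural map $u/B\to b/B$ is precomposition with $u$ and $e/E\to b/B$ is the underlying base arrow.

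Under these identifications I would check that the mediating map $f/E\to e/E\times_{b/B}u/B$ sends $\angled{x,\xi,v,g}$ to $\angled{x,\xi,v,g\circ^P f}$, i.e.\ that it is, fiberwise in $\angled{x,\xi,v}$, the dependent post-composition map $(-)\circ^P f\colon \hom^P_v(e',\xi)\to\hom^P_{v\circ u}(e,\xi)$. The only delicate point here is the bookkeeping: verifying that the two induced projections to $u/B$ agree and that the transported arrow is genuinely the composite $g\circ^P f$ requires matching the strict restrictions along $\Lambda^2_0\hookrightarrow\Delta^2$ built into the extension-type descriptions of the comma objects with the propositional (dependent) composition operation on the Segal type $E$.

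Since both $f/E$ and $e/E\times_{b/B}u/B$ live over $u/B$ and the mediating map commutes with these projections, I would conclude by observing that it is an equivalence if and only if it is a fiberwise equivalence over $u/B$. Fixing $\angled{x,v:b'\to x}$ in $u/B$, the induced map on fibers is $\sum_{\xi:P\,x}\hom^P_v(e',\xi)\to\sum_{\xi:P\,x}\hom^P_{v\circ u}(e,\xi)$, which—being itself fiberwise over $\xi$—is an equivalence exactly when
\[
  \isContr\Bigl(\sum_{g:e'\to^P_v \xi} g\circ^P f = h\Bigr)
\]
holds for every $\xi:P\,x$ and every $h:e\to^P_{v\circ u}\xi$. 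Quantifying over all $x$, $v$, $\xi$, and $h$ recovers verbatim the Segal-rewritten form of $\isCocartArr^P_u f$ displayed just before the statement, yielding the equivalence in both directions at once. I expect the principal obstacle to be precisely the strictification step of the first paragraph: translating the extension-type presentations of $f/E$, $e/E$, $u/B$ into the arrow-theoretic $\Sigma$-types above while keeping careful track of which equalities are judgmental and which are merely propositional, so that the gap map is correctly identified with post-composition by $f$.
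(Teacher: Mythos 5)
Your proposal is correct and follows essentially the same route as the paper: identify the comma types with $\Sigma$-types of (dependent) arrows using Segalness of $B$ and $E$ (plus choice/strictification), recognize the gap map as dependent post-composition with $f$, and observe that its fibers being contractible is verbatim the Segal-type rewriting of $\isCocartArr^P_u f$. The paper merely compresses these steps — the comma-type identifications are set up in the display preceding the proposition, and the proof itself consists of the single observation that the gap map is an equivalence iff the displayed contractibility condition holds.
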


\begin{proof}
	Note that by precondition both $B$ and $E$ are Segal types. The map $\varphi$ is an equivalence if and only if
	\[
	\prod_{b'':B} \, \prod_{v:\hom_B(b',b'')} \, \prod_{e'':P \, b''} \, \prod_{h:\dhom^P_{v \circ u}(e,e'')} \, \isContr \Big( \sum_{g:\dhom^P_v(e',e'')} g \circ f = h \Big)
	\]
	which is equivalent to $f$ being cocartesian.
\end{proof}

\begin{cor}\label{cor:cocart-arr-idfam}
	For a Segal type $B$, any arrow in $B$ is cocartesian in the unit family~$\lambda b.\unit : B \to \UU$.
\end{cor}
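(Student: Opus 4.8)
The plan is to unfold the definition of cocartesian arrow for the unit family and to observe that every extension type appearing in that definition is the extension type of a \emph{constant} family valued in the contractible type $\unit$, so that contractibility is immediate from relative function extensionality.

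First I would note that $\lambda b.\unit$ is an inner family, so that the notion of cocartesian arrow is available for it. Indeed, its total type $\totalty{\lambda b.\unit} \equiv \sum_{b:B}\unit$ is equivalent to $B$ with the projection corresponding to an equivalence, and families corresponding to equivalences are isoinner---hence in particular inner---by the closure result recorded above (alternatively, one may invoke \cref{prop:orth-maps-equivs} directly, since an equivalence is right orthogonal to $\Lambda_1^2 \hookrightarrow \Delta^2$). Moreover, since the fibers $\unit$ are contractible, for any arrow $u:\hom_B(b,b')$ the endpoints $e:\unit$, $e':\unit$ and the dependent arrow $f:\dhom^{\lambda b.\unit}_u(e,e')$ are each uniquely determined; thus ``an arrow in $B$'' corresponds to a well-defined dependent arrow $f$ whose cocartesianness is what must be established.

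Next I would verify $\isCocartArr^{\lambda b.\unit}_u f$ straight from the definition. Fixing $\sigma:\ndexten{\Delta^2}{B}{\Delta_0^1}{u}$ and $h:\prod_{t:\Delta^1}\unit$, the type required to be contractible is $\exten{\pair{t}{s}:\Delta^2}{\unit}{\Lambda_0^2}{[f,h]}$, that is, the extension type of the constant family at $\unit$ along the shape inclusion $\Lambda_0^2 \hookrightarrow \Delta^2$. Since $\unit$ is contractible, relative function extensionality (\cref{ax:relfunext}) yields that this extension type is contractible. As $\sigma$ and $h$ were arbitrary, the product $\prod_\sigma \prod_h \isContr\big(\exten{\pair{t}{s}:\Delta^2}{\unit}{\Lambda_0^2}{[f,h]}\big)$ is inhabited, which is precisely $\isCocartArr^{\lambda b.\unit}_u f$.

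There is essentially no obstacle here: the whole content is that the defining extension types collapse to extension types of a constant contractible family, to which \cref{ax:relfunext} applies verbatim. The only point demanding a moment's care is the bookkeeping that the unit family is genuinely inner---so that the definition of cocartesian arrow is legitimately in force---which, as noted, follows from the isoinnerness of families corresponding to equivalences.
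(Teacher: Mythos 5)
Your proof is correct, but it takes a different route from the paper. The paper states this result as a corollary of Joyal's Criterion, which it immediately follows: for the unit family the projection $\pi:\totalty{\lambda b.\unit}\to B$ is an equivalence, under which $f/E$ is identified with $u/B$ and $e/E$ with $b/B$, so the gap map $f/E \to e/E \times_{b/B} u/B$ in the criterion is an equivalence and every dependent arrow is cocartesian. You instead verify $\isCocartArr^{\lambda b.\unit}_u f$ straight from the definition: the extension types appearing there are extension types of the constant family at $\unit$ along $\Lambda_0^2\hookrightarrow\Delta^2$, so \cref{ax:relfunext} gives their contractibility at once, and your bookkeeping that the unit family is inner (via \cref{prop:orth-maps-equivs}, or the isoinnerness of families corresponding to equivalences) is exactly what is needed for the notion of cocartesian arrow to apply. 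Your argument is arguably more elementary and in fact slightly more general: it never uses that $B$ is Segal, whereas Joyal's Criterion is only stated over Segal bases; what the paper's route buys is economy of means---a one-line consequence of the characterization just proved, which also serves to illustrate that criterion in the simplest example.
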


Another characterization familiar from the semantics is given in terms of the ``cubical horn''. The pushout product of the two shape inclusions $b_1: \partial \Delta^1 \hookrightarrow \Delta^1$, $i_0:\unit \to \Delta^1$, can be depicted as:
\[\begin{tikzcd}
	{\cdot} & {\cdot} \\
	{\cdot} & {\cdot} & {(\partial \Delta^1 \times \Delta^1) \bigsqcup_{\partial \Delta^1 \times \{0\}} (\Delta^1 \times \{0\}) } \\
	{\cdot} & {\cdot} & {\Delta^1 \times \Delta^1} \\
	{\cdot} & {\cdot}
	\arrow[""{name=0, inner sep=0}, from=2-1, to=2-2]
	\arrow[from=2-1, to=1-1]
	\arrow[from=2-2, to=1-2]
	\arrow[""{name=1, inner sep=0}, from=3-1, to=3-2]
	\arrow[from=4-1, to=3-1]
	\arrow[from=4-1, to=4-2]
	\arrow[from=4-2, to=3-2]
	\arrow[from=2-3, to=3-3, hook]
	\arrow[Rightarrow, from=4-1, to=3-2, shorten <=3pt, shorten >=3pt, no head]
	\arrow[from=0, to=1, shorten <=3pt, shorten >=3pt, hook]
\end{tikzcd}\]
\begin{prop}[\protect{\cite[Proposition~2.4.1.8]{LurHTT}}]
	Let $f$ be a dependent arrow in an isoinner family $P:B \to \UU$ over a Rezk type $B$. Then $f$ is cocartesian if and only if every diagram of the following form has a filler uniquely up to homotopy:
	\[\begin{tikzcd}
		{\Delta^1} & {(\partial \Delta^1 \times \Delta^1) \bigsqcup_{\partial \Delta^1 \times \{0\}} \Delta^1 \times \{0\}} && {\widetilde{P}} \\
		& {\Delta^1 \times \Delta^1} && {B}
		\arrow["{\{0,1\}}"', from=1-1, to=1-2]
		\arrow[from=1-2, to=1-4]
		\arrow[from=1-2, to=2-2, hook]
		\arrow[from=2-2, to=2-4]
		\arrow[from=1-4, to=2-4, two heads]
		\arrow["{f}" description, from=1-1, to=1-4, curve={height=-18pt}]
	\end{tikzcd}\]
\end{prop}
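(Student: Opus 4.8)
The plan is to reduce the cubical lifting problem to the two triangular conditions that already define innerness and cocartesianness, by triangulating the square along its diagonal. Write $L \defeq \{\pair{t}{s}:\I^2 \mid s \le t\}$ and $U \defeq \{\pair{t}{s}:\I^2 \mid t \le s\}$ for the two $2$-simplices comprising $\Delta^1 \times \Delta^1$; on the tope layer one has $\top \equiv (s \le t) \lor (t \le s)$ with $(s \le t) \land (t \le s) \equiv (s \equiv t)$, so the square is the union of $L$ and $U$ glued along the diagonal edge $\{s \equiv t\}$. Tracing the cubical horn $(\partial\Delta^1 \times \Delta^1) \sqcup_{\partial\Delta^1 \times \{0\}} \Delta^1 \times \{0\}$ through this decomposition, its restriction to $L$ is the inner horn $\Lambda^2_1$ (the bottom and right sides, meeting at the middle vertex), while its restriction to $U$ is the single nondegenerate edge carrying $f$, which sits as the initial edge $\{01\}$ of the outer horn $\Lambda^2_0 \subseteq U$ (with the diagonal as the edge $\{02\}$ and the top as the edge $\{12\}$).

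First I would factor the shape inclusion of the cubical horn into the full square as ``fill $L$, then fill $U$'': the horn includes into the intermediate shape $M \defeq \{\pair{t}{s} : \I^2 \mid (s \le t) \lor (t \equiv 0)\}$ (the lower triangle together with the edge carrying $f$), and $M$ in turn includes into the full square. The first inclusion fills the lower triangle relative to its inner horn $\Lambda^2_1$, and the second fills the upper triangle relative to the outer horn $\Lambda^2_0$ consisting of $f$ together with the diagonal produced by the first step. By the composition law for extension types along these nested shape inclusions (the shape calculus of \cite[Section 4]{RS17} together with de-/strictification), the space of fillers of the cubical problem is equivalent to a dependent sum of the two triangular filler spaces, the $U$-filler being taken over the diagonal $d$ that the $L$-filler produces.

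Next I would dispatch the lower triangle by innerness: as $P$ is an inner family, $\pi$ is right orthogonal to $\Lambda^2_1 \hookrightarrow \Delta^2$ by \cref{prop:innfam-pb}, so for any base square and any choice of the two given sides the space of $L$-fillers is contractible, and it exhibits the diagonal $d$ as the dependent composite. Hence the cubical filler space is equivalent to the space of $\Lambda^2_0$-fillers in $U$ with $f$ on the edge $\{01\}$ and $d$ on the edge $\{02\}$, and by the very definition of $\isCocartArr^P_u f$ this space is contractible precisely when $f$ is cocartesian. To match the quantifiers exactly, in the forward direction I would let the base square and the two free sides range over all values---in particular choosing the lower triangle degenerate (the right side an identity and the bottom side equal to the base diagonal), so that the produced $d$ realizes an \emph{arbitrary} dependent arrow $h$ over a prescribed $2$-simplex $\sigma$ extending $u$---which recovers exactly the data $\pair{\sigma}{h}$ quantified over in the cocartesian condition; the converse direction is then immediate from the same decomposition.

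The step I expect to be the main obstacle is the strict bookkeeping at the level of extension types: one must justify carefully that the decomposition $\Delta^1 \times \Delta^1 \equiv L \cup_{\{s \equiv t\}} U$ yields an honest composite of the associated extension types along the nested inclusions, and that ranging over the auxiliary bottom and right sides together with the base square reconstitutes \emph{precisely}---neither more nor less than---the quantification $\prod_\sigma \prod_h$ in the definition of a cocartesian arrow. Once these identifications are in place, contractibility of the $L$-filler space collapses the cubical filler space onto the $U$-filler space, and the equivalence with cocartesianness is read off directly; this is the type-theoretic shadow of Lurie's inner-anodyne comparison between the cubical and simplicial horns.
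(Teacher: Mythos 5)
Your proof is correct, and it is essentially the paper's own argument made explicit: the paper in fact omits the proof of this proposition, saying only that it is ``straightforward to do, jumping between (degenerate) squares and $2$-simplices'', and your triangulation $\Delta^1\times\Delta^1 \equiv L \cup_{\{s \jdeq t\}} U$ --- with innerness collapsing the $L$-filler space, cocartesianness governing the resulting $\Lambda^2_0$-problem in $U$, and degenerate squares matching the quantifiers --- is exactly that argument spelled out. One cosmetic remark: your direction labels are swapped, since the degenerate-lower-triangle construction (e.g.\ via the connection $(t,s)\mapsto\pair{t}{t\lor s}$, under which the $L$-filler can be taken to be $(t,s)\mapsto h(t)$ strictly, so that its diagonal is exactly $h$) is what the ``if'' direction (unique cubical fillers implies $f$ cocartesian) requires, while the ``only if'' direction is the one that follows immediately by applying the decomposition to an arbitrary square.
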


We omit a proof, but it is straightforward to do, jumping between (degenerate) squares and $2$-simplices.

As has been pointed out to us by Emily Riehl, cocartesian arrows can also be characterized along similar lines like cocartesian families in \cref{thm:cocart-fam-intl-char-fib,thm:cocart-fams-intl-char} and cocartesian functors in \cref{thm:cocart-fun-intl-char}, using the formalism of relative adjunctions/absolute left lifting diagrams~\cite[Theorem 5.1.7]{RV}. Translating to the type-theoretic setting, this unwinds exactly to the above ``cubical'' formulation of the universal property. This viewpoint seems suitable for the type-theoretic setting and would also yield a more general account to the formal properties of cocartesian arrows, such as closure under composition or right cancelation. In this sense, cocartesian arrows can be seen as an instance of the notion of \emph{$j$-LARI cells} which is further and more explicitly discussed in~\Cite[Appendix~A.2.1]{jw-phd}.

\subsection{Cocartesian families}\label{ssec:cocart-fam}

Cocartesian families are families of Rezk types such that every map in the base has a cocartesian lift w.r.t.~a choice of the source vertex. After showing elementary properties such as functoriality we prove the Chevalley criterion which exhibits cocartesian families as LARI fibrations (w.r.t.~the initial vertex inclusion $i_0:\unit \to \Delta^1$). By the results from \cref{ssec:lari-closed} it then follows that cocartesian fibrations are closed under pullback, composition, and dependent products. We proceed by giving three kinds of examples of cocartesian families: the comma codomain, the domain projection in case the base category has all pushouts, and the cocartesian replacement of an arbitrary map between Rezk types. Relating to the latter, we show that cocartesian replacement really satisfies the desired universal property.

Here, we are only concerned with \emph{co-}cartesian fibrations, so we omit the dual case of right adjoint right inverse (RARI) adjunctions. Occasionally, reminiscent of the jargon of classical fibered $1$-category theory, we will refer to cocartesian fibrations simply as \emph{opfibrations} (\eg,~when talking about the \emph{codomain opfibration} of a Rezk type).

\subsubsection{Definition and basic properties}

\begin{defn}[Cocartesian lifting property]
	A family $P: B \to \UU$ is said to \emph{have (all) cocartesian lifts} if
	\[
	\hasCocartLifts \, P :\jdeq \prod_{b,b':B} \prod_{u:\hom_B(b,b')} \prod_{e:P\,b} \sum_{e':P\,b'} \sum_{f:\hom^{P}_{u}(e,e')} \isCocartArr^P_u \, f.
	\]
\end{defn}

\begin{defn}[Cocartesian family]
	For any type $B$, we call a family $P : B \to \UU$ a \emph{cocartesian family} if
	\[
	\isCocartFam \, P :\jdeq \isIsoInnerFam \, P \times \hasCocartLifts \, P.
	\]
\end{defn}

If $B$ is a Rezk type and $P:B \to \UU$ is a cocartesian family, then any arrow $u:\hom_B(a,b)$ induces a functor $\coliftptfammap{P}{u}: P\,a \to P\,b$ defined by
\[ \coliftptfammap{P}{u}: \jdeq \lambda d. \partial_1 \coliftarr{P}{u}{d}.\]
We will often omit the superscript if the family is clear from the context.

\emph{Notation:} We often denote (types of) cocartesian arrows by a decorated arrow $\cocartarr$. (Dually, cartesian arrows are denoted $\cartarr$.)

\begin{defn}[Vertical arrow]\label{def:vert-arr}
Let $\pi:E \to B$ be an inner fibration over a Segal type. A dependent arrow $f: \Delta^1 \to E$ is called \emph{vertical} if $\pi \, f$ is an isomorphism.
\end{defn}

Observe that, since being an isomorphism is a proposition in a Segal type by~\cite[Proposition~1.10]{RS17}, being a vertical arrow is a proposition when in an inner family over a Segal type. In a cocartesian family one recovers the classically well-known fact that any dependent arrow factors as a cocartesian arrow followed by a vertical arrow.

Cocartesian families implement the idea of a \emph{functorial} family of Rezk types, \ie,~in addition to transport along paths---which exists for arbitrary type families---there is also a notion of transport along \emph{directed} arrows, which turns out to be compatible with the familiar path transport.

\begin{prop}[Functoriality]\label{prop:cocart-functoriality}
	Let $B$ be a Rezk type and $P:B \to \UU$ a cocartesian family. For any $a:B$ and $x:P\,a$ there is an identity
	\[ \coliftarr{P}{\id_a}{x} = \id_x, \]
	and for any $u:\hom_B(a,b)$, $v:\hom_B(b,c)$, there is an identity
	\[ \coliftarr{P}{v \circ u}{x} = \coliftarr{P}{v}{\coliftpt{u}{x}} \circ \coliftarr{P}{u}{x}. \]
\end{prop}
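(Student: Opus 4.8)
The plan is to derive both identities from the uniqueness of cocartesian lifts established in \cref{prop:cocart-lifts-unique-in-isoinner-fams}. Concretely, since $B$ is Rezk and $P$ is cocartesian (hence isoinner), for every arrow $w$ of $B$ and every source vertex the type of cocartesian lifts is a proposition; as $P$ has all cocartesian lifts, this type is in fact contractible. Thus, to prove an equation of the shape $\coliftarr{P}{w}{x} = k$ it suffices to exhibit $k$ as a cocartesian lift of $w$ starting at $x$ and to appeal to contractibility of $\CocartLift_P(w,x)$, which simultaneously identifies the target vertices and the lifting arrows as a single path in this $\Sigma$-type.

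For the unit law (read with $\id_a$, since $x:P\,a$), first I would observe that $\id_x : \dhom^P_{\id_a}(x,x)$ is a dependent isomorphism lying over the identity $\id_a$, an identity arrow being always invertible. By \cref{lem:cocart-arrows-isos}(\ref{it:depisos-are-cocart}), every dependent isomorphism is cocartesian, so $\id_x$ is a cocartesian lift of $\id_a$ starting at $x$. Since the chosen lift $\coliftarr{P}{\id_a}{x}$ is another such lift, contractibility of $\CocartLift_P(\id_a,x)$ yields a path identifying $\coliftpt{\id_a}{x}$ with $x$ and, over it, $\coliftarr{P}{\id_a}{x}$ with $\id_x$, which is the desired identity.

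For the composition law, I would set $f \defeq \coliftarr{P}{u}{x} : \dhom^P_u(x, \coliftpt{u}{x})$ and $g \defeq \coliftarr{P}{v}{\coliftpt{u}{x}} : \dhom^P_v(\coliftpt{u}{x}, \coliftpt{v}{\coliftpt{u}{x}})$, both cocartesian by construction. By \cref{prop:cocart-arr-closure}(\ref{it:cocart-arr-comp}), the composite $g \circ f$ is again cocartesian; projecting under $\pi$ shows that it lies over $v \circ u$, and it starts at $x$, so $g \circ f$ is a cocartesian lift of $v \circ u$ starting at $x$. As $\coliftarr{P}{v \circ u}{x}$ is the chosen such lift, contractibility of $\CocartLift_P(v \circ u, x)$ identifies the two, giving $\coliftarr{P}{v \circ u}{x} = \coliftarr{P}{v}{\coliftpt{u}{x}} \circ \coliftarr{P}{u}{x}$.

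The main obstacle is the dependent bookkeeping rather than any deep new idea: the two arrows being compared a priori live over different target vertices (e.g.\ $(v\circ u)_! x$ versus $v_! u_! x$), so the equalities must be read inside the contractible total type $\CocartLift_P(w,x)$, where the identification of arrows comes paired with the identification of their codomains via transport. I would also take care that $g \circ f$ genuinely lies over $v \circ u$ — this uses that the projection $\pi$ is a functor between Segal types and so preserves composition up to the relevant homotopy — and check that the hypotheses ($B$ Rezk, $P$ isoinner) are exactly those required by \cref{prop:cocart-lifts-unique-in-isoinner-fams}, so that uniqueness is available in both steps.
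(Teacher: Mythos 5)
Your proof is correct and follows essentially the same route as the paper's: both identities are reduced to the uniqueness of cocartesian lifts (\cref{prop:cocart-lifts-unique-in-isoinner-fams}), with the unit law obtained from \cref{lem:cocart-arrows-isos} and the composition law from \cref{prop:cocart-arr-closure}(\ref{it:cocart-arr-comp}). The only cosmetic difference is that you invoke \cref{lem:cocart-arrows-isos}(\ref{it:depisos-are-cocart}) to see that $\id_x$ is cocartesian, whereas the paper cites part (\ref{it:cocart-arrows-over-ids-are-isos}) of the same lemma; your version is, if anything, the more direct reading, and your attention to the dependent bookkeeping (identifying lifts inside the total type of cocartesian lifts) makes explicit what the paper leaves implicit.
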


\begin{proof}
	The first claim follows from \cref{lem:cocart-arrows-isos}(\ref{it:cocart-arrows-over-ids-are-isos}), in combination with \cref{prop:cocart-lifts-unique-in-isoinner-fams}.

	The second claim follows from \cref{prop:cocart-arr-closure}(\ref{it:cocart-arr-comp}).
\end{proof}

\begin{prop}
Let $B$ be a Rezk type and $P:B \to \UU$ be a cocartesian family. For any arrow $u:\hom_B(a,b)$ and elements $d:P\,a$, $e:P\,b$, we have equivalences between the types of (cocartesian) lifts of arrows from $d$ to $e$ and maps (isomorphisms, resp.) from $u_!d$ to $e$:
	\[\begin{tikzcd}
		{(d} & {e)} && {(u_!d} & {e)} \\
		{(d} & {e)} && {(u_!d} & {e)}
		\arrow["\equiv", from=1-2, to=1-4]
		\arrow["\equiv", from=2-2, to=2-4]
		\arrow[""{name=0, anchor=center, inner sep=0}, "u"', from=1-1, to=1-2, cocart]
		\arrow[""{name=1, anchor=center, inner sep=0}, "{P\,b}" below, from=2-4, to=2-5]
		\arrow[""{name=2, anchor=center, inner sep=0}, "u"', from=2-1, to=2-2]
		\arrow[""{name=3, anchor=center, inner sep=0}, "{P\,b}" below,equals,  from=1-4, to=1-5]
		\arrow[shorten <=9pt, shorten >=6pt, hook, from=0, to=2]
		\arrow[shorten <=9pt, shorten >=6pt, hook, from=3, to=1]
	\end{tikzcd}\]
\end{prop}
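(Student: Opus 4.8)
The plan is to derive both equivalences from the universal property of the chosen cocartesian lift of $u$ starting at $d$, and then to match the distinguished subtypes on each side.

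Write $c \defeq \coliftarr{P}{u}{d} : \dhom^P_u(d, u_!d)$ for this cocartesian lift, so that $u_!d \jdeq \partial_1 c$. I would first establish the \emph{lower} equivalence. Since $B$ is Segal, we have the unit law $\id_b \circ u = u$, so instantiating the Segal-type form of $\isCocartArr^P_u c$ at $b'' \defeq b$, $v \defeq \id_b$, $e'' \defeq e$ yields that for every $h : \dhom^P_u(d,e)$ the type $\sum_{g : \dhom^P_{\id_b}(u_!d,e)} (g \circ c = h)$ is contractible. As arrows over $\id_b$ are exactly arrows in the fiber $P\,b$, this says precisely that postcomposition $g \mapsto g \circ c$ is an equivalence $\hom_{P\,b}(u_!d,e) \xrightarrow{\ \equiv\ } \dhom^P_u(d,e)$, with inverse $h \mapsto \tyfill_c(h)$. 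This is the bottom row of the diagram.

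For the \emph{upper} equivalence it suffices to check that $h$ is cocartesian if and only if the corresponding fiber arrow $g \defeq \tyfill_c(h)$ is an isomorphism in $P\,b$; since both conditions are propositions, the equivalence of total types then restricts to an equivalence of the respective subtypes, compatibly with the two vertical inclusions. If $g$ is an isomorphism in $P\,b$, it is a dependent isomorphism over $\id_b$, hence cocartesian by \cref{lem:cocart-arrows-isos}(\ref{it:depisos-are-cocart}); as $c$ is cocartesian and $h = g \circ c$, closure under composition \cref{prop:cocart-arr-closure}(\ref{it:cocart-arr-comp}) makes $h$ cocartesian. Conversely, if $h = g \circ c$ is cocartesian while $c$ is cocartesian, right cancelation \cref{prop:cocart-arr-closure}(\ref{it:cocart-arr-cancel}) makes $g$ cocartesian; being cocartesian over the identity $\id_b$, the arrow $g$ is an isomorphism by \cref{lem:cocart-arrows-isos}(\ref{it:cocart-arrows-over-ids-are-isos}).

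The only delicate point, and the one I would flag explicitly, is the identification $\id_b \circ u = u$: the unit law holds only up to propositional equality in a Segal type, so the composite $g \circ c$ nominally lies over $\id_b \circ u$ and must be transported along this path to be read as a dependent arrow over $u$. This is harmless, since we reason up to homotopy throughout and both cocartesianness and invertibility are stable under such transport, but it is what keeps the argument from being a purely formal one-liner. Everything else is a direct assembly of the already-established closure and identification results for cocartesian arrows.
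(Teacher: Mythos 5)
Your proof is correct, and its overall shape---first an equivalence between \emph{all} lifts of $u$ and \emph{all} fiber arrows, then a restriction to the distinguished subtypes---matches the paper's; but the two halves diverge in interesting ways. For the bottom equivalence the arguments are essentially the same: you read off contractibility of the fibers of postcomposition $g \mapsto g \circ c$ directly from the Segal form of the universal property, while the paper exhibits the explicit quasi-inverse pair $\Phi(f) \defeq \tyfill_{P_!(u,d)}(f)$ and $\Psi(p) \defeq p \circ P_!(u,d)$ and checks the two round trips via \cref{cor:cocart-trivfill}. Where you genuinely differ is the restriction step. The paper identifies the cocartesian lifts with \emph{paths} $u_!d =_{P\,b} e$ in one stroke: if $f$ is cocartesian, then by uniqueness of cocartesian lifts (\cref{prop:cocart-lifts-unique-in-isoinner-fams}) the filler $\Phi(f)$ must be a path, and conversely paths are in particular cocartesian, so $\Psi$ restricts. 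You instead characterize the subtype on the right as the \emph{isomorphisms} of $P\,b$, using \cref{lem:cocart-arrows-isos}(\ref{it:depisos-are-cocart}) and (\ref{it:cocart-arrows-over-ids-are-isos}) together with composition and right cancelation from \cref{prop:cocart-arr-closure}. Your route is more structured and avoids invoking uniqueness of lifts (which the paper proves separately via an initiality argument), but it lands in $\iso_{P\,b}(u_!d,e)$ rather than in the identity type appearing in the diagram; to finish you need one further step, namely that the fibers of a cocartesian family are Rezk, so that $\idtoiso$ trades isomorphisms for paths. That step is licensed by the statement's parenthetical ``(isomorphisms, resp.)'', but you should state it explicitly. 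Finally, your flag about transporting along the propositional unit law $\id_b \circ u = u$ is a fair and careful point; the paper glosses over it, and it affects both proofs equally.
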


\begin{proof}
Consider the maps
\begin{align*}
	\Phi & :  (d \to_u e) \longrightarrow (u_!d \to_{P\,b} e), \quad \Phi(f):\jdeq \tyfill_{P_!(u,d)}(f), \\
	 \Psi & : (u_!d \to_{P\,b} e) \longrightarrow (d \to_u e), \quad \Psi(p):\jdeq p \circ P_!(u,d).
\end{align*}
From the universal property of cocartesian fillers, as argued in \cref{cor:cocart-trivfill}, we find
\begin{align*}
	\Phi(\Psi(g)) & = \tyfill_{P_!(u,d)}(g \circ P_!(u,d)) = g, \\
	\Psi(\Phi(h)) & = \tyfill_{P_!(u,d)}(h) \circ P_!(u,d) = h.
\end{align*}

Now, if $u:d \cocartarr_u e$ is a cocartesian arrow by uniqueness of cocartesian lifts the filler $\Phi(u)$ must be a path. Thus $\Phi$ restricts to a map $\Phi':(d \cocartarr_u e) \longrightarrow (u_!d =_{P\,b} e)$.

If $p:u_!d = e$ is a path, it is in particular cocartesian, so $\Psi(p)$ is as well since cocartesian arrows are closed under composition. Hence, $\Psi$ restricts to a map
\[
  \Psi':(u_!d =_{P\,b} e) \longrightarrow (d \cocartarr_u e).\qedhere
\]
\end{proof}

Thus, just as in the classical case, our cocartesian families capture the notion of covariantly functorial families of categories.

Due to \cref{prop:cocart-lifts-unique-in-isoinner-fams}, over Rezk types being a cocartesian family is a proposition, and indeed this is the setting that we are interested in. In particular, cocartesian families over Rezk types are thus ``cloven up to homotopy''. Given an arrow $u:\hom_B(b,b')$ together with $e:P\,b$, we write $P_!(u,e)$ for the homotopically unique cocartesian lift of $u$. Even more, from the point of view of homotopy type theory, these cleavages are automatically ``\emph{split}''.\footnote{This is understood in the sense analogous to~\cite[Definition 3.1]{streicher2020fibered}.}

\subsubsection{Characterizations of cocartesian families}

We now turn to characterizations of cocartesian fibrations after~\cite{RV,RVyoneda}.

\paragraph{Cocartesian families via lifting}

\begin{theorem}[Chevalley criterion: Cocartesian families via lifting, {\protect\cite[Proposition 5.2.8(ii)]{RV}}]\label{thm:cocart-fams-intl-char}
	Let $B$ be a Rezk type, $P: B \to \UU$ be an isoinner family, and denote by $\pi: E \to B$ the associated projection map. The family $P$ is cocartesian if and only if the Leibniz cotensor map $i_0 \cotens \pi: E^{\Delta^1} \to \comma{\pi}{B}$ has a left adjoint right inverse:
	\[\begin{tikzcd}
		{E^{\Delta^1}} & {} \\
		&& {\pi \downarrow B} && E \\
		&& {B^{\Delta^1}} && B
		\arrow[two heads, from=2-3, to=3-3]
		\arrow["{\partial_0}"', from=3-3, to=3-5]
		\arrow[from=2-3, to=2-5]
		\arrow["\pi", two heads, from=2-5, to=3-5]
		\arrow["\lrcorner"{anchor=center, pos=0.125}, draw=none, from=2-3, to=3-5]
		\arrow["{\partial_0}", shift left=2, curve={height=-18pt}, from=1-1, to=2-5]
		\arrow[""{name=0, anchor=center, inner sep=0}, "\chi"', curve={height=12pt}, dashed, from=2-3, to=1-1]
		\arrow[""{name=1, anchor=center, inner sep=0}, "{i_0\widehat{\pitchfork}\pi}"', curve={height=12pt}, from=1-1, to=2-3]
		\arrow["{\pi^{\Delta^1}}"', shift right=2, curve={height=18pt}, two heads, from=1-1, to=3-3]
		\arrow["\dashv"{anchor=center, rotate=-118}, draw=none, from=0, to=1]
	\end{tikzcd}\]
\end{theorem}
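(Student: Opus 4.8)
The plan is to reduce the biconditional to the pointwise theory of LARI adjunctions from \cref{app:sec:adj}, exploiting the fact---already recorded in the discussion preceding \cref{prop:cocart-lifts-unique-in-isoinner-fams}---that cocartesian lifts are precisely the initial objects of the type $A(u,e)$ of~\eqref{eq:cocart-initiality-type}. Since $P$ is isoinner by hypothesis and $\isCocartFam\,P \jdeq \isIsoInnerFam\,P \times \hasCocartLifts\,P$, it suffices to prove that $\hasCocartLifts\,P$ holds if and only if $U \defeq i_0 \cotens \pi$ admits a LARI. First I would unfold this map: its codomain $B^{\Delta^1} \times_B E$ (with $B^{\Delta^1} \to B$ the source evaluation and $E \to B$ the map $\pi$) is exactly the comma type $\comma{\pi}{B}$, and $U \colon E^{\Delta^1} \to \comma{\pi}{B}$ carries a dependent arrow $\phi$ to the pair $\pair{\phi(0)}{\pi\phi}$ of its source and its projection to the base. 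All four types in the square are Segal---$E \jdeq \totalty{P}$ by \cref{prop:innerfib-total-sp-over-segal-base-is-segal}, and Segal types form an exponential ideal closed under the pullbacks defining the comma---so the synthetic adjunction machinery applies.

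Next I would invoke the pointwise characterization of LARI adjunctions (\cref{thm:char-lari}, together with \cref{lem:LARI-initial}): $U$ admits a LARI exactly when, for every object $\pair{e}{u} \colon \comma{\pi}{B}$---an element $e \colon E$ over $b \defeq \pi\,e$ together with an arrow $u \colon \hom_B(b,b')$---the comma type $\pair{e}{u} \downarrow U$ possesses an initial object whose image under $U$ agrees with $\pair{e}{u}$ up to isomorphism, the latter encoding the right-inverse condition $U \circ \chi = \id$. The technical heart is to identify $\pair{e}{u} \downarrow U$ with the type $A(u,e)$ of~\eqref{eq:cocart-initiality-type} (after de-/strictification and an application of the axiom of choice for extension types, \cref{thm:choice}), under which the right-inverse normalization matches initial objects with cocartesian lifts $\angled{b',\id_{b'},e',f}$. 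Since $f$ was recognized to be a cocartesian lift of $u$ at $e$ precisely when $\angled{b',\id_{b'},e',f}$ is initial in $A(u,e)$, the pointwise existence of such initial objects is literally $\hasCocartLifts\,P$, yielding both implications simultaneously.

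Finally I would explain why the pointwise data assemble into a genuine, \emph{functorial} LARI rather than a mere fiberwise family of lifts: here uniqueness of cocartesian lifts in isoinner families over Rezk types (\cref{prop:cocart-lifts-unique-in-isoinner-fams}) is essential, since it makes the assignment $\pair{e}{u} \mapsto \chi\pair{e}{u}$ coherent, so that \cref{thm:char-lari} promotes the fiberwise initial objects to an actual adjunction. I expect the main obstacle to be exactly this identification $\pair{e}{u}\downarrow U \simeq A(u,e)$ and the matching of the adjunction's unit and counit with the universal property~\eqref{eq:isCocartArrEquiv} of cocartesian arrows, as it requires carefully translating between the homotopical comma constructions and the strict extension-type formulations while keeping track of precisely which data the right-inverse condition pins down.
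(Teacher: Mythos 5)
Your overall strategy---recasting cocartesian lifts as initial objects and detecting the LARI fiberwise---is appealing, but its central step invokes a result the paper does not contain. You appeal to a ``pointwise characterization of LARI adjunctions'': that $U \jdeq i_0 \cotens \pi$ admits a LARI as soon as every comma type $\pair{u}{e} \downarrow U$ has a suitably normalized initial object, citing \cref{thm:char-lari} together with \cref{lem:LARI-initial}. Neither result says this. Every item of \cref{thm:char-lari} takes the candidate left adjoint (indeed, a section, or a functor with unit data) as \emph{given} and characterizes when that data constitutes a LARI; it does not manufacture the functor out of fiberwise universal objects. And \cref{lem:LARI-initial} transfers initial objects along an \emph{already constructed} LARI adjunction---that is exactly how the paper uses it in \cref{prop:pushout-domainfib}---so it cannot be the tool that produces the adjunction. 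Assembling the fiberwise initial objects into a section $\chi$ of $i_0 \cotens \pi$, showing that the resulting unit components organize into a natural transformation, and verifying the transposing equivalences is precisely the mathematical content of the theorem. This is what the paper's own proof does by hand: it defines $\chi(u,e) \defeq \pair{u}{P_!(u,e)}$, checks it is a strict section, and exhibits explicit quasi-inverse transposing maps $\Phi$, $\Psi$ built from cocartesian fillers (and, conversely, evaluates the transposition equivalence at targets of the form $\pair{\id_c}{\id_d}$ to recover the cocartesian universal property). Your citations leave all of that work unaccounted for.

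Second, the identification $\pair{u}{e} \downarrow U \simeq A(u,e)$, which you correctly single out as the technical heart, is false as an equivalence of types. An object of $\pair{u}{e} \downarrow U$ is a dependent arrow $\phi$ lying over an \emph{arbitrary} arrow $w$ of $B$ together with an arbitrary connecting morphism $\pair{u}{e} \to U\phi$ in $\comma{\pi}{B}$ (a square $\sigma : u \to w$ in $B$ plus a lift $m : e \to \partial_0\phi$ of its source edge), whereas an object of $A(u,e)$ has this connecting data trivialized. There are comparison maps both ways---compose, $(\phi,\sigma,m) \mapsto \phi \circ m$, respectively include via identities---and one composite is the identity, but the other sends $(\phi,\sigma,m)$ to $(\phi \circ m, \ldots, \id_e)$, and the canonical comparison arrow $\phi \circ m \to \phi$ is an isomorphism only when $m$ is. (Already for $B \jdeq \unit$ this is the difference between the coslice $e \downarrow E$ and the lax version $e \downarrow \partial_0$, which have different numbers of non-isomorphic objects.) What is true is that the inclusion of the strict type into the lax one is a LARI of the composition map, so initiality transfers by \cref{lem:LARI-initial}---the very maneuver of \cref{prop:pushout-domainfib}---but that is an adjunction argument, not an equivalence, and it again consumes the adjunction technology you were hoping would come for free. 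Once both repairs are made, your route essentially collapses into the paper's direct verification, so as written the proposal has a genuine gap rather than an alternative proof.
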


\begin{proof}
	Assume $P:B \to \UU$ is a cocartesian family. The gap map can be taken to be
	\[ i_0 \cotens \pi \defeq \lambda u,f.\pair{u}{\partial_0 f}: E^{\Delta^1} \to \comma{\pi}{B}.\]
	For the candidate LARI we take the map that produces the cocartesian lifts, \ie,
	\[ \chi \defeq \lambda u,e.\pair{u}{P_!(u,e)} : \comma{\pi}{B} \to E^{\Delta^1}.\]
	This is clearly a (strict) section of $i_0 \cotens \pi$.

	We show that for any $\pair{u:b \to b'}{e:P\,b}$ in $\comma{\pi}{B}$ and $\pair{v:c \to c'}{g:d \to d'}$ in $E^{\Delta^1}$ the maps
	\[\begin{tikzcd}
		{\hom_{\Delta^1 \to E}(\chi(u,e), \langle v,g \rangle)} && {\hom_{\pi \downarrow B}(\langle u, e \rangle,\langle v,\partial_0\,g \rangle)}.
		\arrow["\Phi_{\pair{u}{e},\pair{v}{g}}", shift left=2, from=1-1, to=1-3]
		\arrow["\Psi_{\pair{u}{e},\pair{v}{g}}", shift left=2, from=1-3, to=1-1]
	\end{tikzcd}\]
	defined by
	\[ \Phi_{\pair{u}{e},\pair{v}{g}}(k,m) \defeq \angled{\pi \, k,\pi\, m, k}, \quad \Psi_{\pair{u}{e},\pair{v}{g}}(w,r,k) \defeq \pair{k}{\tyfill_{P_!(u,e)}(g\circ k)} \]
	yield a quasi-transposing adjunction (cf.~\cref{fig:transp-chevalley} for illustration).\footnote{For brevity, we shall henceforth leave the fixed parameters $\pair{u}{e},\pair{v}{g}$ implicit.}

	On the one hand, for a morphism $\angled{w,r,k}$ we find
	\[ \Phi(\Psi(w,r,k)) = \Phi(k, \tyfill_{P_!(u,e)}(g \circ k)) = \angled{w,r,k} \]
	since $\tyfill_{P_!(u,e)}(g \circ k)$ lies over $r$ and $k$ lies over $w$ (even strictly so).

	For a morphism $\pair{k}{m}$ we obtain
	\[ \Psi(\Phi(k,m))=\Psi(\pi \,k, \pi\,m, k) = \pair{k}{m}\]
	from the cocartesian universal property.

	This shows $\chi \dashv i_0 \cotens \pi$ is a LARI adjunction, as claimed.

	Conversely, suppose $\chi$ is a given LARI of $i_0 \cotens \pi$, \wlogg~a strict section. Then, for any $u:b \to b'$ in $B$, $c:B$, $e:P\,b$, $d:P\,c$, the map
	\[\begin{tikzcd}
		{\hom_{\Delta^1 \to E}(\chi(u,e),\langle \id_c,\id_d\rangle)} && {\hom_{\pi \downarrow B}(\langle u,e\rangle,\langle \id_c,d\rangle)}.
		\arrow["\Phi", from=1-1, to=1-3]
	\end{tikzcd}\]
	defined by
	\[ \Phi(w,v,h,g) \jdeq \angled{w,v,h}\]
	is an equivalence. Finally, contractibility of the fibers amounts to the cocartesian universal property of $\chi(u,e):e \to_u \partial_1(\chi(u,e))$.
\end{proof}
	\begin{figure}
		\centering
		\[\begin{tikzcd}
			&& e & d && e & d \\
			e & d & b & c && b & c & e & d \\
			{u_!e} & {d'\vphantom{b'}} & {b'} & {c'} && {b'} & {c'} & {u_!e\vphantom{c'}} & d'
			\arrow[from=2-1, to=3-1]
			\arrow["m"', from=3-1, to=3-2]
			\arrow["k", from=2-1, to=2-2]
			\arrow[""{name=0, anchor=center, inner sep=0}, "g"', from=2-2, to=3-2]
			\arrow[""{name=1, anchor=center, inner sep=0}, "u", from=2-3, to=3-3]
			\arrow["{\pi\,m}"', from=3-3, to=3-4]
			\arrow["{\pi\,k}", from=2-3, to=2-4]
			\arrow["v"', from=2-4, to=3-4]
			\arrow["k", from=1-3, to=1-4]
			\arrow["u", from=2-6, to=3-6]
			\arrow["r"', from=3-6, to=3-7]
			\arrow["w", from=2-6, to=2-7]
			\arrow[""{name=2, anchor=center, inner sep=0}, "v"', from=2-7, to=3-7]
			\arrow["k", from=1-6, to=1-7]
			\arrow[dashed, from=3-8, to=3-9]
			\arrow["k", from=2-8, to=2-9]
			\arrow["g", from=2-9, to=3-9]
			\arrow[""{name=3, anchor=center, inner sep=0}, from=2-8, to=3-8]
			\arrow[Rightarrow, dotted, no head, from=1-3, to=2-3]
			\arrow[Rightarrow, dotted, no head, from=1-4, to=2-4]
			\arrow[Rightarrow, dotted, no head, from=1-6, to=2-6]
			\arrow[Rightarrow, dotted, no head, from=1-7, to=2-7]
			\arrow["\Phi", shorten <=6pt, shorten >=6pt, maps to, from=0, to=1]
			\arrow["\Psi", shorten <=6pt, shorten >=6pt, maps to, from=2, to=3]
		\end{tikzcd}\]
		\caption{Transposing maps of the adjunction $\chi \dashv i_0 \cotens \pi$}\label{fig:transp-chevalley}
	\end{figure}

\paragraph{Cocartesian families via transport}

There is another characterization of cocartesian families in terms of an adjointness condition. Any map $\pi:E \to B$ between Rezk types is exhibited as a retract of the pullback map $\partial_0^*\pi:\comma{\pi}{B} \to B^{\Delta^1}$ in the following way:
\[\begin{tikzcd}
	E \\
	&& {\pi\downarrow B} &&& E \\
	&& {} \\
	B && {B^{\Delta^1}} &&& B
	\arrow["\pi", from=2-6, to=4-6]
	\arrow[curve={height=-24pt}, no head, from=1-1, to=2-6, equals]
	\arrow["{\mathrm{cst}}"', from=4-1, to=4-3]
	\arrow["{\partial_0^*\pi}", from=2-3, to=4-3]
	\arrow[from=2-3, to=2-6]
	\arrow["{\partial_0}"', from=4-3, to=4-6]
	\arrow["\pi"{description},from=1-1, to=4-1]
	\arrow["{\iota_\pi}"{description}, dashed, from=1-1, to=2-3]
	\arrow["\lrcorner"{anchor=center, pos=0.125}, draw=none, from=2-3, to=4-6]
	\arrow[from=1-1, to=4-3]
	\arrow[curve={height=26pt}, from=4-1, to=4-6, equals]
\end{tikzcd}\]
The mediating map
\[ \iota \defeq \iota_\pi \defeq \lambda \pair{b}{e}.\pair{\id_b}{e}: E \to \comma{\pi}{B} \]
is a fibered functor from $\pi:E \to B$ to the cocartesian replacement\footnote{Explicitly, $\partial'_1 \defeq \lambda u,e.u(1):\comma{\pi}{B} \fibarr B$, cf.~\cref{def:free-cocart}.} of $\partial_0^*\pi: \comma{\pi}{B} \to B^{\Delta^1}$, \ie,~there is a commutative triangle:
\[\begin{tikzcd}
	E && {\comma{\pi}{B}} \\
	& B
	\arrow["{\iota_\pi}", from=1-1, to=1-3]
	\arrow["\pi"', two heads, from=1-1, to=2-2]
	\arrow["{\partial'_1}", two heads, from=1-3, to=2-2]
\end{tikzcd}\]
Denote the family of fibers of $\pi$ by $P:B \to \UU$. If $P$ is cocartesian, it has ``directed transport''
\[ \tau \defeq \iota_\pi \defeq \tau_P \defeq \transp_P\defeq \lambda u,e.u^P_!e:\comma{\pi}{B} \to E.\]
This transport map $\tau$ is easily checked to be a fibered functor from $\partial'_1$ to $\pi$. We will show that it is a fibered left adjoint of $\iota_\pi$, and conversely, the existence of a fibered left adjoint $\tau_\pi$ to $\iota_\pi$ will imply that $\pi$ is cocartesian.

\begin{theorem}[Cocartesian families via transport, {\protect\cite[Proposition 5.2.8(iii)]{RV}}]\label{thm:cocart-fam-intl-char-fib}
	Let $B$ be a Rezk type, and $P:B \to \UU$ an isoinner family with associated total type projection $\pi:E \to B$.

	Then, $P$ is cocartesian if and only if the map
	\[ \iota \defeq \iota_P : E \to \comma{\pi}{B}, \quad \iota \, \pair{b}{e} :\jdeq \pair{\id_b}{e}  \]
	has a fibered left adjoint\footnote{\cf~\cref{def:fib-adj}} $\tau \defeq \tau_P: \comma{\pi}{B} \to E$ as indicated in the diagram:
	\[\begin{tikzcd}
		{} && E && {\pi \downarrow B} \\
		\\
		&&& B
		\arrow[""{name=0, anchor=center, inner sep=0}, "\iota"', curve={height=12pt}, from=1-3, to=1-5]
		\arrow["\pi"', two heads, from=1-3, to=3-4]
		\arrow["{\partial_1'}", two heads, from=1-5, to=3-4]
		\arrow[""{name=1, anchor=center, inner sep=0}, "\tau"', curve={height=12pt}, from=1-5, to=1-3]
		\arrow["\dashv"{anchor=center, rotate=-90}, draw=none, from=1, to=0]
	\end{tikzcd}\]
\end{theorem}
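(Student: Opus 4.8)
The plan is to derive this characterization from the Chevalley criterion \cref{thm:cocart-fams-intl-char} by a composition-of-adjunctions argument, rather than redoing the explicit transposition. The key observation is that both $\iota$ and $\tau$ factor through the arrow type $E^{\Delta^1}$: writing $\delta\colon E \to E^{\Delta^1}$ for the degeneracy $\langle b,e\rangle \mapsto \id_e$ and $\partial_1\colon E^{\Delta^1}\to E$ for evaluation at the target, one checks directly that
\[ \iota = (i_0 \cotens \pi)\circ \delta, \qquad \tau = \partial_1 \circ \chi, \]
where $\chi\colon \comma{\pi}{B}\to E^{\Delta^1}$ is the cocartesian-lift map produced by \cref{thm:cocart-fams-intl-char}. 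Moreover there is a cotensor adjunction $\partial_1 \dashv \delta$, fibered over $B$ with respect to the projections $\pi$ on $E$ and $\pi\circ\partial_1$ on $E^{\Delta^1}$: its unit sends $f$ to the square $(f,\id)$ and its counit is the identity, since $\partial_1\circ\delta = \id_E$, and both are visibly fibered.

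\emph{Forward direction.} Assuming $P$ is cocartesian, \cref{thm:cocart-fams-intl-char} supplies the LARI adjunction $\chi \dashv i_0\cotens\pi$, which is fibered over $B$ via $\pi\circ\partial_1$ on $E^{\Delta^1}$ and $\partial_1'$ on $\comma{\pi}{B}$ (the relevant triangles commute, e.g.\ $\partial_1'\circ(i_0\cotens\pi) = \pi\circ\partial_1$ and $\pi\circ\partial_1\circ\chi = \partial_1'$). Composing the two fibered adjunctions $\chi\dashv i_0\cotens\pi$ and $\partial_1\dashv\delta$ then yields a fibered adjunction whose left adjoint is $\partial_1\circ\chi = \tau$ and whose right adjoint is $(i_0\cotens\pi)\circ\delta = \iota$, i.e.\ exactly $\tau\dashv_B\iota$. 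That the resulting counit is homotopic to the identity on $E$ follows from $\tau\iota\langle b,e\rangle = \coliftarr{P}{\id_b}{e} = e$ via \cref{prop:cocart-functoriality}, so $\iota$ is fully faithful, as expected of an identity-embedding.

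\emph{Converse.} Assume a fibered left adjoint $\tau\dashv_B\iota$. For $u\colon b \to b'$ and $e\colon P\,b$, set $e'\defeq \tau\langle u,e\rangle$, which lies over $b'$ since $\pi\circ\tau = \partial_1'$. The unit $\eta_{\langle u,e\rangle}\colon \langle u,e\rangle \to \iota\langle b',e'\rangle = \langle \id_{b'},e'\rangle$ is a morphism of $\comma{\pi}{B}$ which, because the adjunction is fibered over $B$, restricts to a dependent arrow $f\colon \dhom^P_u(e,e')$. I would then show that the universal property of the adjunction unit—that $\eta_{\langle u,e\rangle}$ is initial among morphisms out of $\langle u,e\rangle$ into the image of $\iota$—unwinds, upon translating comma-morphisms into dependent arrows over base arrows, into exactly the condition \eqref{eq:isCocartArrEquiv} that $f$ be cocartesian. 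Hence every arrow of $B$ admits a cocartesian lift at each chosen source, so $\hasCocartLifts P$ holds, and together with the standing isoinnerness this gives $\isCocartFam P$.

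The hard part will be the bookkeeping in the converse: identifying the unit of a \emph{fibered} adjunction with a genuine dependent arrow, and matching its initiality in the comma slice $\langle u,e\rangle \downarrow \iota$ with the cocartesian universal property term for term (the given comma-morphism into $\iota\langle c,d\rangle$ playing the role of $h$, and its base component the role of $v$). In the forward direction the only non-formal points are that the Chevalley adjunction and the cotensor adjunction $\partial_1\dashv\delta$ are fibered over $B$ and that fibered adjunctions compose, all available from \cref{def:fib-adj}, \cref{thm:char-lari}, and the adjunction calculus of \cref{app:sec:adj} built on \cite[Section 11]{RS17}; these are routine but must be recorded. A fully self-contained alternative mirrors the proof of \cref{thm:cocart-fams-intl-char}: exhibit explicit quasi-inverse transposing maps $\hom_E(\tau\langle u,e\rangle,\langle c,d\rangle) \simeq \hom_{\comma{\pi}{B}}(\langle u,e\rangle,\iota\langle c,d\rangle)$ using $\tyfill$ and \cref{cor:cocart-trivfill}, at the cost of more explicit computation.
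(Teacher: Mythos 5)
Your converse direction coincides with the paper's own argument: the paper likewise uses fiberedness of the unit to pin its base component down to $\angled{u,\id_{b'}}$, leaving a single dependent arrow $f_{u,e}:\dhom^P_u(e,\widehat{\tau}(u,e))$, and then observes that invertibility of the transposing map $\lambda v,g.\,\iota(v,g)\circ\eta_{\pair{u}{e}}$ is literally the cocartesian universal property of $f_{u,e}$; your "initiality of the unit" phrasing is the same statement, so the bookkeeping you worry about is exactly what the paper carries out. Your forward direction, however, is genuinely different. The paper constructs $\tau(u,e)\defeq\pair{\partial_1 u}{u_!e}$ by hand, exhibits explicit quasi-transposing maps $\Phi,\Psi$ using $\tyfill$ and \cref{cor:cocart-trivfill}, and then checks that the unit $\angled{u,\id_{b'},P_!(u,e)}$ is vertical. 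You instead factor the desired adjunction as a composite: the Chevalley LARI adjunction $\chi\dashv i_0\cotens\pi$ of \cref{thm:cocart-fams-intl-char} followed by the cotensor adjunction $\partial_1\dashv\delta$, via the judgmental factorizations $\iota=(i_0\cotens\pi)\circ\delta$ and $\tau=\partial_1\circ\chi$ (both of which check out). This is sound: the Chevalley unit is invertible and the unit of $\partial_1\dashv\delta$ projects to an identity, so both adjunctions are fibered in the sense of \cref{def:fib-adj}, and composites of fibered adjunctions are fibered because fibered functors preserve vertical arrows. What your route buys is conceptual economy—the transposition computation is done once, inside the Chevalley theorem, instead of being repeated—and it mirrors how Riehl--Verity relate their criteria (ii) and (iii). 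What it costs is precisely what you flag: the paper never records $\partial_1\dashv\delta$, the fiberedness of the Chevalley adjunction, or closure of general fibered adjunctions under composition (\cref{prop:lari-closed-under-comp} covers only LARI adjunctions between Rezk types), so your proof needs three short auxiliary lemmas, each routine given \cref{thm:char-lari} and \cref{def:fib-adj}. The paper's proof is longer in computation but self-contained.
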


\begin{proof}
	Assume $P$ is cocartesian. For the candidate left adjoint we take the map given by cocartesian transport
	\[ \tau:\comma{\pi}{B} \to E, \quad \tau(u,e)\defeq \pair{\partial_1 \, u}{u_!e}.\]
	Then $\pi(\tau(u,e)) \jdeq \partial_1 \,u \jdeq \partial_1'(u,e)$, so $\tau$ is a fibered functor from $\partial_1'$ to $\pi$. We show that for any $\pair{u:b \to b'}{e:P\,b}$ and $\pair{c:B}{d:P\,c}$ in $E$ the maps
	\[\begin{tikzcd}
		{\hom_{E}(\langle b',u_!e \rangle, \langle c,d \rangle)} && {\hom_{\pi \downarrow B}(\langle u, e \rangle,\langle \mathrm{id}_c, d \rangle)}
		\arrow["{\Phi_{\langle u,e \rangle, \langle c,d\rangle}}", shift left=2, from=1-1, to=1-3]
		\arrow["{\Psi_{\langle u,e \rangle, \langle c,d\rangle}}", shift left=2, from=1-3, to=1-1]
	\end{tikzcd}\]
	defined by
	\[ \Phi_{\langle u,e \rangle, \langle c,d\rangle}(v,g) \defeq \angled{vu,v,g \circ P_!(u,e)}, \quad \Psi_{\langle u,e \rangle, \langle c,d\rangle}(w,v,h) \defeq \pair{v}{\tyfill_{P_!(u,e)}(h)}\]
	form a quasi-equivalence (cf.~\cref{fig:transp-fibadj} for illustration).
	We have
	\[ \Phi(\Psi(w,v,h)) = \Phi(v, \tyfill_{P_!(u,e)}(h)) = \angled{vu,v,\tyfill_{P_!(u,e)}(h) \circ P_!(u,e)} = \angled{w,v,h}\]
	by \cref{cor:cocart-trivfill}(\ref{it:cocart-fill-comp}), and noting that for any square $\pair{w}{v} : u \to \id_{c}$ there is an identification $w=vu$.
	Next, we find
	\[ \Psi(\Phi(v,g)) = \Psi(vu,v,g \circ P_!(u,e)) = \pair{v}{\tyfill_{P_!(u,e)}(g \circ P_!(u,e))} = \pair{v}{g}.\]
	using again the properties of the fillers defined by the cocartesian lifts, cf.~\cref{cor:cocart-trivfill}, \ref{it:cocart-comp-fill}.

	So indeed $\tau$ is left adjoint to $\iota$. Moreover, it is a fibered left adjoint as can be seen as follows. The unit is defined by
	\[ \eta_{\pair{u}{e}} \defeq \Phi(\id_{b'},\id_{u_!e}) = \angled{u, \id_{b'}, P_!(u,e)}:\hom_{\comma{\pi}{B}}(\pair{u}{e},\pair{\id_{b'}}{u_!e}).\]
	Since the second component is an identity this is a vertical arrow in $\partial_1': \comma{\pi}{B} \fibarr B$ which proves the adjunction is fibered.

	Suppose on the converse that $\tau \dashv_B \iota$ is some fibered left adjunction. Since $\tau$ is a fibered functor, for $\pair{u:b\to b'}{e:P\,b}$ we can assume
	\[ \tau(u,e) \jdeq \pair{b'}{\widehat{\tau}(u,e)}. \]
	Next, $\eta$ being a \emph{fibered} natural transformation fixes its part in $B$, \ie,~since in the square the lower horizontal edge has to be an identity the upper horizontal edge must be $u$ (up to identification), so the only degree of freedom is the dependent arrow $f_{u,e}$ as indicated:
	\[\begin{tikzcd}
		& e & {\widehat{\tau}(u,e)} \\
		{\eta_{u,e}:} & b & {b'} \\
		& {b'} & {b'}
		\arrow["{f_{u,e}}", from=1-2, to=1-3]
		\arrow["u"', from=2-2, to=3-2]
		\arrow[Rightarrow, no head, from=3-2, to=3-3]
		\arrow["u", from=2-2, to=2-3]
		\arrow[Rightarrow, no head, from=2-3, to=3-3]
	\end{tikzcd}\]
	Hence, we can assume
	\[ \eta_{u,e} \jdeq \angled{u,\id_{b'}, f_{u,e}}.\]
	By assumption, the transposing map induced by the unit
	\[ \Phi \defeq \Phi_\eta \defeq \lambda v,g.\iota(v,g) \circ \eta_{u,e} \jdeq \angled{vu,v,g \circ f_{u,e}}:\hom_E(\iota(u,e),\pair{c}{d}) \longrightarrow \hom_{\comma{\pi}{B}}(\pair{u}{e},\pair{\id_c}{d})\]
	is an equivalence.	Spelled out, this means for any $v:b' \to c$, $h:e \to^P_{vu} d$ there exists an arrow $g_h: \widehat{\tau}(u,e) \to_v d$, uniquely up to homotopy, s.t.~$h = g_h \circ f_{u,e}$. This says exactly that $f_{u,e}:e \to \widehat{\tau}(u,e)$ is a cocartesian lift of $u$ \wrt~$e$.
\end{proof}

\begin{figure}
	\centering
	\[\begin{tikzcd}
		&& e & d && e & d \\
		{u_!e} & d & b & c && b & c & {u_!e} & d \\
		{b'} & {c\vphantom{b'}} & {b'} & c && {b'} & {c\vphantom{b'}} & {b'} & c
		\arrow["v", from=3-1, to=3-2]
		\arrow["g", from=2-1, to=2-2]
		\arrow[""{name=0, anchor=center, inner sep=0}, "u", from=2-3, to=3-3]
		\arrow["v"', from=3-3, to=3-4]
		\arrow["vu", from=2-3, to=2-4]
		\arrow["{\mathrm{id}_c}", Rightarrow, no head, from=2-4, to=3-4]
		\arrow["u"', from=2-6, to=3-6]
		\arrow["v"', from=3-6, to=3-7]
		\arrow["w", from=2-6, to=2-7]
		\arrow[""{name=1, anchor=center, inner sep=0}, "{\mathrm{id}_c}"', Rightarrow, no head, from=2-7, to=3-7]
		\arrow["{g\circ P_!(u,e)}", from=1-3, to=1-4]
		\arrow["h", from=1-6, to=1-7]
		\arrow[Rightarrow, dotted, no head, from=2-1, to=3-1]
		\arrow[""{name=2, anchor=center, inner sep=0}, Rightarrow, dotted, no head, from=2-2, to=3-2]
		\arrow[Rightarrow, dotted, no head, from=1-3, to=2-3]
		\arrow[Rightarrow, dotted, no head, from=1-4, to=2-4]
		\arrow[Rightarrow, dotted, no head, from=1-6, to=2-6]
		\arrow[Rightarrow, dotted, no head, from=1-7, to=2-7]
		\arrow["{\mathrm{fill}_{P_!(u,e)}(h)}", dashed, from=2-8, to=2-9]
		\arrow["v", from=3-8, to=3-9]
		\arrow[Rightarrow, dotted, no head, from=2-9, to=3-9]
		\arrow[""{name=3, anchor=center, inner sep=0}, Rightarrow, dotted, no head, from=2-8, to=3-8]
		\arrow["\Phi", shorten <=6pt, shorten >=6pt, maps to, from=2, to=0]
		\arrow["\Psi", shorten <=6pt, shorten >=6pt, maps to, from=1, to=3]
	\end{tikzcd}\]
	\caption{Transposing maps of the fibered adjunction $\tau_P \dashv_B \iota_P$}\label{fig:transp-fibadj}
\end{figure}
\subsubsection{Closure properties of cocartesian families}\label{ssec:cocart-clos}

In this section, when considering type families, we again assume the base types to be Rezk.

Stability of cocartesian families under composition follows as an instance of \cref{prop:lari-maps-closed-comp}.
In principle, the computation of the cocartesian lifts follows by instantiation and unpacking of the constructions given in the proofs of Proposition~\ref{prop:lari-maps-closed-comp} and~\ref{prop:lari-closed-under-comp}, respectively. But actually, we are showing a stronger statement, in a direct way, where the condition on the second factor of the composition is being slightly weakened. The reason is that this will simplify a later proof about induced cocartesian fibrations between pullback types.

\begin{prop}[Identities are cocartesian fibrations]
	For a (complete) Segal type $B$, the unit family~$\lambda b.\unit: B \to \UU$ is cocartesian.	
\end{prop}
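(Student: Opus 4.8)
The plan is to unfold the definition $\isCocartFam(P) \defeq \isIsoInnerFam(P) \times \hasCocartLifts(P)$ for $P \defeq \lambda b.\unit$ and verify each conjunct separately.

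For the first conjunct, observe that the total type $\totalty{P} \defeq \sum_{b:B}\unit$ is equivalent to $B$, and under this equivalence the associated projection $\pi_P$ corresponds to the identity $\id_B$; since all its fibers are contractible, $\pi_P$ is an equivalence. Isoinnerness of $P$ is then immediate from the closure property that families corresponding to equivalences are always isoinner. Equivalently, one may argue directly: innerness of $\id_B$ follows from \cref{prop:maps-between-segal-types} as $B$ is Segal, while each fiber $\unit$ is Rezk, being contractible.

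For the second conjunct I would unwind $\hasCocartLifts(P)$. Fix $b,b':B$, an arrow $u:\hom_B(b,b')$, and $e:P\,b\jdeq\unit$. I take $e'$ to be the unique point of $P\,b'\jdeq\unit$. The type of dependent lifts $\dhom^P_u(e,e') \defeq \exten{t:\Delta^1}{\unit}{\partial\Delta^1}{[e,e']}$ is contractible by relative function extensionality (\cref{ax:relfunext}), since $\unit$ is contractible; let $f$ be its center. It then remains to supply a witness of $\isCocartArr^P_u f$, and this is precisely the content of \cref{cor:cocart-arr-idfam}, which states that every arrow of a Segal type is cocartesian in the unit family. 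Assembling $e'$, $f$, and this witness inhabits the required $\Sigma$-type, so $\hasCocartLifts(P)$ holds.

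Combining the two conjuncts establishes $\isCocartFam(P)$. I expect no genuine obstacle here: the only nontrivial ingredient is the cocartesianness of the canonical lift, which has already been recorded in \cref{cor:cocart-arr-idfam}, where it holds because the contractibility of the fibers of the unit family forces the comparison map in Joyal's criterion to be an equivalence. Everything else reduces to the triviality of lifting against contractible fibers.
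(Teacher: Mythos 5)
Your proof is correct and follows the same route as the paper: the paper's own proof simply declares the statement ``an easy consequence of \cref{cor:cocart-arr-idfam}'', which is exactly the key ingredient you invoke for cocartesianness of the lifts. The remaining details you spell out---isoinnerness via the equivalence $\totalty{P}\simeq B$ (or via \cref{prop:maps-between-segal-types} plus contractibility of the fibers) and contractibility of the dependent lift types via \cref{ax:relfunext}---are precisely the routine steps the paper leaves implicit.
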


\begin{proof}
	This is an easy consequence of \cref{cor:cocart-arr-idfam}.
\end{proof}

\begin{prop}[Cocartesian families are closed under the dependent product]\label{prop:cocart-fam-prod}
	Let $B:I \to \UU$ be a family of Rezk types, and for each $i:I$, assume $P_i: B_i \to \UU$ is a cocartesian family. We denote $E_i :\jdeq \totalty{P_i}$. Then, the family associated to the map of sections
	\[ \prod_{i:I} E_i \xfib{}  \prod_{i:I} B_i \]
	is a cocartesian family.

	In particular, a cocartesian lift for $u:\alpha \to \beta$ in $\prod_{i:I} B_i$ \wrt~$\sigma:\prod_{i:I} P_i(\alpha_i)$ is given by
	\[ \left(\prod_{i:I} P_i\right)_!(u,\sigma) :\jdeq \lambda i.\big(P(i)\big)_!(u(i), \sigma(i)).\]
\end{prop}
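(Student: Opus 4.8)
The plan is to deduce this from the Chevalley criterion \cref{thm:cocart-fams-intl-char} together with the closure of the two defining conditions---isoinnerness and the $i_0$-LARI property---under dependent products. First I would record that the base $\prod_{i:I} B_i$ is again a Rezk type, since Rezk types are closed under dependent products, so that \cref{thm:cocart-fams-intl-char} applies: over this base, the family $\prod_{i:I} P$ (with associated projection $\pi$) is cocartesian precisely if it is isoinner and the Leibniz cotensor $i_0 \cotens \pi$ admits a LARI. Each $P_i$ is cocartesian over the Rezk type $B_i$, hence by \cref{thm:cocart-fams-intl-char} each $P_i$ is isoinner and $i_0$-LARI; these are the two hypotheses I would feed into the respective product-closure results.

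For the isoinner half, I would use that isoinnerness over a Segal base is the conjunction of innerness ($\Lambda_1^2 \hookrightarrow \Delta^2$-orthogonality) and right orthogonality to $\walkBinv \to \unit$, both of which are instances of $j$-orthogonality and hence closed under dependent products by the results of \cref{ssec:clos-orth}. Thus $\prod_{i:I} P$ is isoinner. For the LARI half, I would invoke \cref{prop:lari-maps-closed-pi} directly with $j \defeq i_0$: since each $P_i$ is an $i_0$-LARI family, so is $\prod_{i:I} P$. Combining the two via \cref{thm:cocart-fams-intl-char} then yields that $\prod_{i:I} P$ is cocartesian.

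It remains to read off the explicit formula for the cocartesian lift. Here I would unwind the LARI produced by \cref{prop:lari-maps-closed-pi}, which is assembled componentwise as $\prod_{i:I}\ell_i$ from the individual LARIs $\ell_i$; under the correspondence of \cref{thm:cocart-fams-intl-char} between the LARI section and cocartesian transport, its value on $\pair{u}{\sigma}$ is exactly $\lambda i.\big(P(i)\big)_!(u(i),\sigma(i))$. The main obstacle I anticipate is justifying that this componentwise arrow is genuinely cocartesian in $\prod_{i:I} P$---i.e.\ that cocartesian arrows in a dependent product are precisely the componentwise cocartesian ones. I would settle this by a direct computation with the universal property of cocartesian arrows: the type of fillers witnessing cocartesianness of $\lambda i.f_i$ decomposes, via the type-theoretic axiom of choice for extension types (\cref{thm:choice}) and the commutation of $\prod$ with the relevant pullbacks, into the dependent product over $i:I$ of the filler types for the $f_i$; since each factor is contractible by cocartesianness of $f_i$, so is the product, which is the desired universal property.
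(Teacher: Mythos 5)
Your proposal is correct and follows essentially the same route as the paper's own proof: the paper likewise reduces the claim to \cref{prop:lari-maps-closed-pi} (implicitly via the Chevalley criterion \cref{thm:cocart-fams-intl-char}, with the isoinner half covered by the orthogonality closure results), and obtains the explicit lift either by unpacking that construction or, as the paper puts it ``more directly,'' by verifying the universal property pointwise and invoking function extensionality --- exactly your final paragraph. You merely make explicit two steps the paper leaves implicit, namely the Rezk-ness of $\prod_{i:I} B_i$ and the isoinner half of the Chevalley translation, which is a sound filling-in rather than a different argument.
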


\begin{proof}
	Stability of cocartesian families under products follows as an instance of \cref{prop:lari-maps-closed-pi}. In principle, the computation of the cocartesian lifts follows by instantiating and unpacking the constructions given in the proofs of Propositions~\ref{prop:lari-maps-closed-pi} and~\ref{prop:lari-closed-under-pi}, respectively.

	More directly, one can alternatively take the suggested candidate lift in the product fibration and show that it is indeed cocartesian. But this follows from employing the universal property pointwisely and then invoking function extensionality.
\end{proof}

\begin{cor}[Cocartesian families are closed under exponentiation]\label{cor:cocart-fam-closed-exp}
	Let $B$ be a Rezk type and $P: B \to \UU$ be a cocartesian family. For any shape or type $x:X$, the family $P^X$ is cocartesian.
\end{cor}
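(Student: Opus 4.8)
The plan is to exhibit $P^X$ as an instance of the dependent-product closure, \cref{prop:cocart-fam-prod}, with constant indexing data. First I would recall that, writing $\pi : \totalty{P} \to B$ for the associated projection, the exponentiated family $P^X : B^X \to \UU$ is by definition the straightening of $\pi^X : \totalty{P}^X \to B^X$, so that for a diagram $\sigma : X \to B$ one has $(P^X)(\sigma) \equiv \prod_{x:X} P(\sigma(x))$. Taking the index type to be $I \defeq X$ and the constant families $\lambda x.B : X \to \UU$ and $\lambda x.P$, I observe that $\prod_{x:X} B \equiv (X \to B) \equiv B^X$ and that the dependent product $\prod_{x:X} P$ evaluated at $\sigma$ is exactly $\prod_{x:X} P(\sigma(x)) \equiv (P^X)(\sigma)$. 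Thus $P^X$ is literally the dependent-product family $\prod_{x:X} P$ for this constant data.

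Next I would check the hypotheses of \cref{prop:cocart-fam-prod}: each fibre $B$ of the constant base family is Rezk by assumption, and each $P$ is cocartesian by assumption. The proposition then yields that $P^X \equiv \prod_{x:X} P$ is a cocartesian family over $\prod_{x:X} B \equiv B^X$; moreover $B^X$ is again Rezk, since Rezk types are closed under exponentials, so the conclusion has the expected form. As a by-product the proposition records that the cocartesian lift of an arrow $u : \sigma \to \sigma'$ in $B^X$ with respect to a section is computed pointwise, $(P^X)_!(u,-) \equiv \lambda x.P_!(u(x),-(x))$.

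Finally I would address the case where $X$ is a \emph{shape} rather than a type. Here I invoke the coercion of \cref{ssec:fib-shapes}, under which every (fibrant) shape is a type; taking this underlying type as the index $I$, the function type $B^X$ and the product $\prod_{x:X}$ collapse to the corresponding $\Pi$-types over the underlying type of $X$, and the argument above applies verbatim. The only points requiring any care are the identification $P^X \equiv \prod_{x:X} P$ for constant data and the observation that $B^X$ remains Rezk; both are routine, so the corollary is essentially immediate from \cref{prop:cocart-fam-prod}.
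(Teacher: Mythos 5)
Your proposal is correct and is exactly the argument the paper intends: the corollary is stated immediately after \cref{prop:cocart-fam-prod} with no separate proof, precisely because it is the instantiation of that proposition at the constant families $\lambda x.B$ and $\lambda x.P$ over $I \defeq X$, using $\prod_{x:X} B \equiv B^X$ and the shape-to-type coercion of \cref{ssec:fib-shapes} when $X$ is a shape. Your additional observations (pointwise cocartesian lifts, $B^X$ remaining Rezk since Rezk types form an exponential ideal) are accurate and consistent with the paper's treatment.
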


\Cref{cor:cocart-fam-closed-exp} can be seen as a shadow of the fact that cocartesian fibrations are \emph{representably defined}, \cf~\cite[Section~5.6]{RV}.
To fully express this, however, we would need to employ modal type theory.

\begin{defn}[Partial cocartesian families]
	Let $P:B \to \UU$ be a cocartesian family over a Rezk type $B$. An isoinner family $Q: \totalty{P} \to \UU$ is called \emph{partial cocartesian (w.r.t.~$P$)} if every $P$-cocartesian arrow in $\totalty{P}$ has a $Q$-cocartesian lift (w.r.t.~a given point in the fiber over the source).
\end{defn}

\begin{prop}[Composite cocartesian fibrations, {\protect\cite[Lemma~5.2.3]{RV}}]\label{prop:cocart-fam-comp}
Let
\[\begin{tikzcd}
	F & E & B
	\arrow["\xi", from=1-1, to=1-2]
	\arrow["\pi", two heads, from=1-2, to=1-3]
	\arrow["\rho"{description}, curve={height=12pt}, from=1-1, to=1-3]
\end{tikzcd}\]
	where $\pi$ is a cocartesian fibration. Then the composite $\rho \jdeq \pi \circ \xi$ is a cocartesian fibration, whenever $\xi$ is a partial cocartesian fibration over $\pi$.

		Call $P$ the straightening of $\pi$, $Q$ the straightening of $\xi$, and $R$ the straightening of the composite $\rho$. In particular, up to homotopy the cocartesian lift for $u:a \to b$ in $B$ \wrt~to the pair $e:P\,a$, $x:Q\,a\,e$ is given by
	\[ (P \compfam Q)_!(u,\pair{e}{x}) :\jdeq Q_!(P_!(u,e),x). \]
\end{prop}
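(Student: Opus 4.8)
The plan is to verify the two clauses of $\isCocartFam$ for the composite family $R \defeq Q \compfam P$, whose total type satisfies $\totalty R \equiv \totalty Q = F$ and whose associated projection is $\rho = \pi \circ \xi$. First I would dispatch isoinnerness: $P$ is cocartesian, hence isoinner, and $Q$ is isoinner by the definition of a partial cocartesian family; since $B$ is Rezk, the total type $E = \totalty P$ is Segal by \cref{prop:innerfib-total-sp-over-segal-base-is-segal}, so closure of isoinner families under composition immediately yields that $R$ is isoinner. (The full cocartesian statement would also follow from \cref{prop:lari-maps-closed-comp} via \cref{thm:cocart-fams-intl-char} when $Q$ is itself cocartesian; here I argue directly in order to accommodate the weaker hypothesis that $Q$ is only partial cocartesian.) It then remains to produce, for every $u:\hom_B(a,b)$ and every $\pair e x : R\,a \equiv \sum_{e:P\,a} Q\,a\,e$, an $R$-cocartesian lift.

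The key structural observation is that an $R$-dependent arrow over $u$ with source $\pair e x$ decomposes as a pair consisting of a $P$-dependent arrow $\bar\phi : e \to^P_u e'$ (equivalently, an arrow of $E$ lying over $u$) together with a $Q$-dependent arrow $\phi : x \to^Q_{\bar\phi} x'$, and that dependent composition in $R$ respects this decomposition layer by layer. Accordingly I define the candidate lift by iterating the two transports: take $\bar\phi \defeq P_!(u,e)$, the $P$-cocartesian lift supplied by cocartesianness of $P$, and then $\phi \defeq Q_!(P_!(u,e),x)$, the $Q$-cocartesian lift of the $P$-cocartesian arrow $P_!(u,e)$ at the point $x$, which exists precisely because $Q$ is partial cocartesian with respect to $P$. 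The resulting arrow lies over $u$ under $\rho$, since $P_!(u,e)$ lies over $u$ and $\phi$ lies over $P_!(u,e)$; this is the claimed $Q_!(P_!(u,e),x)$.

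To see that the candidate is $R$-cocartesian I would verify the lifting universal property in the Segal form following \eqref{eq:isCocartArrEquiv}, using the Segal structure of $E$ and $F$. Given $c:B$, $v:\hom_B(b,c)$, a target $\pair{e''}{x''}$ and a test arrow $\psi$ over $v\circ u$, decompose $\psi$ into $\bar\psi: e \to^P_{vu} e''$ and $\psi : x \to^Q_{\bar\psi} x''$. The space of $R$-fillers is then the space of pairs $\pair{\bar\chi}{\chi}$ with $\bar\chi \circ \bar\phi = \bar\psi$ in $E$ and $\chi \circ \phi = \psi$ in $F$ lying over it. Because $\bar\phi = P_!(u,e)$ is $P$-cocartesian, the first component is contracted by $\bar\chi \defeq \tyfill_{P_!(u,e)}(\bar\psi)$, with $\bar\chi\circ\bar\phi = \bar\psi$ by \cref{cor:cocart-trivfill}; and because $\phi = Q_!(P_!(u,e),x)$ is $Q$-cocartesian, applying its universal property relative to this base arrow $\bar\chi$ and to $\psi$ (which lies over $\bar\psi = \bar\chi\circ\bar\phi$) contracts the second component. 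Hence the total filler space is contractible, establishing $\isCocartArr^R_u$ for the candidate and, together with \cref{prop:cocart-lifts-unique-in-isoinner-fams}, that the formula is homotopy well-defined.

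The main obstacle is the bookkeeping of this decomposition: one must check that dependent composition in $R$ really splits into composition of the base layer in $E$ followed by the $Q$-layer over it, and that the two contractibility statements chain correctly, i.e.\ that the $Q$-cocartesian property of $\phi$ is invoked over exactly the base arrow $\bar\chi$ produced by the $P$-layer and over the correct composite. I would make this precise by phrasing both universal properties through the initiality type \eqref{eq:cocart-initiality-type}, so that the composite lift is recognised as an iterated initial object; the Segal-ness of $E$ and $F$ then makes the fillers and their uniqueness available, and homotopy-uniqueness of each cocartesian lift (\cref{prop:cocart-lifts-unique-in-isoinner-fams}) ensures the construction is independent of choices.
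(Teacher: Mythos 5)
Your proposal is correct and takes essentially the same route as the paper's proof: the candidate lift is $Q_!(P_!(u,e),x)$, and its cocartesianness is verified by decomposing an arbitrary test arrow over $v\circ u$ into its layer in $E$ and its layer in $F$ lying over that, then chaining the $P$-cocartesian universal property (contracting the first component) with the $Q$-cocartesian universal property of the lift (contracting the second). The only difference is that you additionally spell out isoinnerness of the composite and the uniqueness bookkeeping, which the paper's (terser) proof leaves implicit.
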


\begin{proof}
	 Consider an arrow $u:b \to b'$ in $B$ with lifts $f : e \cocartarr^P_u e'$ and $k: x \cocartarr^Q_f x'$.

	 Let $v:b' \to b''$ in $B$ be any arrow, and $h:e \cocartarr^P_{vu} e''$, $r:x \cocartarr^Q_{h} x''$. Since $f$ is $P$-cocartesian, there is a unique filler $g:e' \cocartarr^P_v e''$ s.t.~$gf = h$. Since $r$ is $Q$-cocartesian, there is a unique filler $m:x' \cocartarr^Q_g x''$ s.t.~$mk=r$. Taken together, this implies that the pair $\pair{f}{k}$ is $R$-cocartesian.
\end{proof}

We directly\footnote{Of course, this also already follows from \cref{prop:lari-closed-under-comp}.} conclude:
\begin{cor}
The composite of two cocartesian fibration is itself a cocartesian fibration.
\end{cor}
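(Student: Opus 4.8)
The plan is to obtain the statement as an immediate specialization of \cref{prop:cocart-fam-comp}, observing that an honest cocartesian fibration is in particular a restricted (partial) cocartesian fibration over the base. So suppose $\xi : F \to E$ and $\pi : E \to B$ are cocartesian fibrations with $B$ a Rezk type, and set $P \defeq \St_B(\pi)$ and $Q \defeq \St_E(\xi)$, so that $P$ and $Q$ are cocartesian families. The first step is to record that $E$ is itself a Rezk type: indeed $P$ is isoinner over the Rezk type $B$, so its total type $E \equiv \totalty{P}$ is Rezk by the total-space result for isoinner families over Rezk bases. Thus $Q$ is a genuine cocartesian family over the Rezk type $E$, and the notion of partial cocartesian family with respect to $P$ is available for it.

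The second step is to verify the hypothesis of \cref{prop:cocart-fam-comp}, namely that $Q$ is partial cocartesian with respect to $P$. By definition this requires that every $P$-cocartesian arrow of $E$ admit a $Q$-cocartesian lift from any prescribed point in the fiber over its source. But $Q$ being cocartesian means $\hasCocartLifts\,Q$ holds, i.e.\ \emph{every} arrow of $E$ --- hence a fortiori every $P$-cocartesian one --- has a $Q$-cocartesian lift. So the hypothesis is satisfied with no extra argument, and \cref{prop:cocart-fam-comp} gives that $\rho \defeq \pi \circ \xi$ is a cocartesian fibration, with cocartesian lifts computed by $(P \compfam Q)_!(u,\pair{e}{x}) \jdeq Q_!(P_!(u,e),x)$.

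I do not expect any genuine obstacle, since the corollary is a strict weakening of \cref{prop:cocart-fam-comp}; the only point deserving a moment's care is confirming that $E$ is Rezk, so that $Q$ qualifies as a cocartesian (and hence partial cocartesian) family over $E$. For completeness I would also note the alternative route indicated in the footnote: by the Chevalley criterion (\cref{thm:cocart-fams-intl-char}) both $\pi$ and $\xi$ are $i_0$-LARI fibrations, $i_0$-LARI families are closed under composition by \cref{prop:lari-maps-closed-comp}, and isoinner families are closed under composition (cf.~\cref{sec:isoinner-fams}); together these exhibit $\rho$ as an isoinner $i_0$-LARI fibration, i.e.\ again a cocartesian fibration.
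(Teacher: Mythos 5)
Your proposal is correct and follows the paper's own route: the corollary is stated there as a direct specialization of \cref{prop:cocart-fam-comp}, since a cocartesian fibration $\xi$ has $Q$-cocartesian lifts for \emph{all} arrows and hence in particular for the $P$-cocartesian ones, i.e.\ it is a restricted (partial) cocartesian fibration over $\pi$. Your alternative argument via the Chevalley criterion and closure of LARI families under composition likewise matches the paper's footnote, and your added observation that $E$ is Rezk (via the total-type result for isoinner families) is a detail the paper leaves implicit.
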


\begin{prop}[Cocartesian families are closed under pullback]\label{prop:cocart-fam-pb}
	Let $P:B \to \UU$ be a cocartesian family, and $k:A \to B$ be a map. Then the pullback family $k^*P:A \to \UU$ with associated projection $k^*\pi$ in
	\[\begin{tikzcd}
		{A \times_B E} & E \\
		A & B
		\arrow["{k^*\pi}"', two heads, from=1-1, to=2-1]
		\arrow["k"', from=2-1, to=2-2]
		\arrow[from=1-1, to=1-2]
		\arrow["\pi", two heads, from=1-2, to=2-2]
		\arrow["\lrcorner"{anchor=center, pos=0.125}, draw=none, from=1-1, to=2-2]
	\end{tikzcd}\]
	is a cocartesian family.

	In particular, a cocartesian lift of $u:a \to a'$ \wrt~to $e:P\,ka$ is given by
	\[ (k^*P)_!(u,e) :\jdeq P_!(ku,e).\]
\end{prop}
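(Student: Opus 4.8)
The plan is to reduce the claim to the closure properties already established for the two defining conditions of a cocartesian family, and then to verify the explicit lift formula directly. Since we assume $A$ to be Rezk, the Chevalley criterion (\cref{thm:cocart-fams-intl-char}) tells us that $k^*P$ is cocartesian precisely when it is isoinner and its associated projection is an $i_0$-LARI fibration for $i_0:\unit\hookrightarrow\Delta^1$. So it suffices to establish these two properties for $k^*P$, given that $P$ enjoys them.

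First I would check that $k^*P$ is isoinner. Innerness is a right-orthogonality condition against $\Lambda_1^2\hookrightarrow\Delta^2$ by \cref{prop:innfam-pb}, and such conditions are preserved under pullback along maps by the closure results of \cref{ssec:clos-orth}; hence $k^*P$ is inner. For the completeness half, I observe that the fibers of the pullback are literally fibers of $P$, since $(k^*P)(a)\jdeq P(ka)$, so each is Rezk by hypothesis and the $!_\walkBinv$-local condition transports along $k$ without work. Together these give $\isIsoInnerFam(k^*P)$.

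Next, applying \cref{prop:lari-maps-closed-pb} (closure of $j$-LARI families under pullback) with $j\jdeq i_0$ to the $i_0$-LARI family $P$ yields that $k^*P$ is again $i_0$-LARI. Combining isoinnerness with the $i_0$-LARI property and invoking the Chevalley criterion in the converse direction gives that $k^*P$ is a cocartesian family.

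It remains to identify the cocartesian lifts. After the strict identification $(k^*P)(u(t))\jdeq P((ku)(t))$, a dependent arrow in $k^*P$ over $u:a\to a'$ is exactly a dependent arrow in $P$ over $ku:ka\to ka'$; thus $P_!(ku,e)$ has the correct type for a candidate lift of $u$ starting at $e:P(ka)\jdeq(k^*P)(a)$. To see it is $k^*P$-cocartesian, I would unwind the universal property: the lifting condition for $k^*P$ quantifies only over arrows and composition $2$-cells in $A$, all of which map under $k$ into $B$. Since $P_!(ku,e)$ already satisfies the $P$-cocartesian universal property against \emph{all} arrows of $B$, it a fortiori satisfies the weaker condition tested only against arrows of the form $kv$, and uniqueness of fillers is inherited in the same way. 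Hence $(k^*P)_!(u,e)\jdeq P_!(ku,e)$. The one point demanding care is the compatibility of the strict pullback identifications with the extension types defining cocartesian arrows; this is the main (if minor) obstacle, and it is handled by the de-/strictification already used in \cref{cor:orth-shapes}.
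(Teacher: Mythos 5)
Your proposal is correct, and its core reduction coincides with the paper's: the paper likewise deduces cocartesianness of $k^*P$ as an instance of \cref{prop:lari-maps-closed-pb} (closure of $j$-LARI families under pullback, instantiated at $j \jdeq i_0$), with the Chevalley criterion \cref{thm:cocart-fams-intl-char} and the pullback-stability of isoinner families serving as the implicit bridge — steps the paper does not spell out but you make explicit. Where you genuinely diverge is in identifying the lifts: the paper obtains $(k^*P)_!(u,e) = P_!(ku,e)$ by instructing the reader to unwind the LARIs constructed in the proofs of \cref{prop:lari-maps-closed-pb} and \cref{prop:lari-closed-under-pullback}, whereas you verify directly that $P_!(ku,e)$ satisfies the $k^*P$-cocartesian universal property, noting that since $k^*P \jdeq P \circ k$ the relevant extension types agree judgmentally, and the $k^*P$-condition only tests against $2$-simplices of the form $k\sigma$, for which $P$-cocartesianness already supplies contractibility. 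This direct check is more self-contained than the paper's ``unpack the constructions'' instruction, and in fact it renders your abstract LARI step logically redundant: exhibiting a cocartesian lift for every $u$ and $e$, combined with isoinnerness, already yields the proposition straight from the definition of cocartesian family. Your closing worry about de-/strictification is likewise unnecessary for the same reason — the pullback family is defined by composition, so the identifications are strict and nothing needs to be destrictified.
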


\begin{proof}
Stability of cocartesian families under products follows as an instance of \cref{prop:lari-maps-closed-pb}. The computation of the cocartesian lifts follows by instantiation and unpacking of the constructions given in the proofs of Propositions~\ref{prop:lari-maps-closed-pb} and~\ref{prop:lari-closed-under-pullback}, respectively.
\end{proof}

\subsubsection{Examples of cocartesian families}

We give three important kinds of examples of cocartesian fibrations: comma opfibrations (including codomain opfibrations), domain opfibrations, and free cocartesian fibrations. In a setting including universe types which are Rezk, one would naturally be interested in \emph{universal op-/fibrations}, arising as co-/domain projections associated to the categorical universes.

\begin{prop}[Comma opfibration]
Let $g:C \to A \leftarrow B:f$ be a cospan of Rezk types. Then the codomain projection from the comma object
\[\begin{tikzcd}
	{f \downarrow g} && {A^{\Delta^1}} \\
	{C \times B} && {A \times A} \\
	C
	\arrow[from=1-1, to=2-1]
	\arrow["{g \times f}"', from=2-1, to=2-3]
	\arrow[from=1-1, to=1-3]
	\arrow["{\langle \partial_1, \partial_0 \rangle}", from=1-3, to=2-3]
	\arrow["\lrcorner"{anchor=center, pos=0.125}, draw=none, from=1-1, to=2-3]
	\arrow[from=2-1, to=3-1]
	\arrow["{\partial_1}"', curve={height=22pt}, from=1-1, to=3-1]
\end{tikzcd}\]
is a cocartesian fibration.
\end{prop}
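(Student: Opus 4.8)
The plan is to compute the straightening of the codomain projection $\partial_1 : f \downarrow g \to C$ and verify the two defining conditions of $\isCocartFam$ directly. By \cref{def:comma-obj} the fibre of $\partial_1$ over $c : C$ is the comma type $\comma{f}{gc} \equiv \sum_{b:B}\hom_A(fb, gc)$, so the associated family is $P : C \to \UU$ with $P(c) \defeq \comma{f}{gc}$ and $\totalty{P}\equiv f\downarrow g$. It then remains to establish $\isIsoInnerFam(P)$ and $\hasCocartLifts\,P$.

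First I would dispatch isoinnerness. The comma type $f\downarrow g$ is the pullback of $\langle \partial_1,\partial_0\rangle : A^{\Delta^1}\to A\times A$ along $g\times f$, hence a finite limit of the Rezk types $A^{\Delta^1}$, $A\times A$ and $C\times B$, and is therefore itself Rezk (Rezk types being closed under cotensoring by $\Delta^1$, products, and the pullbacks at issue). In particular $f\downarrow g$ is Segal, so over the Segal base $C$ the family $P$ is inner by \cref{prop:innerfib-total-sp-over-segal-base-is-segal}; and each fibre $\comma{f}{gc}\equiv \unit\times_C(f\downarrow g)$, being a pullback of Rezk types, is again Rezk. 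Hence $P$ is isoinner.

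Next I would exhibit the cocartesian lifts. Given $w:\hom_C(c,c')$ and $\pair{b}{\alpha}:P(c)$ with $\alpha:\hom_A(fb,gc)$, I take the transport to be post-composition with the functorial image of $w$ under $g$, namely $\pair{b}{g(w)\circ\alpha}:P(c')$, keeping the $B$-component fixed. The candidate cocartesian dependent arrow $f_w : \dhom^P_w(\pair{b}{\alpha},\pair{b}{g(w)\circ\alpha})$ is, under the identification of \cref{def:comma-obj}, the naturality square in $A$ with vertical edges $f(\id_b)\equiv\id_{fb}$ and $g(w)$; it commutes by the identity law, so it genuinely names an arrow of $f\downarrow g$ lying over $w$.

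The substance of the proof is then checking that $f_w$ is cocartesian, for which I would unwind the Segal reformulation of \eqref{eq:isCocartArrEquiv}. A dependent arrow $h$ over a composite $v\circ w$ from $\pair{b}{\alpha}$ to $\pair{b''}{\alpha''}$ consists, by \cref{def:comma-obj}, of an arrow $v_h:b\to b''$ in $B$ together with a square in $A$ witnessing $g(v\circ w)\circ\alpha=\alpha''\circ f(v_h)$; arrows over $v$ out of $\pair{b}{g(w)\circ\alpha}$ are described analogously. Since $g$ is a functor between Segal types we have $g(v\circ w)=g(v)\circ g(w)$, so the square of $h$ is precisely the square demanded of the filler $g'$ over $v$, with its $B$-component forced to equal $v_h$; existence is thus immediate. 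Uniqueness follows because that $B$-component is determined and the remaining $2$-cell is a point of a hom-type of $A$, which is \emph{discrete} (the hom-type of a Segal type), so the space of fillers is contractible. I expect the only genuine obstacle to be this bookkeeping: faithfully translating dependent arrows and their pasted composites in $f\downarrow g$ into naturality squares in $A$, and extracting \emph{contractibility} of the filler space from discreteness of $A$'s hom-types rather than mere inhabitation.
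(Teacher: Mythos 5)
Your proposal takes essentially the same route as the paper's proof: the identical candidate lift (constant $B$-component, post-composition of $\alpha$ by $g(w)$), and the identical verification by unwinding a dependent arrow in $f \downarrow g$ over a composite as a $B$-arrow together with a commuting square in $A$, so that the filler's $B$-component is forced and its square is supplied by functoriality of $g$. You in fact do slightly more than the paper, whose proof checks only the lifting property and leaves the isoinnerness of $\partial_1$ tacit; your closure-property argument for that half is correct. One small repair to your last step: contractibility of the filler space is extracted from Segal-ness, which identifies squares with fixed boundary with identifications in $\hom_A(fb,gc'')$, making the filler space the fiber of an equivalence of path types; discreteness of hom-types is not the operative lever, though your treatment is still no less rigorous than the paper's bare assertion of a ``uniquely determined filler.''
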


\begin{proof}
We show that, for $v:c \to c'$ in $C$, and $\alpha:fb \to gc$ in $f \downarrow g$ the candidate for the cocartesian lift is defined as the following square:
\[\begin{tikzcd}
	{f \downarrow g} & fb & fb \\
	& gc & {gc'} \\
	C & c & {c'}
	\arrow["{\partial_1}"', from=1-1, to=3-1]
	\arrow["v"', from=3-2, to=3-3]
	\arrow["\alpha"', from=1-2, to=2-2]
	\arrow["gv"', from=2-2, to=2-3]
	\arrow[no head, from=1-2, to=1-3, equals]
	\arrow["{\alpha' \defeq gv \circ \alpha}", from=1-3, to=2-3]
\end{tikzcd}\]
For any $v':c' \to c''$ we are to consider a dependent arrow over $v'v$ in the comma object with domain $\alpha$. This amounts to giving $u:b \to b'$ in $B$ and $\beta:fb' \to gc''$ s.t.~$\beta \circ fu = g(v'\circ v) \circ \alpha$. As indicated in the figure below, this represents the outer rectangle, and we have to uniquely exhibit a square, on the inner right, as a filler:
\[\begin{tikzcd}
	{f \downarrow g} & fb & fb & {fb'} & b & {b'} \\
	& gc & {gc'} & {gc''} \\
	C & c & {c'} & {c''}
	\arrow["{\partial_1}"', from=1-1, to=3-1]
	\arrow["v"', from=3-2, to=3-3]
	\arrow["\alpha"', from=1-2, to=2-2]
	\arrow["gv", from=2-2, to=2-3]
	\arrow[no head, from=1-2, to=1-3, equals]
	\arrow["{\alpha'}", from=1-3, to=2-3]
	\arrow["{v'}"', from=3-3, to=3-4]
	\arrow["fu"', dashed, from=1-3, to=1-4]
	\arrow["\beta"', from=1-4, to=2-4]
	\arrow["{gv'}", dashed, from=2-3, to=2-4]
	\arrow["{g(v'\circ v)}"{description}, curve={height=18pt}, from=2-2, to=2-4]
	\arrow["fu"{description}, curve={height=-18pt}, from=1-2, to=1-4]
	\arrow["u", from=1-5, to=1-6]
\end{tikzcd}\]
Clearly the horizontal arrows can be chosen as $fu$ and $gv'$, resp., by the conditions read off from the adjacent triangles. Since by assumption, there is an identity $\beta \circ fu = g(v'v) \circ \alpha$, hence $gv' \circ \alpha' = \beta \circ fu$, so the square on the right is indeed commutative, and moreover the uniquely determined filler.
\end{proof}

\begin{cor}[Codomain opfibration]\label{prop:cod-cocartfam}
	For any Rezk type $B$, the projection
	\[ \partial_1 : B^{\Delta^1} \to B, ~  \partial_1(f) :\jdeq f(1)\]
	is a cocartesian fibration, called the \emph{codomain opfibration}.
\end{cor}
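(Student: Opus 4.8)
The plan is to derive the codomain opfibration directly from the preceding Comma opfibration proposition by specializing the cospan. Concretely, I would instantiate that result at the cospan $B \xrightarrow{\id_B} B \xleftarrow{\id_B} B$, i.e.\ take all three types to be the Rezk type $B$ and both legs $f, g$ to be $\id_B$. The hypotheses are then trivially satisfied, and the proposition gives that the codomain projection $\partial_1 : \comma{\id_B}{\id_B} \to B$ is a cocartesian fibration.

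It then remains to identify $\comma{\id_B}{\id_B}$ with the arrow type $B^{\Delta^1}$ compatibly with the two projections to $B$. By \cref{def:comma-obj}, $\comma{\id_B}{\id_B}$ is the pullback of $\langle \partial_1, \partial_0 \rangle : B^{\Delta^1} \to B \times B$ along $\id_B \times \id_B = \id_{B \times B}$; pulling back along an identity returns, up to canonical equivalence, the type $B^{\Delta^1}$ itself. Under this equivalence a point $\phi : B^{\Delta^1}$ corresponds to the comma datum $\angled{\phi(1), \phi(0), \phi}$, so the comma codomain projection—which reads off the first ($C$-)coordinate—is carried exactly to the evaluation $\partial_1 : B^{\Delta^1} \to B$, $\phi \mapsto \phi(1)$. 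This yields a commutative square whose horizontal maps are equivalences and whose base map is the identity on $B$.

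Finally I would appeal to the homotopy invariance of notions of fibrations established earlier: since $\partial_1 : \comma{\id_B}{\id_B} \to B$ is a cocartesian fibration and is fibered-equivalent over $B$ to $\partial_1 : B^{\Delta^1} \to B$, the latter is cocartesian as well. The only step requiring any real care—and hence the main, if minor, point—is confirming that the identification of $\comma{\id_B}{\id_B}$ with $B^{\Delta^1}$ intertwines the two codomain projections, which is immediate from the explicit description above. As a byproduct, transporting the cocartesian lift computed in the comma opfibration shows that cocartesian transport along an arrow $v : c \to c'$ in $B$ is post-composition $\phi \mapsto v \circ \phi$, recovering the familiar description of the codomain opfibration.
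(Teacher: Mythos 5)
Your proposal is correct and is essentially the paper's intended argument: the paper presents this statement as a corollary of the immediately preceding Comma opfibration proposition, obtained by specializing to the identity cospan $B \xrightarrow{\id_B} B \xleftarrow{\id_B} B$, identifying $\comma{\id_B}{\id_B} \simeq B^{\Delta^1}$ compatibly with the codomain projections, and invoking homotopy invariance of notions of fibrations. Your care in checking that the equivalence intertwines the two projections (and your observation that transport becomes post-composition $\phi \mapsto v \circ \phi$) fills in exactly the details the paper leaves implicit.
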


Directly by \cref{prop:pushout-domainfib} we obtain that the domain projection of a Rezk type is a cocartesian fibration given that the base has pushouts.
\begin{prop}[Domain opfibration]
	If $B$ is a Rezk type that has all pushouts, then the domain projection
	\[ \partial_0 : B^{\Delta^1} \to B, \quad \partial_0(u) :\jdeq u(0)\]
	is a cocartesian fibration.
\end{prop}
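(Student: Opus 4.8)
The plan is to verify the two conjuncts making up $\isCocartFam$ for the family $P \defeq \St_B(\partial_0)$ straightening the domain projection, namely that $P$ is isoinner and that it has all cocartesian lifts, the latter being the only place where the hypothesis on pushouts enters and being supplied essentially verbatim by \cref{prop:pushout-domainfib}.

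First I would record that $\partial_0 : B^{\Delta^1} \to B$ is a map between Rezk types. Since $B$ is Rezk, so is the arrow type $B^{\Delta^1}$: Rezk types are closed under cotensors, or, equivalently, this follows from $\partial_1$ being an (isoinner, indeed cocartesian) fibration by \cref{prop:cod-cocartfam} together with the fact that total types of isoinner families over Rezk bases are again Rezk. By \cref{prop:maps-between-segal-types}, any map between Segal types is inner, so in particular $\partial_0$ is inner. Its fibers are the coslices $b/B \equiv \comma{b}{B}$, which are Rezk because $B$ is and comma types of Rezk types arise as iterated pullbacks of Rezk types, Rezk types being closed under such limits. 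Hence $P$ is isoinner.

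Next I would unpack $\hasCocartLifts(P)$. An element of the fiber of $\partial_0$ over a point $b:B$ is precisely an arrow $v:\hom_B(b,c)$ out of $b$, so given an arrow $u:\hom_B(b,b')$ in the base we obtain exactly the span $b' \xleftarrow u{} b \xrightarrow v{} c$ featuring in \cref{prop:pushout-domainfib}. That proposition identifies the cocartesian lifts of $u$ starting at $v$ with the pushout squares on this span. By the standing hypothesis that $B$ has all pushouts, such a pushout exists and thereby supplies the required cocartesian lift. As $u$, $v$, and the starting point were arbitrary, $P$ has all cocartesian lifts.

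Combining the two items gives $\isCocartFam(P)$, i.e.\ that $\partial_0$ is a cocartesian fibration. The genuine mathematical content beyond this bookkeeping already resides in \cref{prop:pushout-domainfib}, which has been proved; accordingly the only anticipated friction is a matter of matching conventions, namely confirming that the fiber datum of $\partial_0$ over $b$ is the domain-fibration element $v$ used there, and that the span built from $\pair{u}{v}$ coincides strictly with the one appearing in that proposition, so that the cited correspondence between pushouts and cocartesian lifts applies directly.
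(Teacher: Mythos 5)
Your proposal is correct and matches the paper's approach: the paper derives this proposition directly from \cref{prop:pushout-domainfib}, exactly as you do, with the existence of pushouts supplying the cocartesian lifts. Your additional verification that the straightening of $\partial_0$ is isoinner (via $B^{\Delta^1}$ being Rezk, \cref{prop:maps-between-segal-types}, and Rezk-ness of the coslice fibers) is precisely the bookkeeping the paper leaves implicit in the word ``directly.''
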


\subsubsection{Towards monadicity: the free cocartesian family}

As discussed in~\cite{AFfib,GHT17,RVexp} cocartesian fibrations are monadic (and comonadic) over general functors (over a fixed base). This means that for any functor $\pi:E \fibarr B$ there is a \emph{free cocartesian fibration} $L(\pi) : L(E) \fibarr B$. Due to the current absence of categorical universes in our type theory we postpone a discussion with emphasis on a global perspective similar to the cited works. However, we can still state and prove the universal property for this construction, so that in future work, in presence of the desired universes the actual monadicity statement will easily follow.
Namely, we define a ``unit map'' $\iota_\pi \defeq \iota: \pi \to_B L(\pi)$, and prove that precomposition constitutes an equivalence of types\footnote{In general, $\CocartFun_B(\pi,\xi)$ is the $\Sigma$-type of fiberwise maps from $\pi$ to $\xi$ which preserve cocartesian lifts. Cf.~Subsection~\ref{ssec:cocart-fun} for a more thorough treatment.}
\[ -\circ \iota_P : \CocartFun_{B}(L(\pi),\xi) \stackrel{\simeq}{\longrightarrow} \Fun_B(\pi,\xi),\]
for any \emph{cocartesian} fibration $\xi:F \fibarr B$.

\begin{defn}[Free cocartesian family, {\protect{\cite[Definition 4.1]{GHT17}, \cite[Terminology 3.3.6]{AFfib}}}]\label{def:free-cocart}
	Let $B$ be a Rezk type and $P: B \to \UU$ be an isoinner family. Then the family
	\[ L(P) \defeq \lambda b. \sum_{u:\comma{B}{b}} P(\partial_0u)  : B \to \UU \]
	is the \emph{free cocartesian family associated to $P$}, or the \emph{cocartesian replacement of $P$}.
\end{defn}

In more categorical terms, the free cocartesian family---in its incarnation as a fibration---is constructed by first pulling back the map $\pi:E \fibarr B$ along the domain projection, and then postcomposing with the \emph{codomain} projection:
\[\begin{tikzcd}
	{L(\pi)} && E \\
	{B^{\Delta^1}} && B \\
	B
	\arrow[from=1-1, to=1-3]
	\arrow[two heads, from=1-1, to=2-1]
	\arrow["{\partial_0}"', from=2-1, to=2-3]
	\arrow["\pi", two heads, from=1-3, to=2-3]
	\arrow["{\partial_1}", two heads, from=2-1, to=3-1]
	\arrow["\lrcorner"{anchor=center, pos=0.125}, draw=none, from=1-1, to=2-3]
	\arrow["{\partial_1'}"', curve={height=24pt}, two heads, from=1-1, to=3-1]
\end{tikzcd}\]
Morphisms in the cocartesian replacement can be depicted as follows:
\[\begin{tikzcd}
	e && {e'} \\
	a && {a'} \\
	b && {b'} \\
	b && {b'}
	\arrow["w", from=2-1, to=2-3]
	\arrow["u", from=3-1, to=3-3]
	\arrow["{v'}", from=2-3, to=3-3]
	\arrow["v"', from=2-1, to=3-1]
	\arrow["h", from=1-1, to=1-3]
	\arrow["u", from=4-1, to=4-3]
	\arrow[Rightarrow, dotted, no head, from=1-1, to=2-1]
	\arrow[Rightarrow, dotted, no head, from=1-3, to=2-3]
\end{tikzcd}\]

We will see that, indeed the free cocartesian family is a cocartesian family.
\begin{theorem}[Cocartesian replacement is cocartesian, {\protect\cite[Theorem~4.3]{GHT17}, \cite[Lemma~3.3.1]{AFfib}}]
	If $B$ is a Rezk type and $P:B \to \UU$ an isoinner family, then the family $L(P) :B \to \UU$  is cocartesian.
\end{theorem}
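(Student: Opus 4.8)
The plan is to recognize $L(P)$, in its fibrational incarnation, as a composite of two pieces whose cocartesian structure is already under control, and then invoke the composition principle \cref{prop:cocart-fam-comp}. Concretely, as in the categorical description displayed above, the associated projection $\partial_1': L(\pi) \to B$ factors as
\[ L(\pi) \xrightarrow{\,\partial_0^*\pi\,} B^{\Delta^1} \xrightarrow{\,\partial_1\,} B, \]
where $\partial_0^*\pi$ is the pullback of $\pi$ along the domain projection $\partial_0: B^{\Delta^1}\to B$, and $\partial_1$ is the codomain projection. By \cref{prop:cod-cocartfam} the latter is the codomain opfibration, hence cocartesian, and its cocartesian lift of an arrow $u:a\to b$ of $B$ starting at an element $w: x \to a$ of the fibre $\comma{B}{a}$ is the square with identity top edge, sending $w$ to $u\circ w$. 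Thus it suffices, by \cref{prop:cocart-fam-comp}, to verify that the straightening $Q$ of $\partial_0^*\pi$ is a partial cocartesian family over the codomain opfibration.

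Unwinding the pullback, $Q$ is the family over $B^{\Delta^1}$ given by $Q(u) \defeq P(\partial_0 u) = P(u(0))$, i.e.~the restriction of $P$ along $\partial_0$. First I would note that $Q$ is isoinner: it is a pullback of the isoinner family $P$ along $\partial_0$, and isoinner families are closed under pullback over the Segal type $B^{\Delta^1}$ (which is Segal since $B$ is). The crucial observation is then that the codomain-opfibration cocartesian arrows fix the domain vertex: their top edge is an identity (more generally an isomorphism), so along such an arrow the value $\partial_0 u$, and hence the fibre $Q(u) = P(\partial_0 u)$, is unchanged. Consequently the identity dependent arrow $\id_e$ on any $e: Q(u)$ serves as a $Q$-cocartesian lift, since a dependent isomorphism is always cocartesian by \cref{lem:cocart-arrows-isos}(\ref{it:depisos-are-cocart}). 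This exhibits $Q$ as partial cocartesian over the codomain opfibration.

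With these two facts in hand, \cref{prop:cocart-fam-comp} yields that the composite $\partial_1'$ is a cocartesian fibration, i.e.~$L(P)$ is a cocartesian family. Moreover, reading off the composite formula $(P_{\mathrm{cod}}\compfam Q)_!\pair{u}{\pair{w}{e}} = Q_!\bigl((P_{\mathrm{cod}})_!\pair{u}{w},e\bigr)$ together with the two lifts just described, the cocartesian lift of $u: a \to b$ starting at $\pair{w: x\to a}{e: P(x)}$ is simply $\pair{u\circ w}{e}$, lying over $u$ with trivial (degenerate) top component on $e$. The main obstacle is the second paragraph: correctly identifying the cocartesian arrows of the codomain opfibration and observing that $Q$, depending only on the domain vertex, is transported trivially along them. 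This is precisely what makes the pulled-back copy of the merely isoinner family $\pi$ contribute no genuine lifting condition, so that all of the cocartesian structure of $L(P)$ originates from the codomain opfibration.
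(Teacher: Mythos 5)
Your decomposition of $\partial_1' : L(\pi) \fibarr B$ as the pullback $\partial_0^*\pi$ followed by the codomain opfibration $\partial_1$, with \cref{prop:cod-cocartfam} handling the second factor and \cref{prop:cocart-fam-comp} assembling the composite, is a legitimate repackaging of the paper's argument (which instead verifies the universal property of the candidate lift directly), and the lift you end up with, $\pair{u\circ w}{\id_e}$, is exactly the one the paper constructs. Two remarks on the reduction itself: the notion ``partial/restricted cocartesian'' asks for $Q$-cocartesian lifts of \emph{every} $\partial_1$-cocartesian arrow, i.e.\ every square with invertible (not just identity) top edge; this is a harmless extension since cocartesianness and lifts are homotopy-invariant, but it should be said.

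The genuine gap is in your justification of the crucial claim that the constant dependent arrow $\id_e$ over the lifted square $\sigma$ is $Q$-cocartesian. \cref{lem:cocart-arrows-isos}(\ref{it:depisos-are-cocart}) applies to dependent isomorphisms \emph{in the family $Q$}, i.e.\ to arrows of the total type $L(\pi)$ that are invertible, and by part (\ref{it:depisos-are-over-isos}) of the same lemma such arrows can only sit over isomorphisms of the base $B^{\Delta^1}$. Your arrow $(\sigma,\id_e)$ sits over the square $\sigma$ whose bottom edge is the arbitrary arrow $u$; since $\partial_1$ is a functor and functors preserve isomorphisms, $\sigma$ invertible would force $u$ invertible, so $\sigma$ is not an isomorphism and $(\sigma,\id_e)$ is not a dependent isomorphism in $Q$. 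You have conflated ``the fibre component is an identity'' with ``the arrow is a dependent isomorphism'' --- note that cocartesian lifts in the codomain opfibration themselves have identity fibre components without being isomorphisms. The claim is nevertheless true, and can be repaired in two ways: (a) verify it directly, using that $Q(w)\jdeq P(w(0))$, so a dependent arrow (or $2$-simplex) in $Q$ over $\tau$ is judgmentally one in $P$ over $\partial_0\tau$; every $Q$-lifting problem against $(\sigma,\id_e)$ then becomes a $P$-lifting problem against $\id_e$ over $\id_x$, where precomposition with the identity is an equivalence --- this computation is essentially the paper's own proof; or (b) observe that $\id_e$ \emph{is} a dependent isomorphism in $P$ over $\id_x$, hence $P$-cocartesian by the lemma, and then prove the arrow-level transfer principle that a dependent arrow of a pullback family $k^*P$ whose underlying arrow in $P$ is $P$-cocartesian is itself $k^*P$-cocartesian (the observation underlying \cref{prop:cocart-fam-pb}, which you cannot cite as stated because there $P$ is assumed cocartesian). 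Either way, the missing argument is precisely the mathematical content of the theorem, so as written the proposal does not go through.
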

\begin{proof}
	By the closure properties of isoinner families, since $P$ is an isoinner family, so is $L(P)$.

	Let $u:\hom_B(b,b')$ be an arrow in $B$, and $\pair{v}{e}:LP(b)$ a point over $b$, where $v:\hom_B(a,b)$ and $e:P\,a$.

	We define the candidate lift to be $\pair{\id_a, u}{\id_e}:\dhom^{LP}_u(\pair{v}{e},\pair{vu}{e})$, \ie:
	\[
	\begin{tikzcd}
		e \ar[r, equal] & e \\
		a \ar[r, equal]
		\ar[d, "v" swap] & a \ar[d, "vu"] \\
		b \ar[r, "u" swap] & b'
	\end{tikzcd}
	\]
	One can readily verify that this arrow is $LP$-cocartesian.\footnote{Cf.~\cref{prop:cod-cocartfam}.} Namely, for $u':\hom_B(b',b'')$, let $t:\hom_B(a',b'')$, $e':P\,a'$, together with $w:\hom_B(a,a')$ and $f:\dhom^P_{u'u}(e,e')$ s.t.~$t \circ w = (u'u) \circ v$. We find the ensuing filler over $v$ as indicated:
	\[
	\begin{tikzcd}
		L(\pi) \ar[dddddd, two heads] &
		e \ar[r, equal]
		\ar[rr, bend left = 40, "f"] & e \ar[r, dashed, "f" swap] & e' \\
		& & & \\
		& a \ar[r, equal]
		\ar[rr, bend left = 40, "w"] \ar[d, "v" swap] & a \ar[d, "vu"] \ar[r, dashed, "w"] & a' \ar[d, "t"]\\
		& b \ar[r, "u" swap]  \ar[rr, bend right = 40, "u'u" swap]  & b' \ar[r, dashed, "u'" swap] & b''
		& & & \\
		& & & \\
		& & & \\
		B & b \ar[r, "u" swap] \ar[rr, bend left = 40, "u'u"] & b' \ar[r, "u'" swap] & b'' \\
	\end{tikzcd}
	\]
 By construction, the dashed arrows are unique up to homotopy.
\end{proof}

For example, the free fibration of the identity $\id_B: B \fibarr B$ is the codomain opfibration $\partial_1: B^{\Delta^1} \fibarr B$ (cf.~\cite[Example 4.2]{GHT17}).

We now demonstrate how the cocartesian replacement is in fact a ``free'' construction.\footnote{With categorical universes at hand, one would obtain statements more closely resembling~\cite[Theorem~4.5]{GHT17}, \cite[Theorem~3.3.5]{AFfib}, and~\cite[Theorem~7.2.6]{RVexp}.} We take as the ``unit map''
\[\begin{tikzcd}
	E && {\pi \downarrow B} \\
	& B
	\arrow["\pi"', two heads, from=1-1, to=2-2]
	\arrow["\iota", from=1-1, to=1-3]
	\arrow["{\partial_1'}", two heads, from=1-3, to=2-2]
\end{tikzcd}\]
the ``inclusion''
\[ \iota\defeq \lambda b,e.\pair{\id_b}{e}:\prod_{b:B} P\,b \to (LP)\,b,\]
known from \cref{thm:cocart-fam-intl-char-fib}.

\begin{prop}[Universal property of cocartesian replacement, cf.~{\protect\cite[Theorem~4.5]{GHT17}, \cite[Corollary~3.3.4]{AFfib}}]\label{prop:univ-prop-cocart-repl}
For a Rezk type $B$, consider an isoinner fibration $\pi:E \to B$, and a cocartesian fibration $\xi:F \to B$. Then the map
\[
  \begin{tikzcd}[column sep=large]
  \CocartFun_{B}(L(\pi),\xi) \ar[r,"{\Phi \;{\defeq}\; {{-}\circ \iota_P}}"] & \Fun_B(\pi,\xi)
  \end{tikzcd}
\]
is an equivalence of types.
\end{prop}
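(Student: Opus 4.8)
The plan is to exhibit an explicit quasi-inverse to $\Phi = -\circ \iota_P$ built from the cocartesian transport of $\xi$, and to verify the two round-trips. Since $\xi$ is cocartesian, \cref{thm:cocart-fam-intl-char-fib} applied to $\xi$ supplies a fibered left adjoint $\tau_\xi : \comma{\xi}{B} \to F$ to the inclusion $\iota_\xi : F \to \comma{\xi}{B}$, with $\tau_\xi\pair{u}{y} = u_!\,y$ the cocartesian transport of $y$ along $u$. The guiding geometric observation is that in $L(\pi) = \comma{\pi}{B}$ every object $\pair{u}{e}$, with $u:\hom_B(a,b)$ and $e:P\,a$, is the target of the canonical cocartesian lift $\pair{\id_a,u}{\id_e}:\iota_P(e)=\pair{\id_a}{e}\cocartarr\pair{u}{e}$ over $u$ constructed in the proof that $L(P)$ is cocartesian. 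Hence a cocartesian functor out of $L(\pi)$ is forced, up to homotopy, to send $\pair{u}{e}$ to the cocartesian transport of its value at $\iota_P(e)$, which pins down the inverse. Concretely, I define
\[
  \Psi : \Fun_B(\pi,\xi) \to \CocartFun_B(L(\pi),\xi), \qquad \Psi(\phi) \defeq \tau_\xi \circ (\phi\downarrow B),
\]
so that $\Psi(\phi)\pair{u}{e} = u_!\,\phi(e)$; here $\phi\downarrow B : \comma{\pi}{B}\to\comma{\xi}{B}$ is the functor induced on commas by the fibered functor $\phi$, acting as the identity on base arrows and as $\phi$ on fibers.

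As a composite of functors $\Psi(\phi)$ is again a functor, fibered over $B$ because $\tau_\xi$ and $\phi\downarrow B$ are. To see it is \emph{cocartesian}, I use that by \cref{prop:cocart-lifts-unique-in-isoinner-fams} every cocartesian arrow of $L(\pi)$ is homotopic to a canonical lift $\pair{\id_a,w}{\id_e}:\pair{v}{e}\cocartarr\pair{vw}{e}$; applying $\Psi(\phi)$ and invoking functoriality of transport (\cref{prop:cocart-functoriality}) sends this to an arrow $v_!\phi(e) \to (vw)_!\phi(e) = w_!(v_!\phi(e))$, which is precisely the cocartesian lift of $w$ at $v_!\phi(e)$ in $\xi$, hence cocartesian; since being cocartesian is a proposition, checking preservation on this generating family suffices. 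For the round-trips, $\Phi\circ\Psi$ is computed by $\Psi(\phi)\circ\iota_P = \tau_\xi\circ(\phi\downarrow B)\circ\iota_P = \tau_\xi\circ\iota_\xi\circ\phi$, using that inserting an identity commutes with applying $\phi$ on fibers, together with $\tau_\xi\circ\iota_\xi \simeq \id_F$ (identity transport is the identity, again \cref{prop:cocart-functoriality}); thus $\Phi(\Psi(\phi)) \simeq \phi$. Conversely, given a cocartesian functor $G$ and setting $\phi\defeq G\circ\iota_P$, I have $\Psi(\phi)\pair{u}{e} = u_!\,G\pair{\id_a}{e}$, while $G$ carries the canonical lift $\pair{\id_a}{e}\cocartarr\pair{u}{e}$ to a cocartesian arrow $G\pair{\id_a}{e}\cocartarr G\pair{u}{e}$ over $u$; uniqueness of cocartesian lifts in $\xi$ (\cref{prop:cocart-lifts-unique-in-isoinner-fams}) then forces $G\pair{u}{e} \simeq u_!\,G\pair{\id_a}{e}$, so $\Psi(\Phi(G)) \simeq G$. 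As $\CocartFun_B(L(\pi),\xi)$ and $\Fun_B(\pi,\xi)$ are carved out of function types by propositions (fiberedness and preservation of cocartesian arrows), these object-level identifications assemble, via relative function extensionality, into the homotopies exhibiting $\Phi$ and $\Psi$ as mutually inverse.

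I expect the main obstacle to be the second step: packaging the object-level assignment $\pair{u}{e}\mapsto u_!\,\phi(e)$ as a bona fide fibered \emph{functor} and confirming it preserves \emph{all} cocartesian arrows, which rests entirely on the reduction of an arbitrary cocartesian arrow of $L(\pi)$ to a canonical lift. The residual bookkeeping---naturality of the ``insert an identity'' map $\iota$, coherence of the pointwise identifications into genuine paths of functors, and compatibility with the fibered structure---is routine given relative function extensionality and the propositionality of the defining conditions.
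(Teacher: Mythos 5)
Your overall skeleton coincides with the paper's: the same quasi-inverse $\pair{u}{e}\mapsto u_!\,\phi(e)$, and both of your round-trip computations are exactly the ones in the paper (the paper handles $\Psi(\Phi(G))=G$ via the fact that cocartesian functors commute with lifts, \cref{cor:nat-cocartlift-pt}, and $\Phi(\Psi(\phi))=\phi$ via the fact that lifts of identities are identities). Your packaging of the inverse as $\Psi(\phi)\defeq\tau_\xi\circ(\phi\downarrow B)$ is a nice refinement, since it gives functoriality and fiberedness of $\Psi(\phi)$ for free. However, there is a genuine gap at the one step where the paper's proof does its real work: the claim that $\Psi(\phi)$ sends the canonical lift $\pair{\id_a,w}{\id_e}$ to a cocartesian arrow of $\xi$. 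You justify this by saying the image is ``an arrow $v_!\phi(e)\to(vw)_!\phi(e)=w_!(v_!\phi(e))$, which is precisely the cocartesian lift of $w$ at $v_!\phi(e)$,'' citing \cref{prop:cocart-functoriality}. But that proposition only identifies the \emph{designated lifts}: it tells you the endpoints of your arrow agree with those of the cocartesian lift. An arbitrary arrow sharing source and target with a cocartesian lift need not be identified with it. What must actually be shown is that the \emph{functorial action} of the transport map $\tau_\xi$ on the comma-type arrow $\pair{\id_a,w}{\id_{\phi e}}$ is cocartesian, and this is precisely the content of the paper's argument: it packages the lifts $Q_!(-,d)$ into a natural transformation $\cst(d)\Rightarrow\tau_Q(-,d)$, reads off the naturality square at the arrow $w\colon v\to wv$ (whose top edge is an identity), obtaining that your arrow precomposed with the cocartesian arrow $Q_!(v,d)$ equals the cocartesian arrow $Q_!(wv,d)$, and then concludes by right cancelation, \cref{prop:cocart-arr-closure}(\ref{it:cocart-arr-cancel}).

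The gap is fixable, and your own packaging suggests the cleanest repair: by \cref{thm:cocart-fam-intl-char-fib}, $\tau_\xi$ is a \emph{fibered left adjoint} between the cocartesian fibrations $\partial_1'\colon\comma{\xi}{B}\fibarr B$ and $\xi\colon F\fibarr B$, hence a cocartesian functor by \cref{prop:cocart-fun-fib-ladj} (no circularity: that result is proved via mates and $2$-cell conservativity, independently of the present proposition). Since $\phi\downarrow B$ visibly sends the canonical lifts of $L(\pi)$ to the canonical lifts of $L(\xi)$, which are cocartesian, the composite $\tau_\xi\circ(\phi\downarrow B)$ sends canonical lifts to cocartesian arrows, and your reduction of arbitrary cocartesian arrows to canonical ones via \cref{prop:cocart-lifts-unique-in-isoinner-fams} then finishes the argument. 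Note also that the obstacle you flag at the end of your proposal---the reduction to canonical lifts---is the easy part; the step you treated as immediate is the one that needs a proof.
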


\begin{proof}
	Denote by $P, Q:B \to \UU$ the straightenings of $\pi: E \to B$ and $\xi:F \to B$, resp.
	
	We aim to give a quasi-inverse of the precomposition map. Let
	\[ \Psi \defeq \lambda \varphi.\varphi': \Fun_B(\pi, \xi) \to \CocartFun_{B}(L(\pi),\xi)\]
	where
	\[ \varphi'_b(v,e) \defeq v_!^Q(\varphi_a \,e),\]
	for $v:a \to b$, $e:P\,a$.
	First, we are to show that this operation is really valued in cocartesian functors.
	For this, we have to show that, for any $v:a \to b$, $e:P\,a$, the arrow
	\[ \lambda t.(u(t) \circ v)_!^Q(\varphi_b \,e) : v_!Q(\varphi_b\,e) \longrightarrow^P_u (uv)_!^Q(\varphi_b\,a)\]
	is $Q$-cocartesian. To that end, we observe the following. Let $a:B$, $d:Q\,a$ be fixed. Consider the maps $\cst(d), \tau_Q(-,d): \comma{a}{B} \to E$ defined by
	\[ \cst(d)(v:a \to b) \defeq  d, \quad \tau_Q(-,d)(v:a \to b) \defeq v^Q_!(d):d . \]
	We define the natural transformation
	\[ Q_!(-,d):\nat{\comma{\pi}{B}}{E}(\cst(d),\tau_Q(-,d)), \quad Q_!(-,d)(v:a \to b) \defeq Q_!(v,d) : d \cocartarr_v v^Q_!(d).\]
	Morphisms in $\comma{a}{B}$ are given by commutative triangles $u:v \to w$, so for fixed $v$ the type of morphisms in $\comma{a}{B}$ starting at $v$ is equivalent to the type $\comma{(\partial_1 \, v)}{B}$. Hence, any morphism in $\comma{a}{B}$ can be taken to be of the form $u:v \to uv$, for $v:a \to b$, $u:b \to b'$. The naturality squares of $Q_!(-,d)$ thus are of the following form:
	\[\begin{tikzcd}
		d && d \\
		{v_!^Q(d)} && {(uv)_!^Q(d)} \\
		a && a \\
		b && {b'}
		\arrow[Rightarrow, no head, from=3-1, to=3-3]
		\arrow["u"', from=3-1, to=4-1]
		\arrow["v"', from=4-1, to=4-3]
		\arrow["vu", from=3-3, to=4-3]
		\arrow[from=1-1, to=2-1, cocart]
		\arrow["{(u:v \to uv)^Q_!(d)}"', from=2-1, to=2-3]
		\arrow[Rightarrow, no head, from=1-1, to=1-3]
		\arrow[from=1-3, to=2-3, cocart]
	\end{tikzcd}\]
Note that the lower vertical arrow is given by
\[ \lambda t.(u(t) \circ v)^Q_!(d) = (u:v \to uv)^Q_!(d).\]
By right cancelation, $(u:v \to uv)^Q_!(d)$ is cocartesian, and hence we have an identity of arrows:
\[\begin{tikzcd}
	&& {(uv)_!^Q(d)} \\
	{v^Q_!(d)} && {u^Q_!(v_!^Q(d))}
	\arrow["{Q_!(u,v_!^Q(d))}", from=2-1, to=2-3, cocart, swap]
	\arrow["{(u:v \to uv)^Q_!(d)}", from=2-1, to=1-3, cocart]
	\arrow[Rightarrow, no head, from=1-3, to=2-3]
\end{tikzcd}\]
In the cocartesian replacement $\partial_1': \comma{\pi}{B} \fibarr B$, the cocartesian lift of $u:b \to b'$ w.r.t.~$\angled{v:a \to b,e:P\,a}$ is given by $\angled{\id_a,u,\id_e}$. Now, by the previous discussion we have
\begin{align*}
	\varphi'_u(\id_a,u,\id_e)  & = \lambda t.(u(t) \circ v)_!^Q(\varphi_a\,e) \\
	& =  (u:v \to uv)^Q_!(\varphi_a\,e) \\
	& = Q_!(u,v^Q_!(\varphi_a\,e)) \\
	& = Q_!(u, \varphi'_a(v,e))
\end{align*}
which shows that $\varphi': L(\pi) \to_B \xi$ is a cocartesian functor, as desired.

We now turn to showing that precomposing with $\iota$ gives an equivalence
\[ \Fun_B(\pi, \xi) \simeq \CocartFun_{B}(L(\pi),\xi).\]
We define
\[ \Phi \defeq \lambda \psi. \iota^*\psi \defeq \lambda \psi.\psi \circ \iota : \Fun_B(\pi, \xi) \to \CocartFun_{B}(L(\pi),\xi),\]
and recall that in the converse direction
\[ \Psi \defeq \lambda \varphi. \varphi' : \CocartFun_{B}(L(\pi),\xi) \to  \Fun_B(\pi, \xi)\]
with $\varphi'_b(v,e) \defeq v_!^Q(\varphi_a\,e)$.
Let $\psi: L\,\pi \to_B F$ be a cocartesian functor.
We compute
\[ (\iota^*\psi)_b'(v,e) = v_!^Q(\iota^*\psi_a(e)) = v_!^Q(\psi_a(\id_a,e)).\]
Since $\psi$ is cocartesian, we have $v_!^Q(\psi_a(\id_a,e)) = \psi_b(v_!^{LP}(\id_a,e))$, cf.~\cref{cor:nat-cocartlift-pt}. Now, the $LP$-cocartesian lift of $v:a \to b$ w.r.t.~$\pair{\id_a}{e}$ is given by $\angled{\id_a, v,\id_e}: \pair{\id_a}{e} \to \pair{v}{e}$:
\[\begin{tikzcd}
	e && e \\
	a && a \\
	a && b \\
	a && b
	\arrow[Rightarrow, no head, from=1-1, to=1-3]
	\arrow[Rightarrow, no head, from=2-1, to=3-1]
	\arrow["v"', from=3-1, to=3-3]
	\arrow[Rightarrow, no head, from=2-1, to=2-3]
	\arrow["v", from=2-3, to=3-3]
	\arrow["v", from=4-1, to=4-3]
\end{tikzcd}\]
As a dependent arrow in $LP$, the codomain of this morphism is the pair $\pair{v}{e}$. In sum, this means
\[ v_!^Q(\psi_a(\id_a,e)) = \psi_b(v_!^{LP}(\id_a,e)) = \psi_b(v,e) = (\iota^*\psi)_b'(v,e),\]
\ie,~$\Psi(\Phi(\psi))$.
On the other hand, for an arbitrary fiberwise map $\varphi$ from $P$ to $Q$, we find that
\[ (\iota^* \varphi)'(b,e)= \varphi_b'(\id_b,e) = (\id_b)_!^Q(\varphi_b\,e) = \varphi_b(e)\]
since cocartesian lifts of identities are themselves identities. This gives $\Phi(\Psi(\varphi))= \varphi$.
\end{proof}

\begin{rem}
	In fact, there also exists a discrete version of co-/cartesian replacement, cf.~e.g.~\cite[Section 4.3]{AFfib}. In type theory, this construction demands some more involvement, since it will include fiberwise localization.
\end{rem} 

\subsection{Cocartesian functors}\label{ssec:cocart-fun}

Cocartesian functors are an approproiate notion of morphism between cocartesian fibrations (not necessarily over the same base). They are defined as fiberwise maps that preserve cocartesian morphisms. We discuss several closure properties of cocartesian functors coming from $\infty$-cosmos theory. We also transfer the alternative characterizations of cocartesian functors from~\cite[Theorem 5.3.4]{RV}, reworking the necessary tools from formal category theory in \cref{app:ssec:pasting-lax,app:ssec:mates}.

\subsubsection{Morphisms of sections}\label{ssec:mor-of-sections}

We commence this section with a brief discussion of morphisms between sections. This willl turn out to be particularly important in the context of the Yoneda Lemma in \cref{sec:yoneda}.

First, let $B$ be any type and $P: B \to \UU$ any family. For any morphism $u:\hom_B(a,b)$, $a,b:B$, there is an induced dependent morphism
\[ \sigma\, u: \dhom^P_u(\sigma \,a, \sigma \, b), \quad \sigma \,u: \jdeq \lambda t.\sigma(u(t)). \]

Consider sections $\sigma,\tau :\prod_B P$ and a morphism $\kappa : \hom_{\prod_B P}(\sigma, \tau)$. Again, we imagine $\kappa$ as a kind of $2$-cell, and abbreviate $\hom_{\prod_B P}(\sigma, \tau) \defeq (\sigma \Rightarrow^P \tau) = (\sigma \Rightarrow \tau)$.

For any $x:B$, $\kappa$ induces a vertical morphism
\[ \kappa \, x :\hom_{P\,x}(\sigma \, x, \tau \, x), \quad \kappa \, x : \jdeq \lambda t.\kappa(t,x). \]

By the axiom of choice, we have
\[ (\Delta^1 \to \prod_B P ) \equiv \sum_{u:\Delta^1 \to B} (\Delta^1 \to P(u)). \]
Let $B$ is a Rezk type and $P: B \to \UU$ an isoinner family. Then $\kappa$ acts on arrows in the base $B$ in the sense that in $\prod_B P$ there are canonical squares
\[
\begin{tikzcd}
	\sigma a \ar[d, "\kappa a" swap] \ar[r, "\sigma u"] & \sigma b \ar[d, "\kappa b"] \\
	\tau a \ar[r, "\tau u" swap] & \tau b \\
	a \ar[r, "u"] & b
\end{tikzcd}
\]
and these squares compose over composable morphisms.\footnote{But we will not discuss the validity of the Segal condition for $\Pi$-types here.}

\subsubsection{Definition and basic properties}\label{ssec:fib-fun-defn}

\begin{defn}[Fiberwise maps]\label{def:fib-maps}
	Let $P: B \to \UU$ and $Q: C \to \UU$ be families. A \emph{fiberwise map} or \emph{fibered functor from $P$ to $Q$}  is a pair of functions $\Phi \jdeq \pair{j}{\varphi}$, where
	\begin{itemize}
		\item $j: B \to C$,
		\item $\varphi : \prod_{b:B} (P\,b \to Q\,j(b))$.
	\end{itemize}
	We call $\Phi$ a \emph{fibered equivalence} if both $j$ and $\varphi$ are equivalences.

	 We write $\FibFun_{B,C}(P,Q) \jdeq (P \xbigtoto[]{}  Q) \jdeq (\pi_P \xbigtoto[]{} \pi_Q)$ for the ensuing type of fiberwise maps. In the case of $B \jdeq C$, we denote this type as $\FibFun_{B}(P,Q) = (P \to_B Q) = (\pi_P \to_B \pi_Q)$.
\end{defn}
Note that (by fibrant replacement) the type of commutative squares is equivalent to the type of maps between families.

Observe that given a map between families $P$ and $Q$ as above we get a strictly commutative square:
\[\begin{tikzcd}
	{\widetilde{P}} && {\widetilde{Q}} \\
	B && C
	\arrow["{\pi_P}"', from=1-1, to=2-1]
	\arrow["j"', from=2-1, to=2-3]
	\arrow["{\mathrm{total}(\varphi)}", from=1-1, to=1-3]
	\arrow["{\pi_Q}", from=1-3, to=2-3]
\end{tikzcd}\]

For any $u:\hom_B(a,b)$, the fiberwise map $\Phi$ acts on arrows over $u$ in the following way. For $f:\dhom^P_u(d,e)$, $d:Pa$, $e:Pb$, we have
\[ \varphi_u(f):\jdeq \lambda t.\varphi_{u(t)}(f(t)) : \dhom^Q_{ju}(\varphi_a(d), \varphi_b(e)).\]

\begin{defn}[Cocartesian functors]\label{def:cocart-fun}
	Let $P:B \to \UU$ and $Q:B \to \UU$ be cocartesian families over Rezk types. Given a fibered functor $\Phi \defeq \pair{j}{\varphi}$ from $P$ to $Q$, if
	\[ \Phi = \total(\varphi): \totalty{P} \to \totalty{Q}, \quad \Phi_b(e) :\jdeq \pair{j(b)}{\varphi_{j\,b}(e) } \]
	preserves cocartesian arrows, then we call $\Phi$ a \emph{cocartesian functor}:\footnote{This is a proposition because being a cocartesian arrow is a proposition in our usual setting.}
	\[ \isCocartFun_{P,Q}(\Phi) :\jdeq \prod_{\pair{u}{f}:\Delta^1\to \widetilde{P}} \isCocartArr_u^P(f) \to \isCocartArr_{ju}^Q(\varphi f).\]
	We define
	\[ \CocartFun_{B,C}(P,Q) :\jdeq \sum_{\Phi:\FibFun_{B,C}(P,Q)} \isCocartFun_{P,Q}(\Phi).\]

	For cocartesian families $P: B \to \UU$ to $Q: B \to \UU$ over a common base, we have
	\[ \isCocartFun_{P,Q}(\varphi) \equiv \prod_{\substack{u:\Delta^1 \to B \\ f: (t:\Delta^1) \to P(u\,t)}} \isCocartArr^P_u(f) \to \isCocartArr^Q_u(\varphi f)\]
	and write
	\[  \CocartFun_{B}(P,Q) \simeq  \sum_{\varphi:P \to_B Q} \isCocartFun_{P,Q}(\varphi).\]
\end{defn}

\begin{defn}[Cocartesian sections]\label{def:cocart-sec}
	If the unstraightening of $P$ is an identity, $\Phi$ can be identified with its second component, which in turn is the same as a section of $Q$. In this case, if $\Phi$ is a cocartesian functor it is called a \emph{cocartesian section}. For a section $\sigma:\prod_B P$, this condition amounts to
	\[  \isCocartSect_P(\sigma) :\jdeq \prod_{\substack{u:\Delta^1 \to B}} \isCocartArr_{u}^P(\sigma \circ u). \]
	We denote by
	\[ \prod_B^{\mathrm{cocart}} P :\jdeq \sum_{\sigma:\prod_B P} \isCocartSect_P(\sigma)  \]
	the subtype of cocartesian sections.
\end{defn}

	\begin{figure}
		\centering
		\[\begin{tikzcd}
			{(\Delta^1)^{\Delta^1}} & g && {f_1} && {\Delta^2} & {\langle 0,1 \rangle} && {\langle 1,1 \rangle} \\
			& {f_0} &&&&& {\langle 0,0 \rangle} \\
			{\Delta^1} & 0 && 1 && {\Delta^1} & 0 && 1
			\arrow["{\partial_0}"', two heads, from=1-1, to=3-1]
			\arrow[from=3-2, to=3-4]
			\arrow["{\pr_1}"', two heads, from=1-6, to=3-6]
			\arrow[from=3-7, to=3-9]
			\arrow["\sigma", from=2-2, to=1-2]
			\arrow["\tau", from=1-2, to=1-4]
			\arrow["{\tau\circ \sigma}"', from=2-2, to=1-4]
			\arrow[from=2-7, to=1-7]
			\arrow[from=2-7, to=1-9]
			\arrow[from=1-7, to=1-9]
		\end{tikzcd}\]
	\caption{The two equivalent fibrations $\partial_0:(\Delta^1)^{\Delta^1} \to \Delta^1$ and $\pr_1: \Delta^2 \to \Delta^1$}\label{fig:counterex-cocartsec}
	\end{figure}
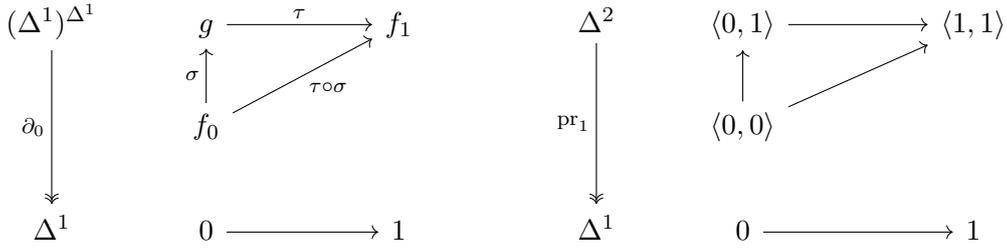

In general, cocartesian families can have non-cocartesian sections. This implies that fiberwise maps between cocartesian families are not automatically cocartesian. As an example, consider the cocartesian fibration
\[ \partial_0 : (\Delta^1)^{\Delta^1} \to \Delta^1, \quad \partial_0 \defeq \lambda f.f(0)\]
which is fibered equivalent to:
\[ \pr_1: \Delta^2 \to \Delta^1, \quad \pr_1 \defeq \lambda \pair{t}{s}.t \]
 The elements of $(\Delta^1)^{\Delta^1}$ are given by
\[ f_k \defeq \lambda t.k, \quad g \defeq \lambda t.t,\]
for $k=0,1$, and the non-identity morphisms are the squares $\sigma$, $\tau$ and $\tau \circ \sigma$, defined as follows:
\[\begin{tikzcd}
	0 && 1 && 1 \\
	\\
	0 && 0 && 1
	\arrow[""{name=0, anchor=center, inner sep=0}, "{f_0}"{description}, Rightarrow, no head, from=1-1, to=3-1]
	\arrow[Rightarrow, no head, from=3-1, to=3-3]
	\arrow[from=1-1, to=1-3]
	\arrow[""{name=1, anchor=center, inner sep=0}, "g"{description}, from=3-3, to=1-3]
	\arrow[from=3-3, to=3-5]
	\arrow[Rightarrow, no head, from=1-3, to=1-5]
	\arrow[""{name=2, anchor=center, inner sep=0}, "{f_1}"{description}, Rightarrow, no head, from=1-5, to=3-5]
	\arrow["\sigma"{description}, shorten <=13pt, shorten >=13pt, Rightarrow, from=0, to=1]
	\arrow["\tau"{description}, shorten <=13pt, shorten >=13pt, Rightarrow, from=1, to=2]
\end{tikzcd}\]
Cf.~\Cref{fig:counterex-cocartsec} for a visualization of the two equivalent fibrations $\partial_0:(\Delta^1)^{\Delta^1} \fibarr \Delta^1$ and $\pr_1:\Delta^2 \fibarr \Delta^1$ (for the latter, using the choice of coordinates and embeddings from~\cite[Subsection~2.3]{RS17}, \cf~\Cref{ssec:syn-higher-cats}).

The inclusion of $\Delta^1$ into $\Delta^2$ as the ``long edge'' $\pair{0}{0} \to \pair{1}{1}$ (corresponding to $\tau \circ \sigma$ in $(\Delta^1)^{\Delta^1}$) is a non-cocartesian section of $\pi$ since there is no map $\pair{1}{1} \to \pair{0}{1}$.

\begin{prop}[Naturality of cocartesian liftings]\label{prop:nat-cocartlift-arr}
Let $B$ be a Rezk type, $P:B \to \UU$, $Q:C \to \UU$ cocartesian families, and $\Phi \jdeq \pair{j}{\varphi} : \CocartFun_{B,C}(P,Q)$ a cocartesian functor. Then $\Phi$ commutes with cocartesian lifts, \ie,~for any $u:\hom_B(a,b)$ there is an identification of arrows
\[ \varphi \big(P_!(u,d)\big) =_{\Delta^1 \to (ju)^*Q} Q_!(ju,\varphi_ad) \]
and hence of endpoints
\[ \varphi_b(u_!^Pd) =_{Q(jb)} (ju)_!^Q(\varphi_ad). \]
In particular there is a homotopy commutative square:
	\[
\begin{tikzcd}
	Pa \ar[rr, "\varphi_a"] \ar[d, "\coliftptfammap{P}{u}" swap] & & Qa \ar[d, "\coliftptfammap{Q}{(ju)}"] \\
	Pb \ar[rr, "\varphi_b" swap] && Qb
\end{tikzcd}
\]
\end{prop}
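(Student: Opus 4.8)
The plan is to exploit the defining property of a cocartesian functor---preservation of cocartesian arrows---and then to collapse the resulting ambiguity using the uniqueness of cocartesian lifts over a Rezk base.

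First I would recall that $P_!(u,d):\dhom^P_u(d,u_!^Pd)$ is, by \cref{prop:cocart-lifts-unique-in-isoinner-fams}, the homotopically unique $P$-cocartesian lift of $u$ starting at $d$, with endpoint $u_!^Pd\jdeq\partial_1\,P_!(u,d)$. Applying $\varphi$ to this dependent arrow yields $\varphi_{P_!(u,d)}\jdeq\lambda t.\varphi_{u(t)}\bigl(P_!(u,d)(t)\bigr)$, a dependent arrow in $Q$ over $ju$ that starts at $\varphi_a d$ and ends at $\varphi_b(u_!^Pd)$. Since $\Phi\jdeq\pair{j}{\varphi}$ is a cocartesian functor, $\isCocartFun_{P,Q}(\Phi)$ holds (\cref{def:cocart-fun}), so $\varphi_{P_!(u,d)}$ is itself a $Q$-cocartesian arrow, hence a $Q$-cocartesian lift of $ju$ starting at $\varphi_a d$.

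On the other hand, $Q_!(ju,\varphi_ad):\dhom^Q_{ju}\bigl(\varphi_ad,(ju)_!^Q(\varphi_ad)\bigr)$ is by construction the distinguished $Q$-cocartesian lift of $ju$ starting at $\varphi_ad$. Thus both data
\[ \angled{\varphi_b(u_!^Pd),\, \varphi_{P_!(u,d)}} \quad\text{and}\quad \angled{(ju)_!^Q(\varphi_ad),\, Q_!(ju,\varphi_ad)} \]
(together with their respective witnesses of cocartesianness) are elements of the type $\CocartLift_Q(ju,\varphi_ad)$. As $Q$ is isoinner over the Rezk type $C$, \cref{prop:cocart-lifts-unique-in-isoinner-fams} tells us this type is contractible, so the two elements are identified. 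Projecting to the arrow component gives the desired $\varphi_{P_!(u,d)}=_{\Delta^1\to(ju)^*Q}Q_!(ju,\varphi_ad)$, and projecting to the endpoint component (equivalently, applying $\partial_1$ to this identification) gives $\varphi_b(u_!^Pd)=_{Q(jb)}(ju)_!^Q(\varphi_ad)$.

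For the homotopy commutative square, I would abstract over $d:P\,a$: the two composites $\lambda d.\varphi_b(u_!^Pd)=\varphi_b\circ\coliftptfammap{P}{u}$ and $\lambda d.(ju)_!^Q(\varphi_ad)=\coliftptfammap{Q}{(ju)}\circ\varphi_a$ agree pointwise by the endpoint identification just obtained, so by function extensionality the square commutes up to homotopy. The one step that genuinely carries content---and the place where the standing hypothesis that our cocartesian families live over Rezk types is essential---is the appeal to uniqueness of cocartesian lifts, which makes $\CocartLift_Q(ju,\varphi_ad)$ a proposition; everything else is bookkeeping of endpoints and a final use of function extensionality.
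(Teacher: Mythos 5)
Your proposal is correct and follows essentially the same route as the paper's proof: apply $\varphi$ to the $P$-cocartesian lift $P_!(u,d)$, observe that the result is a $Q$-cocartesian lift of $ju$ starting at $\varphi_a d$ because $\Phi$ is a cocartesian functor, and then identify it with $Q_!(ju,\varphi_a d)$ by uniqueness of cocartesian lifts (\cref{prop:cocart-lifts-unique-in-isoinner-fams}). Your explicit packaging of the two lifts as elements of the contractible type $\CocartLift_Q(ju,\varphi_a d)$, and the final function-extensionality step for the square, are just slightly more detailed renderings of what the paper leaves implicit.
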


\begin{proof}
For $u:\hom_B(a,b)$ and $d:P\,a$, consider the $P$-cocartesian lift $\coliftarr{P}{u}{d}:\dhom^P_u(d,\coliftptfam{P}{u}{d})$. Since $\varphi$ is a cocartesian functor the arrow $\varphi_u(\coliftarr{P}{u}{d}):\dhom^Q_{ju}(\varphi_ad,\varphi_b(\coliftptfam{P}{u}{d}))$ is $Q$-cocartesian.

On the other hand, $\coliftarr{Q}{ju}{\varphi_ad}:\dhom^Q_{ju}(\varphi_ad, \coliftptfam{Q}{ju}{\varphi_ad})$ is as well a $Q$-cocartesian lift of $ju$ with domain $\varphi_ad$, thus coincides with $\varphi_u(\coliftarr{P}{u}{d})$ up to a path, in particular this gives an identification $\varphi_b(\coliftptfam{P}{u}{d}) = \coliftptfam{Q}{ju}{\varphi_ad}$.
\end{proof}

\begin{cor}[Naturality over a common base ({\protect\cite[Exercise 5.3.iii]{RV}}; discrete case in sHoTT: {\protect\cite[Proposition 8.17]{RS17}})]\label{cor:nat-cocartlift-pt}
Consider a Rezk type $B$, cocartesian families $P,Q: B \to \UU$, and a cocartesian functor $\varphi:\CocartFun_{B}(P,Q)$. Then $\varphi$ commutes with the actions of arrows, \ie,~for any $a,b:B$, $u:\hom_B(a,b)$, $d:P(a)$, we get an
identification
	\[ \varphi_b(\coliftptfam{P}{u}{d}) =_{Qb} \coliftptfam{Q}{u}{\varphi_a(d)},\]
	thus a homotopy commutative square:
	\[
	\begin{tikzcd}
		Pa \ar[rr, "\varphi_a"] \ar[d, "\coliftptfammap{P}{u}" swap] & & Qa \ar[d, "\coliftptfammap{Q}{u}"] \\
		Pb \ar[rr, "\varphi_b" swap] && Qb
	\end{tikzcd}
	\]
\end{cor}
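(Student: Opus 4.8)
The plan is to deduce this directly as the special case of \cref{prop:nat-cocartlift-arr} in which the two cocartesian families live over one and the same base and the underlying base map of the fibered functor is the identity. First I would record that, by \cref{def:fib-maps,def:cocart-fun}, a cocartesian functor $\varphi : \CocartFun_{B}(P,Q)$ over a common base $B$ is precisely the data of a fibered functor $\Phi \defeq \pair{\id_B}{\varphi}$ with codomain base $C \defeq B$ and base component $j \defeq \id_B$, together with a proof that it preserves cocartesian arrows. Thus the hypotheses of \cref{prop:nat-cocartlift-arr} are met with this choice of data, and no further setup is required.

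Applying that proposition, I obtain for every $a,b:B$, every $u:\hom_B(a,b)$ and every $d:P\,a$ an identification
\[ \varphi_b\big(\coliftptfam{P}{u}{d}\big) =_{Q(jb)} \coliftptfam{Q}{ju}{\varphi_a(d)}. \]
Since $j \jdeq \id_B$, we have $jb = b$ and $ju = u$, so this reduces to exactly the asserted identification $\varphi_b\big(\coliftptfam{P}{u}{d}\big) =_{Qb} \coliftptfam{Q}{u}{\varphi_a(d)}$. To read off the homotopy commutative square I would then recall that $\coliftptfammap{P}{u} \jdeq \lambda d.\coliftptfam{P}{u}{d}$, and likewise for $Q$; the family of pointwise identifications just produced therefore assembles, by function extensionality, into a homotopy $\varphi_b \circ \coliftptfammap{P}{u} \sim \coliftptfammap{Q}{u} \circ \varphi_a$, which is precisely the commutativity of the displayed square.

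There is no genuine obstacle here: the entire content is the bookkeeping observation that the common-base notion of cocartesian functor supplies exactly the input demanded by \cref{prop:nat-cocartlift-arr} once one takes the base component to be $\id_B$, after which the conclusion is immediate. The only point worth stating explicitly is the passage from $Q(jb)$ and $ju$ to $Qb$ and $u$, which is harmless since $j$ is the identity.
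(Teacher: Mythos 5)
Your proposal is correct and matches the paper's intent exactly: the paper gives no separate proof for this corollary, treating it as the immediate specialization of \cref{prop:nat-cocartlift-arr} to the case $C \jdeq B$, $j \jdeq \id_B$, which is precisely your argument. The only content is the bookkeeping you spell out (that a common-base cocartesian functor is the fibered functor $\pair{\id_B}{\varphi}$, and that $jb$, $ju$ reduce to $b$, $u$), so nothing further is needed.
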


\subsubsection{Closure properties of cocartesian functors}

The closure properties stated in this section are to be understood w.r.t.~the (non-full) sub-$\inftyone$-category of the $\inftyone$-category of arrows which has as objects cocartesian fibrations $E\tofib B$ and as arrows cocartesian functors between those. In the absence of categorical universes, we capture the proclaimed limits and cotensors by spelling out type-theoretically their universal properties. 

We start by explaining horizontal and vertical composition of cocartesian functors, alluding to the fact that in the model cocartesian fibrations form an $\inftyone$-double category.

In the following, for a fibered functor $\varphi$, we often write $\widetilde{\varphi}$ for its totalization.
\begin{prop}[Horizontal and vertical composition of cocartesian functors, {\protect\cite[Exercise~5.3.ii]{RV}}]\label{prop:comp-cocart-fun}
~\\
	\begin{enumerate}
		\item Cocartesian functors compose horizontally: Suppose, we are given cocartesian families $P: A \to \UU$, $Q: B \to \UU$, $R: C \to \UU$ over Rezk types $A,B,C$. If $\Phi \jdeq \pair{j}{\varphi}:\CocartFun_{A,B}(P,Q)$, $\Psi \jdeq \pair{k}{\psi}:\CocartFun_{B,C}(Q,R)$ are cocartesian functors, then the \emph{horizontal composite} $\Psi \hcomp \Phi :\jdeq \pair{k \circ j}{\psi \circ \varphi}$ defines a cocartesian functor from $P$ to $R$:
	\[\begin{tikzcd}
		{\widetilde{P}} & {\widetilde{F}} & {\widetilde{G}} \\
		{A} & {B} & {C}
		\arrow["{\widetilde{\varphi}}", from=1-1, to=1-2]
		\arrow["{\xi}", from=1-2, to=2-2, two heads]
		\arrow["{\widetilde{\psi}}", from=1-2, to=1-3]
		\arrow["{\chi}", from=1-3, to=2-3, two heads]
		\arrow["{\pi}"', from=1-1, to=2-1, two heads]
		\arrow["{j}"', from=2-1, to=2-2]
		\arrow["{k}"', from=2-2, to=2-3]
		\arrow["{\widetilde{\psi} \circ \widetilde{\varphi}}", from=1-1, to=1-3, curve={height=-24pt}]
		\arrow["{k \circ j}"', from=2-1, to=2-3, curve={height=24pt}]
	\end{tikzcd}\]
		\item Cocartesian functors compose vertically: Suppose, we are given cocartesian families $P: B \to \UU$, $P': \widetilde{P} \to \UU$, $Q: A \to \UU$, $Q': \widetilde{Q} \to \UU$, over Rezk types $A$ and $B$. If $\Phi \jdeq \pair{j}{\varphi}:\CocartFun_{B,A}(P,Q)$, $\Psi \jdeq \pair{\widetilde{\varphi}}{\psi}:\CocartFun_{B,A}(P',Q')$ are cocartesian functors, then the \emph{vertical composite} $\Phi \vcomp \Psi :\jdeq \pair{j}{\psi}$ defines a cocartesian functor from $R\defeq \Sigma_{P}Q$ to $R':\jdeq \Sigma_{P'}Q'$:\footnote{We have $\Sigma_P Q \jdeq Q \compfam P$.}
	\[\begin{tikzcd}
		{\widetilde{Q}} & {\widetilde{Q'}} \\
		{\widetilde{P}} & {\widetilde{P'}} \\
		{B} & {A}
		\arrow["{\widetilde{\psi}}", from=1-1, to=1-2]
		\arrow["{\xi}"', from=1-1, to=2-1, two heads]
		\arrow["{\xi'}", from=1-2, to=2-2, two heads]
		\arrow["{\widetilde{\varphi}}"', from=2-1, to=2-2]
		\arrow["{\pi}"', from=2-1, to=3-1, two heads]
		\arrow["{j}"', from=3-1, to=3-2]
		\arrow["{\pi'}", from=2-2, to=3-2, two heads]
		\arrow["{\pi \circ \xi}" description, from=1-1, to=3-1, curve={height=24pt}, two heads]
		\arrow["{\pi' \circ \xi'}" description, from=1-2, to=3-2, curve={height=-24pt}, two heads]
	\end{tikzcd}\]
	\end{enumerate}
\end{prop}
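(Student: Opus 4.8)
Proposition \ref{prop:comp-cocart-fun} asserts that cocartesian functors compose both horizontally and vertically. Part (1) takes cocartesian functors $\Phi = \langle j,\varphi\rangle : P \to Q$ and $\Psi = \langle k,\psi\rangle : Q \to R$ (over Rezk bases $A,B,C$) and claims the horizontal composite $\langle k\circ j, \psi \circ \varphi\rangle$ is a cocartesian functor $P \to R$. Part (2) handles two families stacked over each, and claims the vertical composite $\langle j, \psi\rangle$ is cocartesian from the $\Sigma$-composite $\Sigma_P Q$ to $\Sigma_{P'} Q'$.

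**Plan for part (1).** The plan is to reduce the claim directly to the definition of $\isCocartFun$ from \cref{def:cocart-fun}, which says a fibered functor preserves cocartesian arrows. First I would fix an arrow $u : \hom_A(a,b)$ together with a $P$-cocartesian dependent arrow $f : \dhom^P_u(d,e)$. Since $\Phi$ is a cocartesian functor, $\varphi_u(f) : \dhom^Q_{ju}(\varphi_a d, \varphi_b e)$ is $Q$-cocartesian by hypothesis. Since $\Psi$ is a cocartesian functor, applying it to the $Q$-cocartesian arrow $\varphi_u(f)$ lying over $ju$ yields $\psi_{ju}(\varphi_u(f))$, which is $R$-cocartesian over $k(ju) = (k\circ j)u$. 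The only remaining point is bookkeeping: one checks that the composite fibered functor acts on the dependent arrow $f$ precisely as $\psi \circ \varphi$, i.e.\ that $(\psi\circ\varphi)_u(f) = \psi_{ju}(\varphi_u(f))$, which is immediate by unwinding the pointwise definition $\varphi_u(f) \defeq \lambda t.\varphi_{u(t)}(f(t))$ given just before \cref{def:cocart-fun}. Hence the horizontal composite preserves cocartesian arrows, as required.

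**Plan for part (2).** The vertical composite is slightly more delicate because cocartesianness in the composite family $\Sigma_P Q \jdeq Q \compfam P$ must be unwound. I would first recall the explicit description of cocartesian lifts in a composite family from \cref{prop:cocart-fam-comp}: a cocartesian arrow in $\Sigma_P Q$ over $u:\hom_B(a,b)$ decomposes (up to homotopy) as a $P$-cocartesian arrow paired with a $Q$-cocartesian arrow lying over it, $(P\compfam Q)_!(u,\langle e,x\rangle) = Q_!(P_!(u,e),x)$. Given such a cocartesian arrow in $\Sigma_P Q$, the functor $\Phi \vcomp \Psi \jdeq \langle j,\psi\rangle$ acts on the base-level component by $\varphi$ (via $\Phi$) and on the upper component by $\psi$ (via $\Psi$). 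Since $\Phi$ is cocartesian it preserves the $P$-cocartesian first factor, and since $\Psi$ is cocartesian (over the total space $\widetilde P$, with $\widetilde\varphi$ as its base map) it preserves the $Q'$-cocartesian second factor. Reassembling via \cref{prop:cocart-fam-comp} applied now to the target composite $\Sigma_{P'} Q'$, the image is again of the form $Q'_!(P'_!(\ldots),\ldots)$, hence $(\Sigma_{P'} Q')$-cocartesian. The key is that the decomposition of cocartesian arrows is natural, so that preservation factor-by-factor assembles to preservation of the composite cocartesian arrow.

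**Main obstacle.** For part (1) there is essentially no obstacle beyond functoriality bookkeeping. The genuine work is in part (2): I must verify that a $\Sigma_P Q$-cocartesian arrow really is detected by the pair of its $P$- and $Q$-cocartesian components, and conversely that preserving both components suffices for preserving the composite. This is exactly the content of the characterization underlying \cref{prop:cocart-fam-comp} (\cite[Lemma 5.2.3]{RV}), and the subtlety lies in making sure the ``partial cocartesian'' condition on the upper family $Q'$ matches what $\Psi$ preserves—i.e.\ that $\Psi$ carries $Q$-cocartesian arrows over $P$-cocartesian base arrows to $Q'$-cocartesian arrows over $P'$-cocartesian base arrows. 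Once this alignment is in place, the proof collapses to applying the two preservation hypotheses to the two factors and reassembling.
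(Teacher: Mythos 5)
Your proposal is correct and takes essentially the same approach as the paper: part (1) reduces to applying the two preservation hypotheses in sequence, and part (2) combines the description of cocartesian lifts in composite families (\cref{prop:cocart-fam-comp}) with the two preservation identifications, which is precisely the paper's chain of paths $\psi(R_!(u,e,d)) = Q'_!(P'_!(ju,\varphi_b e),\psi_{\varphi_b e}d) = R'_!(ju,\varphi_b e,\psi_{\varphi_b e}d)$. The only difference is cosmetic: in part (1) you apply the predicate $\isCocartArr$ directly to arbitrary cocartesian arrows, whereas the paper tracks identifications of the chosen lifts $P_!(u,e)$ (implicitly invoking their uniqueness); both amount to the same argument.
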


\begin{proof}
	Recall \cref{ssec:cocart-clos} for the closure properties of cocartesian fibrations, and how to compute cocartesian lifts in the respective constructions.
	\begin{enumerate}
		\item Fiberwise composition of the fiberwise maps $\varphi:\prod_{a:A} P\,a \to Q\,ja$, $\psi:\prod_{b:B} Q\,b \to R\,kb$ is given by
		\begin{align}
			\psi \circ \varphi :\jdeq \lambda a.\psi_{ja} \circ \varphi_a:\prod_{a:A} P\,a \to R\,kja \label{eq:fibw-comp}.
		\end{align}
	Since $\varphi$ and $\psi$ both are cocartesian functors, for arrows $u:a \to a'$ in $A$, $v:b \to b'$ in $B$, and points $e:P\,a$, $d:Q\,b$, there are paths:
	\begin{align}
		\varphi_u(P_!(u,e)) & = Q_!(ju, \varphi_a e) \label{eq:phi-hor-cocart} \\
		\psi_v(Q_!(v,d)) & = R_!(kv,\psi_b d) \label{eq:psi-hor-cocart}
	\end{align}
Using these identifications, we find
\begin{align*}
 	(\psi \circ \varphi)_u(P_!(u,e)) & \stackrel{\ref{eq:fibw-comp}}{=}  \psi_{ju}(\varphi_u(P_!(u,e))) \stackrel{\ref{eq:phi-hor-cocart}}{=} \psi_{ju}(Q_!(ju,\varphi_a e)) \\
 	\stackrel{\ref{eq:psi-hor-cocart}}{=}  R_!(kju,\psi_{kja}(\varphi_a e)) &  \stackrel{\ref{eq:fibw-comp}}{=} R_!(kju, (\psi \circ \varphi)(a)).
\end{align*}
	\item Let $u:b \to b'$ in $B$, $u':a \to a'$ in $A$, be arrows with points $e:P\,b$, $\pair{e}{d}:Q(b,e)$, and $e':P\,b'$. First, recall that lifts in the composite families are given by:
	\begin{align}
		R_!(u,e,d) & = Q_!(P_!(u,e),d) \label{eq:comp-lifts} \\
		R'_!(u',e',d') & = Q'_!(P'_!(u',e'),d') \label{eq:comp'-lifts}
	\end{align}
Since $\varphi$ and $\psi$ are cocartesian functors, there are identifications:
\begin{align}
	\varphi(P_!(u,e)) & = P_!'(ju, \varphi_b(e)) \label{eq:phi-vert-cocart} \\
	\psi(Q_!(u,f,e,d)) & =  Q_!'(ju, \varphi_u(f), \varphi_b(e), \psi_{\varphi_b(e)}(d)) \label{eq:psi-vert-cocart}
\end{align}
This gives a path
\begin{align*}
\psi(R_!(u,e,d)) & \stackrel{\ref{eq:comp-lifts}}{=} \psi(Q_!(P_!(u,e),e,d)) \stackrel{\ref{eq:psi-vert-cocart}}{=}Q'_!(ju, \varphi_uP_!(u,e),\varphi_b e, \psi_{\varphi_b e} d) \\
  \stackrel{\ref{eq:phi-vert-cocart}}{=}  Q'_!(ju,P_!'(ju,\varphi_b e), \varphi_b e, \psi_{\varphi_b e} d) & \stackrel{\ref{eq:comp'-lifts}}{=} R_!'(ju, \varphi_b e, \psi_{\varphi_b e}d).\qedhere
\end{align*} 
	\end{enumerate}
\end{proof}

\begin{prop}[Product cones are cocartesian functors]\label{prop:cocart-fun-prod}
Let $B:I \to \UU$ be a type family over an arbitrary type $I$, and $P_i: B_i \to \UU$ cocartesian families with total types $E_i\defeq \totalty{P_i}$. Then, for any $k:I$ the projections from the product fibration
\[\begin{tikzcd}
	{\prod_i E_i} & {E_k} \\
	{\prod_i B_i} & {B_k}
	\arrow[two heads, from=1-1, to=2-1]
	\arrow[from=1-1, to=1-2]
	\arrow[from=2-1, to=2-2]
	\arrow[two heads, from=1-2, to=2-2]
\end{tikzcd}\]
are cocartesian.
\end{prop}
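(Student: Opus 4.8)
The plan is to realize each projection as a cocartesian functor in the sense of \cref{def:cocart-fun} and then verify that it preserves cocartesian arrows, using the componentwise formula for cocartesian lifts established in \cref{prop:cocart-fam-prod}.

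First I would pin down the fibered-functor data. For fixed $k:I$, the base component is the $k$-th projection $j \defeq \mathrm{pr}_k : \prod_{i:I} B_i \to B_k$, $j(\beta) \defeq \beta(k)$, and the fiberwise component is evaluation at $k$,
\[ \varphi_\beta : \Big(\prod_{i:I} P_i\Big)(\beta) \to P_k(\beta(k)), \quad \varphi_\beta(\sigma) \defeq \sigma(k). \]
These assemble into the strictly commuting square of \cref{def:fib-maps}, so $\Phi \defeq \pair{j}{\varphi}$ is a fibered functor from $\prod_{i:I} P_i$ to $P_k$; it remains only to establish $\isCocartFun_{\prod_i P_i, P_k}(\Phi)$, that is, that $\Phi$ carries cocartesian arrows to cocartesian arrows.

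The crux is a componentwise description of the cocartesian arrows of the product family. By \cref{prop:cocart-fam-prod}, $\prod_{i:I} P_i$ is itself a cocartesian family over the Rezk type $\prod_{i:I} B_i$ (Rezk types being closed under products), and its cocartesian lift of $u : \alpha \to \beta$ at $\sigma$ is the pointwise lift $\big(\prod_{i:I} P_i\big)_!(u,\sigma) = \lambda i.\,(P_i)_!(u(i),\sigma(i))$. Now take any cocartesian arrow $f$ over $u$ with $\partial_0 f = \sigma$. By \cref{prop:cocart-lifts-unique-in-isoinner-fams}, cocartesian lifts over a Rezk base are unique up to homotopy, so $f$ is identified with $\big(\prod_{i:I} P_i\big)_!(u,\sigma)$. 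Its image $\varphi f$ under evaluation at $k$ is thus identified with $(P_k)_!(u(k),\sigma(k))$, which is $P_k$-cocartesian by construction. Since being cocartesian is a proposition, and hence transported along this identification, $\varphi f$ is cocartesian, establishing the claim.

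I expect the only real subtlety to lie in the componentwise identification of the lifts, which is exactly the content already extracted in \cref{prop:cocart-fam-prod}; the appeal to uniqueness is what lets us conclude about an \emph{arbitrary} cocartesian arrow rather than only the canonical lift. As an alternative that sidesteps uniqueness, one could argue directly from the universal property \eqref{eq:cocart-initiality-type}: the initiality type $A(u,\sigma)$ for the product family factors as a product $\prod_{i:I} A_i(u(i),\sigma(i))$ of the fiberwise initiality types (via the axiom of choice for extension types, \cref{thm:choice}), and a point of a product is initial precisely when it is componentwise initial, since a product of types is contractible exactly when each factor is. Projecting onto the $k$-th coordinate then manifestly preserves initiality, i.e.\ cocartesianness.
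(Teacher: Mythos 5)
Your proof is correct and takes essentially the same route as the paper's: the paper's (very terse) proof likewise identifies the projection with evaluation at $k$ and invokes the pointwise description of cocartesian lifts from \cref{prop:cocart-fam-prod}. Your explicit appeal to \cref{prop:cocart-lifts-unique-in-isoinner-fams}, together with the observation that cocartesianness is a proposition, merely fills in the step---passing from the canonical lifts to \emph{arbitrary} cocartesian arrows---that the paper leaves implicit.
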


\begin{proof}
The projection maps are given by evaluation at $i$. From the description of the cocartesian arrows in \cref{prop:cocart-fam-prod} it follows that this defines a cocartesian functor.
\end{proof}

\begin{prop}[Cocartesian universality of products]\label{prop:cocart-fun-prod-ump}
Let $I$ be a small type, $B:I \to \UU$ be a family, and $P: \prod_{i:I} B_i \to \UU$ a family such that for all $i:I$ the family $P_i: B_i \to \UU$ is a cocartesian family. We denote the associated projections by $\pi_i : E_i \fibarr B_i$.

Then the product fibration $\prod_i E_i \tofib  \prod_i B_i$ satisfies the following universal property: For any cocartesian family $Q: A \to \UU$, with associated projection $\xi:F \fibarr A$, given a family of cocartesian functors $\pair{\alpha_k}{\psi_k}: \xi \xbigtoto[]{} \pi_k$ for $k:I$, there exists a unique cocartesian functor, the \emph{target tupling} $\pair{(\alpha_i)_{i:I}}{(\psi_i)_{I:I}} : \xi \xbigtoto[]{} \prod_{i:I} \pi_i$ s.t.~that every diagram of the form
\[\begin{tikzcd}
	{F} && {E_k} \\
	{A} & {\prod_i E_i} & {B_k} \\
	& {\prod_i B_i}
	\arrow["{\psi_k}", from=1-1, to=1-3]
	\arrow[two heads, from=2-2, to=3-2]
	\arrow["\xi"', two heads, from=1-1, to=2-1]
	\arrow["{(\alpha_i)_{i:I}}"{description}, dashed, from=2-1, to=3-2]
	\arrow["{\mathrm{ev}_k}"{description}, from=2-2, to=1-3]
	\arrow["{\mathrm{ev}_k}"{description}, from=3-2, to=2-3]
	\arrow["{(\psi_i)_{i:I}}"{description}, dashed, from=1-1, to=2-2]
	\arrow["{\pi_k}", two heads, from=1-3, to=2-3]
	\arrow[no head, from=2-1, to=2-2]
	\arrow["{\alpha_k}", from=2-2, to=2-3]
\end{tikzcd}\]
commutes.
\end{prop}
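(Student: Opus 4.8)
The plan is to exhibit the target tupling explicitly as a fiberwise map, check that it is a cocartesian functor using the pointwise description of cocartesian lifts in a product fibration, and finally verify the universal property (existence plus uniqueness) by reducing everything to the ordinary universal property of the product of types together with function extensionality. Throughout I would work on the level of families rather than total spaces, since the straightening/unstraightening equivalence lets me pass freely between the two pictures.

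First I would \emph{construct} the candidate. Given the family of cocartesian functors $\pair{\alpha_k}{\psi_k} : \xi \xbigtoto[]{} \pi_k$, each consisting of a base map $\alpha_k : A \to B_k$ and a fiberwise part $\psi_k : \prod_{a:A}(Q\,a \to P_k(\alpha_k\,a))$, I define the target tupling by tupling componentwise: the base map is $\lambda a.\lambda k.\alpha_k\,a : A \to \prod_{i:I}B_i$ and the fiberwise part is $\lambda a.\lambda d.\lambda k.\psi_k(a)(d)$, landing in $\prod_{i:I}P_i(\alpha_i\,a)$. This is forced by the universal property of $\prod_i$ at the level of types, so existence and uniqueness of the \emph{underlying} fiberwise map is immediate; the content is that this map is cocartesian and that it is the unique \emph{cocartesian} one making the diagram commute.

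Next I would \emph{check cocartesianness}. By \cref{prop:cocart-fam-prod}, a cocartesian lift of $u:\alpha \to \beta$ in $\prod_{i:I}B_i$ with respect to a point $\sigma$ is computed componentwise, namely $\bigl(\prod_{i:I}P_i\bigr)_!(u,\sigma) = \lambda i.(P_i)_!(u(i),\sigma(i))$. So to see that the target tupling sends a $Q$-cocartesian arrow $f$ over some $u:a\to b$ in $A$ to a $\prod_i P_i$-cocartesian arrow, it suffices to check the $i$-th component for every $i:I$. But the $i$-th component of the image is exactly $\psi_i$ applied to $f$, which is $P_i$-cocartesian precisely because each $\pair{\alpha_i}{\psi_i}$ is assumed to be a cocartesian functor. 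Since being cocartesian in the product fibration is a pointwise (in $i$) condition by the cited computation of lifts, the image is cocartesian, so the target tupling lies in $\CocartFun$.

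Finally I would \emph{verify the universal property}. Commutativity of each triangle $\mathrm{ev}_k \circ (\text{target tupling}) = \psi_k$ (and the analogous statement on bases) holds strictly by construction, since evaluating the tuple at $k$ returns the $k$-th component. For uniqueness, suppose $\pair{\beta}{\theta}$ is any cocartesian functor $\xi \xbigtoto[]{} \prod_i\pi_i$ whose post-composition with each $\mathrm{ev}_k$ recovers $\pair{\alpha_k}{\psi_k}$; then by the universal property of the product of types its components must agree with the target tupling pointwise, and function extensionality promotes this to an identification of the fiberwise maps. The main obstacle I anticipate is purely bookkeeping: being careful that ``cocartesian'' really is detected componentwise in the product fibration, i.e.\ that the equivalence in \cref{prop:cocart-fam-prod} identifying product lifts with tuples of lifts is compatible with the $\isCocartArr$ predicate, so that one may invoke function extensionality both for the construction of the tupling and for its uniqueness without any coherence left to check beyond the pointwise level.
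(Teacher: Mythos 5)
Your proposal is correct and follows essentially the same route as the paper: the paper's proof treats existence and uniqueness of the underlying tupling as immediate and reduces everything to the single observation that cocartesian lifts in the product fibration are computed pointwise (\cref{prop:cocart-fam-prod}), which is exactly your key step. Your write-up merely spells out the bookkeeping (explicit construction of the tupling, pointwise detection of cocartesianness via uniqueness of lifts, and function extensionality for uniqueness) that the paper leaves implicit.
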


\begin{proof}
	We only point out that the square given by $\pair{(\alpha_i)_{i:I}}{(\psi_i)_{i:I}}$ is cocartesian. But this is again clear, because cocartesian lifts in the product fibration are defined pointwisely.
\end{proof}

\begin{prop}[Pullback squares are cocartesian functors, cf~{\protect{\cite[Example 5.3.3]{RV}}}]\label{prop:pb-sq-cocart}
	Let $P: B \to \UU$ be a cocartesian family with projection $\pi:E \tofib B$. For any map $k:A \to B$, the pullback square
	\[\begin{tikzcd}
		{k^*E} && E \\
		A && B
		\arrow["{k^*\pi}"', two heads, from=1-1, to=2-1]
		\arrow["\varphi", from=1-1, to=1-3]
		\arrow["k"', from=2-1, to=2-3]
		\arrow["\pi", two heads, from=1-3, to=2-3]
		\arrow["\lrcorner"{anchor=center, pos=0.125}, draw=none, from=1-1, to=2-3]
	\end{tikzcd}\]
	is a cocartesian functor.
\end{prop}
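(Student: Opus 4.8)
The plan is to verify directly that the top horizontal map $\varphi$ of the pullback square preserves cocartesian arrows, which is exactly the condition $\isCocartFun_{k^*P,P}(\varphi)$ required by \cref{def:cocart-fun}. First I would fix the strict pullback model $k^*E \jdeq \sum_{a:A} P(ka)$, in which the fiber of $k^*P$ over $a:A$ is \emph{literally} the fiber $P(ka)$, the projection $k^*\pi$ is the first projection, and $\varphi\,\pair{a}{e} \jdeq \pair{ka}{e}$ acts as the identity on each fiber. Consequently, for an arrow $u:\hom_A(a,a')$ and a dependent arrow $f:\dhom^{k^*P}_u(e,e')$, the image $\varphi_u(f) \jdeq \lambda t.\varphi_{u(t)}(f(t))$ is definitionally the very same function $\Delta^1 \to \widetilde{P}$, merely reinterpreted as a dependent arrow $\dhom^P_{ku}(e,e')$ over the composite $ku$ in $B$. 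The whole statement therefore reduces to a single implication: whenever $f$ is $k^*P$-cocartesian over $u$, it is also $P$-cocartesian over $ku$.

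The main obstacle I anticipate is that this implication does \emph{not} follow by merely unwinding the two cocartesianness conditions into their defining extension types: the $k^*P$-cocartesian condition quantifies over $2$-simplices $\sigma:\Delta^2 \to A$ whose first edge is $u$, whereas the $P$-cocartesian condition over $ku$ quantifies over \emph{all} $2$-simplices $\Delta^2 \to B$ with first edge $ku$, and not every such simplex factors through $k$. Hence the $P$-cocartesian property is a priori strictly stronger, and a naive comparison leaves a genuine gap. To bridge it I would invoke uniqueness of cocartesian lifts. Since $A$ and $B$ are Rezk (the standing assumption of this section) and $k^*P$ is isoinner, \cref{prop:cocart-lifts-unique-in-isoinner-fams} guarantees that $k^*P$-cocartesian lifts of $u$ starting at $e$ are unique up to homotopy, so any $k^*P$-cocartesian arrow $f:\dhom^{k^*P}_u(e,e')$ is homotopic to the designated lift $(k^*P)_!(u,e)$. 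But \cref{prop:cocart-fam-pb} computes this designated lift as $(k^*P)_!(u,e) \jdeq P_!(ku,e)$, which is a $P$-cocartesian arrow over $ku$ by the very definition of the cocartesian lifting structure on $P$.

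It then remains to transport cocartesianness across the homotopy. Because being a cocartesian arrow is a proposition and is invariant under the identification of $f$ with $P_!(ku,e)$, the arrow $\varphi_u(f) = f$ is $P$-cocartesian over $ku$, which is precisely what \cref{def:cocart-fun} demands. This yields $\isCocartFun_{k^*P,P}(\varphi)$ and so exhibits the pullback square as a cocartesian functor, the type-theoretic counterpart of \cite[Example~5.3.3]{RV}. I expect the argument to be short once phrased this way; the only real work is recognizing that uniqueness of lifts, rather than a direct diagram chase, is what licenses passing from $A$-indexed to $B$-indexed simplices.
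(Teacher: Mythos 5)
Your proposal is correct and follows essentially the same route as the paper's proof: both hinge on \cref{prop:cocart-fam-pb}, which computes the designated lifts in the pullback family as $(k^*P)_!(u,e) = P_!(ku,e)$, together with the observation that $\varphi$ acts trivially on fibers. In fact you are slightly more careful than the paper, which only verifies preservation of the designated lifts and leaves implicit the step you spell out — that uniqueness of cocartesian lifts (\cref{prop:cocart-lifts-unique-in-isoinner-fams}) plus propositionality of $\isCocartArr$ upgrades this to preservation of \emph{all} cocartesian arrows, bridging the genuine mismatch between $2$-simplices in $A$ and in $B$.
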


\begin{proof}
	The square acts as
	\[ \pair{k}{\varphi}(v,d) = \pair{kv}{d}.\]
	By \cref{prop:cocart-fam-pb}, the pullback map $k^*\pi: k^*E \to A$ is a cocartesian fibration.
	
	We abbreviate $Q\defeq k^*P$. Let $v:a \to a'$ in $A$ be an arrow and $d:k^*P(a) \jdeq P(ka)$ a point in the fiber. Again by \cref{prop:cocart-fam-pb}, the $Q$-cocartesian lift is given by
	\[ Q_!(v,d) = P_!(kv,d).\]
	This leads to
	\[
	\varphi(v,Q_!(v,d)) = \pair{kv}{Q_!(v,d)} = \pair{kv}{P_!(kv,d)} = \pair{kv}{P_!(kv,\varphi d)}.\qedhere
	\]
\end{proof}

\begin{prop}[Pullback cones are cocartesian functors]\label{prop:cocart-fun-pb}
Let $P:B \to \UU$, $P':B' \to \UU$, and $P'':B'' \to \UU$ be cocartesian families with associated projection maps $\pi :E \to B$, $\pi':E' \to B'$, and $\pi'':E'' \to B''$, resp.

Consider a commutative cubical diagram as below, where the given vertical maps are cocartesian fibrations and the front and right square are cocartesian functors:
\[\begin{tikzcd}
	{E' \times_E E''} &&& {E''} \\
	& {E'} &&& E \\
	{B' \times_B B''} &&& {B''} \\
	& {B'} &&& B
	\arrow["{\pi'\times_\pi \pi''}"{description}, dashed, two heads, from=1-1, to=3-1]
	\arrow[from=1-1, to=1-4]
	\arrow[from=1-1, to=2-2]
	\arrow["\pi"{description, pos=0.3}, from=2-5, to=4-5, two heads]
	\arrow["{\beta''}"{description}, from=3-4, to=4-5]
	\arrow["{\varphi''}"{description}, from=1-4, to=2-5]
	\arrow["{\beta'}"{description}, from=4-2, to=4-5]
	\arrow["\slpbcorner"{anchor=center, pos=0.125}, draw=none, from=3-1, to=4-5]
	\arrow["\slpbcorner"{anchor=center, pos=0.125}, draw=none, from=1-1, to=2-5]
	\arrow["{\pi''}"{description, pos=0.2}, from=1-4, to=3-4, two heads]
	\arrow[from=3-1, to=4-2]
	\arrow[from=3-1, to=3-4]
	\arrow["{\varphi'}"{description}, from=2-2, to=2-5, crossing over]
	\arrow["{\pi'}"{description, pos=0.3}, from=2-2, to=4-2, crossing over, two heads]
\end{tikzcd}\]
Then the mediating map
\[ \pi''':\jdeq \pi' \times_\pi \pi'' : E''': \jdeq E' \times_E E'' \to B' \times_{B} B'' \jdeq : B'''\]
is a cocartesian fibration. Furthermore, the back and the left square are cocartesian functors.

Specifically, the cocartesian lift of an arrow $u''':\jdeq \langle u,u',u''\rangle$ w.r.t.~$\langle e,e',e'' \rangle$ is given by
\[ \langle  P_!(u,e), P_!(u',e'), P_!(u'',e'') \rangle. \]
\end{prop}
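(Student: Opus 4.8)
The plan is to check directly that the family $P'''$ associated to the mediating map $\pi'''$ is cocartesian — that is, that it is isoinner and admits the cocartesian lifts written in the statement — and then to read off that the two pullback projections preserve cocartesian arrows. The one tool that makes everything cohere is the naturality of cocartesian lifts along cocartesian functors, \cref{prop:nat-cocartlift-arr}, applied to the two given cocartesian functors $\varphi'$ (over $\beta'$) and $\varphi''$ (over $\beta''$): for an arrow $u'$ in $B'$ at $e'$ one has $\varphi'(P'_!(u',e')) = P_!(\beta'u',\varphi'e')$, and dually for $\varphi''$.

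First I would construct the lift. An arrow of $B''' = B' \times_B B''$ is a compatible pair $(u',u'')$ with common image $u \defeq \beta'u' = \beta''u''$ in $B$, and a point of $E''' = E' \times_E E''$ over its source is a pair $(e',e'')$ with $\varphi'e' = \varphi''e''$; write $e$ for this common value. I take as candidate lift the pair $(P'_!(u',e'),\,P''_!(u'',e''))$. This is a genuine arrow of the homotopy pullback $E'''$ precisely because its two components have the same image in $E$: by \cref{prop:nat-cocartlift-arr} both $\varphi'(P'_!(u',e'))$ and $\varphi''(P''_!(u'',e''))$ compute to $P_!(u,e)$. Thus the candidate lies over $u'''$ and projects to $P_!(u,e)$ in $E$, matching the claimed description $\langle P_!(u,e),P'_!(u',e'),P''_!(u'',e'')\rangle$.

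The heart of the argument is showing this candidate is $P'''$-cocartesian, for which I would use the Segal universal property of cocartesian arrows. Given a further arrow $v''' = (v',v'')$ of $B'''$ composable with $u'''$, a target point, and a dependent arrow $h''' = (h',h'')$ over $v'''\circ u'''$, cocartesianness of $P'_!(u',e')$ in $E'$ and of $P''_!(u'',e'')$ in $E''$ produces unique fillers $g'$ over $v'$ and $g''$ over $v''$ with $g'\circ P'_!(u',e') = h'$ and $g''\circ P''_!(u'',e'') = h''$. The only remaining point — and the main obstacle — is that $(g',g'')$ assembles into an arrow of $E'''$, i.e.\ that $\varphi'g' = \varphi''g''$ in $E$. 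Here I would note that both $\varphi'g'$ and $\varphi''g''$ are fillers, over the common image $v \defeq \beta'v' = \beta''v''$, of the arrow $h \defeq \varphi'h' = \varphi''h''$ against the common image $f \defeq P_!(u,e)$: since $\varphi',\varphi''$ preserve composition (being functors between Segal types) one gets $(\varphi'g')\circ f = h = (\varphi''g'')\circ f$, and because $f = P_!(u,e)$ is $P$-cocartesian the filler over $v$ is unique, forcing $\varphi'g' = \varphi''g''$. Uniqueness of $(g',g'')$ then follows from uniqueness in each factor, establishing the cocartesian universal property.

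It remains to record that $P'''$ is isoinner and that the projections are cocartesian functors. For the former, $B'''$ and the total spaces $E,E',E''$ are Rezk (total spaces of isoinner families over Rezk bases), so their homotopy pullbacks $B''' = B'\times_B B''$ and $E''' = E'\times_E E''$ are again Rezk, since (complete) Segal types are closed under pullback (local objects of the Segal/Rezk localizations being closed under limits); thus $\pi'''$ is a map of Rezk types, hence inner by \cref{prop:maps-between-segal-types}, while its fibers $E'_{b'}\times_{E_b}E''_{b''}$ are pullbacks of the Rezk fibers of $P',P'',P$ (\cref{prop:innerfib-has-segal-fibers} together with the isoinner hypothesis), hence Rezk — so $P'''$ is isoinner. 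Finally, the pullback projection $E'''\to E'$ sends the chosen lift $(P'_!(u',e'),P''_!(u'',e''))$ to $P'_!(u',e')$, which is $P'$-cocartesian; since every cocartesian arrow agrees with a chosen lift up to homotopy by uniqueness (\cref{prop:cocart-lifts-unique-in-isoinner-fams}) and cocartesianness is a homotopy-invariant proposition, the projection preserves all cocartesian arrows, and likewise for $E'''\to E''$. This exhibits the back and left squares as cocartesian functors and yields exactly the stated formula for the lift, completing the proof.
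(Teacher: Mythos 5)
Your proof is correct and takes essentially the same approach as the paper: the paper also takes the componentwise candidate lift $\langle P_!(u,e), P'_!(u',e'), P''_!(u'',e'')\rangle$ and concludes by asserting that the universal property holds because it holds in each component. You have in fact supplied the details the paper compresses into a single line---notably the compatibility step that $\varphi' g' = \varphi'' g''$, forced by uniqueness of fillers against the $P$-cocartesian arrow $P_!(u,e)$ in $E$, together with the isoinnerness of $\pi'''$ and the verification that the two pullback projections preserve cocartesian arrows.
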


\begin{proof}
Denote by $Q',Q'':B \to \UU$ the families associated to the maps $\beta'$ and $\beta''$, respectively.
The map in question is equivalent to (\cf~\cite[Theorem 24.2.3(ii)]{RijIntro}) the following projection:
\[\begin{tikzcd}
	E''':\jdeq \sum_{b''':B'''} P'\,b' \times_{P\,b} P''\,b'' \ar[rr,two heads, "\pi'''"] && B'''\equiv \sum_{b:B} Q' \,b \times_{Q\,b} Q''\,b
\end{tikzcd}\]
The candidate $\pi'''$-cocartesian lift of $\angled{u,u',u''}$ w.r.t.~$\angled{e:P(u0),e':P'(u'0),e'':P''(u''0)}$ is given by
\[ \angled{P_!(u,e), P_!'(u',e'), P_!''(u'',e'')},\]
and since the universal property is satisfied fiberwisely it is satisfied in the pullback fibration.
\end{proof}

\begin{prop}[Cocartesian universality of pullback cones]\label{prop:cocart-fun-pb-ump}
	Given a commutative cube of cocartesian fibrations
\[\begin{tikzcd}
	F &&& {E''} \\
	& {E'} &&& E \\
	A &&& {B''} \\
	& {B'} &&& B
	\arrow["\xi"{description}, two heads, from=1-1, to=3-1]
	\arrow["{\alpha''}"{description,pos=0.7}, from=3-1, to=3-4]
	\arrow["{\psi''}"{description,pos=0.7}, from=1-1, to=1-4]
	\arrow["{\pi''}"{description, pos=0.2}, two heads, from=1-4, to=3-4]
	\arrow["{\psi'}"{description}, from=1-1, to=2-2]
	\arrow["{\pi'}"{description, pos=0.3}, two heads, from=2-2, to=4-2]
	\arrow["\pi"{description, pos=0.3}, two heads, from=2-5, to=4-5]
	\arrow["{\alpha'}"{description}, from=3-1, to=4-2]
	\arrow["{\beta''}"{description}, from=3-4, to=4-5]
	\arrow["{\varphi''}"{description}, from=1-4, to=2-5]
	\arrow["{\beta'}"{description}, from=4-2, to=4-5]
	\arrow["{\varphi'}"{description}, from=2-2, to=2-5, crossing over]
	\arrow["{\pi'}"{description, pos=0.3}, two heads, from=2-2, to=4-2, crossing over]
\end{tikzcd}\]
 where the vertical faces are cocartesian functors, there exists up to homotopy a unique cocartesian functor $\pair{\alpha'''}{\psi'''}:\xi \xbigtoto[]{} \pi'''$ s.t.~the diagram
\[\begin{tikzcd}
	F && {E'''} &&& {E''} \\
	&&& {E'} &&& E \\
	A && {B'''} &&& {B''} \\
	&&& {B'} &&& B
	\arrow[from=1-3, to=1-6]
	\arrow["{\pi''}"{description}, two heads, from=1-6, to=3-6, near start]
	\arrow[from=3-3, to=3-6]
	\arrow[from=1-3, to=2-4]
	\arrow["{\varphi''}", from=1-6, to=2-7, description]
	\arrow["{\beta'}"{description}, from=4-4, to=4-7]
	\arrow["\pi"{description}, two heads, from=2-7, to=4-7, near start]
	\arrow[from=3-3, to=4-4]
	\arrow["{\beta''}"{description}, from=3-6, to=4-7]
	\arrow["\slpbcorner"{anchor=center, pos=0.125}, draw=none, from=3-3, to=4-7]
	\arrow["\slpbcorner"{anchor=center, pos=0.125}, draw=none, from=1-3, to=2-7]
	\arrow["\xi"', two heads, from=1-1, to=3-1, description]
	\arrow["{\alpha'''}"{description}, dashed, from=3-1, to=3-3, description]
	\arrow["{\psi'''}"{description}, dashed, from=1-1, to=1-3]
	\arrow["{\psi''}"{description}, curve={height=-22pt}, from=1-1, to=1-6]
	\arrow["{\alpha'}"{description}, from=3-1, to=4-4]
	\arrow["{\alpha''}"{description}, curve={height=-22pt}, from=3-1, to=3-6]
	\arrow["{\varphi'}"{description}, from=2-4, to=2-7,crossing over, description]
	\arrow["{\pi'}"{description}, two heads, from=2-4, to=4-4, very near start,crossing over]
	\arrow["{\pi'''}"{description}, two heads, from=1-3, to=3-3, crossing over]
	\arrow["{\psi'}"{description}, from=1-1, to=2-4, crossing over]
\end{tikzcd}\]
commutes.
\end{prop}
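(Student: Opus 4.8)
The plan is to construct the mediating functor $\pair{\alpha'''}{\psi'''}$ directly from the universal properties of the two pullback types and then verify that it is cocartesian componentwise, exactly in the spirit of the proof of \cref{prop:cocart-fun-prod-ump}. First I would extract from the commutativity of the given cube the two compatibility identities $\varphi' \circ \psi' = \varphi'' \circ \psi''$ as maps $F \to E$ and $\beta' \circ \alpha' = \beta'' \circ \alpha''$ as maps $A \to B$. These are precisely the cocone data needed to invoke the universal property of the pullbacks $E''' \defeq E' \times_E E''$ and $B''' \defeq B' \times_B B''$, yielding up-to-homotopy unique gap maps $\psi''' \defeq \pair{\psi'}{\psi''}: F \to E'''$ and $\alpha''' \defeq \pair{\alpha'}{\alpha''} : A \to B'''$. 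Since the relevant squares commute by the defining projection identities of the gap maps, the pair $\pair{\alpha'''}{\psi'''}$ is a fibered functor $\xi \xbigtoto[]{} \pi'''$, and the entire target pasting diagram commutes for the same reason.

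Next I would verify that $\pair{\alpha'''}{\psi'''}$ preserves cocartesian arrows. By \cref{prop:cocart-fun-pb} the map $\pi'''$ is a cocartesian fibration whose cocartesian lift of an arrow $\angled{u,u',u''}$ with respect to a point $\angled{e,e',e''}$ is computed componentwise as $\angled{P_!(u,e),P'_!(u',e'),P''_!(u'',e'')}$; in particular an arrow of $E'''$ is $\pi'''$-cocartesian if and only if its projections to $E'$ and $E''$ are $\pi'$- respectively $\pi''$-cocartesian. Applying $\psi'''$ to a cocartesian arrow and then projecting recovers the actions of $\psi'$ and $\psi''$, which preserve cocartesian arrows since the left and front faces of the cube are cocartesian functors by hypothesis. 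Hence $\psi'''$ sends cocartesian arrows to cocartesian arrows, so $\pair{\alpha'''}{\psi'''}$ is a cocartesian functor.

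For uniqueness I would argue that any fibered functor $\xi \xbigtoto[]{} \pi'''$ making the pasting diagram commute has underlying maps $F \to E'''$ and $A \to B'''$ whose composites with the pullback projections are forced to be $\psi', \psi''$ and $\alpha', \alpha''$; by the universal property of the pullbacks these maps agree with $\psi'''$ and $\alpha'''$ up to a coherent homotopy. Since being a cocartesian functor is a proposition (\cref{def:cocart-fun}), the cocartesian-functor structure carries no further data, so the whole cocartesian functor is unique up to homotopy.

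The substantive input is entirely contained in \cref{prop:cocart-fun-pb}, which supplies both that $\pi'''$ is a cocartesian fibration and the componentwise description of its cocartesian lifts; once this is in hand the argument is formal. The only point genuinely requiring care is the homotopy bookkeeping: the gap maps into the pullbacks are determined only up to homotopy, so assembling the commuting pasting diagram and the uniqueness claim means tracking these homotopies coherently rather than relying on strict equalities. No new computation is needed beyond the componentwise detection of cocartesian arrows.
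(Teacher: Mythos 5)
Your proposal is correct and follows essentially the same route as the paper: the paper likewise constructs $\alpha'''$ and $\psi'''$ as tuplings into the pullbacks (after a fibrant replacement that lets the cube commute strictly, so the tupling carries the common composites $\alpha \defeq \beta'\alpha' \jdeq \beta''\alpha''$ and $\psi \defeq \varphi'\psi' \jdeq \varphi''\psi''$ as explicit components rather than tracked homotopies), and then verifies cocartesianness by exactly your componentwise computation using the description of $\pi'''$-cocartesian lifts from \cref{prop:cocart-fun-pb}. The only cosmetic differences are the strictification device in place of your homotopy bookkeeping and the fact that the paper leaves the uniqueness clause implicit in the pullback universal property, which you spell out.
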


\begin{proof}
By fibrant replacement, we can take the whole diagram to commute strictly. This gives a cocartesian functor comprised of the dependent pair
\[ \alpha\defeq \beta' \circ \alpha' \jdeq \beta'' \circ \alpha'': A \to B, \quad \psi\defeq \varphi' \circ \psi' \jdeq \varphi'' \circ \psi'':F \to E. \]
Then $\alpha'''$ acts as the fiber product tupling $\angled{\alpha, \alpha', \alpha''}$, and likewise $\psi'''$ is given by $\angled{\psi, \psi', \psi''}$. Denote by $R:A \to \UU$ the family associated to the fibration $\xi:F \to A$. For an arrow $v:a \to a'$ in $A$ and a point $d:R\,a$, the $R$-cocartesian lift gets mapped to
\begin{align*}
	\psi'''(R_!(v,d)) & = \angled{\psi(R_!(v,d)), \psi'(R'_!(v,d)), \psi''(R_!(v,d))} \\
	& = \angled{P_!(\alpha v,\psi d), P_!'(\alpha'v,\psi'd), P_!''(\alpha''v, \psi''d)}  \tag{\text{$\alpha, \alpha', \alpha''$ cocart.~functors}} \\
	& = P'''_!(\alpha \, v, \psi \,d ) \tag{\text{construction of lifts by \cref{prop:cocart-fun-pb}}},
\end{align*}
which establishes the claim.
\end{proof}

\begin{prop}[Sequential limit cones are cocartesian functors]\label{prop:cocart-fun-seqlim}
Consider an inverse diagram of cocartesian fibrations as below where all of the connecting squares are cocartesian functors:
\[\begin{tikzcd}
	& \ldots &&&& {E_\infty} \\
	\cdots && {E_2} && {E_1} && {E_0} \\
	& \ldots &&&& {B_\infty} \\
	\cdots && {B_2} && {B_1} && {B_0}
	\arrow["{\pi_\infty}"{description, pos=0.3}, dashed, from=1-6, to=3-6]
	\arrow["{\pi_0}"{description, pos=0.3}, two heads, from=2-7, to=4-7]
	\arrow["{f_0}"{description, pos=0.7}, from=4-5, to=4-7]
	\arrow["{\varphi_0}"{description}, dashed, from=1-6, to=2-7]
	\arrow["{\psi_0}"{description}, dashed, from=3-6, to=4-7]
	\arrow["{f_1}"{description, pos=0.7}, from=4-3, to=4-5]
	\arrow["{g_1}"{description, pos=0.7}, from=2-3, to=2-5]
	\arrow["{g_2}"{description, pos=0.7}, from=2-1, to=2-3]
	\arrow["{f_2}"{description, pos=0.7}, from=4-1, to=4-3]
	\arrow["{\varphi_1}"{description}, dashed, from=1-6, to=2-5]
	\arrow["{\varphi_2}"{description}, dashed, from=1-6, to=2-3]
	\arrow["{\psi_2}"{description}, dashed, from=3-6, to=4-3]
	\arrow["{\psi_1}"{description}, dashed, from=3-6, to=4-5]
	\arrow["{\varphi_3}"{description}, curve={height=12pt}, dashed, from=1-6, to=2-1]
	\arrow["{\psi_3}"{description}, curve={height=12pt}, dashed, from=3-6, to=4-1]
	\arrow["{g_0}"{description, pos=0.7}, from=2-5, to=2-7, crossing over]
	\arrow["{\pi_1}"{description, pos=0.3}, two heads, from=2-5, to=4-5, crossing over]
	\arrow["{\pi_2}"{description, pos=0.3}, two heads, from=2-3, to=4-3, crossing over]
\end{tikzcd}\]
Then the induced map $\pi_\infty: E_\infty \to B_\infty$ between the limit types is a cocartesian fibration, and the projections constitute cocartesian functors.
\end{prop}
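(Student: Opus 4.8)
The plan is to reduce the statement to two assertions: that the straightening $P_\infty \defeq \St(\pi_\infty)$ of the induced map is a cocartesian family, and that each projection cone preserves cocartesian arrows. Write $\pi_n : E_n \to B_n$ for the tower, with connecting squares that are cocartesian functors $E_{n+1}\to E_n$ over $B_{n+1}\to B_n$, and projection cones $\varphi_n : E_\infty \to E_n$, $\psi_n : B_\infty \to B_n$. Along the limit presentation, a point (resp.\ an arrow) of $B_\infty$ or $E_\infty$ is a compatible sequence of points (resp.\ arrows) at the finite stages, together with the coherence paths witnessing compatibility. First I would record that $B_\infty$ is a Rezk type and that $P_\infty$ is isoinner: both the Segal and the Rezk-completeness conditions are right-orthogonality conditions (against $\Lambda_1^2 \hookrightarrow \Delta^2$ and $\walkBinv \to \unit$, respectively), and these are closed under sequential limits by the sequential-limit closure of $j$-orthogonal families; likewise isoinner families are closed under sequential limits. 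Thus it only remains to exhibit cocartesian lifts in $P_\infty$ and to check that the projections preserve them.

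For the lifts, given $u : \hom_{B_\infty}(a,b)$ and $e : P_\infty\,a$, presented as compatible sequences with stage-$n$ components $u_n$ and $e_n$, I would lift levelwise by setting $f_n \defeq (P_n)_!(u_n, e_n)$, the cocartesian lift guaranteed by the cocartesian family $P_n$. The key point is that these levelwise lifts are compatible along the tower: since each connecting square is a cocartesian functor, the naturality of cocartesian liftings (\cref{prop:nat-cocartlift-arr}) supplies identifications between the image of $f_{n+1}$ and $f_n$ over the corresponding arrows of the base. These identifications are exactly the coherence data needed to assemble $(f_n)$ into a single dependent arrow $f_\infty$ in $E_\infty$ lying over $u$ with source $e$. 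Uniqueness of cocartesian lifts in isoinner families (\cref{prop:cocart-lifts-unique-in-isoinner-fams}) pins down these data up to a contractible choice, which is what makes the assembly coherent rather than merely stagewise.

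To see that $f_\infty$ is cocartesian I would use that the defining universal property is a contractibility statement: an extension problem posed in $E_\infty$ over a given $2$-simplex decomposes, along the limit presentation, into a compatible tower of the analogous problems in the $E_n$, each of which has a contractible type of solutions because $f_n$ is cocartesian. Since a sequential limit of contractible types is contractible, the type of fillers in $E_\infty$ is contractible, so $f_\infty$ is cocartesian. Finally, the $n$-th projection cone $\varphi_n$ sends $f_\infty$ to $f_n = (P_n)_!(u_n,e_n)$ up to homotopy by construction, hence carries cocartesian arrows to cocartesian arrows; together with isoinnerness this makes each projection a cocartesian functor, and the same bookkeeping shows the remaining squares of the diagram are cocartesian functors.

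The hard part will be the coherence management in the two interchanges: assembling the homotopy-compatible family $(f_n)$ into a genuine element of the sequential limit $E_\infty^{\Delta^1}$---which in type theory means producing a coherent tower of paths, not merely pairwise compatibilities---and justifying that the cocartesian universal property commutes with the limit, i.e.\ that a limit of contractible spaces of fillers computes the space of fillers in the limit. Both are precisely where one uses that the relevant choices live in contractible types (\cref{prop:cocart-lifts-unique-in-isoinner-fams}), so that the coherences are uniquely determined. An alternative that sidesteps some of this bookkeeping is to phrase everything through the Chevalley criterion (\cref{thm:cocart-fams-intl-char}) and take the limit of the tower of LARI adjunctions $\chi_n$; but establishing that LARIs of the gap maps are themselves stable under sequential limits requires essentially the same coherence analysis, so I would carry out the direct construction of lifts as the primary argument.
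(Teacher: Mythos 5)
Your proposal is correct in its main thrust, but it follows a genuinely different route from the paper. The paper never constructs lifts in $E_\infty$ by hand: it re-presents the sequential limit via an even/odd interleaving trick, writing $B_\infty$ (resp.\ $E_\infty$) as a pullback of $\prod_{n}B_{2n}$ and $\prod_n B_{2n+1}$ over $\prod_n B_n$, and then invokes the already-established closure of cocartesian fibrations and cocartesian functors under dependent products (\cref{prop:cocart-fam-prod}, \cref{prop:cocart-fun-prod}) and under pullback cones (\cref{prop:cocart-fun-pb}, \cref{prop:cocart-fun-pb-ump}); isoinnerness, the existence of lifts, and the cocartesianness of the projection cones all come along for free from those results. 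Your direct construction --- levelwise lifts $(P_n)_!(u_n,e_n)$ glued by naturality of cocartesian lifts (\cref{prop:nat-cocartlift-arr}) and contractibility of the type of lifts (\cref{prop:cocart-lifts-unique-in-isoinner-fams}), followed by the observation that the filler type in the limit is a sequential limit of contractible types --- is also viable, and it has the merit of producing an explicit description of the cocartesian arrows of $E_\infty$, which the paper's formal argument yields only after unwinding. What the paper's route buys is exactly the avoidance of the bookkeeping you flag as the hard part; note, moreover, that for a tower indexed by $\mathbb{N}$ an element of the limit consists of a sequence together with the \emph{one-step} compatibility paths only, so the ``coherent tower of paths'' you worry about is literally just the pairwise data, pinned down (over the given base paths and source paths) by the contractibility you cite.

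One citation, however, does not support the claim attached to it. For isoinnerness of $\pi_\infty$ you appeal to the proposition that $j$-orthogonal families are closed under sequential limits. That proposition concerns a tower whose \emph{connecting maps} $A_{n+1}\to A_n$ are each $j$-orthogonal, and concludes that the projection $A_\infty \to A_0$ is $j$-orthogonal; here the connecting maps $g_n$, $f_n$ are not assumed orthogonal to anything, and what you need is the different statement that \emph{levelwise} $j$-orthogonal maps between two towers induce a $j$-orthogonal map on the limits. That statement is true, but it requires its own argument: either observe that the Leibniz cotensor (gap) map of $\pi_\infty$ is the sequential limit of the gap maps of the $\pi_n$ (cotensors and pullbacks commute with sequential limits) and that a levelwise equivalence of towers induces an equivalence on limits, or reduce objectwise sequential limits to products and pullbacks exactly as the paper does and then invoke the closure of $j$-orthogonal maps under those two operations. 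With that repair, your argument goes through.
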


\begin{proof}
Note that we can present the limiting types as pullbacks in the following way: First, for the base types, we have
\[\begin{tikzcd}
	{} & {\sum_{\beta:\prod_{n:\mathbb N} B_n} \prod_{k:\mathbb N} \beta_{n} = f_n(\beta_{n+1}) \equiv B_\infty} && {\prod_{n:\mathbb N} B_{2n}} \\
	\\
	{} & {\prod_{n:\mathbb N} B_{2n+1}} && {\prod_{n:\mathbb N} B_n}
	\arrow[from=1-2, to=3-2]
	\arrow[from=3-2, to=3-4]
	\arrow[from=1-2, to=1-4]
	\arrow[from=1-4, to=3-4]
	\arrow["\lrcorner"{anchor=center, pos=0.125}, draw=none, from=1-2, to=3-4]
\end{tikzcd}\]
since
\[
\prod_{n:\mathbb N} B_{2n}  \equiv \sum_{\beta:\prod_{n:\mathbb N} B_n} \prod_{k:\mathbb N}\beta_{2k+1} = f_{2k+1}(\beta_{2k+2}), \quad \text{and} \quad
\prod_{n:\mathbb N} B_{2n+1} \equiv \prod_{\beta:\prod_{n:\mathbb N} B_n} \prod_{k:\mathbb N}\beta_{2k} = f_{2k}(\beta_{2k+1}).
\]
Lying over, the limiting total type arises as
\[\begin{tikzcd}
	{} & {\sum_{\langle \beta, \sigma \rangle:B_\infty} \sum_{\vartheta:\prod_{n:\mathbb N} P_n(\beta_n)}\prod_{k:\mathbb N} \vartheta_{n} =_{\sigma_n} g_n(\vartheta_{n+1}) \equiv E_\infty} && {\prod_{n:\mathbb N} E_{2n}} \\
	\\
	{} & {\prod_{n:\mathbb N} E_{2n+1}} && {\prod_{n:\mathbb N} E_n}
	\arrow[from=1-2, to=3-2]
	\arrow[from=3-2, to=3-4]
	\arrow[from=1-2, to=1-4]
	\arrow[from=1-4, to=3-4]
	\arrow["\lrcorner"{anchor=center, pos=0.125}, draw=none, from=1-2, to=3-4]
\end{tikzcd}\]
because
\begin{align*}
	\prod_{n:\mathbb N} E_{2n}  & \equiv \sum_{\pair{\beta}{\sigma}:B_{2n}} \sum_{\vartheta:\prod_{n:\mathbb N} P_n(\beta_n)} \prod_{k:\mathbb N}\vartheta_{2k+1} =_{\sigma_{2k+1}} g_{2k+1}(\vartheta_{2k+2}) \\
	\prod_{n:\mathbb N} E_{2n+1}  & \equiv \sum_{\pair{\beta}{\sigma}:B_{2n+1}} \sum_{\vartheta:\prod_{n:\mathbb N} P_n(\beta_n)} \prod_{k:\mathbb N}\vartheta_{2k} =_{\sigma_{2k}} g_{2k}(\vartheta_{2k+1})
\end{align*}
Since cocartesian fibrations are closed under composition by \cref{prop:cocart-fam-comp} and dependent products by \cref{prop:cocart-fam-prod}, and the induced maps are cocartesian functors by \cref{prop:cocart-fun-prod} the diagram
\begin{equation}\label{cd:seqlim-pb}
\begin{tikzcd}
	{E_\infty} & {} && {\prod_{n:\mathbb N} E_{2n}} \\
	& {\prod_{n:\mathbb N} E_{2n+1}} & {} & {} & {\prod_{n:\mathbb N} E_n} \\
	{B_\infty} &&& {\prod_{n:\mathbb N} B_{2n}} \\
	& {\prod_{n:\mathbb N} B_{2n+1}} &&& {\prod_{n:\mathbb N} B_n}
	\arrow[from=1-4, to=2-5]
	\arrow["{\pi_\infty}"{description}, dashed, two heads, from=1-1, to=3-1]
	\arrow[two heads, from=2-2, to=4-2]
	\arrow[two heads, from=2-5, to=4-5]
	\arrow[from=3-4, to=4-5]
	\arrow[from=3-1, to=4-2]
	\arrow[from=4-2, to=4-5]
	\arrow[from=3-1, to=3-4]
	\arrow[from=1-1, to=1-4]
	\arrow[dashed, from=1-1, to=2-2]
	\arrow["\slpbcorner"{anchor=center, pos=0.125}, draw=none, from=3-1, to=4-5]
	\arrow["\slpbcorner"{anchor=center, pos=0.125}, draw=none, from=1-1, to=2-5]
	\arrow[two heads, from=2-2, to=4-2, crossing over]
	\arrow[from=1-4, to=3-4, crossing over]	
	\arrow[from=2-2, to=2-5, crossing over]
\end{tikzcd}
\end{equation}
shows that the induced map $\pi_\infty : E_\infty \to B_\infty$ is a cocartesian fibration, too, by \cref{prop:cocart-fun-pb}.

Finally, coming back to the sequential limit of squares, it is clear that the mediating squares
\[\begin{tikzcd}
	{E_\infty} && {E_k} \\
	{B_\infty} && {B_k}
	\arrow["{\pi_\infty}"', two heads, from=1-1, to=2-1]
	\arrow["{\psi_k}"', from=2-1, to=2-3]
	\arrow["{\varphi_k}", from=1-1, to=1-3]
	\arrow["{\pi_k}", two heads, from=1-3, to=2-3]
\end{tikzcd}\]
consisting of evaluations are cocartesian functors.
\end{proof}

\begin{prop}[Cocartesian universality of sequential limits]\label{prop:cocart-fun-seqlim-ump}
Given an inverse diagram of cocartesian fibrations
\[\begin{tikzcd}
	{E_{n+1}} && {E_n} \\
	{B_{n+1}} && {B_n}
	\arrow["{\pi_{n+1}}"', from=1-1, to=2-1, two heads]
	\arrow["{g_n}", from=1-1, to=1-3]
	\arrow["{f_n}"', from=2-1, to=2-3]
	\arrow["{\pi_n}", from=1-3, to=2-3, two heads]
\end{tikzcd}\]
and a cone
\[\begin{tikzcd}
	F && {E_{n+1}} && {E_n} \\
	A && {B_{n+1}} && {B_n}
	\arrow["{\pi_{n+1}}"', two heads, from=1-3, to=2-3]
	\arrow["{g_n}", from=1-3, to=1-5]
	\arrow["{f_n}"', from=2-3, to=2-5]
	\arrow["{\pi_n}", two heads, from=1-5, to=2-5]
	\arrow["\xi"', two heads, from=1-1, to=2-1]
	\arrow["{\varphi_n}"{description}, curve={height=-24pt}, from=1-1, to=1-5]
	\arrow["{k_n}"{description}, curve={height=24pt}, from=2-1, to=2-5]
	\arrow["{\varphi_{n+1}}", from=1-1, to=1-3]
	\arrow["{k_{n+1}}"', from=2-1, to=2-3]
\end{tikzcd}\]
lying above, there exists uniquely up to homotopy a cocartesian functor
\[\begin{tikzcd}
	F && {E_\infty} \\
	A && {B_\infty}
	\arrow["\xi"', two heads, from=1-1, to=2-1]
	\arrow["{\lim_{i:I} \varphi_i}", from=1-1, to=1-3]
	\arrow["{\lim_{i:I} k_i}"', from=2-1, to=2-3]
	\arrow["{\pi_\infty}", two heads, from=1-3, to=2-3]
\end{tikzcd}\]
together with homotopies as indicated in the following diagram:
	\[\begin{tikzcd}
		F && {E_{n+1}} && {E_n} \\
		& {E_\infty} \\
		A && {B_{n+1}} && {B_n} \\
		& {B_\infty}
		\arrow["{\varphi_{n+1}}"{description, pos=0.7}, from=1-1, to=1-3]
		\arrow["{g_{n}}"{description}, from=1-3, to=1-5]
		\arrow["\xi"{description}, two heads, from=1-1, to=3-1]
		\arrow["{\pi_n}"{description}, two heads, from=1-5, to=3-5]
		\arrow["{f_n}"{description}, from=3-3, to=3-5]
		\arrow["{\lim_{i:I} \varphi_i}"{description}, dashed, from=1-1, to=2-2]
		\arrow[from=2-2, to=1-3]
		\arrow["{\lim_{i:I}k_i}"{description}, dashed, from=3-1, to=4-2]
		\arrow[from=4-2, to=3-3]
		\arrow[from=4-2, to=3-5]
		\arrow["{\varphi_{n}}"{description, pos=0.7}, curve={height=-24pt}, from=1-1, to=1-5]
		\arrow["{k_n}"{description, pos=0.7}, curve={height=-24pt}, from=3-1, to=3-5]
		\arrow["{k_{n+1}}"{description, pos=0.7}, from=3-1, to=3-3]
		\arrow["{\pi_{n+1}}"{description}, two heads, from=1-3, to=3-3, crossing over]
		\arrow["{\pi_\infty}"{description, pos=0.7}, shift right=1, two heads, from=2-2, to=4-2,crossing over]
		\arrow[from=2-2, to=1-5, crossing over]
	\end{tikzcd}\]
\end{prop}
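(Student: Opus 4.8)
The plan is to first invoke the universal property of the sequential limit at the level of \emph{types} to obtain the underlying mediating functor, and then to verify separately that this functor is cocartesian, reusing the explicit level-wise description of cocartesian lifts established in the proof of \cref{prop:cocart-fun-seqlim}.

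First I would recall that $E_\infty$ and $B_\infty$ are sequential limits of types, presented as the iterated pullbacks appearing in the proof of \cref{prop:cocart-fun-seqlim}. By hypothesis the cone data $(\varphi_n)_{n:\mathbb N}$ into the $E_n$ and $(k_n)_{n:\mathbb N}$ into the $B_n$ are compatible with the connecting maps $g_n$ and $f_n$, so the universal property of these (homotopy) limits yields mediating maps $\lim_i \varphi_i : F \to E_\infty$ and $\lim_i k_i : A \to B_\infty$, each determined uniquely up to homotopy, with $\mathrm{pr}_n \circ (\lim_i \varphi_i) = \varphi_n$ and $\mathrm{pr}_n \circ (\lim_i k_i) = k_n$. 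Since $\pi_\infty$ is by construction compatible with the projections, the pair $\Psi \defeq \pair{\lim_i k_i}{\lim_i \varphi_i}$ commutes with $\xi$ and $\pi_\infty$, hence defines a fibered functor $\xi \xbigtoto[]{} \pi_\infty$ together with the homotopies depicted in the final diagram.

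The heart of the argument is to show that $\Psi$ is cocartesian. Writing $R \defeq \St_A(\xi)$, let $f$ be an $R$-cocartesian arrow in $F$ over an arrow $v$ in $A$. From \cref{prop:cocart-fun-seqlim}---more precisely, from the explicit level-wise formula $(P_\infty)_!(u_\infty,e_\infty) = \angled{(P_n)_!(u_n,e_n)}_n$ for cocartesian lifts in $E_\infty$ obtained there---together with uniqueness of cocartesian lifts over the Rezk bases $B_n$ (\cref{prop:cocart-lifts-unique-in-isoinner-fams}), one extracts the detection criterion that an arrow in $E_\infty$ is cocartesian precisely when each of its projections $\mathrm{pr}_n$ to $E_n$ is cocartesian: any level-wise cocartesian arrow agrees component-wise, hence globally up to homotopy, with the lift, and conversely. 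Now the $n$-th projection of $\Psi(f)$ is $\varphi_n(f)$, which is cocartesian because $\pair{k_n}{\varphi_n}$ is a cocartesian functor. Thus $\Psi(f)$ is level-wise cocartesian and therefore cocartesian in $E_\infty$, so $\Psi$ preserves cocartesian arrows and is the desired cocartesian functor.

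Finally, existence and uniqueness are packaged together: since $\isCocartFun$ is a proposition, the type of cocartesian functors $\xi \xbigtoto[]{} \pi_\infty$ compatible with the cone is a subtype of the type of fibered functors compatible with the cone, and the latter is contractible by the universal property of the sequential limit of types. Having exhibited the point $\Psi$, this subtype is contractible as well, yielding the claimed up-to-homotopy unique mediating cocartesian functor. The main obstacle I anticipate is making the level-wise detection of cocartesian arrows in $E_\infty$ fully precise---namely, assembling the fiberwise uniqueness homotopies at each stage $n$ into a single coherent identification with the limit lift---and this is exactly the step where completeness (Rezk-ness) of the bases and fibers is genuinely used.
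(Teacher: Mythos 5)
Your proposal is correct in outline, but it takes a genuinely different route from the paper. The paper's proof is a two-line reduction: the type of cones over the inverse diagram of squares is equivalent to the type of cones over the cube~\eqref{cd:seqlim-pb} exhibiting $\pi_\infty$ as a pullback of the even/odd product fibrations, whence the claim follows at once from the cocartesian universality of pullback cones, \cref{prop:cocart-fun-pb-ump} --- no fresh analysis of cocartesian arrows in $E_\infty$ is needed. You instead argue directly at the sequential limit: the type-level limit universal property for existence of the fibered functor, a level-wise detection criterion for cocartesianness of arrows in $E_\infty$, and propositionality of being a cocartesian functor to convert an inhabited subtype of a contractible type into a contractible one for uniqueness; that last packaging is clean and correct.

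The step you flag as the anticipated obstacle --- assembling the stage-wise identifications of $\varphi_n(f)$ with the $n$-th cocartesian lift into a single identification in $E_\infty$ --- is indeed where the real content of your route sits, and as written it is only asserted (``agrees component-wise, hence globally''). It can be closed without any infinite coherence bookkeeping: for fixed $u$ and $e$, the type of arrows over $u$ out of $e$ in $E_\infty$ whose projections are all cocartesian is the sequential limit of the types of cocartesian lifts of $u_n$ at $e_n$, each of which is contractible by \cref{prop:cocart-lifts-unique-in-isoinner-fams}; a sequential limit of contractible types is contractible (a contractible $\Pi$-type together with contractible path conditions), and this limit contains the actual lift of $u$ at $e$ because the projections are cocartesian functors by \cref{prop:cocart-fun-seqlim}. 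Hence any level-wise cocartesian arrow is identified with that lift and is therefore itself cocartesian, by transporting the proposition of being a cocartesian arrow along the identification. Note that what this argument genuinely uses is contractibility of the type of cocartesian lifts (through which Rezk-ness of the bases enters), not completeness in any further way. The trade-off between the two approaches: the paper's reduction off-loads all infinitary content onto the finite pullback presentation and reuses an existing result, while your route, once the above repair is inserted, additionally yields the useful by-product that cocartesian arrows in a sequential limit are detected level-wise.
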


\begin{proof}
The type of cones over a fixed inverse diagram of squares is equivalent to the type of cones over the ensuing cube~\eqref{cd:seqlim-pb}. Then the claim follows by \cref{prop:cocart-fun-pb-ump}.
\end{proof}

\begin{prop}[Cocartesian functors are closed under dependent products]\label{prop:cocart-fun-dep-prod}
For a type $I:\UU$, suppose there are families $A,B: I \to \UU$ with families of families $P,Q: \prod_{i:I} B_i \to \UU$ such that $P_i:B_i\to \UU$ and $Q_i:A_i \to \UU$ are cocartesian for each $i:I$. Then, writing $\xi_i: F_i \to A_i$ and $\pi_i: E_i \to B_i$, resp., for the associated projections, given a family of cocartesian functors
\[\begin{tikzcd}
	{F_i} && {E_i} \\
	{A_i} && {B_i}
	\arrow["{\varphi_i}", from=1-1, to=1-3]
	\arrow["{\xi_i}"', two heads, from=1-1, to=2-1]
	\arrow["{\pi_i}", two heads, from=1-3, to=2-3]
	\arrow["{k_i}"', from=2-1, to=2-3]
\end{tikzcd}\]
the square induced by taking the dependent product
\[\begin{tikzcd}
	{\prod_i F_i} && {\prod_i E_i} \\
	{\prod_i A_i} && {\prod_i B_i}
	\arrow["{\prod_{i:I} \varphi_i}", from=1-1, to=1-3]
	\arrow["{\prod_{i:I} \xi_i}"', from=1-1, to=2-1]
	\arrow["{\prod_{i:I} k_i}"', from=2-1, to=2-3]
	\arrow["{\prod_{i:I} \pi_i}", from=1-3, to=2-3]
\end{tikzcd}\]
is a cocartesian functor.
\end{prop}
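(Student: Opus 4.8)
The plan is to reduce everything to the componentwise behaviour already established for the product of cocartesian \emph{families} in \cref{prop:cocart-fam-prod}. First I would record that the two base types $\prod_{i:I} A_i$ and $\prod_{i:I} B_i$ are again Rezk, since Rezk types are closed under dependent products (Segal types form an exponential ideal, and $\isEquiv(\idtoiso)$ is preserved by $\prod$), so the statement is well-posed. By \cref{prop:cocart-fam-prod} both $\prod_{i:I} P_i$ and $\prod_{i:I} Q_i$ are cocartesian families, and—crucially—a cocartesian lift in the product is computed pointwise, namely $\left(\prod_{i:I}P_i\right)_!(u,\sigma) \jdeq \lambda i.(P_i)_!(u(i),\sigma(i))$, and likewise for $Q$. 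Since $\prod_{i:I}\varphi_i$ is by construction a fiberwise map over $\prod_{i:I} k_i$ (with domain family $\prod_{i:I}Q_i$ and codomain family $\prod_{i:I}P_i$), by \cref{def:cocart-fun} it remains only to check that it preserves cocartesian arrows.

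The key step is the observation that a dependent arrow $f \jdeq (f_i)_{i:I}$ in the product family is cocartesian if and only if each component $f_i$ is cocartesian in $Q_i$. Indeed, unwinding $\isCocartArr$ for $\prod_{i:I}Q_i$, the relevant extension type over a $2$-simplex in $\prod_{i:I} A_i$ decomposes, since forming extension types commutes with dependent products (\cref{thm:choice}), as the dependent product over $i:I$ of the corresponding extension types for the $Q_i$; and a dependent product of types is contractible exactly when each factor is, whence the $\isContr$ witnessing $\isCocartArr$ for $f$ is logically equivalent to $\prod_{i:I}$ of the $\isContr$ witnessing $\isCocartArr$ for the $f_i$. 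For the converse implication one may alternatively invoke uniqueness of cocartesian lifts in isoinner families (\cref{prop:cocart-lifts-unique-in-isoinner-fams}) together with the pointwise lift formula, exactly as in the reasoning of \cref{prop:cocart-fun-prod}.

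Finally I would conclude as follows. Let $f \jdeq (f_i)_{i:I}$ be a cocartesian arrow in $\prod_{i:I}F_i \jdeq \totalty{\prod_{i:I}Q_i}$ lying over an arrow $(v_i)_{i:I}$ in $\prod_{i:I}A_i$. By the characterization each $f_i$ is cocartesian; since each $\varphi_i$ is a cocartesian functor, each $\varphi_i(f_i)$ is cocartesian in $E_i$ over $k_i(v_i)$. Hence the image $\left(\prod_{i:I}\varphi_i\right)(f) \jdeq (\varphi_i(f_i))_{i:I}$ is componentwise cocartesian, and by the same characterization applied to $\prod_{i:I}P_i$ it is cocartesian in $\prod_{i:I}E_i$. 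Thus $\prod_{i:I}\varphi_i$ preserves cocartesian arrows and is a cocartesian functor. The main obstacle is precisely the componentwise characterization of cocartesianness in the second paragraph: making rigorous that the contractibility witnessing the universal property of a cocartesian arrow interchanges with $\prod_{i:I}$, which rests on the fact that extension types commute with dependent products (\cref{thm:choice}) and that $\isContr$ is both preserved and reflected by $\prod$; once this is in place, the remainder is a routine reduction using function extensionality.
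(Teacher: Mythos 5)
Your overall strategy---reduce to a pointwise characterization of cocartesian arrows in the dependent product, then apply each $\varphi_i$---is a genuinely different route from the paper's, and it can be made to work, but one justification you give is false: the claim that ``a dependent product of types is contractible exactly when each factor is.'' Only the preservation direction holds (each factor contractible implies the product is, by weak function extensionality). The reflection direction fails for a general index type $I$: to deduce contractibility of a single factor $X_{i_0}$ from $\isContr\big(\prod_{i:I}X_i\big)$ one would need to exhibit $X_{i_0}$ as a retract of the product, i.e.\ to modify a given section at the single index $i_0$, which is impossible without decidable equality on $I$. It genuinely fails in the intended models: over $I \defeq \mathrm{BAut}(\unit+\unit)$, the tautological family with an added fixed point, $X(A) \defeq A + \unit$, has contractible type of sections (the only ``equivariant'' element is the added point) while no fiber is contractible. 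The same obstruction infects the decomposition argument directly: $\isCocartArr$ for $f_{i_0}$ quantifies over test data $(\sigma^{i_0},h^{i_0})$ at the single index $i_0$, and there is no way to extend such data to test data for the whole product family without again modifying a function at one index. So the ``only if'' direction of your key step (if $f$ is cocartesian then each $f_i$ is) cannot be obtained from \cref{thm:choice} plus contractibility bookkeeping.

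Fortunately, what you present as an ``alternative'' argument for that direction is in fact the necessary one, and it is correct: $\prod_{i:I}Q_i$ is isoinner over a Rezk base, so by \cref{prop:cocart-lifts-unique-in-isoinner-fams} a cocartesian $f$ over $v$ with source $\vartheta$ is identified with the chosen lift $\lambda i.(Q_i)_!(v_i,\vartheta_i)$ of \cref{prop:cocart-fam-prod}, whence each $f_i$ is identified with $(Q_i)_!(v_i,\vartheta_i)$ and is therefore cocartesian (cocartesianness being a proposition, hence invariant under identification). With that repair, your proof goes through: the forward direction of your characterization only needs the unproblematic preservation of contractibility by $\prod$. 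For comparison, the paper's proof bypasses the characterization entirely: it checks that $\prod_i\varphi_i$ carries the chosen lift $\lambda i.(Q_i)_!(\sigma_i,\vartheta_i)$ to $\lambda i.(P_i)_!(k_i\sigma_i,\varphi_i\vartheta_i)$, which is the chosen lift of the image, using only the pointwise lift formula and cocartesianness of each $\varphi_i$; this suffices since, again by uniqueness of lifts, every cocartesian arrow is identified with a chosen one. Your route is longer but yields a reusable lemma (cocartesianness in a dependent product is detected pointwise), provided the harder direction is argued via uniqueness as above.
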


\begin{proof}
	Recall that cocartesian lifts in the product families are computed pointwise. For an arrow $\sigma: \alpha \Rightarrow \alpha'$ of sections in $\prod_{i:I} A_i$ and a section $\vartheta:\prod_{i:I} \alpha^* P_i$ lying over we obtain
	\begin{align*}
		\big( \prod_{i:I} \varphi_i\big) \big( (\prod_{i:I} \xi_i)_!(\sigma,\vartheta) \big)
    &= \lambda i. \varphi_i(Q_i)_!(\sigma_i,\vartheta_i)
		 = \lambda i.(P_i)_!(k_i \sigma_i, \varphi_i \vartheta_i))\\
    &= \big( \prod_{i:I} \pi_i\big)_!(\kappa \sigma, \varphi \vartheta ).
    \qquad\qquad\text{($\varphi$ cocart.)}\qedhere
	\end{align*}
\end{proof}

\begin{prop}[Cocartesian fibrations are cotensored over maps/shape inclusions]\label{prop:cocart-fun-cotensor-maps}
	Let $P:B \to \UU$ be a cocartesian family with associated projection $\pi:E \fibarr B$. For any type map or shape inclusion $j:Y \to X$, the maps $\pi^X$ and $\pi^Y$ are cocartesian fibrations, and moreover the square
	\[\begin{tikzcd}
		{E^X} && {E^Y} \\
		{B^X} && {B^Y}
		\arrow["\pi^X", swap, two heads, from=1-1, to=2-1]
		\arrow["B^j", from=2-1, to=2-3, swap]
		\arrow["E^j", from=1-1, to=1-3]
		\arrow["\pi^Y",two heads, from=1-3, to=2-3]
	\end{tikzcd}\]
is a cocartesian functor.
\end{prop}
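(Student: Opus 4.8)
The plan is to obtain both halves of the statement from the treatment of dependent products in \Cref{prop:cocart-fam-prod}, of which exponentiation is the special case isolated in \Cref{cor:cocart-fam-closed-exp}. The first half is then immediate: since $P$ is cocartesian over the Rezk type $B$, \Cref{cor:cocart-fam-closed-exp} gives that $P^X$ and $P^Y$ are cocartesian families, with associated projections $\pi^X$ and $\pi^Y$. It remains to verify that the commuting square $\pi^Y \circ E^j = B^j \circ \pi^X$, regarded as the fibered functor $(B^j, E^j)$ whose fiberwise action is restriction along $j$, satisfies the preservation condition of \Cref{def:cocart-fun}.

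The crux is a pointwise description of cocartesian arrows in an exponential family, read off from the lift formula in \Cref{prop:cocart-fam-prod}. Specializing the index type to $X$ and the families to the constant choices $B_i \defeq B$, $P_i \defeq P$, I would first use the axiom of choice for extension types (\Cref{thm:choice}) to identify the fibre of $\pi^X$ over a diagram $\alpha \colon X \to B$ with $\prod_{x:X} P(\alpha\,x)$, and an arrow $u \colon \alpha \to \alpha'$ in $B^X$ with a natural family of arrows $u(x) \colon \alpha(x) \to \alpha'(x)$. Then the cocartesian lift of such a $u$ starting at a section $\sigma \colon \prod_{x:X} P(\alpha\,x)$ is computed pointwise, $(P^X)_!(u,\sigma) = \lambda x.\,P_!(u(x),\sigma(x))$.

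By uniqueness of cocartesian lifts over Rezk types (\Cref{prop:cocart-lifts-unique-in-isoinner-fams}), every cocartesian arrow of $P^X$ agrees up to homotopy with such a pointwise lift; this is the characterization that makes preservation transparent. Restriction along $j$ now carries pointwise lifts to pointwise lifts, since $E^j\bigl((P^X)_!(u,\sigma)\bigr) = \lambda y.\,P_!(u(j\,y),\sigma(j\,y))$, which is exactly the $P^Y$-cocartesian lift $(P^Y)_!(u \circ j,\sigma \circ j)$ of $B^j(u)$ at the restricted starting point $\sigma \circ j$---indeed the two agree strictly. Hence $E^j$ takes canonical cocartesian lifts to canonical cocartesian lifts, so by the preceding characterization it preserves all cocartesian arrows, and $(B^j, E^j)$ is a cocartesian functor.

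The main obstacle is bookkeeping rather than conceptual. Making the pointwise reading precise requires transposing freely between $\Delta^1 \to E^X$, $X \to E^{\Delta^1}$, and $\Delta^1 \times X \to E$, and checking that the equivalences supplied by \Cref{thm:choice} are compatible both with the cocartesian universal property and with the restriction map $E^j$. Once the pointwise characterization of cocartesian arrows in an exponential family is established, the preservation statement for $E^j$ follows with no further work.
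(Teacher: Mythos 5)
Your proof is correct and takes essentially the same route as the paper's: both deduce cocartesianness of $\pi^X$ and $\pi^Y$ from the dependent-product result (\cref{prop:cocart-fam-prod}, via \cref{cor:cocart-fam-closed-exp}), read off the pointwise formula for cocartesian lifts in the exponentials, and observe that restriction along $j$ carries these lifts to each other. The only presentational difference is that the paper strictifies $E^X$ relative to $E^Y$ so that $E^j$ becomes a projection (gluing the $X$-lift to the $Y$-lift by a $2$-dimensional lift), whereas you treat the two exponentials separately and pass from canonical lifts to arbitrary cocartesian arrows via uniqueness (\cref{prop:cocart-lifts-unique-in-isoinner-fams})---a step the paper leaves as ``readily verified''.
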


\begin{proof}
By \cref{prop:cocart-fam-prod}, $P^X$ and $P^Y$ are cocartesian families with associated projections $\pi^X$ and $\pi^Y$, resp. Furthermore, said proposition tells us that cocartesian lifts in exponentials are computed pointwise. Invoking fibrant replacement and the axiom of choice, this gives the following.

Lifts in the cocartesian fibration
\[  E^Y \equiv \sum_{u:Y \to B} \prod_{y:Y} P(u(y)) \xfib{\pi^Y} B^Y \]
are given by
\[ (P^Y)_! \Big(\alpha:u\to u', f:\prod_Y u^*P \Big) :\jdeq \lambda y.P_!(\alpha_y:u_y \to u_y', f_y).\]
Lifts in the cocartesian fibration
\[ E^X \equiv \sum_{\substack{u:Y \to B \\ v: X \to B}} \sum_{p:j^*v=u} \sum_{f:\prod_Y u^*P} \sum_{g:\prod_X v^*P} j^*g=_p f \xfib{\pi^X} \sum_{\substack{u:Y \to B \\ v:X \to B}} j^*v=u \equiv B^X \]
``sitting above'' are given by
\begin{align*} & (P_!)^X\Big(\alpha:u\to u', f:\prod_Y u^*P, \beta:v \to v', g:\prod_X v^*P, \varphi:j^*\beta=\alpha, r:j^*g =_{\varphi_0} f\Big) \\
:\jdeq & \left \langle P_!^Y(\alpha,f), \lambda x.P_!(\beta_x,g_x), \lambda y.P_!^2(\varphi_y, r_y) \right \rangle
\end{align*}
where the $2$-dimensional lift of $\varphi$ w.r.t.~$r$ is constructed as follows:
\[\begin{tikzcd}
	&&& {j^*g} && f \\
	{E^Y} &&& \cdot && \cdot \\
	&&& {j^*v} && u \\
	{B^Y} &&& {j^*v'} && {u'}
	\arrow[""{name=0, anchor=center, inner sep=0}, from=3-4, to=3-6,equals]
	\arrow["{j^*\beta}"', from=3-4, to=4-4]
	\arrow["\alpha", from=3-6, to=4-6]
	\arrow[""{name=1, anchor=center, inner sep=0}, from=4-4, to=4-6]
	\arrow["{(P^Y)_!(j^*\beta,j^*g)}"', from=1-4, to=2-4, cocart]
	\arrow[""{name=2, anchor=center, inner sep=0}, dashed, from=2-4, to=2-6, equals]
	\arrow[""{name=3, anchor=center, inner sep=0}, "r", from=1-4, to=1-6, equals]
	\arrow["{(P^Y)_!(\alpha,f)}", from=1-6, to=2-6, cocart]
	\arrow[two heads, from=2-1, to=4-1]
	\arrow["\varphi"{description}, shorten <=4pt, shorten >=4pt, Rightarrow, from=0, to=1]
	\arrow["{(P^Y)_!^2(\varphi,r)}"{description}, shorten <=4pt, shorten >=4pt, Rightarrow, dashed, from=3, to=2]
\end{tikzcd}\]
Since the functor $E^j$ maps tuples $\langle u,v,p,f,g, r \rangle$ in $E^X$ to pairs $\langle u,f \rangle$ in $E^Y$ one can readily verify that it is cocartesian, using the computations above.
\end{proof}

\begin{prop}[Cocartesian functors are closed under Leibniz cotensors]\label{prop:cocart-fun-leibniz}
	Let $j: Y \to X$ be a type map or shape inclusion. Then, given cocartesian fibrations $\pi: E \fibarr B$, $\xi: F \fibarr A$, and a cocartesian functor
	\[\begin{tikzcd}
		F && E \\
		A && B
		\arrow["\xi"', two heads, from=1-1, to=2-1]
		\arrow["k"', from=2-1, to=2-3]
		\arrow["\psi", from=1-1, to=1-3]
		\arrow["\pi", two heads, from=1-3, to=2-3]
	\end{tikzcd}\]
the square induced between the Leibniz cotensors
	\[\begin{tikzcd}
		{F^X} & {F^Y\times_{E^Y} E^X} \\
		{A^X} & {A^Y\times_{B^Y} B^X}
		\arrow["\xi^X", two heads, from=1-1, to=2-1, swap]
		\arrow["j \cotens \psi", from=1-1, to=1-2]
		\arrow["j \cotens k", from=2-1, to=2-2, swap]
		\arrow["\xi^Y \times_{\pi^Y} \pi^X", two heads, from=1-2, to=2-2]
	\end{tikzcd}\]
is a cocartesian functor.
\end{prop}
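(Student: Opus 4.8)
The plan is to realise the square as an assembly of constructions that are already known to preserve the property of being a cocartesian functor, namely cotensoring by a shape (exponentiation) and pullback, and then to detect the cocartesianness of the induced gap map \emph{componentwise}. The whole argument rests on the fact that cocartesian lifts are computed pointwise in exponentials, by \cref{prop:cocart-fam-prod}, and componentwise in pullbacks, by the explicit formula in \cref{prop:cocart-fun-pb}, together with uniqueness of cocartesian lifts over Rezk bases (\cref{prop:cocart-lifts-unique-in-isoinner-fams}), which upgrades ``preservation'' of lifts to a two-sided detection criterion.

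First I would assemble the pieces obtained by cotensoring the given data. By \cref{cor:cocart-fam-closed-exp} the exponentials $\pi^X, \pi^Y, \xi^X, \xi^Y$ are cocartesian fibrations. By \cref{prop:cocart-fun-cotensor-maps} the restriction squares $E^j$ over $B^j$ and $F^j$ over $A^j$ are cocartesian functors, and applying \cref{prop:cocart-fun-dep-prod} with index type $X$ (resp.\ $Y$) and the constant family of cocartesian functors equal to $\psi$ shows that $\psi^X$ over $k^X$ and $\psi^Y$ over $k^Y$ are cocartesian functors. The left vertical map $\xi^X \colon F^X \fibarr A^X$ is then a cocartesian fibration by \cref{cor:cocart-fam-closed-exp}. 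For the right vertical map, I would exhibit the Leibniz cotensor codomain as the evident fibered pullback of $\pi^X$ and $\xi^Y$ over $\pi^Y$ and instantiate the cube of \cref{prop:cocart-fun-pb} with the three cocartesian fibrations $\pi^Y$ (over $B^Y$), $\pi^X$ (over $B^X$, with cocartesian functor $\pair{E^j}{B^j}$) and $\xi^Y$ (over $A^Y$, with cocartesian functor $\pair{\psi^Y}{k^Y}$). That proposition yields at once that the mediating map $\pi^X \times_{\pi^Y} \xi^Y$ is a cocartesian fibration and that the two projection squares out of the pullback are cocartesian functors, with cocartesian lifts computed componentwise.

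It then remains to show that the horizontal gap map $j \cotens \psi$ preserves cocartesian arrows of $\xi^X$. By functoriality of exponentiation and naturality of $\psi$ (which gives $E^j \circ \psi^X = \psi^Y \circ F^j$), this gap map is exactly the pairing $\pair{\psi^X}{F^j}$ into the codomain pullback, so its composites with the two projections of that pullback are precisely $\psi^X$ and $F^j$, both cocartesian functors by the first step. Given a $\xi^X$-cocartesian arrow $\gamma$ in $F^X$, its images $\psi^X(\gamma)$ and $F^j(\gamma)$ are therefore cocartesian in $\pi^X$ and $\xi^Y$ respectively; since an arrow of the pullback fibration is cocartesian exactly when both of its projections are — this is the componentwise lift formula of \cref{prop:cocart-fun-pb} combined with uniqueness of lifts (\cref{prop:cocart-lifts-unique-in-isoinner-fams}) — it follows that $(j \cotens \psi)(\gamma)$ is cocartesian. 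Hence $j \cotens \psi$ is a cocartesian functor, which is the assertion.

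I expect the main obstacle to be this last detection step: justifying that cocartesianness in the codomain pullback is genuinely equivalent to cocartesianness of both projections, and matching the componentwise lift formula against the pointwise description of lifts in the exponentials, so that the single mediating map $\pair{\psi^X}{F^j}$ is seen to have cocartesian projections rather than merely projections that agree up to the ambient coherences. The other point demanding care is purely bookkeeping: setting up the cube of \cref{prop:cocart-fun-pb} with the correct orientation of the defining pullback, so that the Leibniz cotensor codomain really appears as a fibered pullback of cocartesian fibrations over the intended base, and so that the projection functors land on the factors $\pi^X$ and $\xi^Y$ as required.
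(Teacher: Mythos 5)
Your proof is correct, and its skeleton is the paper's: assemble the cotensored data via \cref{prop:cocart-fun-cotensor-maps} (plus \cref{cor:cocart-fam-closed-exp} and \cref{prop:cocart-fun-dep-prod} for the pieces involving $\psi$), realize the Leibniz cotensor codomain as the pullback of $\xi^Y$ and $\pi^X$ over $\pi^Y$, and feed this cube into \cref{prop:cocart-fun-pb}. Where you diverge is the final step. The paper simply cites \cref{prop:cocart-fun-pb-ump}: the gap map $\pair{\psi^X}{F^j}$ is a cone of cocartesian functors over the cube, so by the (homotopy-unique) cocartesian universality of pullback cones it \emph{is} the induced cocartesian functor, and nothing more needs to be checked. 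You instead re-derive this instance by a two-sided detection criterion: an arrow of the pullback fibration is cocartesian iff both projections are. That criterion is true, and your argument for it can be completed, but it is slightly more delicate than ``componentwise lift formula plus \cref{prop:cocart-lifts-unique-in-isoinner-fams}'' suggests: identifying your arrow $\gamma$ with the componentwise-built cocartesian lift requires not just an identification in each leg but a coherence between the two induced identifications of the middle components in $E^Y$. This does follow---being cocartesian is a proposition, so the type of cocartesian lifts of a fixed base arrow at a fixed source is contractible once inhabited, making the relevant identification type contractible and the coherence automatic---but it is exactly the bookkeeping you flagged as the main obstacle, and it is exactly what \cref{prop:cocart-fun-pb-ump} was proved in order to package away. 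So: your route buys a reusable detection lemma for cocartesianness in pullbacks (mildly stronger than what is strictly needed here), at the cost of redoing coherence work; the paper's route buys a two-line proof by leaning on the universal property already established.
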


\begin{proof}
This follows from using \cref{prop:cocart-fun-cotensor-maps} and then applying the universal property \cref{prop:cocart-fun-pb-ump}.
\end{proof}

To sum up the results from \cref{ssec:cocart-clos} and the ones above, our synthetic cocartesian families satisfy stability properties analogous to those of an $\infty$-cosmos, \cf~\cite[Definition~1.2.1]{RV}:

\begin{prop}[Cosmological closure properties of cocartesian families]\label{prop:cocart-cosm-closure}
	\leavevmode
	\begin{enumerate}
		\item Over Rezk bases, it holds that:
		\begin{itemize}
			\item[] Cocartesian families are closed under composition, dependent products, pullback along arbitrary maps, and cotensoring with maps/shape inclusions. Families corresponding to equivalences or terminal projections are always cocartesian.
		\end{itemize}
		\item Between cocartesian families over Rezk bases, it holds that:
		\begin{itemize}
			\item[] Cocartesian functors are closed under (both horizontal and vertical) composition, dependent products, pullback, sequential limits,\footnote{Here, all three objectwise limit notions are meant to satisfy the expected universal properties \emph{\wrt~to cocartesian functors}.} and Leibniz cotensors.
			\item[] Fibered equivalences and fibered functors into the identity of $\unit$ are always cocartesian.
		\end{itemize}
	\end{enumerate}
\end{prop}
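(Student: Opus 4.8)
The statement is a summary proposition: it collects the closure properties established throughout Sections~\ref{sec:closure} and~\ref{ssec:cocart-clos}, so the plan is simply to assemble the individual results into the cosmological package, verifying that each claimed closure property has already been proven. I would organize the proof as a two-part bookkeeping argument, first handling closure of cocartesian \emph{families} (objects) and then closure of cocartesian \emph{functors} (morphisms), in each case citing the relevant earlier result.

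For the object-level closure properties, the plan is to invoke the Chevalley criterion \cref{thm:cocart-fams-intl-char}, which exhibits cocartesian families over Rezk types as precisely the isoinner $i_0$-LARI families for $i_0:\unit\to\Delta^1$. Once phrased this way, each closure property reduces to the corresponding $j$-LARI closure result from \cref{ssec:lari-closed}: closure under composition follows from \cref{prop:lari-maps-closed-comp} (recorded as \cref{prop:cocart-fam-comp}), closure under dependent products from \cref{prop:lari-maps-closed-pi} (as \cref{prop:cocart-fam-prod}), and closure under pullback along arbitrary maps from \cref{prop:lari-maps-closed-pb} (as \cref{prop:cocart-fam-pb}). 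Cotensoring with maps or shape inclusions is the special case of dependent products given in \cref{cor:cocart-fam-closed-exp}. For the degenerate cases: families corresponding to equivalences are isoinner by the remark following \cref{prop:char-rezk} and are $i_0$-LARI by \cref{cpr:orth-fam-lari-fam}; the terminal (unit) projection is cocartesian by \cref{cor:cocart-arr-idfam} and the identity-family proposition. In each case I must also check that the isoinnerness hypothesis persists under the operation, which follows from the isoinner closure properties already collected at the end of \cref{sec:isoinner-fams}.

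For the morphism-level closure properties, the plan is to cite directly the sequence of propositions proven in \cref{ssec:cocart-fun}: horizontal and vertical composition from \cref{prop:comp-cocart-fun}, closure under dependent products from \cref{prop:cocart-fun-dep-prod}, closure under pullback from \cref{prop:pb-sq-cocart} together with the universal property \cref{prop:cocart-fun-pb-ump}, closure under sequential limits from \cref{prop:cocart-fun-seqlim} (with universality \cref{prop:cocart-fun-seqlim-ump}), and closure under Leibniz cotensors from \cref{prop:cocart-fun-leibniz}. The final assertions---that fibered equivalences and fibered functors into the identity of $\unit$ are cocartesian---reduce to the observation that cocartesian lifts over $\unit$ are identities (so any fiberwise map trivially preserves them) and that an equivalence of fibers carries cocartesian arrows to cocartesian arrows by homotopy invariance of the cocartesian condition, which is a proposition.

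The main obstacle is not any single hard computation but the verification that the universal-property phrasings match up coherently: since the paper works without categorical universes, the ``limit'' and ``cotensor'' notions appearing in the statement are spelled out via the explicit universal properties in \cref{prop:cocart-fun-prod-ump,prop:cocart-fun-pb-ump,prop:cocart-fun-seqlim-ump} rather than as genuine $\inftyone$-categorical limits. Thus the delicate point is to state the conclusion carefully enough that ``closed under sequential limits, satisfying the expected universal properties w.r.t.~cocartesian functors'' is justified by exactly those universality propositions, and to ensure the pointwise computation of cocartesian lifts (used uniformly across the proofs) is consistent between the object-level and morphism-level statements. Since every required ingredient has been established, the proof itself is a short cross-reference; I would present it as such rather than reproving anything.
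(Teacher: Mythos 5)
Your proposal is correct and matches the paper's approach exactly: the paper's own proof is a one-line cross-reference to the preceding results (\cref{ssec:cocart-clos}, the propositions of \cref{ssec:cocart-fun}, and \cref{prop:cocart-fun-fib-equiv}), which is precisely the bookkeeping assembly you describe. Your more detailed citation trail (Chevalley criterion plus the $j$-LARI closure results for the family-level claims, and the individual functor-level propositions for the morphism-level claims) is an accurate, and indeed more explicit, account of what the paper leaves implicit.
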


\begin{proof}
	Cf.~the previous statements, as well as \cref{ssec:cocart-clos}, and \cref{prop:cocart-fun-fib-equiv}.
\end{proof}

We end the section with yet another useful stability property.

\begin{prop}[Base change of cocartesian functors, {\protect\cite[Exercise~5.3.i]{RV}}]\label{prop:cocart-fun-basechange}
	Let $A,B$ be a Rezk type and $j:A \to B$ be a map.

	Consider cocartesian families $P,P':B \to \UU$ and a cocartesian functor $\varphi:\CocartFun_B(P,P')$. Then the mediating map $\psi \jdeq \lambda a,e.\pair{ja}{e}:\CocartFun_A(j^*P,j^*P')$ is also a cocartesian functor.
We write $\pi:E \fibarr B$, $\pi':E' \fibarr B$ for the fibrations corresponding to $P$, $P'$, and similarly for the pullback fibrations:
\[\begin{tikzcd}
	{F} && {E} \\
	& {F'} && {E'} \\
	& {A} && {B}
	\arrow["{\xi'}", from=2-2, to=3-2, two heads]
\arrow["{\pi'}", from=2-4, to=3-4, two heads]
\arrow["{\widetilde{\psi}}"', swap, pos=0.8, from=1-1, to=2-2, dashed]
\arrow["{\xi}"', from=1-1, to=3-2, curve={height=6pt}, two heads]
\arrow["{\widetilde{\varphi}}", from=1-3, to=2-4]
\arrow["{\pi}"', from=1-3, to=3-4, curve={height=6pt}, two heads]
\arrow["\lrcorner"{very near start, rotate=0}, pos=0.125, from=2-2, to=3-4, shift right=2, phantom]
\arrow["\lrcorner"{very near start, rotate=0},pos=0.125, from=1-1, to=3-4, shift left=1, phantom]
\arrow["{j}"', from=3-2, to=3-4]
\arrow[from=2-2, to=2-4, crossing over]
\arrow[from=1-1, to=1-3, crossing over]
\end{tikzcd}\]
\end{prop}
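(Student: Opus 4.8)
The plan is to mimic the proof of \cref{prop:pb-sq-cocart}, replacing the bare pullback square by the base change of the cocartesian functor $\varphi$, and to feed in the naturality of cocartesian lifts from \cref{cor:nat-cocartlift-pt}. First I would record the two facts supplied by \cref{prop:cocart-fam-pb}: the pulled-back families $j^*P$ and $j^*P'$ are again cocartesian over $A$, and their cocartesian lifts are computed by $(j^*P)_!(u,e) \defeq P_!(ju,e)$ and $(j^*P')_!(u,d) \defeq (P')_!(ju,d)$ for $u:\hom_A(a,a')$. Since $F \equiv \sum_{a:A} P(ja)$ and $F' \equiv \sum_{a:A} P'(ja)$, the fibered functor $\psi$ is the base change of $\varphi$, acting fiberwise as $\psi_a \defeq \varphi_{ja} : P(ja) \to P'(ja)$ and hence on a dependent arrow $f$ over $u$ by $\psi_u(f) = \varphi_{ju}(f)$, the action of $\varphi$ on the arrow $ju$ in the base $B$.

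The core of the argument is then a single computation on the designated lifts. By uniqueness of cocartesian lifts in isoinner families over Rezk types (\cref{prop:cocart-lifts-unique-in-isoinner-fams}), any $(j^*P)$-cocartesian arrow over $u$ with source $e:P(ja)$ is identified with $(j^*P)_!(u,e)$, so it suffices to check that $\psi$ sends this one to a $(j^*P')$-cocartesian arrow. Unfolding the definitions gives $\psi_u\big((j^*P)_!(u,e)\big) = \varphi_{ju}\big(P_!(ju,e)\big)$, and since $\varphi$ is a cocartesian functor over the common base $B$, naturality of cocartesian lifts (\cref{prop:nat-cocartlift-arr}, the arrow-level form of \cref{cor:nat-cocartlift-pt}) supplies an identification $\varphi_{ju}\big(P_!(ju,e)\big) = (P')_!(ju, \varphi_{ja}(e))$. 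But the right-hand side is precisely $(j^*P')_!(u, \psi_a(e))$ by the lift formula above, hence a $(j^*P')$-cocartesian arrow by construction. As $\isCocartArr$ is a proposition stable under identification of arrows, $\psi_u(f)$ is cocartesian for every cocartesian $f$, which is exactly $\isCocartFun_{j^*P,\,j^*P'}(\psi)$.

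The main obstacle I anticipate is purely bookkeeping rather than conceptual: one must keep straight the two different cocartesianness conditions---that of $P'$ for arrows over $ju$ in $B$, and that of $j^*P'$ for arrows over $u$ in $A$---and confirm that the identification of underlying dependent arrows transports the former to the latter. This is exactly what the definitional formula $(j^*P')_!(u,d) \defeq (P')_!(ju,d)$ from \cref{prop:cocart-fam-pb} delivers, so no genuine reflection-of-cocartesianness lemma is needed. A secondary point is the strict-versus-homotopy coherence of the identifications, which is absorbed by working over Rezk bases where being a cocartesian family is a proposition; I would also remark that the statement is the functor-level refinement of \cref{prop:pb-sq-cocart}, and that the fibered squares in the displayed cube commute by fibrant replacement as in the surrounding proofs.
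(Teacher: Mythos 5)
Your proposal is correct and follows essentially the same route as the paper's proof: both reduce to the designated lifts, unfold $\psi_u(f) = \varphi_{ju}(f)$ and $\psi_a(e) = \varphi_{ja}(e)$, invoke the lift formula $(j^*P)_!(u,e) = P_!(ju,e)$ from \cref{prop:cocart-fam-pb}, and close the chain with the naturality identification $\varphi_{ju}(P_!(ju,e)) = P'_!(ju,\varphi_{ja}(e))$ coming from $\varphi$ being cocartesian. Your explicit appeal to \cref{prop:cocart-lifts-unique-in-isoinner-fams} and to propositionality of $\isCocartArr$ to justify checking only the designated lift is a point the paper leaves implicit, but it is the same argument.
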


\begin{proof}
	Let $v:a \to a'$ be an arrow in $A$, and $f:e \to_{jv} e'$ a dependent arrow in $P$. By definition, we have identifications:
	\begin{align}
		\psi_v(f) & = \varphi_{jv}(f) \label{eq:pb-fibw-f} \\
		\psi_a(e) & = \varphi_{ja}(e) \label{eq:pb-fibw-e}
	\end{align}
Since $\varphi$ is a cocartesian functor, there is a path
	\begin{align}
		\varphi_v(P_!(v,e)) = P_!'(v, \varphi_b e). \label{eq:pb-varphi-cocart} 
	\end{align}
The cocartesian lifts in the pullback family are given by
	\begin{align}
		Q_!'(v,e) = P_!'(jv,e). \label{eq:pb-lifts}
	\end{align}
Taken together, this gives rise to an identification
\[
	\psi_v(Q_!(v,e)) \stackrel{\ref{eq:pb-fibw-f}}{=} \varphi_{jv}(P_!(jv,e))   \stackrel{\ref{eq:pb-varphi-cocart}}{=} P_!'(jv,\varphi_{ja}(e))  \stackrel{\ref{eq:pb-fibw-e}, \ref{eq:pb-lifts}}{=} Q_!'(v,\psi_a e).\qedhere
\]
\end{proof}

\subsubsection{Characterizations of cocartesian functors}

\begin{theorem}[{\protect\cite[Theorem 5.3.4]{RV}}]\label{thm:cocart-fun-intl-char}
Let $A$ and $B$ be Rezk types, and consider cocartesian families $P:B \to \UU$ and $Q:A \to \UU$ with associated fibrations $\xi: F \fibarr  A$ and $\pi: E \fibarr B$, resp .

For a fibered functor $\Phi\defeq \pair{j}{\varphi}$ giving rise to a square
\[
	\begin{tikzcd}
		F \ar[r, "\varphi"] \ar[d, "\xi" swap] & E \ar[d, "\pi"] \\
		A \ar[r, "j" swap] & B
	\end{tikzcd}
\]
the following are equivalent:
\begin{enumerate}
	\item\label{it:cocart-fun-cocart} The fiberwise map $\Phi$ is a cocartesian functor.
	\item\label{it:cocart-fun-mate-fibered} The mate of the induced natural isomorphism, fibered over $j:A \to B$, is invertible, too:
	\[\begin{tikzcd}
		{F} && {E} & {} & {F} && {E} \\
		{\xi \downarrow A} && {\pi \downarrow B} & {} & {\xi \downarrow A} && {\pi \downarrow B}
		\arrow["{i}"', from=1-1, to=2-1]
		\arrow["{\varphi}", from=1-1, to=1-3]
		\arrow["{i'}", from=1-3, to=2-3]
		\arrow[Rightarrow, "{=}", from=2-1, to=1-3, shorten <=7pt, shorten >=7pt]
		\arrow["{\rightsquigarrow}" description, from=1-4, to=2-4, phantom, no head]
		\arrow["{\kappa}", from=2-5, to=1-5]
		\arrow["{\varphi}", from=1-5, to=1-7]
		\arrow["{\varphi \downarrow j}"', from=2-5, to=2-7]
		\arrow["{\kappa'}"', from=2-7, to=1-7]
		\arrow[Rightarrow, "{=}"', from=2-7, to=1-5, shorten <=7pt, shorten >=7pt]
		\arrow["{\varphi \downarrow j}"', from=2-1, to=2-3]
	\end{tikzcd}\]

	\item\label{it:cocart-fun-mate} The mate of the induced natural isomorphism is invertible, too:
\[\begin{tikzcd}
	{F^{\Delta^1}} && {E^{\Delta^1}} & {} & {F^{\Delta^1}} && {E^{\Delta^1}} \\
	{\xi \downarrow A} && {\pi \downarrow B} & {} & {\xi \downarrow A} && {\pi \downarrow B}
	\arrow["{r}"', from=1-1, to=2-1]
	\arrow["{\varphi \downarrow j}"', from=2-1, to=2-3]
	\arrow["{\varphi^{\Delta^1}}", from=1-1, to=1-3]
	\arrow["{r'}", from=1-3, to=2-3]
	\arrow["{\rightsquigarrow}" description, from=1-4, to=2-4, phantom, no head]
	\arrow[Rightarrow, "{=}", from=2-1, to=1-3, shorten <=7pt, shorten >=7pt]
	\arrow["{\ell}", from=2-5, to=1-5]
	\arrow["{\varphi \downarrow j}"', from=2-5, to=2-7]
	\arrow["{\varphi^{\Delta^1}}", from=1-5, to=1-7]
	\arrow["{\ell'}"', from=2-7, to=1-7]
	\arrow[Rightarrow, "{=}"', from=2-7, to=1-5, shorten <=7pt, shorten >=7pt]
\end{tikzcd}\]
\end{enumerate}

\end{theorem}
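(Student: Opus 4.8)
The plan is to reduce all three conditions to the single statement that $\varphi$ commutes with the chosen cocartesian lifts, exploiting the two adjoint characterizations of cocartesian families already established. By the Chevalley criterion (\cref{thm:cocart-fams-intl-char}), the hypothesis that $\pi$ and $\xi$ are cocartesian supplies LARIs $\ell = \chi_\xi$ and $\ell' = \chi_\pi$ to the gap maps $r = i_0 \cotens \xi$ and $r' = i_0 \cotens \pi$, where $\chi$ sends a pair $(u,d)$ to the canonical cocartesian lift arrow $(u, Q_!(u,d))$, resp.\ $(u, P_!(u,e))$. Dually, by the transport characterization (\cref{thm:cocart-fam-intl-char-fib}), the inclusions $i = \iota_\xi$ and $i' = \iota_\pi$ admit the fibered left adjoints $\kappa = \tau_\xi$ and $\kappa' = \tau_\pi$ given by directed transport $(u,d) \mapsto (\partial_1 u, u_! d)$. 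Since the two defining squares commute (the displayed $2$-cells labelled ``$=$'' are identities after strictification), in each case we are precisely in the situation of the mate calculus of \cref{app:ssec:mates}: a square of right adjoints whose mate is a canonical $2$-cell between the corresponding composites of left adjoints.

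For the equivalence $\ref{it:cocart-fun-cocart} \Leftrightarrow \ref{it:cocart-fun-mate}$ I would compute the mate of condition~\ref{it:cocart-fun-mate} pointwise. Whiskering the commuting square with the LARI units and counits, the mate at a point $(u,d) : \xi \downarrow A$ is the comparison arrow in $E^{\Delta^1}$ from $\varphi^{\Delta^1}(\ell(u,d)) = \varphi_u(Q_!(u,d))$ to $\ell'((\varphi \downarrow j)(u,d)) = P_!(ju, \varphi_a d)$, both lying over $ju$ and agreeing on the source vertex $\varphi_a d$. By \cref{prop:nat-cocartlift-arr}, if $\Phi$ is a cocartesian functor then these two dependent arrows are identified, so the mate is invertible; this is the forward direction. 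Conversely, if the mate is invertible then $\varphi_u(Q_!(u,d))$ is identified with the cocartesian arrow $P_!(ju,\varphi_a d)$, hence is itself cocartesian. Since every $Q$-cocartesian arrow over $u$ with source $d$ agrees up to a vertical isomorphism with the canonical lift $Q_!(u,d)$ by uniqueness of cocartesian lifts (\cref{prop:cocart-lifts-unique-in-isoinner-fams}), and being cocartesian is a homotopy-invariant proposition, it follows that $\varphi$ carries every cocartesian arrow to a cocartesian one, which is condition~\ref{it:cocart-fun-cocart}.

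The equivalence $\ref{it:cocart-fun-cocart} \Leftrightarrow \ref{it:cocart-fun-mate-fibered}$ proceeds along the same lines but using the transport adjunction $\kappa \dashv i$ in place of the LARI $\ell \dashv r$. Here the mate of the fibered square at $(u,d)$ is the comparison in $E$ from $\varphi(\kappa(u,d)) = \varphi_b(u_! d)$ to $\kappa'((\varphi \downarrow j)(u,d)) = (ju)_!(\varphi_a d)$, i.e.\ exactly the comparison of endpoints governed by the endpoint clause of \cref{prop:nat-cocartlift-arr} (specializing to \cref{cor:nat-cocartlift-pt} over a common base). The same two-directional argument---invertibility of this vertical comparison $2$-cell being equivalent to $\varphi$ commuting with directed transport---yields the equivalence, the reverse direction again using uniqueness of lifts to pass from the canonical transports to all cocartesian arrows. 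One may alternatively observe that the transport adjunction is obtained from the LARI adjunction by whiskering with the retraction $\partial_1$ and the constant-arrow inclusion of \cref{thm:cocart-fam-intl-char-fib}, so that $\ref{it:cocart-fun-mate-fibered} \Leftrightarrow \ref{it:cocart-fun-mate}$ also follows directly from naturality of mate formation under whiskering.

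I expect the main obstacle to be the two reverse implications, where invertibility of the abstractly-defined mate must be translated into preservation of \emph{all} cocartesian arrows rather than merely the chosen lifts; this hinges on combining uniqueness of cocartesian lifts (\cref{prop:cocart-lifts-unique-in-isoinner-fams}) with the homotopy-invariance of the cocartesian predicate. The explicit pointwise computation of each mate as a pasting composite of units and counits is the technical heart, but it is routine once the adjoints are identified, relying on the bookkeeping for mates of LARI adjunctions developed in \cref{app:ssec:pasting-lax,app:ssec:mates}.
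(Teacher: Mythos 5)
Your proposal is correct and takes essentially the same route as the paper's proof: both identify the vertical adjoints via \cref{thm:cocart-fams-intl-char,thm:cocart-fam-intl-char-fib}, compute the mates pointwise as the canonical fillers comparing $\varphi(Q_!(u,d))$ with $P_!(ju,\varphi_a\,d)$ (respectively their endpoints $\varphi_b(u_!d)$ and $(ju)_!(\varphi_a\,d)$), and read off that invertibility of these fillers is equivalent to preservation of the chosen lifts, with your appeal to uniqueness of cocartesian lifts making explicit a step the paper leaves implicit. The only slip is directional: the canonical mates run from $\kappa'\circ(\varphi\downarrow j)$ to $\varphi\circ\kappa$ and from $\ell'\circ(\varphi\downarrow j)$ to $\varphi^{\Delta^1}\circ\ell$, i.e.\ out of the transported object, since they are induced by the universal property of the lift $P_!(ju,\varphi_a\,d)$; your reversed orientation is harmless for the equivalence but is not the cell the mate calculus of \cref{app:ssec:mates} actually produces.
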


\begin{proof}
	Let $Q: A \to \UU$ be the straightening of $\xi$ and $P: B \to \UU$ the straightening of $\pi$.
\begin{description}
	\item[$\ref{it:cocart-fun-cocart} \iff \ref{it:cocart-fun-mate-fibered}$]: Consider the first (fibered) adjunction, where the mate of the canonical isomorphism cell is constructed through the following pasting diagram:
	\[\begin{tikzcd}
		{\xi \downarrow A} & {F} && {E} \\
		& {\xi \downarrow A} && {\pi \downarrow B} & {E}
		\arrow["{i}", from=1-2, to=2-2]
		\arrow["{\kappa}", from=1-1, to=1-2]
		\arrow["{\varphi}", from=1-2, to=1-4]
		\arrow["{\varphi\downarrow j}"', from=2-2, to=2-4]
		\arrow["{i'}"', from=1-4, to=2-4]
		\arrow[""{name=0, inner sep=0}, from=1-4, to=2-5, no head, equals]
		\arrow["{\kappa'}"', from=2-4, to=2-5]
		\arrow[""{name=1, inner sep=0}, from=1-1, to=2-2, no head, equals]
		\arrow[Rightarrow, "{=}", from=2-2, to=1-4, shorten <=7pt, shorten >=7pt]
		\arrow[Rightarrow, "{\eta}", from=1, to=1-2, shorten <=2pt, shorten >=2pt]
		\arrow[Rightarrow, "{=}", from=2-4, to=0, shorten <=1pt, shorten >=1pt]
	\end{tikzcd}\]
	The unit $\eta: \hom_{\xi \downarrow A}(\id_F, \kappa i)$ at $\pair{u:a \to a'}{d:P_a}$ is given as follows:
	\[\begin{tikzcd}
		{d} && {u_!d} \\
		{a} && {a'} \\
		{a'} && {a'}
		\arrow["{P_!(u,d)}", from=1-1, to=1-3]
		\arrow["{u}"', from=2-1, to=3-1]
		\arrow[from=3-1, to=3-3, no head, equals]
		\arrow["{u}", from=2-1, to=2-3]
		\arrow[from=2-3, to=3-3, no head, equals]
	\end{tikzcd}\]
	The pasting $2$-cell can be identified with the natural transformation
	\[ \alpha: \hom_{F \to \pi \downarrow B}(\kappa' \circ \varphi \downarrow j, \varphi \circ \kappa) \]
	whose components at $\pair{u:a\to a'}{d:Pa}$ are given by the fillers:
	\[ \alpha_{\pair{u}{d}}:\jdeq (\kappa' \circ \varphi \downarrow j)\eta_{\pair{u}{d}}: \pair{jd'}{(ju)_!(\varphi d)} \to \pair{jd'}{\varphi(u_!d)} \]
	\[\begin{tikzcd}
		&&&&&& {\varphi(u_!d)} \\
		{d} && {u_!d} & {} & {\varphi(d)} && {(ju)_!(\varphi d)} \\
		{a} && {a'} & {} & {j(a)} && {j(a')}
		\arrow["{Q_!(u,d)}", from=2-1, to=2-3]
		\arrow["{u}", from=3-1, to=3-3]
		\arrow["{\rightsquigarrow}", from=2-4, to=3-4, phantom, no head]
		\arrow["{j(u)}", from=3-5, to=3-7]
		\arrow["{P_!(ju,\varphi d)}"', from=2-5, to=2-7, cocart]
		\arrow["{\alpha_{\langle u,d \rangle}}"', from=2-7, to=1-7, dashed]
		\arrow["{\varphi\big(P_!(u,d)\big)}", from=2-5, to=1-7]
	\end{tikzcd}\]
	If $\Phi \jdeq \pair{j}{\varphi}$ is a cocartesian functor there is an identification $\varphi(Q_!(u,d)) = P_!(ju, \varphi d)$ in $E^{\Delta^1}$, hence $\alpha_{\pair{u}{d}}$ is an identity.
	
	On the other hand, if the induced filler $\alpha_{\pair{u}{d}}$ happens to be an isomorphism, and thus an identity, we obtain an identification $\varphi(Q_!(u,d)) = P_!(ju, \varphi d)$ rendering $\Phi$ a cocartesian functor.
	
	\item[$\ref{it:cocart-fun-cocart} \iff \ref{it:cocart-fun-mate}$]: In case of the second adjunction, the mate is given by the cell constructed by pasting from the diagram:
	\[\begin{tikzcd}
		{\xi\downarrow A} & {F^{\Delta^1}} & {E^{\Delta^1}} \\
		& {\xi\downarrow A} & {\varphi \downarrow B} & {E^{\Delta^1}}
		\arrow["{\ell}", from=1-1, to=1-2]
		\arrow["{r}"', from=1-2, to=2-2]
		\arrow["{\varphi\downarrow j}"', from=2-2, to=2-3]
		\arrow["{\varphi^{\Delta^1}}", from=1-2, to=1-3]
		\arrow["{r'}"', from=1-3, to=2-3]
		\arrow["{\ell'}"', from=2-3, to=2-4]
		\arrow[""{name=0, inner sep=0}, from=1-1, to=2-2, no head, equals]
		\arrow[""{name=1, inner sep=0}, from=1-3, to=2-4, no head, equals]
		\arrow[Rightarrow, "{=}", from=2-2, to=1-3, shorten <=3pt, shorten >=3pt]
		\arrow[Rightarrow, "{=}", from=0, to=1-2, shorten <=2pt, shorten >=2pt]
		\arrow[Rightarrow, "{\varepsilon'}", from=2-3, to=1, shorten <=1pt, shorten >=1pt]
	\end{tikzcd}\]
	The counit $\varepsilon':\hom_{E^{\Delta^1} \to E^{\Delta^1}}(\ell' \circ r', \id_{E^{\Delta^1}})$ at $\pair{v:b \to b'}{g:e \to e'}$ is given by the fillers:
	\[\begin{tikzcd}
		&& {e'} \\
		{e} && {v_!e} \\
		{b} && {b'}
		\arrow["{v}"', from=3-1, to=3-3]
		\arrow["{P_!(v,e)}"', cocart, from=2-1, to=2-3]
		\arrow["{g}", from=2-1, to=1-3]
		\arrow["{\varepsilon'_{\langle {v,g} \rangle}}"', from=2-3, to=1-3, dashed]
	\end{tikzcd}\]
	The pasting cell can be identified with the natural transformation
	\[ \beta:\hom_{\xi \downarrow A \to F^{\Delta^1}}(\varphi \downarrow j \circ \ell', \varphi^{\Delta^1} \circ \ell)\]
	whose components at $\pair{u:a\to a'}{d:Pa}$ are given by:
	\[ \beta_{\pair{u}{d}} \jdeq \varepsilon'_{\varphi^{\Delta^1}(\ell \pair{u}{d})}: \pair{ju}{P_!(ju,\varphi(d))} \to \pair{ju}{\varphi Q_!(u,d)}  \]
	\[\begin{tikzcd}
		&& {\varphi(u_!d)} \\
		{\varphi d} && {(ju)_!(\varphi d)} \\
		{ja} && {ja'}
		\arrow["{ju}"', from=3-1, to=3-3]
		\arrow["{P_!(ju,\varphi d)}"', cocart, from=2-1, to=2-3]
		\arrow["{\varphi(Q_!(u,d))}", from=2-1, to=1-3]
		\arrow["{\beta_{\langle {u,d} \rangle}}"', from=2-3, to=1-3, dashed]
	\end{tikzcd}\]
	If $\Phi \jdeq \pair{\varphi}{u}$ is a cocartesian functor, then $\varphi(Q_!(u,d))$ is a cocartesian arrow, so the filler $\beta_{\pair{u}{d}}$ must be too, thus an identity.
	
	Conversely, $\beta_{\pair{u}{d}}$ being an identity gives rise to an identification $\varphi(Q_!(u,d)) =  P_!(ju,\varphi d)$. This implies that $\Phi$ is a cocartesian functor.\qedhere
\end{description}	
\end{proof}

An application of the characterization theorem is a proof that any fibered left adjoint (over a common base) is a cocartesian functor.

\begin{lem}[$2$-cell conservativity for comma types, cf.~{\protect\cite[Proposition 3.4.6(iii)]{RV}}]\label{lem:comma-2cell-cons}
Let $g:C \rightarrow A \leftarrow B : f$ be a cospan of Rezk types. For generalized elements $\alpha, \alpha':X \to f \downarrow g$, consider a natural transformation $\tau: \alpha \Rightarrow \alpha'$. If $\partial_k \tau$  for both $k=0,1$ are natural isomorphisms, then $\tau$ is a natural isomorphism.
\end{lem}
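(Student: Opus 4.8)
The plan is to show that $\tau$, regarded as an arrow in the Segal type $(f\downarrow g)^X$, is an isomorphism, by detecting invertibility leg-by-leg through the pullback presentation of the comma type. Write $\partial_0\colon f\downarrow g\to C$ and $\partial_1\colon f\downarrow g\to B$ for the two comma projections, and recall that $f\downarrow g$ is the pullback of $\langle\partial_1,\partial_0\rangle\colon A^{\Delta^1}\to A\times A$ along $g\times f\colon C\times B\to A\times A$. Since exponentiation $(-)^X$ preserves pullbacks, $(f\downarrow g)^X$ is the pullback $(A^{\Delta^1})^X\times_{(A\times A)^X}(C\times B)^X$, a pullback of Segal types (Segal types being closed under products, exponentials, and pullbacks). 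A natural isomorphism $\alpha\Rightarrow\alpha'$ is precisely an isomorphism in this Segal type, so it suffices to test $\tau$ against the two legs.

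The main step is a conservativity lemma for pullbacks: for any pullback $P\simeq Y\times_Z W$ of Segal types, with legs $p\colon P\to Y$ and $q\colon P\to W$, an arrow $t$ of $P$ is an isomorphism as soon as $pt$ and $qt$ are. I would prove this using the corepresentation $S^{\walkBinv}\simeq\iso(S)$ of isomorphisms in a Segal type $S$, together with the fact that $(-)^{\walkBinv}$ also preserves the pullback, so that $P^{\walkBinv}\simeq Y^{\walkBinv}\times_{Z^{\walkBinv}}W^{\walkBinv}$ sits over $P^{\Delta^1}\simeq Y^{\Delta^1}\times_{Z^{\Delta^1}}W^{\Delta^1}$ by restriction along $\Delta^1\hookrightarrow\walkBinv$. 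An arrow is an isomorphism exactly when it lifts along this restriction map, and a lift of $t$ is the same datum as a compatible pair of lifts of $pt$ and $qt$ over $Z$. Existence of the two lifts is the hypothesis; their agreement over $Z$ is automatic, because $\isIso$ is a proposition, so the fibre of $Z^{\walkBinv}\to Z^{\Delta^1}$ over the (invertible) image of $t$ in $Z$ is contractible.

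Applying this lemma to $(f\downarrow g)^X\simeq(A^{\Delta^1})^X\times_{(A\times A)^X}(C\times B)^X$ reduces the claim to invertibility of the two images of $\tau$. The image in $(C\times B)^X\simeq C^X\times B^X$ is the pair $(\partial_0\tau,\partial_1\tau)$, which is an isomorphism by hypothesis. For the image in $(A^{\Delta^1})^X\simeq(A^X)^{\Delta^1}$ I would use that its two endpoint edges, obtained by applying $\partial_0,\partial_1\colon A^{\Delta^1}\to A$, are $f^X(\partial_1\tau)$ and $g^X(\partial_0\tau)$; since $f$ and $g$ are maps between Segal types they are functors and hence preserve isomorphisms, as do $f^X$ and $g^X$, so both edges are natural isomorphisms. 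Finally, a commuting square in a Segal type whose two parallel edges are isomorphisms is itself an isomorphism in the arrow type: one inverts the two edges and reads off the inverse square from the original naturality square, the required coherences being free since the hom-types of a Segal type are discrete. Thus the image of $\tau$ in $(A^{\Delta^1})^X$ is invertible, and the lemma yields that $\tau$ is a natural isomorphism.

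The principal obstacle is the conservativity lemma of the second paragraph: one must ensure that the identification of isomorphisms with $\walkBinv$-extensions is compatible with the pullback and that the two partial lifts genuinely glue, which is exactly where propositionality of $\isIso$ over the base $Z$ is needed. The two remaining inputs—that $f,g$ preserve isomorphisms and that squares with invertible parallel edges are invertible in the arrow type—are routine consequences of functoriality of maps between Segal types and discreteness of hom-types, and I would keep their verification light.
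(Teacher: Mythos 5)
Your architecture is genuinely different from the paper's proof, and most of it is sound. The paper argues directly: writing a generalized element of $(f \downarrow g)^X$ in its fibrant form, it uses Rezk-completeness of $B^X$ and $C^X$ to identify the two invertible whiskered cells $\partial_k\tau$ with identity transformations, after which each square $\tau_x$ degenerates to one with identity horizontal edges, \ie~to a path in a hom-type, exhibiting $\tau$ as an isomorphism. You instead present $f \downarrow g$ as the pullback of $\langle\partial_1,\partial_0\rangle$ along $g \times f$, prove a conservativity statement for pullbacks of Segal types via the corepresentation $\iso(S) \simeq (\walkBinv \to S)$, and reduce to the two legs. Your pullback lemma is correct as argued: the fibre of $S^{\walkBinv} \to S^{\Delta^1}$ over an arrow is the proposition $\isIso$, which is contractible over an invertible arrow, so the two partial lifts glue; and exponentiation by $\walkBinv$ preserves the pullback. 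This is a reusable statement that the paper does not isolate, and it cleanly disposes of the product leg.

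The gap is in your last step, and unfortunately that step is the crux: the claim that a square in $A^X$ with invertible parallel edges is invertible as an arrow of $(A^X)^{\Delta^1}$ is exactly the lemma being proved, specialized to the identity cospan $A \to A \leftarrow A$, so it needs an honest argument, and the one you sketch does not close. Discreteness of hom-types does not make ``the required coherences free'': discreteness says that arrows in a hom-type coincide with paths, not that the identity types of hom-types are trivial. Concretely, an arrow $\sigma \Rightarrow \sigma$ in $(A^X)^{\Delta^1}$ whose two components are identities carries, as its filler datum, a loop in the hom-type $\hom_{A^X}(\sigma\,0,\sigma\,1)$ at $\sigma$, and such loops can be nontrivial; hence, after inverting the two edges and forming the composite $\nu \circ \mu$, knowing that its components are propositionally equal to identities does not yield $\nu \circ \mu = \id_\sigma$, which is what $\isIso(\mu)$ demands. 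The repair is exactly the paper's move: $A^X$ is Rezk, so based isomorphism types are contractible and you may identify the two invertible edges with identities (iso-induction); the square then literally is a path in a hom-type and is therefore invertible. (Alternatively, argue representably: composition with the invertible components induces equivalences on the hom-types of $A^X$, hence composition with $\mu$ induces equivalences on the hom-types of $(A^X)^{\Delta^1}$, producing left and right inverses without any coherence chase.) With that step repaired, your proof is complete and constitutes a legitimately different, more modular route than the paper's.
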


\begin{proof}
We have an equivalence $X \to f \downarrow g \equiv \sum_{\substack{\alpha_0:X \to B \\ \alpha_1 : X \to C}} \prod_{x:X} f \alpha_0 x \to g \alpha_1 x$. The natural transformation $\tau$ at stage $x:X$ is given by a square:
\[\begin{tikzcd}
	{f \alpha_0x} && {f \alpha'_0x} \\
	{g \alpha_1x} && {g \alpha_1'x}
	\arrow["{\alpha_x}"', from=1-1, to=2-1]
	\arrow["{g(\partial_1\tau_x)}"', from=2-1, to=2-3]
	\arrow["{f(\partial_0 \tau_x)}", from=1-1, to=1-3]
	\arrow["{\alpha'_x}", from=1-3, to=2-3]
	\arrow["{\tau_x}"{description}, draw=none, from=1-1, to=2-3]
\end{tikzcd}\]
Invoking transformation extensionality, if both whiskered $2$-cells $\partial_k \tau_x$ are isomorphisms, then by functoriality the squares $\tau_x$ degenerate to
\[\begin{tikzcd}
	\cdot && \cdot \\
	\cdot && \cdot
	\arrow["{\alpha_x}"', from=1-1, to=2-1]
	\arrow[""{name=0, anchor=center, inner sep=0}, no head, from=1-1, to=1-3,equals]
	\arrow[""{name=1, anchor=center, inner sep=0}, no head, from=2-1, to=2-3,equals]
	\arrow["{\alpha'_x}", from=1-3, to=2-3]
	\arrow["{\tau_x}"{description}, Rightarrow, draw=none, from=0, to=1]
\end{tikzcd}\]
exhibiting $\tau$ as a natural isomorphism.
\end{proof}

\begin{prop}[Fibered left adjoints are cocartesian functors, {\protect\cite[Lemma~5.3.6]{RV}}]\label{prop:cocart-fun-fib-ladj}
Consider a fibered functor between cocartesian fibrations where the horizontal arrows make up a fibered adjunction:
\[\begin{tikzcd}
	F && E & {} \\
	\\
	& B && {}
	\arrow[""{name=0, anchor=center, inner sep=0}, "\psi"', curve={height=12pt}, from=1-3, to=1-1]
	\arrow[""{name=1, anchor=center, inner sep=0}, "\varphi"', curve={height=12pt}, from=1-1, to=1-3]
	\arrow["\xi"', two heads, from=1-1, to=3-2]
	\arrow["\pi", two heads, from=1-3, to=3-2]
	\arrow["\dashv"{anchor=center, rotate=90}, draw=none, from=1, to=0]
\end{tikzcd}\]
Then $\varphi$ is a cocartesian functor.
\end{prop}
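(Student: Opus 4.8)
The plan is to reduce the statement to the mate characterization of cocartesian functors, Theorem~\ref{thm:cocart-fun-intl-char}, and then to recognize the relevant mate as the comparison between two composite left adjoints, which is forced to be invertible by uniqueness of adjoints.

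First I fix notation: let $Q,P:B\to\UU$ be the cocartesian families straightening $\xi:F\fibarr B$ and $\pi:E\fibarr B$, and write the fibered adjunction as $\varphi\dashv_B\psi$. By \cref{thm:cocart-fam-intl-char-fib} the inclusions $\iota_\xi:F\to\xi\downarrow B$ and $\iota_\pi:E\to\pi\downarrow B$ carry fibered left adjoints $\tau_\xi\dashv\iota_\xi$ and $\tau_\pi\dashv\iota_\pi$, given by cocartesian transport. Since $\pi\varphi=\xi$ and $\xi\psi=\pi$ over $B$, the square with verticals $\iota_\xi,\iota_\pi$, top $\varphi$, and bottom $\varphi\downarrow B$ commutes strictly (both composites send $(b,z)$ to $(\id_b,\varphi_b z)$). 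Applying part~\ref{it:cocart-fun-mate-fibered} of \cref{thm:cocart-fun-intl-char} with $A\jdeq B$ and $j\jdeq\id_B$, the map $\varphi$ is a cocartesian functor precisely when the mate
\[ \mu:\tau_\pi\circ(\varphi\downarrow B)\Longrightarrow\varphi\circ\tau_\xi \]
of this identity square, formed with respect to the transport adjunctions, is invertible.

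The heart of the argument is to show $\mu$ invertible by exhibiting its source and target as left adjoints to one and the same functor. The comma construction $(-\downarrow B)$ is $2$-functorial in its first slot, so it carries $\varphi\dashv_B\psi$ to an adjunction $\varphi\downarrow B\dashv\psi\downarrow B$. Composing adjunctions then gives
\[ \varphi\circ\tau_\xi\dashv\iota_\xi\circ\psi,\qquad \tau_\pi\circ(\varphi\downarrow B)\dashv(\psi\downarrow B)\circ\iota_\pi. \]
A direct computation identifies the two right adjoints: both $\iota_\xi\psi$ and $(\psi\downarrow B)\iota_\pi$ send a point $(b,z)$ of $E$ to $(\id_b,\psi_b z)$ in $\xi\downarrow B$. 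Hence $\varphi\tau_\xi$ and $\tau_\pi(\varphi\downarrow B)$ are both left adjoint to the common functor $\iota_\xi\psi=(\psi\downarrow B)\iota_\pi$, and by uniqueness of adjoints the canonical comparison $2$-cell between them is invertible. It then remains to check that this canonical comparison coincides with the mate $\mu$, a compatibility of the two adjunction data that follows from the mate calculus, using that the square filler is the identity.

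I expect this last identification — that $\mu$ is exactly the comparison cell supplied by uniqueness of adjoints — to be the main obstacle, since it requires matching units and counits rather than merely producing some isomorphism. If a more hands-on route is preferred, one can instead invoke part~\ref{it:cocart-fun-mate} and observe that the corresponding mate lands in the arrow type $E^{\Delta^1}\simeq\id_E\downarrow\id_E$; by the $2$-cell conservativity of \cref{lem:comma-2cell-cons} it then suffices that the endpoint whiskerings $\partial_0\mu$ and $\partial_1\mu$ be invertible. The source projection $\partial_0\mu$ is an identity because $\varphi$ sends the source of a $Q$-cocartesian lift to the source $\varphi_a d$ of the corresponding $P$-cocartesian lift, while invertibility of $\partial_1\mu$ is again the adjoint comparison $\varphi_b(u_!^Q d)\cong u_!^P(\varphi_a d)$; combined with \cref{lem:cocart-arrows-isos} and \cref{prop:cocart-arr-closure} this shows directly that $\varphi$ preserves cocartesian arrows.
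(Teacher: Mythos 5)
Your proposal is correct and is, at its core, the same argument as the paper's; the differences are in packaging. The paper applies part~\ref{it:cocart-fun-mate} of \cref{thm:cocart-fun-intl-char} (the Chevalley adjunctions on arrow types) instead of part~\ref{it:cocart-fun-mate-fibered}, so its crossed square has top adjunction $\varphi^{\Delta^1}\dashv\psi^{\Delta^1}$ (from \cref{prop:lari-closed-under-pi}) and bottom adjunction $\varphi\downarrow B\dashv\psi\downarrow B$ (from \cref{prop:fib-adj-pb}, i.e.\ base change of $\varphi\dashv_B\psi$ along $\partial_0\colon B^{\Delta^1}\to B$ --- which is also the precise in-paper justification for your appeal to ``$2$-functoriality'' of the comma construction). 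It then invokes \cref{prop:inv-mates-composite-adjs} to trade the mate taken with respect to the vertical adjunctions for the mate taken with respect to the horizontal ones, and disposes of the latter by $2$-cell conservativity of comma types (\cref{lem:comma-2cell-cons}). The step you flag as the ``main obstacle'' --- that your mate $\mu$ coincides with the comparison cell produced by uniqueness of adjoints --- is exactly what \cref{prop:inv-mates-composite-adjs} encapsulates: its proof consists of passing to the composite adjunctions and applying \cref{prop:inv-conj}, which is precisely your conjugates argument. So you need not prove the identification by hand: apply \cref{prop:inv-mates-composite-adjs} to your square (vertical adjunctions $\tau_\xi\dashv\iota_\xi$ and $\tau_\pi\dashv\iota_\pi$, horizontal adjunctions $\varphi\dashv_B\psi$ and $\varphi\downarrow B\dashv\psi\downarrow B$); this reduces invertibility of $\mu$ to invertibility of the horizontal mate $\iota_\xi\psi\Rightarrow(\psi\downarrow B)\iota_\pi$, a $2$-cell between the strictly equal functors you computed, valued in the comma type $\xi\downarrow B$, and hence invertible by \cref{lem:comma-2cell-cons} (its whiskered components are an isomorphism in the base and a triangle-identity composite in $F$). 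What your variant buys is mild but real: part~\ref{it:cocart-fun-mate-fibered} keeps the whole argument on the fibrations themselves rather than their arrow types, and the two composite right adjoints are judgmentally equal, so the final conservativity check is essentially trivial. One caution about your fallback sketch: asserting that ``invertibility of $\partial_1\mu$ is again the adjoint comparison'' is a renaming, not an argument --- that component is exactly what must be proved invertible, and it still requires the conjugates input above, so as written that route is circular.
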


\begin{proof}
Since $\xi$ and $\pi$ are cocartesian, we obtain the following induced diagram:
\[\begin{tikzcd}
	{} & {F^{\Delta^1}} && {E^{\Delta^1}} \\
	&& {} \\
	{} & {\xi \downarrow B} && {\pi \downarrow B}
	\arrow["{\varphi\downarrow k}"{name=0, swap}, from=3-2, to=3-4, curve={height=12pt}]
	\arrow["{\psi \downarrow k}"{name=1, swap}, from=3-4, to=3-2, curve={height=12pt}]
	\arrow["{r'}"{name=2, swap}, from=1-4, to=3-4]
	\arrow["{\varphi^{\Delta^1}}"{name=3, swap}, from=1-2, to=1-4, curve={height=12pt}]
	\arrow["{\psi^{\Delta^1}}"{name=4, swap}, from=1-4, to=1-2, curve={height=12pt}]
	\arrow["{r}"{name=5}, from=1-2, to=3-2]
	\arrow["{\ell}"{name=6}, from=3-2, to=1-2, curve={height=-18pt}]
	\arrow["{\ell'}"{name=7, swap}, from=3-4, to=1-4, curve={height=18pt}]
	\arrow[Rightarrow, "{=}" description, from=3-2, to=1-4, shorten <=8pt, shorten >=8pt]
	\arrow["\dashv"{rotate=90}, from=0, to=1, phantom]
	\arrow["\dashv"{rotate=90}, from=3, to=4, phantom]
	\arrow["\dashv"{rotate=-1}, from=6, to=5, phantom]
	\arrow["\dashv"{rotate=-179}, from=7, to=2, phantom]
\end{tikzcd}\]
Specifically, the vertical adjunctions follow from the Chevalley criterion for cocartesian fibrations. The top horizontal adjunction exists because of \cref{prop:lari-closed-under-pi}. The bottom horizontal adjunction exists by \cref{prop:fib-adj-pb}.
We now have to show that the the mate w.r.t.~the vertical adjunctions is invertible, which by \cref{prop:inv-mates-composite-adjs} is equivalent to the mate w.r.t.~the horizontal adjunctions being invertible. But this is established by $2$-cell conservativity of comma objects, cf.~\cref{lem:comma-2cell-cons}.
\end{proof}

Any fibered equivalence between arbitrary maps is also cocartesian.

\begin{prop}[Fibered equivalences are cocartesian functors]\label{prop:cocart-fun-fib-equiv}
For (cocartesian) fibrations $\xi: F \fibarr A$ and $\pi:E \fibarr B$, any square of the form
\[\begin{tikzcd}
	F && E \\
	A && B
	\arrow["\xi"', two heads, from=1-1, to=2-1]
	\arrow["\simeq"', from=2-1, to=2-3]
	\arrow["\simeq", from=1-1, to=1-3]
	\arrow["\pi", two heads, from=1-3, to=2-3]
\end{tikzcd}\]
is a cocartesian functor.
\end{prop}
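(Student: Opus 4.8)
The plan is to deduce this immediately from univalence. Since both horizontal maps are equivalences, the fibered functor $\Phi \jdeq \pair{j}{\varphi}$ is a fibered equivalence in the sense of \cref{def:fib-maps}: both the base map $j : A \to B$ and the total map $\varphi : F \to E$ are equivalences. The predicate $\isCocartFun_{P,Q}(\Phi)$ of \cref{def:cocart-fun} is a proposition, so it suffices to verify it after replacing $j$ and $\varphi$ by identities.

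First I would apply univalence to the bottom equivalence, obtaining a path $A = B$ along which $j$ is identified with $\id_B$; transporting the data of $\xi$ along this path reduces us to a fibered equivalence over the common base $B$, with $\xi = \pi \circ \varphi$ and $\varphi : F \simeq E$. Then I would apply univalence a second time to $\varphi$, giving $F = E$ and allowing the assumption $\varphi \equiv \id_E$, whence $\xi \equiv \pi$ and $\Phi$ is the identity functor on $\pi$.

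At this point the claim is trivial: for the identity functor the required implication $\isCocartArr^P_u(f) \to \isCocartArr^P_u(f)$ is witnessed by the identity for every dependent arrow $f$, so $\isCocartFun$ holds on the reduced data, and hence---being a proposition---on the original $\Phi$.

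I do not expect a genuine obstacle here; the only delicate point is bookkeeping the two successive univalence reductions and checking that the transports respect the commuting square, which is routine. Alternatively, one could sidestep univalence and argue directly that the equivalence $\varphi$ of total spaces both preserves and reflects the contractibility statement defining $\isCocartArr$ (using that equivalences preserve contractible types); this route additionally yields the reflection statement and re-proves the homotopy invariance of cocartesianness along the way.
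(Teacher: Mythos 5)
Your proposal is correct and matches the paper's own proof, which likewise invokes univalence (equivalence induction) to replace both horizontal equivalences by identities and then observes that identities trivially preserve cocartesian lifts. Your closing remark about a direct argument via preservation of the contractibility statement is also essentially the paper's parenthetical alternative of arguing as in \cref{prop:cocart-fun-fib-ladj}, so nothing further is needed.
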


\begin{proof}
	By univalence we can replace anonymous equivalences by identities (\emph{equivalence induction}), and these are easily seen to be cocartesian functors (since they preserve cocartesian lifts).
\end{proof}

Alternatively, in the case of cocartesian fibrations, one could argue similarly as in~\cref{prop:cocart-fun-fib-ladj}, cf.~\cite[Corollary~5.3.1]{RV}

\section{More on covariant families}\label{sec:cov-fam}

Covariant families have been introduced and thoroughly analyzed by Riehl--Shulman, cf.~\cite[Section~8]{RS17}. Since they can be characterized by a right orthogonality condition (namely \wrt~the map $0: \unit \hookrightarrow \Delta^1$), it follows formally that they satisfy all the closure properties from Subsection~\ref{ssec:clos-orth}, yielding the analog of the $\infty$-cosmological closure properties, \cf~\cref{prop:cov-cosm-clos}. As a perhaps rather non-obvious result, we prove that covariant families (over an arbitrary type) are automatically inner. Furthermore, we prove the expected result that, over a Rezk type, a family is covariant if and only if it is cocartesian and all its fibers are discrete. We also argue how covariant families can be seen as a type-theoretic analogue of discrete cocartesian fibrations in the $\infty$-cosmological sense.

The section concludes by establishing a directed version of the \emph{encode-decode method}~\cite[Section 8.9]{hottbook}, which we expect to be useful when analyzing localization or other higher inductive types in later work.

In the following section, the Yoneda Lemma for covariant families, originally established in~\cite[Section 9]{RS17}, is recovered in \cref{ssec:yoneda} as a special case of the Yoneda Lemma for cocartesian families.

\subsection{Properties and characterizations of covariant families}

\subsubsection{Covariant vs.~inner families}\label{ssec:cov-vs-inner-fam}

\begin{prop}\label{prop:covfib-is-innerfib}
	Every covariant family is an inner family.
\end{prop}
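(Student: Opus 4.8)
The plan is to recast both notions as right-orthogonality conditions and then show that orthogonality to the initial-vertex inclusion $i_0:\unit\hookrightarrow\Delta^1$ already forces orthogonality to the inner horn. By the Example characterizing covariant families in \cref{ssec:fib-shapes}, $P$ is covariant iff $\Un_B(P)$ is right orthogonal to $i_0$, and by \cref{prop:innfam-pb} it is inner iff $\Un_B(P)$ is right orthogonal to $\Lambda^2_1\hookrightarrow\Delta^2$. By \cref{cor:orth-shapes} the latter means that for every $\sigma:\Delta^2\to B$ and every horn $\eta:\prod_{t:\Lambda^2_1}P(\sigma(t))$ the extension type $\exten{t:\Delta^2}{P(\sigma(t))}{\Lambda^2_1}{\eta}$ is contractible, and this is what I would establish.

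First I would use the composition law for extension types over the nested shapes $\{\langle 0,0\rangle\}\subseteq\Lambda^2_1\subseteq\Delta^2$, giving
\[ \exten{t:\Delta^2}{P(\sigma(t))}{\{\langle 0,0\rangle\}}{e} \;\equiv\; \sum_{\eta':\exten{t:\Lambda^2_1}{P(\sigma(t))}{\{\langle 0,0\rangle\}}{e}} \exten{t:\Delta^2}{P(\sigma(t))}{\Lambda^2_1}{\eta'}, \]
where $e\defeq\eta(\langle 0,0\rangle)$. This reduces the claim to two contractibility statements: (1) that $\exten{t:\Delta^2}{P(\sigma(t))}{\{\langle 0,0\rangle\}}{e}$ is contractible, i.e. $\Un_B(P)$ is orthogonal to the initial-vertex inclusion of $\Delta^2$; and (2) that the base type over $\Lambda^2_1$ is contractible. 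Claim (2) is the easy half: the inclusion $\{\langle 0,0\rangle\}\hookrightarrow\Lambda^2_1$ factors as $\{\langle 0,0\rangle\}\hookrightarrow\{s\jdeq 0\}\hookrightarrow\Lambda^2_1$, where the first map is a pushout of $i_0$ (the source inclusion of the bottom edge) and the second is a pushout of $i_0$ (the source inclusion of the right edge $\{t\jdeq 1\}$, glued on at $\langle 1,0\rangle$); so covariance together with the composition law applied twice makes this extension type contractible.

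The crux is Claim (1): that covariance, i.e. $i_0$-orthogonality, propagates to the initial-vertex inclusion $\{\langle 0,0\rangle\}\hookrightarrow\Delta^2$. My approach is to exhibit this inclusion as a retract of $\{\langle 0,0\rangle\}\hookrightarrow\Delta^1\times\Delta^1$ using the connections on $\I$: the map $r\defeq\lambda\pair{t}{s}.\pair{t\lor s}{t\land s}:\Delta^1\times\Delta^1\to\Delta^2$ restricts to the identity on $\Delta^2\hookrightarrow\Delta^1\times\Delta^1$ and fixes $\langle 0,0\rangle$, so it displays $(\{\langle 0,0\rangle\}\hookrightarrow\Delta^2)$ as a retract of $(\{\langle 0,0\rangle\}\hookrightarrow\Delta^1\times\Delta^1)$ in the arrow category. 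The latter inclusion is built from $i_0$: it factors as $\{\langle 0,0\rangle\}\hookrightarrow\{t\jdeq 0\lor s\jdeq 0\}\hookrightarrow\Delta^1\times\Delta^1$, where the first map is again two pushouts of $i_0$ and the second is exactly the pushout product $i_0\mathbin{\widehat{\otimes}}i_0$ (by the explicit shape formula for $\widehat{\otimes}$). Orthogonality of $\Un_B(P)$ to $i_0\mathbin{\widehat{\otimes}}i_0$ follows from the pushout-product/pullback-hom adjunction $\pi\bot(j\mathbin{\widehat{\otimes}}k)\iff k\bot(j\mathbin{\widehat{\pitchfork}}\pi)$ together with the closure of $i_0$-orthogonal maps under Leibniz cotensors from \cref{ssec:clos-orth}, which makes $i_0\mathbin{\widehat{\pitchfork}}\Un_B(P)$ again $i_0$-orthogonal. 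Orthogonality then transfers along the composite and the retract, since the class of maps right orthogonal to a fixed map is closed under composition, pushout, and retract.

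The main obstacle is precisely this last step: verifying in the shape/tope layer that the initial-vertex inclusion is cellularly generated by $i_0$ (via connections, the pushout product, and the retraction $r$) and that orthogonality is preserved under the relevant operations on the left-hand maps. An alternative, more computational route avoids the saturation machinery entirely: one unwinds the inner condition to the contractibility of $\sum_{h:e\to^P_w e''}(f,g\Rightarrow^P_w h)$ and produces the unique filler directly from iterated covariant transport (the lift of $u$ at $e$, then of the base composite $w$ at $e$), using uniqueness of covariant lifts to identify the endpoint with $e''$ and to force the $2$-cell; I expect this to reproduce the same essential content with more bookkeeping.
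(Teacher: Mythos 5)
Your proposal is correct in substance, but it takes a genuinely different route from the paper's. The paper proves the statement by a direct construction: given the inner horn $[f,g]$ over $\sigma:\Delta^2\to B$, it builds the filler pointwise by covariant transport along the connection-deformed arrows $\lambda t.\,\sigma(t,t\land t_2)$, then uses the homotopy extension property (\cref{prop:hep}) to strictify, and relative function extensionality (\cref{ax:relfunext}) together with tope disjunction elimination for uniqueness --- this is close in spirit to the ``alternative, more computational route'' you sketch in your final paragraph. Your main route is instead a synthetic cell-complex argument: exhibit the vertex inclusion $\{\langle 0,0\rangle\}\hookrightarrow\Delta^2$ as a retract (via the connection retraction $r(t,s)\defeq\langle t\lor s,\,t\land s\rangle$) of the vertex inclusion into $\Delta^1\times\Delta^1$, which is in turn assembled from $i_0$ by cobase change, composition, and the pushout product $i_0\mathbin{\widehat{\otimes}}i_0$; combined with the analogous decomposition of $\{\langle 0,0\rangle\}\hookrightarrow\Lambda^2_1$ and the nested-extension-type/fiberwise contractibility argument, this yields the inner condition. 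This is the synthetic analogue of the classical fact that low-dimensional inner horn inclusions are left anodyne, and it is the more conceptual and reusable argument. The trade-off, which you correctly flag as the main obstacle, is that it rests on saturation properties of the \emph{left} class --- closure of the maps to which $\Un_B(P)$ is right orthogonal under cobase change along tope unions, composition, and retracts, plus the two-variable adjunction relating $\widehat{\otimes}$ and $\widehat{\pitchfork}$ --- none of which the paper develops (\cref{ssec:clos-orth} only proves closure properties in the right-hand, fibration variable). These lemmas are standard and provable in this setting (cobase change via tope disjunction elimination as in~\cite{RS17}, retract closure formally from the characterization in \cref{prop:orth-maps}, the adjunction by currying of extension types), so your argument does go through; but a complete write-up would first need to establish this infrastructure, which the paper's direct construction deliberately avoids at the cost of more explicit computation.
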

Since inner families are a relative version of Segal types the proposition at hand can be seen as a relative version of~\cite[Proposition 7.3]{RS17}.
\begin{proof}
	Let $P : B \to \UU$ be a covariant family.
	Suppose we are given a
	$2$-simplex $\sigma$ in $B$ with boundary given by
	$u:\hom_B(b,b')$, $v:\hom_B(b',b'')$, and $w:\hom_B(b,b'')$, for
	$b,b',b'':B$, \ie, $\sigma:\hom_B^2(u,v;w)$. Furthermore, assume
	there are dependent arrows $f:\hom^P_u(e,e')$,
	$g:\hom^{P}_v(e',e'')$ for $e:P\,b$, $e':P\,b'$, $e'':P\,b''$.
	We prove that there is a contractible choice of elements of the
	extension type
	\begin{equation}\label{eq:cov-lift-type}
		\ext[\Big]{\prod_{t_2\le t_1}P(\sigma(t_1,t_2))}{\Lambda^2_1}{[f,g]}
	\end{equation}
	by giving a \emph{center} and by showing that any two elements are equal.\footnote{In general, a type $A$ is contractible iff the type $\isContr(A) \defeq \sum_{c:A} \prod_{x:A} (c=_Ax)$ is inhabited. For any pair $\pair{c}{H_c}$ we call $c:A$ a center of contraction of $A$ and $H_c$ a contraction of $A$. It is common to strengthen the articles to be \emph{definite}: contractible data is unique up to contractibility anyway, and this is the usual sense in which data in HoTT is deemed unique. We therefore allow ourselves to speak of \emph{the} center of contraction of $A$ \emph{etc.}, \cf~\cite[Definition~10.1.1]{RijIntro}.}

	The center is obtained using the homotopy extension property (HEP),
	cf.~\cite[Proposition 4.10]{RS17}, \cref{prop:hep}. Since $P$ is covariant, there is an element
	\[
	H : \prod_{b_1: B}\prod_{u_1:\hom_B(b,b_1)}
	\isContr\ext[\Big]{\prod_{t}P(u_1(t))}{0}{e}.
	\]
	To use the HEP, we first give an element of
	$\prod_{t_2\le t_1}P(\sigma(t_1,t_2))$. If $t_2$ is given, consider
	$K(t_2) :\jdeq H\,\sigma(1,t_2)\,(\lambda t.\,\sigma(t,t\land
	t_2))$. The corresponding center applied to $t_1$ is in
	$P(\sigma(t_1,t_1\land t_2)) \jdeq P(\sigma(t_1,t_2))$. We now need
	to provide a homotopy between this and $[f,g]$, but this is
	obtained from the contraction part of $H$.

	To show that any two elements $\tilde\sigma_1,\tilde\sigma_2$
	in~\eqref{eq:cov-lift-type} are equal, we use relative function
	extensionality~\cite[Proposition 4.8]{RS17}, \cref{prop:hep}, by which it suffices to give
	an element of the extension type
	\[
	\ext[\Big]{\prod_{t_2\le t_1}
		\tilde\sigma_1(t_1,t_2) = \tilde\sigma_2(t_1,t_2)}{\Lambda^2_1}{\refl}
	\]
	Again we appeal to HEP. Given $t_2\le t_1$, from $K(t_2)$ we get an
	identity between $\tilde\sigma_1$ and $\tilde\sigma_2$ restricted to
	the extension type $\ext{\prod_t P(t,t\land t_2)}{0}{e}$. From the
	easy direction of relative function extensionality we get an
	identity $\tilde\sigma_1(t_1,t_2) = \tilde\sigma_2(t_1,t_2)$.
	To show that the resulting homotopy is homotopic to $\refl$ on
	$\Lambda^2_1$, we use tope disjunction elimination.

	On the edge $t_2=0$ we have an identity $f=f$ in the contractible
	type $\ext{\prod_tP(\sigma(t,0))}{0}{e}$. But an identity
	type in a contractible type is itself contractible, so this identity
	is equal to $\refl$.

	On the edge $t_1=1$ we may consider an arbitrary $t_2$ and use the
	same argument.
\end{proof}

\subsubsection{Covariant vs.~cocartesian families}

\begin{prop}
	Any covariant family over a Rezk type $B$ is a cocartesian family.
\end{prop}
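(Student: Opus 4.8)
The strategy is to recognize $\isCocartFam\,P \jdeq \isIsoInnerFam\,P \times \hasCocartLifts\,P$ as a conjunction each of whose conjuncts follows from an already-established result, with the lifting condition funneled through the Chevalley criterion (\cref{thm:cocart-fams-intl-char}). Concretely, I would first record that $P$ is isoinner, and then observe that covariance is precisely the $i_0$-orthogonality hypothesis needed to invoke \cref{cpr:orth-fam-lari-fam}, so that the entire lifting property comes for free from the closure theory of \cref{sec:closure}.

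For isoinnerness: innerness of $P$ is exactly \cref{prop:covfib-is-innerfib}, whence each fibre $P\,b$ is Segal by \cref{prop:innerfib-has-segal-fibers}. To see that each fibre is moreover Rezk, I would specialise the covariance condition $\isCovFam(P)$ to an identity arrow $\id_b:b\to b$: for every $d:P\,b$ this yields contractibility of $\sum_{e:P\,b}\dhom^P_{\id_b}(d,e)$, and since $\dhom^P_{\id_b}(d,e)\jdeq\hom_{P\,b}(d,e)$ this says exactly that every coslice $\sum_{e:P\,b}\hom_{P\,b}(d,e)$ is contractible. Comparing with the always-contractible based path space $\sum_{e:P\,b}(d=_{P\,b}e)$ via $\idtoarr$ then exhibits $\idtoarr_{P\,b}$ as a fibrewise equivalence, i.e.\ shows $P\,b$ to be discrete, hence Rezk, cf.~\cite{RS17}. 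Thus $P$ is isoinner.

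For the lifting property I would avoid constructing cocartesian lifts by hand and instead use the fact, recorded in the Example of \cref{ssec:fib-shapes} and again at the start of \cref{sec:closure}, that a family over a Segal type is covariant if and only if its projection $\pi:\totalty P\to B$ is right orthogonal to $i_0:\unit\hookrightarrow\Delta^1$. Hence $P$ is an $i_0$-orthogonal family over the Rezk type $B$, and \cref{cpr:orth-fam-lari-fam} promotes this at once to $P$ being an $i_0$-LARI family. Being isoinner and $i_0$-LARI over a Rezk base, $P$ is cocartesian by the Chevalley criterion \cref{thm:cocart-fams-intl-char}.

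The conceptual crux---and the only genuine obstacle---is the identification of covariance with $i_0$-orthogonality, since that is what lets the LARI machinery of \cref{sec:closure} apply verbatim. A purely hands-on alternative would construct the unique covariant lift $f:e\to^P_u e'$ of $u$ at $e$ and verify its cocartesian universal property directly, pinning down the target and the comparison arrow by applying covariance to the composite $v\circ u$ at $e$ and to $v$ at $e'$ and then invoking uniqueness of lifts; but this merely re-proves the content of \cref{cpr:orth-fam-lari-fam} inline, so the orthogonality reduction is preferable.
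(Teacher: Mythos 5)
Your proposal is correct and follows essentially the same route as the paper: identify covariance with $i_0$-orthogonality of the projection, upgrade this to an $i_0$-LARI structure via the adjoint-equivalence result (\cref{prop:adj-equiv}, packaged as \cref{cpr:orth-fam-lari-fam}), and conclude by the Chevalley criterion \cref{thm:cocart-fams-intl-char}. In fact your write-up is slightly more complete than the paper's one-line proof, since you explicitly verify the isoinnerness hypothesis (innerness via \cref{prop:covfib-is-innerfib} plus discreteness, hence Rezkness, of the fibers) that the Chevalley criterion requires and that the paper leaves implicit.
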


\begin{proof}
A family $P:B \to \UU$ is covariant if and only if the Leibniz cotensor~$i_0 \cotens \pi_P: \totalty{P}^{\Delta^1} \to B^{\Delta^1} \times_{B} \totalty{P}$ is an equivalence. By \cref{prop:adj-equiv} this also constitutes an adjoint equivalence, and by fibrant replacement the left adjoint can be chosen a (strict) section.
\end{proof}

\begin{prop}\label{prop:disc-fib-and-cocart-lifting-implies-cov}
	Let $B$ be a Segal type. Assume $P : B \to \UU$ is such that the following properties are satisfied:
	\begin{enumerate}
		\item $P$ is an inner family.
		\item $P$ has the cocartesian lifting property.
		\item All fibers $P\,b$ for $b:B$ are discrete.
	\end{enumerate}
	Then $P$ is covariant.
\end{prop}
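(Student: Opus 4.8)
The plan is to fix $a,b:B$, an arrow $u:\hom_B(a,b)$ and a point $d:P\,a$, and to show directly that the type $\sum_{e:P\,b}(d\to^P_u e)$ occurring in $\isCovFam(P)$ is contractible. The centre will be produced by the cocartesian lifting property: $\hasCocartLifts\,P$ yields $e_0:P\,b$ together with a cocartesian arrow $f_0:\dhom^P_u(d,e_0)$. Because $P$ is inner over a Segal base, $\totalty{P}$ is Segal by \cref{prop:innerfib-total-sp-over-segal-base-is-segal}, so dependent composites are available and the cocartesianness of $f_0$ may be read in its composition form. Specialising that universal property to $b''\defeq b$ and $v\defeq\id_b$ says exactly that precomposition
\[ (-\circ f_0):(e_0\to^P_{\id_b}e)\longrightarrow(d\to^P_{\id_b\circ u}e) \]
has contractible fibres, i.e.\ is an equivalence, for every $e:P\,b$.

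Next I would simplify both sides of this equivalence. On the target, the identification $\id_b\circ u=u$ in the Segal type $B$ induces by transport an equivalence $(d\to^P_{\id_b\circ u}e)\simeq(d\to^P_u e)$; on the source, an arrow lying over the identity is definitionally an arrow in the fibre, $(e_0\to^P_{\id_b}e)\equiv\hom_{P\,b}(e_0,e)$. Hence $f_0$ induces, for each $e:P\,b$, an equivalence $\hom_{P\,b}(e_0,e)\simeq(d\to^P_u e)$, and since $\Sigma$ preserves fibrewise equivalences we obtain
\[ \sum_{e:P\,b}(d\to^P_u e)\;\simeq\;\sum_{e:P\,b}\hom_{P\,b}(e_0,e). \]
Finally I would invoke discreteness of the fibre: $\idtoarr_{P\,b}:(e_0=e)\to\hom_{P\,b}(e_0,e)$ is an equivalence, so the right-hand side is equivalent to $\sum_{e:P\,b}(e_0=e)$, which is contractible as a based path space. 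Composing the equivalences exhibits $\sum_{e:P\,b}(d\to^P_u e)$ as contractible, which is precisely $\isCovFam(P)$.

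All three hypotheses are genuinely used: innerness gives a Segal total type and hence the composition-based reformulation of cocartesianness; the cocartesian lifting property supplies both the centre $(e_0,f_0)$ and the universal equivalence; and discreteness is what turns the fibre hom-types into identity types. The main care-point is the bookkeeping around $\id_b\circ u=u$, where one must check that transporting along this propositional equality is compatible with the equivalence induced by $f_0$. Since the end goal is a contractibility statement---a proposition---these coherence issues are harmless, and no genuinely $2$-dimensional naturality has to be tracked.
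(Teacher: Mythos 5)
Your proof is correct and takes essentially the same route as the paper's: both use the cocartesian lift as the centre, specialise its universal property to $v = \id_b$ (which requires innerness, so that the total type is Segal and dependent composition makes sense), and then use discreteness of the fibre to identify the resulting fibre arrows with paths. The only difference is packaging — the paper argues pointwise that any lift $g$ equals the cocartesian lift $f$ because the unique filler $h$ with $h \circ f = g$ must be an identity, whereas you assemble the same facts into a chain of equivalences terminating in a contractible based path space.
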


\begin{proof}
	Let $b,b':B$ with $u:\hom_B(b,b')$, and $e:P\,b$. There is $e':P\,b'$ and a lift $f:\hom^P_u(e,e')$ which is cocartesian, meaning that for any $e'':P\,b'$ and $g:\hom^P_u(e,e'')$ there is a unique arrow~$h:\hom^{P}_{\id_{b'}}(e',e'')$ such that $h \circ f = g$. This dependent composition becomes an ordinary composition in the total space $\totalty{P}$ which is Segal, since $B$ is. Now, as $\hom^{P}_{\id_{b'}}(e',e'') \equiv \hom_{P\,b'}(e',e'')$, and $P\,b'$ is discrete, $h$ can be taken to be an identity. But then $f = g$.
\end{proof}

\begin{cor}\label{cor:cov-disc-fib}
A cocartesian family over a Segal type is covariant if and only if all its fibers are discrete.
\end{cor}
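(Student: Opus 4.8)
The plan is to prove the two implications separately, in each case reducing to one of the two immediately preceding results rather than re-proving everything from scratch. For the ``if'' direction I would assume that $P : B \to \UU$ is a cocartesian family over the Segal type $B$ all of whose fibers $P\,b$ are discrete, and simply verify the hypotheses of \cref{prop:disc-fib-and-cocart-lifting-implies-cov}. By definition a cocartesian family is isoinner---hence in particular an inner family---and it has the cocartesian lifting property; together with the assumed discreteness of the fibers these are exactly the three conditions that proposition requires, so covariance of $P$ follows at once.

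For the ``only if'' direction I would start from a covariant $P$ and extract discreteness of an arbitrary fiber. First, fixing $b:B$ and $d:P\,b$, I instantiate the defining contractibility of $\isCovFam(P)$ at the identity arrow $u \defeq \id_b$. Since a dependent arrow over an identity is (even definitionally) just an arrow in the fiber, we have $\dhom^P_{\id_b}(d,e)\equiv\hom_{P\,b}(d,e)$, and so the statement obtained is that $\sum_{e:P\,b}\hom_{P\,b}(d,e)$ is contractible for every $d:P\,b$.

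The only nonroutine step is then to convert this into discreteness of $P\,b$, and this is where I expect the real content to sit. I would compare the contractible type above with the based-path space $\sum_{e:P\,b}(d=_{P\,b}e)$, which is contractible as well, through the fiberwise map induced by $\idtoarr_{P\,b}$ over $e:P\,b$. A map between contractible total spaces is an equivalence on total spaces, hence a fiberwise equivalence; consequently each $\idtoarr_{P\,b,d,e}$ is an equivalence and $P\,b$ is discrete. This is precisely Riehl--Shulman's characterization of discrete types as covariant families over $\unit$, and the care needed is to ensure that the contractibility produced by $\isCovFam$ really is the total space of $\idtoarr_{P\,b}$ rather than a merely pointwise assertion. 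With both implications in hand, the corollary follows by combining them.
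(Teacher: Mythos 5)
Your proposal is correct and follows essentially the route the paper intends (the paper leaves the proof implicit): the ``if'' direction is exactly an application of \cref{prop:disc-fib-and-cocart-lifting-implies-cov}, whose hypotheses are verified just as you do, and the ``only if'' direction is the standard fact that covariant families have discrete fibers, proved by Riehl--Shulman (\cite[Proposition~8.18]{RS17}) via precisely your argument: instantiate covariance at $\id_b$, note that dependent arrows over an identity are definitionally arrows in the fiber, and conclude that the totalization of $\idtoarr_{P\,b}$ is a map between contractible types, hence a fiberwise equivalence.
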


\begin{prop}\label{cor:cov-all-cocart}
A cocartesian family is covariant if and only if all dependent arrows are cocartesian.
\end{prop}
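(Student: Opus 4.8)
The plan is to prove the two implications separately for a cocartesian family $P:B\to\UU$ over a Rezk type $B$, in both cases routing through the already-established \cref{cor:cov-disc-fib}, which identifies covariance of a cocartesian family with discreteness of all of its fibers.

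First I would treat the direction ``covariant $\Rightarrow$ every dependent arrow is cocartesian''. Assume $P$ is covariant and fix $u:\hom_B(a,b)$ together with $e:P\,a$. The covariance condition asserts precisely that the type $\sum_{e':P\,b}\dhom^P_u(e,e')$ of lifts of $u$ with source $e$ is contractible. Since $P$ is cocartesian it also supplies a distinguished element of this type, namely the cocartesian lift $\coliftarr{P}{u}{e}$, which is a cocartesian arrow by construction. Now $\isCocartArr^P_u(-)$ is a proposition in its arrow argument, hence a family of propositions over the total space $\sum_{e':P\,b}\dhom^P_u(e,e')$; as this total space is contractible and the proposition holds at the center, it holds at every point. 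Thus an arbitrary dependent arrow $f:\dhom^P_u(e,e')$, being homotopic to $\coliftarr{P}{u}{e}$ inside the contractible lift space, is itself cocartesian.

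For the converse, assume every dependent arrow of $P$ is cocartesian; by \cref{cor:cov-disc-fib} it then suffices to show that each fiber $P\,b$ is discrete. An arrow of $P\,b$ is the same as a dependent arrow over $\id_b$, hence cocartesian by hypothesis, and by \cref{lem:cocart-arrows-isos}(\ref{it:cocart-arrows-over-ids-are-isos}) every cocartesian arrow over an identity is an isomorphism; so every arrow of $P\,b$ is an isomorphism. Since $P$ is an isoinner family over a Rezk base, the fiber $P\,b$ is itself Rezk, and I would conclude by observing that a Rezk type all of whose arrows are isomorphisms is discrete: the map $\idtoarr_{P\,b}$ factors as the Rezk-equivalence $\idtoiso_{P\,b}$ followed by the forgetful inclusion $\iso_{P\,b}(x,y)\hookrightarrow\hom_{P\,b}(x,y)$, and the latter is an equivalence exactly because $\isIso$ is an inhabited, hence contractible, proposition at every arrow. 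Therefore all fibers are discrete and $P$ is covariant.

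The bookkeeping is light throughout; the only step demanding genuine care is the final move of the backward direction, upgrading ``all arrows invertible'' to discreteness. The potential obstacle there is ensuring this step actually uses Rezk-completeness of the fiber (so that $\idtoiso$, not merely the weaker data available for a bare Segal type, is an equivalence) and confirming that the pointwise contractibility of $\isIso(f)$ is exactly what makes the inclusion $\iso\hookrightarrow\hom$ an equivalence, so that $\idtoarr$ inherits the equivalence property and discreteness follows.
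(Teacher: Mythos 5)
Your proposal is correct, and your forward direction is essentially the paper's: covariance makes the space of lifts of $u$ from $e$ contractible, the cocartesian lift sits in it, and since $\isCocartArr^P_u$ is a proposition it transports along the contraction to every lift. Your backward direction, however, takes a genuinely different route. The paper verifies covariance directly: given two lifts $f,g$ of $u$ from $e$, the filler between them supplied by the cocartesian universal property lies over an identity, hence (since $g$ is also cocartesian, by right cancelation) is an isomorphism and thus an identity in the Rezk fiber, so the lift space is contractible and the definition of covariance holds on the nose. You instead reduce to \cref{cor:cov-disc-fib}: arrows in a fiber are dependent arrows over identities, hence cocartesian by hypothesis, hence isomorphisms by \cref{lem:cocart-arrows-isos}(\ref{it:cocart-arrows-over-ids-are-isos}); then you upgrade ``all arrows invertible'' to discreteness of the Rezk fiber via the factorization $\idtoarr = U \circ \idtoiso$ with $U : \iso_{P\,b}(x,y) \to \hom_{P\,b}(x,y)$ the forgetful projection, which is an equivalence precisely because each $\isIso(f)$ is an inhabited proposition. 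Both arguments ultimately rest on the same two facts (right cancelation plus ``cocartesian over an identity implies iso''), but your version is more modular — it makes explicit the conceptual chain ``all arrows cocartesian $\Rightarrow$ fibers groupoidal $\Rightarrow$ covariant'' and reuses the established characterization — while the paper's is shorter and self-contained, checking the contractibility clause of covariance directly without passing through fiber discreteness. Your final step (Rezk $+$ all arrows invertible $\Rightarrow$ discrete) is exactly right and is the one place your route needs an argument the paper's does not; you have supplied it correctly.
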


\begin{proof}
Let $P: B \to \UU$ be a cocartesian family.

If $P$ is covariant, then the unique lift for any $u:b \to b'$ in $B$ \wrt~$e:P\,b$ must be (identifiable with) a cocartesian arrow. Since any dependent arrow is given as a lift of some arrow in the base, it follows that any dependent arrow is cocartesian.

Conversely, if every dependent arrow in $P$ is cocartesian, any two lifts of $u:b \to b'$ beginning at some $e:P\,b$ become equal (since the filler induced by the universal property is exhibited as an isomorphism).
\end{proof}

\subsection{Closure properties of covariant families}

\begin{prop}[Cosmological closure properties of covariant families]\label{prop:cov-cosm-clos}
	\leavevmode
	\begin{enumerate}
	\item Over Rezk bases, it holds that:
		\begin{itemize}
			\item[]	Covariant families are closed under composition, dependent products, pullback along arbitrary maps, and cotensoring with maps/shape inclusions. Furthermore, they are closed under sequential limits, and satisfy left cancelation. Families corresponding to equivalences or terminal projections are always covariant.
		\end{itemize}
	\item Between covariant families over Rezk bases, it holds that:
		\begin{itemize}
			\item[] Fibered functors are closed under (both horizontal and vertical) composition, dependent products, pullback, sequential limits,\footnote{all three objectwise limit notions satisfying the expected universal properties \wrt~to fibered functors} and Leibniz cotensors.
			\item[] Fibered equivalences and fibered functors into the identity of $\unit$ are always covariant.
		\end{itemize}
	\end{enumerate}
\end{prop}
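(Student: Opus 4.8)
The plan is to reduce both halves of the statement to the orthogonality characterization of covariant families recalled at the start of \cref{sec:closure}, namely that $P: B \to \UU$ is covariant if and only if its unstraightening $\Un_B(P)$ is right orthogonal to the initial vertex inclusion $i_0: \unit \hookrightarrow \Delta^1$. Once this is in hand, essentially every assertion about the \emph{families} becomes a direct instance of the closure results for $j$-orthogonal families collected in \cref{ssec:clos-orth}, specialized to $j :\jdeq i_0$.

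Concretely, for the first block I would invoke, in order, closedness under dependent products (and hence binary products and exponentiation, \cref{prop:orth-exp}), under composition together with left cancelation, under pullback along arbitrary maps (the pullback theorem and its corollary of \cref{ssec:clos-orth}), under cotensoring with a map or shape inclusion (the Leibniz-cotensor corollary), and under sequential limits. That covariant families enjoy the two extra properties absent from the cocartesian case---closure under sequential limits and left cancelation---is precisely the shadow of the fact that $i_0$-orthogonality is an honest equivalence condition, so that the \emph{orthogonal} closure lemmas (rather than the weaker $j$-LARI ones) apply. The claim that equivalences are covariant is \cref{prop:orth-maps-equivs}, and the unit (terminal) family $\lambda b.\unit$ is covariant by a one-line direct check, the relevant $\Sigma$-type of lifts being contractible since its fibers are extension types into $\unit$; alternatively this is an instance of the equivalence case via $\sum_{b:B}\unit \simeq B$.

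For the second block, concerning fibered functors, the key observation is that between covariant families every fibered functor is automatically a cocartesian functor: by \cref{cor:cov-all-cocart} all dependent arrows in a covariant family are cocartesian, so the preservation condition of \cref{def:cocart-fun} holds vacuously. Since covariant families over Rezk types are in particular cocartesian (as established above), I can then transport the cocartesian-functor closure statements of \cref{prop:cocart-cosm-closure} across the inclusion of covariant into cocartesian families. Horizontal and vertical composition, dependent products, pullback, sequential limits, and Leibniz cotensors all yield fibered functors whose domains and codomains are covariant by the first block; as these functors are cocartesian and ``cocartesian'' collapses to ``fibered'' here, the universal properties proven for cocartesian functors become universal properties with respect to all fibered functors. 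Fibered equivalences are cocartesian by \cref{prop:cocart-fun-fib-equiv}, and a fibered functor into the identity of $\unit$ is covariant trivially.

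The one genuine subtlety---rather than a real obstacle---is making the reduction airtight for those operations that \cref{sec:closure} phrased for $j$-LARI rather than $j$-orthogonal data (for instance Leibniz cotensors and sequential limits of functors). Here I would either appeal directly to the orthogonal-family versions in \cref{ssec:clos-orth}, which are available since $i_0$-orthogonality holds, or observe that the cocartesian-functor arguments specialize verbatim once \cref{cor:cov-all-cocart} identifies ``cocartesian functor'' with ``fibered functor''. No new computation is required; the entire proof is a matter of correctly instantiating $j :\jdeq i_0$ and citing the matching closure lemma in each case.
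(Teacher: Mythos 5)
Your overall strategy coincides with the paper's own proof: the closure properties of the \emph{families} are instances of the $i_0$-orthogonality results of \cref{ssec:clos-orth}, and the statements about \emph{functors} are transported from \cref{prop:cocart-cosm-closure} via the observation that fibered functors between covariant families are automatically cocartesian (the paper makes exactly this remark, citing what amounts to \cref{cor:cov-all-cocart}). Your treatment of equivalences, terminal projections, composition, dependent products, and Leibniz cotensors, and your point that the cocartesian universal properties collapse to universal properties with respect to all fibered functors, are all correct.

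The gap is at the objectwise pullback (and hence the objectwise sequential limit), which is precisely the one step the paper's proof argues by hand rather than by citation. You justify covariance of $\pi''' : E' \times_E E'' \to B' \times_B B''$ as ``covariant by the first block,'' but the first block's ``pullback along arbitrary maps'' is base change of a \emph{single} fibration, $k^*P$ for $k : A \to B$; the objectwise pullback is a limit of a cospan of fibrations in the arrow category and is not an instance of any first-block operation as listed. What transporting \cref{prop:cocart-fun-pb} gives you is only that $\pi'''$ is a \emph{cocartesian} fibration, i.e.\ existence of (cocartesian) lifts; covariance additionally requires that the type of \emph{all} lifts be contractible, and neither of your two suggested justifications supplies this uniqueness. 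This is what the paper proves directly: since exponentiation preserves pullbacks, $(E' \times_E E'')^{\Delta^1} \equiv E'^{\Delta^1} \times_{E^{\Delta^1}} E''^{\Delta^1}$, so a lift of $\langle u, u', u'' \rangle$ starting at $\langle e, e', e'' \rangle$ is a compatible triple of componentwise lifts, and the type of such triples is a pullback of contractible types (componentwise covariance) over a contractible type, hence contractible. One \emph{can} instead piece the claim together from first-block properties --- pull all three fibrations back to $B' \times_B B''$, use left cancelation to see that the induced map between total spaces is $i_0$-orthogonal, then apply base change and composition --- but that nontrivial chain appears nowhere in your proposal, so as written the crux of the proposition is asserted rather than proved.
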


\begin{proof}
Most of the properties follow from the established results about cocartesian fibrations \cref{prop:cocart-cosm-closure}, or the additional closure properties that hold for right classes. Recall that fibered functors between covariant families are automatically cocartesian.
We are only left with stability under objectwise pullback (which will imply stability under objectwise sequential limits). We assume a diagram as follows whose vertically drawn maps are covariant fibrations:
\[\begin{tikzcd}
	& {} && {} \\
	{E'\defeq \widetilde{P'}} & {} & {E \defeq \widetilde{P}} && {E'' \defeq \widetilde{P''}} \\
	& {} \\
	{B'} & {} & B && {B''}
	\arrow[two heads, from=2-3, to=4-3]
	\arrow[two heads, from=2-1, to=4-1]
	\arrow[from=2-1, to=2-3]
	\arrow[from=4-1, to=4-3]
	\arrow[two heads, from=2-5, to=4-5]
	\arrow[from=4-5, to=4-3]
	\arrow[from=2-5, to=2-3]
\end{tikzcd}\]
We are to prove that the induced map $E' \times_E E'' \to B' \times_B B''$ is a covariant fibration as well.
To this end, we observe the following. Given a triple of arrows $\angled{u,u', u''}$ in $B' \times_B B''$ with points $\angled{e_0,e'_0,e''_0}:E' \times_E E''$ lying over the respective domains the cocartesian lift is given fiberwise by
\[ \angled{P_!(u,e_0), P'_!(u',e_0'), P''_!(u'',e_0'')}.\]
Due to covariance all the components are uniquely determined (after projecting), and so the lift in $E' \times_E E'' \to B' \times_B B''$ is as well.
\end{proof}

\subsection{Covariant families as discrete objects}

In $\infty$-cosmos theory, covariant families arise as a kind of discrete objects. Their analogues in simplicial type theory satisfy a corresponding property as well.

Namely, over a fixed base, a discrete cocartesian fibration in an $\infty$-cosmos is defined as a cocartesian fibration (relative to the given $\infty$-cosmos) that is required to be discrete as an object of the slice $\infty$-cosmos~\cite[Proposition~1.2.22]{RV}. There are two equivalent ways of expressing the latter. In our current type-theoretic setting, in absence of categorical universes, we cannot give a systematic account of these definitions, but we can still give appropriate translations of both of these criteria, and prove they are satisfied by discrete covariant families.

To state these principles, we need to type-theoretically capture some of the structure of sliced $\infty$-cosmoses, namely sliced versions of function types as well as simplicial cotensors.

The enrichment over categories is simply given by the object of fiberwise maps (\cf~\cref{ssec:fib-fun-defn}), itself a Rezk type:

\begin{defn}[Sliced function type, \protect{\cite[Proposition~1.2.22(ii)]{RV}}]
For two maps $\pi:E\to B$, $\xi:F \to B$ over a common base type $B$, the \emph{sliced function type (over $B$)} is given by the pullback object:
\[\begin{tikzcd}
	{\mathrm{Fun}_B(\xi,\pi)} && {E^F} \\
	{\unit} && {B^F}
	\arrow[from=1-1, to=2-1]
	\arrow["{\xi}"', from=2-1, to=2-3]
	\arrow[from=1-1, to=1-3]
	\arrow["{\pi^F}", from=1-3, to=2-3]
	\arrow["\lrcorner"{very near start, rotate=0}, from=1-1, to=2-3, phantom]
\end{tikzcd}\]
\end{defn}

Taking the sliced cotensor amounts to taking the usual exponential of each fiber:

\begin{defn}[Sliced cotensor, \protect{\cite[Proposition~1.2.22(vi)]{RV}}]
	Let $\pi:E\to B$ be a map, and $X$ be a type or shape. The \emph{sliced exponential (over $B$) of $\pi$ by $X$} is given by the map $X \iexp E \to B$ defined as:
\[\begin{tikzcd}
	{X \boxtimes E} && {E^X} \\
	{B} && {B^X}
	\arrow[from=1-1, to=2-1]
	\arrow["{\mathrm{cst}}"', from=2-1, to=2-3]
	\arrow[from=1-1, to=1-3]
	\arrow["{\pi^X}", from=1-3, to=2-3]
	\arrow["\lrcorner"{very near start, rotate=0}, from=1-1, to=2-3, phantom]
\end{tikzcd}\]
\end{defn}

If $X$ is a shape $\Phi$, we can take the strict extension type ${\Phi\boxtimes E}$ which is fibered equivalent to the dependent pair type along the powers of the fibers, ~\ie, over $B$, we have an equivalence
\[\begin{tikzcd}
	{\Phi\boxtimes E} && {\sum_{b:B} \Phi \to P\,b} \\
	& {B}
	\arrow[from=1-1, to=2-2, two heads]
	\arrow["{\equiv}", from=1-1, to=1-3]
	\arrow[from=1-3, to=2-2, two heads]
\end{tikzcd}\]
where $P:B \to \UU$ denotes the family of fibers of $\pi$.

In particular, the sliced exponential \wrt~$\Phi \jdeq \Delta^1$ has as total type the type of vertical arrows.

\begin{prop}[Covariant families are cosmologically discrete, \cf~\protect{\cite[Definition~5.5.3]{RV}}]
	Over a Rezk type, for a cocartesian family $P:B \to \UU$ with associated projection $\pi:E \to B$ the following are equivalent:
	\begin{enumerate}
		\item\label{it:disc-orth} The map $\pi$ is $i_0$-orthogonal.
		\item\label{it:disc-shape} The map induced by the non-degenerate map $s:\Delta^1 \to \walkBinv$ is a fibered equivalence:
		\[\begin{tikzcd}
			{\mathbb E \boxtimes E} && {\Delta^1 \boxtimes E} \\
			& {B}
			\arrow["{\equiv}", from=1-1, to=1-3]
			\arrow[from=1-1, to=2-2, two heads]
			\arrow[from=1-3, to=2-2, two heads]
		\end{tikzcd}\]
			\item\label{it:disc-fun} For all global elements $b:\unit \to B$, the sliced function type $\Fun_B(b,\pi)$ is discrete.
	\end{enumerate}
	Furthermore, if $P:B \to \UU$ is a covariant family, then, for any map $\xi: F \to B$, the sliced function type $\Fun_B(\xi,\pi)$ is discrete.
\end{prop}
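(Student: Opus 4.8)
The plan is to show that conditions \ref{it:disc-orth}, \ref{it:disc-shape} and \ref{it:disc-fun} are each equivalent to the single condition that every fiber $P\,b$ be discrete, and then to deduce the concluding assertion from this reduction. Throughout I use that since $P$ is cocartesian it is in particular isoinner, so every fiber $P\,b$ is a Rezk type. This lets me translate freely between ``every arrow of $P\,b$ is invertible'' and ``$P\,b$ is discrete'': in a Rezk type $\idtoiso$ is an equivalence, and $\idtoarr$ factors as $(x=y) \xrightarrow{\idtoiso} \iso_{P\,b}(x,y) \hookrightarrow \hom_{P\,b}(x,y)$, so $\idtoarr$ is an equivalence exactly when the forgetful inclusion $\iso_{P\,b}(x,y) \hookrightarrow \hom_{P\,b}(x,y)$ is, which (as being an isomorphism is a proposition) happens exactly when every arrow of $P\,b$ is invertible.

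For \ref{it:disc-orth}, I note that $i_0$-orthogonality of $\pi$ is precisely the defining condition for $P$ to be a covariant family, so by \cref{cor:cov-disc-fib}---a cocartesian family over a (complete) Segal type is covariant iff all its fibers are discrete---this condition is equivalent to fiberwise discreteness. For \ref{it:disc-fun} I would compute the relative function type directly: taking $\xi \jdeq b : \unit \to B$ and $F \jdeq \unit$, the defining pullback collapses (using $E^\unit \equiv E$, $B^\unit \equiv B$) to the fiber, so $\ifun(b,\pi) \equiv P\,b$, and \ref{it:disc-fun} literally asserts that each $P\,b$ is discrete.

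The substantive step is \ref{it:disc-shape}. First I would rewrite the relative cotensors fiberwise as $X \boxtimes E \equiv \sum_{b:B}(X \to P\,b)$, since a map $X \to E$ lying over the constant diagram $\cst_b$ is the same as a map into the fiber $P\,b$; under this identification the map induced by $\Delta^1 \hookrightarrow \walkBinv$ is, fiber by fiber, the restriction $(\walkBinv \to P\,b) \to (P\,b)^{\Delta^1}$. Using that $P\,b$ is Segal, the representability $(\walkBinv \to P\,b) \equiv \iso(P\,b)$ exhibits this as the forgetful map $\iso(P\,b) \to (P\,b)^{\Delta^1}$ sending an isomorphism to its underlying arrow, which by the first paragraph is an equivalence precisely when $P\,b$ is discrete. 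Finally, because this is a map of families over $B$ that is the identity on the base, being a fibered equivalence coincides with being a fiberwise equivalence, so \ref{it:disc-shape} holds iff every $P\,b$ is discrete. Matching the abstractly defined relative-cotensor map with the concrete iso-versus-arrow comparison and invoking Rezk-completeness of the fibers is where I expect the only real friction.

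For the concluding statement, assume $P$ is covariant, so all fibers are discrete. For an arbitrary $\xi : F \to B$ the relative function type is the type of lifts of $\xi$, i.e.\ sections of the pullback family, $\ifun(\xi,\pi) \equiv \prod_{x:F} P(\xi\,x)$. Each $P(\xi\,x)$ is discrete, and discreteness is closed under dependent products: by function extensionality and the commutation of the $\Delta^1$-cotensor with $\prod$, the map $\idtoarr$ for $\prod_{x:F} P(\xi\,x)$ is identified with the dependent product of the maps $\idtoarr_{P(\xi\,x)}$, each an equivalence, hence itself an equivalence. Thus $\ifun(\xi,\pi)$ is discrete, which moreover recovers \ref{it:disc-fun} as the special case $F \jdeq \unit$.
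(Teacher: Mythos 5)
Your proposal is correct and follows essentially the same route as the paper: all three conditions are reduced to fiberwise discreteness of $P$, using that $i_0$-orthogonality is exactly covariance together with \cref{cor:cov-disc-fib}, the computation $\ifun(b,\pi)\equiv P\,b$, and the identification of the comparison map induced by $\Delta^1 \hookrightarrow \walkBinv$ with the fiberwise inclusion $\iso(P\,b) \to (P\,b)^{\Delta^1}$, which for Rezk fibers is an equivalence precisely when the fiber is discrete. The only minor divergence is the closing claim: you identify $\ifun(\xi,\pi)$ with $\prod_{x:F}P(\xi\,x)$ and invoke closure of discreteness under dependent products, whereas the paper notes that $\pi^F$ is again a covariant fibration (closure under cotensoring, \cref{prop:cov-cosm-clos}) and exhibits $\ifun_B(\xi,\pi)$ as its fiber over $\xi$ -- both are one-line appeals to closure properties already established, so nothing is gained or lost either way.
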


\begin{proof} ~ \\
	\begin{enumerate}
		\item[$\ref{it:disc-orth} \iff \ref{it:disc-shape}$:] Criterion $\ref{it:disc-shape}$ is equivalent to $\prod_{b:B} \isEquiv(s \to P\,b)$, $s \to P\,b:(\walkBinv \to P\,b) \to (\Delta^1 \to P\,b)$, which precisely says that all fibers of $P$ are discrete. Given that $P$ is cocartesian, this is equivalent to $P$ being covariant by \cref{cor:cov-disc-fib}.
		\item[$\ref{it:disc-orth} \iff \ref{it:disc-fun}$:] Similarly, since $\Fun_B(b,\pi) \equiv P\,b$, Criterion $\ref{it:disc-fun}$ is also equivalent to $P$ being covariant.
	\end{enumerate}
Now, assume $P$ is covariant. Given a map $\xi:F \to B$, the exponential $\pi^F: E^F \to B^F$ is a covariant fibration, too, hence the fiber $\xi^* \pi^F \equiv \Fun_B(\xi,\pi)$ is discrete.
\end{proof}

\subsection{Directed encode-decode}

We point out a directed version of the
``encode-decode method'' in order to characterize
$\hom$-types in higher inductive types
(and localizations).

The encode-decode method is used for a type $A$ with $a:A$,
together with a family $P : A \to \UU$ with $d: P(a)$.
The elimination rule for identity types gives a fiberwise
map $\varphi : \prod_{x:A}(a=x \to P(x))$,
sending the reflexivity path to $d$.
Since $\sum_{x:A}a=x$ is contractible,
and a fiberwise map is a fiberwise equivalence if and only if
the map on total types is an equivalence,
we get that $\varphi$ is a fiberwise equivalence
if and only if $\sum_{x:A}P(x)$ is contractible.

This has the following directed analog:
\begin{theorem}
	Let $A$ be a Segal type with $a:A$, and let
	$P : A \to \UU$ be a covariant family with $d:P(a)$.
	The fiberwise map
	$\varphi : \prod_{x:A}(\hom_A(a,x) \to P(x))$
	given by $\varphi(x,f) := f_!\,x$,
	is a fiberwise equivalence if and only if
	$\pair ad$ is an initial object in $\sum_{x:A}P(x)$.\footnote{Recall that $\sum_{x:A}P(x)$ is a Segal type.}
\end{theorem}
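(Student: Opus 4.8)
The statement is a directed analogue of the encode-decode characterization, so the plan is to reduce the "fiberwise equivalence" claim to a statement about total spaces and then identify that statement with the initiality of $\pair{a}{d}$ in $\totalty{P} \defeq \sum_{x:A} P(x)$. The key structural fact I would invoke is that a fiberwise map is a fiberwise equivalence if and only if the induced map on total spaces is an equivalence. Applying this to $\varphi : \prod_{x:A}(\hom_A(a,x) \to P(x))$, the domain family is $x \mapsto \hom_A(a,x)$, whose total space $\sum_{x:A}\hom_A(a,x)$ is exactly the coslice type $a/A$, while the codomain total space is $\totalty{P}$. So I would first rewrite the assertion as: the total map $\total(\varphi) : a/A \to \totalty{P}$ is an equivalence.

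Next I would analyze this total map. By covariance of $P$ (via the characterization that covariant families are those for which $i_0 \cotens \pi_P$ is an equivalence, equivalently left fibrations), the projection $\totalty{P} \to A$ is a covariant fibration, and $\pair{a}{d}$ is a point in the fiber over $a$. The map $\total(\varphi)$ sends $\pair{x}{f : a \to x}$ to $\pair{x}{f_!\,d}$, i.e.\ it performs cocartesian/covariant transport of $d$ along $f$. The plan is to recognize $\total(\varphi)$ as the comparison map exhibiting $\pair{a}{d}$ as a "free" or "universal" lift: concretely, $a/A \equiv \sum_{x:A}\hom_A(a,x)$ is the representable left fibration, and $\total(\varphi)$ is the unique map of left fibrations over $A$ sending $\id_a$ to $\pair{a}{d}$. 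Since both $a/A \to A$ and $\totalty{P} \to A$ are covariant, the directed Yoneda-style reasoning (the discrete Yoneda Lemma of Riehl--Shulman, or the initiality of $\id_a$ in $a/A$) tells us $\total(\varphi)$ is an equivalence precisely when it induces an equivalence on the relevant hom-data.

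The cleanest route, which I would take as the main line of argument, is to identify "$\total(\varphi)$ is an equivalence" with "$\pair{a}{d}$ is initial in $\totalty{P}$." The point $\id_a : \hom_A(a,a)$ is an initial object of $a/A$ (this is the directed analogue of contractibility of $\sum_{x}a=x$, and holds because $a/A \to A$ is a left fibration with a terminal-style universal property at the identity). The map $\total(\varphi)$ preserves this distinguished point, sending $\pair{a}{\id_a}$ to $\pair{a}{d}$. Since equivalences between Segal types preserve and reflect initial objects, and since a map of covariant fibrations over $A$ out of the representable $a/A$ is an equivalence exactly when the image of the initial object is again initial, I would conclude that $\total(\varphi)$ is an equivalence if and only if $\pair{a}{d}$ is initial in $\totalty{P}$. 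This gives both directions at once.

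\textbf{Main obstacle.} The delicate step is the equivalence "$\total(\varphi)$ is an equivalence $\iff \pair{a}{d}$ initial", specifically the direction producing an equivalence from mere initiality. For this I expect to need that a map of covariant families over $A$ whose source is the representable $a/A$ is determined by the image of the universal element, together with the fact that initiality of that image forces the fiberwise maps $\hom_A(a,x) \to P(x)$ to be equivalences. I would argue this by observing that both sides compute, fiberwise, the hom-spaces out of an initial object: initiality of $\pair{a}{d}$ says each $\hom_{\totalty{P}}(\pair{a}{d}, \pair{x}{e})$ is contractible, and via the covariant transport correspondence this contractibility is equivalent to $\varphi_x$ being an equivalence for every $x$. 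Establishing this correspondence carefully — matching the universal property of cocartesian/covariant lifts with the components of $\varphi$ — is where the real work lies; everything else is formal manipulation of total spaces and the fiberwise-equivalence principle.
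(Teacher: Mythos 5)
Your proposal is correct and follows essentially the same route as the paper's proof: both rest on (i) the initiality of $\pair{a}{\id_a}$ in $\sum_{x:A}\hom_A(a,x)$ together with the fiberwise-versus-total equivalence principle for the easy direction, and (ii) for the hard direction, identifying the fiber of $\varphi_x$ at $e$, namely $\sum_{f:\hom_A(a,x)}(f_!\,d=e)$, with the contractible hom-type $\sum_{f:\hom_A(a,x)}\dhom^P_f(d,e)$ via the covariant-transport equivalence $\dhom^P_f(d,e)\equiv(f_!\,d=e)$. The ``delicate step'' you isolate is exactly the content of the transport lemma the paper cites from Riehl--Shulman, so your sketch matches the paper's argument.
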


\begin{proof}
	By~\cite[Lemma~9.8]{RS17}, $\sum_{x:A}\hom_A(a,x)$ has $\pair{a}{\id_a}$
	as initial object.

	Conversely, if $\pair ad$ is initial in $\sum_{x:A}P(x)$,
	then each type $\sum_{f:\hom_A(a,x)}\dhom^P_f(d,e)$
	is contractible, where $x:A$ and $e:P(x)$.
	By~\cite[Lemma~8.13]{RS17},
	we have equivalences $\dhom^P_f(d,e) \equiv (f_!\,d = e)$,
	and hence we get that the fiber of $\varphi_x$ at $e$,
	which is $\sum_{f:\hom_A(a,x)}(f_!\,d=e)$,
	is equivalent to $\sum_{f:\hom_A(a,x)}\dhom^P_f(d,e)$,
	and is hence contractible.
\end{proof}

\section{Yoneda Lemma for cocartesian families}\label{sec:yoneda}

In our synthetic setting, we prove a version of the Yoneda Lemma for cocartesian fibrations which reads as follows: For a Rezk type $B$ and a cocartesian family $P:B \to \UU$, evaluation at the identity arrow
is an equivalence:
\[ \prod_{b:B} \isEquiv \left( \Big( \prod^\cocart_{u:\comma{b}{B}} P(\partial_1\,u) \Big) \stackrel{\ev_{\id_b}}{\longrightarrow} P\,b\right)\]
Generalizing work by Street~\cite{StrBicat}, this Yoneda Lemma has been formulated and proved by Riehl--Verity w.r.t.~$\infty$-cosmoses~\cite[Theorem~5.7.3]{RV}, showing that it applies to cocartesian fibrations of $(\infty,n)$-categories for $0 \le n \le \infty$. For our proof, we adapt Riehl--Verity's methods to the type-theoretic setting, with significant simplifications because we are only considering $\inftyone$-categories. At the same time, this generalizes Riehl--Shulman's type-theoretic Yoneda Lemma for \emph{discrete} covariant fibrations~\cite[Theorems~9.1, 9.5]{RS17}.\footnote{In the discrete covariant case, the statement reads exactly the same, only without the restriction to cocartesian sections which has become vacuous.} A semantic version of the fibered Yoneda Lemma for discrete (co-)cartesian fibrations over Segal spaces has also been established by Rasekh~\cite[Theorem~3.49]{RasYon}.

As discussed by Riehl--Shulman, the synthetic Yoneda Lemma can be understood as a \emph{directed} path induction principle, now also applying to the case of \emph{categorical} fibers. In fact---following Riehl--Verity as well as Riehl--Shulman---the Yoneda Lemma will be implied by a general result about cocartesian fibrations $P:B \to \UU$ over a base $B$ with initial element $b:B$. Namely, evaluation at $b$ induces a LARI adjunction between the fiber and the type of sections which yields a (quasi-)equivalence when restricted to the cocartesian sections:
\[\begin{tikzcd}
	{\prod_B^{\mathrm{cocart}} P} && {P\,b} \\
	& {} \\
	& {\prod_B P}
	\arrow[""{name=0, anchor=center, inner sep=0}, "{\mathrm{ev}_b}"', curve={height=12pt}, from=1-1, to=1-3]
	\arrow[""{name=1, anchor=center, inner sep=0}, "{\mathbf{y}_b}"', curve={height=12pt}, from=1-3, to=1-1]
	\arrow[hook, from=1-1, to=3-2]
	\arrow[""{name=2, anchor=center, inner sep=0}, "{\mathrm{ev}_b}"', curve={height=12pt}, from=3-2, to=1-3]
	\arrow[""{name=3, anchor=center, inner sep=0}, "{\mathbf{y}_b}"', curve={height=12pt}, dotted, from=1-3, to=3-2]
	\arrow["\simeq"{description}, draw=none, from=1, to=0]
	\arrow["\dashv"{anchor=center, rotate=-30}, draw=none, from=3, to=2]
\end{tikzcd}\]
This is discussed in \cref{ssec:cocart-sect-initial}. The Yoneda Lemma will follow as an instance of this result, which gets discussed in \cref{ssec:yoneda}.

Here, for a given $b:B$, the LARI $\yon \defeq \yon_b$ is defined at $d:P\,b$, $x:B$ via cocartesian transport as
\[ (\yon\,d)(x) \defeq \partial_1\,P_!(\emptyset_x,d).\]
A key step is showing that $\yon$ lands in \emph{cocartesian} sections. This can be shown by interpreting cocartesian lifts as $2$-cells $\chi_d: \cst \,d \rightarrow \yon\,d$, and then concluding by the formal properties of cocartesian arrows that the components of $\yon\,d$ are, in fact, cocartesian arrows.

\subsection{Cocartesian sections from initial elements}\label{ssec:cocart-sect-initial}

Recall the definition of an initial element in a type.

\begin{defn}[Initial element, {\protect\cite[Definition 9.6]{RS17}}]
Let $B$ be a type. An element $b:B$ is \emph{initial} if
\[ \prod_{a:B} \isContr(\hom_B(b,a)).\]
\end{defn}

We write $\emptyset_a:\hom_B(b,a)$ for the center of contraction if $b:B$ is initial. First, we establish the LARI adjunction result between the sections of the family and the fiber at the initial element.

\begin{prop}[{\protect{\cite[Proposition 5.7.13]{RV}}}]\label{prop:cocart-fam-evb-lari}
Let $P: B \to \UU$ be a cocartesian family over a Rezk type $B$. If $b:B$ is initial the evaluation functor $\ev_b: \prod_B P \to P\,b$ has a LARI:
	\[
\tikzset{%
	symbol/.style={%
		draw=none,
		every to/.append style={%
			edge node={node [sloped, allow upside down, auto=false]{$#1$}}}
	}
}
\begin{tikzcd}
	\prod_B P \ar[rr, bend right = 25, "\ev_b" swap, ""{name=B, below}] && P\,b \ar[ll, bend right = 25, dotted, "\yon" swap, ""{name=A, above}]
	\ar[from=B, to=A, symbol=\vdash]
\end{tikzcd}
\]
\end{prop}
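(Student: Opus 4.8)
The plan is to exhibit the adjunction $\yon \dashv \ev_b$ directly through a pair of transposing maps, in the same quasi-equivalence style used for the Chevalley criterion in \cref{thm:cocart-fams-intl-char} and \cref{thm:cocart-fam-intl-char-fib}, and then to invoke \cref{thm:char-lari} to promote this quasi-transposition to a genuine LARI adjunction. The candidate left adjoint is $\yon$, defined on $d:P\,b$ and $x:B$ by $(\yon\,d)(x) \defeq \partial_1\,P_!(\emptyset_x,d)$, where initiality of $b$ supplies the center $\emptyset_x:\hom_B(b,x)$ of the contractible hom-type. First I would check that $\yon$ is a strict right inverse of $\ev_b$: since $\emptyset_b$ and $\id_b$ both inhabit the contractible type $\hom_B(b,b)$, we have $\emptyset_b = \id_b$, and then $P_!(\id_b,d) = \id_d$ by functoriality of cocartesian transport (\cref{prop:cocart-functoriality}), so that $(\yon\,d)(b) = d$ and hence $\ev_b\circ\yon = \id_{P\,b}$.

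The key step, flagged in the introduction, is to identify the action of the section $\yon\,d$ on arrows out of $b$ with the cocartesian lifts. Concretely, for $w:\hom_B(x,y)$ initiality gives an identification $\emptyset_y = w\circ\emptyset_x$, and functoriality (\cref{prop:cocart-functoriality}) yields $P_!(\emptyset_y,d) = P_!(w,(\emptyset_x)_!d)\circ P_!(\emptyset_x,d)$. As both $P_!(\emptyset_y,d)$ and $P_!(\emptyset_x,d)$ are cocartesian, right cancelation (\cref{prop:cocart-arr-closure}(\ref{it:cocart-arr-cancel})) shows the middle factor is cocartesian; identifying it with $(\yon\,d)(w)$ exhibits $\yon\,d$ as a cocartesian section. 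I would organize this by interpreting the lifts $P_!(\emptyset_x,d)$ as the components of a $2$-cell $\chi_d:\cst\,d\Rightarrow\yon\,d$. In particular, restricting the cocartesian section $\yon\,d$ along $\emptyset_x$ produces a cocartesian lift of $\emptyset_x$ starting at $d$, so uniqueness of cocartesian lifts (\cref{prop:cocart-lifts-unique-in-isoinner-fams}) forces the identification
\[ (\yon\,d)(\emptyset_x) = P_!(\emptyset_x,d), \]
which is exactly what makes the transposition below work.

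For the transposing maps, recall that by currying a morphism of sections $\kappa:\yon\,d\Rightarrow\sigma$ is precisely a pointwise family of vertical arrows $\kappa\,x:(\yon\,d)(x)\to\sigma\,x$, so no separate naturality condition is imposed. I would set $\Phi(\kappa)\defeq\kappa\,b:\hom_{P\,b}(d,\sigma\,b)$ (evaluation at $b$, using $(\yon\,d)(b)=d$), and conversely, for $g:\hom_{P\,b}(d,\sigma\,b)$, set $\Psi(g)(x)\defeq\tyfill_{P_!(\emptyset_x,d)}(\sigma(\emptyset_x)\circ g)$, using the cocartesian universal property of $P_!(\emptyset_x,d)$. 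Then $\Phi(\Psi(g)) = \Psi(g)(b) = \tyfill_{P_!(\emptyset_b,d)}(\sigma(\emptyset_b)\circ g) = \tyfill_{\id_d}(g) = g$ by \cref{cor:cocart-trivfill}(\ref{it:cocart-fill-comp}), while $\Psi(\Phi(\kappa))(x) = \tyfill_{P_!(\emptyset_x,d)}(\sigma(\emptyset_x)\circ\kappa\,b) = \kappa\,x$ because the naturality square of $\kappa$ over $\emptyset_x$ reads $\kappa\,x\circ(\yon\,d)(\emptyset_x) = \sigma(\emptyset_x)\circ\kappa\,b$, which after the identification $(\yon\,d)(\emptyset_x)=P_!(\emptyset_x,d)$ says precisely that $\kappa\,x$ is the cocartesian filler. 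Naturality of $\Phi$ in $\sigma$ is immediate from its description as whiskering, so $\Phi$ assembles into the natural hom-equivalence witnessing a LARI adjunction in the sense of \cref{thm:char-lari}, with $\yon$ a right inverse of $\ev_b$ by the first paragraph.

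The main obstacle is the second paragraph: rigorously establishing that $\yon\,d$ is a cocartesian section and that its restriction along $\emptyset_x$ coincides with $P_!(\emptyset_x,d)$. This is where the argument is delicate, since it must coherently combine the uniqueness of arrows out of the initial object, functoriality of cocartesian transport, right cancelation of cocartesian arrows, and uniqueness of cocartesian lifts, all up to the propositional identifications that these results provide.
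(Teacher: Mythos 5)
Your overall route is the paper's: the same $\yon$, the same transposing maps (your $\Phi,\Psi$ are the paper's $\varphi,\psi$), and the same two round-trip computations. You are also right that everything hinges on the identification $(\yon\,d)(\emptyset_x) = P_!(\emptyset_x,d)$ --- a point the paper's own proof glosses over by silently drawing the naturality square of $\kappa$ with $P_!(\emptyset_x,d)$ as its top edge. The problem is your argument \emph{for} that identification. To show $\yon\,d$ is a cocartesian section you derive, from $\emptyset_y = w\circ\emptyset_x$ and \cref{prop:cocart-functoriality}, the identity $P_!(\emptyset_y,d) = P_!(w,(\emptyset_x)_!d)\circ P_!(\emptyset_x,d)$, and then finish by ``identifying'' $P_!(w,(\emptyset_x)_!d)$ with $(\yon\,d)(w)$. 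That identification is precisely what has to be proved: the functoriality identity relates cocartesian lifts only to each other and says nothing about the arrow $(\yon\,d)(w) \jdeq \lambda t.(\emptyset_{w(t)})_!\,d$, which is defined pointwise from a priori unrelated lifts. So the step is circular (and the appeal to right cancelation there is vacuous, since $P_!(w,(\emptyset_x)_!d)$ is cocartesian by construction). The gap propagates: your use of \cref{prop:cocart-lifts-unique-in-isoinner-fams} presupposes that $(\yon\,d)(\emptyset_x)$ is cocartesian, and hence so does your verification that $\Psi(\Phi(\kappa)) = \kappa$.

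The missing ingredient is the one you name but never actually use: the $2$-cell $\chi_d : \cst\,d \Rightarrow \yon\,d$ with components $\chi_{d,x} \defeq P_!(\emptyset_x,d)$, which exists by the currying equivalence $\hom_{B \to E}(\cst\,d,\yon\,d) \equiv \prod_{x:B}\hom_E(d,(\yon\,d)(x))$, where $E \defeq \totalty{P}$. Its naturality square at $w : x \to y$ comes for free from the simplicial structure (\cref{ssec:mor-of-sections}) and, since $(\cst\,d)(w) = \id_d$, reads
\[ (\yon\,d)(w)\circ P_!(\emptyset_x,d) \;=\; P_!(\emptyset_y,d). \]
From this, \cref{prop:cocart-arr-closure}(\ref{it:cocart-arr-cancel}) gives that $(\yon\,d)(w)$ is cocartesian, and uniqueness of fillers along the cocartesian arrow $P_!(\emptyset_x,d)$ (\cref{cor:cocart-trivfill}) combined with your functoriality identity gives $(\yon\,d)(w) = P_!(w,(\emptyset_x)_!\,d)$; specializing to $x \defeq b$, $w \defeq \emptyset_x$ and using $P_!(\id_b,d)=\id_d$ yields $(\yon\,d)(\emptyset_x) = P_!(\emptyset_x,d)$ directly. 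Note this is exactly the argument the paper carries out in \cref{prop:yon-cocart-sec}; for the present proposition the full cocartesian-section property is not even needed, only the displayed equation at the arrows $\emptyset_x$. With this repair the rest of your proof goes through as written.
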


\begin{proof}
The candidate for the left adjoint is
\[ \yon: P\,b \to \prod_BP, \quad \yon :\jdeq \lambda d.\lambda x.  \coliftpt{(\emptyset_x)}{d} \jdeq \lambda d.\lambda x. \partial_1 \, \coliftarr{P}{\emptyset_x}{d}. \]
We will show that the map
\[ \varphi: \hom_{\prod_B P}(\yon\,d,\sigma) \to \hom_{P\,b}(d,\sigma \,b), \quad \varphi : \jdeq \lambda \kappa.\kappa \,b \]
is an equivalence.

Since $b$ is initial, for any $x:B$, the hom-type $\hom_B(b,x)$ is contractible with center $\emptyset_x$. Since $P$ is cocartesian, we can define the map
\[ \psi:\hom_{P\,b}(d,\sigma \,b) \to \hom_{\prod_B P}(\yon\,d,\sigma), \psi\defeq \lambda f.\lambda x.\tyfill_{P_!(\emptyset_x,d)}(\sigma(\emptyset_x) \circ f). \]
To illustrate, $\psi$ yields the right-hand vertical arrow in squares of the form:
\[
\begin{tikzcd}
	d \ar[rr, "{P_!(\emptyset_x, d)}", cocart]  \ar[d,"f" swap]  \ar[drr] && (\yon  d)x \ar[d, "{\psi(f, x)}", dashed] \\
	\sigma b \ar[rr, "{\sigma \emptyset_x}" swap] && \sigma x \\
	b \ar[rr, "{\emptyset_x}"] && x
\end{tikzcd}
\]
where $f:\hom_{P\,b}(d,\sigma \,b)$. For the round-trip along through $\varphi$ and $\psi$, note that we obtain a path $\varphi(\psi f) = \id_b$ since cocartesian lifts of identities are again identities.

For the other direction, note that as described in \cref{ssec:mor-of-sections} a morphism of sections $\kappa: \hom_{\prod_B P}(\yon\,d,\sigma)$ yields a square
\[
\begin{tikzcd}
	d \ar[rr, "{P_!(\emptyset_x, d)}", cocart]  \ar[d,"\kappa b" swap]  \ar[drr] && (\yon  d)x \ar[d, "{\kappa x}", dashed] \\
	\sigma b \ar[rr, "{\sigma \emptyset_x}" swap] && \sigma x \\
	b \ar[rr, "{\emptyset_x}"] && x
\end{tikzcd}
\]
for every $x:B$.
But this means $\psi(\varphi \kappa)(x) = \kappa(x)$ as desired.

One checks that $(\yon f)(b) = f$, so $\yon$ is indeed a left adjoint right inverse to $\ev_b$.
\end{proof}

In fact, the transport map $\yon: P\,b \to \prod_BP$ is valued in cocartesian sections.	We prove this by a $2$-dimensional naturality property of the cocartesian lifts.

\begin{prop}[\protect{\cite[Proposition 5.7.18]{RV}}]\label{prop:yon-cocart-sec}
	Let $B$ be a Rezk type, $P: B \to \UU$ a cocartesian family and $b:B$ be an initial object. The map
	\[ \yon : P\, b \to \prod_B P, \quad \yon :\jdeq \lambda d.\lambda x. \coliftpt{(\emptyset_x)}{d} \]
	factors through the subtype of cocartesian sections, \ie,
	\[
	\begin{tikzcd}
		P\,b \ar[rr, "\yon" swap, bend right = 15] \ar[r, "\yon"] & \prod_B^{\mathrm{cocart}} P \ar[r] & \prod_B P.
	\end{tikzcd}
	\]
\end{prop}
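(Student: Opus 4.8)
The plan is to show that for every $d:P\,b$ the section $\yon\,d$ is cocartesian, \ie, that its action $(\yon\,d)(u)$ on an arbitrary arrow $u:\hom_B(x,y)$ is a $P$-cocartesian dependent arrow. The strategy, following Riehl--Verity, is to recognize the defining cocartesian lifts $P_!(\emptyset_x,d)$ as the components of a single natural transformation---a $2$-cell $\chi_d$---and then to read off the cocartesianness of $(\yon\,d)(u)$ from a naturality square together with the right cancelation property of cocartesian arrows from \cref{prop:cocart-arr-closure}(\ref{it:cocart-arr-cancel}).

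First I would observe that, since $b$ is initial, the assignment $x\mapsto\emptyset_x$ is (the transpose of) a natural transformation $\emptyset:\cst_b\Rightarrow\id_B$ in the Segal type $B\to B$: naturality amounts to $u\circ\emptyset_x=\emptyset_y$ for every $u:\hom_B(x,y)$, which holds by contractibility of $\hom_B(b,y)$. I would then invoke \cref{cor:cocart-fam-closed-exp} to regard the exponentiated family $P^B$ as a cocartesian family over the Rezk type $B\to B$, with cocartesian lifts computed pointwise by \cref{prop:cocart-fam-prod}. The constant section $\cst_d$ is a point of $(P^B)(\cst_b)\equiv(B\to P\,b)$, and its cocartesian lift $\chi_d\defeq (P^B)_!(\emptyset,\cst_d)$ is a dependent arrow over $\emptyset$ whose component at each $x:B$ is exactly $P_!(\emptyset_x,d)$ and whose codomain is the section $\yon\,d=\lambda x.\partial_1 P_!(\emptyset_x,d)$.

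The key step is to extract from $\chi_d$, for a fixed $u:\hom_B(x,y)$, its naturality square in $E\defeq\totalty P$, exactly as in the discussion of \cref{ssec:mor-of-sections} (using the axiom of choice for extension types to transpose $\chi_d$ evaluated at $u$ into a square $\Delta^1\times\Delta^1\to E$). This square has left edge $P_!(\emptyset_x,d)$, right edge $P_!(\emptyset_y,d)$, bottom edge the identity $\id_{\pair b d}$ (the constant section acts trivially on $u$), and top edge $(\yon\,d)(u)$; its commutativity yields the identification $(\yon\,d)(u)\circ P_!(\emptyset_x,d)=P_!(\emptyset_y,d)$ over $u\circ\emptyset_x=\emptyset_y$. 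Since both $P_!(\emptyset_x,d)$ and the composite $P_!(\emptyset_y,d)$ are cocartesian, \cref{prop:cocart-arr-closure}(\ref{it:cocart-arr-cancel}) forces $(\yon\,d)(u)$ to be cocartesian as well. As $u$ was arbitrary, $\yon\,d$ is a cocartesian section, so $\yon$ factors through $\prod_B^{\mathrm{cocart}}P$ as claimed.

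I expect the main obstacle to be the $2$-dimensional bookkeeping in the middle step: rigorously producing the $2$-cell $\chi_d$ and identifying the edges of its naturality square---in particular matching the top edge with the genuine action $(\yon\,d)(u)$ of the section, rather than with an externally chosen cocartesian lift $P_!(u,(\yon\,d)x)$, which by \cref{prop:cocart-functoriality} is a priori only the transport and not the section's functoriality. Getting this identification right is precisely what makes the right-cancelation argument apply to $(\yon\,d)(u)$ itself.
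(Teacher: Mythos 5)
Your proof is correct and takes essentially the same route as the paper's: both interpret the cocartesian lifts $P_!(\emptyset_x,d)$ as the components of a single $2$-cell $\chi_d:\cst(d)\Rightarrow\yon\,d$ in $B\to\totalty{P}$, read off from its naturality square at an arbitrary $u:\hom_B(x,y)$ the identification $(\yon\,d)(u)\circ P_!(\emptyset_x,d)=P_!(\emptyset_y,d)$, and conclude by right cancelation, \cref{prop:cocart-arr-closure}(\ref{it:cocart-arr-cancel}). The only inessential difference is in packaging: you manufacture $\chi_d$ as a cocartesian lift in the exponential family $P^B$ over $B\to B$ (via \cref{cor:cocart-fam-closed-exp} and the pointwise computation of lifts), whereas the paper assembles it directly from its components using the transposition equivalence $\hom_{B\to\totalty{P}}(\cst(d),\yon\,d)\equiv\prod_{x:B}\hom_{\totalty{P}}(d,\yon(d)(x))$.
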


\begin{proof}
	For $d:P\,b$, consider the constant map $\cst(d) :\jdeq \lambda x.d : B \to E$, where $E\defeq \totalty{P}$. The cocartesian lifts w.r.t.~$d:P\,b$ give rise to a natural transformation
	\[ \chi:\hom_{B \to E}(\cst(d), \yon d) \equiv \prod_{x:B} \hom_E(\cst(d,x), \yon(d,x)), \quad \chi_x :\jdeq \coliftarr{P}{\emptyset_x}{d}. \]
	Given $x,x':B$, for any arrow $u:\hom_B(x,x')$ we want to argue that the action of $\chi$ on $u$ yields a cocartesian arrow $\chi_u:\dhom^P_u(\yon dx,\yon dx')$.\footnote{We suppress the canonical $2$-cell witnessing composition.} Considering the naturality square
	\[
	\begin{tikzcd}
		d \ar[d, cocart, "{\chi_x}"] \ar[drr, cocart, "{\chi_{x'}}"] \ar[rr, equals] && d \ar[d, cocart, "{\chi_{x'}}"]  \\
		\yon(d,x) \ar[rr, "\yon du" swap] && \yon(d,x') \\
		b \ar[d,"\emptyset_x" swap, dashed] \ar[drr, "\emptyset_{x'}", dashed] & & \\
		x \ar[rr, "u" swap] & & x'
	\end{tikzcd}
	\]
	it follows that $\yon du$ is cocartesian by \cref{prop:cocart-arr-closure}(\ref{it:cocart-arr-cancel}).
\end{proof}

\begin{prop}[{\protect{\cite[Theorem 5.7.13]{RV}}}, {\protect\cite[Theorem 9.7]{RS17}}]\label{prop:evid-equiv}
	Let $B$ be a Rezk type, $b:B$ an initial object, and $P: B \to \UU$ a cocartesian family. Then evaluation at $b$
	\[ \ev_b : \Big(\prod_{B}^{\mathrm{cocart}} P \Big) \to P\,b \]
	is an equivalence.
\end{prop}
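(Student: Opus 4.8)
The plan is to show that $\ev_b$ and $\yon$ restrict to a pair of inverse equivalences between $\prod_B^{\mathrm{cocart}} P$ and $P\,b$. Two ingredients are already in place: \cref{prop:cocart-fam-evb-lari} exhibits $\yon$ as a left adjoint right inverse to $\ev_b$ on all sections, so in particular $\ev_b \circ \yon = \id_{P\,b}$ via the identity $(\yon\,f)(b) = f$; and \cref{prop:yon-cocart-sec} shows that $\yon$ factors through the subtype $\prod_B^{\mathrm{cocart}} P$. Thus $\yon : P\,b \to \prod_B^{\mathrm{cocart}} P$ is a section of $\ev_b|_{\mathrm{cocart}}$, and it remains only to produce, for every cocartesian section $\sigma$, an identification $\yon(\sigma\,b) = \sigma$ inside $\prod_B^{\mathrm{cocart}} P$.

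First I would unwind the counit of the LARI adjunction. Transposing $\id_{\sigma b}$ across the hom-equivalence $\varphi,\psi$ of \cref{prop:cocart-fam-evb-lari} produces a morphism of sections $\varepsilon_\sigma : \yon(\sigma\,b) \Rightarrow \sigma$ whose component at $x:B$ is the vertical filler $\varepsilon_\sigma(x) \defeq \tyfill_{P_!(\emptyset_x,\sigma b)}(\sigma(\emptyset_x))$, i.e.\ the unique arrow over $\id_x$ with $\varepsilon_\sigma(x) \circ P_!(\emptyset_x,\sigma\,b) = \sigma(\emptyset_x)$.

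The crux of the argument is to see that each $\varepsilon_\sigma(x)$ is an isomorphism. Because $\sigma$ is a cocartesian section, the dependent arrow $\sigma(\emptyset_x)$ over $\emptyset_x$ is itself cocartesian; it and the chosen lift $P_!(\emptyset_x,\sigma\,b)$ are thus two cocartesian arrows over $\emptyset_x$ with common source $\sigma\,b$. Right cancelation of cocartesian arrows, \cref{prop:cocart-arr-closure}(\ref{it:cocart-arr-cancel}), then forces the filler $\varepsilon_\sigma(x)$ to be cocartesian as well, and since it lies over the identity $\id_x$ it is an isomorphism by \cref{lem:cocart-arrows-isos}(\ref{it:cocart-arrows-over-ids-are-isos}). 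Hence $\varepsilon_\sigma$ is a fiberwise isomorphism.

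Finally I would upgrade this fiberwise isomorphism to an honest identification. Each fiber $P\,x$ is Rezk, so the isomorphism $\varepsilon_\sigma(x)$ corresponds under $\idtoiso$ to a path $\yon(\sigma\,b)(x) = \sigma(x)$; using that $\totalty{P}$ is Rezk (being isoinner over the Rezk base $B$) together with relative function extensionality, these identifications cohere into a single path $\yon(\sigma\,b) = \sigma$, which lands in $\prod_B^{\mathrm{cocart}} P$ since cocartesianness of a section is a proposition. Combined with $\ev_b \circ \yon = \id$, this makes $\ev_b$ and $\yon$ mutually inverse, proving the claim. I expect the genuinely delicate point to be this last upgrade: turning the pointwise natural isomorphism $\varepsilon_\sigma$ into a coherent equality of sections requires the completeness (Rezk) input rather than mere Segal structure, and one must check that the coherences assemble correctly---equivalently, that $\idtoiso$ is available on the relevant type of sections---so that the fiberwise paths genuinely glue.
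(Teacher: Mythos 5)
Your proposal follows the paper's proof essentially step for step: the same two preliminary ingredients (\cref{prop:cocart-fam-evb-lari} and \cref{prop:yon-cocart-sec}), the same counit component $\varepsilon_{\sigma,x} \jdeq \tyfill_{P_!(\emptyset_x,\sigma b)}(\sigma(\emptyset_x))$, cocartesianness of that filler by right cancelation (\cref{prop:cocart-arr-closure}(\ref{it:cocart-arr-cancel})), and invertibility because it lies over $\id_x$ (\cref{lem:cocart-arrows-isos}(\ref{it:cocart-arrows-over-ids-are-isos})). The only difference is bookkeeping at the end: where you convert each fiberwise isomorphism into a path via $\idtoiso$ in the Rezk fiber and glue by function extensionality (which, note, requires no extra coherence and no Rezk-ness of $\totalty{P}$ --- pointwise paths of sections already assemble by plain funext), the paper packages $\varepsilon$ as a single natural transformation $\yon \circ \ev_b \Rightarrow \id$ and cites \cite[Proposition~10.3]{RS17} to conclude it is an identity.
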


\begin{proof}
	By \cref{prop:yon-cocart-sec}, the map $\yon$ restricts to cocartesian sections. By \cref{prop:cocart-fam-evb-lari}, $\ev_b\circ \mathbf y = \id_{P\,b}$.

	We set $T :\jdeq  \prod_{B}^{\mathrm{cocart}} P$. For the converse direction, we define a natural transformation
	\begin{align*}
		\varepsilon : \hom_{T \to T}(\yon \circ \ev_b, \id_T) & \equiv \prod_{\sigma:T} \hom_T(\yon(\ev_b)(\sigma), \sigma) \\
		& \equiv  \prod_{\sigma:T} \prod_{x: B} \hom_{P\,x}(\yon(\sigma(b))(x), \sigma(x))
	\end{align*}
	as follows:
	For $x:B$, the action of $\sigma$ on $\emptyset_x:\hom_{B}(b,x)$ yields a cocartesian arrow $\sigma(\emptyset_x): \dhom^P_{\emptyset_x}(\sigma(b),\sigma(x))$.

	We define $\varepsilon_{\sigma,x}$ as the following filler:
	\[
	\begin{tikzcd}
		& & \sigma(x) \\
		\sigma(b) \ar[rru, cocart, "\sigma(\emptyset_x)"]  \ar[rr, cocart, "{\coliftarr{P}{\emptyset_x}{\sigma(b)}}" swap] & & \yon(\sigma(b),x) \ar[u, dashed, "{\varepsilon_{\sigma,x}}" swap]	\\
		b \ar[rr, "\emptyset_x"] && x
	\end{tikzcd}
	\]
	By \cref{prop:cocart-arr-closure}(\ref{it:cocart-arr-cancel}), $\varepsilon_{\sigma,x}$ is cocartesian. As a cocartesian lift of an identity, it is itself an identity. By~\cite[Proposition 10.3]{RS17}, $\varepsilon:\hom_{T \to T}(\yon \circ \ev_b,\id_T)$ is an identity.
\end{proof}

\subsection{Dependent and absolute Yoneda Lemma}\label{ssec:yoneda}

We now obtain analogues of the (dependent) Yoneda Lemma as in~\cite{RV}, \cite{RS17} for cocartesian families.

\begin{lem}[{\protect\cite[Lemma~9.8]{RS17}}]\label{lem:initial-obj-cocomma}
	Let $B$ be a Segal type. For any element $b:B$, the identity morphism $\id_b:b \downarrow B$ is an initial object.
\end{lem}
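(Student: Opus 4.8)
We must show that for a Segal type $B$ and any $b : B$, the identity morphism $\id_b$, viewed as an element of the slice $b \downarrow B$, is an initial object, i.e.\ that $\prod_{u : b \downarrow B} \isContr\bigl(\hom_{b \downarrow B}(\id_b, u)\bigr)$.

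The plan is to unfold both the objects and the hom-type of the slice $b \downarrow B \equiv \comma{b}{B}$ using the extension-type descriptions from \cref{ssec:commas-cones}, and then to reduce the required contractibility to the Segal condition on $B$. First I would recall that an object of $b \downarrow B$ is a pair $\pair{a}{u}$ with $a : B$ and $u : \hom_B(b,a)$, and that the distinguished object $\id_b$ is the pair $\pair{b}{\id_b}$. The morphisms in the slice out of $\id_b$ into an arbitrary $\pair{a}{u}$ are, by the comma-type description, indexed by $2$-simplices in $B$: concretely a morphism amounts to an arrow $w : \hom_B(b,a)$ together with a filler $\sigma : \hom_B^2(\id_b, w; u)$ witnessing $w \circ \id_b = u$ (equivalently a $2$-simplex with the appropriate boundary). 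The key observation is that this data assembles into $\sum_{w : \hom_B(b,a)} \hom_B^2(\id_b, w; u)$, up to the coherence supplied by the strict computations of extension types.

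The main step is then to show $\isContr\bigl(\sum_{w : \hom_B(b,a)} \hom_B^2(\id_b, w; u)\bigr)$. Here I would use the Segal condition for $B$: for the composable pair $\id_b$ and any candidate $w$, the type $\sum_{x : \hom_B(b,a)} \hom_B^2(\id_b, w; x)$ is contractible, which pins down the composite. But the situation we face is dual—we have fixed the long edge $u$ and are free to vary $w$—so the cleaner route is to invoke the unit/identity-composition laws for Segal types from \cite[Section~5]{RS17}: composing with $\id_b$ on the right is an identity up to the canonical $2$-cell, so the ``degenerate'' simplex $\sigma_0 : \hom_B^2(\id_b, u; u)$ built from $u$ (using that $u \circ \id_b = u$) furnishes a center of contraction $\pair{u}{\sigma_0}$. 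Contractibility of the total space then follows because the Segal condition makes the assignment $\pair{w}{\sigma} \mapsto (\text{the composite})$ an equivalence onto a contractible type of composites, forcing $w = u$ and $\sigma = \sigma_0$ uniquely up to homotopy.

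The hardest part will be organizing the identifications so that the contractibility statement is genuinely reduced to the Segal property rather than merely restated: one must be careful that the extension-type encoding of slice-morphisms matches the $2$-simplex formulation, and that the right-unit law $u \circ \id_b = u$ is deployed coherently to exhibit the center. I expect to use \cref{thm:choice} to commute the $\sum$ past the extension type when unwinding $\hom_{b \downarrow B}(\id_b, u)$, and the basic properties of identity arrows in Segal types to identify the contraction. Once the hom-type is rewritten as a type of composites of $\id_b$ with a variable arrow, the Segal condition delivers contractibility directly, completing the proof.
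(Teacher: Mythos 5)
First, a point about the comparison itself: the paper does not prove this lemma at all --- it imports it verbatim from \cite[Lemma~9.8]{RS17}, whose proof runs through the covariance machinery of that paper (the hom type of the coslice out of $\pair{b}{\id_b}$ is decomposed into dependent homs $\sum_{f}\dhom^{P}_f(\id_b,u)$ for the covariant family $P \jdeq \hom_B(b,-)$, dependent homs in a covariant family are identity types after transport, and transport in this family is composition). Your attempt at a direct simplicial proof is therefore a genuinely different, and in principle legitimate, route; your unfolding of the coslice and your choice of center $\pair{u}{\sigma_0}$ are fine. But the step that carries all the weight has a real gap.

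The type you must contract, $\sum_{w:\hom_B(b,a)}\hom_B^2(\id_b,w;u)$, is the type of fillers of an \emph{outer} horn $\Lambda^2_0$: the long edge $u$ and the first short edge $\id_b$ are fixed, while the second short edge $w$ varies. The Segal condition only contracts $\sum_{x}\hom_B^2(\id_b,w;x)$ for each fixed $w$ (inner filling: short edges fixed, long edge varies). These two statements are not interchangeable by any ``duality'': unique outer-horn filling with an arbitrary arrow in place of $\id_b$ would give every arrow of $B$ a retraction (take $u \jdeq \id_b$), which fails in general Segal types, so the identity-ness of $\id_b$ must enter essentially. Your sentence ``the Segal condition makes the assignment $(w,\sigma)\mapsto(\text{the composite})$ an equivalence onto a contractible type of composites'' does not describe a well-defined argument: the ``type of composites'' $\sum_x\hom_B^2(\id_b,w;x)$ depends on $w$, and its total space over all $w$ is equivalent to $\hom_B(b,a)$, which is not contractible. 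What your reasoning actually produces is, for each element $(w,\sigma)$, a path $w=u$; but a pointwise family of identifications of first components is not a contraction of the $\Sigma$-type --- you need a coherent family of paths $\pair{u}{\sigma_0} = \pair{w}{\sigma}$ in the total type.

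The gap is fixable, and the repair shows where the unit law does its real work. For each fixed $w$, contractibility of $\sum_x\hom_B^2(\id_b,w;x)$ gives, by the fundamental theorem of identity types, an equivalence $\hom_B^2(\id_b,w;u)\simeq(w\circ\id_b = u)$. Hence $\sum_w\hom_B^2(\id_b,w;u)\simeq\sum_w(w\circ\id_b=u)$, which is exactly the fiber over $u$ of the precomposition map $(-\circ\id_b):\hom_B(b,a)\to\hom_B(b,a)$; the right unit law of the Segal type $B$ is a homotopy $(-\circ\id_b)\sim\id$, so this map is an equivalence and all its fibers are contractible. (This is essentially the same endgame as the Riehl--Shulman proof, which lands in $\sum_f(f\circ\id_b=u)$ via covariant transport.) One smaller repair is also needed earlier: the literal unfolding of $\hom_{b\downarrow B}(\id_b,\pair{a}{u})$ via the comma-type pullback gives squares $\Delta^1\times\Delta^1\to B$ with one degenerate edge, not triangles; you reach $\sum_w\hom_B^2(\id_b,w;u)$ by splitting the square along its diagonal (a strict, tope-level decomposition of $\Delta^1\times\Delta^1$ into two copies of $\Delta^2$) and contracting $\sum_d\hom_B^2(\id_b,u;d)$ using Segal-ness and the unit law --- the same two ingredients again, which deserve to be made explicit rather than absorbed into ``the coherence supplied by the strict computations of extension types''.
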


\begin{theorem}[Dependent Yoneda Lemma for cocartesian families, \cf~{\protect\cite[Theorem 5.7.2]{RV}}]\label{thm:dep-yon}
	Let $B$ be a Rezk type, $b:B$ any element, and $Q: b\downarrow B \to \UU$ a cocartesian family. Then evaluation at $\id_b$ as in
	\[ \ev_{\id_b} : \Big( \prod_{b \downarrow B}^{\mathrm{cocart}}Q \Big) \to Q(\id_b) \]
	is an equivalence.
\end{theorem}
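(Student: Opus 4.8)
The plan is to obtain this as a direct specialization of \cref{prop:evid-equiv}, the general statement that for a cocartesian family over a Rezk type with an initial object, evaluation at that object restricts to an equivalence on cocartesian sections. Concretely, I would invoke \cref{prop:evid-equiv} with the ambient Rezk type taken to be the coslice $b \downarrow B$, the distinguished initial element taken to be $\id_b$, and the cocartesian family taken to be $Q$ itself. Under this substitution, the conclusion of \cref{prop:evid-equiv} is verbatim the assertion that $\ev_{\id_b} : \big( \prod_{b \downarrow B}^{\mathrm{cocart}} Q \big) \to Q(\id_b)$ is an equivalence.

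What remains is to check the three hypotheses of \cref{prop:evid-equiv}. That $Q$ is cocartesian over $b \downarrow B$ holds by assumption. That $\id_b$ is an initial element of $b \downarrow B$ is precisely \cref{lem:initial-obj-cocomma}, which applies because the Rezk type $B$ is in particular Segal. Finally, I must establish that $b \downarrow B$ is itself a Rezk type; I would argue this by recalling that $b \downarrow B \equiv \comma{b}{B}$ is (equivalent to) the fiber over $b$ of the domain projection $\partial_0 : B^{\Delta^1} \to B$, and then appealing to the closure properties of Rezk types: $B^{\Delta^1}$ is Rezk since Rezk types form an exponential ideal, $\partial_0$ is an isoinner fibration between Rezk types by \cref{prop:maps-between-segal-types}, and the fibers of an isoinner family over a point are Rezk.

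The argument is essentially a packaging step, so I expect no substantial obstacle. The only place demanding care is the verification that $b \downarrow B$ is Rezk, but this is a routine consequence of earlier closure results together with the identification of the coslice as a fiber of the domain (equivalently, codomain) fibration; once assembled, \cref{prop:evid-equiv} closes the proof immediately. In this sense the dependent Yoneda Lemma is a clean corollary, obtained by transporting the \emph{cocartesian sections from initial elements} principle along the canonical initial object $\id_b$ of a coslice type.
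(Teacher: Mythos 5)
Your proposal is correct and essentially identical to the paper's own proof, which likewise obtains the theorem by instantiating \cref{prop:evid-equiv} at the coslice $b \downarrow B$ with the initial element $\id_b$ supplied by \cref{lem:initial-obj-cocomma}. The only difference is that you spell out the Rezkness of $b \downarrow B$, which the paper leaves implicit; this check is sound, except that \cref{prop:maps-between-segal-types} only yields \emph{innerness} of $\partial_0$, so the isoinnerness you need is better sourced from \cref{prop:maps-orth-types} (applied to $\Lambda_1^2 \hookrightarrow \Delta^2$ and to $\walkBinv \to \unit$, both of which $B^{\Delta^1}$ and $B$ are orthogonal to, being Rezk) together with \cref{cor:fib-j-orth} and \cref{prop:char-rezk} to conclude that the fibers are Rezk.
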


\begin{proof}
Because of \cref{lem:initial-obj-cocomma} (cf.~\cite[Lemma~9.8]{RS17}) this follows from \cref{prop:evid-equiv}.
\end{proof}

\begin{theorem}[Yoneda Lemma for cocartesian families, \cf~{\protect\cite[Theorem 5.7.3]{RV}}  \& {\protect\cite[Theorem 9.1]{RS17}}]\label{thm:yon}
Let $B$ be a Rezk type, $b:B$ any element, and $P: B \to \UU$ a cocartesian family. Then evaluation at $\id_b$ as in
\[ \ev_{\id_b} : \Big( \prod_{b \downarrow B}^{\mathrm{cocart}} \partial_1^* P \Big) \to P\,b \]
is an equivalence, where
\[ \partial_1\defeq \lambda u.u(1): b \downarrow B \to B.\]
\end{theorem}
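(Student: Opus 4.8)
The final statement, \cref{thm:yon}, is the Yoneda Lemma for cocartesian families, and it is designed to follow almost immediately from the dependent version \cref{thm:dep-yon} combined with the free cocartesian replacement. My plan is therefore to reduce the statement to \cref{thm:dep-yon} by exhibiting $\partial_1^* P : b \downarrow B \to \UU$ as a cocartesian family and identifying its fiber over $\id_b$ with $P\,b$. First I would note that $\partial_1 \defeq \lambda u.u(1) : b \downarrow B \to B$ is a map of Rezk types (since $b \downarrow B \equiv \comma{b}{B}$ is Rezk whenever $B$ is, being a comma type of Rezk types), so that the pullback family $\partial_1^* P$ is well-defined. By \cref{prop:cocart-fam-pb}, cocartesian families are closed under pullback along arbitrary maps, so $\partial_1^* P$ is again a cocartesian family over the Rezk type $b \downarrow B$.

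The next step is to compute the fiber of $\partial_1^* P$ over the designated element $\id_b : b \downarrow B$. By definition of the pullback family, $(\partial_1^* P)(\id_b) \jdeq P(\partial_1(\id_b)) \jdeq P(\id_b(1)) \jdeq P\,b$, so the fiber over $\id_b$ is \emph{definitionally} $P\,b$. This identifies the codomain of the evaluation map $\ev_{\id_b}$ in the statement with the fiber appearing in \cref{thm:dep-yon}. Consequently the evaluation map
\[
  \ev_{\id_b} : \Big( \prod_{b \downarrow B}^{\mathrm{cocart}} \partial_1^* P \Big) \to (\partial_1^* P)(\id_b) \jdeq P\,b
\]
is precisely the evaluation map of \cref{thm:dep-yon} applied to the cocartesian family $Q \defeq \partial_1^* P$ over $b \downarrow B$. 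Since \cref{lem:initial-obj-cocomma} guarantees that $\id_b$ is an initial object of the Segal (indeed Rezk) type $b \downarrow B$, the hypotheses of \cref{thm:dep-yon} are met, and that theorem yields that $\ev_{\id_b}$ is an equivalence.

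There is no serious obstacle here: the entire content of the Yoneda Lemma has been front-loaded into \cref{prop:evid-equiv} (evaluation at an initial element is an equivalence on cocartesian sections), \cref{prop:yon-cocart-sec} (the transport map lands in cocartesian sections), and \cref{lem:initial-obj-cocomma} (identities are initial in coslices). The only point requiring a moment's care is confirming that $b \downarrow B$ is a Rezk type so that the cocartesian-family machinery applies over it, and that the definitional identification of the fiber over $\id_b$ with $P\,b$ really is strict rather than merely up-to-equivalence; both hold by construction of the comma type and the pullback family. Thus the proof amounts to a single citation, as the excerpt itself anticipates.

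\begin{proof}
Since $B$ is a Rezk type, so is the coslice $b \downarrow B \equiv \comma{b}{B}$, and the map $\partial_1 \defeq \lambda u.u(1): b \downarrow B \to B$ is a map of Rezk types. By \cref{prop:cocart-fam-pb}, the pullback family $\partial_1^* P : b \downarrow B \to \UU$ is again cocartesian. Its fiber over $\id_b$ is, by definition of the pullback family, $(\partial_1^* P)(\id_b) \jdeq P(\partial_1(\id_b)) \jdeq P\,b$. By \cref{lem:initial-obj-cocomma}, the identity $\id_b$ is an initial object of $b \downarrow B$. Applying \cref{thm:dep-yon} to the cocartesian family $\partial_1^* P$ over $b \downarrow B$ with initial element $\id_b$ shows that
\[
  \ev_{\id_b} : \Big( \prod_{b \downarrow B}^{\mathrm{cocart}} \partial_1^* P \Big) \to (\partial_1^* P)(\id_b) \jdeq P\,b
\]
is an equivalence, as claimed.
\end{proof}
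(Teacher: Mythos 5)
Your proof is correct and takes essentially the same route as the paper, which simply declares the statement an instance of \cref{thm:dep-yon} with $Q \defeq \partial_1^*P$. The only difference is that you spell out the implicit verifications (cocartesianness of $\partial_1^*P$ via \cref{prop:cocart-fam-pb}, the definitional identification of the fiber over $\id_b$ with $P\,b$), which the paper leaves tacit; your citation of \cref{lem:initial-obj-cocomma} is harmless but redundant, since initiality of $\id_b$ is used inside the proof of \cref{thm:dep-yon} rather than appearing among its hypotheses.
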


\begin{proof}
This is an instance of the Dependent Yoneda Lemma \cref{thm:dep-yon} for~$Q\defeq \partial_1^*P \jdeq \lambda u.P(u1):b \downarrow B \to B$.
\end{proof}

Noting that for a covariant family any section is cocartesian, we recover the discrete versions from~\cite{RV,RS17}.

\begin{cor}[Dependent Yoneda Lemma for covariant families, \cf~\protect{\protect\cite[Theorem 9.5]{RS17}}]\label{thm:dep-yon-cov}
	Let $B$ be a Rezk type, $b:B$ any element, and $Q: b\downarrow B \to \UU$ a covariant family. Then evaluation at $\id_b$ as in
	\[ \ev_{\id_b} : \Big( \prod_{b \downarrow B}Q \Big) \to Q(\id_b) \]
	is an equivalence.
\end{cor}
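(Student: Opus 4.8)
The plan is to derive this as a direct corollary of the Dependent Yoneda Lemma for cocartesian families, \cref{thm:dep-yon}, by showing that for a covariant family the passage to cocartesian sections is vacuous. First I would observe that $b\downarrow B \equiv \comma{b}{B}$ is itself a Rezk type (comma types of Rezk types being Rezk), so that the earlier proposition stating that any covariant family over a Rezk type is a cocartesian family applies: the given covariant family $Q: b\downarrow B \to \UU$ is in particular a cocartesian family. This makes \cref{thm:dep-yon} available, yielding that the restricted evaluation
\[
  \ev_{\id_b} : \Big( \prod_{b \downarrow B}^{\mathrm{cocart}} Q \Big) \to Q(\id_b)
\]
is an equivalence.

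The remaining point is to identify $\prod_{b \downarrow B}^{\mathrm{cocart}} Q$ with the full section type $\prod_{b \downarrow B} Q$. Here I would invoke \cref{cor:cov-all-cocart}, according to which a cocartesian family is covariant precisely when \emph{all} of its dependent arrows are cocartesian. Since $Q$ is covariant, every dependent arrow in $Q$ is cocartesian. Unfolding $\isCocartSect_Q(\sigma) \jdeq \prod_{u:\Delta^1 \to (b\downarrow B)} \isCocartArr_u^Q(\sigma \circ u)$, each $\sigma \circ u$ is such a dependent arrow and is therefore cocartesian; hence the proposition $\isCocartSect_Q(\sigma)$ is inhabited for every section $\sigma$. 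Consequently the canonical subtype inclusion $\prod_{b\downarrow B}^{\mathrm{cocart}} Q \hookrightarrow \prod_{b\downarrow B} Q$ has contractible fibers and is an equivalence.

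Finally I would combine the two steps: the evaluation map $\ev_{\id_b}$ on the full section type factors as this inclusion equivalence followed by the evaluation on cocartesian sections, and the latter is an equivalence by \cref{thm:dep-yon}, so the composite is an equivalence by the $2$-for-$3$ property. I do not anticipate a genuine obstacle, as this is a corollary; the only points requiring a word of care are the observation that $b\downarrow B$ is Rezk (so that the cocartesian-family results apply) and the clean reduction, via \cref{cor:cov-all-cocart}, of cocartesian-section-hood to a condition that holds automatically in the covariant case.
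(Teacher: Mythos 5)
Your proposal is correct and is essentially the paper's own argument: the paper disposes of this corollary with the single remark that for a covariant family every section is cocartesian, which is exactly the reduction you carry out in detail via the proposition that covariant families over Rezk types are cocartesian, \cref{cor:cov-all-cocart}, and the resulting equivalence $\prod^{\mathrm{cocart}}_{b\downarrow B} Q \simeq \prod_{b \downarrow B} Q$, followed by \cref{thm:dep-yon}. Your explicit attention to $b \downarrow B$ being Rezk (needed so that the cocartesian machinery applies) is a point the paper leaves implicit, but it is the same route.
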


\begin{cor}[Yoneda Lemma for covariant families, \cf~\protect{\cite[Theorem~5.7.1]{RV}, \cite[Theorem~9.1]{RS17}, \cite[Proposition~2.1.7]{KV14}, \cite[Theorem~3.49]{RasYon}}]\label{thm:yon-cov}
	Let $B$ be a Rezk type, $b:B$ any element, and $P: B \to \UU$ a covariant family. Then evaluation at $\id_b$ as in
	\[ \ev_{\id_b} : \Big( \prod_{b \downarrow B} \partial_1^* P \Big) \to P\,b \]
	is an equivalence, where
	\[ \partial_1\defeq \lambda u.u(1): b \downarrow B \to B.\]
\end{cor}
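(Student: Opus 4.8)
The plan is to obtain this as a direct specialization of the Dependent Yoneda Lemma for covariant families, \cref{thm:dep-yon-cov}, in exactly the same way that the cocartesian Yoneda Lemma \cref{thm:yon} is derived from its dependent counterpart \cref{thm:dep-yon}. In other words, the entire content of this corollary is a substitution of data into a theorem already in hand, together with two routine identifications.

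First I would set $Q \defeq \partial_1^* P \jdeq \lambda u. P(u(1)) : b \downarrow B \to \UU$, the pullback of $P$ along $\partial_1 : b \downarrow B \to B$. The key preliminary observation is that $Q$ is again a covariant family: by \cref{prop:cov-cosm-clos}, covariant families are closed under pullback along arbitrary maps, and $Q$ is precisely such a pullback of the covariant family $P$. Since $B$ is Rezk, the hypotheses of \cref{thm:dep-yon-cov} are met for $Q$ over the base $b \downarrow B$.

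Applying \cref{thm:dep-yon-cov} then yields that $\ev_{\id_b} : \big(\prod_{b\downarrow B} Q\big) \to Q(\id_b)$ is an equivalence. It remains only to identify the fiber: evaluating $\partial_1$ at the initial object $\id_b$ gives $\partial_1(\id_b) \jdeq \id_b(1) \jdeq b$, so $Q(\id_b) \jdeq P(\partial_1(\id_b)) \jdeq P\,b$. Rewriting $\prod_{b\downarrow B} Q$ as $\prod_{b\downarrow B} \partial_1^* P$ produces exactly the claimed equivalence.

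Since essentially all the substantive work has already been carried out upstream---the LARI adjunction from an initial element (\cref{prop:cocart-fam-evb-lari}), the fact that cocartesian transport lands in cocartesian sections (\cref{prop:yon-cocart-sec}), and the resulting equivalence \cref{prop:evid-equiv}, all transported to the covariant setting via the observation that every section of a covariant family is automatically cocartesian so that $\prod^{\mathrm{cocart}}$ collapses to $\prod$---there is no genuine obstacle at this final step. The only things to verify are the closure-under-pullback property and the definitional unfolding of the fiber over $\id_b$, both of which are immediate.
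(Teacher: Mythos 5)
Your proof is correct and takes essentially the same route as the paper: the paper derives \cref{thm:yon} from \cref{thm:dep-yon} by exactly the substitution $Q \defeq \partial_1^* P$, and \cref{thm:yon-cov} is left as the same instantiation of \cref{thm:dep-yon-cov} (the sections-collapse for covariant families being already absorbed into that corollary). Your explicit appeals to pullback-closure of covariant families and to the judgmental identification $Q(\id_b) \jdeq P\,b$ are precisely the steps the paper treats as immediate.
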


\begin{appendices}

\section{Lax squares}\label{app:lax-sq}

Since Segal types form an exponential ideal, simplicial type theory in fact captures some of the $2$-dimensional theory of synthetic $\inftyone$-categories, cf.~\cite[Section 6]{RS17}. For Rezk types $A,B$ the type $B^A$ is again Rezk, with mapping spaces $\nat{A}{B}(f,g) \jdeq (f \Rightarrow g)$. This invites a treatment of \emph{lax squares}
\[\begin{tikzcd}
	A && B \\
	C && D
	\arrow["k"', from=1-1, to=2-1]
	\arrow["h"', from=2-1, to=2-3]
	\arrow["f", from=1-1, to=1-3]
	\arrow["g", from=1-3, to=2-3]
	\arrow["\sigma", Rightarrow, from=2-1, to=1-3]
\end{tikzcd}\]
which are encoded by $2$-cells
\[ \sigma: hk \Rightarrow gf \simeq \prod_{a:A} (hk)(a) \to (gf)(a).\]
We are mainly interested in some basic results about mates, known from classical $2$-category theory, re-interpreted in the context of higher categories as in~\cite{RV} (cf.~\emph{op.~cit.}, Appendix B, for the classical setting). These occur in our study of cocartesian functors in \cref{ssec:cocart-fun}.

First, in \cref{app:ssec:pasting-lax} we explicitly define pasting operations for lax squares. Then we prove a simple pasting theorem for a specific pasting scheme, which is used in \cref{app:ssec:mates} to establish the mates correspondence for an adjunction.

Alas, in the absence of categorical type universes, a general pasting theorem, possibly along the lines of~\cite{ColPhD,JYpasting,infty2pasting}, seems out of reach. Therefore, we will only in an \emph{ad hoc} manner consider a few very specific pasting schemes relevant to our specific applications, and prove well-definedness of these pastings by manual pasting diagram chases, which is straightforward, though lengthy to spell out.

\subsection{Pasting of lax squares}\label{app:ssec:pasting-lax}

In this subsection, we introduce horizontal and vertical pasting of lax squares of functors between Segal types. We prove that these pasting operations are associative, and establish a certain pasting theorem which will be used to establish the mates correspondence subsequently in \cref{app:ssec:mates}.

\begin{defn}[Pasting of squares]
\begin{enumerate}
	\item The \emph{horizontal pasting} of a natural transformations $\alpha:\nat{A}{A'}(hu,u'k)$ with a natural transformation $\alpha':\nat{A'}{A''}(h'u',u''k')$ as indicated in
	\[\begin{tikzcd}
		{B} & {B'} & {B''} & {} & {B} && {B''} \\
		{A} & {A'} & {A''} & {} & {A} && {A''}
		\arrow["{u}"', from=1-1, to=2-1]
		\arrow["{h}"', from=2-1, to=2-2]
		\arrow["{k}", from=1-1, to=1-2]
		\arrow["{u'}"', from=1-2, to=2-2]
		\arrow[Rightarrow, "{\alpha}", from=2-1, to=1-2, shorten <=3pt, shorten >=3pt]
		\arrow["{h'}"', from=2-2, to=2-3]
		\arrow["{u''}", from=1-3, to=2-3]
		\arrow["{k'}", from=1-2, to=1-3]
		\arrow[Rightarrow, "{\alpha'}", from=2-2, to=1-3, shorten <=3pt, shorten >=3pt]
		\arrow["{\rightsquigarrow}" description, from=1-4, to=2-4, phantom, no head]
		\arrow["{u}"', from=1-5, to=2-5]
		\arrow["{k'k}", from=1-5, to=1-7]
		\arrow["{h'h}"', from=2-5, to=2-7]
		\arrow["{u''}", from=1-7, to=2-7]
		\arrow[Rightarrow, "{\alpha' \hpaste \alpha}", from=2-5, to=1-7, shorten <=7pt, shorten >=7pt]
	\end{tikzcd}\]
	is defined as:
	\[ \alpha' \hpaste \alpha\defeq (\alpha' * \id_k) \circ (\id_{h'} * \alpha):\nat {B}{A''}(h'hu,u''k'k)\]

	\item The \emph{vertical pasting} of a natural transformations $\beta:\nat{A}{A'}(f'h,kf)$ with a natural transformation $\beta':\nat{A'}{A''}(f''h',k'f')$ as indicated in
\[\begin{tikzcd}
		{A} & {B} && {A} & {B} \\
	{A'} & {B'} & {\rightsquigarrow} \\
	{A''} & {B''} && {A''} & {B''}
	\arrow["{f}", from=1-1, to=1-2]
	\arrow["{h}"', from=1-1, to=2-1]
	\arrow["{f'}",from=2-1, to=2-2]
	\arrow["{h'}"', from=2-1, to=3-1]
	\arrow["{f''}"', from=3-1, to=3-2]
	\arrow["{k'}", from=2-2, to=3-2]
	\arrow[Rightarrow, "{\beta'}", from=3-1, to=2-2, shorten <=3pt, shorten >=3pt]
	\arrow[Rightarrow, "{\beta}", from=2-1, to=1-2, shorten <=3pt, shorten >=3pt]
	\arrow["{f}"', from=1-4, to=1-5]
	\arrow["{h'h}"', from=1-4, to=3-4]
	\arrow["{k'k}", from=1-5, to=3-5]
	\arrow[Rightarrow, "{\beta' \vpaste \beta}", from=3-4, to=1-5, shorten <=6pt, shorten >=6pt]
	\arrow["{k}", from=1-2, to=2-2]
	\arrow["{f''}"', from=3-4, to=3-5]
\end{tikzcd}\]
	is defined as:
	\[  \beta' \vpaste \beta :\jdeq (\id_{k'}*\beta) \circ (\beta' * \mathrm{id}_h) : \nat{A}{B''}(f''h'h,k'kf)\]

\end{enumerate}

\end{defn}

\begin{figure}
	\centering
\[\begin{tikzcd}
	{} & B && {A''} & {} & A && {B''}
	\arrow[""{name=0, anchor=center, inner sep=0}, "{h'hu}"{description}, curve={height=-60pt}, from=1-2, to=1-4]
	\arrow[""{name=1, anchor=center, inner sep=0}, "{h'u'k}"{description}, from=1-2, to=1-4]
	\arrow[""{name=2, anchor=center, inner sep=0}, "{u''k'k}"{description}, curve={height=60pt}, from=1-2, to=1-4]
	\arrow["{\beta'\vpaste \beta\defeq}"{description}, draw=none, from=1-5, to=1-6]
	\arrow["{\alpha' \hpaste \alpha \defeq}"{description}, draw=none, from=1-1, to=1-2]
	\arrow[""{name=3, anchor=center, inner sep=0}, "{k'f'h}"{description}, from=1-6, to=1-8]
	\arrow[""{name=4, anchor=center, inner sep=0}, "{f''h'h}"{description}, curve={height=-60pt}, from=1-6, to=1-8]
	\arrow[""{name=5, anchor=center, inner sep=0}, "{k'kf}"{description}, curve={height=60pt}, from=1-6, to=1-8]
	\arrow["{\mathrm{id}_{h'}*\alpha}"{description}, shorten <=3pt, shorten >=3pt, Rightarrow, from=0, to=1]
	\arrow["{\alpha'*\mathrm{id}_k}"{description}, shorten <=3pt, shorten >=3pt, Rightarrow, from=1, to=2]
	\arrow["{\beta'*\id_h}"{description}, shorten <=4pt, shorten >=4pt, Rightarrow, from=4, to=3]
	\arrow["{\id_{k'}*\beta}"{description}, shorten <=4pt, shorten >=4pt, Rightarrow, from=3, to=5]
\end{tikzcd}\]
\caption{Horizontal and vertical pasting of squares}\label{fig:pasting-squares}
\end{figure}
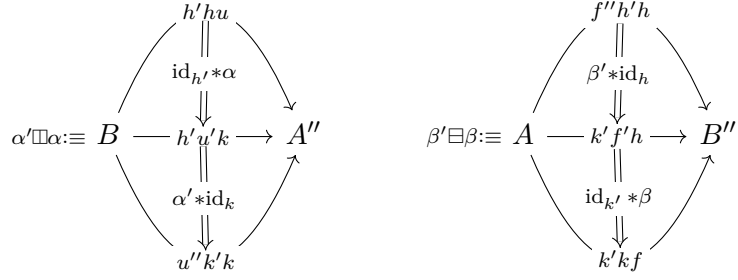
\begin{prop}[Associativity of horizontal pasting of squares]
For Segal types, consider $2$-cells as given in the diagram:
\[\begin{tikzcd}
	{B} & {B'} & {B''} & {B'''} \\
	{A} & {A'} & {A''} & {A'''}
	\arrow["{u}"', from=1-1, to=2-1]
	\arrow["{k}", from=1-1, to=1-2]
	\arrow["{h}"', from=2-1, to=2-2]
	\arrow["{k'}", from=1-2, to=1-3]
	\arrow["{h'}"', from=2-2, to=2-3]
	\arrow["{u''}"', from=1-3, to=2-3]
	\arrow["{k''}", from=1-3, to=1-4]
	\arrow["{h''}"', from=2-3, to=2-4]
	\arrow["{u'''}"', from=1-4, to=2-4]
	\arrow["{u'}"', from=1-2, to=2-2]
	\arrow[Rightarrow, "{\alpha}", from=2-1, to=1-2, shorten <=3pt, shorten >=3pt]
	\arrow[Rightarrow, "{\alpha'}", from=2-2, to=1-3, shorten <=3pt, shorten >=3pt]
	\arrow[Rightarrow, "{\alpha''}", from=2-3, to=1-4, shorten <=3pt, shorten >=3pt]
\end{tikzcd}\]
This defines a unique pasting $2$-cell $\alpha''':\hom_{B \to A'''}(h''h'hu,u'''k''k'k)$, so in particular there is an identification
\[ (\alpha'' \hpaste \alpha') \hpaste \alpha = \alpha'''= \alpha'' \hpaste (\alpha' \hpaste \alpha).\]
\end{prop}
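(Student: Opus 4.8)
The plan is to recognize the three cells $\alpha,\alpha',\alpha''$ as arrows in suitable functor types and to reduce the asserted identity to a handful of standard coherences for whiskering and vertical composition of natural transformations, all available from Riehl--Shulman's treatment of functoriality and weak composition in Segal types~\cite[Sections~5,~6]{RS17}. Since the codomains are (at least) Segal, each functor type such as $B \to A'''$ is again Segal, so the relevant natural transformations are arrows in a Segal type and the composites appearing in $\hpaste$ are their weak composites. The claimed pasting $2$-cell $\alpha'''$ is then this iterated composite arrow, and its asserted weak uniqueness is precisely the uniqueness of composites in a Segal type. So the real content is only the identity $(\alpha'' \hpaste \alpha') \hpaste \alpha = \alpha'' \hpaste (\alpha' \hpaste \alpha)$.

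First I would unfold both bracketings using the defining formula $\beta' \hpaste \beta \defeq (\beta' * \id) \circ (\id * \beta)$, obtaining
\[ (\alpha'' \hpaste \alpha') \hpaste \alpha = \bigl((\alpha'' \hpaste \alpha') * \id_k\bigr) \circ \bigl(\id_{h''h'} * \alpha\bigr) \]
and
\[ \alpha'' \hpaste (\alpha' \hpaste \alpha) = \bigl(\alpha'' * \id_{k'k}\bigr) \circ \bigl(\id_{h''} * (\alpha' \hpaste \alpha)\bigr), \]
and then substitute $\alpha'' \hpaste \alpha' = (\alpha'' * \id_{k'}) \circ (\id_{h''} * \alpha')$ and $\alpha' \hpaste \alpha = (\alpha' * \id_k) \circ (\id_{h'} * \alpha)$ on the respective sides. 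The identity then follows by assembling four families of standard laws: that left- and right-whiskering each preserve vertical composition (so $(-*\id_k)$ and $(\id_{h''}*-)$ distribute over $\circ$); compatibility of whiskering with functor composition, $\id_{h''h'}*\alpha = \id_{h''}*(\id_{h'}*\alpha)$ and $\alpha''*\id_{k'k} = (\alpha''*\id_{k'})*\id_k$; the interchange-type commutation of left- and right-whiskering, $(\id_{h''}*\alpha')*\id_k = \id_{h''}*(\alpha'*\id_k)$; and the weak associativity of $\circ$ in $B \to A'''$. Rewriting both sides with these brings them to the common normal form
\[ (\alpha'' * \id_{k'k}) \circ \bigl(\id_{h''}*(\alpha'*\id_k)\bigr) \circ \bigl(\id_{h''}*(\id_{h'}*\alpha)\bigr), \]
which is $\alpha'''$.

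The hard part will not be any single step but the coherent bookkeeping: in the synthetic setting all of these operations hold only up to propositional equality, and, as the authors note, there is as yet no general pasting theorem to invoke. Consequently one must fix witnesses for each instance of whiskering functoriality, whiskering commutation, and weak associativity, and then verify that the composite identification of $2$-cells is independent of the order in which the rewrites are applied. Because the targets are Rezk—so that the types of such $2$-cells and of their identifications are well-behaved—this assembly goes through, but it is essentially a manual pasting-diagram chase rather than an appeal to one clean lemma.
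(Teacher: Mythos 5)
Your proposal is correct, and in substance it lands exactly where the paper does: both bracketings are identified with the single three-fold composite whose component at $b:B$ is $\alpha''_{k'kb} \circ h''\alpha'_{kb} \circ h''h'\alpha_b$ --- your ``common normal form'' $(\alpha'' * \id_{k'k}) \circ \bigl(\id_{h''}*(\alpha'*\id_k)\bigr) \circ \bigl(\id_{h''}*(\id_{h'}*\alpha)\bigr)$ is precisely this cell, read pointwise. The difference is one of organization. The paper never states the whiskering coherences you list: it defines $\beta \defeq \alpha' \hpaste \alpha$ pointwise as a composite of two arrows, observes that in $\alpha'' \hpaste \beta$ the arrow $h''\beta_b$ factors as $h''\alpha'_{kb} \circ h''h'\alpha_b$ (functoriality of $h''$ on arrows, \cite[Section~6.1]{RS17}), so that the whole cell is the unique triple composite in the Segal type $A'''$, and then disposes of the other bracketing ``by symmetry.'' You instead package the same pointwise facts as named laws --- distributivity of left/right whiskering over vertical composition, compatibility of whiskering with functor composition, interchange of the two whiskerings --- and rewrite formally at the level of $2$-cells. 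What your route buys is reusability: those laws, once proved, dispatch other pasting schemes (including the vertical-pasting associativity and the pasting theorem that follow in the appendix) without redoing the diagram chase. What it costs is that the one law with real content, distributivity of $\id_{h''}*(-)$ over $\circ$, rests on functors preserving composition and must itself be verified by exactly the pointwise computation the paper performs directly, so for this single proposition the paper's chase is the shorter path. One small correction: at the end you invoke the targets being Rezk; the proposition assumes only Segal types, and Segal-ness --- weak composition with uniqueness, associativity, and functoriality --- is all your argument actually uses, so completeness plays no role.
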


\begin{proof}
We define $\beta : \hom_{B \to A''}(h'hu,u''k'k)$ as the composition $\beta\defeq \alpha' \hpaste \alpha$, \ie,~for all $b:B$:
	\[
	\begin{tikzcd}
		\beta_b : h'hu(b) \ar[rr, "h'\alpha_b"] & & h'u'k(b) \ar[rr, "\alpha'_{kb}"] && u''k'k(b)
	\end{tikzcd}
\]
Pasting $\alpha''$ on the right gives the natural transformation $\alpha'' \hpaste \beta \jdeq (\alpha'' * \id_{k'k}) \circ (\id_{h''} * \beta)$:
\[\begin{tikzcd}
	{h''h'hu(b)} && {h''u''k'k(b)} && {u'''k''k'k(b)} \\
	& {h''h'u'k(b)}
	\arrow["{h''\beta_b}", from=1-1, to=1-3]
	\arrow["{h''h'\alpha_b}"', from=1-1, to=2-2]
	\arrow["{h''\alpha'_{kb}}"', from=2-2, to=1-3]
	\arrow["{\alpha''_{k'kb}}", from=1-3, to=1-5]
\end{tikzcd}\]
By symmetry, one can show that for $\gamma : \hom_{B'\to A'''}(h''h'u',u'''k''k')$ defined as $\gamma :\jdeq \alpha'' \hpaste \alpha'$ the composite $\gamma \hpaste \alpha \jdeq (\gamma * \id_{k})\circ (\id_{h''h'}*\alpha)$ coincides with $\alpha'' \hpaste \beta$.
\end{proof}

\begin{prop}[Associativity of vertical pasting of squares]
	For Segal types, consider $2$-cells as given in the diagram:
\[\begin{tikzcd}
	{A} & {B} \\
	{A'} & {B'} \\
	{A''} & {B''} \\
	{A'''} & {B'''}
	\arrow["{h}"', from=1-1, to=2-1]
	\arrow["{f}", from=1-1, to=1-2]
	\arrow["{f'}", from=2-1, to=2-2]
	\arrow["{k}", from=1-2, to=2-2]
	\arrow["{h'}"', from=2-1, to=3-1]
	\arrow["{f''}", from=3-1, to=3-2]
	\arrow["{k'}", from=2-2, to=3-2]
	\arrow["{h''}"', from=3-1, to=4-1]
	\arrow["{f'''}", from=4-1, to=4-2]
	\arrow["{k''}", from=3-2, to=4-2]
	\arrow[Rightarrow, "{\beta}", from=2-1, to=1-2, shorten <=3pt, shorten >=3pt]
	\arrow[Rightarrow, "{\beta'}", from=3-1, to=2-2, shorten <=3pt, shorten >=3pt]
	\arrow[Rightarrow, "{\beta''}", from=4-1, to=3-2, shorten <=3pt, shorten >=3pt]
\end{tikzcd}\]
	This defines a unique pasting $2$-cell $\beta''':\hom_{B \to A'''}(f'''h''h'h,k''k'kf)$, so in particular there is an identification
	\[ (\beta'' \hpaste \beta') \hpaste \beta = \beta'''=\beta'' \hpaste (\beta' \hpaste \beta).\]
\end{prop}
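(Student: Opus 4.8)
The plan is to mirror the proof just given for associativity of horizontal pasting, working entirely inside the Segal types of natural transformations (such as $\nat{A}{A'''}{}$ and the relevant functor types $B^{A}$), whose arrows are the $2$-cells in question and in which composition is weakly associative. Writing $*$ for whiskering a natural transformation by a functor and $\circ$ for vertical composition of $2$-cells, I would unfold the definition of $\vpaste$ for each of the two bracketings and check that both reduce to one and the same $2$-cell, which then serves as the definition of $\beta'''$.

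Concretely, first I would introduce the intermediate pastings $\delta \defeq \beta'' \vpaste \beta' = (\id_{k''} * \beta') \circ (\beta'' * \id_{h'})$ and $\epsilon \defeq \beta' \vpaste \beta = (\id_{k'} * \beta) \circ (\beta' * \id_h)$. Expanding the left bracketing by the definition of $\vpaste$ gives
\[ (\beta'' \vpaste \beta') \vpaste \beta = (\id_{k''k'} * \beta) \circ (\delta * \id_h), \]
while the right bracketing gives
\[ \beta'' \vpaste (\beta' \vpaste \beta) = (\id_{k''} * \epsilon) \circ (\beta'' * \id_{h'h}). \]
Using functoriality of whiskering — that precomposition by $h$ and postcomposition by $k''$ each preserve vertical composition of natural transformations, which is the synthetic functoriality from~\cite[Section~6]{RS17} — together with the identities $\id_{h'} * \id_h = \id_{h'h}$ and $\id_{k''k'} = \id_{k''} * \id_{k'}$, I would rewrite both sides until they identify with the single three-term composite
\[ \beta''' \defeq (\id_{k''} * \id_{k'} * \beta) \circ (\id_{k''} * \beta' * \id_h) \circ (\beta'' * \id_{h'h}), \]
which is then the unique pasting $2$-cell sought.

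The substantive content, and the only real obstacle, is not the bookkeeping above but the justification of each rewriting in the synthetic setting: whiskering is functorial only up to propositional identification, and vertical composition of $2$-cells is only weakly associative, so the displayed equalities are really a coherent chain of identifications inside the relevant Segal (indeed Rezk) functor type rather than strict equalities. As in the horizontal case, I expect this to be straightforward but tedious to spell out in full; the key inputs are that pre- and postcomposition by a fixed functor act functorially on the hom-types of a Segal type, and associativity of composition in a Segal type, both of which are available from~\cite[Section~6]{RS17}.
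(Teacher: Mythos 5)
Your proposal is correct and matches the paper's approach: the paper's proof of this proposition is literally ``completely analogous to the proof for the horizontal pasting,'' and that horizontal proof proceeds exactly as you do—unfolding both bracketings via the definition of pasting, distributing whiskering over vertical composition, and identifying both with the same canonical three-term composite, which is exactly your $\beta'''$. Your remarks that the identifications are propositional rather than strict, resting on functoriality of pre-/postcomposition and weak associativity of composition in Segal types from~\cite[Section~6]{RS17}, correctly pin down the only substantive inputs.
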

The proof is completely analogous to the proof for the horizontal pasting.

To prove the mates correspondence we need the following pasting theorem involving $2$-cells with identity boundaries.

\begin{prop}[A pasting theorem]
For Segal types, consider the following configuration of natural transformations:
\[\begin{tikzcd}
	{A} && {B} && {B} && \\
	{A} && {A} && {B} && \\
	&& {A'} && {B'} && {B'} \\
	&& {A'} && {A'} && {B'}
	\arrow["{f'}" description, from=4-5, to=4-7]
	\arrow["{u'}"', from=3-5, to=4-5]
	\arrow[from=3-5, to=3-7, no head, equals]
	\arrow[from=3-7, to=4-7, no head, equals]
	\arrow[from=4-3, to=4-5, no head, equals]
	\arrow[from=3-3, to=4-3, no head, equals]
	\arrow["{f'}" description, from=3-3, to=3-5]
	\arrow["{h}"', from=2-3, to=3-3]
	\arrow["{f}" description, from=2-3, to=2-5]
	\arrow["{k}", from=2-5, to=3-5]
	\arrow["{u}"', from=1-3, to=2-3]
	\arrow[from=1-3, to=1-5, no head, equals]
	\arrow[from=2-1, to=2-3, no head, equals]
	\arrow[from=1-1, to=2-1, no head, equals]
	\arrow["{f}" description, from=1-1, to=1-3]
	\arrow[Rightarrow, "{\varphi}" description, from=2-1, to=1-3, shorten <=7pt, shorten >=7pt]
	\arrow[Rightarrow, "{\psi}" description, from=2-3, to=1-5, shorten <=7pt, shorten >=7pt]
	\arrow[Rightarrow, "{\beta}" description, from=3-3, to=2-5, shorten <=7pt, shorten >=7pt]
	\arrow[Rightarrow, "{\varphi'}" description, from=4-3, to=3-5, shorten <=7pt, shorten >=7pt]
	\arrow[Rightarrow, "{\psi'}" description, from=4-5, to=3-7, shorten <=7pt, shorten >=7pt]
	\arrow[from=1-5, to=2-5, no head, equals]
\end{tikzcd}\]
This gives rise to a unique pasting $2$-cell, in particular there is a homotopy
\[ (\psi' \hpaste \varphi') \vpaste \beta \vpaste (\psi \hpaste \varphi) = \psi' \hpaste (\varphi' \vpaste \beta \vpaste \psi) \hpaste \varphi.\]
\[\begin{tikzcd}
	&&&&&&&& {A} && {B} \\
	{A} && {B} && {B'} && {B'} & {} & {A} && {B} \\
	{A} && {A} && {A'} && {B'} & {} & {A'} && {B'} \\
	&&&&&&&& {A'} && {B'}
	\arrow["{=}" description, from=2-8, to=3-8, phantom, no head, equals]
	\arrow["{f}" description, from=1-9, to=1-11]
	\arrow[from=1-9, to=2-9, no head, equals]
	\arrow["{f}" description, from=2-9, to=2-11]
	\arrow[from=1-11, to=2-11, no head, equals]
	\arrow["{h}"', from=2-9, to=3-9]
	\arrow["{f'}" description, from=3-9, to=3-11]
	\arrow["{k}", from=2-11, to=3-11]
	\arrow["{f'}" description, from=4-9, to=4-11]
	\arrow[from=3-11, to=4-11, no head, equals]
	\arrow[Rightarrow, "{\psi\hpaste\varphi}" description, from=2-9, to=1-11, shorten <=7pt, shorten >=7pt]
	\arrow[Rightarrow, "{\beta}" description, from=3-9, to=2-11, shorten <=7pt, shorten >=7pt]
	\arrow[Rightarrow, "{\psi'\hpaste\varphi'}" description, from=4-9, to=3-11, shorten <=7pt, shorten >=7pt]
	\arrow[from=3-9, to=4-9, no head, equals]
	\arrow["{u'}"', from=2-5, to=3-5]
	\arrow[from=2-5, to=2-7, no head, equals]
	\arrow[from=2-7, to=3-7, no head, equals]
	\arrow["{f'}" description, from=3-5, to=3-7]
	\arrow[Rightarrow, "{\psi'}" description, from=3-5, to=2-7, shorten <=7pt, shorten >=7pt]
	\arrow["{h}" description, from=3-3, to=3-5]
	\arrow["{k}" description, from=2-3, to=2-5]
	\arrow["{u}"', from=2-3, to=3-3]
	\arrow[from=3-1, to=3-3, no head, equals]
	\arrow[Rightarrow, "{\varphi}" description, from=3-1, to=2-3, shorten <=7pt, shorten >=7pt]
	\arrow[from=2-1, to=3-1, no head, equals]
	\arrow["{f}" description, from=2-1, to=2-3]
	\arrow[Rightarrow, "{\varphi'\vpaste\beta\vpaste\psi}" description, from=3-3, to=2-5, shorten <=7pt, shorten >=7pt]
\end{tikzcd}\]
\end{prop}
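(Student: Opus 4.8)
The plan is to treat this purely as a statement about the $2$-categorical algebra carried by natural transformations between functors of Segal types, and to reduce it to the interchange law together with the two associativity results already proven. Since $B^A$ is again Segal whenever $B$ is (Segal types form an exponential ideal), each type $\nat{A}{B}(f,g)$ is well-behaved: vertical composition $\circ$ is associative and unital up to the coherences of~\cite[Section~6]{RS17}, whiskering by a fixed functor is functorial, and whiskering distributes over $\circ$ (the interchange law). These are exactly the algebraic facts the chase will consume, so the first step is to record them in the present setting.

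First I would unfold both sides of the claimed identity using the definitions $\alpha' \hpaste \alpha \defeq (\alpha' * \id_k) \circ (\id_{h'} * \alpha)$ and $\beta' \vpaste \beta \defeq (\id_{k'} * \beta) \circ (\beta' * \id_h)$, rewriting each bracketing as an explicit vertical composite of whiskered cells along the common boundary path from $f$ to $f'$ (through the intermediate functors at $A$, $A'$, $B$, $B'$). The two expressions $(\psi' \hpaste \varphi') \vpaste \beta \vpaste (\psi \hpaste \varphi)$ and $\psi' \hpaste (\varphi' \vpaste \beta \vpaste \psi) \hpaste \varphi$ then become two strings of composable $2$-cells, and the goal reduces to showing that after distributing the outer whiskerings these strings consist of the same factors in the same order. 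Here I would exploit that $\varphi, \psi, \varphi', \psi'$ are triangle cells with identity boundaries, so many of the whiskerings $\id_{(-)} * (-)$ and $(-) * \id_{(-)}$ collapse, cutting down the number of genuine factors.

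Next I would invoke \emph{Associativity of horizontal pasting} and \emph{Associativity of vertical pasting} to reassociate the factors freely, and apply the interchange law repeatedly to commute an independent horizontal whiskering past a vertical one whenever the two meet; each such move rewrites one string one step closer to the other, and finitely many moves identify them. The main obstacle I expect is not conceptual but bookkeeping: with five cells, several identity edges, and four intermediate functors, the matching of whiskered factors must be tracked carefully so that no domain/codomain mismatch is introduced when reassociating. As the surrounding text already signals, well-definedness of such pastings is established here \emph{ad hoc} by manual pasting-diagram chases rather than a general pasting theorem, so the verification is a finite, if lengthy, diagram chase; uniqueness of the resulting $2$-cell follows because both sides are constructed along the same boundary and agree factor-by-factor, pinning down a single element of the relevant $\hom$-type.
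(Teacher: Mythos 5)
Your proposal is correct and takes essentially the same route as the paper's own proof: both unfold the two pastings into explicit vertical composites of whiskered $2$-cells and then identify them by a finite chase whose individual steps are naturality/interchange squares (the paper packages exactly these moves into one large commuting diagram of transformations in $A \to B'$, checked pointwise, with the key square $(*)$ being a naturality square for $\psi' * \beta$). One small caution: after reassociation the two strings are \emph{not} literally the same factors in the same order --- the interchange moves genuinely change the whiskered factors rather than merely permuting them --- but since you invoke those moves explicitly, your argument closes the gap just as the paper does.
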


\begin{proof}
Given a $2$-cell $\beta:f'h \Rightarrow kf$, we spell out the $2$-cells generated by the pastings on either side of the identification. Each time, this is done in two steps, first considering the diagram in the middle, and then pasting the cells on the sides. For the horizontal composite $H \defeq \psi' \hpaste (\varphi' \vpaste \beta \vpaste \psi) \vpaste \varphi$, we find
	\[\begin{tikzcd}
		{\alpha :\jdeq \varphi'\vpaste\beta\vpaste\psi:} & {hu(b)} && {u'f'hu(b)} && {u'kfu(b)} && {u'k(b)}
		\arrow["{\varphi'_{hu(b)}}", from=1-2, to=1-4]
		\arrow["{u'\beta_{u(b)}}", from=1-4, to=1-6]
		\arrow["{u'k\psi_b}", from=1-6, to=1-8]
	\end{tikzcd}\]
which yields the resulting cell:
\[\begin{tikzcd}
	{\psi'\hpaste\alpha\hpaste\varphi:} & {f'h(a)} && {f'huf(a)} && {f'u'kf(a)} & {kf(a)} \\
	&&& {} \\
	&&& {f'u'f'huf(a)} & {} & {f'u'kfuf(a)} & {}
	\arrow["{f'h\varphi_a}", from=1-2, to=1-4]
	\arrow["{f'\alpha_{f(a)}}", from=1-4, to=1-6]
	\arrow["{f'u'\beta_{uf(a)}}"', from=3-4, to=3-6]
	\arrow["{f'u'k\psi_{f(a)}}"', from=3-6, to=1-6]
	\arrow["{\psi'_{kf(a)}}", from=1-6, to=1-7]
	\arrow["{f'\varphi'_{huf(a)}}"', from=1-4, to=3-4]
\end{tikzcd}\]
The vertical composite $V\defeq (\psi' \hpaste \varphi') \vpaste \beta \vpaste (\varphi \hpaste \psi)$ is treated accordingly. In sum, we get the two composites:
\[\begin{tikzcd}
	{H:f'h} & {f'huf} & {f'u'f'huf} & {f'u'kfuf} & {f'u'kf} & kf \\
	{V:f'h} & {f'u'f'h} & {f'h} & kf & kfuf & kf
	\arrow["{f'h\varphi}", Rightarrow, from=1-1, to=1-2]
	\arrow["{f'\varphi'h}", Rightarrow, from=2-1, to=2-2]
	\arrow["{\psi'f'h}", Rightarrow, from=2-2, to=2-3]
	\arrow["\beta", Rightarrow, from=2-3, to=2-4]
	\arrow["kf\varphi", Rightarrow, from=2-4, to=2-5]
	\arrow["{k\psi f}", Rightarrow, from=2-5, to=2-6]
	\arrow["{f'\varphi'huf}", Rightarrow, from=1-2, to=1-3]
	\arrow["{f'u'\beta uf}", Rightarrow, from=1-3, to=1-4]
	\arrow["{f'u'k\psi f}", Rightarrow, from=1-4, to=1-5]
	\arrow["{\psi'kf}", Rightarrow, from=1-5, to=1-6]
\end{tikzcd}\]
We are to show that the composite $2$-cells $H$ and $V$ coincide.

Ultimately, this is established by commutation of the following diagram of natural transformations in $A \to B'$ (which can be checked pointwisely) where the outermost left composite is the $2$-cell $V$, and the outermost right composite is $H$:
\[\begin{tikzcd}
	& {f'h} && {f'huf} \\
	{f'h} & {f'u'f'h} && {f'u'f'huf} \\
	& {f'u'kf} && {f'u'kfuf} && {f'u'kf} \\
	& kf && kfuf && kf
	\arrow["{f'\varphi'h}"', Rightarrow, from=1-2, to=2-2]
	\arrow["{f'h\varphi}"{description}, Rightarrow, from=1-2, to=1-4]
	\arrow["{f'u'f'h\varphi}"{description}, Rightarrow, from=2-2, to=2-4]
	\arrow["{f'\varphi'huf}", Rightarrow, from=1-4, to=2-4]
	\arrow[""{name=0, anchor=center, inner sep=0}, "{f'u'\beta}"', Rightarrow, from=2-2, to=3-2]
	\arrow["{f'u'kf\varphi}"{description}, Rightarrow, from=3-2, to=3-4]
	\arrow["{\psi'kf}"', Rightarrow, from=3-2, to=4-2]
	\arrow["{kf\varphi }"', Rightarrow, from=4-2, to=4-4]
	\arrow["{\psi'kfuf}"{description}, Rightarrow, from=3-4, to=4-4]
	\arrow["{f'u'\beta uf}", Rightarrow, from=2-4, to=3-4]
	\arrow["{f'u'k\psi f}", Rightarrow, from=3-4, to=3-6]
	\arrow["{k\psi f}"', Rightarrow, from=4-4, to=4-6]
	\arrow["{\psi'kf}", Rightarrow, from=3-6, to=4-6]
	\arrow["{f'(\varphi'h*\varphi)}"{description}, Rightarrow, from=1-2, to=2-4]
	\arrow["{f'u'(\beta * \varphi)}"{description}, Rightarrow, from=2-2, to=3-4]
	\arrow["{\psi'k*\varphi f}"{description}, Rightarrow, from=3-2, to=4-4]
	\arrow["{\psi'k*\psi f}"{description}, Rightarrow, from=3-4, to=4-6]
	\arrow["{\psi'f'h}"', Rightarrow, from=2-2, to=2-1]
	\arrow[""{name=2, anchor=center, inner sep=0}, "\beta"{description}, curve={height=24pt}, Rightarrow, from=2-1, to=4-2]
	\arrow["{(*)}"{description, pos=0.4}, Rightarrow, draw=none, from=2, to=0]
\end{tikzcd}\]
Here, the identity $(*)$ is established by:
\[\begin{tikzcd}
	{f'u'f'h} && {f'u'kfa} \\
	{f'h} && kf
	\arrow["{f'u'\beta}", Rightarrow, from=1-1, to=1-3]
	\arrow["{\psi'f'h}"', Rightarrow, from=1-1, to=2-1]
	\arrow["\beta"', Rightarrow, from=2-1, to=2-3]
	\arrow["{\psi'kf}", Rightarrow, from=1-3, to=2-3]
	\arrow["{\psi*\beta}"{description}, Rightarrow, from=1-1, to=2-3]
\end{tikzcd}\]
In sum, as desired we have constructed a path between the two composites
\[ H \jdeq \psi' \hpaste (\varphi' \vpaste \beta \vpaste \psi) \vpaste \varphi = (\psi' \hpaste \varphi') \vpaste \beta \vpaste (\varphi \hpaste \psi) \jdeq V. \]
\end{proof}

\subsection{Mates}\label{app:ssec:mates}
We are now able to capture the mates correspondence of an adjunction in type theory.

\begin{prop}\label{prop:mates}
Consider adjunctions and morphisms between Rezk types as follows:
\[\begin{tikzcd}
	{B} & {B'} \\
	{A} & {A'}
	\arrow["{u}"{name=0}, from=1-1, to=2-1]
	\arrow["{h}"', from=2-1, to=2-2]
	\arrow["{k}", from=1-1, to=1-2]
	\arrow["{f}"{name=1}, from=2-1, to=1-1, curve={height=-12pt}]
	\arrow["{f'}"{name=2, swap}, from=2-2, to=1-2, curve={height=12pt}]
	\arrow["{u'}"{name=3, swap}, from=1-2, to=2-2]
	\arrow["\dashv"{rotate=0}, from=1, to=0, phantom]
	\arrow["\dashv"{rotate=-180}, from=2, to=3, phantom]
\end{tikzcd}\]
There is a quasi-equivalence between types of natural transformations
\[ \Phi: \nat B {A'}(hu,u'k) \equiv \nat A {B'}(f'h,kf) : \Psi\]
 traditionally known as the \emph{mates correspondence}, \cf~\cite[Definition B.3.3]{RV}, given by pasting in the following way\footnote{The associativity theorems tell us that the diagrams are well-defined when read as pasting diagrams.}:

 \[\begin{tikzcd}
 	{B} & {B'} & {} &&& {B} & {B'} && {A'} \\
 	{A} & {A'} & {} & {B} && {A} & {A'} \\
 	{B} & {B'} & {} & {A} && {B} & {B'} \\
 	{A} & {A'} & { } & {} && {A} & {A'} && {B'} \\
 	&&&& {} &&&& {} \\
 	&&&&&&& {}
 	\arrow["{k}", from=3-1, to=3-2]
 	\arrow["{h}"', from=4-1, to=4-2]
 	\arrow["{u}"', from=3-6, to=4-6]
 	\arrow["{f}", from=3-4, to=3-6]
 	\arrow[Rightarrow, ""{name=0, inner sep=0}, from=3-4, to=4-6, shorten <=7pt, shorten >=7pt, no head]
 	\arrow["{k}", from=3-6, to=3-7]
 	\arrow["{h}"', from=4-6, to=4-7]
 	\arrow["{u'}"', from=3-7, to=4-7]
 	\arrow["{f'}"', from=4-7, to=4-9]
 	\arrow[Rightarrow, ""{name=1, inner sep=0}, from=4-9, to=3-7, shorten <=7pt, shorten >=7pt, no head]
 	\arrow[Rightarrow, "{\beta}", from=4-6, to=3-7, shorten <=3pt, shorten >=3pt]
 	\arrow["{:=}" description, from=3-3, to=4-3, phantom, no head]
 	\arrow[Rightarrow, "{\alpha_\beta}"', from=4-2, to=3-1, shorten <=3pt, shorten >=3pt]
 	\arrow["{f}", from=4-1, to=3-1]
 	\arrow["{f'}"', from=4-2, to=3-2]
 	\arrow["{u}"', from=1-1, to=2-1]
 	\arrow["{h}"', from=2-1, to=2-2]
 	\arrow["{u'}"' swap, from=1-2, to=2-2]
 	\arrow["{:=}" description, from=1-3, to=2-3, phantom, no head]
 	\arrow[Rightarrow, "{\beta_\alpha}", from=2-1, to=1-2, shorten <=3pt, shorten >=3pt]
 	\arrow["{u}"', from=2-4, to=2-6]
 	\arrow["{f}", from=2-6, to=1-6]
 	\arrow["{f'}"', from=2-7, to=1-7]
 	\arrow["{u'}", from=1-7, to=1-9]
 	\arrow[Rightarrow, ""{name=2, inner sep=0}, from=2-4, to=1-6, shorten <=7pt, shorten >=7pt, no head]
 	\arrow["{k}", from=1-6, to=1-7]
 	\arrow["{h}"', from=2-6, to=2-7]
 	\arrow[Rightarrow, "{\alpha}"', from=2-7, to=1-6, shorten <=3pt, shorten >=3pt]
 	\arrow[Rightarrow, ""{name=3, inner sep=0}, from=1-9, to=2-7, shorten <=7pt, shorten >=7pt, no head]
 	\arrow["{k}", from=1-1, to=1-2]
 	\arrow[Rightarrow, "{\varepsilon'}" description, from=4-7, to=1, shorten <=4pt, shorten >=4pt]
 	\arrow[Rightarrow, "{\eta}" description, from=0, to=3-6, shorten <=4pt, shorten >=4pt]
 	\arrow[Rightarrow, "{\varepsilon}" description, from=2-6, to=2, shorten <=4pt, shorten >=4pt]
 	\arrow[Rightarrow, "{\eta'}" description, from=3, to=1-7, shorten <=4pt, shorten >=4pt]
 \end{tikzcd}\]
Explicitly, the constructions are given by:
\[\begin{tikzcd}
	{} & {\beta_\alpha \defeq \Phi(\alpha) \defeq \Big(f'ha} && {f'hufa} && {f'u'kfa} && {kfa \Big)} \\
	& {\alpha_\beta \defeq \Psi(\beta) \defeq \Big( hub} && {u'f'hub} && {u'kfu(b)} && {u'kb\Big)}
	\arrow["f'\eta_{ha}", from=1-2, to=1-4]
	\arrow["{f'\alpha_{fa}}", from=1-4, to=1-6]
	\arrow["{\varepsilon'_{kfa}}", from=1-6, to=1-8]
	\arrow["{u'\beta_{ub}}", from=2-4, to=2-6]
	\arrow["{u'k\varepsilon_b}", from=2-6, to=2-8]
	\arrow["\eta'_{hub}", from=2-2, to=2-4]
\end{tikzcd}\]
\end{prop}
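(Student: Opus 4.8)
The plan is to prove the asserted equivalence by exhibiting $\Phi$ and $\Psi$ as mutually inverse maps of types. First I would record that both are well-defined maps: each is obtained by pasting the given $2$-cell ($\alpha$, resp.\ $\beta$) with the unit and counit of the two adjunctions, and the resulting pasting diagrams are unambiguous by the associativity theorems for horizontal and vertical pasting established above (the content of the preceding footnote). Concretely, $\Phi(\alpha)$ pastes the unit $\eta$ of $f\dashv u$ on the left and the counit $\varepsilon'$ of $f'\dashv u'$ on the right of $\alpha$, while $\Psi(\beta)$ pastes $\eta'$ on the left and $\varepsilon$ on the right of $\beta$. Both are genuine natural transformations because the relevant functor types (such as $A^{B}$ and ${B'}^{A}$) are again Rezk, Segal types forming an exponential ideal, so naturality of the whiskered units and counits is automatic in the sense of~\cite[Section~6]{RS17}.

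Next I would compute the round trip $\Psi\circ\Phi$. Substituting the definition of $\Phi(\alpha)$ into $\Psi$ yields, at each $b:B$, the composite $u'k\varepsilon_b \circ u'\varepsilon'_{kfub}\circ u'f'\alpha_{fub}\circ u'f'h\eta_{ub}\circ \eta'_{hub}$. The key move is to reorganize this pasting by means of the pasting theorem proved just above, which is tailored to exactly this configuration: it lets me regroup the outer whiskered units and counits so that the pair $\varepsilon'$ (from $\Phi$) and $\eta'$ (from $\Psi$) appears as a single whiskered triangle for $f'\dashv u'$, and the pair $\eta$ (from $\Phi$) and $\varepsilon$ (from $\Psi$) appears as a single whiskered triangle for $f\dashv u$. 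Applying the triangle identities $u'\varepsilon'\circ\eta'u' = \id_{u'}$ and $u\varepsilon\circ\eta u=\id_u$ pointwise---these hold as honest identities via the coherent adjunction data of \cref{prop:adj-equiv} and Appendix~\ref{app:sec:adj}---collapses both triangles, leaving exactly $\alpha_b$ up to the canonical identifications. This gives a homotopy $\Psi(\Phi(\alpha)) = \alpha$, natural in $\alpha$.

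The reverse round trip $\Phi\circ\Psi = \id$ is entirely symmetric, exchanging the roles of the two adjunctions and of units with counits, and uses the remaining two triangle identities $\varepsilon f\circ f\eta=\id_f$ and $\varepsilon' f'\circ f'\eta'=\id_{f'}$. Together the two homotopies exhibit $\Phi$ and $\Psi$ as quasi-inverse, hence as an equivalence of types. The \textbf{main obstacle} I anticipate is not the triangle identities themselves but the bookkeeping of the pasting manipulations: lacking a general pasting calculus, I must match the regrouping in the round-trip computation precisely to the hypotheses of the pasting theorem and verify the middle-four interchange (naturality of whiskering) steps by hand. A secondary subtlety is ensuring that the triangle identities enter as \emph{identities} of $2$-cells rather than merely invertible $2$-cells; this is guaranteed by working with coherent adjunctions between Rezk types, so that the relevant $\hom$-types are discrete and invertible $2$-cells lying over identities are themselves identities.
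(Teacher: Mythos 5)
Your proposal is correct and takes essentially the same route as the paper's proof: both define $\Phi(\alpha) \defeq \varepsilon' \hpaste \alpha \hpaste \eta$ and $\Psi(\beta) \defeq \eta' \vpaste \beta \vpaste \varepsilon$, regroup the round trip via the pasting theorem into $(\eta' \hpaste \varepsilon') \vpaste \alpha \vpaste (\eta \hpaste \varepsilon)$, and collapse the two whiskered triangles using witnesses of the triangle identities, the converse round trip being symmetric. One small correction: those witnesses come from the coherent (bi-diagrammatic) adjunction data of~\cite[Theorem~11.23]{RS17}, not from \cref{prop:adj-equiv}, which concerns promoting an equivalence to an adjunction rather than extracting triangle-identity paths.
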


\begin{proof}[Proof of \cref{prop:mates}]
	Consider the maps
	\begin{align*}
		\Phi & : \nat B {A'}(hu,u'k) \to \nat B {A'}(f'h,kf), & \Phi(\alpha) :\jdeq \varepsilon' \hpaste \alpha \hpaste \eta \\
		\Psi & : \nat A {B'}(f'h,kf) \to \nat B {A'}(hu,u'k), & \Psi(\beta) :\jdeq \eta' \vpaste \beta \vpaste \varepsilon.
 	\end{align*}
 By the previous pasting theorem, we obtain for $\alpha : hu \Rightarrow u'k$ an identification
 \[ \Psi(\Phi(\alpha)) = \eta' \vpaste \Phi(\alpha) \vpaste \varepsilon = (\eta' \hpaste \varepsilon') \vpaste \alpha \vpaste (\eta \hpaste \varepsilon) = \alpha \]
 invoking witnesses for the triangle identities, via~\cite[Theorem 11.23]{RS17}.

The round trip in the converse direction is analogous.
\end{proof}

\begin{prop}[Invertibility of conjugates, {\protect\cite[Example B.3.5, Warning B.3.7]{RV}}]\label{prop:inv-conj}
	Given Rezk types $A,B$ with adjunctions $f \dashv u:B \to A$, $f' \dashv u':B \to A$, consider a $2$-cell $\alpha: u \Rightarrow u'$ and its mate $\alpha': f \Rightarrow f'$. Then $\alpha$ is invertible if and only if $\alpha'$ is:
\[\begin{tikzcd}
	{B} & {B} & {} & {B} & {B} \\
	{A} & {A} & {} & {A} & {A}
	\arrow["{u}"', from=1-1, to=2-1]
	\arrow["{=}", from=1-1, to=1-2]
	\arrow["{=}"', from=2-1, to=2-2]
	\arrow["{u'}", from=1-2, to=2-2]
	\arrow["{\leftrightsquigarrow}" description, from=1-3, to=2-3, phantom, no head]
	\arrow["{=}", from=1-4, to=1-5]
	\arrow["{f}", from=2-4, to=1-4]
	\arrow["{f'}"', from=2-5, to=1-5]
	\arrow[Rightarrow, "{\alpha}", "{=}" swap, from=2-1, to=1-2, shorten <=3pt, shorten >=3pt]
	\arrow[Rightarrow, "{\alpha'}"', "{=}",  from=2-5, to=1-4, shorten <=3pt, shorten >=3pt]
	\arrow["{=}"', from=2-4, to=2-5]
\end{tikzcd}\]
\end{prop}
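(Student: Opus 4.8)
The statement is the conjugate special case of the mates correspondence of \cref{prop:mates}, obtained by taking $h \jdeq \id_A$ and $k \jdeq \id_B$, so that $\alpha' \jdeq \Phi(\alpha) : f' \Rightarrow f$ and conversely $\alpha \jdeq \Psi(\alpha')$. Since \cref{prop:mates} already exhibits $\Phi$ and $\Psi$ as mutually inverse equivalences of types of natural transformations, my plan is to reduce the biconditional to the single assertion that $\Phi$ \emph{preserves invertibility}. The converse then follows by the symmetric argument applied to $\Psi$: inspecting its explicit formula $\Psi(\beta) \jdeq \eta' \vpaste \beta \vpaste \varepsilon$ shows that $\Psi$ is literally the mate operation obtained by interchanging the roles of the two adjunctions $f \dashv u$ and $f' \dashv u'$, so that ``$\Psi$ preserves invertibility'' has the same proof, and then invertibility of $\alpha' = \Phi(\alpha)$ transports to $\alpha = \Psi(\alpha')$.

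So assume $\alpha : u \Rightarrow u'$ is invertible, with two-sided inverse $\alpha^{-1} : u' \Rightarrow u$ in the Rezk type $B^A$ (which is Rezk since $B$ is and Rezk types form an exponential ideal). I would show that the conjugate mate $\Phi(\alpha^{-1}) : f \Rightarrow f'$ is a two-sided inverse to $\alpha' = \Phi(\alpha)$, by verifying the two homotopies
\[ \Phi(\alpha) \circ \Phi(\alpha^{-1}) = \id_f, \qquad \Phi(\alpha^{-1}) \circ \Phi(\alpha) = \id_{f'}, \]
which then identify $\alpha'$ as an isomorphism in $B^A$.

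Each of these reduces to two facts about conjugate mates. First, that $\Phi$ preserves identity cells: the mate of $\id_u$ is $\varepsilon \hpaste \id_u \hpaste \eta$, which is exactly one of the triangle identities for $f \dashv u$ and hence equals $\id_f$; here I would invoke the coherent adjunction data and the witnesses for the triangle identities from \cite[Theorem~11.23]{RS17}, precisely as in the proof of \cref{prop:mates}. Second, that $\Phi$ is contravariantly functorial for vertical composition of conjugate cells, i.e.\ the mate of a composite $\alpha^{-1} \circ \alpha : u \Rightarrow u$ is the reversed composite $\Phi(\alpha) \circ \Phi(\alpha^{-1})$ of the mates. Granting these, one computes $\Phi(\alpha) \circ \Phi(\alpha^{-1}) = \Phi(\alpha^{-1} \circ \alpha) = \Phi(\id_u) = \id_f$, and symmetrically for the other composite.

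The main obstacle is establishing this composition law for conjugate mates; it is the synthetic analogue of the naturality of the mates correspondence. I would prove it by a pasting diagram chase: expand both mates via the explicit formulas of \cref{prop:mates}, paste them vertically, and cancel an adjacent unit--counit pair using the interchange of horizontal and vertical pasting together with naturality of $\eta, \eta', \varepsilon, \varepsilon'$. Structurally this is the same bookkeeping carried out in the pasting theorem preceding \cref{prop:mates}, and is routine though lengthy; all manipulations take place in Rezk function types, where $2$-cells compose associatively and the unit/counit cancellations hold up to the coherent homotopies supplied by \cite[Theorem~11.23]{RS17}. Once the composition and identity laws are in hand, invertibility transfers in both directions as above.
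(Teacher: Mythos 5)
Your proposal is correct, but it takes a genuinely different route from the paper. The paper's proof is a short, distinctly synthetic argument: since the relevant function types are Rezk, an invertible $2$-cell $\alpha$ is an identification $u = u'$; transporting the adjunction $f' \dashv u'$ along it and invoking uniqueness of left adjoints and their accompanying data \cite[Theorem~11.23]{RS17} yields $f = f'$ and allows replacing $\varepsilon'$ by $\varepsilon$, whereupon the explicit mate formula collapses to $\varepsilon_{fa} \circ f\eta_a = \id_{fa}$, an identity by a triangle identity (the converse direction being the dual argument). Your argument is instead the classical $2$-categorical one: the conjugate correspondence preserves identities (again a triangle identity) and reverses vertical composition, and any such anti-functorial bijection preserves and reflects invertibility. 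The price is exactly the lemma you flag: the composition law for conjugates across three adjunctions, which the paper never proves and which, in its ad hoc pasting framework (the paper explicitly disclaims any general pasting theorem), requires another manual pasting chase comparable in length to the one preceding \cref{prop:mates}. What your route buys is independence from the isos-are-paths transport trick and, as a byproduct, the functoriality of the mates correspondence, a statement of independent interest; what the paper's route buys is brevity, by exploiting Rezk completeness and uniqueness of adjoints. One imprecision in your treatment of the converse: $\Psi$ is not ``literally the mate operation with the two adjunctions interchanged''---it is the dual conjugate operation, carrying $2$-cells between the left adjoints to $2$-cells between the right adjoints, and is not an instance of $\Phi$ for any reordering of the given adjunctions---but your conclusion stands, since the symmetric argument (identity preservation via the other triangle identity $u\varepsilon \circ \eta u = \id_u$, and anti-functoriality via the mirrored pasting chase) applies verbatim to $\Psi$, so invertibility of $\alpha'$ still transfers to $\alpha = \Psi(\alpha')$.
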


\begin{proof}
By precondition, we have the identification $\alpha:u = u'$. Recall the uniqueness of left adjoints and all the accompanying data~\cite[Theorem 11.23]{RS17}. Since both $f \dashv u$, $f' \dashv u'$, also $f' \dashv u$, but this implies there is an identification $f=f'$. Furthermore, the counit $\varepsilon'$ can be replaced by $\varepsilon$.

Thus, the mate of $\alpha$ at $a:A$ is homotopic to the composite
\[
\begin{tikzcd}
	f'a \ar[rr,"{f \eta_a}"] && fufa \ar[rr,equals, "{f\alpha_{fa}}"] && f'u'fa \ar[rr,"{\varepsilon_{fa}}"] && fa
\end{tikzcd}
\]
which is an isomorphism by one of the triangle identities.
\end{proof}

\begin{prop}[{\protect\cite[Exercise B.3.iii]{RV}}]\label{prop:inv-mates-composite-adjs}
	Consider adjunctions between Rezk types as in:
	\[\begin{tikzcd}
		{B} && {B'} \\
		{A} && {A'}
		\arrow["{k}"{name=0, description}, from=2-1, to=2-3]
		\arrow["{k'}"{name=1, description}, from=1-1, to=1-3]
		\arrow["{u'}"{name=2, swap}, from=1-3, to=2-3]
		\arrow[Rightarrow, "{\alpha}" description, from=2-1, to=1-3, shorten <=7pt, shorten >=7pt]
		\arrow["{f}"{name=3}, from=2-1, to=1-1, curve={height=-18pt}]
		\arrow["{\ell'}"{name=4, description}, from=1-3, to=1-1, curve={height=18pt}]
		\arrow["{\ell}"{name=5, description}, from=2-3, to=2-1, curve={height=-18pt}]
		\arrow["{f'}"{name=6, swap}, from=2-3, to=1-3, curve={height=18pt}]
		\arrow["{u}"{name=7}, from=1-1, to=2-1]
		\arrow["\dashv"{rotate=-90}, from=4, to=1, phantom]
		\arrow["\dashv"{rotate=90}, from=5, to=0, phantom]
		\arrow["\dashv"{rotate=0}, from=3, to=7, phantom]
		\arrow["\dashv"{rotate=-180}, from=6, to=2, phantom]
	\end{tikzcd}\]
Then the mate $\alpha':u \ell' \Rightarrow \ell u'$ w.r.t.~the horizontal adjunctions is invertible if and only if the mate $\alpha'':f'h \Rightarrow h'f$ w.r.t.~the vertical adjunctions is:
\[\begin{tikzcd}
	{B} && {B'} & {} & {B} && {B'} \\
	{A} && {A'} & {} & {A} && {A'}
	\arrow["{u}"', from=1-1, to=2-1]
	\arrow["{u'}", from=1-3, to=2-3]
	\arrow["{\leftrightsquigarrow}" description, from=1-4, to=2-4, phantom, no head]
	\arrow["{h}" description, from=2-5, to=2-7]
	\arrow["{h'}" description, from=1-5, to=1-7]
	\arrow["{f'}"', from=2-7, to=1-7]
	\arrow["{f}", from=2-5, to=1-5]
	\arrow["{\ell'}" description, from=1-3, to=1-1]
	\arrow["{\ell}" description, from=2-3, to=2-1]
	\arrow[Rightarrow, "{\alpha'}" swap,  "=", from=1-1, to=2-3, shorten <=7pt, shorten >=7pt]
	\arrow[Rightarrow, "{\alpha''}" swap,  "=", from=2-7, to=1-5, shorten <=7pt, shorten >=7pt]
\end{tikzcd}\]
\end{prop}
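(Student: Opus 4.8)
The plan is to prove this by a Yoneda-style transposition argument, reducing the two invertibility statements to one and the same condition through the four adjunction hom-equivalences; this is the square-shaped analogue of the conjugate invertibility already recorded in \cref{prop:inv-conj}. First I would observe that the configuration is symmetric under interchanging the horizontal and vertical adjunctions, an operation that swaps $\alpha'$ and $\alpha''$, so it suffices to treat one implication. Moreover, since $B^A$ is again Rezk and isomorphisms in functor types are detected pointwise (exactly as exploited in \cref{lem:comma-2cell-cons}), invertibility of $\alpha'$ (resp.\ $\alpha''$) is equivalent to each of its components $\alpha'_y : u\ell' y \to \ell u' y$ (resp.\ $\alpha''_x : f'k x \to k'f x$) being an isomorphism in the relevant Rezk type. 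So the whole statement reduces to comparing these two families of components.

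Next I would fix objects $a:A$ and $y:B'$ and chase the map induced by $\alpha'_y$ through the adjunction transpositions. Each of $f \dashv u$, $\ell' \dashv k'$, $\ell \dashv k$, $f' \dashv u'$ supplies a natural hom-equivalence (the transposition characterizing adjunctions in the sense of \cref{prop:adj-equiv} and \cite[Section~11]{RS17}). Transposing $\hom_A(a, u\ell' y)$ along $f \dashv u$ and then $\ell' \dashv k'$ yields $\hom_{B'}(k'f a, y)$, whereas transposing $\hom_A(a, \ell u' y)$ along $\ell \dashv k$ and then $f' \dashv u'$ yields $\hom_{B'}(f'k a, y)$. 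Under these identifications the postcomposition map $(\alpha'_y)_\ast$ becomes precomposition with a single arrow $f'k a \to k'f a$, and the explicit pasting formula for the mates in \cref{prop:mates} identifies this arrow as precisely the component $\alpha''_a$. Hence $(\alpha'_y)_\ast$ is an equivalence for all $a$ and $y$ if and only if precomposition with $\alpha''_a$ is an equivalence on $\hom_{B'}(-,y)$ for all $a$ and $y$.

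Finally I would invoke the Yoneda lemma, so that an arrow in a Rezk type is an isomorphism precisely when it induces equivalences on all representable hom-types (\cref{thm:yon}), to conclude that $\alpha'_y$ is an isomorphism for every $y$ if and only if $\alpha''_a$ is an isomorphism for every $a$; by the pointwise criterion this is exactly the desired equivalence that $\alpha'$ is invertible if and only if $\alpha''$ is. The hard part will be the middle step: verifying that the four transpositions genuinely convert the map induced by $\alpha'$ into precomposition with $\alpha''$, rather than with some other cell. This is a compatibility check between the adjunction transposes and the mate, and it is exactly where the triangle identities enter; I expect to discharge it by unwinding the mate pasting of \cref{prop:mates} together with the pasting calculus of \cref{app:ssec:pasting-lax}, at the cost of a lengthy but routine diagram chase.
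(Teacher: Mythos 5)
Your proposal is correct, and at heart it is the same argument as the paper's one-line proof, executed at the level of hom-types rather than $2$-cells. The paper's proof reads: this ``becomes an instance of \cref{prop:inv-conj} after considering the composites of the adjunctions''; concretely, the relevant composites are the two adjunctions $k'f \dashv u\ell'$ and $f'k \dashv \ell u'$ between $A$ and $B'$, and the point is that $\alpha''$ and $\alpha'$ are exactly a pair of conjugate $2$-cells for these composite adjunctions (one between the left adjoints, one between the right adjoints), whence \cref{prop:inv-conj} transfers invertibility between them. Your two transposition chains are precisely these composite adjunctions in transposing form: $\hom_A(a,u\ell'y)\simeq\hom_{B'}(k'fa,y)$ exhibits $k'f\dashv u\ell'$, and $\hom_A(a,\ell u'y)\simeq\hom_{B'}(f'ka,y)$ exhibits $f'k\dashv\ell u'$. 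Where you genuinely diverge is the final step: rather than invoking \cref{prop:inv-conj}, you re-prove its content for these composites by a representability argument (an arrow in a Rezk type is invertible if and only if postcomposition with it is an equivalence on all hom-types). That makes your proof more elementary and self-contained, and it has the pleasant side effect that your chain of equivalences is already symmetric in $\alpha'$ and $\alpha''$, so the symmetry reduction in your first step is not actually needed. Note also that the compatibility you flag as the hard part---that under the transpositions, postcomposition by $\alpha'_y$ becomes precomposition by $\alpha''_a$---is exactly the coherence the paper elides as well: it is the assertion that $\alpha'$ and $\alpha''$ are conjugates for the composite adjunctions, which is where the triangle identities and the pasting formulas of \cref{prop:mates} enter; so your proposal is simply more explicit than the paper about where the real work lies. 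Two small caveats. First, your transposition chains tacitly read the horizontal adjunctions as $k\dashv\ell$ and $k'\dashv\ell'$ (left adjoints pointing rightward); despite the labels in the paper's diagram, this is the only reading under which both stated mates of a single $\alpha$ typecheck, and it is the reading required by the application in \cref{prop:cocart-fun-fib-ladj}, so you made the right choice. Second, citing the Yoneda Lemma \cref{thm:yon} is heavier machinery than necessary: the elementary fact that isomorphisms in a Segal type are detected by postcomposition equivalences (provable directly from weak composition, by evaluating at the codomain to produce a one-sided inverse and then cancelling) is all your argument uses.
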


\begin{proof}
This becomes an instance of \cref{prop:inv-conj} after considering the composites of the adjunctions.
\end{proof}

\section{LARI and fibered adjunctions}\label{app:sec:adj}

We complement Riehl--Shulman's theory of adjunctions~\cite[Section 10]{RS17} by a treatment of \emph{left~adjoint right~inverse (LARI)~adjunctions}. Hereby, we assume the types involved to be Rezk.

Specifically, we provide a characterization result \cref{thm:char-lari} along the lines of~\cite[Theorem~11.23]{RS17}, as well as various closure properties which imply the closure properties in \cref{ssec:lari-closed,ssec:cocart-clos}. In particular, by fibrant replacement, we can identify LARI adjunctions between Rezk types with adjunctions whose left adjoint is a section. As for the stability properties, LARI adjunctions (between Rezk types) are always closed under dependent products, composition and pullback. This in turn implies the same closure properties for LARI fibrations \cref{ssec:lari-closed}.

Finally, we briefly discuss fibered adjunctions between isoinner families over a Rezk type. These occur in the characterization theorems of cocartesian families, \cref{thm:cocart-fam-intl-char-fib}, and functors, \cref{thm:cocart-fun-intl-char}.

\subsection{Left adjoint right inverse (LARI) adjunctions}

After providing a characterization of LARI adjunctions, similar to and relying on~\cite[Theorem~11.23]{RS17}, we prove a few closure properties. From these we derive corresponding closure properties of $j$-LARI fibrations (hence also cocartesian fibrations) in the main text, \cf~\cref{ssec:lari-closed}.

\subsubsection{Characterizations of LARI adjunctions}

\begin{defn}[Transposing LARI adjunction, cf.~{\protect\cite[Definition~11.1]{RS17}}]\label{def:transp-lari}
	A \emph{transposing left adjoint right inverse adjunction} (or \emph{LARI adjunction} for short) between Rezk types $A,B$ consists of:
	\begin{itemize}
		\item a transposing adjunction, consisting of functors $u:B \to A$, $f:A \to B$, and a family of equivalences $\varphi:\prod_{a:A, b:B} \hom_B(fa,b) \equiv \hom_A(a,ub)$
		\item such that for all $a:A$ the components $\eta_{\varphi,a} \defeq \varphi_{a,fa}(\id_{fa}) : \hom_A(a,ufa)$ of the unit are isomorphisms.
	\end{itemize}
Given this data, $f$ is called a \emph{transposing left adjoint right inverse} (or \emph{transposing LARI}).
\end{defn}

\begin{defn}[Bi-diagrammatic LARI adjunction, cf.~{\protect\cite[Definition~11.6]{RS17}}]
	A \emph{bi-diagrammatic left adjoint right inverse adjunction} (or \emph{bi-diagrammatic LARI adjunction}) between Rezk types $A$ and $B$ consists of:
	\begin{itemize}
		\item functors $u:B \to A$, $f:A \to B$
		\item a natural isomorphism $\eta:\iso_{A \to A}(\id_A, uf)$
		\item two natural transformations $\varepsilon, \varepsilon' : \hom_{B \to B}(fu,\id_B)$
		\item a path $\alpha:u\varepsilon \circ \eta u = \id_u$
		\item a path $\beta:\varepsilon' f \circ f \eta = \id_f$
	\end{itemize}
Given this data, $f$ is called a \emph{bi-diagrammatic left adjoint right inverse} (or \emph{bi-diagrammatic LARI}).
\end{defn}

\begin{defn}[Lifting LARI adjunction, cf.~{\protect\cite[Proposition~4.4.12]{RV2cat}}]
A \emph{lifting LARI adjunction} between Rezk types $A$ and $B$ consists of  functors $u:B \to A$, $f:A \to B$ with a homotopy $\sigma:u \circ f = \id_A$ such that the following holds: For all $a:A, b:B$ and arrows $\alpha:\hom_A(a,ub)$ there exists uniquely up to homotopy an arrow $\beta:\hom_B(fa,b)$ with $u\circ \beta = \alpha$:
\[
 \begin{tikzcd}
 	\partial \Delta^1 \ar[rr, "{[fa, b]}"] \ar[d] & & B  \ar[d, "u"] \\
 	\Delta^1 \ar[rr, "\alpha" swap] \ar[rru, dashed, "\beta" description]&& A
 \end{tikzcd}
 \]
Given this data, $f$ is called a \emph{lifting left adjoint right inverse} (or \emph{lifting LARI}).
\end{defn}

\begin{theorem}[Characterizations of LARI adjunctions, cf.~{\protect\cite[Theorem~11.23]{RS17}}]\label{thm:char-lari}
For a functor $u: B \to A$ between Rezk types the following types are equivalent propositions:
\begin{enumerate}
	\item\label{it:lari-transp} The type of transposing LARIs of $u$.
	\item\label{it:lari-lifting} The type of lifting LARIs of $u$.
	\item\label{it:lari-eta} The type of functors $f:A \to B$ together with natural isomorphisms $\eta:\iso_{A \to A}(\id_A,uf)$ such that $\varphi_\eta\defeq \lambda k.uk \circ \eta_a:\hom_B(fa,b) \to \hom_A(a,ub)$ is an equivalence for all $a:A$, $b:B$.
		\item\label{it:lari-bidiag} The type of bi-diagrammatic LARIs of $u$.
\end{enumerate}
\end{theorem}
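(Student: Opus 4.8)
The plan is to build on Riehl--Shulman's characterization of adjunctions \cite[Theorem 11.23]{RS17}, viewing each of the four LARI notions as the corresponding notion of adjunction $f \dashv u$ decorated by the single extra condition that the unit be a natural isomorphism. Since all four types are claimed to be equivalent \emph{propositions}, I would first dispatch propositionality and then produce only logical equivalences, which suffices. For items \ref{it:lari-transp}, \ref{it:lari-eta} and \ref{it:lari-bidiag} propositionality follows from the uniqueness of adjoints to a fixed functor in \emph{loc.\ cit.}: the underlying (transposing, unit-based, bi-diagrammatic) adjunction data already form a proposition, and ``the unit is a natural isomorphism'' is a proposition in the functor type $A \to A$, so the decorated types remain propositions. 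For item \ref{it:lari-lifting} this is immediate, the unique-lifting clause being a product of contractibility statements.

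The equivalences \ref{it:lari-transp} $\Leftrightarrow$ \ref{it:lari-eta} $\Leftrightarrow$ \ref{it:lari-bidiag} are obtained by transporting the LARI decoration across the corresponding equivalences of \cite[Theorem 11.23]{RS17}. The passage between a transposing adjunction and its unit is governed by $\eta_a = \varphi_{a,fa}(\id_{fa})$ and $\varphi_\eta(k) = uk \circ \eta_a$, so the requirement that the components $\eta_{\varphi,a}$ be isomorphisms matches verbatim the requirement that $\eta$ be a natural isomorphism, giving \ref{it:lari-transp} $\Leftrightarrow$ \ref{it:lari-eta}. The promotion of a unit-based adjunction to a bi-diagrammatic one, and back, is exactly the content of \cite[Theorem 11.23]{RS17}; since that promotion fixes the unit up to the coherences it records, invertibility of $\eta$ is preserved in both directions, yielding \ref{it:lari-eta} $\Leftrightarrow$ \ref{it:lari-bidiag}.

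The essential new ingredient is \ref{it:lari-transp} $\Leftrightarrow$ \ref{it:lari-lifting}, where Rezk-completeness enters. Given a transposing LARI, the unit $\eta : \iso_{A \to A}(\id_A, uf)$ is a categorical isomorphism in $A \to A$; since $A$ is Rezk and Rezk types are closed under forming functor types, $A \to A$ is Rezk, so $\idtoiso_{A \to A}$ is an equivalence (\cref{prop:char-rezk}) and $\eta$ determines a homotopy $\sigma : uf = \id_A$. Under this identification the transposing equivalence $\varphi_\eta : \hom_B(fa,b) \simeq \hom_A(a,ub)$, $\varphi_\eta(\beta) = u\beta \circ \eta_a$, says precisely that for every $\alpha : \hom_A(a,ub)$ there is a unique $\beta$ with $u\beta \circ \eta_a = \alpha$; transporting along $\sigma$, so that $\eta_a$ becomes an identity, this is the condition that there be a unique $\beta : \hom_B(fa,b)$ with $u \circ \beta = \alpha$, i.e.\ the lifting LARI condition. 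Conversely, a lifting LARI supplies $f$ together with $\sigma : uf = \id_A$; applying $\idtoiso_{A \to A}$ to $\sigma$ recovers the unit as a natural isomorphism, and the unique-lifting clause reassembles the transposing family $\varphi_\eta$.

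I expect the main obstacle to be the bookkeeping in \ref{it:lari-transp} $\Leftrightarrow$ \ref{it:lari-lifting}: one must check that ``the unique $\beta$ with $u \circ \beta = \alpha$'' corresponds on the nose to the transpose under $\varphi_\eta$, which requires tracking the homotopy $\sigma$ through the boundary data of the extension types defining the hom-types, and invoking relative function extensionality (\cref{ax:relfunext}) together with the homotopy extension property (\cref{prop:hep}) to arrange the lift to restrict judgmentally to $[fa,b]$ on $\partial\Delta^1$ while lying strictly over $\alpha$. The interplay between the propositional identification $\sigma$ and the judgmental restrictions built into the lifting square is the fiddly step; everything else is a transport of Riehl--Shulman's adjunction characterization along the ``unit is invertible'' proposition.
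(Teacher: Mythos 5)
Your claim that propositionality of \cref{it:lari-lifting} is ``immediate'' is the one genuine gap, and it sits exactly where your strategy needs support. The unique-lifting clause is indeed a product of contractibility statements, hence a proposition; but the type of lifting LARIs is not that clause alone --- it is the $\Sigma$-type over the \emph{data} of a functor $f:A\to B$ and a homotopy $\sigma: u\circ f = \id_A$ satisfying the clause. A $\Sigma$-type of such data cut out by a propositional condition is not a proposition unless one proves the pair $(f,\sigma)$ is unique, and that uniqueness is essentially the uniqueness of LARIs --- the very thing being proved. This matters because your plan reduces everything to logical equivalences, which suffices only when all four types are already known to be propositions: a type logically equivalent to a proposition need not itself be one. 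So your treatment of \cref{it:lari-transp} $\Leftrightarrow$ \cref{it:lari-lifting} is circular as structured: you need propositionality of \cref{it:lari-lifting} to get away with a mere logical equivalence, but propositionality of \cref{it:lari-lifting} only becomes available once a genuine type equivalence with the other items has been established.

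The repair is to reverse your order, which is what the paper does. Your own mechanism for \cref{it:lari-transp} $\Leftrightarrow$ \cref{it:lari-lifting} --- trading the natural isomorphism $\eta$ for an identification $\sigma$ via $\idtoiso$ in the Rezk type $A\to A$ --- already yields a \emph{type} equivalence if carried out at the level of $\Sigma$-types, since $\idtoiso_{A \to A}$ is an equivalence and the two propositional conditions correspond along it; no appeal to propositionality is needed. The paper packages this more uniformly: it straightens $u$ into the projection of a family $P:A\to\UU$ (so $B\equiv\totalty{P}$) and strictifies, after which the types in \cref{it:lari-lifting} and \cref{it:lari-eta} are each equivalent to the single strict type $\sum_{f:\prod_{a:A}P\,a}\prod_{x,y:A}\prod_{e:P\,y}\prod_{\alpha:x\to y}\isContr\big(fx\to_\alpha e\big)$; this also disposes of the transport bookkeeping with HEP and relative function extensionality that you flag as the fiddly step. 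Only after these equivalences (combined with the identification of \cref{it:lari-transp}, \cref{it:lari-eta}, \cref{it:lari-bidiag} via~\cite[Theorem 11.23]{RS17}, which you handle essentially as the paper does) is propositionality deduced: bi-diagrammatic LARIs form a subtype of the proposition of bi-diagrammatic left adjoints, hence a proposition, and the equivalences propagate this to the remaining items --- in particular to \cref{it:lari-lifting}, where it was never immediate.
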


\begin{proof}
Straightening $u$ to be the first projection of a family $P:A \to \UU$
(so that $B \equiv \totalty{P}$),
and strictifying all the occurring data, shows that the types in \cref{it:lari-lifting} and \cref{it:lari-eta} each are equivalent to the type
\[ \sum_{f:\prod_{a:A} P\,a} \prod_{\substack{x,y:A \\ e:P\,y}} \prod_{\alpha:x \to y} \isContr\big( fx \to_{\alpha}  e\big). \]

By~\cite[Theorem 11.23]{RS17}, the types of transposing and bi-diagrammatic left adjoints are equivalent to each other and to the type
\[ \sum_{f:A \to B} \sum_{\eta:\id_A \Rightarrow uf} \isEquiv(\varphi_\eta).\]
Hence, the type
\[ \sum_{f:A \to B} \sum_{\eta:\id_A \Rightarrow uf} \isEquiv(\varphi_\eta) \times \isIso(\eta)\]
is equivalent to the type of bi-diagrammatic LARIs of $u$, which establishes equivalences between the types from \cref{it:lari-eta,it:lari-bidiag,it:lari-transp}. Since the type of bi-diagrammatic left adjoints is a proposition by~\cite[Theorem~11.23]{RS17}, also the sub-type of bi-diagrammatic LARIs in \cref{it:lari-bidiag} is (and consequently, the type in \cref{it:lari-lifting} is as well).
\end{proof}

We remark that, in general, given two functors $u : B \to A$ and
$f : A \to B$, whether they determine an adjunction is extra
structure, and not just a proposition, but it is a proposition given
the data that $f$ is a section of $u$ (the proposition being: are the
corresponding transposing maps invertible?).
Similarly, whether $f$ is a section of $u$ is in general extra
structure, but this becomes a proposition when we have the data
of an adjunction (the proposition being: are the units invertible?).

\subsubsection{Closure properties of LARI  adjunctions}

\begin{prop}[(LARI) adjunctions are closed under products]\label{prop:lari-closed-under-pi}
	Let $I:\UU$ be a type, and $A,B:I \to \UU$ families with maps $r_i : B_i \to A_i$.
	If there is, for every $i:I$, a (LARI) adjunction
	\[
	\tikzset{%
		symbol/.style={%
			draw=none,
			every to/.append style={%
				edge node={node [sloped, allow upside down, auto=false]{$#1$}}}
		}
	}
	\begin{tikzcd}
		B(i) \ar[rr, bend right = 25, "r(i)" swap, ""{name=B, below}] && A(i) \ar[ll, bend right = 25, dotted, "\ell(i)" swap, ""{name=A, above}]
		\ar[from=B, to=A, symbol=\vdash]
	\end{tikzcd}
	\]
we get an induced (LARI) adjunction:
	\[
	\tikzset{%
		symbol/.style={%
			draw=none,
			every to/.append style={%
				edge node={node [sloped, allow upside down, auto=false]{$#1$}}}
		}
	}
	\begin{tikzcd}
		\prod_I B \ar[rr, bend right = 25, "\prod_I r" swap, ""{name=B, below}] && \prod_I A \ar[ll, bend right = 25, dotted, "\prod_I \ell" swap, ""{name=A, above}]
		\ar[from=B, to=A, symbol=\vdash]
	\end{tikzcd}
	\]
\end{prop}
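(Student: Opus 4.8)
The plan is to invoke the transposing characterization of (LARI) adjunctions from \cref{thm:char-lari}, for which the essential observation is that hom-types in a dependent product decompose as dependent products of hom-types. First I would record that $\prod_I A$ and $\prod_I B$ are again Rezk, since the Rezk condition (Segalness together with $!_\walkBinv$-locality) is a combination of an exponential-ideal condition and a right orthogonality condition, both stable under dependent products. The candidate adjoints are the pointwise maps $\prod_I \ell \defeq \lambda \sigma.\lambda i.\ell(i)(\sigma(i))$ and $\prod_I r \defeq \lambda \tau.\lambda i.r(i)(\tau(i))$, and I would exhibit a transposing adjunction structure on this pair.

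The key step is to establish, for $\sigma,\tau:\prod_I A$, an equivalence
\[ \hom_{\prod_I A}(\sigma,\tau) \equiv \prod_{i:I} \hom_{A_i}(\sigma(i),\tau(i)), \]
and likewise over $B$. This holds because exponentiation by $\Delta^1$ commutes with dependent products, so that $(\prod_I A)^{\Delta^1} \equiv \prod_{i:I} A_i^{\Delta^1}$, while the boundary restriction along $\partial\Delta^1$ is imposed pointwise; concretely this is an instance of the type-theoretic axiom of choice for extension types \cref{thm:choice}. Composing the pointwise transposing equivalences $\varphi_i:\hom_{B_i}(\ell(i)a,b)\equiv\hom_{A_i}(a,r(i)b)$ under these decompositions then yields
\[ \hom_{\prod_I B}\Bigl(\prod_I \ell(\sigma),\tau\Bigr) \equiv \prod_{i:I}\hom_{B_i}(\ell(i)\sigma(i),\tau(i)) \equiv \prod_{i:I}\hom_{A_i}(\sigma(i),r(i)\tau(i)) \equiv \hom_{\prod_I A}\Bigl(\sigma,\prod_I r(\tau)\Bigr), \]
which is exactly the transposing adjunction $\prod_I \ell \dashv \prod_I r$. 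Naturality in both variables is automatic, as the comparison is assembled pointwise out of natural equivalences.

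For the LARI refinement I would observe that the unit of the product adjunction is the pointwise product of the units: under the hom-decomposition its component at $\sigma$ is $\lambda i.\eta_{\varphi_i,\sigma(i)}$. Since being an isomorphism in a Rezk type is detected componentwise in a product (again by the hom-decomposition, together with the fact that $\isIso$ is a pointwise condition), and each $\eta_{\varphi_i,\sigma(i)}$ is an isomorphism by hypothesis, the product unit is itself an isomorphism. By \cref{thm:char-lari} this upgrades the transposing adjunction to a LARI adjunction. Dropping the iso-unit clause throughout gives the plain adjunction case with the same argument.

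The main obstacle I anticipate is the careful bookkeeping in the hom-type decomposition, namely verifying that the commutation of $(-)^{\Delta^1}$ with $\prod_I$ is compatible with the pointwise boundary conditions and that the assembled equivalence genuinely coincides with the transposing maps $\hom(\prod_I \ell\,\sigma,\tau)\to\hom(\sigma,\prod_I r\,\tau)$ induced by the candidate adjoints. Once this compatibility is pinned down via \cref{thm:choice}, transporting the iso-unit condition factorwise is routine.
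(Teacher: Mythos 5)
Your proof is correct, and its core step --- decomposing hom-types in a dependent product as dependent products of hom-types (via \cref{thm:choice}) and then composing the pointwise transposing equivalences --- is exactly the computation in the paper's proof. The two arguments part ways only on the LARI clause. The paper first applies fibrant replacement, straightening each $r_i$ to a projection $\sum_{a:A_i}P_i(a) \to A_i$ and writing $\ell_i \jdeq \pair{f_i}{L_i}$; by the identification underlying \cref{thm:char-lari}, LARI-ness then amounts to $f_i$ being the identity, i.e.\ the left adjoint being a section, and sections are manifestly closed under dependent products, so the LARI case needs no further checking. You instead stay with the transposing-LARI formulation and transport the iso-unit condition: the unit of the product adjunction is the pointwise family of units, and you invoke the fact that an arrow in $\prod_I A$ is invertible precisely when each component is. That fact is true but is a small extra lemma your write-up leaves implicit --- it follows from the same hom-decomposition together with the observation that composition and identities in $\prod_I A$ are computed componentwise, so componentwise inverses assemble to an inverse (the forward direction being just functoriality of the evaluation maps). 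Both routes are sound: the paper's strictification makes the LARI step vacuous at the cost of the replacement step, while yours avoids straightening at the cost of this componentwise-invertibility lemma. Your preliminary remark that $\prod_I A$ and $\prod_I B$ are again Rezk --- needed for the statement to even parse, since adjunctions here are taken between Rezk types --- is a point the paper elides; as you say, it follows from the closure of the relevant orthogonality conditions under dependent products.
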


\begin{proof}
By fibrant replacement we consider the straightenings $P_i:A_i \to \UU$, so that $B_i \simeq \widetilde{A_i} \simeq \sum_{a:A_i} P_i(a)$. Writing $\ell_i \defeq \pair{f_i}{L_i}$, the induced map is given by
\[ \prod_i \ell_i : \prod_i A_i \to \prod_i B_i, \quad (\prod_i \ell_i)(\alpha) \jdeq \lambda i. \pair{f_i \, \alpha_i}{L_i \, \alpha_i}.\]
With this, we obtain
\begin{align*}
	\hom_{\prod_i B_i}(\prod_i \ell_i \, \alpha, \pair{\alpha'}{\beta'}) & \simeq \prod_i \hom_{B_i}(\ell_i(\alpha_i), \pair{\alpha_i'}{\beta_i'}) \\
	& \simeq \prod_i \hom_{B_i}(\pair{f_i \alpha_i}{L_i\alpha_i}, \pair{\alpha_i', \beta_i'}) \\
	& \simeq \prod_i \hom_{A_i}(\alpha_i, \alpha_i') \tag{\text{since $\ell_i \dashv r_i$}} \\
	& \simeq \hom_{\prod_i A_i}(\alpha, \alpha').
\end{align*}
Thus, adjunctions are closed under taking dependent products. Clearly, this property descends to LARI adjunctions, \ie,~the case where $f_i:A_i \to A_i$ is the identity.
\end{proof}

\begin{prop}[LARI adjunctions are closed under pullback]\label{prop:lari-closed-under-pullback}
	LARI adjunctions between Rezk types are stable under pullback, \ie,~given a map $r:C \to A$ between Rezk types together with a LARI $\ell: A \to C$ and a map $j: B \to A$ where $B$ is a Rezk type, then the map $r':\jdeq j^*r$ has a LARI as well:
\[\begin{tikzcd}
	{D} & {C} \\
	{B} & {A}
	\arrow[from=1-1, to=1-2]
	\arrow["{r'}"{name=0}, from=1-1, to=2-1]
	\arrow["{j}"', from=2-1, to=2-2]
	\arrow["{r}"{name=1, swap}, from=1-2, to=2-2]
	\arrow["{\ell}"{name=2, swap}, from=2-2, to=1-2, curve={height=12pt}, dotted]
	\arrow["{\ell'}"{name=3}, from=2-1, to=1-1, curve={height=-12pt}, dotted]
	\arrow["\lrcorner"{very near start, rotate=0}, from=1-1, to=2-2, phantom]
	\arrow["\dashv"{rotate=-180}, from=2, to=1, phantom]
	\arrow["\dashv"{rotate=0}, from=3, to=0, phantom]
\end{tikzcd}\]
\end{prop}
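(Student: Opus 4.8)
The plan is to deduce the statement from the characterization of LARI adjunctions in \cref{thm:char-lari}, using the \emph{lifting} criterion \ref{it:lari-lifting}. By fibrant replacement we may assume that $\ell$ is a strict section of $r$, so that $r \circ \ell \jdeq \id_A$, and we take $D \jdeq B \times_A C$ with $r' \jdeq j^*r$ the left-hand projection. Since Rezk types are closed under pullback (limits of Rezk types being Rezk), the type $D$ is again Rezk, so all four types involved are Rezk and the characterization theorem is applicable to $r':D \to B$.

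First I would exhibit the candidate LARI. Using the section identity $r\ell = \id_A$, define $\ell' : B \to D$ by $\ell'(b) \defeq \pair{b}{\ell(jb)}$, where the pullback witness $jb = r(\ell(jb))$ is supplied by $r\ell = \id_A$. This is manifestly a (strict) section of $r'$, so by \cref{thm:char-lari} it remains only to verify the lifting property \ref{it:lari-lifting}: for every $b:B$, every $d \jdeq \pair{b_1}{c_1}:D$, and every $\alpha:\hom_B(b, r'd) \jdeq \hom_B(b,b_1)$, the arrow $\alpha$ admits a unique (up to homotopy) lift $\beta:\hom_D(\ell'(b),d)$ with $r'\beta = \alpha$.

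The key step is to reduce this lifting problem for $r'$ to the one for $r$. Since exponentiation by $\Delta^1$ preserves pullbacks, the arrow type of $D$ decomposes as $\hom_D \equiv \hom_B \times_{\hom_A} \hom_C$; concretely, $\beta$ is a pair consisting of $\gamma:\hom_B(b,b_1)$ and $\delta:\hom_C(\ell(jb),c_1)$ whose images $j\gamma$ and $r\delta$ agree in $A$ (after transport along the pullback witnesses). Requiring $r'\beta = \alpha$ forces $\gamma \jdeq \alpha$, so the fiber of the forgetful map over $\alpha$ is equivalent to the type of $\delta:\hom_C(\ell(jb),c_1)$ with $r\delta$ equal to the composite arrow $jb \to rc_1$ determined by $j\alpha$ and the witness $jb_1 = rc_1$. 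This is precisely the lifting problem for the LARI $\ell \dashv r$ at the object $jb:A$, the object $c_1:C$, and that arrow in $A$, which by hypothesis has a contractible space of solutions. Hence the fiber over $\alpha$ is contractible, establishing the lifting property for $r'$.

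Thus $\ell'$ is a lifting LARI of $r'$, and by \cref{thm:char-lari} this is equivalent to the transposing/bi-diagrammatic data, giving the desired LARI adjunction $\ell' \dashv r'$. I expect no genuine obstacle here; the only point demanding care is the bookkeeping of the pullback witnesses $jb_1 = rc_1$ when reinterpreting $j\alpha$ as an arrow into $rc_1$, but once $\hom_D$ is identified with the pullback of hom-types this is entirely routine.
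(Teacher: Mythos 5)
Your proposal is correct and takes essentially the same route as the paper's proof: the same candidate section $\ell'$ (your $\ell'(b) \defeq \pair{b}{\ell(jb)}$ is the unstraightened form of the paper's $\lambda \pair{a}{d}.\langle a,d,\ell(a)\rangle$), the same decomposition of hom-types in the pullback, and the same reduction to the contractibility supplied by $\ell \dashv r$. The only difference is presentational: the paper straightens $j$ and $r$ into families over $A$, which makes your pullback-witness bookkeeping judgmental, and exhibits the transposing equivalence directly, whereas you verify the lifting criterion of \cref{thm:char-lari} with the witnesses carried explicitly.
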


\begin{proof}
We fibrantly replace the square as follows. Denote by $P:A \to \UU$ the family of fibers of the map $j: B \to A$, so that $B \equiv \totalty{P}$, and likewise, $Q:A \to \UU$ the family associated to $r: C \to A$, so that $C \equiv \totalty{Q}$:
\[\begin{tikzcd}
	{\sum_{a:A} P\,a \times Q\,a} & {\sum_{a:A}Q\,a} \\
	{\sum_{a:A} P\,a} & {A}
	\arrow[from=1-1, to=1-2]
	\arrow["{r'}"{name=0}, from=1-1, to=2-1]
	\arrow["{j}"', from=2-1, to=2-2]
	\arrow["{r}"{name=1, swap}, from=1-2, to=2-2]
	\arrow["{\ell}"{name=2, swap}, from=2-2, to=1-2, curve={height=12pt}, dotted]
	\arrow["{\ell'}"{name=3}, from=2-1, to=1-1, curve={height=-12pt}, dotted]
	\arrow["\lrcorner"{very near start, rotate=0}, from=1-1, to=2-2, phantom]
	\arrow["\dashv"{rotate=180}, from=2, to=1, phantom]
	\arrow["\dashv"{rotate=0}, from=3, to=0, phantom]
\end{tikzcd}\]
The section $\ell'$ is induced by the section $\ell$ by setting $\ell':\jdeq \lambda \pair{a}{d}.\angled{a,d,\ell(a)}$.\footnote{Here, in the notation we are identifying sections with their ``principal parts''.}
The adjunction $\ell \dashv r$ is given by an equivalence
\[ \sum_{\alpha:a \to a'} \ell(a) \to_\alpha e' \equiv \hom_C(\pair{a}{\ell(a)}, \pair{a'}{e'}) \stackrel{\Phi}{\equiv} \hom_A(a,a') \]
giving rise to an equivalence
\begin{align*}
	& \sum_{\alpha:a\to a'} (d \to_\alpha d') \times (\ell(a) \to e') \equiv \hom_D(\angled{a,d,\ell(a)}, \angled{a',d',e'}) \\
	 \stackrel{\Phi'}{\equiv} & \hom_B(\pair{a}{d},\pair{a'}{d'}) \equiv \sum_{\alpha:a \to a'} d \to_\alpha d',
\end{align*}
in sum establishing that $\ell'$ is a left adjoint right inverse of $r'$.
\end{proof}

\begin{prop}[LARI adjunctions are closed under composition]\label{prop:lari-closed-under-comp}
	Any two (LARI) adjunctions between Rezk types
	\[
	\tikzset{%
		symbol/.style={%
			draw=none,
			every to/.append style={%
				edge node={node [sloped, allow upside down, auto=false]{$#1$}}}
		}
	}
	\begin{tikzcd}
		C \ar[rr, bend right = 25, "r'" swap, ""{name=C, below}] && B \ar[ll, bend right = 25, dotted, "\ell'" swap, ""{name=B', above}]
		\ar[from=C, to=B', symbol=\vdash]
		&&
		B \ar[rr, bend right = 25, "r" swap, ""{name=B, below}] && A \ar[ll, bend right = 25, dotted, "\ell" swap, ""{name=A, above}]
		\ar[from=B, to=A, symbol=\vdash]
	\end{tikzcd}
	\]
	compose to a (LARI) adjunction:
	\[
	\tikzset{%
		symbol/.style={%
			draw=none,
			every to/.append style={%
				edge node={node [sloped, allow upside down, auto=false]{$#1$}}}
		}
	}
	\begin{tikzcd}
		C \ar[rr, bend right = 25, "r'r" swap, ""{name=C, below}] && A \ar[ll, bend right = 25, dotted, "\ell'\ell" swap, ""{name=A, above}]
		\ar[from=C, to=A, symbol=\vdash]
	\end{tikzcd}
	\]
\end{prop}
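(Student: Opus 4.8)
The plan is to reduce to the transposing characterization of (LARI) adjunctions from \cref{thm:char-lari} and to build the composite by stacking transposing equivalences, treating the plain-adjunction case first and then checking that the extra LARI condition is inherited. By \cref{thm:char-lari} (together with \cite[Theorem 11.23]{RS17}) all the relevant types of adjunction data are propositions, so it suffices to \emph{produce} the composite structure; beyond naturality, no higher-coherence verification is required.

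For the adjunction part, I would record the two transposing adjunctions as families of equivalences
\[ \varphi_{a,b} : \hom_B(\ell a, b) \simeq \hom_A(a, r b), \qquad \varphi'_{b,c} : \hom_C(\ell' b, c) \simeq \hom_B(b, r' c), \]
natural in all arguments, and for $a:A$, $c:C$ form the composite
\[ \hom_C(\ell'(\ell a), c) \xrightarrow{\varphi'_{\ell a, c}} \hom_B(\ell a, r' c) \xrightarrow{\varphi_{a, r' c}} \hom_A(a, r(r' c)). \]
This is again an equivalence, and is natural in $a$ and $c$ since each factor is, so it exhibits $\ell'\ell \defeq \ell' \circ \ell$ as a transposing left adjoint to $r'r \defeq r \circ r'$. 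No triangle identities are needed at this stage, since a transposing adjunction is just the data of functors together with such a natural family of equivalences.

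For the LARI refinement, recall from the remark following \cref{thm:char-lari} that, once an adjunction is in place, being a LARI is the propositional condition that the unit be a natural isomorphism, equivalently that the left adjoint be a section of the right adjoint. Both viewpoints make the inheritance transparent. On the section side, since $r' \circ \ell' = \id_B$ and $r \circ \ell = \id_A$ up to the homotopies supplied by the two LARI structures, I get
\[ (r \circ r') \circ (\ell' \circ \ell) = r \circ (r' \circ \ell') \circ \ell = r \circ \ell = \id_A, \]
so $\ell'\ell$ is a section of $r'r$. On the unit side, using the formula $\varphi_{a,b}(k) = r(k) \circ \eta_a$ from condition~\ref{it:lari-eta} of \cref{thm:char-lari}, the unit of the composite adjunction at $a:A$ is the pasting $r(\eta'_{\ell a}) \circ \eta_a : a \to r r' \ell' \ell\, a$; as $\eta_a$ and $\eta'_{\ell a}$ are isomorphisms and the functor $r$ preserves isomorphisms, this composite is an isomorphism. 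Invoking condition~\ref{it:lari-eta} (or~\ref{it:lari-transp}) of \cref{thm:char-lari} then upgrades the composite adjunction to a LARI adjunction.

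The routine content is the naturality bookkeeping for the composite family $\varphi \circ \varphi'$ and the identification of the composite unit with the indicated pasting; the only place the hypotheses are genuinely used is that isomorphisms are closed under composition and preserved by functors, which is exactly what makes the LARI condition descend. Since the target type is a proposition, I do not expect any obstruction from coherence, and the step needing the most care is merely confirming that the composite unit is the pasting $r(\eta'_{\ell a}) \circ \eta_a$ rather than producing any new data.
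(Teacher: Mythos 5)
Your proposal is correct and takes essentially the same route as the paper: both exhibit the composite adjunction by stacking the two transposing hom-equivalences, and both observe that the right-inverse (LARI) property is inherited by the composite. The only presentational difference is that the paper first strictifies via fibrant replacement (writing $r$, $r'$ as projections of families and the transpositions as explicit equivalences of $\Sigma$-types, so that the sections compose strictly), whereas you argue abstractly, identifying the composite unit with the pasting $r(\eta'_{\ell a}) \circ \eta_a$ and invoking closure of isomorphisms under composition and functors.
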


\begin{proof}
	By fibrant replacement, we take $r$ and $r'$, resp., to be the projections of families $P:A \to \UU$ and $Q: B \to \UU$, where $B :\jdeq \widetilde{P}$ and $C\defeq \widetilde{Q}$:
	\[\begin{tikzcd}
		{C} && {B} && {A} \\
		{\sum_{\substack{a:A \\ d:P\,a}} Q\,a \, d} && {\sum_{a:A} P\,a}
		\arrow["{r'}"{name=0, swap}, from=1-1, to=1-3, curve={height=4pt}]
		\arrow["{r}"{name=1, swap}, from=1-3, to=1-5, curve={height=4pt}]
		\arrow["{\equiv}" description, from=1-1, to=2-1, phantom, no head]
		\arrow["{\equiv}" description, from=1-3, to=2-3, phantom, no head]
		\arrow["{\ell'}"{name=2, swap}, from=1-3, to=1-1, curve={height=12pt}]
		\arrow["{\ell}"{name=3, swap}, from=1-5, to=1-3, curve={height=12pt}]
		\arrow["\dashv"{rotate=-90}, from=2, to=0, phantom]
		\arrow["\dashv"{rotate=-90}, from=3, to=1, phantom]
	\end{tikzcd}\]
We denote the actions of the maps $\ell$ and $\ell'$ in the following way:
\begin{align*}
	\ell \jdeq  \lambda a. &\pair{\ell_0 a}{\ell_1 a} \\
	\ell' \jdeq  \lambda \pair{a}{d}.&  \angled{\ell_0' \,a \,d, \ell_1' \,a \,d, \ell_2' \,a \,d}
\end{align*}
The adjunctions $\ell \dashv r$ and $\ell' \dashv r'$ are given by equivalences
\begin{align}
	\sum_{\alpha:\ell_0 a \to a'} (\ell_1 a \to_\alpha d') \equiv \hom_B(\ell(a), \pair{a'}{d'}) \stackrel{\Phi}{\equiv} \hom_A(a,a') \label{eq:adj-comp1}
\end{align}
and
\begin{align}
		& \sum_{\substack{\alpha:\ell'_0(a,d) \to a' \\ \beta:\ell_1'(a,d) \to_\alpha d'}}  (\ell'_2(a,d) \to_{\pair{\alpha}{\beta}} e') \equiv \hom_C(\ell'(a,d), \angled{a',d',e'})  \nonumber \\
	\stackrel{\Phi'}{\equiv} &  \hom_B(\pair{a}{d}, \pair{a'}{d'}) \equiv \sum_{\alpha:a \to a'} (d \to_\alpha d'), \label{eq:adj-comp2}
\end{align}
resp.
The composite of the left adjoints acts as:
\[ \ell''\jdeq \ell' \circ \ell \jdeq \lambda  a. \angled{\ell_0'(\ell_0 a, \ell_1 a),  \ell_1'(\ell_0a, \ell_1a), \ell_2'(\ell_0a, \ell_1a)} \]
This gives rise to an equivalence witnessing the composite adjunction:
\begin{align*}
	& \hom_C(\ell''(a), \angled{a',d',e'}) \equiv \sum_{\substack{\alpha:\ell'_0(\ell_0a, \ell_1a) \\ \beta:\ell_1'(\ell_0a, \ell_1a) }} \ell_2'(\ell_0a, \ell_1a) \to_{\pair{\alpha}{\beta}} e' \\
	\stackrel{\eqref{eq:adj-comp2}}{\equiv} & \sum_{\alpha:\ell_0a \to a'} \ell_1 a \to_\alpha d' \stackrel{\eqref{eq:adj-comp1}}{\equiv} \hom_A(a,a')
\end{align*}
Furthermore, clearly if both $\ell$ and $\ell'$, resp., happen to be sections of $r$ and $r'$, resp., then $\ell' \circ \ell$ is a section of $r' \circ r$ as can be seen from the above terms defining the functions.
\end{proof}

\subsubsection{Initial elements in a LARI adjunction}

The following lemma is useful in our considerations of cocartesian arrows, in particular in proving their uniqueness up to homotopy (\wrt~a fixed source vertex), \cref{prop:cocart-lifts-unique-in-isoinner-fams}.

Recall first the discussion in \cref{rem:lari-ff}.
\begin{lem}\label{lem:LARI-initial}
	Suppose $F \dashv U : E \to B$ is a LARI adjunction of Segal types,
	and let $b:B$. Then $b$ is initial in $B$ if and only if $F\,b$
	is initial in $E$.
\end{lem}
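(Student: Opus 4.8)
The plan is to reduce both implications to two facts about the LARI adjunction $F \dashv U$ that are already on the table. The first is the transposing equivalence of hom-types built into \cref{def:transp-lari}: after matching the roles of the objects, it supplies, for all $b:B$ and $e:E$, an equivalence $\hom_E(F\,b, e) \simeq \hom_B(b, U\,e)$. The second is the full faithfulness of the left adjoint recorded in \cref{rem:lari-ff}, which gives $\hom_B(b,b') \simeq \hom_E(F\,b, F\,b')$ for all $b,b':B$. The key conceptual observation is that, for these Segal types, an element is initial exactly when each of its outgoing hom-types is contractible, and contractibility is invariant under equivalence of types; so each direction of the lemma will amount to transporting a contractibility statement across one of these two equivalences.

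First I would prove that if $b$ is initial in $B$ then $F\,b$ is initial in $E$. Fix an arbitrary $e:E$. By the transposing equivalence we have $\hom_E(F\,b, e) \simeq \hom_B(b, U\,e)$, and the right-hand type is contractible precisely because $b$ is initial. Since $e$ was arbitrary, $F\,b$ is initial. It is worth noting that this direction uses only the underlying adjunction, not the specifically LARI (right-inverse) part of the hypothesis.

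Conversely, to show that $F\,b$ initial implies $b$ initial, I would fix $a:B$ and invoke full faithfulness of $F$ to get $\hom_B(b,a) \simeq \hom_E(F\,b, F\,a)$; the target is contractible because $F\,b$ is initial (instantiating $e \defeq F\,a$). Hence $\hom_B(b,a)$ is contractible for every $a:B$, which is exactly initiality of $b$. This is where the LARI hypothesis genuinely enters, since full faithfulness of $F$ is what the invertibility of the unit buys us via \cref{rem:lari-ff}. I do not expect a real obstacle here: the only subtlety is matching the correct equivalence to each implication, and because the whole argument stays at the level of contractibility, none of the naturality of the transposing maps is needed.
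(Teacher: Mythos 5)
Your proof is correct and follows essentially the same route as the paper: the forward direction is the observation that left adjoints preserve initial objects (which you spell out via the transposing equivalence $\hom_E(F\,b,e)\simeq\hom_B(b,U\,e)$), and the backward direction uses full faithfulness of the LARI left adjoint from \cref{rem:lari-ff} exactly as the paper does. Your added remarks about which hypothesis each direction uses are accurate but do not change the argument.
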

\begin{proof}
	The implication from left to right is clear, since
	left adjoints preserve initial objects.%
	\footnote{We expect there to be a nice general theory of (co)limits
		in simplicial type theory, but here we only need this specific case.}

	Conversely, for any $b':B$, since $F$ is fully faithful
	we have $\hom_B(b,b') \equiv \hom_E(F\,b, F\,b') \equiv \unit$,
	assuming $F\,b$ is initial in $E$.
\end{proof}

\subsection{Fibered adjunctions}

We give a brief treatment of fibered adjunctions which appear in the characterization theorems of cocartesian families and functors, resp., \cref{thm:cocart-fam-intl-char-fib,thm:cocart-fun-intl-char}. We furthermore show that fibered adjunctions over a common base pull back along arbitrary functors, which is used in the main text to show that, for a fibered adjunction between cocartesian fibrations, the fibered left adjoint is always cocartesian, cf.~\cref{prop:cocart-fun-fib-ladj}.

Throughout the subsection, we assume all types to be Rezk.

\begin{defn}[Fibered natural transformation]
	Let $\pi:E \fibarr B$ and $\xi: F \fibarr A$ be maps. If $\Phi\defeq\pair{k}{\varphi}$ and $\Psi \defeq \pair{m}{\psi}$ each are fibered functors from $\xi$ to $\pi$, then a \emph{fibered natural transformation $\Phi$ to $\Psi$} consists of a pair of natural transformations $\mu: k \Rightarrow m$ and $\vartheta: \varphi \Rightarrow \psi$ as indicated in
	\[\begin{tikzcd}
		F && E \\
		\\
		A && B
		\arrow[""{name=0, anchor=center, inner sep=0}, "\varphi", curve={height=-12pt}, from=1-1, to=1-3]
		\arrow[""{name=1, anchor=center, inner sep=0}, "\psi"', curve={height=12pt}, from=1-1, to=1-3]
		\arrow["\xi"',from=1-1, to=3-1, two heads]
		\arrow[""{name=2, anchor=center, inner sep=0}, "m"', curve={height=12pt}, from=3-1, to=3-3]
		\arrow["\pi", from=1-3, to=3-3, two heads]
		\arrow[""{name=3, anchor=center, inner sep=0}, "k", curve={height=-12pt}, from=3-1, to=3-3]
		\arrow["\vartheta"', shorten <=3pt, shorten >=3pt, Rightarrow, from=0, to=1]
		\arrow["\mu"', shorten <=3pt, shorten >=3pt, Rightarrow, from=3, to=2]
	\end{tikzcd}\]
together with a family of paths
\[ \prod_{a:A, d:Q\,a} \pi(\vartheta_{d}) = \mu_{\xi_{d}}.\]
\end{defn}
By fibrant replacement, writing $P\defeq \St_B(\pi)$ and $Q \defeq \St_A(\xi)$, any fibered natural transformation can be presented by the data
\begin{itemize}
	\item $\mu:\hom_{A \to B}(k,m)$
	\item $\vartheta:\prod_{\pair{a}{d}:F} \varphi(d) \longrightarrow_{\mu_a}^P \psi(d)$.
\end{itemize}
We also write shorthand $\vartheta: \varphi \Rightarrow_\mu \psi$.

Over a common base, we obtain that a natural transformation as given in
\[\begin{tikzcd}
	F && E \\
	\\
	& B
	\arrow[""{name=0, anchor=center, inner sep=0}, "\varphi", curve={height=-12pt}, from=1-1, to=1-3]
	\arrow[""{name=1, anchor=center, inner sep=0}, "\psi"', curve={height=12pt}, from=1-1, to=1-3]
	\arrow["\xi"', from=1-1, to=3-2, two heads]
	\arrow["\pi", from=1-3, to=3-2, two heads]
	\arrow["\vartheta"', shorten <=3pt, shorten >=3pt, Rightarrow, from=0, to=1]
\end{tikzcd}\]
is fibered if and only if all components of $\vartheta$ are vertical arrows (cf.~\cref{def:vert-arr}).

\begin{defn}[Fibered adjunctions, {\cite[Definition 3.6.5]{RV}}]\label{def:fib-adj}
Let $\xi:F \fibarr B$, $\pi:E \fibarr B$ be maps between Rezk types, and fibered functors $\varphi: \xi \to_B \pi$, $\psi: \pi \to_B\xi$. A \emph{fibered adjunction (with fibered left adjoint $\psi$ and fibered right adjoint $\varphi$)} is an adjunction $\psi \dashv \varphi$ such that for all $\pair{b}{e}:E$, the components $\eta_{b,e}$ of the unit are vertical arrows. We indicate this by writing $\psi \dashv_B \varphi$ or the following diagram:
\[\begin{tikzcd}
	F && E \\
	\\
	& B
	\arrow[""{name=0, anchor=center, inner sep=0}, "\psi", curve={height=-12pt}, from=1-1, to=1-3]
	\arrow[""{name=1, anchor=center, inner sep=0}, "\varphi", curve={height=-12pt}, from=1-3, to=1-1]
	\arrow["\xi"', from=1-1, to=3-2, two heads]
	\arrow["\pi", from=1-3, to=3-2, two heads]
	\arrow["\dashv"{anchor=center, rotate=-90}, draw=none, from=0, to=1]
\end{tikzcd}\]
\end{defn}

\begin{prop}[Base change of fibered adjunctions, cf.~{\protect\cite[Lemma~3.6.6(i), Exercise~5.3.i]{RV}}]\label{prop:fib-adj-pb}
Given a fibered adjunction $\psi \dashv_B \varphi: F \to_B E$ between isoinner fibrations $\xi:F \fibarr A$, $\pi:E \fibarr B$, where $P\defeq \St_B(\pi)$, $Q \defeq \St_B(\xi)$, for any map $k:A \to B$ the induced adjunction $\psi' \dashv_B \varphi': k^*F \to_B k^*E$ is again a fibered adjunction:
\[\begin{tikzcd}
	{k^*F} & {} && F \\
	& {k^*E} &&& E \\
	A &&& B
	\arrow[""{name=0, anchor=center, inner sep=0}, "\varphi"', curve={height=6pt}, from=1-4, to=2-5]
	\arrow["\xi"{description, pos=0.3}, two heads, from=1-4, to=3-4]
	\arrow["\pi"{description}, two heads, from=2-5, to=3-4]
	\arrow[""{name=1, anchor=center, inner sep=0}, "{\varphi'}"', curve={height=6pt}, dashed, from=1-1, to=2-2]
	\arrow["{k^*\xi}"{description}, two heads, from=1-1, to=3-1]
	\arrow["k"', from=3-1, to=3-4]
	\arrow[from=1-1, to=1-4]
	\arrow["{k^*\pi}"{description}, two heads, from=2-2, to=3-1]
	\arrow["\lrcorner"{anchor=center, pos=0.125}, draw=none, from=2-2, to=3-4]
	\arrow[""{name=2, anchor=center, inner sep=0}, "\psi"', curve={height=12pt}, from=2-5, to=1-4]
	\arrow[""{name=3, anchor=center, inner sep=0}, "{\psi'}"'{pos=0.4}, curve={height=12pt}, dashed, from=2-2, to=1-1]
	\arrow["\lrcorner"{anchor=center, pos=0.125}, draw=none, from=1-2, to=3-4]
	\arrow["\dashv"{anchor=center, rotate=-135}, draw=none, from=2, to=0]
	\arrow["\dashv"{anchor=center, rotate=-138}, draw=none, from=3, to=1]
	\arrow[from=2-2, to=2-5, crossing over]
\end{tikzcd}\]
\end{prop}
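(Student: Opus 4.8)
The plan is to transport the defining transposition equivalence of $\psi \dashv_B \varphi$ across the pullback, mirroring the strategy already used for \cref{prop:lari-closed-under-pullback}. First I would fibrantly replace the right-hand square: straighten $\pi$ and $\xi$ to families $P, Q : B \to \UU$, so that $E \equiv \totalty{P}$ and $F \equiv \totalty{Q}$, and present the two pullbacks as the reindexed families $k^*P, k^*Q : A \to \UU$ with $k^*E \equiv \totalty{k^*P}$ and $k^*F \equiv \totalty{k^*Q}$. In these terms the fibered functors act fiberwise, $\psi \pair{b}{e} \jdeq \pair{b}{\psi_b\,e}$ and $\varphi\pair{b}{x} \jdeq \pair{b}{\varphi_b\,x}$, and the induced functors are their restrictions along $k$, namely $\psi'\pair{a}{e} \jdeq \pair{a}{\psi_{ka}\,e}$ and $\varphi'\pair{a}{x}\jdeq \pair{a}{\varphi_{ka}\,x}$. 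Using \cref{def:fib-adj} together with \cite[Theorem 11.23]{RS17} I may moreover assume all of the adjunction data (unit, counit, triangle witnesses) strictified, and, since $B$ is Rezk and the unit is vertical, arrange its base components to lie strictly over identities.

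The core step is the transposition. Because the arrow type functor $(-)^{\Delta^1}$ preserves pullbacks, the hom-types of $k^*E \equiv A \times_B E$ decompose as fiber products,
\[ \hom_{k^*E}(\pair{a}{e}, \pair{a'}{e'}) \equiv \hom_A(a,a') \times_{\hom_B(ka, ka')} \hom_E(e, e'), \]
and similarly for $k^*F$. The fibered adjunction $\psi \dashv_B \varphi$ supplies a family of equivalences $\hom_F(\psi\,e, x) \equiv \hom_E(e, \varphi\,x)$ which, being fibered over $B$, commutes with the projections to $\hom_B$. Combining it with the identity on the $\hom_A$-factor therefore yields, fiberwise over $\hom_A(a,a')$, an equivalence
\[ \hom_{k^*F}(\psi' \pair{a}{e}, \pair{a'}{x}) \equiv \hom_{k^*E}(\pair{a}{e}, \varphi'\pair{a'}{x}), \]
which is exactly a transposing adjunction $\psi' \dashv \varphi'$ over $A$.

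It then remains to check that this adjunction is \emph{fibered} over $A$, i.e.\ that its unit is vertical in the sense of \cref{def:vert-arr}. Reading the unit off from the transposition, $\eta'_{\pair{a}{e}}$ is the transpose of $\id_{\psi'\pair{a}{e}}$, which under the above decomposition is the pair $\pair{\id_a}{\eta_e}$ of the identity on the $\hom_A$-factor with the original unit $\eta_e$ on the $\hom_E$-factor; this pair is well typed precisely because we strictified $\pi(\eta_e) \jdeq \id_{ka}$. Hence $(k^*\pi)(\eta'_{\pair{a}{e}}) \jdeq \id_a$ is an isomorphism, so $\eta'$ is vertical and $\psi' \dashv_A \varphi'$ is a fibered adjunction, as desired.

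The main obstacle I anticipate is bookkeeping the coherences rather than any conceptual difficulty: one must verify that the transposition equivalence genuinely respects the fiber-product decomposition of hom-types (equivalently, that the family of equivalences defining $\psi \dashv_B \varphi$ is natural and strictly compatible with the projections to $\hom_B$), and that strictifying the vertical unit over identities is legitimate, which is where Rezk-completeness of $B$ and de-/strictification enter. Everything else is a routine transport of the adjunction data of \cref{def:fib-adj} along $k$, entirely parallel to the pullback stability already proved in \cref{prop:lari-closed-under-pullback}.
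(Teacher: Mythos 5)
Your proposal is correct and follows essentially the same route as the paper's proof: both define $\psi'$, $\varphi'$ fiberwise along $k$, take $\pair{\id_a}{\eta_e}$ as the unit over $A$ (manifestly vertical), and reduce the transposing equivalence for the pullback to the original one, using that the fibered (vertical-unit) structure makes the original transposition compatible with the underlying arrows in $B$. Your fiber-product decomposition $\hom_{k^*E}(\pair{a}{e},\pair{a'}{e'}) \equiv \hom_A(a,a') \times_{\hom_B(ka,ka')} \hom_E(e,e')$ is just a more structural packaging of the paper's fiberwise contractibility check for $\Phi'_\eta$, so the two arguments coincide in substance.
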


\begin{proof}
Suppose given a fibered adjunction $\psi \dashv_B \varphi$ as indicated with unit
\[ \pair{\id_b}{\eta_{e}}: \pair{b}{e} \to \pair{b}{(\varphi \psi)_b(e)},\]
for $b:B$, $e:P\,b$.
By assumption, for $b':B$, $d':Q\,d'$, this induces an equivalence
\[ \Phi_\eta : \hom_F(\pair{b}{\psi_b(e)}, \pair{b'}{d'}) \to \hom_E(\pair{b}{e}, \pair{b'}{\varphi_b'(d')})\]
by
\[ \Phi \defeq \Phi_\eta \defeq \lambda u,g.\pair{u}{\varphi_u(g) \circ \eta_e}. \]
For the unit of the adjunction over $A$ we take
\[ \pair{\id_a}{\eta_{e}} : \pair{a}{e} \to \pair{a}{(\psi \varphi)_{ka}(e)}\]
for $a:A$, $e:P(ka)$. This gives rise to the transposing map
\[ \Phi'_\eta : \hom_{k^*F}(\pair{a}{\psi_{ka}(e)}, \pair{a'}{d'}) \to \hom_{k^*E}(\pair{a}{e}, \pair{a'}{\varphi_b'(d')})\]
for $a':A$, $d':Q(ka')$.
Contractibility of the fiber of a pair $\pair{v:a \to a'}{h:e\to_{kv}^P \varphi_b'(d')}$ demands the unique existence of an arrow $g_h:\psi_{ka}(e) \to^P_{v} d'$ such that
\[ \varphi_{kv}(g_h) \circ \eta_{e} = h,\]
which follows from the respective condition for the original map $\Phi$.
\end{proof}

\end{appendices}

\addcontentsline{toc}{section}{Acknowledgements}

\section*{Acknowledgements}
The first author acknowledges the support of the Centre~for~Advanced~Study~(CAS) at the Norwegian~Academy~of~Science~and~Letters in Oslo, 
Norway, which funded and hosted the research project Homotopy Type Theory and Univalent Foundations during the academic~year~2018/19. The second author acknowledges the support of CAS and the hospitality of Bj{\o}rn~Ian~Dundas and Marc Bezem on the occasion of several guest visits of the HoTT-UF project during which parts of this work have been carried out. The second author also acknowledges the support of ARO under MURI Grant W911NF-20-1-0082 during final stages of this work.

We wish to greatly thank Steve Awodey, Paolo Capriotti, Denis-Charles Cisinski, Dan Licata, Michael Shulman, Thomas Streicher, and Matthew Weaver for many helpful discussions and feedback. We are also thankful to Bastiaan Cnossen who pointed out a flaw in~\cref{def:transp-lari} as well as a typo. Furthermore, we express gratitude to the anonymous referee for a very careful and detailed review.

We are particularly grateful to Emily Riehl for numerous continued insightful conversations, and for her hospitality hosting the second author multiple times to discuss this work.

\phantomsection%
\nocite{Ras18model,AFfib,GHT17,RV2cat,RVexp,RVscratch,RVyoneda,rasekh2021cartesian,RSSmod,LiBint,clementino2020lax,hermida1992fibred,rezk2017stuff,kock2013local,BorHandb2,BarwickShahFib,JoyQcat}

\printbibliography[heading=bibintoc]

\end{document}
